


\documentclass[11pt]{article}
\usepackage{bbm}
 \usepackage{amssymb}
\usepackage{amssymb, amsthm, amsmath, amscd}
\setlength{\topmargin}{-45pt} \setlength{\evensidemargin}{0cm}
\setlength{\oddsidemargin}{0cm} \setlength{\textheight}{23.7cm}
\setlength{\textwidth}{16cm}
\usepackage[usenames,dvipsnames]{color}


\newtheorem{thm}{Theorem}[section]
\newtheorem{lem}[thm]{Lemma}
\newtheorem{prop}[thm]{Proposition}
\newtheorem{cor}[thm]{Corollary}
\theoremstyle{definition}

\theoremstyle{definition}
\newtheorem{df}[thm]{Definition}
\theoremstyle{definition}
\newtheorem{rem}[thm]{Remark}

\theoremstyle{definition}

\renewcommand{\phi}{\varphi}

\newcommand{\N}{\mathbb{N}}
\newcommand{\Z}{\mathbb{Z}}
\newcommand{\Q}{\mathbb{Q}}
\newcommand{\R}{\mathbb{R}}
\newcommand{\C}{\mathbb{C}}
\newcommand{\T}{\mathbb{T}}

\numberwithin{equation}{section}

\newcommand{\Aff}{\operatorname{Aff}}

\newcommand{\id}{\operatorname{id}}

\newcommand{\aff}{\rm aff}

\newcommand{\cpc}{completely positive contractive linear map}
\newcommand{\morp}{contractive completely positive linear map}

\newcommand{\hm}{homomorphism}
\newcommand{\dt}{\delta}
\newcommand{\ep}{\varepsilon}





\newcommand{\la}{\langle}
\newcommand{\ra}{\rangle}
\newcommand{\andeqn}{\,\,\,{\rm and}\,\,\,}
\newcommand{\rforal}{\,\,\,{\rm for\,\,\,all}\,\,\,}
\newcommand{\CA}{$C^*$-algebra}
\newcommand{\SCA}{$C^*$-subalgebra}

\newcommand{\af}{{\alpha}}
\newcommand{\bt}{{\beta}}

\newcommand{\diag}{{\rm diag}}

\newcommand{\wilog}{without loss of generality}
\newcommand{\Wlog}{Without loss of generality}

\newcommand{\beq}{\begin{eqnarray}}
\newcommand{\eneq}{\end{eqnarray}}
\newcommand{\tforal}{\,\,\,\text{for\,\,\,all}\,\,\,}
\newcommand{\tand}{\,\,\,\text{and}\,\,\,}

\newcommand{\p}{\mathfrak{p}}
\newcommand{\q}{\mathfrak{q}}

\usepackage{amsfonts}
\usepackage{mathrsfs}
\usepackage{textcomp}
\usepackage[all]{xy}




\title{On classification of  non-unital  simple amenable C*-algebras,
I}
\author{Guihua Gong and  Huaxin Lin
 }
\date{}

\begin{document}

\maketitle

\begin{abstract}
We present a stable uniqueness theorem for non-unital \CA s.
Generalized  tracial rank one is defined for stably projectionless simple \CA s.
Let $A$ and $B$ be two  stably projectionless separable  simple amenable \CA s
 with $gTR(A)\le 1$
and $gTR(B)\le 1.$
Suppose also that  $KK(A, D)=KK(B,D)=\{0\}$ for all \CA s $D.$
Then $A\cong B$ if and only if they have the same tracial cones with scales.
We also show that every separable simple \CA\, $A$ with finite nuclear dimension
which satisfies the UCT with non-zero traces must have  $gTR(A)\le 1$ if $K_0(A)$ is torsion.
In the next part of this research, we show similar results without the restriction on $K$-theory.
\end{abstract}
{\tiny{\footnote{{\tiny (This is a revised version  of  Arxiv:1611.04440, Nov. 14, 2016)}}}}

\section{Introduction}

Recently some  sweeping progresses have been made in the Elliott  program (\cite{Ellicm}),
the program of classification of  separable amenable \CA s by the Elliott invariant
(a $K$-theoretical set of invariant) (see \cite{GLN}, \cite{TWW} and \cite{EGLN}).
These are the results of decades of work by many  mathematicians (see also \cite{GLN},
\cite{TWW} and \cite{EGLN} for the historical discussion there).
These progresses   could be summarized  briefly as the following:
Two unital  finite separable simple \CA s $A$ and $B$ with finite nuclear dimension which satisfy
the UCT are isomorphic if and only if  their Elliott invariant ${\rm Ell}(A)$ and ${\rm Ell}(B)$ are
isomorphic.  Moreover, all  weakly unperforated Elliott invariant
can be achieved by a finite separable simple \CA s in UCT class with finite nuclear dimension
(In fact these can be constructed as so-called ASH-algebras--see \cite{GLN}).
Combining with the previous classification of purely infinite simple \CA s, results
of Kirchberg and Phillips (\cite{Pclass} and \cite{KP}), now all unital separable simple \CA s in the UCT class with
finite nuclear dimension are classified by the Elliott invariant.

This research  studies the non-unital cases.

Suppose that $A$ is a separable simple \CA.
In the case that $K_0(A)_+\not=\{0\},$ then $A\otimes {\cal K}$ has a non-zero projection, say
$p.$ Then $p(A\otimes K)p$ is unital.  Therefore if $A$ is in the UCT class and has finite
nuclear dimension, then $p(A\otimes {\cal K})p$ falls into the class of \CA s which has been classified.
Therefore isomorphism theorem  for  these \CA s  is an immediate
consequence of that in \cite{GLN} (see section 8.4 of \cite{Lncbms}).

Therefore this paper considers the case that $K_0(A)_+=\{0\}.$
Simple \CA s with $K_0(A)_+=\{0\}$ are stably projectionless in the sense
that not only $A$ has no non-zero projections but $M_n(A)$ also has no non-zero
projections for every integer $n\ge 1.$  In particular, the results in \cite{GLN} cannot be applied
in the stably projectionless case. It is entirely new situation.

One of the central issues  of the establishment of  the isomorphism theorem is the  uniqueness theorem.
The uniqueness theorem for  unital simple \CA s
is based on a stable uniqueness theorem first established in \cite{Lnsuniq}.   Stable uniqueness
theorem was established aimed at the introduction of \CA s of tracial rank zero (and later for
tracial rank one and the generalization of them). One of the main constrains
of the stably uniqueness theorem is that the stabilized maps are required to be full (see also \cite{ED}).
To deal with non-unital cases, the first thing one might do is the unitization.
However, unitization would immediately lose  the fullness condition since
$A$ is always an ideal of its unitization.  Therefore stable uniqueness theorem
for non-unital \CA s has to be established  without the usage of the known unital version.
The first task of this research is to do just that.
It is also important to establish a useful one, i.e., it should be able to  be applied
to  the  classification process, in particular,  stable uniqueness theorem  should work
for \cpc s that are only approximately multiplicative.  In section 6 and 7, we present
the needed  stable uniqueness theorems.

One of the other important issues of  this research is to introduce an appropriate notion of
generalized tracial rank one for stably projectionless simple \CA s.
We use stably projectionless 1-dimensional non-commutative finite complices as the models.  In section
12, we introduce a class of simple \CA s which we will called ${\cal D}$ (and
${\cal D}_0$ for the variation). We show that these \CA s have the regularities
that required for the classification purpose.  In particular,  we show that these \CA s have strict comparison
for positive elements, approximate divisibility as well as stable rank one.
\CA s with  generalize tracial rank one are defined based on the class of \CA s in ${\cal D}.$

Our goal is to give a classification for general stably projectionless simple \CA s using a modified version of Elliott invariant.
This first part of the research, including the stable uniqueness theorem and study of
generalized tracial rank,  serves as the foundation for the general classification.
However, since the  case that \CA s have trivial $K$-theory is  relatively  less involved.
The classification of the  stably projectionless simple \CA s with trivial $K$-theory
(but with arbitrary tracial cones) is also presented.
 It was first presented by S. Razak (\cite{Raz}) that  certain
stably projectionless
\CA s can be constructed as inductive limits of 1-dimensional non-commutative finite complices.
Razak showed that these special inductive limits can be classified by
their cone of lower semi-continuous traces.  One of them  (see \cite{Tsang} and \cite{Jb})
is called $W.$ It is a stably projectionless  simple \CA\, with a unique tracial state and with
$K_0(W)=K_1(W)=\{0\}.$

The main isomorphism theorem in this article  is the following statement:
\begin{thm}\label{TTCLM}
Let $A$ and $B$ be two non-unital separable amenable simple \CA s which satisfy the UCT.
Suppose that $K_0(A)=K_0(B)=K_1(A)=K_1(B),$  $gTR(A)\le 1$ and $gT(B)\le 1.$
Then $A\cong B$ if and only if  there is an isomorphism $\Gamma:$
\beq\label{TTCLM-1}
({\tilde T}(A), \Sigma_A)\to ({\tilde T}(B), \Sigma_B).
\eneq
Moreover, the isomorphism $\psi: B\to A$ can be chosen so that
it induces $\Gamma.$
\end{thm}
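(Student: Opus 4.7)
The ``only if'' direction is immediate: any isomorphism $\psi: B \to A$ induces an affine homeomorphism of the cones of densely finite lower semi-continuous traces preserving the scales. For the converse, the plan is to follow Elliott's approximate intertwining strategy, adapted to the non-unital, stably projectionless setting. One builds two sequences of approximately multiplicative \cpc\ maps $\phi_n: B \to A$ and $\psi_n: A \to B$ such that, on increasing finite subsets $\mathcal{F}_n \subset B$ and $\mathcal{G}_n \subset A$, the composites $\psi_n \circ \phi_n$ and $\phi_{n+1} \circ \psi_n$ are close to the identity up to inner automorphisms, with errors summable so that the two-sided intertwining produces an isomorphism $\psi: B \to A$ whose induced trace map equals the prescribed $\Gamma$.

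The construction of the $\phi_n$ (the \emph{existence} part) exploits $gTR(B) \le 1$: $B$ is locally approximated by \CA s in the class $\mathcal{D}_0$ of Section 12, modeled on the stably projectionless 1-dimensional non-commutative CW complexes used by Razak in \cite{Raz}. Given $\mathcal{F}_n$ and a tolerance $\varepsilon_n$, one chooses a building block $C \subset B$ capturing $\mathcal{F}_n$ up to $\varepsilon_n$, and constructs a \cpc\ map $C \to A$ whose induced affine map on traces is approximately the restriction of $\Gamma$. Because $K_0 = K_1 = \{0\}$ for both algebras and they satisfy the UCT, all $KK$- and $KL$-type obstructions vanish, and such a map is assembled by Razak-type techniques for morphisms between 1-dimensional NCCW complexes, together with the strict comparison, stable rank one, and approximate divisibility of $A$ established in Section 12. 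The symmetric construction produces the $\psi_n$.

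For the \emph{uniqueness} half---needed to force $\psi_n \circ \phi_n$ to be close to the identity on $\mathcal{F}_n$---one invokes the stable uniqueness theorems of Sections 6 and 7: two approximately multiplicative maps from a building block into $A$ that induce the same tracial data become approximately unitarily equivalent after stabilization by a common full summand. The main obstacles are twofold. First, one must absorb this stabilizing summand back into $A$ without disturbing the trace-level data; here the approximate divisibility of $A$ lets one replace the full summand by an inner unitary conjugation inside $A$ itself. Second, one must coordinate the tolerances, finite subsets, and building blocks between the existence and uniqueness steps so that the approximate unitary equivalences on each side piece together into the convergent sequences required by Elliott's intertwining lemma. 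Once these are handled, the limit isomorphism automatically induces $\Gamma$ by construction, giving the ``moreover'' clause.
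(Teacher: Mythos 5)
Your sketch is essentially the engine of the paper's continuous-scale isomorphism theorem (Theorem \ref{TTMW}): Elliott approximate intertwining, existence of maps via Robert's $Cu^{\sim}$-classification applied to the ${\cal C}_0'$ building blocks coming from $gTR\le 1$, uniqueness via the stable uniqueness theorem with the stabilizing full summand absorbed using tracial divisibility, with $K_*=0$ plus UCT (i.e.\ $KK(A,D)=\{0\}$) killing the obstructions. One structural difference: the paper does not intertwine $A$ and $B$ directly; it intertwines each with a fixed model $C\in{\cal M}_0$ (an inductive limit of finite direct sums of $W$ with the right trace simplex), so that the maps \emph{out of} the model are honest homomorphisms supplied by Robert's existence theorem, while only the maps \emph{into} it are approximately multiplicative; and the key uniqueness step (comparing $H\circ L_1$ with ${\rm id}_A$) applies the stable uniqueness theorem with domain $A$ itself, not merely with domain a building block, which is exactly where $KK(A,D)=\{0\}$ enters. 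Your direct $A\leftrightarrow B$ scheme is plausible in spirit but would have to reproduce this asymmetry somewhere, since neither algebra is known in advance to be an inductive limit of building blocks.

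The genuine gap is that you never address the invariant actually appearing in the statement: the cone $({\tilde T}(A),\Sigma_A)$ of densely defined, possibly unbounded, lower semicontinuous traces together with the scale. All of the machinery you invoke (the fullness constants, the maps $\Delta$, the measurement of traces in the uniqueness theorems of Sections 7 and 10, the divisibility statements of Section 12) is formulated for quasi-compact algebras with $0\notin\overline{T(\cdot)}^w$, i.e.\ in effect for continuous scale and compact $T(A)$, and the intertwining cannot be run directly against $\tilde T$ and $\Sigma$. The paper's actual proof of Theorem \ref{TTCLM} is precisely the missing reduction: choose $a\in P(A)_+$ so that $A_0=\overline{aAa}$ has continuous scale, choose $b\in B_+$ with $d_{\Gamma(\tau)}(b)=d_\tau(a)$, apply the continuous-scale theorem to get $A_0\cong B_0$ compatibly with $\Gamma|_{T(A_0)}$, extend to an isomorphism $A\otimes{\cal K}\cong B\otimes{\cal K}$ by Brown's stable isomorphism theorem, and then match the scales: the image $b_0$ of a strictly positive element of $A$ satisfies $d_\tau(b_0)=\Sigma_B(\tau)=d_\tau(b_1)$ for a strictly positive element $b_1$ of $B$, and stable rank one (via the Cuntz semigroup) then identifies $\overline{b_0(B\otimes{\cal K})b_0}$ with $B$ trace-compatibly. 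Without this reduction and scale-matching step your argument only treats the bounded-trace, compact-simplex situation and does not yield the ``moreover'' clause that the isomorphism induces $\Gamma$ on $({\tilde T},\Sigma)$.
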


Here ${\tilde T}(A)$ and ${\tilde T}(B)$ are cones of lower semi-continuous traces and
$\Sigma_A$ and $\Sigma_B$ are the scales (see  \ref{Ddimf} bellow).
This is first proved for \CA s in ${\cal D}$ with continuous scale using tracial state space $T(A)$ and
$T(B)$ instead of ${\tilde T}(A)$ and ${\tilde T}(B)$ to avoid the difficulties  created by
those simple \CA s which have both bounded  and unbounded traces as well as non-compact
tracial state spaces.  One particular
consequence is that $W\cong W\otimes W.$
In this first part of the research, at least for the statement \ref{TTCLM} above,
we assume that \CA s have trivial $K$-theory. Therefore UCT assumption can be replaced
by the assumption that $KK(A,D)=KK(B,D)=\{0\}$ for all \CA s $D.$

One immediate question
is whether Theorem \ref{TTCLM} applies to all   stably projectionless simple \CA s with finite nuclear
dimension. By studying the so-called $W$-trace,  we show, in this first part of the research, that
within UCT class,  under the assumption that $K_0(A)$ is torsion,
a finite separable simple amenable \CA\, $A$ has $gTR(A)\le 1$ if and only if it has finite nuclear dimension
(this restriction on $K$-theory  will be removed in the later parts of this research).
Therefore the above theorem holds when we remove the condition
$gTR(A)\le 1$ and $gTR(B)\le 1$ and replaced it by the condition
that both $A$ and $B$ have finite nuclear dimension.
In other words, we have the following:

\begin{thm}\label{TMpartI}
Let $A$ and $B$ be two separable simple \CA\, with finite nuclear dimension and which satisfies the UCT.
Suppose that ${\tilde T}(A)\not=\{0\},$ ${\tilde T}(B)\not=\{0\}$ and $K_i(A)=K_i(B)=\{0\},$ $i=0,1.$
Then $A\cong B$ if and only if
$$
({\tilde T}(A), \Sigma_A)\cong ({\tilde T}(B), \Sigma_B).
$$

\end{thm}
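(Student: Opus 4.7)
The plan is to derive Theorem~\ref{TMpartI} as a corollary of Theorem~\ref{TTCLM} by using the finite nuclear dimension hypothesis to supply the generalized tracial rank bound that Theorem~\ref{TTCLM} requires. The bridge is the result announced in the introduction: within the UCT class, a finite separable simple amenable \CA\ with non-zero traces and $K_0$ torsion has $gTR\le 1$ if and only if it has finite nuclear dimension.

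First I would verify the hypotheses of this bridge result for both $A$ and $B$. Separability, simplicity, nuclearity (from finite nuclear dimension), the UCT, and the existence of non-zero traces are all assumed outright. The torsion hypothesis on $K_0$ is trivially met since $K_0(A)=K_0(B)=\{0\}$, and the algebras are automatically stably finite because, having trivial $K_0$ and non-zero traces with simplicity, they are in fact stably projectionless. Applying the bridge to each algebra gives $gTR(A)\le 1$ and $gTR(B)\le 1$. The remaining hypotheses of Theorem~\ref{TTCLM} --- non-unital (stably projectionless), separable, amenable, simple, UCT, and matching trivial $K$-groups --- are then all in hand; note that under trivial $K$-theory the UCT hypothesis can be upgraded to $KK(A,D)=KK(B,D)=\{0\}$ for every \CA\ $D$ via the UCT short exact sequence, exactly as anticipated in the introduction. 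Theorem~\ref{TTCLM} now produces, for any prescribed isomorphism $\Gamma:(\tilde T(A),\Sigma_A)\to(\tilde T(B),\Sigma_B)$, an isomorphism $\psi:B\to A$ inducing $\Gamma$, and the reverse implication (that an isomorphism $A\cong B$ induces an isomorphism of the tracial cones with scales) is a functoriality check.

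The main obstacle is not in the present reduction, which is essentially bookkeeping, but in proving the bridge result itself. That is where the $W$-trace technology referred to in the introduction carries the weight: one must show that a separable simple amenable \CA\ of finite nuclear dimension with torsion $K_0$, UCT, and non-zero traces can be suitably approximated, in the sense required by the definition of generalized tracial rank one, by \SCA s drawn from the class ${\cal D}$ constructed in Section~12, exploiting the regularity properties --- strict comparison, approximate divisibility, and stable rank one --- established there for ${\cal D}$. Once that structural theorem has been proved, Theorem~\ref{TMpartI} follows immediately as outlined above; the K-theoretic restriction in the bridge is precisely the restriction that appears in Theorem~\ref{TMpartI} and will be removed in the later parts of the research.
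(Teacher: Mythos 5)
Your proposal matches the paper's own argument: the paper proves Theorem \ref{TMpartI} by combining Theorem \ref{TTCLM} with Theorem \ref{LFNZ} (the "bridge" result, proved via the $W$-trace technology), exactly the reduction you describe, including the observation that trivial $K$-theory plus the UCT yields $KK(A,D)=KK(B,D)=\{0\}$ and that the algebras are automatically stably projectionless. Your bookkeeping is correct and the approach is essentially identical to the paper's.
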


In particular, for any finite separable simple \CA\, $A$ with finite nuclear dimension, one has that
$gTR(A\otimes W)\le 1.$  Therefore we have the following corollary:

\begin{cor}\label{CWiso}
Let $A$ and $B$ be two separable simple \CA s with finite nuclear dimension which
satisfy the UCT.
Then, in the case ${\tilde T}(A)\not=\{0\},$  $A\otimes W\cong B\otimes W$ if and only if
$$
({\tilde T}(A), \Sigma_A)\cong ({\tilde T}(B), \Sigma_B).
$$
In the case that ${\tilde T}(A)=\{0\},$ then $A\otimes W\cong {\cal O}_2\otimes {\cal K}.$
\end{cor}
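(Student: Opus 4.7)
The plan is to reduce the corollary to Theorem~\ref{TMpartI} in the case when $A$ admits traces, and to the Kirchberg--Phillips classification in the traceless case. The key point is that $W$ annihilates $K$-theory via the K\"unneth formula while faithfully preserving the cone of lower semi-continuous traces.

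First assume $\widetilde{T}(A)\neq\{0\}$. I would verify that both $A\otimes W$ and $B\otimes W$ satisfy the hypotheses of Theorem~\ref{TMpartI}. Separability, simplicity and nuclearity pass to $-\otimes W$ (since $W$ is separable, simple and nuclear); finite nuclear dimension is preserved under tensoring with a finite-nuclear-dimensional nuclear $C^*$-algebra; and the UCT is preserved under tensoring with a UCT nuclear $C^*$-algebra. Because $K_0(W)=K_1(W)=\{0\}$, the K\"unneth exact sequence yields $K_i(A\otimes W)=\{0\}$ for $i=0,1$, so $A\otimes W$ is stably projectionless. Since $W$ admits a unique (up to positive scalar) densely defined lower semi-continuous trace $\tau_W$, the assignment $\tau\mapsto\tau\otimes\tau_W$ induces a natural scale-preserving isomorphism $(\widetilde{T}(A\otimes W),\Sigma_{A\otimes W})\cong(\widetilde{T}(A),\Sigma_A)$, and similarly for $B$. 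The hypothesis $gTR(A\otimes W)\le 1$ (and similarly for $B$) is exactly the structural fact announced immediately before the corollary. Theorem~\ref{TMpartI} then gives $A\otimes W\cong B\otimes W$ if and only if the tracial cones with scales match, which is the desired equivalence.

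Now assume $\widetilde{T}(A)=\{0\}$. A separable simple nuclear $C^*$-algebra of finite nuclear dimension is $\mathcal{Z}$-stable, so by the Kirchberg--R\o rdam dichotomy $A$ must be purely infinite. In particular $A\cong A\otimes\mathcal{O}_\infty$, and therefore $A\otimes W\cong (A\otimes W)\otimes\mathcal{O}_\infty$ is $\mathcal{O}_\infty$-stable, hence simple and purely infinite. Thus $A\otimes W$ is a separable stable nuclear Kirchberg algebra satisfying the UCT, with $K_i(A\otimes W)=\{0\}$ again by K\"unneth. The Kirchberg--Phillips classification then forces $A\otimes W\cong\mathcal{O}_2\otimes\mathcal{K}$.

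The main obstacle is in fact tucked into the traced case: essentially all of the content of the corollary rests on Theorem~\ref{TMpartI} together with the structural fact $gTR(A\otimes W)\le 1$, which are the genuinely deep inputs of this paper. Once these are accepted, the remaining work is bookkeeping of invariants under $-\otimes W$: the K\"unneth identification of $K$-theory and the identification of tracial cones induced by the uniqueness of the trace on $W$. In the traceless case the only delicate point is verifying pure infiniteness of $A\otimes W$, which follows from the stability of $\mathcal{O}_\infty$-absorption under minimal tensor products.
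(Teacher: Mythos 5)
Your argument is correct and follows essentially the same route as the paper: the traced case is the paper's reduction to Theorem \ref{TMpartI} (with $gTR(A\otimes W)\le 1$ from \ref{LFNZ} in the background and the identification of $({\tilde T}(A\otimes W),\Sigma_{A\otimes W})$ with $({\tilde T}(A),\Sigma_A)$ coming from the unique trace on $W$), and the traceless case is the paper's appeal to ${\cal Z}$-stability, pure infiniteness, triviality of $K$-theory and Kirchberg--Phillips. The only step you assert without justification is stability of $A\otimes W$; as in the paper, this follows because $A\otimes W$ is non-unital, $\sigma$-unital, simple and purely infinite (Zhang's dichotomy), so the conclusion $A\otimes W\cong {\cal O}_2\otimes {\cal K}$ stands.
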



This paper  is the first part of a series research.  Some of the results  in this
part will play more prominent role in the  later parts of this research.
In particular, the stable uniqueness theorem established here only has limited usage in
this part.
In the second part of this research,  using  also some of the results established in this part, we show the following:

\begin{thm}
Let $A$ and $B$ be two non-unital  finite separable simple  \CA s with finite nuclear dimension which satisfy the UCT.
Suppose that $K_0(A)={\rm ker}\rho_A$ and $K_0(B)={\rm ker}\rho_B.$
Then $A\cong B$ if and only if
$$
{\rm {\tilde{Ell}}}(A)\cong {\rm {\tilde{Ell}}}(B)
$$
\end{thm}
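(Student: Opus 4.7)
The plan is to prove this via an Elliott approximate intertwining, generalizing Theorem \ref{TTCLM} of the present part to allow nontrivial $K$-theory whenever the pairing between $K_0$ and the tracial cone vanishes. The hypothesis $K_0(A) = \ker \rho_A$ is exactly what decouples the $K$-theoretic bookkeeping from the tracial classification treated in Part I, so one can reasonably hope to classify by the enlarged invariant $\widetilde{\mathrm{Ell}}$, which augments $({\tilde T}(A), \Sigma_A)$ with $K_0$, $K_1$, and the (necessarily trivial-on-$K_0$) pairing.

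The first step is to enlarge the class ${\cal D}$ introduced in Section 12 to a class ${\cal D}^\sharp$ whose building blocks are still stably projectionless 1-dimensional non-commutative CW complexes, but now permitted to have arbitrary finitely generated $K_0$ and $K_1$ subject to the $\ker \rho$ constraint. One must verify that ${\cal D}^\sharp$ retains the regularity features established for ${\cal D}$ --- strict comparison for positive elements, stable rank one, approximate divisibility, and an appropriately defined generalized tracial rank at most one --- and compute the range of the invariant on ${\cal D}^\sharp$ so that every admissible $\widetilde{\mathrm{Ell}}$ datum is realized by some model in ${\cal D}^\sharp$. The $W$-trace technique sketched in this paper, which produced $gTR(A) \le 1$ under a torsion assumption on $K_0$, should then be adapted so that under the weaker assumption $K_0 = \ker \rho$ one still has $A$ falling into this enlarged class, perhaps after first tensoring with a suitable UHF or with a regular stably projectionless model that $A$ absorbs.

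With the model algebras in hand, the isomorphism theorem follows by combining an existence theorem --- which lifts an isomorphism $\Gamma \colon \widetilde{\mathrm{Ell}}(B) \to \widetilde{\mathrm{Ell}}(A)$ to a sequence of approximately multiplicative \cpc\ maps $B \to A$ realizing $\Gamma$ on the invariant --- with a uniqueness theorem asserting that any two such lifts are approximately unitarily equivalent on prescribed finite subsets. The uniqueness step rests on the non-unital stable uniqueness theorem developed in Sections 6 and 7 of the present paper, and is where I expect the main obstacle. Unlike in Theorem \ref{TMpartI}, where the hypothesis $KK(A,D) = \{0\}$ trivializes the $KK$-input, here one must track genuine $KK$-data while simultaneously controlling the traces matched by $\Gamma$. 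The $\ker \rho$ condition is exactly what allows these two strands to be kept independent, but weaving them into a single intertwining --- so that unitaries constructed to correct a $K$-theoretic discrepancy do not disturb tracial approximations already achieved at the previous stage --- is the delicate point, and is precisely what forces the refinement of the stable uniqueness machinery built in this first part.
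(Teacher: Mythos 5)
You have not been given anything to compare against here: this theorem is not proved in the present paper at all. It is stated in the introduction only as an announcement of the second part of the series (together with the remark that a range theorem for the modified invariant will also appear there), so your text can only be judged as a proposed strategy, and as such it is a roadmap rather than a proof. Each of its stages is itself a major open step relative to what this paper establishes. The most serious gap is your step asserting that $K_0(A)=\ker\rho_A$ together with finite nuclear dimension places $A$ in an enlarged class $\mathcal{D}^\sharp$ (i.e.\ yields $gTR(A)\le 1$). In this paper that conclusion is obtained only when the $K$-theoretic obstruction is effectively killed: Theorem \ref{TalWtrace} needs $K_0(A)=\mathrm{Tor}(K_0(A))$ to make every trace a $W$-trace, and Theorem \ref{LFNZ} needs $KK(A,D)=\{0\}$; the authors say explicitly that removing these restrictions is deferred to the later parts. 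Your suggestion "perhaps after first tensoring with a suitable UHF" does not resolve this, because one must then descend from $A\otimes U$ back to $A$; the descent in the proof of \ref{LFNZ} works only because $K_0=K_1=0$ lets the two UHF-stabilized isomorphisms be matched on traces (via \ref{TTMW} and \ref{Casympu}) and asymptotically intertwined, and with nontrivial $K$-theory this matching is exactly the classification one is trying to prove.

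The second concrete gap is in your uniqueness step. With nontrivial $K_1$ (and non-divisible $K_0$), $KL$-data plus traces do not determine approximate unitary equivalence of maps: one must also control the image in $U(\tilde A)/CU(\tilde A)$ and exponential length — this is precisely the hypothesis \eqref{Lauct-1} with the map ${\bf L}$ in Theorem \ref{Lauct2}, which the trivial-$K$-theory applications in this paper (Corollaries \ref{CLuniq}, \ref{CCLuniq}, Remark \ref{RRLuniq}) are designed to discard — and one must handle Bott/rotation obstructions in the Basic-Homotopy-Lemma stage of the intertwining (the analogues of \ref{UniqN1} and \ref{BHK00} are proved here only for building blocks with vanishing $K_1$). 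Your plan neither explains how these quantities are computed and matched under an isomorphism of $\widetilde{\mathrm{Ell}}$, nor addresses whether the invariant must be augmented by (or shown compatible with) the Hausdorffized unitary-group data; saying the $\ker\rho$ condition "decouples" the two strands does not by itself prevent the unitaries correcting $K$-theoretic discrepancies from carrying nontrivial rotation-type obstructions. Finally, the range theorem for $\mathcal{D}^\sharp$ is simply assumed. So the proposal points in a reasonable direction — it is broadly the program the authors announce — but none of the three pillars (membership in the classifiable class under $K_0=\ker\rho$, existence, uniqueness with genuine $KK$- and $U/CU$-data) is supplied, and the first two are known from this paper only in the trivial-$K$-theory or torsion cases.
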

Here ${\rm {\tilde{Ell}}}(A)$ is a modified Elliott invariant for stably  projectionless  simple \CA s.
A range theorem will also presented in the second part of this research.

{\bf Acknowledgement}:   Most of the first part of this  research
was done when both authors stayed
in the Research Center for Operator Algebras in East China Normal University
in the summer of 2016.  The second part of this research was also initiated during
the same stay.  Both authors acknowledge the support by the Center.
The first named author was also partially supported by by NNSF of China (11531003) and
the second named author was also supported by a  NSF grant.

\section{Notation}


\begin{df}\label{DTtilde}
Let $A$ be a \CA. Denote by $P(A)$ the Pedersen ideal.

Denote by ${\tilde T}(A)$ the space  of all  densely defined  lower semi-continuous  positive traces
equipped   with the weak *-topology induced by elements in $P(A)$ as a topological convex cone.
In particular, if $\tau\in {\tilde T}(A)$ and $b\in P(A)_+$ then
$\tau$ is a finite trace on $\overline{bAb}.$

Suppose that $A=P(A).$ Let $T(A)$ be those $\tau\in {\tilde T}(A)$ such that
they are also states of $A.$ These are called tracial states.
In this case
define $T_0(A)=\{\af \cdot \tau: \tau\in T(A),\,\, \, 0\le \af\le 1\}.$

Suppose that $A$ is $\sigma$-unital.
In the case that $P(A)_+$ contains a full element $a$ of $A$ (in particular when
$A$ is simple),  let us clarify the structure of ${\tilde T}(A).$
Put $A_1=\overline{aAa}.$   Then we may identify $A$ with a $\sigma$-unital hereditary
\SCA\, of $A_1\otimes {\cal K}$ by
the Brown's theorem (\cite{Br1}).
Then $T_0(A_1)$ is a  weak*-closed convex subset of all positive linear functionals of $A_1$ with norm no more than
1.  Thus  $T_0(A_1)$  has the usual  structure of a topological  convex set and it is a Choquet simplex.

Hence ${\tilde T}(A)=\{\af \tau: \tau\in T_0(A_1)\andeqn \af\in \R_+\}.$
Therefore we view that ${\tilde T}(A)$ is the cone generated by $T_0(A_1)$ and equipped with the topology
induced by that of $T_0(A_1).$

Let $T_a=\{\tau\in {\tilde T}(A): \tau(a)=1\}.$ Then $T$ is a weak *-compact. In fact
$T_a$ is a Choquet simplex.  Moreover ${\tilde T}(A)$ is a topological cone
with the Choquet simplex $T_a$ as a base.

\end{df}

\begin{df}\label{Dfep}
Let $1>\ep>0.$  Define
\beq
f_{\ep}(t)=\begin{cases} f_{\ep}(t)=0, &\text{if}\, t\in [0,\ep/2];\\
                                      f_{\ep}(t)={t-\ep/2\over{\ep/2}}, &\text{if}\, t\in (\ep/2, \ep];\\
                                       f_{\ep}(t)=1 & \text{if}\, t\in (\ep, \infty).\end{cases}
                                       \eneq
\end{df}

\begin{df}\label{Ddimf}
Let $A$ be a \CA\, and let $a\in A_+.$  Suppose that ${\tilde T}(A)\not=\emptyset.$
Define
$$
d_\tau(a)=\lim_{\ep\to 0} \tau(f_\ep(a))
$$
with possible infinite value.  Note that $f_\ep(a)\in P(A)_+.$ Therefore
$\tau\mapsto d_\tau(a)$ is a lower semi-continuous affine function  on ${\tilde T}(A)$
(to $[0,\infty]$).  One also notices
that
$$
d_\tau(a)=\lim_{n\to\infty}\tau(a^{1/n})\rforal \tau\in {\tilde T}(A).
$$

Suppose that $A$ is non-unital. Let $a\in A_+$ be a strictly positive element.
Define
$$
\Sigma_A(\tau)=d_\tau(a)\rforal \tau\in {\tilde T}(A).
$$
It is standard and routine to check that $\Sigma_A$ is independent of the choice
of $a.$
The lower semi-continuous affine function $\Sigma_A$ is called the scale function of $A.$
\end{df}

\begin{df}\label{Dcu}
Let $A$ be a \CA\, and let $a, b\in A_+.$
We write $a\lesssim b,$ if there exists a sequence $\{x_n\}$ in $A$
such that $x_n^*bx_n\to a$ in norm. If $a\lesssim b$ and $b\lesssim a,$ we
write $a\sim b$ and say that $a$ and $b$ are Cuntz equivalent.
 It is know that $\sim$ is an equivalence relation.  Let $W(A)$ be the equivalence class
 of positive elements in $M_n(A)$ for all $n$ with the usual embedding
 from $M_n$ into $M_{n+1}.$  Denote by
 $Cu(A)$ the Cuntz equivalence classes of positive elements in $A\otimes {\cal K}.$
 It is an ordered  semigroup.
 We use $\la a \ra$ for the equivalence class represented by $a.$
 Thus $a\lesssim b$ will be also written as $\la a\ra \le \la b\ra.$
 Recall that we write $a\ll b$ if  the following holds: for any increasing sequence $\{y_n\},$
 and if $\la b\ra \le  \sup\{\la y_n \ra \}$ then  there exists $n_0\ge 1$ such that $a\lesssim y_{n_0}.$

In what follows we will also use the notation $Cu^{\sim}(A)$ and $Cu^{\sim}(\phi)$ as introduced
in \cite{Rl}.
\end{df}

\begin{df}\label{Dstrictcom}

Let $A$ be a \CA. We will use $QT(A)$ for the quasi-trace $\tau$ with $\|\tau\|=1.$
Suppose that ${\tilde T}(A)\not=\{0\}.$ Suppose also that every quasi-trace is a trace.

If $\tau\in {\tilde T}(A),$ we will extend it to $(A\otimes {\cal K})_+$ by
$\tau(a\otimes b)=\tau(a)Tr(b),$  for all $a\in A$ and $b\in {\cal K}.$ where $Tr$ is the densely defined trace on ${\cal K}.$

We say $A$ has the property of  strict comparison for positive elements, if for any two elements $a, \, b\in M_n(A)_+$
(for any integer $n\ge 1$)
with property that $d_\tau(a)<d_\tau(b)<\infty$ for all $\tau\in {\tilde T}(A)\setminus \{0\},$
then $a\lesssim b.$

Let  $A$ be a  $\sigma$-unital and let $a\in P(A)_+.$ Suppose that $a$ is full in
$A.$   We view $A$ as a $\sigma$-unital hereditary \SCA\, of $A_1\otimes {\cal K}$ as in \ref{DTtilde}.
View $T_0(A_1)$ as a convex subset of ${\tilde T}(A).$

Then $A$ has strictly comparison for positive elements if and only if,
for any two positive elements $a, \, b\in  M_n(A)_+$
(for any integer $n\ge 1$) with
with property that $d_\tau(a)<d_\tau(b)<\infty$ for all $\tau\in T_0(A_1)\setminus \{0\},$
then $a\lesssim b.$

Suppose that $T(A)\not=\emptyset.$ Denote by
$\overline{T(A)}^w$ the weak*-closure of $T(A)$ in ${\tilde T}(A).$
Suppose also that $0\not\in \overline{T(A)}^w$ (see \ref{compactrace} below). We say $A$ has  the  property of {\it strong} strict comparison for positive elements, if for any two elements $a, \, b\in M_n(A)_+$
(for any integer $n\ge 1$)
with property that $d_\tau(a)<d_\tau(b)$ for all $\tau\in \overline{T(A)}^w,$
then $a\lesssim b.$

\end{df}

\begin{df}\label{DAq}
Let $A$ be a \CA\, with $T(A)\not=\{0\}$ such that $0\not\in \overline{T(A)}^w.$
There is an affine  map
$r_{\aff}: A_{s.a.}\to \Aff(\overline{T(A)}^w)$ defined by
$$
r_{\aff}(a)(\tau)=\hat{a}(\tau)=\tau(a)\tforal \tau\in \overline{T(A)}^w
$$
and for all $a\in A_{s.a.}.$ Denote by $A_{s.a.}^q$ the space  $r_{\aff}(A_{s.a.})$ and
$A_+^q=r_{\aff}(A_+).$

\end{df}

\begin{df}\label{Dcpcamen}
Let $A$ and $B$ be two \CA s. Let $\phi: A\to B$ be a \cpc.
We say that $\phi$ is amenable, if, for any $\ep>0,$ and any finite subset ${\cal F}\subset A,$
there exists an integer $k\ge 1$ and there exist  \cpc s $\psi_0: A\to M_k$ and
$\psi_1: M_k\to B$ such that
$$
\|\psi_1\circ\psi_0(a)-\phi(a)\|<\ep\rforal a\in A.
$$
\end{df}

\begin{df}[cf. \cite{Rlz}]\label{Dalst1}
Let $A$ be a non-unital \CA. We say $A$ almost has stable rank one
if for any integer $m\ge 1$ and any hereditary \SCA\, $B\subset M_m(A),$
$B\subset \overline{GL({\tilde B})}, $ where  $GL({\tilde B})$ is the group of invertible elements of
${\tilde B}.$  This definition is slightly different from that in \cite{Rlz}.
\end{df}

\begin{df}\label{Dappcpc}
Let $A$ and $B$ be \CA s and let $\phi_n: A\to B$ be
\cpc s. We say $\{\phi_n\}$ is a sequence of approximately multiplicative \cpc s
if
$$
\lim_{n\to\infty}\|\phi_n(a)\phi_n(b)-\phi_n(ab)\|=0\rforal a, \, b\in A.
$$
\end{df}

\begin{df}
Let $A$ be a \CA.  Denote by $A^{\bf 1}$ the unit ball of $A.$
$A_+^{q, {\bf 1}}$ is the image of the intersection of $A_+\cap A^{{\bf 1}}$ in $A_+^q.$
\end{df}

\begin{df}\label{DKLtriple}
Let $A$ be a unital C*-algebra. Recall that, following D\u{a}d\u{a}rlat and Loring (\cite{DL}), one defines
\begin{equation}\label{Dbeta-5}
\underline{K}(A)=\bigoplus_{i=0,1}K_i(A)\oplus\bigoplus_{i=0,1}
\bigoplus_{k\ge 2}K_i(A,\Z/k\Z).
\end{equation}
There is a commutative \CA\, $C_k$ such that one may identify
$K_i(A\otimes C_k)$ with $K_i(A, \Z/k\Z).$
Let $A$ be a unital separable amenable C*-algebra, and let $B$ be a $\sigma$-unital C*-algebra.
Following R\o rdam (\cite{Ror-KL-I}),  $KL(A,B)$ is the quotient of $KK(A,B)$ by those elements
represented by limits of trivial extensions (see \cite{Lnauct}). In the case that $A$ satisfies the UCT,
R\o rdam defines $KL(A,B)=KK(A,B)/{\cal P},$ where
${\cal P}$ is the subgroup corresponding to the pure extensions of the $K_*(A)$ by $K_*(B).$
In \cite{DL}, D\u{a}d\u{a}rlat and Loring  proved that
\beq\label{DKL-2}
KL(A,B)={\rm Hom}_{\Lambda}(\underline{K}(A), \underline{K}(B)).
\eneq
\end{df}

\begin{df}\label{Dceil}
Let $A$ be a unital separable amenable \CA\, and let  $x\in A.$ Suppose
that
$\|xx^*-1\|<1$ and $\|x^*x-1\|<1.$ Then $x|x|^{-1}$ is a unitary.
Let us use $\lceil x \rceil $ to denote $x|x|^{-1}.$

Let $C$ be a separable \CA\, and $B$ be another \CA.
Let ${\cal F}\subset C$ be a finite subset and $\ep>0$ be a positive number.
We say a map $L: C\to B$ is ${\cal F}$-$\ep$-multiplicative if
$$
\|L(xy)-L(x)L(y)\|<\ep\rforal x,\, y\in {\cal F}.
$$
We now assume that $L: C\to B$ is a \cpc.
Denote by ${\tilde L}: {\tilde C}\to {\tilde B}$  the  unital extension of $L.$
Let ${\cal
P}\subset \underline{K}(C)$ be a finite subset.
Let\linebreak $\{p_1, p_2,...,p_{m_1}, p_1', p_2',...,p_{m_1}'\}\subset M_N({\tilde{C}}\otimes {\tilde{C_k}})$ be a finite subset of projections
and \linebreak
 $\{u_1, u_2,...,u_{m_2}\}\subset M_N({\tilde{C}}\otimes {\tilde{C_k}})$ (for some
integer $N\ge 1$) be a finite subset of
unitaries so that  $\{[p_i]-[p_i'], [u_j]: 1\le i\le m_1\andeqn 1\le j\le m_2\}={\cal P}.$

There is $\ep>0$ and a finite subset
${\cal F}$ satisfying the following: for any  \CA\, $B$ and
any unital ${\cal F}$-$\ep$-multiplicative \morp\, $L : C\to B,$ $L$
induces a \hm\, $[L]$ defined on  $G({\cal P}),$ where $G({\cal
P})$ is the subgroup generated by ${\cal P},$ to $\underline{K}(B)$
such that,
\beq\label{KLtriple-1}
\|{\tilde L}\otimes {\rm id}_{M_N}(p_i)-q_i\|<1,\,\,\,\|{\tilde L}\otimes {\rm id}_{M_N}(p_i')-q_i'\|<1\andeqn \|\lceil
{\tilde L}\otimes {\rm id}_{M_N}(u_j)\rceil -v_j\|<1
\eneq
for some  projection  $q_i, q_i'\in M_N({\tilde B}\otimes {\tilde C}_k)$  such that $[q_i]=[L]([p_i])$
and $[q_i']=[L]([p_i'])$ in
$K_0({\tilde B}\otimes {\tilde C}_k)$
  and some  unitary
$v_j\in M_N({\tilde B}\otimes {\tilde{C_k}})$  such that
$[v_j]=[L]([u_j]),$ $1\le i\le m_1$ and $1\le m_2.$  for
Such a triple $(\ep, {\cal F}, {\cal P})$ is called a
$KL$-triple for $C.$

\end{df}

\begin{df}\label{Dsb}
Let $A$ be a \CA.
Denote by $SA$ the suspension of $A:$
$SA=C_0((0,1], A).$
\end{df}

\section{ Some results of R\o rdam}

For the convenience, we  would like to use the following version
of  a lemma of R\o rdam:

\begin{lem}[R\o rdam, Lemma 2.2 of \cite{Rr11}]\label{Lrorm}
Let $a, \, b\in A$ with $0\le a,\, b\le 1$ such that
$\|a-b\|<\dt/2.$
Then there exists $z\in A$ with $\|z\|\le 1$ such that
$$
(a-\dt)_+=z^*bz.
$$
\end{lem}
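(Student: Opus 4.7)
My plan is to construct $z$ through continuous functional calculus applied to $a$ and $b$, using the positivity implied by the norm hypothesis.

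First, I would observe that $\|a-b\|<\delta/2$ together with self-adjointness yields the operator inequality $b\ge a-(\delta/2)\cdot 1$ in the unitization $\tilde A$, since $b-a+(\delta/2)\cdot 1$ is self-adjoint with spectrum contained in $(0,\infty)$. Next I would introduce a continuous function $h:[0,\infty)\to [0,1)$ designed to realize $(a-\delta)_+$ as a symmetric compression of $a-\delta/2$: take
$$
h(t)=\begin{cases} 0, & 0\le t\le \delta/2,\\ (t-\delta)_+/(t-\delta/2), & t>\delta/2,\end{cases}
$$
which is continuous on $[0,\infty)$ with $h(0)=0$ and satisfies the identity $h(t)(t-\delta/2)=(t-\delta)_+$. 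Setting $w:=h(a)^{1/2}\in A$, functional calculus on $a$ gives $w(a-\delta/2)w=(a-\delta)_+$, and compressing the preceding inequality by $w$ then yields
$$
y:=w\,b\,w\ \ge\ (a-\delta)_+\quad\text{in}\ A_+.
$$

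Second, I would pass from this operator inequality to the exact identity. The idea is to produce a contraction $u\in A$ with $u^*yu=(a-\delta)_+$; then $z:=wu\in A$ satisfies $z^*bz=u^*yu=(a-\delta)_+$ and $\|z\|\le\|w\|\|u\|\le 1$. Formally $u=y^{-1/2}(a-\delta)_+^{1/2}$ on the common spectral support of $y$ and $(a-\delta)_+$; rigorously, one approximates the inverse by continuous functions of $y$ vanishing at $0$, say $f_n(t)=t^{1/2}/(t+1/n)$ or similar, and sets $u_n:=f_n(y)(a-\delta)_+^{1/2}\in A$. The contraction bound $\|u_n\|\le 1$ comes from the comparison $(a-\delta)_+\le y$, which gives $f_n(y)(a-\delta)_+f_n(y)\le f_n(y)\,y\,f_n(y)\le 1$. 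Because the function $h$ was chosen to vanish on $[0,\delta/2]$, the compression $y=wbw$ has its support contained in the open support of $(a-\delta)_+$, which is the key structural fact that makes the construction go through.

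The main obstacle is the exact (as opposed to approximate) realization of $u$ inside $A$. Approximating via continuous functional calculus of $y$ only produces $u_n\in A$ with $u_n^*yu_n\to (a-\delta)_+$ in norm, giving Cuntz subequivalence but not a priori the exact equation. R\o rdam's contribution is to observe that the specific form of the target $(a-\delta)_+$, with a built-in spectral gap at $\delta$, allows one to close this gap by explicit functional-calculus identities; the matching of the open supports above guarantees that a limit of the $u_n$ can be extracted in $A$ (rather than only in $A^{**}$) and produces a genuine $z\in A$ satisfying $z^*bz=(a-\delta)_+$ on the nose with $\|z\|\le 1$.
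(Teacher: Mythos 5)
Your first step is sound: with $h(t)=(t-\delta)_+/(t-\delta/2)$ one gets $w\bigl(a-\tfrac{\delta}{2}\bigr)w=(a-\delta)_+$, hence $(a-\delta)_+\le y:=wbw$, and your $u_n=f_n(y)(a-\delta)_+^{1/2}$ are contractions with $u_n^*yu_n\to (a-\delta)_+$ in norm. But that only re-proves the approximate statement; the entire content of the lemma is the exact equality, and the two reasons you offer for it are not correct. First, $(a-\delta)_+$ has no ``built-in spectral gap at $\delta$'': its spectrum is $\{(t-\delta)_+: t\in \mathrm{sp}(a)\}$, which in general accumulates at $0$. Second, matching of open supports does not force the $u_n$ (or $wu_n$) to converge, and the principle you implicitly rely on --- $c\le y$ with equal open supports implies $c=u^*yu$ exactly for some contraction $u$ in $A$ (or its unitization) --- is false: in $A=C_0((0,1])$ take $y(t)=t$ and $c(t)=t\bigl(\tfrac12+\tfrac12\sin^2(1/t)\bigr)$; then $c\le y$ with the same open support, but any $u$ with $u^*yu=c$ must satisfy $|u(t)|^2=\tfrac12+\tfrac12\sin^2(1/t)$, which has no limit as $t\to 0$, so no such $u$ exists in $A$ or $\tilde A$. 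Retreating to $u\in A^{**}$ only relocates the problem to showing $wu\in A$, which you also do not address. So the passage from the inequality to the exact factorization is a genuine gap, not a finishing detail.

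The paper handles this step differently: it derives the inequality $(a-\delta)_+\le f_\delta(a)^{1/2}\,b\,f_\delta(a)^{1/2}$ and then simply quotes Lemmas 2.2 and 2.3 of \cite{Rr11} for the exact factorization. The mechanism behind those lemmas is not a spectral gap of $(a-\delta)_+$ but the slack between the cut level $\delta$ and the error $\|a-b\|=\delta_0<\delta/2$, exploited through a cutoff supported down to $\delta/2$: with $g=f_\delta(a)^{1/2}$ one has $gbg\ge f_\delta(a)(a-\delta_0)\ge (\delta-\delta_0)\,\chi_{[\delta,\infty)}(a)$ in $A^{**}$, with $\delta-\delta_0>\delta/2>0$, so the compressed element is bounded below on the spectral support of $(a-\delta)_+$ and an exact inverse-type element can be constructed there. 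Note that your cutoff $h$, which vanishes on all of $[0,\delta]$, forfeits precisely this lower bound ($y=wbw\le h(a)$ tends to $0$ at the spectral boundary $\{a=\delta\}$), so even a corrected exactness argument would not go through with your $w$ without modification; either use a cutoff of the type $f_\delta$ as in the paper, or invoke R\o rdam's lemma directly.
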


\begin{proof}
By Lemma 2.2 of \cite{Rr11}, let $\dt_0=\|a-b\|,$
$$
f_\dt(a)^{1/2} (a-\dt_0\cdot 1)f_\dt(a)^{1/2}\le f_\dt(a)^{1/2}b f_\dt(a)^{1/2}.
$$
Therefore
$$
f_\dt(a)^{1/2} (a-\dt_0\cdot 1)_+f_\dt(a)^{1/2}\le  f_\dt(a)^{1/2}b f_\dt(a)^{1/2}.
$$
Thus
$$
(a-\dt)_+\le f_\dt(a)^{1/2} (a-\dt_0\cdot 1)_+f_\dt(a)^{1/2}\le  f_\dt(a)^{1/2}b f_\dt(a)^{1/2}.
$$
The lemma then follows from 2.3 of \cite{Rr11}.
\end{proof}

\begin{lem}[Proposition 1  of \cite{CEI}]\label{Lalmstr1}
Suppose that $A$ is a non-unital \CA\, which has almost stable rank one.
Suppose that $a, b\in M_m(A)_+$ (for some $m\ge 1$) are two elements
such that $a\lesssim b.$
Then, for any $0< \dt<1,$  there exists a unitary $u\in {\widetilde M_m(A)}$ such that
$$
u^*f_\dt(a)u\in \overline{bAb}.
$$
Moreover, there exists $x\in M_m(A)$ such that
$$
x^*x=a\andeqn xx^*\in \overline{bM_m(A)b}.
$$
\end{lem}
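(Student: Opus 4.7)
The plan is to prove the first assertion by combining Cuntz subequivalence with almost stable rank one, and then to obtain the moreover clause by a Cauchy refinement.

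For the first stage, I would use $a\lesssim b$ to pick $r\in M_m(A)$ with $\|r^*br-a\|<\dt/16.$ Lemma \ref{Lrorm} then supplies a contraction $z$ such that $(a-\dt/8)_+=z^*(r^*br)z,$ and setting $s:=b^{1/2}rz$ gives $s^*s=(a-\dt/8)_+$ with $ss^*\in \overline{bM_m(A)b}.$ To reshape $(a-\dt/8)_+$ into $f_{\dt/2}(a),$ I choose a continuous $h:\R_+\to\R_+$ vanishing on $[0,\dt/8]$ and satisfying $h(t)^2(t-\dt/8)=f_{\dt/2}(t)$ for $t>\dt/8;$ the vanishing of $f_{\dt/2}$ on $[0,\dt/4]$ makes $h$ continuous at $\dt/8.$ With $w:=s\,h(a)$ I obtain $w^*w=f_{\dt/2}(a)$ and $ww^*=s\,h(a)^2s^*\in\overline{bM_m(A)b}.$

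For the second stage, almost stable rank one applied to the hereditary subalgebra $M_m(A)\subset M_m(A)$ yields $y_n\in GL(\widetilde{M_m(A)})$ with $y_n\to w^*.$ Writing $y_n=u_n|y_n|$ in the unique polar decomposition, each $u_n$ is unitary since $|y_n|$ is positive invertible. Norm continuity of the square root gives $|y_n|\to |w^*|,$ and therefore $u_n|w^*|=u_n|y_n|-u_n(|y_n|-|w^*|)\to w^*.$ Hence $u_n(ww^*)u_n^*=(u_n|w^*|)(u_n|w^*|)^*\to(w^*)(w^*)^*=f_{\dt/2}(a),$ i.e.\ $u_n^*f_{\dt/2}(a)u_n\to ww^*\in\overline{bM_m(A)b}.$ Given $\et\in(0,1/2),$ I fix $n$ with $\|u_n^*f_{\dt/2}(a)u_n-ww^*\|<\et/2$ and apply Lemma \ref{Lrorm} once more to get $u_n^*(f_{\dt/2}(a)-\et)_+u_n=(u_n^*f_{\dt/2}(a)u_n-\et)_+\in\overline{bM_m(A)b}.$ A pointwise comparison on the spectrum of $a,$ using that $f_{\dt/2}\equiv 1$ on the support of $f_\dt,$ yields $f_\dt(t)\le(1-\et)^{-1}(f_{\dt/2}(t)-\et)_+$ for $\et$ sufficiently small, so $0\le u_n^*f_\dt(a)u_n\le(1-\et)^{-1}u_n^*(f_{\dt/2}(a)-\et)_+u_n\in\overline{bM_m(A)b}.$ Heredity of $\overline{bM_m(A)b}$ then places $u_n^*f_\dt(a)u_n$ itself inside $\overline{bM_m(A)b},$ and $u:=u_n$ completes the first assertion.

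For the moreover, the main obstacle is upgrading from approximate to exact. Applying the first assertion with $\dt_n\downarrow 0$ produces unitaries $u_n$ with $u_n^*f_{\dt_n}(a)u_n\in \overline{bM_m(A)b},$ and setting $x_n:=u_n^*a^{1/2}f_{\dt_n}(a)^{1/2}$ gives $x_n^*x_n=af_{\dt_n}(a)\to a$ and $x_nx_n^*\le\|a\|\,u_n^*f_{\dt_n}(a)u_n\in \overline{bM_m(A)b}.$ The difficulty is that the $u_n$ are not a priori compatible across $n,$ so $(x_n)$ need not be Cauchy. My plan is to choose the $u_n$ inductively in the spirit of \cite{CEI}, at each step using almost stable rank one to adjust the next invertible approximant so that its polar part agrees with $u_{n-1}$ on the spectral subspace of $a$ corresponding to $[\dt_{n-1},\infty).$ Once this compatibility is arranged $(x_n)$ becomes Cauchy, and its limit $x\in M_m(A)$ satisfies $x^*x=a$ with $xx^*\in \overline{bM_m(A)b}.$
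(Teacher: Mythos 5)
Your first stage is correct: the element $w=b^{1/2}rz\,h(a)$ does satisfy $w^*w=f_{\dt/2}(a)$ and $ww^*\in \overline{bM_m(A)b},$ and your production of unitaries $u_n\in \widetilde{M_m(A)}$ with $u_n|w^*|\to w^*$ is a valid use of almost stable rank one. The argument breaks at the sentence claiming that Lemma \ref{Lrorm} gives $(u_n^*f_{\dt/2}(a)u_n-\et)_+\in\overline{bM_m(A)b}.$ That lemma only produces a contraction $z$ with $(u_n^*f_{\dt/2}(a)u_n-\et)_+=z^*(ww^*)z,$ and conjugation by a non-unitary contraction does not stay inside a hereditary \SCA; more fundamentally, being within $\et/2$ of an element of $\overline{bM_m(A)b}$ never forces $(c-\et)_+$ to lie in $\overline{bM_m(A)b}$ (already in $M_2$: a rank-one projection $c$ at small angle to $e_{11}$ has $\|c-e_{11}\|$ small but $(c-\et)_+=(1-\et)c\notin e_{11}M_2e_{11}$). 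So your construction only yields approximate containment, while the lemma asserts exact containment, and the subsequent domination-plus-heredity step collapses with it. What almost stable rank one actually buys here is an \emph{exact} unitarization of the polar part of $w$: writing $w=v|w|$ in the bidual with $|w|=f_{\dt/2}(a)^{1/2},$ Theorem 5 of \cite{Pedjot87} (used in exactly this way in the proof of \ref{T565}), or equivalently the unitary refinement in Lemma \ref{LRL}, gives a unitary $u$ with $uf_\et(|w|)=vf_\et(|w|)$ \emph{exactly}; since $|w|=1$ on the support of $f_\dt(a)$ one has $f_\dt(a)=|w|f_\dt(a)|w|,$ hence $uf_\dt(a)u^*=wf_\dt(a)w^*\in\overline{bM_m(A)b}.$ (Alternatively, your own estimate $\|u_n^*f_{\dt/2}(a)u_n-ww^*\|<\et/2$ feeds directly into Lemma \ref{LRL}, which supplies the missing unitary; but some such exact statement must be invoked, and your attempted shortcut through \ref{Lrorm} is not a substitute.)

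For the moreover clause the situation is worse: your final paragraph is a plan, not a proof. Producing $x$ with $x^*x=a$ exactly (rather than $x_n^*x_n=af_{\dt_n}(a)$) is precisely the nontrivial content of Proposition 1 of \cite{CEI}; the known arguments proceed by an inductive construction in which the $n$-th approximant is corrected by a unitary so as to agree with the previous one on a spectral cutoff of $a,$ together with a norm estimate of order $\sqrt{\dt_n}$ on $\|x_{n+1}-x_n\|$ (summable after a choice of $\dt_n$) which makes $(x_n)$ Cauchy. You assert that such compatible choices can be made ``in the spirit of \cite{CEI}'' but give neither the correction step nor any estimate, so the Cauchy claim is unsupported and the second assertion is not established. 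Note that the paper itself gives no proof of this lemma (it is quoted from \cite{CEI}), so the burden of both missing ingredients --- the exact polar-part unitarization via \cite{Pedjot87} or \ref{LRL}, and the inductive Cauchy argument --- falls entirely on your write-up.
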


We would also like to include the following variation of \ref{Lrorm}  which is also known.

\begin{lem}\label{LRL}
Let $1>\ep>0$ and $1/4>\sigma>0.$  There exists $1/4>\dt>0$ satisfying the following:
If $A$ is a \CA, $x, y\in A_+$ with $0\le x,\, y\le 1$ and
$$
\|x-y\|<\dt,
$$
then there exists a partial isometry $w\in A^{**}$ with
\beq\label{LRL-1}
ww^*f_{\sigma}(x)=f_{\sigma}(x)ww^*=f_{\sigma}(x),\\
w^*aw\in \overline{yAy}\rforal a\in \overline{f_\sigma(x)Af_\sigma(x)}\andeqn\\
\|w^*cw-c\|<\ep\rforal c\in \overline{f_{\sigma}(x)Af_{\sigma}(x)}\,\,\,{\rm with}\,\,\,\|c\|\le 1.
\eneq

If $A$ has almost stable rank one, then there exists a unitary $u\in {\tilde A}$
to replace $w$ above.
\end{lem}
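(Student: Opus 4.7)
The plan is to construct $w$ via the polar decomposition in $A^{**}$ of the element
\[
b := f_\sigma(x)^{1/2}\, y^{1/2} \in A.
\]
Write $b = w\,|b|$ with $|b| = (b^*b)^{1/2}$ and $w \in A^{**}$ a partial isometry. The first step is to verify $w w^* f_\sigma(x) = f_\sigma(x)$. I compute $b b^* = f_\sigma(x)^{1/2}\, y\, f_\sigma(x)^{1/2}$; the pointwise bound $t f_\sigma(t) \ge (\sigma/2) f_\sigma(t)$ gives $x f_\sigma(x) \ge (\sigma/2) f_\sigma(x)$, and $\|x-y\|<\dt$ (chosen $<\sigma/4$) then yields
\[
(\sigma/4)\, f_\sigma(x)\ \le\ b b^*\ \le\ f_\sigma(x),
\]
so the support projection of $b b^*$ in $A^{**}$ coincides with that of $f_\sigma(x)$; this is $w w^*$, giving $ww^* f_\sigma(x) = f_\sigma(x) = f_\sigma(x) ww^*$.

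Second, I would deduce $w^* c w \in \overline{yAy}$ for each $c \in \overline{f_\sigma(x) A f_\sigma(x)}$. The relation $b^* b = y^{1/2} f_\sigma(x) y^{1/2}$ places $w^*w$ (its support projection) inside $\overline{yAy}^{**}$, so it suffices to show $w^*cw \in A$. Writing $w^* c w = |b|^{-1}\, b^* c b\, |b|^{-1}$ (understood on the support), with $b^* c b = y^{1/2} f_\sigma(x)^{1/2} c f_\sigma(x)^{1/2} y^{1/2} \in A$, one approximates $|b|^{-1}$ in norm by elements $g(b^*b)^{1/2} \in A$, where $g \in C_0((0,\infty))$ coincides with $t \mapsto t^{-1}$ on $[\sigma/4,1]$; this is legitimate because the nonzero spectrum of $b^*b$ lies in $[\sigma/4,1]$ by the first step. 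The approximants lie in $A$, so $w^* c w \in A$, and being supported in $w^*w$, it lies in $\overline{|b|^2 A |b|^2} \subset \overline{yAy}$.

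The main obstacle is the near-identity bound $\|w^* c w - c\| < \ep$ for $\|c\| \le 1$. The heuristic is that for $y = x$ exactly, $|b|^2 = x f_\sigma(x)$, $w$ equals the support projection of $f_\sigma(x)$, and $w^* c w = c$ on $\overline{f_\sigma(x) A f_\sigma(x)}$. For general $y$ with $\|x-y\|<\dt$, one quantifies the deviation of $w$ from this projection by continuous functional calculus applied to $|b|^2 = y^{1/2}f_\sigma(x)y^{1/2}$ and to $x f_\sigma(x)$; both have spectrum bounded away from $0$ on $w^*w$ by the two-sided bound above, so the inverse square root operation is uniformly continuous there. This yields a modulus of continuity depending only on $\sigma$ and $\ep$; choosing $\dt$ smaller than this modulus (and $<\sigma/8$) produces the required estimate.

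For the last clause, under the almost-stable-rank-one hypothesis the partial isometry $w \in A^{**}$ is upgraded to a unitary $u \in \tilde A$. Applying Rørdam's Lemma~\ref{Lrorm} with parameter $\sigma/2$ (valid once $\dt < \sigma/4$) gives $(x-\sigma/2)_+ = z^* y z$, and since $f_\sigma(x) \in \overline{(x-\sigma/2)_+ A (x-\sigma/2)_+}$ it follows that $f_\sigma(x) \lesssim y$. Lemma~\ref{Lalmstr1} then supplies a unitary $u \in \tilde A$ with $u^* f_\sigma(x) u \in \overline{yAy}$. A final perturbation of $u$ within $GL(\tilde A)$, arranged so that $u$ approximates the previously constructed $w$ on $\overline{f_\sigma(x) A f_\sigma(x)}$, secures all three conditions at once.
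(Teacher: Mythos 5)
Your construction founders at the key estimate $\|w^*cw-c\|<\ep$. The justification rests on the claim that, because $(\sigma/4)f_\sigma(x)\le bb^*\le f_\sigma(x)$, the nonzero spectrum of $b^*b$ (or $bb^*$) lies in $[\sigma/4,1]$; this is false. The two-sided bound only says that $bb^*$ and $f_\sigma(x)$ have the same support projection, and $f_\sigma(x)$ itself has arbitrarily small nonzero spectral values whenever $x$ has spectrum just above $\sigma/2$ (e.g.\ $x$ with an eigenvalue $\sigma/2+\ep'$ gives $f_\sigma(x)$ an eigenvalue $2\ep'/\sigma$). Consequently $|b|^{-1}$ is in general unbounded and not norm-approximable by $g((b^*b)^{1/2})$ with $g\in C_0((0,\infty))$ equal to $t^{-1}$ on $[\sigma/4,1]$ (so your argument that $w^*cw\in A$ as written fails, though that part is repairable by the standard fact that $wg(|b|)\in A$ for all $g\in C_0((0,\infty))$), and, more seriously, the "inverse square root is uniformly continuous there" mechanism collapses: $t\mapsto t^{-1/2}$ has no uniform modulus near $0$. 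What your data actually give is that $b$ is within $\sqrt{\dt}$ of the positive element $a_0=(xf_\sigma(x))^{1/2}$, and the standard perturbation argument for polar decompositions then controls $w$ only on $f_\theta(a_0)$-parts with $\theta$ large compared to the error; since $a_0$ is not bounded below on the support of $f_\sigma(x)$, this does not cover the whole unit ball of $\overline{f_\sigma(x)Af_\sigma(x)}$, which contains elements (e.g.\ normalized pieces living where $f_\sigma(x)$ is tiny) of norm one. Whether your particular $b=f_\sigma(x)^{1/2}y^{1/2}$ could be rescued by exploiting that the perturbation is itself damped by $f_\sigma(x)^{1/2}$ is a genuinely delicate question that your sketch does not address.

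The paper sidesteps exactly this issue by cutting down at a level far below $\sigma$: it takes $z=yf_\eta(x^2)^{1/2}$ with $\eta$ tiny relative to $\sigma$ and $\ep$, so that $\|z-x\|$ and $\|z-(z^*z)^{1/2}\|$ are small compared with $\sigma$, and then transports only $f_{\sigma/2}$ through the polar isometry via the bounded function $g(t)=f_{\sigma/2}(t)/t$, $\|g\|\le 2/\sigma$; the error is measured against the fixed gap $\sigma/2$ rather than against a (nonexistent) lower bound of the spectrum of $f_\sigma(x)$, and the inclusion $w^*cw\in\overline{yAy}$ comes from the coincidence of range projections of $f_\eta(x^2)$ and $f_\eta(x^2)^{1/2}y^2f_\eta(x^2)^{1/2}$, not from invertibility. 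Your final clause is also unsubstantiated: Lemma~\ref{Lalmstr1} produces a unitary with $u^*f_\sigma(x)u\in\overline{yAy}$ but gives no control of $\|u^*cu-c\|$, and "a final perturbation of $u$ \dots secures all three conditions at once" is precisely what needs proving; the correct route is to apply Pedersen's Theorem 5 of \cite{Pedjot87} to replace the polar partial isometry by a unitary of $\tilde A$ agreeing with it on a suitable $f_{\dt'}$-cut-down, as in the proof of Theorem~\ref{T565}.
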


\begin{proof}
Let $1/4>\sigma\cdot \ep/128>\dt_1>0$ be such that, for any \CA\, $B,$ and any pair of positive elements
$x',\, y'\in B$ with $0\le x',\, y'\le 1$ such that
$$
\|x'-y'\|<\dt_1,
$$
then
\beq\label{LRL-n1}
\|f_{\sigma/2}(x')-f_{\sigma/2}(y')\|<\ep\cdot \sigma/64\andeqn \|(x')^{1/2}-(y')^{1/2}\|<\ep\cdot \sigma/64
\eneq

Put $\eta=(\min\{\ep/64, \sigma/32, \dt_1\cdot \sigma/32\})^2.$
Define
$g(t)=f_{\sigma/2}(t)/t$ for all $0<t\le 1$ and $g(0)=0.$ Then $g(t)\in C_0((0,1]).$
Note that $\|g\|\le 2/\sigma.$
Choose $\dt=\eta\cdot \dt_1/32.$

Now let $A$ be a \CA\, and $x,\, y\in A$ with
$0\le x,\, y\le 1$ and $\|x-y\|<\dt.$

Then
\beq\label{LRL-n2}
\|x^2-y^2\|<2\dt.
\eneq
Set $z=yf_{\eta}(x^2)^{1/2}.$
Then
\beq\label{LRL-n3}
\|(z^*z)^{1/2}-x\| &=&\|(f_{\eta}(x^2)^{1/2}y^2f_{\eta}(x^2)^{1/2})^{1/2}-x\|\\
&<& 2\dt+\|(f_{\eta}(x^2)^{1/2}x^2f_{\eta}(x^2)^{1/2})^{1/2}-x\|\\
&<& 2\dt+2\sqrt{\eta}<\sigma\cdot \dt_1/8.
\eneq
Also
\beq\label{LRL-n4}
\|(z^*z)^{1/2}-z\|&<&\sigma\cdot\dt_1/8+\|x-yf_{\eta}(x^2)^{1/2}\|\\
&<& \sigma\cdot \dt_1/8+\dt_2+\|x-xf_{\eta}(x^2)^{1/2}\|\\\label{LRL-n4+}
&<& \sigma\cdot \dt_1/8+\dt_2+\sqrt{\eta}<\sigma\cdot \dt_1/4.
\eneq
Write $z=v(z^*z)^{1/2}$ as polar decomposition in $A^{**}.$
Then
\beq\nonumber
\hspace{-0.4in}\|vf_{\sigma/2}(x)-f_{\sigma/2}(x)\| &\le & \|vf_{\sigma/2}(x)-vf_{\sigma/2}((z^*z)^{1/2})\|+\|vf_{\sigma/2}((z^*z)^{1/2})-f_{\sigma/2}(x)\|\\\nonumber
&<&\sigma\cdot \ep/64+\|v(z^*z)^{1/2} g((z^*z)^{1/2})-f_{\sigma/2}(x)\|\hspace{0.8in}\, ({\rm using \eqref{LRL-n1}})\\\nonumber
&=&\sigma\cdot \ep/64+\|zg((z^*z)^{1/2})-f_{\sigma/2}(x)\|\\\nonumber
&<&\ep/64 +\dt_1/4+\|(z^*z)^{1/2}g((z^*z)^{1/2})-f_{\sigma/2}(x)\|\hspace{0.5in} ({\rm using \eqref{LRL-n4+}}) \\\nonumber
&=&3\ep/64+\|f_{\sigma/2}((z^*z)^{1/2})-f_{\sigma/2}(x)\|\\\nonumber
&<&3\ep/64+\ep/64=\ep/16.\hspace{2in} ({\rm using \eqref{LRL-n1}})
\eneq
Thus, for any  $c\in \overline{f_{\sigma}(x)Af_{\sigma}(x)}$ with $\|c\|\le 1,$
\beq\label{LRL-12}
\|vcv^*-c\| &=&\|vf_{\sigma/2}(x)cf_{\sigma/2}(x)v^*-c\|\\
&<& \ep/16+\|f_{\sigma/2}(x)cf_{\sigma/2}(x)v^*-c\|\\
&<& \ep/16+\ep/16=\ep/8.
\eneq

Let $v^*v$ be  the open projection  as the range projection of
the positive element $f_\eta(x^2)^{1/2}y^2f_\eta(x^2)^{1/2}.$
It follows from 2.2 of \cite{Rr11} that
$$
(\eta-\|x^2-y^2\|)f_\eta(x^2)\le f_\eta(x^2)^{1/2} y^2 f_\eta(x^2)^{1/2}\le f_\eta(x^2).
$$
Therefore $v^*v$ is also the range projection of
$f_\eta(x^2).$
It follows
that
$$
vcv^*\in \overline{y^{1/2}Ay^{1/2}}
$$
for any $c\in \overline{f_\eta(x^2)Af_\eta(x^2)}.$

Choose $w=v^*.$  Then, since $\sqrt{\eta}\le \sigma/2,$
\beq\label{LRL-14}
f_{\sigma/2}(x)f_\eta(x^2)=f_{\sigma/2}(x)\andeqn
ww^*f_\sigma(x)=f_\sigma(x)ww^*=f_\sigma(x).
\eneq

\end{proof}

The following lemma is known.

\begin{lem}\label{Lfullep}
Let $A$ be a \CA\, and $a\in A_+$ be a full element.
Then, for any $b\in A_+,$ any $1>\ep>0$ and
any $g\in C_0((0,\infty))$ whose support is
in $[\ep, N]$ for some $N\ge 1,$  there are
$x_1, x_2,..,x_m\in A$ such that
$$
g(b)=\sum_{i=1}^m x_i^*ax_i.
$$
\end{lem}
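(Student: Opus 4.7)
The plan is to establish this in two stages: first reduce the claim to the form $(c-\eta_0)_+=\sum x_i^*ax_i$ for some auxiliary $c\in A_+$ and $\eta_0>0$, and then exploit the fullness of $a$ together with R\o rdam's Lemma \ref{Lrorm}.

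\emph{Reduction.} Without loss of generality assume $g\ge 0$ (the sum $\sum x_i^*ax_i$ is automatically positive). Pick a small $\eta_0>0$ and a continuous bump $\chi\in C_0((0,\infty))$ with $\chi\equiv 1$ on $[\ep,N]$ and $\mathrm{supp}(\chi)\subseteq[\ep/2,2N]$. Setting $\tilde g(t):=g(t)+\eta_0\chi(t)\in C_0((0,\infty))$, a direct pointwise check gives $(\tilde g(t)-\eta_0)_+=g(t)$ for all $t\ge 0$, hence $g(b)=(c-\eta_0)_+$ with $c:=\tilde g(b)\in A_+$. So it suffices to prove the following sub-claim: for any full $a\in A_+$, any $c\in A_+$, and any $\eta>0$, there are finitely many $x_i\in A$ with $(c-\eta)_+=\sum x_i^*ax_i$.

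\emph{Using fullness + R\o rdam.} Since $a$ is full, $\overline{AaA}=A$, so for any $\eta_1>0$ we may choose $s_1,\dots,s_k,t_1,\dots,t_k\in A$ with $\|\,\sum_i s_iat_i-c^{1/2}\|<\eta_1$; setting $y:=\sum_i s_iat_i$ and taking $\eta_1$ small enough that $(2\|c^{1/2}\|+\eta_1)\eta_1<\eta/2$ yields $\|y^*y-c\|<\eta/2$. Then Lemma \ref{Lrorm} produces $z\in A$ with $\|z\|\le 1$ and
\[
(c-\eta)_+\;=\;z^*y^*y\,z\;=\;(yz)^*(yz).
\]
Writing $yz=\sum_i s_ia\,(t_iz)$ and expanding gives $(yz)^*(yz)=V^*AMAV$, where $V=(t_1z,\dots,t_kz)^T\in M_{k,1}(A)$, $A:=a\otimes 1_k\in M_k(A)_+$ and $M:=S^*S\in M_k(A)_+$ with $S:=(s_1,\dots,s_k)\in M_{1,k}(A)$. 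Factoring $M=N^*N$ with $N=M^{1/2}\in M_k(A)$ and regrouping the resulting sum (using the positivity of $A$ and the self-adjointness of $N$) converts the expression into one of the form $\sum x_i^*ax_i$; the required $x_i$ are extracted from the entries of $NAV\in M_{k,1}(A)$, after carrying the $a^{1/2}$-factors from $A^{1/2}$ through the Schur decomposition.

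\emph{Main obstacle.} The regrouping step is the delicate point: the naive expansion places $a$ on \emph{both} sides of the positive matrix $M$, producing an expression quadratic in $a$, whereas the target form has a single $a$ sandwiched between two $A$-valued factors in each summand. Equivalently, one needs to observe that $g(b)$ lies in the Pedersen ideal $P(A)$ (since $g$ is compactly supported away from $0$), and that for $a$ full every element of $P(A)_+$ admits a finite expression $\sum x_i^*ax_i$; this last assertion, classical in spirit, combines fullness of $a$ with R\o rdam's Lemma to upgrade a norm approximation into an exact equality, and the matrix rearrangement above is the concrete realization of that principle.
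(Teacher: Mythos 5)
There is a genuine gap at the regrouping step, and it sits exactly where you flagged the ``main obstacle''. Because you approximate $c^{1/2}$ by a general element $y=\sum_i s_iat_i$ of the two-sided algebraic ideal, R\o rdam's lemma hands you $(c-\eta)_+=(yz)^*(yz)=\sum_{i,j}z^*t_i^*a\,s_i^*s_j\,a\,t_jz$, in which every term contains two copies of $a$. The proposed matrix manipulation does not remove the second copy: with $V$, $A=a\otimes 1_k$, $M=S^*S=N^*N$ as you set them up, one gets $(NAV)^*(NAV)=\sum_i w_i^*w_i$ with $w_i=\sum_j N_{ij}a(t_jz)$, and each $w_i^*w_i$ is again a sum of terms $z^*t_j^*a(\cdots)a\,t_lz$, not of the form $x^*ax$; the entries $N_{ij}$ do not commute past $a$, and there is no way to ``carry the $a^{1/2}$-factors through'' (writing $AMA=A^{1/2}QA^{1/2}$ with $Q=A^{1/2}MA^{1/2}$ and factoring $Q=Q^{1/2}Q^{1/2}$ runs into the same obstruction). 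Appealing instead to the assertion that every element of $P(A)_+$ is exactly a finite sum $\sum x_i^*ax_i$ when $a$ is full is circular here: that assertion is essentially the content of the lemma being proved.

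The repair is to approximate the positive element itself, not its square root, by a sum already in the symmetric form, which is how the paper argues: since $a$ is full, for every $b\in A_+$ and $\delta>0$ there are $z_1,\dots,z_m$ with $\|\sum_iz_i^*az_i-b\|<\delta$ (a standard consequence of fullness for positive elements, taken as the first line of the paper's proof). Then R\o rdam's lemma is applied with the roles arranged so that the element being conjugated is the well-formed sum $d=\sum_iz_i^*az_i$: from $\|d-b\|<\ep/2$ one gets $f_{\ep}(b)=y^*dy=\sum_i(z_iy)^*a(z_iy)$, which is exactly of the required shape, and finally $g(b)=g(b)^{1/2}f_{\ep}(b)g(b)^{1/2}=\sum_i(z_iyg(b)^{1/2})^*a(z_iyg(b)^{1/2})$ because $f_{\ep}\equiv 1$ on the support of $g$. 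Your reduction $g(b)=(c-\eta_0)_+$ is a perfectly serviceable alternative to that last step, and your overall plan would go through if you approximate $c$ directly by $\sum_iz_i^*az_i$ and conjugate that sum by the R\o rdam contraction; as written, however, the key step fails. A minor additional point: Lemma \ref{Lrorm} in the paper is stated for elements with $0\le a,b\le 1$, so when you apply it to $c$ and $y^*y$ you need either a rescaling or the original unnormalized form of R\o rdam's lemma.
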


\begin{proof}
Fix $1>\ep>0.$ There are $z_1,z_2,...,z_m\in A$
such that
$$
\|\sum_{i=1}^m z_i^*az_i-b\|<\ep/2,
$$
Therefore, by 2.2 and 2.3 of \cite{Rr11},   there are $y\in B$  with $\|y\|\le 1/\ep$ such that
$$
f_{\ep}(b)=y^*(\sum_{i=1}^m x_i^*ax_i)y.
$$
Therefore
$$
g(b)=g(b)^{1/2}y^*(\sum_{i=1}^m x_i^*ax_i)yg(b)^{1/2}.
$$

\end{proof}

We would also like to include the following theorem of R\o rdam:

\begin{thm}[\cite{Rrzstable}]\label{TRozs}
Let $A$ be an exact   simple \CA\, with is ${\cal Z}$-stable.
Then $A$ has  the  strict comparison property  for positive elements:
Let $a, \, b\in M_n(A)_+$ (for some $n\ge 1$) be two elements
such that
\beq\label{Trozs-1}
d_\tau(a)<d_\tau(b)<\infty\rforal \tau\in {\tilde T}(A)\setminus \{0\},
\eneq
then $a\lesssim b.$
\end{thm}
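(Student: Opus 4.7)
The plan is to follow R\o rdam's two-stage strategy: first establish that the Cuntz semigroup $W(A)$ is almost unperforated, then deduce strict comparison from almost unperforation together with Haagerup's theorem that $2$-quasitraces on exact $C^*$-algebras are traces.

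As a first reduction I would assume $a, b \in A_+$, since $M_n(A) \cong M_n(A) \otimes {\cal Z}$ is again exact, simple and ${\cal Z}$-stable. Exactness and Haagerup's theorem then imply that every lower semicontinuous dimension function on $A$ is of the form $d_\tau$ for some $\tau \in \tilde T(A)$, so the hypothesis phrased in terms of $\tilde T(A)$ captures all the Cuntz-comparison data.

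The central technical step is to prove that $W(A)$ is almost unperforated: if $(k+1)\langle x\rangle \le k\langle y\rangle$ in $W(A)$ for some $k\ge 1$, then $\langle x\rangle \le \langle y\rangle$. The ${\cal Z}$-stability enters through the identification $A \cong A \otimes {\cal Z}$; one uses that ${\cal Z}$ admits unital embeddings of prime dimension-drop algebras $Z_{p,p+1}$ to produce, for each $n \ge 1$, pairwise orthogonal positive contractions in ${\cal Z}$ each Cuntz-equivalent to $1_{\cal Z}$ up to small cut-offs. Tensoring with representatives of $x$ and $y$ and performing a cancellation manipulation inside $W(A\otimes {\cal Z}) = W(A)$ then converts the hypothesis into $\langle x\rangle \le \langle y\rangle$.

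Finally I would deduce strict comparison from almost unperforation. Given $d_\tau(a) < d_\tau(b) < \infty$ on $\tilde T(A) \setminus \{0\}$, fix $\epsilon > 0$. On the compact base $T_a \subset \tilde T(A)$ from \ref{DTtilde}, the function $\tau \mapsto d_\tau(f_\epsilon(a))$ is bounded above by the weak*-continuous function $\tau \mapsto \tau(f_{\epsilon/2}(a))$, while $\tau \mapsto d_\tau(b)$ is lower semicontinuous and strictly larger than $d_\tau(f_\epsilon(a))$; a standard compactness and refinement argument (replacing $b$ by $f_{1/n}(b)$ for suitable $n$) yields an integer $k$ with $(k+1) d_\tau(f_\epsilon(a)) \le k d_\tau(b)$ on all of $\tilde T(A) \setminus \{0\}$. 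R\o rdam's translation from trace inequalities to Cuntz inequalities then gives $(k+1)\langle f_\epsilon(a)\rangle \le k\langle b\rangle$ in $W(A)$; applying almost unperforation yields $f_\epsilon(a) \lesssim b$ for every $\epsilon > 0$, and letting $\epsilon \to 0$ gives $a \lesssim b$. The main obstacle is the almost-unperforation step: extracting quantitative divisibility from the internal structure of ${\cal Z}$ and transporting it through $A \otimes {\cal Z} \cong A$ requires delicate bookkeeping of approximate orthogonalities and cut-offs in $W(A)$; the remaining reductions are comparatively soft once the trace picture is in place.
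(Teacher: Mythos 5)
Your overall architecture (almost unperforation of $W(A)$ from ${\cal Z}$-stability, then comparison-with-states, with Haagerup's theorem entering through exactness) is the same one the paper relies on, since the paper's proof simply normalizes at $b$ and quotes Theorem 4.5 and Proposition 3.2 of R\o rdam's paper. However, your final step contains a genuine gap: you pass from the tracial inequality $(k+1)\,d_\tau(f_\epsilon(a))\le k\,d_\tau(b)$ for all $\tau\in {\tilde T}(A)\setminus\{0\}$ to the Cuntz inequality $(k+1)\langle f_\epsilon(a)\rangle\le k\langle b\rangle$ in $W(A)$ by invoking a ``translation from trace inequalities to Cuntz inequalities.'' No such translation is available at this point: the implication ``$d_\tau(x)\le d_\tau(y)$ for all traces $\Rightarrow x\lesssim y$'' (applied here to $x=f_\epsilon(a)\otimes 1_{k+1}$, $y=b\otimes 1_k$ in matrix algebras over $A$) is precisely the comparison property you are trying to establish, so as written the argument is circular.

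The correct mechanism, which is the actual content of R\o rdam's Proposition 3.2, is a Goodearl--Handelman type separation argument carried out in the ordered semigroup $(W(A),\langle b\rangle)$, not in the trace simplex: one must verify $f(\langle (a-\epsilon)_+\rangle)<f(\langle b\rangle)$ for \emph{every} state $f$ on $(W(A),\langle b\rangle)$, i.e.\ for every dimension function normalized at $b$, including those that are not lower semicontinuous. This is where the cut-down $(a-\epsilon)_+$ and the lower semicontinuous regularization ${\bar D}$ of an arbitrary state $D$ are used: ${\bar D}$ is a lower semicontinuous dimension function, hence by Blackadar--Handelman comes from a $2$-quasitrace, hence from a trace by Haagerup and exactness, and then $D(\langle(a-\epsilon)_+\rangle)\le {\bar D}(\langle a\rangle)=d_\tau(a)<d_\tau(b)={\bar D}(\langle b\rangle)\le D(\langle b\rangle)$. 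Together with the fullness relation $\langle(a-\epsilon)_+\rangle\le K\langle b\rangle$ coming from simplicity, Proposition 3.2 (which is where an inequality of the shape $(n+1)x\le ny$ legitimately appears, as an output of the state-space separation in the almost unperforated semigroup) yields $(a-\epsilon)_+\lesssim b$, and letting $\epsilon\to 0$ finishes. Your knowledge that lower semicontinuous dimension functions are of the form $d_\tau$ is necessary but not sufficient; the passage through \emph{all} states of $(W(A),\langle b\rangle)$ and the regularization trick must replace the circular translation step. The rest of your plan (reduction to $A_+$, re-proving almost unperforation from the dimension-drop structure of ${\cal Z}$) is sound, though the paper takes both ingredients directly from R\o rdam.
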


\begin{proof}

Let
$$
S=\{\tau\in {\tilde T}(A): d_\tau(b)=1\}.
$$
The assumption \eqref{Trozs-1} implies that
\beq\label{Trozs-2}
d_\tau(a)<d_\tau(b)\rforal \tau\in S.
\eneq
Since $A$ is simple, for every $\ep>0,$ $\la (a-\ep)_+\ra \le K \la b\ra $ for some integer $K\ge 1.$
The above implies that
$$
f(a)<f(b)\rforal f\in S(W, b)
$$
(see \cite{Rrzstable} for the notation and other details). Since, by Theorem 4.5 of \cite{Rrzstable}, $W(A)$ is almost unperforated, and by
3.2 of \cite{Rrzstable},
$a\lesssim b.$

\end{proof}

\begin{cor}\label{{CRozs}}
Let $A$ be an exact  separable  simple \CA\, with is ${\cal Z}$-stable
Then $A$ has  the  following strict comparison property  for positive elements:
Let $a, \, b\in M_n(A)_+$ (for some $n\ge 1$) be two elements
such that
\beq\nonumber
\label{Trozs-3}
d_\tau(a)<d_\tau(b)<\infty\rforal \tau\in
\overline{T(B)}^w,
\eneq
then $a\lesssim b.$
where
$B=\overline{cAc}$ for some $c\in P(A)_+\setminus \{0\}.$
\end{cor}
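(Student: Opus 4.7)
The plan is to deduce the corollary from Theorem \ref{TRozs} by showing that the tracial hypothesis on $\overline{T(B)}^w$ is, after scaling, equivalent to the hypothesis on all nonzero $\tau\in{\tilde T}(A)$.

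The key structural input is the description of ${\tilde T}(A)$ in Definition \ref{DTtilde}. Since $A$ is simple and $c\in P(A)_+\setminus\{0\}$ is full, $B=\overline{cAc}$ plays the role of $A_1$ there, so ${\tilde T}(A)$ is canonically identified with the cone generated by $T_0(B)$. Equivalently, restriction to $B$ and the canonical lower semi-continuous extension back to $A$ are mutually inverse, positively homogeneous bijections ${\tilde T}(A)\leftrightarrow{\tilde T}(B)$, and under this identification each nonzero $\tau\in{\tilde T}(A)$ has the form $\tau=\alpha\,\sigma$ for some $\alpha>0$ and some tracial state $\sigma\in T(B)\subset\overline{T(B)}^w$ (absorb the norm of the $T_0(B)$-representative into $\alpha$).

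With this correspondence in hand, the proof is a single scaling step. Fix a nonzero $\tau\in{\tilde T}(A)$ and write $\tau=\alpha\,\sigma$ as above. Since $\tau\mapsto d_\tau(x)$ is positively homogeneous in $\tau$ for each $x\in M_n(A)_+$, the hypothesis applied to $\sigma\in\overline{T(B)}^w$ yields
\[
d_\tau(a)=\alpha\, d_\sigma(a)<\alpha\, d_\sigma(b)=d_\tau(b)<\infty.
\]
Therefore the hypothesis of Theorem \ref{TRozs} is fulfilled on all of ${\tilde T}(A)\setminus\{0\}$, and that theorem directly produces $a\lesssim b$. The only delicate point is the cone identification from \ref{DTtilde}: once the bijection ${\tilde T}(A)\cong\mathbb{R}_+\cdot T(B)$ is used, the rest is pure scaling plus an appeal to R\o rdam's theorem.
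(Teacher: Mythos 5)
Your argument is correct and is exactly the (implicit) derivation the paper intends: Corollary \ref{{CRozs}} is stated without proof as an immediate consequence of Theorem \ref{TRozs}, using precisely the cone description ${\tilde T}(A)=\R_+\cdot T_0(B)$ from \ref{DTtilde} (echoed in \ref{Dstrictcom}) together with the positive homogeneity of $\tau\mapsto d_\tau(\cdot)$. Your scaling step makes this reduction explicit and is sound, since every nonzero $\tau\in{\tilde T}(A)$ is a positive multiple of some $\sigma\in T(B)\subset\overline{T(B)}^w$.
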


\section{Completely positive linear maps from non-unital \CA s to non-unital \CA s}

The following is a version of  Proposition 2.2 of \cite{AAP}.

\begin{lem}\label{532}
Let $A$ be a separable \CA\, and let $\phi: A\to \C$ be a pure state.
For any $\ep>0$ and any finite subset ${\cal F}\subset A,$
there exist $z_1, z_2\in A$ with $z_2\ge 0$ and $\|z_1\|=\|z_2\|=1$ such that
\beq\nonumber
&&\|\phi(a)z_1^*z_1-z_1^*az_1\|<\ep\andeqn
\|\phi(a)z_2-z_2^{1/2}z_1^*az_1z_2^{1/2}\|<\ep\rforal a\in {\cal F},\,\,\andeqn\\\nonumber
&& z_2z_1^*z_1=z_1^*z_1z_2=z_2.
\eneq
\end{lem}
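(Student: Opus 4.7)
The strategy is to apply the Akemann--Anderson--Pedersen excision of pure states (Proposition~2.2 of \cite{AAP}) to extract a single positive element $h\in A$ that excises $\phi,$ and then to carve $z_1$ and $z_2$ out of $h$ by two nested $f_\ep$-cut-offs. The strict algebraic identity $z_2z_1^*z_1 = z_1^*z_1z_2 = z_2$ will come for free from the elementary scalar identity $f_\alpha(t)f_{2\alpha}(t) = f_{2\alpha}(t)$ in $C[0,1],$ while the two approximate equations will follow from the excision estimate by sandwiching it with a bounded continuous function of $h.$

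Concretely, fix $\alpha = 1/4$ and set $\delta = \alpha^2\ep/4.$ The AAP excision proposition, applied to ${\cal F}$ and $\delta,$ produces $h\in A_+$ with $\|h\| = 1$ and
\[
\|hah - \phi(a)h^2\| < \delta \quad \mbox{for all } a\in {\cal F}.
\]
Define $z_1 := f_\alpha(h)^{1/2}$ and $z_2 := f_{2\alpha}(h),$ both elements of $A_+.$ Since $\|h\| = 1$ forces $1$ to lie in the spectrum of $h$ and $f_\alpha(1) = f_{2\alpha}(1) = 1,$ one has $\|z_1\| = \|z_2\| = 1.$ Applying functional calculus to the scalar identity above immediately gives $z_1^*z_1\,z_2 = z_2\,z_1^*z_1 = z_2,$ so the third relation is achieved exactly, not merely approximately.

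For the first approximate identity, observe that $f_\alpha(t)^{1/2} = t\cdot g(t)$ where $g(t) = f_\alpha(t)^{1/2}/t$ (with $g(0) = 0$) is continuous on $[0,1]$ with $\|g\|_\infty \le 2/\alpha.$ Hence $z_1 = g(h)h = hg(h),$ and
\[
z_1^* a z_1 - \phi(a)\,z_1^*z_1 \;=\; g(h)\bigl(hah - \phi(a)h^2\bigr)g(h),
\]
whose norm is at most $(2/\alpha)^2\delta < \ep.$ For the second estimate, use $f_\alpha(t)^{1/2} f_{2\alpha}(t)^{1/2} = f_{2\alpha}(t)^{1/2}$ (since $f_\alpha\equiv 1$ on the support of $f_{2\alpha}$) to collapse $z_2^{1/2} z_1^* a z_1 z_2^{1/2} = f_{2\alpha}(h)^{1/2}\,a\,f_{2\alpha}(h)^{1/2};$ then repeat the factoring trick with $g_2(t) = f_{2\alpha}(t)^{1/2}/t,$ whose sup-norm on $[0,1]$ is at most $1/\alpha,$ to bound the resulting error by $(1/\alpha)^2\delta < \ep.$ The only delicate point is arranging the cut-offs so that $z_2z_1^*z_1 = z_2$ holds on the nose rather than approximately; the nested support pair $[\alpha,1]\subset[\alpha/2,1]$ accomplishes this, at the modest cost of the $\alpha^{-2}$ inflation of the required excision tolerance.
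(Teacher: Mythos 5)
Your proof is correct and follows essentially the same route as the paper: both rest on the Akemann--Anderson--Pedersen excision of the pure state and then produce $z_1,z_2$ as nested functional-calculus cut-offs of the excising positive element, so that the commutation relation $z_2z_1^*z_1=z_2$ holds exactly. The only difference is bookkeeping: you transfer the excision estimate multiplicatively by writing $z_1=g(h)h$ and shrinking the tolerance by $\alpha^2/4$, whereas the paper perturbs the excising element in norm and sandwiches with $z_2^{1/2}$; both yield the required bounds.
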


\begin{proof}
It follows from Proposition 2.2 of \cite{AAP} that there is an element $x_0\in A_+$ with
$\|x_0\|=1$ such that
$$
\|\phi(a)x_0^2-x_0ax_0\|<\ep/8\rforal a\in {\cal F}.
$$
Let $f(t)\in C_0((0,1])_+$ be defined by
\beq
f(t)=\begin{cases}  
                                      f(t)={t\over{1-\ep/5}}, &\text{if}\, t\in [0, 1-\ep/5);\\
                                       f(t)=1 & \text{if}\, t\in [1-\ep/5, 1].\end{cases}
                                       \eneq
Then
$$
\|f(x_0)-x_0\|<\ep/4.
$$
Put $z_1=f(x_0).$
Then
$$
\|\phi(a)z_1^2-z_1az_1\|<\ep\rforal a\in {\cal F}.
$$
Let $g(t)\in C_0((0,1])_+$ such that
\beq
g(t)=\begin{cases}  f(t)=0, &\text{if}\, t\in [0,1-\ep/4];\\
                                      f(t)={t-1+\ep/4\over{\ep/4}}, &\text{if}\, t\in [1-\ep/4, 1]
                                      \end{cases}
                                       \eneq

Let $z_2=g(x_0).$

Then $\|z_2\|=1$ and
$$
z_2z_1^2=z_1^2z_2=z_2\andeqn z_2z_1=z_1z_2=z_2.
$$
Then
$$
\|\phi(a)z_2-z_2^{1/2}z_1^*az_1z_2^{1/2}\|=\|(z_2^{1/2}(\phi(a)z_1^2-z_1az_1)z_2^{1/2}\|<\ep
$$
for all $a\in {\cal F}.$


\end{proof}

\begin{def}\label{Ddn}
Let $A$ and $B$ be two \CA\, and let $j: A\to B$ be an embedding.
For each integer $n\ge 1,$ denote by
$d_n: A\to M_n(B)$ defined by
$$
d_n(a)={\diag}(\overbrace{j(a), j(a),...,j(a)}^n)\rforal a\in A.
$$

\end{def}

\begin{def}\label{Dlocalapp}
Let $A$ be a \CA\, and $S\subset A$ be a subset.
We say $S$ has a countable local approximate identity $\{e_n\}$ in $A,$ if
each $e_n$ is a positive element in $A$ with
$0\le e_n\le 1$  such that
$e_ne_{n+1}=e_ne_{n+1}=e_n$ and $\lim_{n\to\infty}\|e_ns-s\|=\lim_{n\to\infty}\|se_n-s\|=0.$
\end{def}

\begin{lem}\label{535}
Let  $B$ be a \CA, $A$ be a separable \SCA\, of $B$ and  let $j: A\to B$ be a full embedding.
Let   $\{e_n\}\subset B$ be a countable local  approximate identity for some subset $S.$
Then for any state $\phi: A\to \C,$ any $\ep>0,$  any finite subset ${\cal F}\subset A,$
there exists
an integer $k\ge 1$ and $0<r<1,$
a contraction $V\in M_n(B)$ for some integer $n\ge 1$ such that
\beq\nonumber
\|\phi(a)e_k-V^*d_n(a)V\|<\ep \tforal a\in {\cal F},\\\nonumber
\andeqn  V^*V\le e_k^r\andeqn \|V^*V-e_k\|<\ep.
\eneq
(Here we identify $B$ with the upper left corner of $M_n(B).$)
Moreover, if $B$ is unital, then one can choose $e_k=1_B$ for all $k,$  and $V$ can be chosen so that $V^*V=1_B.$
Furthermore, if $\phi$ is only assumed to be contractive positive linear functional ($\|\phi\|\le 1$), then
the statement still holds (with $V$ is a contractive).
\end{lem}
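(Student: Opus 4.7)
The plan is to reduce to a pure state via a finite convex--combination approximation, to produce local ``eigenvector'' data $(z_1^{(l)}, z_2^{(l)})\subset A$ for each pure state via Lemma \ref{532}, and then to transport these data to $e_k\in B$ by applying Lemma \ref{Lfullep} to the full positive elements $j(z_1^{(l)*}z_1^{(l)})\in B.$ The contraction $V$ is assembled by scaling and concatenation.

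First I would approximate $\phi$ by a finite convex combination of pure states. Since $A$ is separable, the state space $S(A)$ is weak-$*$ metrizable, so by Krein--Milman (together with density of convex combinations of extreme points) there exist pure states $\phi_1,\dots,\phi_m$ of $A$ and nonnegative scalars $\lambda_1,\dots,\lambda_m$ with $\sum_l\lambda_l=1$ in the state case (respectively $\sum_l\lambda_l=\|\phi\|\le 1$ in the contractive positive case), so that
$$
\bigl|\phi(a)-\sum_l \lambda_l \phi_l(a)\bigr|<\ep_1 \tforal a\in {\cal F}.
$$
For each $l,$ Lemma \ref{532} produces contractions $z_1^{(l)}, z_2^{(l)}\in A$ of norm $1$ with $z_2^{(l)}z_1^{(l)*}z_1^{(l)}=z_2^{(l)}$ and $\|\phi_l(a) z_1^{(l)*}z_1^{(l)}-z_1^{(l)*} a z_1^{(l)}\|<\ep_2$ for $a\in{\cal F},$ where $\ep_1,\ep_2$ are chosen to control the final error.

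Next I transport to $B.$ Fix $\sigma>0$ and a continuous $g:[0,1]\to[0,1]$ with $g(t)=0$ on $[0,\sigma/2],$ $g(t)=t$ on $[\sigma,1],$ and $g(t)\le t$ throughout, chosen so that $\|g(e_k)-e_k\|<\ep_3;$ then $g(e_k)\le e_k \le e_k^r$ for every $r\in(0,1).$ Fullness of the embedding $j$ makes $j(z_1^{(l)*}z_1^{(l)})$ a full positive element of $B$ for each $l,$ so Lemma \ref{Lfullep} yields $x_{l,1},\dots,x_{l,n_l}\in B$ with
$$
g(e_k)=\sum_{i=1}^{n_l} x_{l,i}^*\,j(z_1^{(l)*}z_1^{(l)})\,x_{l,i}.
$$
Set $y_{l,i}=\lambda_l^{1/2}x_{l,i},$ let $n=\sum_l n_l,$ and take $V\in M_n(B)$ to be the matrix whose first column lists the entries $j(z_1^{(l)})y_{l,i}$ (ordered by $l$ and then $i$) and is zero elsewhere. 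Direct expansion then gives, on ${\cal F},$
$$
V^* d_n(a) V=\sum_{l,i} y_{l,i}^*\, j(z_1^{(l)*}\, a \,z_1^{(l)}) \,y_{l,i}\;\approx\;\sum_l \lambda_l \phi_l(a) g(e_k)\;\approx\;\phi(a) e_k
$$
within $\ep,$ while $V^*V=\sum_l \lambda_l g(e_k),$ which equals $g(e_k)$ in the state case, hence $V^*V\le e_k^r$ and $\|V^*V-e_k\|<\ep_3<\ep.$ When $B$ is unital and $e_k=1_B,$ the same $V$ satisfies $V^*V\approx 1_B,$ so $V^*V$ is invertible and replacing $V$ by $V(V^*V)^{-1/2}$ gives $V^*V=1_B$ exactly while preserving the main estimate after adjusting $\ep.$

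The main obstacle I expect is the contractive positive case: when $\sum_l \lambda_l=\|\phi\|<1,$ the construction only gives $V^*V\approx\|\phi\|e_k$ rather than $\|V^*V-e_k\|<\ep.$ Restoring the latter requires augmenting $V$ with an additional block built from a complementary functional $\phi_0$ making $\phi+(1-\|\phi\|)\phi_0$ a state, and then checking that the contribution of this extra block to $V^* d_n(a) V$ is absorbed into the approximation of $\phi(a)e_k$ on ${\cal F}.$ The bookkeeping for this augmentation is the delicate point, and depends on how freely one may choose $\phi_0$ relative to the finite set ${\cal F}.$
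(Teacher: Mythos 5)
Your overall architecture (Krein--Milman reduction to pure states, excision via Lemma \ref{532}, fullness to transport to $e_k$) is the same as the paper's, but the error estimate in your pure-state step has a genuine gap, and it is exactly the point where the second element $z_2$ from Lemma \ref{532} is indispensable. In your construction the $(1,1)$-entry of $V^*d_n(a)V$ differs from $\sum_l\lambda_l\phi_l(a)g(e_k)$ by $\sum_{l,i}\lambda_l\,x_{l,i}^*c_lx_{l,i}$ with $c_l=z_1^{(l)*}az_1^{(l)}-\phi_l(a)z_1^{(l)*}z_1^{(l)}.$ The only smallness you have is $\|c_l\|<\ep_2,$ so the best available bound on this error is of order $\ep_2\,\|\sum_i x_{l,i}^*x_{l,i}\|,$ and $\sum_i x_{l,i}^*x_{l,i}$ is completely uncontrolled: Lemma \ref{Lfullep} gives no bound on the number or the norms of the $x_{l,i}.$ The identity $\sum_i x_{l,i}^*\,z_1^{(l)*}z_1^{(l)}\,x_{l,i}=g(e_k)$ does not rescue this, because $c_l$ is not dominated by a small multiple of $z_1^{(l)*}z_1^{(l)}$ (excision gives only that $\|z_1^*(a-\phi(a))z_1\|$ is small; one merely has $\pm c_l\le 2\|a\|\,z_1^{(l)*}z_1^{(l)},$ which yields an error of order $\|a\|,$ not $\ep$). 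Moreover the phrase ``$\ep_1,\ep_2$ chosen to control the final error'' hides a circularity: $\ep_2$ would have to be chosen after the $x_{l,i}$ are known, but the $x_{l,i}$ are obtained from $z_1^{(l)},$ which Lemma \ref{532} produces only after $\ep_2$ is fixed.

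This is precisely why Lemma \ref{532} supplies $z_2$ with $z_2z_1^*z_1=z_2$ and the estimate $\|\phi(a)z_2-z_2^{1/2}z_1^*az_1z_2^{1/2}\|<\ep,$ and why the paper applies fullness to $z_2$ rather than to $z_1^*z_1$: writing $\sum_i x_i^*z_2x_i\approx(1-\ep/4)e_k^{1-r}$ and taking column entries of the form $z_1z_2^{1/2}x_ie_k^{r/2},$ the error term appears sandwiched as $x_i^*z_2^{1/2}(\,\cdot\,)z_2^{1/2}x_i$ and is therefore dominated by $\ep\sum_i x_i^*z_2x_i,$ hence by roughly $\ep\,e_k^{1-r},$ independently of how many $x_i$ there are or how large they are; the relation $z_2z_1^*z_1=z_2$ then also gives $V^*V=W^*W\le e_k^r$ for free. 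By discarding $z_2$ you lose the only mechanism that makes the final estimate uniform, and the natural repair (use $z_2,$ factor it as $z_2^{1/2}z_2^{1/2},$ and put $z_2^{1/2}$ inside the column entries) brings you back to the paper's proof. Your worry about the contractive case is comparatively minor: the paper simply writes $\phi=\lambda\psi$ with $\psi$ a state and scales $V,$ and only this scaled form is used in the later applications.
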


\begin{proof}
To simplify the notation, \wilog, we may assume that
$\|a\|\le 1$ for all $a\in {\cal F}.$

We first assume that $\phi$ is pure. By \ref{532}, there are $z_1, z_2\in A$ with
$z_2\ge 0,$
$z_1^*z_1z_2=z_2z_1^*z_1=z_2$ and $\|z_i\|=1$ ($i=1,2$)
such that
\beq\label{535-nl}
\|\phi(a)z_1^*z_1-z_1^*az_1\|<\ep/2\andeqn \|\phi(a)z_2-z_2^{1/2}z_1^*az_1z_2^{1/2}\|<\ep/2\rforal a\in {\cal F}.
\eneq

Since $j: A\to B$ is full, the ideal generated by $A$ is $B.$
Therefore,  there are $x_1, x_2,...,x_n\in B$ such that
\beq\label{535-2}
\|\sum_{i=1}^nx_i^*z_2x_i-(1-\ep/4)e_k^{1-r}\|<\ep/4.
\eneq
Therefore
\beq\label{532-2+}
\|\sum_{i=1}^n e_k^{r/2} x_i^*z_2x_ie_k^{r/2}-(1-\ep/4)e_k\|<\ep/4.
\eneq
Let $W_1^*=(x_1^*z_2^{1/2}, x_2^*z_2^{1/2},...,x_k^*z_2^{1/2})$ (so $W_1$ is a column).
Note that
$$
W_1^*W_1=\sum_{i=1}^nx_i^*z_2x_i\andeqn \|W_1^*W_1\|\le \|(1-\ep/4)e_k^{1-r}\|+\ep/4\le 1.
$$
Define $W^*=(e_k^{r/2}x_1^*z_2^{1/2}, e_k^{r/2}x_2^*z_2^{1/2},...,e_k^{r/2}x_k^*z_2^{1/2})$ (so $W$ is also a column).
Note that
\beq\label{535-3}
W^*W=\sum_{i=1}^ke_k^{r/2}x_i^*(z_2)x_ie_k^{r/2}\le e_k^{r/2}W_1^*W_1e_k^{r/2}\le e_k^r\andeqn\\
\|W^*W\|\le \|(1-\ep/4)e_k\|+\ep/4\le 1.
\eneq

Define $V=Wz_1.$
Since $z_2z_1^*z_1=z_2=z_1^*z_1z_2,$
$$
W^*W=V^*V.
$$
In particular, $\|V^*V\|\le 1,$  $V^*V\le e_k^r$ and
\beq\label{535-nl2}
\|V^*V-e_k\|=\|W^*W-e_k\|<\ep/2.
\eneq
Note that
$$
W^*d_k(z_1^*z)W=\sum_{i=1}^ke_k^{r/2}x_i^*(z_2)^{1/2}z_1^*z_1(z_2)^{1/2}x_ie_k^{r/2}=W^*W.
$$
Then,   by \eqref{535-nl2} and \eqref{535-nl},
\beq\nonumber
\|\phi(a)e_k-V^*d_k(a)V\|&=&\|\phi(a)e_k-\phi(a)W^*d_k(z_1^*z_1)W\|\\
&&+\|\phi(a)W^*d_k(z_1^*z_1)W-V^*d_k(a)V\|\\
&\le & \ep/2+\|\phi(a)W^*d_k(z_1^*z_1)W- W^*d_k(z_1^*az_1)W\|\\
&<&\ep/2+\|\phi(a)d_k(z_1^*z_1)-d_k(z_1^*az_1)\|\\
&<& \ep/4+\ep/2<\ep
\eneq
for all $a\in {\cal F}.$

For a general  state $\phi,$ by the Krein-Milman theorem, we have positive numbers
$\af_1, \af_2,...,\af_m$ with $\sum_{i=1}^m \af_i=1$ and
pure states $\phi_1, \phi_2,...,\phi_m$ such that
$$
\|\phi(a)-\sum_{i=1}^m \af_i\phi_i(a)\|<\ep/2\rforal a\in {\cal F}.
$$

Let $k(i)$ be the integer $k$ in the first part of the proof corresponding to
$\phi_i,$ $i=1,2,...,m.$ Set $n(i)=\sum_{j=1}^ik(j)$ with $n(0)=0.$
Let $V_i\in M_{n(m)}(B)$ be given in the first part of the proof such that
\beq\nonumber
&&\|V_i^*V_i-e_k\|<\ep/2,\,\,\, V_i^*V_i\le e_k^r, \\
&& V_iV_i^*\le \diag(\overbrace{0,0,...,0}^{n(i-1)}, d_{k(i)}(1), 0,...,0)\andeqn
\|\phi(a)e_k-V_i^*d_{k(i)}(a)V_i\|<\ep/2,
\eneq
$i=1,2,...,m.$
Set $V=\sum_{i=1}^m\sqrt{\af_i}V_i\in M_{n(m)}(B).$ Then
$$
V^*V=\sum_{i=1}^m \af_iV_i^*V_i\le e_k^r.
$$
Moreover,
\beq\nonumber
\|\phi(a)e_k-V^*d_{n(m)}(a)V\| &=&\|\phi(a)e_k-\sum_{i=1}^m \af_iV_i^*d_{k(i)}(a)V_i\|\\\nonumber
&\le & \|\phi(a)e_k-\sum_{i=1}^m\af_j\phi_i(a)e_k\|+\sum_{i=1}^m\af_i\|\phi_i(a)e_k-V_i^*d_{k(i)}V_i\|\\\nonumber
&<& \ep/2+\ep/2=\ep
\eneq
for all $a\in {\cal F}.$

If $B$ is unital, we can assume that $e_k=1_B.$

Moreover, \eqref{535-2} can be replaced by
$$
\sum_{i=1}^nx_i^*z_2x_i=1_B.
$$
For the last part of the statement, we note that, there is $0\le \lambda\le 1$ such that  $\phi=\lambda\psi$
for some state $\psi.$
\end{proof}

\begin{lem}\label{535A}
Let $A$ be a separable \CA\, and let $j: A\to B$ be a full embedding.
Let $\{e_n\}$ be an approximate identity for $B.$
Then for any state $\phi: A\to \C,$ any $\ep>0,$  any finite subset ${\cal F}\subset A,$
there exists an integer $k_0\ge 1$ such that,
any integer $k\ge k_0,$
there exists
a partial isometry  $V\in {\widetilde{M_n(B)}}$ for some integer $n\ge 1$ such that
\beq\nonumber
\|\phi(a)e_k-V^*d_n(a)V\|<\ep \tforal a\in {\cal F}
\andeqn  V^*V=1_{{\widetilde B}}.
\eneq
(Here we identify $B$ with the first corner of $M_n(B).$)
\end{lem}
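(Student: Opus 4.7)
The plan is to bootstrap Lemma~\ref{535} by appending one extra coordinate built from a defect element in the unitization, promoting the contractive column $V_0$ of that lemma to an actual isometry $V$ in $\widetilde{M_n(B)}$.

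Given $\ep>0$ and a finite $\mathcal{F}\subset A$ with $\|a\|\le 1$ for $a\in\mathcal{F}$, fix $\delta\in(0,\ep^2/16)$. Since $\{e_k\}$ is an approximate identity for $B$ and $A\subset B$, we have $\|a(1-e_k)a^*\|=\|aa^*-ae_ka^*\|\to 0$ for every $a\in A$, so one may choose $k_0$ so that $\|a(1-e_k)a^*\|<\delta$ for every $a\in\mathcal{F}$ and every $k\ge k_0$. Fix such a $k$ and apply Lemma~\ref{535} to $\phi$, $\mathcal{F}$ and $e_k$ with tolerance $\min(\ep/2,\delta)$ to obtain a column $V_0\in M_{n_0}(B)$ and some $0<r<1$ with $V_0^*V_0\le e_k^r$, $\|V_0^*V_0-e_k\|<\delta$, and $\|\phi(a)e_k-V_0^*d_{n_0}(a)V_0\|<\ep/2$ for every $a\in\mathcal{F}$.

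Set $n=n_0+1$ and define
$$V:=\begin{pmatrix} V_0 \\ (1_{\widetilde B}-V_0^*V_0)^{1/2} \end{pmatrix},$$
viewed as the first column of an element of $\widetilde{M_n(B)}$. Because $V_0^*V_0\le e_k^r\le 1_{\widetilde B}$, the last entry is a well-defined positive element of $\widetilde B$, and a direct computation yields
$$V^*V=V_0^*V_0+(1_{\widetilde B}-V_0^*V_0)=1_{\widetilde B}.$$
Expanding,
$$V^*d_n(a)V = V_0^*d_{n_0}(a)V_0+(1_{\widetilde B}-V_0^*V_0)^{1/2}\,j(a)\,(1_{\widetilde B}-V_0^*V_0)^{1/2};$$
the first summand is within $\ep/2$ of $\phi(a)e_k$ by Lemma~\ref{535}. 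For the second, the standard $C^*$-identity together with $\|(1-V_0^*V_0)^{1/2}\|\le 1$ gives
$$\|(1-V_0^*V_0)^{1/2}\,j(a)\,(1-V_0^*V_0)^{1/2}\|^2\le \|j(a)(1-V_0^*V_0)j(a)^*\|\le \|a(1-e_k)a^*\|+\delta<2\delta,$$
so this term is at most $\sqrt{2\delta}<\ep/2$. Summing gives the required estimate.

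The only real point of care is the square-root loss incurred in passing from $1-V_0^*V_0$ to $(1-V_0^*V_0)^{1/2}$, which forces $\delta$ to be of order $\ep^2$ rather than $\ep$; since both the Lemma~\ref{535} tolerance and the approximate-identity estimate can independently be made arbitrarily small, this presents no real obstacle.
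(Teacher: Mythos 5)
Your proof is correct, but it completes the argument by a genuinely different device than the paper. The paper keeps the matrix size coming out of Lemma \ref{535}: it picks $a_{m_0}$ from an approximate identity of $A$ with $\phi(a_{m_0})>1-\ep/32$, chooses $k_0$ so that $\|a(1-e_k)^{1/2}\|$ is small for $a\in{\cal F}\cup\{a_{m_0}^{1/2}\}$, forms $V_2=(1-e_k)^{1/2}+d_n(a_{m_0}^{1/2})V_1$, checks $V_2^*V_2\approx 1$ (the cross terms being killed by the factor $d_n(a_{m_0}^{1/2})$), and finally normalizes, $V=V_2T$ with $T\approx 1$, to get $V^*V=1_{\widetilde B}$. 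You instead enlarge the matrix by one and append the exact defect $(1-V_0^*V_0)^{1/2}$ as a new coordinate: orthogonality of that coordinate removes all cross terms, $V^*V=1_{\widetilde B}$ holds on the nose with no normalizing element $T$ and no use of an approximate identity of $A$ (indeed you never use that $\phi$ is a state beyond what Lemma \ref{535} requires), and the single new error term $(1-V_0^*V_0)^{1/2}j(a)(1-V_0^*V_0)^{1/2}$ is correctly controlled by $\|V_0^*V_0-e_k\|<\dt$ together with $\|a(1-e_k)a^*\|<\dt$, at the cost of the square-root loss that forces $\dt\sim\ep^2$ — which, as you note, is harmless. Two shared points of looseness are worth flagging, though neither is a gap relative to the paper: (i) you invoke Lemma \ref{535} for a prescribed $k$, whereas its statement only asserts existence of some $k$; the paper's own proof of this lemma (and the statement of \ref{538}) reads \ref{535} the same way, and its proof indeed works for any given $k$ and $r$; (ii) your $V$ has its scalar part concentrated in a single matrix entry, so strictly it lies in $M_n(\widetilde B)$ with $V^*V$ equal to $1_{\widetilde B}$ in the $(1,1)$ corner — but the paper's $V_2$, containing $\diag((1-e_k)^{1/2},0,\dots,0)$, has exactly the same feature, so this is the paper's notational convention rather than an error of yours. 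The trade-off is that the paper's scheme of adding $(1-e_k)^{1/2}$, damping by $d_n(a_{m_0}^{1/2})$, and normalizing is the template that is reused in the harder Lemmas \ref{538A} and \ref{L551}, where the exact defect cannot simply be appended from inside the relevant corner and must be produced via fullness of $e_{k+1}-e_k$; your shortcut is cleaner for the present lemma but does not extend as directly.
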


\begin{proof}
Without loss of generality, we may assume
that
$e_{n+1}e_n=e_ne_{n+1}=e_n$ for all $n.$
We may also assume, \wilog, $1>\ep>0$ and
$\|a\|\le 1$ for all $a\in {\cal F}.$

Let $\{a_n\}$  be  an approximate identity  for $A.$
There exists $m_0\ge 1$ such that, for any $m\ge m_0,$
\beq\label{536-1}
\phi(a_m)>1-\ep/32\andeqn \|a_ma-a\|<\ep/16\rforal a\in {\cal F}.
\eneq
Recall that we identify $B$ with the first corner of $M_n(B).$
There exists an integer $k_0\ge 2$ such that, for all $k\ge k_0,$
\beq\label{536-2}
\|a(1-e_k)^{1/2}\|<\ep/32\rforal a\in {\cal F}\cup\{a_{m_0}^{1/2}\}.
\eneq
It follows from \ref{535} that there is a contraction $V_1\in M_n(B)$ such that
\beq\label{536-3}
\|\phi(a)e_k-V_1^*d_n(a)V_1\|<\ep/32\rforal a\in {\cal F}.
\eneq
Define
$V_2=(1-e_k)^{1/2}+d_n(a_{m_0}^{1/2})V_1\in {\widetilde{M_n(B)}}.$
Note, by our notation,  that
$$
(1-e_k)^{1/2}=\diag((1-e_k)^{1/2},\overbrace{0,...,0}^{n-1}).
$$
Then, by \eqref{536-2} and \eqref{536-3},
\beq\nonumber
V_2^*V_2&=&((1-e_k)^{1/2}+V_1^*d_n(a_{m_0})^{1/2})((1-e_k)^{1/2}+d_n(a_{m_0}^{/12})V_1)\\\nonumber
&=& (1-e_k)+(1-e_k)^{1/2} d_n(a_{m_0})V_1+V_1^*d_n(a_{m_0}^{1/2})(1-e_k)^{1/2}+V_1^*d_n(a_{m_0})V_1\\\nonumber
&\approx_{\ep/16}&(1-e_k)+V_1^*d_n(a_{m_0})V_1\\\nonumber
&\approx_{\ep/32}& (1-e_k)+\phi(e_{m_0})e_k\\\nonumber
&\approx_{\ep/32}& (1-e_k)+e_k=1.
\eneq
There is $T\in B_+$ such that
$$
TV_2^*V_2T=1\andeqn \|T-1\|<\ep/8.
$$
Define $V=V_2T.$  Then $V^*V=TV_2^*V_2T=1.$
We estimate that, for all $a\in {\cal F},$   applying also \eqref{536-2},
\beq\label{536-5}
V^*d_n(a)V&\approx_{\ep/4}& V_2^*d_n(a)V_2\\
&\approx_{\ep/16}&(1-e_k)^{1/2}d_n(a)V_2+V_1^*d_n(a)(1-e_k)^{1/2}+ V_1^*d_n(a)V_1\\
&\approx_{\ep/16}  & V_1^*d_n(a)V_1.
\eneq
We also have that, for all $a\in {\cal F},$ by the above,
\beq\label{536-6}
\|\phi(a)e_k-V^*d_n(a)V\| <
\|\phi(a)e_k-V_1^*d_n(a)V_1\|+2\ep/16+\ep/4<\ep.
\eneq

\end{proof}

\begin{lem}\label{538}
Let $A$ be a separable \CA, $B$ be a $\sigma$-unital \CA\, and let $j: A\to B$ be a full embedding.
Let $\{e_n\}$ be an approximate identity of $B.$
Then, for any non-zero contractive completely positive linear map $\phi: A\to M_n(\C),$ any finite subset
${\cal F}\subset A,$  any $\ep>0,$
and any
integer $k\ge 1$ and $0<r<1,$ there exists a contraction  $V\in M_{Kn}(B)$
(for some large integer $K\ge 1$) such that
\beq\nonumber
&&\|\phi(a)d_n(e_k)-V^*d_K(a)V\|<\ep\tforal a\in {\cal F}\tand\\
&&V^*V\le d_n(e_k^r),
\eneq
where $M_n(\C)$ is viewed as the scalar matrix subalgebra of $M_n({\tilde B}).$
\end{lem}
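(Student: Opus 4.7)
Proof plan. Lemma~\ref{538} is the matrix-valued analogue of Lemma~\ref{535A}, and I plan to prove it by running the argument of Lemma~\ref{535A} with scalars replaced by $M_n$-valued data throughout. The main preparatory ingredient is a matrix analogue of Lemma~\ref{532}: for the CPC map $\phi\colon A \to M_n(\C)$, one produces $Z_1 \in M_n(A)$ and $Z_2 \in M_n(A)_+$ with $\|Z_i\| \le 1$, the support relation $Z_2\,Z_1^*Z_1 = Z_1^*Z_1\,Z_2 = Z_2$, and
\[
\bigl\| \phi(a)\,Z_2 - Z_2^{1/2}\,Z_1^*\,d_n(a)\,Z_1\,Z_2^{1/2}\bigr\| < \ep' \qquad (a \in {\cal F}).
\]
I would establish this via Stinespring: writing $\phi(a) = W^*\pi(a)W$ with $\pi\colon A \to B(H)$ a representation and $W\xi_p = \eta_p \in H$, an Akemann--Anderson--Pedersen-type argument applied coherently to the finite-dimensional subspace $\mathrm{span}\{\pi(a)\eta_p : a \in {\cal F},\, p\in [n]\} \subset H$ produces $z \in A_+$ with $\pi(z)$ acting approximately as the identity on that subspace; $Z_1$ and $Z_2$ are then assembled from appropriate polynomials in $z$. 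Equivalently, one may apply the scalar Lemma~\ref{532} to the positive linear functional $\tilde\phi$ on $M_n(A)$ defined by $\tilde\phi((a_{pq})) := \sum_{p,q} \phi(a_{pq})_{pq}$ (positivity with $\|\tilde\phi\| \le n$ comes from the same Stinespring vector $(\eta_1, \ldots, \eta_n) \in H \otimes \C^n$), and reassemble the resulting scalar approximations at the matrix units $e_{pq} \otimes a$.

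Once the matrix version of Lemma~\ref{532} is in hand, the construction of $V$ mirrors that in Lemma~\ref{535}. Using fullness of the amplified embedding $j_n := j \otimes \mathrm{id}_{M_n}\colon M_n(A) \to M_n(B)$ (via Lemma~\ref{Lfullep} applied to $Z_2$), find $X_1, \ldots, X_L \in M_n(B)$ with $\sum_l X_l^*\,Z_2\,X_l$ close to $(1-\ep')\,d_n(e_k)^{1-r}$; form the column $W \in M_L(M_n(B))$ with entries $W_l := Z_2^{1/2}\,X_l\,d_n(e_k)^{r/2}$; and set $V := d_L(Z_1)\cdot W \in M_L(M_n(B)) = M_{Ln}(B)$ (so $K := Ln$, with $d_L$ here denoting the $L$-fold diagonal amplification into $M_L(M_n(B))$). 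The identity $Z_2^{1/2}\,Z_1^*Z_1\,Z_2^{1/2} = Z_2$ (a consequence of $Z_1^*Z_1\,Z_2 = Z_2$) gives $V^*V = W^*W \le d_n(e_k^r)$, and expanding $V^*d_K(a)V$ using matrix~\ref{532} yields $V^*\,d_K(a)\,V \approx \phi(a)\,W^*W \approx \phi(a)\,d_n(e_k)$ for $a \in {\cal F}$, after choosing $\ep'$ sufficiently small relative to $\ep$.

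The main obstacle is establishing the matrix analogue of Lemma~\ref{532} with the correct matrix-valued right-hand side $\phi(a)\,Z_2$, as opposed to the scalar trace $(\mathrm{tr}\,\phi(a)/n)\,Z_2$ that a naive direct application of Lemma~\ref{532} to $\tilde\phi/n$ on $M_n(A)$ at $X = d_n(a)$ would produce. Recovering the full matrix $\phi(a)$ requires combining the scalar approximations at all individual matrix units $e_{pq} \otimes a$ coherently, or equivalently running a single AAP-type argument on the finite-dimensional subspace $\mathrm{span}\{\pi(a)\eta_p : a \in {\cal F},\, p \in [n]\}$ inside the Stinespring Hilbert space --- a standard construction once one has a single $z \in A_+$ with $\pi(z)$ approximately the identity on this subspace. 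Once such $Z_1, Z_2$ are produced, the remainder is a routine adaptation of the proof of Lemma~\ref{535}, with the commutation relation in $C^*(z) \subset A$ replacing the scalar commutation used there.
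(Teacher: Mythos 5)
Your plan has a genuine gap, and it sits not in the matrix excision lemma you flag as the main obstacle but in the assembly step afterwards. Granting your matrix analogue of Lemma~\ref{532}, i.e. $Z_1\in M_n(A),$ $Z_2\in M_n(A)_+$ with $\|\phi(a)Z_2-Z_2^{1/2}Z_1^*d_n(a)Z_1Z_2^{1/2}\|<\ep'$, your $V=d_L(Z_1)W$ with $W_l=Z_2^{1/2}X_ld_n(e_k)^{r/2}$ gives
$$V^*d_K(a)V\approx \sum_{l}d_n(e_k)^{r/2}X_l^*\,\phi(a)Z_2\,X_l\,d_n(e_k)^{r/2},$$
and to reach $\phi(a)W^*W\approx\phi(a)d_n(e_k)$ you must move the scalar matrix $\phi(a)\in M_n(\C)$ past $X_l^*$. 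In the scalar proofs of Lemmas~\ref{532}--\ref{535} this step is free because $\phi(a)$ is a complex number and commutes with everything; here the $X_l\in M_n(B)$ come from fullness of $Z_2$ (Lemma~\ref{Lfullep}) and have no reason to commute, even approximately, with $M_n(\C)$. One cannot repair this by forcing $X_l=d_n(x_l)$ either, since plain fullness does not produce such diagonal elements with $\sum_l d_n(x_l)^*Z_2d_n(x_l)\approx d_n(e_k^{1-r})$ (the off-diagonal entries of $Z_2$ would have to be simultaneously annihilated). Secondarily, your matrix-$\ref{532}$ itself needs more care than stated: the Krein--Milman reduction of Lemma~\ref{535} does not directly generalize to CP maps into $M_n$, so the "coherent AAP argument" would require a decomposition into maps with irreducible Stinespring dilations plus a Kadison-transitivity argument, none of which is routine enough to leave implicit.

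For comparison, the paper sidesteps both issues by never excising a matrix-valued map. It forms the single positive functional $\psi((a_{ij}))=(1/n)\sum_{i,j}\phi_{ij}(a_{ij})$ on $M_n(A)$ (contractive since $\phi$ is completely positive), applies Lemma~\ref{535} to $\psi$ with the embedding $M_n(A)\subset M_n(B)$ to get one column $W$, cuts to $W_1=W(1\otimes e_{1,1})$, and then recovers each entry $\phi_{i,j}(a)$ by conjugating with the explicit column matrices ${\tilde v}_i=d_K(v_i)$: since ${\tilde v}_i^*d_K(a){\tilde v}_j$ has $na$ in the $(i,j)$-slot, one gets $W_1^*{\tilde v}_i^*d_{Kn}(a){\tilde v}_jW_1\approx\phi_{i,j}(a)e_k^{1-r}$, and setting $V_0=({\tilde v}_1W_1,\ldots,{\tilde v}_nW_1)$ makes $V_0^*d_{Kn}(a)V_0$ literally the matrix $(\phi_{i,j}(a)e_k^{1-r})_{i,j}=\phi(a)d_n(e_k^{1-r})$. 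In other words, the matrix structure of $\phi(a)$ is encoded in the shape of $V_0$ rather than being commuted through the fullness elements; this is exactly the mechanism your proposal is missing, and without it (or some substitute providing approximate $M_n(\C)$-bimodularity of the map $T\mapsto\sum_lX_l^*TX_l$) the proof does not close.
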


\begin{proof}
Let $\{a_n\}$ be an approximate identity of $A.$ We will identify $A$ with $j(A).$

Write
$\phi(a)=\sum_{i=1}^n\phi_{ij}(a)\otimes e_{i,j}$ ($a\in A$), where
$\{e_{i,j}\}$ is a system of matrix units for $M_n$ and
$\phi_{i,j}: A\to\C$ is linear.

Define $\psi: M_n(A)\to \C$ by
$\psi((a_{i,j})_{n\times n})=(1/n)\sum_{i,j=1} \phi_{i,j}(a_{i,j}),$ where $a_{i,j}\in A.$
Since $\phi$ is completely positive, $\psi/\|\psi\|$ is a state.

Note that $M_n(A)\to M_n(B)$ is full.
\Wlog, we may assume that ${\cal F}$ is in the unit ball of $A.$
Fix $m_0\ge 1.$ Choose $m_1\ge 1$ such that,
for all $m\ge m_1,$
\beq\label{538-1-1}
\|a_m^{1/2}ae_m^{1/2}-a\|<\ep/16n^2\rforal a\in {\cal F}\cup\{e_{m_0}\}.
\eneq
Set
$${\cal F}_1={\cal F}\cup\{e_j: 1\le j\le m_1\}\cup\{a_j^{1/2}aa_j^{1/2}: a\in {\cal F},\, 1\le j\le m_1\}$$ and
set ${\cal G}=\{(a_{i,j})_{n\times n}: a_{i,j}\in {\cal F}_1\}.$

Note that $\{e_k^{1-r}\}$ is also an approximate identity for $A.$
Thus, by applying \ref{535}, there is $W\in M_{Kn}(B)$ with
$W^*W\le \|\psi\|d_n(e_k^r)$ and
$$
\|W^*W-\|\psi\|d_n(e_k^{1-r})\|<\ep/(16n)^3\andeqn \|\psi(b)d_n(e_k^{1-r})-W^*d_{Kn}(b)W\|<\ep/(16n)^3
$$
for all $b\in {\cal G}.$
So $W$ is a $K\times 1$-column of $n\times n$ matrix of $B.$
Denote by $\{e_{i,j}\}$ the matrix unit for $n\times n$ matrix.
Define $W_1=W(1\otimes e_{1,1})$ and identify $A$ with the first corner of $M_n(A).$
So now we view $W_1$ as a $Kn$-column.
We have $W_1^*W_1\le \|\psi\|e_k^r,$
$$
\|W_1^*W_1-\|\psi\|e_k^{1-r}\|<\ep/(16n)^3\andeqn  \|\psi(b)e_k^{1-r}-W_1^*d_K(b)W_1\|<\ep/(16n)^3
$$
for all $b\in {\cal G}.$

Let $v_i=(c_{k,j})$ be a $n\times n$ with $c_{k,i}=1,$ $k=1,2,...,n$ and $c_{k,j}=0$
if $j\not=i.$ Define ${\tilde v}_i=d_K(v_i),$ $i=1,2,...,n.$
 Note that, for $a\in A,$
${\tilde v_i}^*d_K(a){\tilde v}_j=d_K((b_{s,t})_{n\times n}),$
where $b_{i,j}=na$ and $b_{s,t}=0$ if $(s,t)\not=(i,j).$  Therefore
$$
\|\phi_{i,j}(a)e_k^{1-r}-W_1^*{\tilde v_i}^*d_{Kn}(a){\tilde v}_jW_1\|<\ep/(16n)^2
$$
for all $a\in {\cal F}.$

Define
$$
V_0=({\tilde v}_1W_1, {\tilde v}_2W_1,...,{\tilde v}_nW_1).
$$
Then
$$
V_0^*V_0=(W_1^*{\tilde v}_i^*{\tilde v}_jW_1)_{n\times n}=\sum_{i,j}(W_1^*{\tilde v}_i^*{\tilde v}_jW_1)\otimes e_{i,j},
$$
$$
 V_0^*d_K(a)V_0 =(W_1^*{\tilde v}_i^*d_K(a){\tilde v}_jW_1)_{n\times n}.
 $$
 Therefore
 \beq\nonumber
 \|\phi(a)d_n(e_k^{1-r})-V_0^*d_{Kn}(a)V_0\| &=&\|\sum_{i,j}(\phi_{i,j}(a)e_k^{1-r})\otimes e_{i,j}-\sum_{i,j}(W^*{\tilde v}_i^*d_K(a){\tilde v}_jW_1)\otimes e_{j,j}\|\\\label{538-10}
 &<&n^2\ep/16n^2=\ep/16\rforal a\in {\cal F}_1.
 \eneq
 Choose $V_1=d_{Kn}(a_{m_1}^{1/2})V_0d_n(e_k^{r/2}).$
 In  \eqref{538-10}, replacing $a$ by $b\hspace{-0.03in}:=a_{m_1}^{1/2}aa_{m_1}^{1/2},$
 we have, by \eqref{538-1-1} and \eqref{538-10},
 \beq\nonumber
 \|\phi(a)d_n(e_k)-V_1^*d_K(a)V_1\| &=& \|\phi(a)d_n(e_k)-\phi(a_{m_1}^{1/2}aa_{m_1}^{1/2})d_n(e_k)\|\\\nonumber
   &&\hspace{0.2in}+\|\phi(b)d_n(e_k)-d_n(e_k^{r/2})V_0^*d_{Kn}(b)V_0d_n(e_k^{r/2})\|\\\nonumber
   &&\hspace{0.6in}+\|d_n(e_k^{r/2})V_0^*d_K(b)V_0d_n(e_k^{r/2})-V_1^*d_{Kn}(a)V_1\|\\\nonumber
   &<& \ep/16n^2 +\ep/16+0<\ep/8
 \eneq
for all $a\in {\cal F}.$
We also have, by \eqref{538-10},
\beq\nonumber
\|\phi(a_{m_1})d_n(e_k)-V_1^*V_1\| &= &
\|\phi(a_{m_1})d_n(e_k)-d_n(e_k^{r/2})V_0^*d_K(a_{m_1})V_0d_n(e_k^{r/2})\|\\
&\le &\|\phi(a_{m_1})d_n(e_k^{1-r})-V_0^*d_{Kn}(a_{m_1})V_0\|\\
&<&
\ep/16.
\eneq
It follows that
$$
\|V_1^*V_1\|\le 1+\ep/16.
$$
Put $V=V_1/\|V_1\|.$ Then
$$
V^*V\le d_n(e_k^r).
$$
Moreover,
\beq\nonumber
\|\phi(a)d_n(e_k)-V^*d_{Kn}(a)V\| &\le &\|\phi(a)d_n(e_k)-V_1^*d_{Kn}(a)V_1\|\\
 &&\hspace{0.02in}\|V_1^*d_{Kn}(a)V_1-V^*d_{Kn}(a)V\|\\
 &<&\ep/16 +|1-{1\over{1+\ep/16}}|<\ep/8
 \eneq
 for all $a\in {\cal F}.$

\end{proof}

\begin{lem}\label{538A}
Let $A$ be a separable \CA, $B$ be a $\sigma$-unital \CA\, and let $j: A\to B$ be a full embedding.
Let $\{e_n\}$ be an approximate identity of $B.$
Then, for any non-zero contractive completely positive linear map $\phi: A\to M_n(\C),$ any finite subset
${\cal F}\subset A$ and any  $\ep>0,$
there exists an integer $k_0\ge 1$ such that, for any
integer $k\ge k_0,$  there exists a contraction  $V\in M_{Kn}(B)$
(for some large integer $K\ge 1$) such that
\beq\nonumber
\|\phi(a)d_n(e_k)-V^*d_{Kn}(a)V\|<\ep\tforal a\in {\cal F},
\eneq
where $M_n(\C)$ is viewed as scalar matrix subalgebra of $M_n({\tilde B}).$

If furthermore,  $e_{n+1}e_n=e_n=e_ne_{n+1}$ and $e_{n+1}-e_n$ is full in $B$
for all $n,$  then $V$ can be chosen  in $M_{Kn}({\tilde B})$
so that $V^*V=1_{M_n}.$

\end{lem}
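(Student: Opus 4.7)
The first statement is immediate from Lemma \ref{538}: for any $k\ge 1$, Lemma \ref{538} produces a contraction $V\in M_{Kn}(B)$ with $\|\phi(a)d_n(e_k)-V^*d_{Kn}(a)V\|<\ep$, so one may take $k_0=1$. The substantive content is the ``furthermore'' clause, requiring $V\in M_{Kn}({\tilde B})$ with $V^*V=1_{M_n}$ exactly.

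The plan is to run a matrix version of the construction in Lemma \ref{535A}, and then correct the residual defect using the fullness of $e_{l+1}-e_l$. Set $p:=\bar\phi(1_{A^{**}})\in M_n(\C)_+$ (a positive contraction), fix an approximate identity $\{a_m\}$ of $A$, and pick $m_0$ large enough that $\|\phi(a_{m_0})-p\|$ and $\|a_{m_0}^{1/2}aa_{m_0}^{1/2}-a\|$ ($a\in {\cal F}$) are small. Lemma \ref{538} with $r=1/2$ yields $V_1\in M_{K'n,n}(B)$ with $V_1^*V_1\le d_n(e_k^{1/2})$ and $\|V_1^*d_{K'n}(a)V_1-\phi(a)d_n(e_k)\|<\ep'$ for $a\in {\cal F}\cup \{a_{m_0}\}$. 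For $k\ge k_0$ large enough that $\|a(1-e_k)^{1/2}\|<\ep'$ on ${\cal F}\cup \{a_{m_0}^{1/2}\}$, set
$$V_2=\begin{pmatrix} d_n((1-e_k)^{1/2})\\ 0\end{pmatrix}+\begin{pmatrix} 0 \\ d_{K'n}(a_{m_0}^{1/2})V_1\end{pmatrix}\in M_{(K'+1)n,n}({\tilde B}).$$
The cross terms, which involve $(1-e_k)^{1/2}a_{m_0}^{1/2}$, are small, so one computes $V_2^*V_2\approx d_n(1-e_k)+\phi(a_{m_0})d_n(e_k)\approx 1_{M_n}-(1_{M_n}-p)d_n(e_k)$.

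To fill the residual $(1_{M_n}-p)d_n(e_k)$, choose $l>k$ (to be fixed later) and apply Lemma \ref{Lfullep} with the full element $e_{l+1}-e_l$ to write $g(e_k)=\sum_{j=1}^q x_j^*(e_{l+1}-e_l)x_j$ for some $g\in C_0((0,\infty))$ with $\|g(e_k)-e_k\|<\ep'$ and some $x_j\in B$. Put $S_j:=(e_{l+1}-e_l)^{1/2}x_j$ and let $V_3\in M_{qn,n}(B)$ be the block column whose $j$-th $n\times n$ block is $(1_{M_n}-p)^{1/2}S_j$ (viewed in $M_n(B)=M_n\otimes B$). A tensor computation gives $V_3^*V_3=(1_{M_n}-p)\otimes \sum_j S_j^*S_j \approx (1_{M_n}-p)d_n(e_k)$ and $V_3^*d_{qn}(a)V_3=(1_{M_n}-p)\otimes \sum_j x_j^*(e_{l+1}-e_l)^{1/2}a(e_{l+1}-e_l)^{1/2}x_j$, which is small provided $l$ is chosen large enough (relative to $q$ and $\max_j\|x_j\|$) that $q(\max_j\|x_j\|)^2\|a(e_{l+1}-e_l)^{1/2}\|<\ep'$ for all $a\in {\cal F}$. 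Stacking $V'=\begin{pmatrix}V_2\\ V_3\end{pmatrix}\in M_{Kn,n}({\tilde B})$ with $K=K'+1+q$ yields $V'^*V'\approx 1_{M_n}$ and $V'^*d_{Kn}(a)V'\approx \phi(a)d_n(e_k)$; normalizing $V:=V'(V'^*V')^{-1/2}\in M_{Kn,n}({\tilde B})$ produces $V^*V=1_{M_n}$ exactly.

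The principal obstacle is the filling step: when $\phi$ is non-degenerate ($p=1_{M_n}$) the defect vanishes and the 535A-style construction alone suffices; for general contractive CP $\phi$ one must absorb the positive residual $(1_{M_n}-p)d_n(e_k)$, and this is precisely where the hypotheses $e_{n+1}e_n=e_n$ and $e_{n+1}-e_n$ full enter. Fullness of $e_{l+1}-e_l$ (via Lemma \ref{Lfullep}) provides the decomposition of $g(e_k)\approx e_k$ needed for $V_3^*V_3\approx (1_{M_n}-p)d_n(e_k)$; the parameter $l$ must then be chosen large enough that $\|a(e_{l+1}-e_l)^{1/2}\|$, which tends to $0$ since $\{e_m\}$ is an approximate identity of $B$, dominates the factor $q(\max_j\|x_j\|)^2$ arising from the decomposition, so that $V_3^*d_{qn}(a)V_3\approx 0$ as required.
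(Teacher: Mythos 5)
Your overall architecture matches the paper's: reduce to Lemma \ref{538}, build a column $V_2$ out of $d_n((1-e_k)^{1/2})$ and $d(a_{m_0}^{1/2})V_1$ so that $V_2^*V_2\approx 1_{M_n}-(1_{M_n}-\phi(1))d_n(e_k)$, fill the residual $(1_{M_n}-\phi(1))d_n(e_k)$ by a column built from the full element $e_{l+1}-e_l$, and then polar-correct to get $V^*V=1_{M_n}$ exactly. However, the filling step has a genuine gap. You apply Lemma \ref{Lfullep} to the full element $e_{l+1}-e_l$, obtaining $g(e_k)=\sum_{j=1}^q x_j^*(e_{l+1}-e_l)x_j$, and then propose to choose $l$ so large that $q(\max_j\|x_j\|)^2\,\|a(e_{l+1}-e_l)^{1/2}\|<\ep'$. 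But $q$ and the $x_j$ come from the decomposition with respect to $e_{l+1}-e_l$ itself, so they change (and can blow up) as $l$ changes; there is no way to fix the bound $q(\max_j\|x_j\|)^2$ first and then enlarge $l$. As written the choice of $l$ is circular, and the estimate $\|V_3^*d_{qn}(a)V_3\|<\ep'$ is not justified. A related symptom is that your argument never really uses the hypothesis $e_{m+1}e_m=e_m$ (beyond positivity of $e_{l+1}-e_l$), whereas this hypothesis is exactly what makes the paper's estimate work.

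The paper's proof closes this by an orthogonality trick that makes the bound independent of $q$ and of $\|x_j\|$: first choose $k_0$ so that $\|e_{k_0}a-a\|$ is small for all $a\in{\cal F}$ (this is the real reason the statement contains a $k_0$, not just the trivial first part); then for $k\ge k_0$ one has $(e_{k+1}-e_k)e_{k_0}=0$ exactly, because $e_{m+1}e_m=e_m$. Writing the filling column $W_1$ with blocks $d_n(e_{k+1}-e_k)^{1/2}x_i$ (one can take $l=k$), one gets $W_1^*d_{K_2n}(e_{k_0}a)W_1=0$, hence
$\|W_1^*d_{K_2n}(a)W_1\|=\|W_1^*d_{K_2n}(a-e_{k_0}a)W_1\|\le\|a-e_{k_0}a\|\,\|W_1^*W_1\|,$
and $\|W_1^*W_1\|\le 1+\ep/32$ since $\sum_i x_i^*d_n(e_{k+1}-e_k)x_i\approx(1_{M_n}-\phi(1))d_n(e_k)$. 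This is small with no reference to how many $x_i$ there are or how large they are. Your construction is salvageable by inserting exactly this step: fix $k_0$ first, take $l=k\ge k_0$, and bound $\sum_j S_j^*aS_j$ by replacing $a$ with $a-e_{k_0}a$ and using $\|\sum_j S_j^*cS_j\|\le\|c\|\,\|\sum_j S_j^*S_j\|$; the remainder of your argument (the cross-term-free block structure of $V_2$, the use of $(1_{M_n}-p)^{1/2}\otimes S_j$, and the final normalization $V=V'(V'^*V')^{-1/2}$) is fine and essentially the paper's.
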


\begin{proof}
The proof is a modification of that of \ref{538}.
Again we assume that $\|\phi\|=1$ and identify $j(A)$ with $A.$
We may assume that $0<\ep<1$ and   $\|a\|\le 1$ for all $a\in {\cal F}.$
Let $\{a_n\}$ be an approximate identity of $A.$ Let $C=\overline{\cup_{n=1}^{\infty} \phi(a_n)M_n\phi(a_n)}.$
Then $C$ is a hereditary \SCA\, of $M_n.$ Therefore $C\cong M_{n_1}.$
\Wlog, by replacing $\phi$ by ${\rm Ad}\, U\circ \phi$ for some
unitary $U\in M_n,$  we may assume that $C=M_n$ and $n_1=n.$

Note that $\{\phi(a_n)\}$ is increasing and strongly converges to a positive element in $M_{n}.$
Since $M_n$ is finite dimensional,  $\{\phi(a_n)\}$ converges in norm.
We will denote the limit by $\phi(1).$ As we assumed,  $\phi(1)$ is  an invertible positive element.
\Wlog, we may write
$$
\phi(1)=\diag(\lambda_1, \lambda_2,...,\lambda_n),
$$
where $0< \lambda_i\le 1,$ $i=1,2,...,n.$

Choose $m_0\ge 1$ such that, for all $m\ge m_0,$
\beq\label{538A-1}
\|\phi(1)-\phi(a_m)\|<\ep/32\andeqn \|a_m^{1/4}aa_m^{1/4}-a\|<\ep/32 \rforal a\in {\cal F}.
\eneq
Choose $k_0\ge 1$ such that, for all $k\ge k_0,$
\beq\label{538A-2}
\|e_kx-x\|<(\ep/32)^2\rforal x\in {\cal F}\cup\{a_{m_0}^{1/4}\}\cup \{\phi(a): a\in {\cal F}\}.
\eneq
Therefore, for $k\ge k_0,$
\beq\label{538A-2+}
\|(e_{k+1}-e_k)^{1/2}a\|^2 = \|a(e_{k+1}-e_k)a\|\le \|a(1-e_k)a\|<(\ep/32)^2.
\eneq
Fix $k\ge k_0.$
It follows from \ref{538} that there exists  an integer $K_1\ge 1$ and a contraction $V_1\in M_{K_1n}(B)$ such that
\beq\label{538A-3}
\|\phi(a)d_n(e_{k})-V_1^*d_{K_1n}(a)V_1\|<\ep/32\rforal a\in {\cal F}\cup\{a_{m_0}^{1/2}\}\andeqn V_1^*V_1\le e_{k}^{1/2}
\eneq
Now we assume that $e_{n+1}e_n=e_ne_{n+1}$ and $e_{n+1}-e_n$ is full in $B.$
Since $e_{k+1}-e_{k}$ is full, there are $x_1, x_2,...,x_{K_2}\in M_n(B)$ such that
\beq\label{538A-3+}
\|\sum_{i=1}^{K_2}x_i^*d_n(e_{k+1}-e_{k})x_i-(1_{M_n}-\phi(1))d_n(e_{k})\|<\ep/32.
\eneq
Define $V_2= (1-d_n(e_{k}))^{1/2}+d_n(a_{m_0}^{1/4})V_1.$
Then, by \eqref{538A-2}, \eqref{538A-3} and \eqref{538A-1},
\beq\label{538A-4}
V_2^*V_2 &=&(1-d_n(e_{k}))+(1-d_n(e_{k}))^{1/2}d_n(a_{m_0}^{1/4})V_1\\
&& +V_1^*d_n(a_{m_0}^{1/4})(1-d_n(e_{k}))^{1/2}+V_1^*d_n(a_{m_0})^{1/2}V_1\\
&\approx_{\ep/16}&(1-d_n(e_{k}))+V_1^*d_n(a_{m_0})^{1/2}V_1\\
&\approx_{\ep/32}& (1-d_n(e_{k}))+\phi(a_{m_0}^{1/2})d_n(e_{k})\\
&\approx_{\ep/32}& (1-d_n(e_{k}))+\phi(1)d_n(e_{k}).
\eneq
Define
$$
W^*=(V_2^*, x_1^*d_n(e_{k+1}-e_{k})^{1/2}, x_2^*d_n(e_{k+1}-e_{k})^{1/2},...,x_{K_1}^*d_n(e_{k+1}-e_{k})^{1/2})
$$
which will be viewed as an element in $M_{(K_1+K_2)n}({\tilde B}).$
Then
\beq\label{538A-5}
W^*W &=&V_2^*V_2+\sum_{i=1}^{K_2}x_i^*d_n(e_{k+1}-e_{k})x_i\\
&\approx_{\ep/8} & (1_{M_n}-d_n(e_{k}))+\phi(1)d_n(e_{k}) +\sum_{i=1}^{K_1}x_i^*d_n(e_{k+1}-e_{k})x_i\\
&\approx_{\ep/32} &(1_{M_n}-d_n(e_{k}))+\phi(1)d_n(e_{k}) +(1-\phi(1))d_n(e_{k})=1_{M_n}
\eneq
In particular,
\beq\label{538A-6}
\|W^*W\|\le 1+5\ep/32.
\eneq
Put $K=K_1+K_2$ and
$$
W_1^*=(x_1^*d_n(e_{k+1}-e_{k})^{1/2}, x_2^*d_n(e_{k+1}-e_{k})^{1/2},...,x_{K_1}^*d_n(e_{k+1}-e_{k})^{1/2}).
$$
By \eqref{538A-3+},
\beq\label{538A-7}
\|W_1^*W_1\|\le 1+\ep/32.
\eneq
We may write $W^*=(V_2^*, W_1^*).$ We note that $(e_{k+1}-e_k)e_{k_0}=0.$
Then, for  $a\in {\cal F},$ by \eqref{538A-3}
and \eqref{538A-2},
\beq\nonumber
\|\phi(a)d_n(e_k)-W^* d_{Kn}(a)W\| &\le & \|\phi(a)d_n(e_k)-V_2^*d_{K_1n}(a)V_2\|+\|W_1^*d_{K_2n}(a)W_1\|\\\nonumber
&<&  \ep/32 +\|W_1^*d_{K_2n}(e_{k_0}a)W_1\|+\ep/32\|W_1^*W_1\|\\
&<&\ep/32+0+\ep/32(1+\ep/32)<\ep/16.
\eneq
There exists $W_2\in M_{Kn}({\tilde B})$ such that
$$
\|W_2-1_{M_n}\|<\ep/4\andeqn W_2^*W^*WW_2=1_{M_n}.
$$
Set $V=WW_2.$ Then $V^*V=1_{M_n}$ and
$$
\|V-W\|<\ep/2.
$$
One verifies that $V$ meets the requirements.
\end{proof}

The following is a  non-unital version of  a result of Kirchberg.

\begin{lem}\label{542}
Let $A$ be a separable \SCA\, of a \CA\, $B$ such that the embedding $j: A\to B$ is full and let $\phi: A\to B$
be a \morp\, which is amenable.
Then, for any $\ep>0$ and any finite subset ${\cal F}\subset A,$ there exists a contraction   $V\in M_N({\tilde B})$
for some integer $N\ge 1$ such that
$$
\|\phi(a)-V^*d_N(a)V\|<\ep\rforal a\in {\cal F}.
$$
If we write $V^*=(x_1^*,x_2^*,...,x_N^*)$ with $x_i\in {\tilde B}, $ then
$\sum_{i=1}^Nx_i^*x_i\le 1$
and
$$
\|\phi(a)-\sum_{i=1}^N x_i^*ax_i\|<\ep\rforal a\in {\cal F}.
$$
Moreover, if we assume that $B$ contains an approximate identity  $\{e_n\}$ such that $e_{n+1}-e_n$ is full in $B,$
we may further require that $V^*V=1_{{\tilde B}}$ and
$\sum_{i=1}^Nx_i^*x_i=1_{\tilde B}.$
\end{lem}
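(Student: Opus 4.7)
The plan is to use amenability of $\phi$ to factor it through a matrix algebra $M_n$, combine a Stinespring-type construction for the second factor $M_n\to B$ with Lemma \ref{538A} applied to the first factor $A\to M_n$, and in the moreover part use the fullness of $e_{n+1}-e_n$ to patch up the defect $1_{\tilde B}-V^*V$.

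By amenability, choose \cpc s $\psi_0:A\to M_n$ and $\psi_1:M_n\to B$ with $\|\psi_1\circ\psi_0(a)-\phi(a)\|<\ep/4$ on ${\cal F}$. Since $\psi_1(1_{M_n})\in B$, factor the positive matrix $(\psi_1(e_{ij}))\in M_n(B)_+$ as $X^*X$ with $X\in M_{L,n}(B)$; stacking produces a column $V_0\in M_{nL,1}(B)$ with $\psi_1(m)=V_0^*d_L(m)V_0$ and $V_0^*V_0=\psi_1(1_{M_n})$, where $d_L$ denotes the block-diagonal embedding $M_n\hookrightarrow M_{nL}$ regarded as scalar matrices in $M_{nL}(B)$. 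Pick $k$ large enough that $\|e_ka-a\|$ and $\|d_{nL}(e_k)^{1/2}V_0-V_0\|$ are small on ${\cal F}$. Apply Lemma \ref{538A} to $\psi_0:A\to M_n(\C)$ to obtain $W\in M_{Kn,n}(B)$ with $W^*W\le d_n(e_k^r)$ and $\|\psi_0(a)d_n(e_k)-W^*d_{Kn}(a)W\|$ small on ${\cal F}$. Define $V:=d_L(W)V_0\in M_{KnL,1}(B)$. Then $V^*V=V_0^*d_L(W^*W)V_0\le V_0^*V_0\le 1_{\tilde B}$, so $V$ is a contraction, and using the commutativity of the scalar matrix $d_L(\psi_0(a))$ with $d_{nL}(e_k)$,
\[
V^*d_{KnL}(a)V=V_0^*d_L(W^*d_{Kn}(a)W)V_0\approx V_0^*d_L(\psi_0(a)d_n(e_k))V_0\approx V_0^*d_L(\psi_0(a))V_0=\psi_1(\psi_0(a))\approx\phi(a)
\]
on ${\cal F}$ (the middle approximation uses $d_{nL}(e_k)^{1/2}V_0\approx V_0$). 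Setting $N:=KnL$ and $V^*=(x_1^*,\ldots,x_N^*)$ gives the first conclusion, with $\sum x_i^*x_i=V^*V\le 1$ and $\sum x_i^*ax_i=V^*d_N(a)V$.

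For the moreover part, fullness of $e_{n+1}-e_n$ allows the stronger conclusion of Lemma \ref{538A}: $W\in M_{Kn,n}(\tilde B)$ with $W^*W=1_{M_n}$, so now $V^*V=V_0^*V_0=\psi_1(1_{M_n})\in B$. Set $c:=1_{\tilde B}-\psi_1(1_{M_n})\ge 0$ and pick $k_0<k_1<k_2$ with $\|e_{k_0}ae_{k_0}-a\|$ sufficiently small on ${\cal F}$; the relations $e_{k_1}e_{k_0}=e_{k_0}$ and $e_{k_2}e_{k_1}=e_{k_1}$ then force
\[
(1-e_{k_1})^{1/2}e_{k_0}=0\quad\text{and}\quad(e_{k_2}-e_{k_1})^{1/2}e_{k_0}=0.
\]
Decompose $c=c^{1/2}(1-e_{k_1})c^{1/2}+c^{1/2}e_{k_1}c^{1/2}$; the second summand lies in $B_+$, so Lemma \ref{Lfullep} applied to the full element $e_{k_2}-e_{k_1}$ yields an approximation $c^{1/2}e_{k_1}c^{1/2}\approx\sum_{j=1}^m z_j^*(e_{k_2}-e_{k_1})z_j$. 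Define correction column entries $Y_1:=(1-e_{k_1})^{1/2}c^{1/2}\in\tilde B$ and $Y_{2,j}:=(e_{k_2}-e_{k_1})^{1/2}z_j\in B$. The two orthogonality relations give $Y_1^*aY_1\approx 0$ and $Y_{2,j}^*aY_{2,j}\approx 0$ on ${\cal F}$, while $Y_1^*Y_1+\sum_jY_{2,j}^*Y_{2,j}\approx c$. Stacking $V':=(V^T,Y_1,Y_{2,1},\ldots,Y_{2,m})^T\in M_{N',1}(\tilde B)$, one obtains $V'^*V'\approx 1_{\tilde B}$ and $V'^*d_{N'}(a)V'\approx\phi(a)$ on ${\cal F}$. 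A final normalization $V:=V'(V'^*V')^{-1/2}$ (well-defined in $\tilde B$ since $V'^*V'$ is invertible and close to $1_{\tilde B}$) produces $V^*V=1_{\tilde B}$ together with the required $\ep$-approximation; the same $V^*=(x_1^*,\ldots,x_{N'}^*)$ then satisfies $\sum x_i^*x_i=1_{\tilde B}$.

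The main obstacle is the moreover part: the error budget must be distributed across Stinespring, Lemma \ref{538A}, Lemma \ref{Lfullep}, and the final normalization, which forces the parameters $r\in(0,1)$ and $k_0<k_1<k_2$ to be chosen in a strict order of dependence. The essential mechanism enabling the correction columns $Y_1,Y_{2,j}$ to be appended without disturbing the $\cal F$-approximation is the orthogonality $(1-e_{k_1})^{1/2}e_{k_0}=0$, which is a consequence of the approximate identity hypothesis $e_{n+1}e_n=e_n$.
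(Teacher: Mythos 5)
Your first half follows essentially the same route as the paper: factor $\phi\approx\psi_1\circ\psi_0$ by amenability, expand the positive matrix $(\psi_1(e_{ij}))$ as a square to obtain a column $V_0$ with $\psi_1(m)=V_0^*d(m)V_0$ and $V_0^*V_0=\psi_1(1_{M_n})$, and compose with the column supplied by Lemma \ref{538}/\ref{538A} for $\psi_0:A\to M_n$. The divergence is in the ``moreover'' part. The paper never sees the defect $1_{\tilde B}-\psi_1(1_{M_n})$: it first unitizes $\psi_1$ to a unital \cpc\ ${\tilde \psi}_1:M_{n+1}\to{\tilde B}$ (through $M_n\oplus\C$), so the square-root column $R$ satisfies $R^*R=1_{\tilde B}$ exactly; the fullness of $e_{k+1}-e_k$ is used only inside Lemma \ref{538A}, to make the $A\to M_n$ leg an isometry $W$ with $W^*W=1_{M_n}$, and then $V=d_{n+1}(W\oplus e_{n+1,n+1})R$ is automatically an isometry. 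You instead keep $V^*V=\psi_1(1_{M_n})$ and patch the defect $c$ by hand with columns supported on $(1-e_{k_1})^{1/2}$ and $(e_{k_2}-e_{k_1})^{1/2}z_j$, followed by a normalization. This is workable, but it re-does at the level of Lemma \ref{542} the patching that the paper has already packaged into the proof of Lemma \ref{538A}; the unitization trick is cheaper. (Note also that, as in Lemma \ref{538A}, you are implicitly using $e_{n+1}e_n=e_n$, which is not literally in the statement of Lemma \ref{542}; exact orthogonality can be replaced by approximate orthogonality if one does not want to adjust the approximate identity.)

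One estimate must be repaired. You justify the correction step by the term-by-term bound $\|Y_{2,j}^*aY_{2,j}\|\le \|z_j\|^2\,\|a-e_{k_0}ae_{k_0}\|$; but the $z_j$ are produced (from fullness of $e_{k_2}-e_{k_1}$) only after $k_0<k_1<k_2$ are fixed, so their norms are not controlled by the earlier choice of $k_0$, and in the quantifier order you describe these individual bounds cannot be forced to be small. The circularity disappears if you bound the aggregate instead: write $a=e_{k_0}ae_{k_0}+r$; the $e_{k_0}ae_{k_0}$ part is annihilated exactly by $(e_{k_2}-e_{k_1})^{1/2}e_{k_0}=0$, and, viewing $Y_2=(Y_{2,1},\dots,Y_{2,m})^T$ as a column, $\|\sum_j Y_{2,j}^*rY_{2,j}\|\le \|\sum_j Y_{2,j}^*Y_{2,j}\|\cdot\|r\|\le (1+\sigma)\|a-e_{k_0}ae_{k_0}\|$, since $\sum_j Y_{2,j}^*Y_{2,j}=\sum_j z_j^*(e_{k_2}-e_{k_1})z_j$ approximates $c^{1/2}e_{k_1}c^{1/2}$, whose norm is at most $1$. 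This is exactly the device the paper uses for $W_1$ in the proof of Lemma \ref{538A} (via $\|W_1^*W_1\|\le 1+\ep/32$). With that bound in place your parameter dependencies are consistent and the argument closes.
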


\begin{proof}
Fix a finite subset ${\cal F}\subset A$ and $\ep>0.$ Since $\phi$ is assumed to be amenable,
\wilog, we may assume that $\phi=\psi\circ \sigma,$ where
$\sigma: A\to M_n$ and $\psi: M_n\to B$ are \morp s.
As in the proof of \ref{538A}, we may also assume that $C=M_n,$
where $C=\overline{\cup_{n=1} \sigma(a_n)M_n\sigma(a_n)}$ and where
$\{a_n\}$ is an approximate identity for $A.$

We identify $M_n\oplus \C$ with a unital \SCA\,  and define  a unital \morp\, $s: M_{n+1}\to M_n\oplus \C$
defined by
$$
s(a)=e_{1,1}ae_{1,1}+(1-e_{1,1})a(1-e_{1,1})\rforal a\in M_{n+1},
$$
where $e_{1,1}$ is the rank one minimal projection corresponding to the first corner of $(n+1)\times (n+1)$
matrices.
Define ${\tilde \psi}: M_n\oplus \C\to {\tilde B}$ by
$$
{\tilde \psi}(a+\lambda\cdot 1)=\psi(a)+\lambda\cdot (1_{{\tilde B}}-\psi(1_{M_n}))\rforal a\in M_n\andeqn \lambda\in \C.
$$
It is ready to verify that ${\tilde \psi}$ is a \morp\, (see, for example the proof of 2.27 of \cite{Lnbk}).
Define ${\tilde{\psi}}_1: M_{n+1}\to {\tilde B}$ by
$$
{\tilde \psi}_1={\tilde \psi}\circ s.
$$

Let $\{e_{i,j}\}$ be a system of matrix units for $M_n.$
Consider $(n+1)^2\times (n+1)^2$ matrix $(e_{i,j})_{n\times n}.$ It is a positive matrix.
Since ${\tilde \psi}_1$ is completely positive, $({\tilde \psi}_1(e_{i,j}))_{(n+1)\times (n+1)}$ is positive in $M_{n+1}({\tilde B}).$
Let $(r_{i,j})_{(n+1)\times (n+1)}\in M_{n+1}({\tilde B})$ be the square root of $({\tilde \psi}_1(e_{i,j}))_{(n+1)\times (n+1)}.$
Note that $r_{i,j}\in B.$ Then
\beq\label{542-1-1}
{\tilde \psi}_1(e_{i,j})=\sum_{k=1}^{n+1}r_{k,i}^*r_{k,j}.
\eneq
Let
\beq\label{542-1-2}
R_k^*=(r_{k,1}^*,r_{k,2}^*,...,r_{k,n+1}^*),\,\,\,k=1,2,...,n+1,\andeqn
R^*=(R_1^*, R_2^*,...,R_{n+1}^*).
\eneq
Then
\beq
\label{542-1-3}
{\tilde \psi}_1((\lambda_{i,j})_{(n+1)\times (n+1)})&=&\sum_{i,j}\lambda_{i,j}{\tilde \psi}_1(e_{i,j})
=\sum_{i,j,k}\lambda_{i,j}r_{k,i}^*r_{k,j}\\\label{542-1-3+}
&=&R^*d_{n+1}((\lambda_{i,j})_{(n+1)\times (n+1)})R
\eneq
for all $(\lambda_{i,j})_{(n+1)\times (n+1)}\in M_{n+1}.$
Note that
$$
{\tilde \psi}_1(1_{M_{n+1}})=1_{\tilde B}.
$$
Thus
\beq\label{542-1-4}
R^*R=1_{\tilde B}.
\eneq
Then, by \eqref{542-1-3}, for all $a\in A,$
\beq\label{542-2+1}
\phi(a)=\sum_{i,j}\sigma_{i,j}(a)\psi(e_{i,j})=\sum_{i,j,s}\sigma_{i,j}(a)r_{s,i}^*r_{s,j}=R^*d_{n+1}(\sigma(a))R.
\eneq

Suppose that $\{e_m\}$ is an approximate identity for $B.$
Choose $m_0\ge 1$ such that
\beq\label{542-1}
\|\sum_{k=1}^{n+1} r_{k,i}^*e_mr_{k,j}-\sum_{k=1}^{n+1}r_{k,i}^*r_{k,j}\|<\ep/2(n+1)^3.
\eneq
for any $m\ge m_0.$
Consequently,
\beq\label{542-10}
\|\phi(a)-R^*d_{n+1}(\sigma(a))d_{(n+1)^2}(e_{m_0})R\|<\ep/2.
\eneq
By \ref{538}, there exists a partial isometry  $W\in M_{Kn}({\tilde B})$
for some integer $K\ge 1$ such that
\beq\label{542-2}
W^*W=1_{M_n}\andeqn
\|\sigma(a)d_n(e_{m_0})-W^*d_{Kn}(a)W\|<\ep/2\rforal a\in {\cal F}.
\eneq
Define $W_1\in M_{K(n+1)}({\tilde B})$ by
$$
W_1=W\oplus e_{n+1, n+1}.
$$
Set $V=d_{n+1}(W_1)R.$
Then
$$
V^*V=R^*d_{n+1}(W_1^*)d_{n+1}(W_1)R=R^*d_{n+1}(1_{M_{n+1}})R=R^*R=1_{\tilde B}.
$$
Combining \eqref{542-10} and \eqref{542-2}, we have
\beq\nonumber
\|\psi(a)-V^*d_{(n+1)Kn}(a)V\|&=&(R^*d_{n+1}(W^*)d_{(n+1)Kn}(a)d_{n+1}(W)R)\|\\\nonumber
&\le & \|\phi(a)-R^*d_{n+1}(\sigma(a)d_n(e_{m_0}))R\|\\\nonumber
&&\hspace{-0.6in}+\|R^*(d_{n+1}(\sigma(a)d_n(e_m))-d_{n+1}(W^*d_{Kn}(a)W))R\|\\\nonumber
&<&\ep/2+\ep/2=\ep
\eneq
for all  $a\in {\cal F}.$ Choose $N=(n+1)Kn.$

\end{proof}

\section{Absorbing extensions}

\begin{lem}\label{L551}
Let $A$ be a separable \CA\, of a \CA\, $B$ so that the embedding $j: A\to B$ is full.
Suppose that $B$ is non-unital .
Identifying the constant functions in $IB=C([0,1],B)$ with $B.$ Suppose that
$\sigma: A\to M_n(SB)$ is an amenable \morp\, for some integer $n\ge 1.$
Then, for any $\ep>0$ and any finite subset ${\cal F}\subset A,$ there exists an integer
$K\ge 1$  and a partial isometry $V\in M_{(K+1)n}({\widetilde{SB}})$ such that
\beq\label{L551-1}
&&\|\phi(a)-V^*d_{Kn}(a)V\|<\ep,\\
&&V^*d_{Kn}(a)V\in SB\rforal a\in {\cal F}\andeqn\\
&&V^*V=1_{M_{n}({\tilde{IB}})}.
\eneq
\end{lem}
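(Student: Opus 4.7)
The strategy is to reduce to Lemma \ref{542} applied inside $M_n(IB)$, and then use the vanishing of $\sigma$ at $t=0$ (together with a scalar cut-off) to force the approximation to land in the suspension $M_n(SB)$ while retaining the identity relation $V^*V = 1$. The extra $n$-block in the codimension $(K{+}1)n - Kn = n$ is used as a ``compensator'' absorbing the defect of $V_1^*V_1$ from being the identity.

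First, the constant-diagonal embedding $j'\colon A\to M_n(IB)$, $j'(a)=d_n(j(a))$, is full: since $j\colon A\to B$ is full and $B$ sits fully in $IB$ as constants, the ideal generated by $j'(A)$ in $M_n(IB)$ is all of $M_n(IB)$. Because $\sigma\colon A\to M_n(SB)\subset M_n(IB)$ is amenable, Lemma \ref{542}, in its sharpened form with $V^*V=1$ (which applies once an approximate identity of $M_n(IB)$ with full consecutive differences is in hand, manufactured from a suitable approximate identity of $B$), produces an integer $K\ge 1$ and a ``column block'' $W$ of size $(Kn)\times n$ with entries in $\widetilde{IB}$, such that $W^*W=1_{M_n(\widetilde{IB})}$ and $\|\sigma(a)-W^*d_{Kn}(a)W\|<\ep/3$ for all $a\in{\cal F}$.

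Second, exploit that $\sigma(a)(0)=0$. By uniform continuity on the finite set ${\cal F}$, choose $\dt>0$ so that $\|\sigma(a)(t)\|<\ep/4$ for $t\in[0,\dt]$ and $a\in{\cal F}$, and pick $h\in C([0,1])$ with $0\le h\le 1$, $h(0)=0$, and $h(t)=1$ for $t\ge\dt$. Then $\|h\sigma(a)-\sigma(a)\|<\ep/4$. Put $V_1:=h^{1/2}W$, so that
$$V_1^*\,d_{Kn}(a)\,V_1 \;=\; h\cdot W^*\,d_{Kn}(a)\,W,$$
which approximates $\sigma(a)$ to within $\ep/3+\ep/4$. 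Since $h^{1/2}(0)=0$, every entry of $V_1$ vanishes at $t=0$, placing $V_1\in M_{Kn,n}(\widetilde{SB})$ with $V_1^*V_1=h\cdot 1_{M_n}$. Next, form a compensating $n\times n$ bottom block $v\in M_n(\widetilde{SB})$ with $v^*v=(1-h)\cdot 1_{M_n(\widetilde{IB})}$, chosen to be supported in the last $n$ rows of $M_{(K+1)n}(\widetilde{SB})$ so that it is orthogonal to the range of $d_{Kn}$. Define
$$V \;:=\; \begin{pmatrix} V_1 \\ v \end{pmatrix} \;\in\; M_{(K+1)n}(\widetilde{SB}).$$
Then $V^*V=V_1^*V_1+v^*v=h+(1-h)=1_{M_n(\widetilde{IB})}$, and since $d_{Kn}(a)$ is supported in the top $Kn$ rows, $V^*d_{Kn}(a)V=V_1^*d_{Kn}(a)V_1\approx_{\ep}\sigma(a)$; moreover $V^*d_{Kn}(a)V\in M_n(SB)$ because $V_1(0)=0$ forces this expression to vanish at $t=0$.

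\textbf{Main obstacle.} The delicate step is arranging the compensating block $v$ so that it genuinely lies in $M_n(\widetilde{SB})$ while $v^*v$ equals the constant function $1-h$ times the identity. Scalar functions like $(1-h)^{1/2}$, interpolating between $1$ at $t=0$ and $0$ at $t=1$, do not canonically sit in $\widetilde{SB}$ when $B$ is non-unital, because $\widetilde{SB}=SB+\C\cdot 1$ only admits constant scalars alongside $B$-valued functions vanishing at $0$. Overcoming this requires threading the approximate identity $\{e_k\}$ of $B$ through the scalar-function parts---exactly paralleling the use of $(1-d_n(e_k))^{1/2}$ in the proof of Lemma \ref{538A}---so that the ``scalar'' contributions are realized by legitimate elements of $\widetilde{SB}$, after which standard tracking of error estimates yields the stated bound. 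This bookkeeping, together with ensuring the corresponding $W$ from \ref{542} is compatible with the suspension cut-off, is the technical heart of the argument.
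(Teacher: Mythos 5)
Your overall architecture is the same as the paper's: apply Lemma \ref{542} to the full embedding of $A$ as constant diagonals in $M_n(IB)$, cut the resulting column so the relevant products land in the suspension, and add a defect block so that $V^*V=1$ holds exactly (the paper, by the way, only needs the contraction form of \ref{542}; your appeal to the sharpened isometry version requires an approximate identity of $M_n(IB)$ with full successive differences, which you have not produced and do not need). The gap is in the cut-off step, and it is not mere bookkeeping. First, the claim that $V_1=h^{1/2}W$ lies in $M_{Kn,n}(\widetilde{SB})$ because its entries vanish at $t=0$ is false: the entries of $W$ have nonzero scalar parts (they must, since $W^*W=1$), and for $\lambda\neq 0$ the function $t\mapsto \lambda h^{1/2}(t)\cdot 1$ is a non-constant scalar function, which lies in $M(SB)$ but not in $\widetilde{SB}=SB+\C\cdot 1$ when $B$ is non-unital; vanishing at $0$ is not the membership criterion for $\widetilde{SB}$. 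Second, the compensator you require cannot exist at all: if $v\in M_n(\widetilde{SB})$, write $v=g+\Lambda\cdot 1$ with $g\in M_n(SB)$ and $\Lambda\in M_n(\C)$; then $v^*v\equiv \Lambda^*\Lambda$ modulo $M_n(SB)$, so $v^*v=(1-h)\cdot 1_{M_n}$ forces $\Lambda^*\Lambda=1_{M_n}$ by evaluating at $t=0$, and then evaluating at $t=1$ (where $1-h=0$) would give $1_{M_n}=-m$ for some $m\in M_n(B)$, impossible since $B$ is non-unital. So the scalar cut-off $h$ has to be abandoned, not merely decorated.

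The repair is exactly what the paper does in its proof of \ref{L551} (modeled on \ref{538A}): instead of a scalar function, cut the column $V_1$ from \ref{542} by algebra elements --- on one side by $d_n(e_k)$ for an approximate identity $\{e_k\}$, and on the other by a diagonal built from an approximate identity $\{a_m\}$ of $A$ arranged (via $\sigma$, which takes values in $M_n(SB)$) so that the cut column $X$ has entries in $SB$. Once $X\in M_{Kn,n}(SB)$ and $X^*X\in M_n(SB)$, the defect block $(1_{M_n}-X^*X)^{1/2}$ automatically lies in $M_n(\widetilde{SB})$ by functional calculus, $V^*V=1_{M_n}$ holds exactly, $V^*d_{Kn}(a)V=X^*d_{Kn}(a)X\in M_n(SB)$ since $SB$ is an ideal, and the approximate-identity estimates give $\|\sigma(a)-V^*d_{Kn}(a)V\|<\ep$. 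As written, your proposal asserts the first membership incorrectly and explicitly leaves the second (impossible) step as ``the technical heart,'' so the proof is incomplete precisely at the point the lemma is about.
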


\begin{proof}
We will repeat some of arguments in the proof of \ref{538A}.
Let $\{a_m\}$ be an approximate identity for $A.$
Choose $m_0\ge 1$ such that, for all $m\ge m_0,$
\beq\label{L551-2}
\|a_m^{1/4}aa_m^{1/4}-a\|<\ep/3\rforal a\in {\cal F}.
\eneq
Choose $k_0\ge 1$ such that, for all $k\ge k_0,$
\beq\label{L551-3}
\|e_kx-x\|<(\ep/3n)^2\rforal x\in {\cal F}\cup\{a_{m_0}^{1/4}\}\cup \{\phi(a): a\in {\cal F}\}.
\eneq
It follows from \ref{542} that there exists  an integer $K_1\ge 1$ and a contraction $V_1\in M_{K_1n}({\tilde B})$ such that
\beq\label{L551-4}
\|\phi(a)-V_1^*d_{K_1n}(a)V_1\|<\ep/3\rforal a\in {\cal F}\cup\{a_{m_0}^{1/2}\}
\eneq
Define
$$
V^*=(d_n(e_k)V_1^*d_{Kn}(\phi(a_{m_0}^{1/4}), (1_{M_n}-(d_n(e_k)V_1^*d_{K_1n}(\phi(a_{m_0}^{1/2}))V_1d_n(e_k)))^{1/2})\in M_{(K+1)n}({\widetilde{SB}}).
$$
Then
\beq\label{L551-6}
V^*V &=&
d_n(e_k)V_1^*d_n(\phi(a_{m_0})^{1/2})V_1d_n(e_k)+\\
&&1_{M_n}-(d_n(e_k)V_1^*d_{K_1n}(\phi(a_{m_0}^{1/2}))V_1d_n(e_k))\\
&=& 1_{M_n}.\\
\eneq
Moreover,  since $d_{Kn}(a_{m_0}^{1/4})\in M_{Kn}(SB),$
$$
V^*d_n(a)V=d_n(e_k)V_1^*d_{Kn}(a_{m_0}^{1/4}aa_{m_0}^{1/4})V_1d_n(e_k)\in M_{(K+1)n}(SB)
$$
for all $a\in A.$
One also has that, for each $x\in {\cal F},$ by \eqref{L551-2}, \eqref{L551-3} and \eqref{L551-4},
\beq\label{L551-7}
V^*d_n(a)V &=& d_n(e_k)V_1^*d_{Kn}(a_{m_0}^{1/4}aa_{m_0}^{1/4})V_1d_n(e_k)\\
&\approx_{\ep/3} & V_1^*d_{Kn}(a_{m_0}^{1/4}aa_{m_0}^{1/4})V_1\\
&\approx_{\ep/3}& V_1^*d_{Kn}(a)V_1\\
&\approx_{\ep/3}& \phi(a).
\eneq
\end{proof}


\begin{lem}\label{551}
Let $A$ be a separable \SCA\, of a \CA\, $B$ so that the embedding $j: A\to B$ is full.
Identifying the constant functions in $IB=C([0,1],B)$ with $B.$ Suppose that
$\sigma_n: A\to M_{k(n)}(SB)$ are amenable \morp s and
$\sigma: A\to M(SB\otimes {\cal K})$ is defined by
$$
\sigma(a)=\diag(\sigma_1(a), \sigma_2(a),...,\sigma_n(a),...)\rforal a\in A,
$$
where the convergence is in the strict topology. Then there exists a sequence
$V_n\in M_2(M( {\widetilde{IB}}\otimes {\cal K}))$ such that
$$
\sigma(a)-V_n^*d_{\infty}(a)V_n\in SB\otimes {\cal K}\andeqn
\lim_{n\to\infty}\|\sigma(a)-V_n^*d_{\infty}(a)V_n\|=0
$$
for all $a\in A.$  Moreover
$$
V^*V=
\diag(p_{k(1)}, p_{k(2)},...,p_{k(n)},...),
$$
where $p_{k(n)}=1_{M_{k(n)}(C[0,1], {\tilde B})},$ $n=1,2,....$
\end{lem}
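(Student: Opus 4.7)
The plan is to construct each $V_n$ by simultaneously applying Lemma \ref{L551} to every block $\sigma_i$ of the direct sum $\sigma = \bigoplus_i \sigma_i$, with approximation errors that tend to $0$ in $n$ and are summable in $i$, and then assembling the resulting blockwise partial isometries into a single partial isometry inside $M(\widetilde{IB}\otimes\mathcal{K})$.

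First, fix a countable set $\{a_m\}$ dense in the unit ball of $A$ and put $\mathcal{F}_n=\{a_1,\ldots,a_n\}$. For each pair $(n,i)$ apply Lemma \ref{L551} to the amenable map $\sigma_i:A\to M_{k(i)}(SB)$ with finite set $\mathcal{F}_n$ and tolerance $2^{-n-i}$. This produces an integer $K_{n,i}\ge 1$ and a partial isometry $W_{n,i}\in M_{(K_{n,i}+1)k(i)}(\widetilde{SB})$ with $W_{n,i}^*W_{n,i}=p_{k(i)}$, with $W_{n,i}^*d_{K_{n,i}k(i)}(a)W_{n,i}\in M_{k(i)}(SB)$ for every $a\in A$, and with
$$
\|\sigma_i(a)-W_{n,i}^*d_{K_{n,i}k(i)}(a)W_{n,i}\|<2^{-n-i}\quad\text{for all }a\in\mathcal{F}_n.
$$

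Next, fix $n$ and partition the standard basis of the $\ell^2$ underlying $\mathcal{K}$ into successive finite blocks $J_i^{(n)}$ of sizes $(K_{n,i}+1)k(i)$. Using the decomposition of $\sigma$ on the source side and $J_i^{(n)}$ on the target side, view each $W_{n,i}$ as an element of $M(\widetilde{IB}\otimes\mathcal{K})$ supported on these orthogonal corners. Since the source projections $\{p_{k(i)}\}$ are mutually orthogonal and the target blocks $\{J_i^{(n)}\}$ are disjoint, the partial sums $\sum_{i\le N} W_{n,i}$ are uniformly bounded by $1$ and converge strictly to a partial isometry $V_n\in M(\widetilde{IB}\otimes\mathcal{K})$ with $V_n^*V_n=\diag(p_{k(1)},p_{k(2)},\ldots)$; viewing $V_n$ in the upper-left corner of $M_2(M(\widetilde{IB}\otimes\mathcal{K}))$ produces the object required. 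The blockwise construction gives
$$
\sigma(a)-V_n^*d_\infty(a)V_n=\diag\bigl(\sigma_i(a)-W_{n,i}^*d_{K_{n,i}k(i)}(a)W_{n,i}\bigr)_{i\ge 1},
$$
whose $i$-th block for $a\in\mathcal{F}_n$ sits in $M_{k(i)}(SB)$ with norm $<2^{-n-i}$; hence the diagonal sum lies in the closed ideal $SB\otimes\mathcal{K}$ and has norm at most $2^{-n}$.

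To extend both conclusions from $\bigcup_n\mathcal{F}_n$ to all of $A$, observe that $a\mapsto\sigma(a)-V_n^*d_\infty(a)V_n$ is a bounded linear map $A\to M(SB\otimes\mathcal{K})$. Its image on the dense set $\bigcup_n\mathcal{F}_n$ lies in $SB\otimes\mathcal{K}$, which is closed, so the image of every $a\in A$ does too; a standard $3\varepsilon$-argument using the uniform contraction bound propagates the norm convergence $V_n^*d_\infty(a)V_n\to\sigma(a)$ to all of $A$. The main technical point is the assembly step, namely verifying that the formal sum $\sum_i W_{n,i}$ converges strictly in $M(\widetilde{IB}\otimes\mathcal{K})$ and yields a genuine partial isometry with the prescribed source projection. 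This rests on the orthogonality of $\{p_{k(i)}\}$ and of the target blocks $\{J_i^{(n)}\}$, combined with the uniform bound $\|W_{n,i}\|\le 1$ inherited from the partial-isometry property in Lemma \ref{L551}.
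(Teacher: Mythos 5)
Your overall strategy (apply Lemma \ref{L551} blockwise to each $\sigma_i$, assemble the resulting partial isometries into a strictly convergent diagonal $V_n$ with $V_n^*V_n=\diag(p_{k(1)},p_{k(2)},\dots)$, and extend the estimates by density) is exactly the paper's route, and the assembly and norm-convergence parts are fine. However, there is a genuine gap in the ideal-membership claim. For a fixed $n$ you choose \emph{every} block $W_{n,i}$ with respect to the \emph{same} finite set ${\cal F}_n$, with tolerance $2^{-n-i}$. That tolerance therefore decays in $i$ only for the finitely many elements $a\in{\cal F}_n$. For $a\notin{\cal F}_n$ you only know that each block difference $\sigma_i(a)-W_{n,i}^*d(a)W_{n,i}$ lies in $M_{k(i)}(SB)$, with norm bounded by $2\|a\|$ but with no decay in $i$; an infinite diagonal of such blocks lies in $M(SB\otimes{\cal K})$ but in general \emph{not} in $SB\otimes{\cal K}$. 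Consequently your closedness/density step does not work: for fixed $n$ the image of the bounded map $a\mapsto\sigma(a)-V_n^*d_\infty(a)V_n$ is known to lie in $SB\otimes{\cal K}$ only on the finite set ${\cal F}_n$, not on the dense set $\bigcup_m{\cal F}_m$, so you cannot invoke closedness of the ideal — and indeed with your choice of $V_n$ the membership $\sigma(a)-V_n^*d_\infty(a)V_n\in SB\otimes{\cal K}$ may simply fail for some $a\in A$.

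The repair is the device used in the paper: make the $i$-th block of $V_n$ adapted to the \emph{increasing} exhausting family, i.e. apply Lemma \ref{L551} to $\sigma_i$ with the finite set ${\cal F}_i$ (or ${\cal F}_{\max(n,i)}$) and tolerance, say, $\varepsilon_n/2^{i+3}$. Then for every fixed $a\in{\cal F}_k$ the blockwise errors are controlled for all $i\ge k$ (since ${\cal F}_k\subset{\cal F}_i$), so the tail of the block diagonal converges in norm and $\sigma(a)-V_n^*d_\infty(a)V_n\in SB\otimes{\cal K}$ holds on the dense union $\bigcup_k{\cal F}_k$; only then does your bounded-linear-map plus closed-ideal argument legitimately extend membership to all of $A$, while the norm estimate on ${\cal F}_n$ still yields $\lim_{n\to\infty}\|\sigma(a)-V_n^*d_\infty(a)V_n\|=0$ by the standard $3\varepsilon$-argument.
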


\begin{proof} The proof is almost identical to that of Lemma 5.5.1 of \cite{Lnbk}.
Since  $A$ is not assumed to be unital, we will present a full proof here using the results in
previous section.

Fix a finite subset ${\cal F}$ and $\ep>0.$
Let ${\cal F}_1={\cal F},...,{\cal F}_n\subset {\cal F}_{n+1},$ $n=1,2,...,$
be a sequence of increasing finite subsets of $A$ such that
$\cup_{n=1}^{\infty} {\cal F}_n$ is dense in $A.$
By \ref{542}, there exists a sequence of partial isometries $V_n\in M_{K(n)+k(n)}({\widetilde{IB}})$
($K(n)>k(n)$) such that
$V_n^*V_n=1_{{\widetilde{M_{k(n)}(IB)}}},$
\beq\label{551-1}
&&\sigma_n(a)-V_n^*d_{K(n)}(a)V_n\in M_{k(n)}(SB)\rforal a\in A\andeqn\\\label{551-2}
&&\|\sigma_n(a)-V_n^*d_{K(n)}(a)V_n\|<\ep/2^{n+3}
\eneq
for all $a\in {\cal F}_n,$ $n=1,2,....$  We identify $1_{\widetilde{M_{k(n)}(IB)}}$ with
$1_{M_{k(n)}({\widetilde{IB}})}$ and denote it by
$p_{k(n)}.$ We may further write $p_{k(n)}=1_{M_{k(n)}({\widetilde{IB}}))}.$

Write
$$
d_{\infty}(a)=\diag(d_{K(1)}(a),d_{K(2)}(a),..., d_{K(n)}(a),...)\rforal a\in A.
$$
Put
$V=\diag(V_1, V_2,..., V_n,...),$ where the convergence is in the strict topology
in $M({\widetilde{SB}}\otimes {\cal K}).$ So $V\in M({\widetilde{SB}}\otimes {\cal K}).$
Then
$$
V^*V=\diag(p_{k(1)}, p_{k(2)},...,p_{k(n)},...),
$$
where convergence in the strict topology. Therefore $V^*V=1_{M({\widetilde{SB}}\otimes {\cal K})}.$
Put
$$
L_n(a)=V^*\diag(\overbrace{0,...,0}^{n-1},d_{K(n)}(a),0,...)V,\,\,\,
\sigma(a)=\sum_{n=1}^{\infty}\sigma_n(a)
$$
for all $a\in A.$

Then
$\sum_{n=1}^NL_n(a)$ converges to $V^*d_{\infty}(a)V$ and $\sum_{n=1}^N\sigma(a)$
converges to $\sigma(a)$ in the strict topology  (in $M(SB\otimes {\cal K})$) for all $a\in A,$  as $N\to\infty,$
respectively.
Therefore, as $N\to\infty,$
$$
\sum_{n=1}^NL_n(a)-\sum_{n=1}^N\sigma_n(a)\to V^*d_{\infty}(a)V-\sigma(a)\rforal a\in A
$$
in the strict topology.
By   \eqref{551-2},  for any fixed $k$ and any $a\in {\cal F}_k,$\\
$\sum_{n=1}^NL_n(a)-\sum_{n=1}^N\sigma(a)$ actually converges in norm,
as $N\to \infty.$  By \eqref{551-1} and \eqref{551-2}, for each $k,$
$$
V^*d_{\infty}(a)V-\sigma(a)\in SB\otimes {\cal K}\rforal a\in {\cal F}_k.
$$
Since $\cup_{n=1}^{\infty}{\cal F}_n$ is dense in $A,$  the above holds for all $a\in A.$
We also compute that
$$
\|V^*d_{\infty}(a)V-\sigma(a)\|< \ep\rforal a\in {\cal F}.
$$
\end{proof}



\begin{df}\label{tilde phi}
Let $A$ be a non-unital separable \CA\, and $B$ be a $\sigma$-unital \CA.
Since $SB\otimes {\cal K}$ is a \SCA\, of $S{\tilde B}\otimes {\cal K},$
we may view $SB\otimes {\cal K}\subset M(S{\tilde B}\otimes {\cal K}).$
Let $x\in M(S{\tilde B}\otimes {\cal K}).$ Then
$xa\in S{\tilde B}\otimes {\cal K}$ if $a\in  SB\otimes {\cal K}.$
Let $\{e_n\}$ be an approximate identity for $SB\otimes {\cal K},$
Then $xae_n\in SB\otimes {\cal K}$ for all $n.$
Since $xae_n\to xa,$ $xa\in SB\otimes {\cal K}.$ This implies that
we may write that $M(S{\tilde B}\otimes {\cal K})\subset M(SB\otimes {\cal K}).$

\end{df}

\begin{lem}\label{553}
Let $A$ be a separable \SCA\, of a $\sigma$-unital \CA\, $B$ so that the embedding
$j: A\to B$ is full. Identifying the constant functions in
$C([0,1],B)$ with $B.$ Suppose that $\phi: A\to M(SB\otimes {\cal K})$ is an amenable
\morp. Then there is a sequence of   isometries $V_n\in M_2(M({\widetilde{SB}}\otimes {\cal K}))$ such that
$$
\phi(a)-V_n^*d_{\infty}(a)V_n\in SB\otimes {\cal K}\andeqn
\lim_{n\to\infty}\|\phi(a)-V_n^*d_{\infty}(a)V_n\|=0
$$
for all $a\in A.$


\end{lem}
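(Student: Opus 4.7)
The plan is to reduce the general amenable map $\phi$ to the block-diagonal setting of Lemma \ref{551}. Let $\{e_n\}$ be a quasicentral approximate identity for $SB\otimes {\cal K}$ relative to the separable set $\phi(A)\subset M(SB\otimes{\cal K})$, chosen of ``matrix'' form so that $f_n := e_{n+1}-e_n$ satisfies $f_n(SB\otimes{\cal K})f_n \cong M_{k(n)}(SB)$ for some integer $k(n)\ge 1$ (for instance $e_n = 1_{\tilde{SB}}\otimes p_n$ for finite-rank projections $p_n\in{\cal K}$, adjusted by a perturbation to achieve quasicentrality). By passing to a subsequence we arrange that on any prescribed finite subset ${\cal F}\subset A$ the commutators $\|[e_n,\phi(a)]\|$ are summable.

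Define $\sigma_n: A \to f_n(SB\otimes{\cal K})f_n \cong M_{k(n)}(SB)$ by $\sigma_n(a) = f_n^{1/2}\phi(a)f_n^{1/2}$. Each $\sigma_n$ is \cpc\, and, being a compression of the amenable map $\phi$, is itself amenable. Set $\sigma(a) := \sum_n \sigma_n(a)$, the sum converging strictly in $M(SB\otimes{\cal K})$; then $\sigma$ is an amenable \morp\, of exactly the block-diagonal form handled by Lemma \ref{551}.

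Quasicentrality gives $\phi(a)-\sigma(a)\in SB\otimes{\cal K}$ for every $a\in A$: the ``off-diagonal'' terms $f_m^{1/2}\phi(a)f_n^{1/2}$ with $m\neq n$ are norm-summable (by the commutator estimate), they lie in $SB\otimes{\cal K}$, and their sum is exactly $\phi(a)-\sigma(a)$ modulo a telescoping norm error that also lies in $SB\otimes{\cal K}$. Moreover by refining $\{e_n\}$ we can make $\|\phi(a)-\sigma(a)\|$ arbitrarily small on any prescribed finite subset of $A$.

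Now Lemma \ref{551} applied to $\sigma$ yields isometries $W_k\in M_2(M(\widetilde{SB}\otimes{\cal K}))$ with $\sigma(a)-W_k^*d_\infty(a)W_k\in SB\otimes{\cal K}$ and norms tending to $0$ for all $a\in A$. Setting $V_k := W_k$ and decomposing
\[
\phi(a) - V_k^*d_\infty(a)V_k \;=\; \bigl(\phi(a)-\sigma(a)\bigr) + \bigl(\sigma(a)-W_k^*d_\infty(a)W_k\bigr),
\]
both summands lie in $SB\otimes{\cal K}$, hence so does the left-hand side; a standard diagonal argument, exhausting a dense sequence of finite subsets of $A$ while simultaneously refining $\{e_n\}$ and improving $W_k$, forces the norm of the left-hand side to $0$. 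The main obstacle is the first step, namely arranging that $\phi(a)-\sigma(a)$ genuinely lies in $SB\otimes{\cal K}$ with controlled norm uniformly over a given finite subset $\mathcal{F}\subset A$; this is the Arveson-type quasicentrality argument, which needs care because $\phi(a)$ is only a multiplier and not a priori in the ideal, but works because $\phi(A)$ is separable and the estimates $\|[e_n,\phi(a)]\|\to 0$ together with $e_n\in SB\otimes{\cal K}$ suffice to push the difference into the ideal.
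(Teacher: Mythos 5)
Your overall strategy is the paper's strategy: compress $\phi$ by the differences of a quasicentral approximate identity $\{e_n\}$ with $e_n\in M_{k(n)}(SB)$, show $\phi(a)-\sum_n(e_{n+1}-e_n)^{1/2}\phi(a)(e_{n+1}-e_n)^{1/2}\in SB\otimes{\cal K}$ with small norm on a finite set, and then invoke Lemma \ref{551}. The quasicentrality part of your argument is essentially the paper's. But there is a genuine gap at the step where you declare that $\sigma=\sum_n f_n^{1/2}\phi(\cdot)f_n^{1/2}$ is ``of exactly the block-diagonal form handled by Lemma \ref{551}.'' Lemma \ref{551} requires a map that is literally $\diag(\sigma_1,\sigma_2,\dots)$ with $\sigma_n:A\to M_{k(n)}(SB)$ sitting in mutually orthogonal matrix corners. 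Your $\sigma_n(a)=f_n^{1/2}\phi(a)f_n^{1/2}$ lives in the hereditary subalgebra $\overline{f_n(SB\otimes{\cal K})f_n}$, and these corners are not mutually orthogonal: the $f_n=e_{n+1}-e_n$ cannot be taken to be orthogonal (adjacent differences always overlap unless the $e_n$ are projections, and $SB\otimes{\cal K}$, being a suspension tensor ${\cal K}$, has no nonzero projections). Your suggested remedy $e_n=1_{\tilde{SB}}\otimes p_n$ does not work either, since $1_{\tilde{SB}}\otimes p_n\notin SB\otimes{\cal K}$, so it is not an approximate identity of the ideal — and membership of the $e_n$ (hence the $f_n$) in the ideal is exactly what your own quasicentrality argument needs. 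So Lemma \ref{551} does not apply to $\sigma$ as it stands, and the claim $\sigma(a)-W_k^*d_\infty(a)W_k\in SB\otimes{\cal K}$ is unjustified.

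The paper closes this gap with an extra intertwining step that your proposal omits: it forms the genuinely diagonal map $\sigma'(a)=\diag(\sigma_1'(a),\sigma_2'(a),\dots)$ (with the blocks placed in honestly orthogonal corners, after a small technical replacement $E_n'=E_n^{1/m(n)}$ of $E_n=(e_{n+1}-e_n)^{1/2}$), and introduces the column isometry $W^*=(e_1^{1/2},(e_2-e_1)^{1/2},\dots)\in M(\widetilde{SB}\otimes{\cal K})$ with $W^*W=1$, proving that $W^*\sigma'(x)W-\sum_n E_n\phi(x)E_n\in SB\otimes{\cal K}$ with small norm on the finite set. Lemma \ref{551} is then applied to $\sigma'$, producing an isometry $Z$, and the isometry in the conclusion is $V=ZW$, not $Z$ alone. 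To repair your proof you would need to insert this conjugation (or some equivalent device transporting the non-orthogonal compressions into genuine diagonal position); taking $V_k:=W_k$ directly, as you do, skips the part of the argument that the overlapping of the $f_n$'s makes necessary.
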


\begin{proof}
Fix a finite subset ${\cal F}\subset A$ and $\ep>0.$ Let ${\cal F}\subset {\cal F}_1\subset {\cal F}_2,...$
be an increasing sequence  of finite subsets of $A$ such that $\cup_{n=1}^{\infty} {\cal F}_n$ is dense in
$A.$ One can find an approximate identity $\{e_n\}$ for $SB\otimes {\cal K}$ such that
$e_n\in M_{L(n)}(SB).$  A subsequence of convex combination of $\{e_n\}$ forms a quasi-central
approximate identity for $SB\otimes {\cal K}.$ \Wlog, we may further assume that $\{e_n\}$ is quasi-central
and $e_n\in M_{k(n)}(SB)$ for a subsequence $\{k(n)\},$ $n=1,2,....$

 By passing to a subsequence, if necessary, we may assume that (with $e_0=0$),
 for all $a\in {\cal F}_n,$
 \beq\label{553-2}
 &&\|(e_n-e_{n-1})\phi(a)-\phi(a)(e_n-e_{n-1})\|<\ep/2^{n+2},\\\label{553-2+}
 &&\|(e_n-e_{n-1})^{1/2}\phi(a)-\phi(a)(e_n-e_{n-1})^{1/2}\|<\ep/2^{n+2}\andeqn\\
 &&\|(1-e_{n+1})e_n\|<\ep/2^{(n+4)},
 \eneq
$n=1,2,....$ Let $E_1=e_1$ and $E_n=(e_{n+1}-e_n)^{1/2}$ for $n>1.$
For each $n,$ there exists $m(n)\ge 1$ such that
\beq\label{553-d-1}
\|E_n^{1/m(n)}E_n-E_n\|<\ep/2^{2(n+4)},\,\,\, n=1,2,....
\eneq
Put
\beq\nonumber
E_n'=E_n^{1/m(n)},\,\,\,
\sigma_n'(a)=E_n'\phi(a)E_n'\andeqn \sigma_n(a)=E_n\phi(a)E_n
\eneq
for all $a\in A,$ $n=1,2,....$
 We also define
${\tilde \sigma}_n(a)=E_n{\tilde \phi}(a)E_n$  for all $a\in A,$
 and  $n=1,2,....$

Note
that $\sigma_n: A\to M_{k(n)}(SB)$ is also amenable.
Set
\beq\nonumber
&&\sigma'(a)=\diag(\sigma_1'(a), \sigma_2'(a),...,\sigma_n'(a),...)
\rforal a\in A.
\eneq
Moreover, one easily verifies that
$\psi(a)=\sum_{n=1}^{\infty}\sigma_n(a)$ converges for every
$a\in A$ (this also holds for ${\tilde \psi}(a)=\sum_{n=1}^{\infty}{\tilde \sigma}_n(a)$)
in the strict topology.
Define ${\tilde \psi}(1)=1_{M({\widetilde{SB}}\otimes {\cal K})}.$

Put
$$
W^*=(e_1^{1/2}, (e_2-e_1)^{1/2},...,(e_n-e_{n-1})^{1/2},....).
$$
Note that $\{(e_1^{1/2}, (e_2-e_1)^{1/2},...,(e_{n+1}-e_{n})^{1/2},0,0,...)\}$
converges strictly not only as elements in $M(SB\otimes {\cal K})$
but also as elements in $M({\widetilde{SB}}\otimes {\cal K}).$ Therefore
 $W\in M({\widetilde{SB}}\otimes {\cal K}).$ With $e_0=0,$
one has that
$$
W^*W=\sum_{n=1}^{\infty} (e_{n}-e_{n-1})=e_1+(e_2-e_1)+(e_3-e_2)+\cdots=1_{M(SB\otimes {\cal K})}
$$
and it converges strictly in both $M({\widetilde{SB}}\otimes {\cal K})$ and   $M(SB\otimes {\cal K}).$
Therefore (with $e_{-1}=0$), for all $a\in A,$
\beq\nonumber
&&\hspace{-0.3in}W^*\diag(\sigma_1'(a), \sigma_2'(a),...,\sigma_n'(a),0,...)W\\\nonumber
&=&\sum_{k=0}^n(e_{k}-e_{k-1})^{1/2}\sigma_{k+1}'(a)(e_{k}-e_{k-1})^{1/2}.
\eneq
Combining  this identity with \eqref{553-d-1}, one obtains that
\beq\label{553-nn+2}
W^*\sigma'(x)W-\sum_{n=1}^{\infty}\sigma_n(x)\in SB\otimes {\cal K}\rforal x\in A\andeqn\\
\|W^*\sigma'(a)W-\sum_{n=1}^{\infty}\sigma_n(a)\|<\ep/4\rforal a\in {\cal F}.
\eneq

Furthermore, $\psi: A\to M(SB\otimes {\cal K})$ is an amenable \morp.
For each $a\in {\cal F},$ by \eqref{553-2+},
\beq\label{553-3}
\|\phi(a)-\psi(a)\|&=&\|\sum_{n=1}^{\infty}(e_n-e_{n-1})\phi(a)-\sum_{n=1}^{\infty}E_n\phi(a)E_n\|\\
&\le &\sum_{n=1}^{\infty}\|E_n(E_n\phi(a)-\phi(a)E_n)\|<\sum_{n=1}^{\infty} \ep/2^{n+2}<\ep/4.
\eneq
Since
\beq\label{553-3n+1}
&&\sum_{n=1}^N((e_n-e_{n-1})\phi(a)-E_n\phi(a)E_n)\in M_{\sum_{i=1}^{N}(k(i)}(SB)
\eneq
for all $a\in A$ (or $b\in {\tilde A}$), and  (by  \eqref{553-2+} again)
\beq\nonumber
&&\|\sum_{n=N+1}^{\infty}(e_n-e_{n-1})\phi(a)-\sum_{n=N+1}^{\infty}\sigma_n(a)\|<\sum_{N+1}^{\infty}\ep/2^{n+2}\to 0,
\eneq
as $N\to\infty$ for all $a\in {\cal F}_k$ (for all $k$),
we conclude that
$\phi(a)-\psi(a)\in SB\otimes {\cal K}$
for all $a\in {\cal F}_k$ and for all $k.$ Therefore
\beq\label{553-nnn}
&&\phi(a)-\psi(a)\in SB\otimes {\cal K}\rforal a\in A
\eneq
It follows from \ref{551} that there exists an isometry $Z\in M({\widetilde{SB}}\otimes {\cal K})$ such that
\beq\label{553-4}
&&\sigma(a)-Z^*d_{\infty}(a)Z\in SB\otimes {\cal K}\rforal a\in A\andeqn\\\label{553-5}
&&\|\sigma(x)-Z^*\sigma_{\infty}(x)Z\|<\ep/4\rforal x\in  {\cal F}.
\eneq
Set $V=ZW.$ Then
$V\in M({\widetilde{SB}}\otimes {\cal K}).$
Moreover, for all $x\in A,$  by \eqref{553-nn+2}, \eqref{553-nnn} and by \eqref{553-4},
$$
\phi(a)-V^*d_{\infty}(x)V\in SB\otimes {\cal K}.
$$
Thus, for  $a\in {\cal F},$
\beq
\|\phi(a)-V^*d_{\infty}(a)V\| &\le& \|\phi(a)-\psi(a)\|+\|\psi(a)-W^*\sigma(a)W\|\\
&+&\|W^*\sigma(a)W-V^*\sigma_{\infty}(a)V\|<\ep.
\eneq

\end{proof}

\begin{df}
Let $A$ be a separable \CA\, and $B$ be a $\sigma$-unital  \CA\,
Suppose that $\phi, \psi: A\to M(B\otimes {\cal K}).$
There is an isometry $W\in M_2(M(B\otimes {\cal K}))$
with $W^*W=1_{M(B\otimes {\cal K})}$ and $WW^*=1_{M_2(M(B\otimes {\cal K}))}.$
Then we will identity $\phi\oplus \psi$ with $W^*(\phi\oplus \psi)W$ in the next statement.

\end{df}

\begin{thm}\label{Abs}
Let $A$ be a separable \SCA\, of $B$ so that
the embedding $j: A\to B$ is full and amenable.
Identify the constant functions in $C([0,1], B)$ with $B.$
Then $d_{\infty}: A\to M(SB\otimes {\cal K})$ gives an amenable
absorbing trivial extension of $A$ by $SB\otimes {\cal K}.$
Moreover, given any amenable monomorphism $\phi: A\to M(SB\otimes {\cal K})$ and any
finite subset ${\cal F}\subset A,$ there is a unitary  $V\in M({\widetilde{SB}}\otimes {\cal K})$ such that
\beq\nonumber
&&V^*(\phi(x)\oplus d_{\infty}(x))V-d_{\infty}(x)\in SB\otimes {\cal K}\rforal x\in A\andeqn\\
&&\|V^*(\phi(a)\oplus d_{\infty}(a))V-d_{\infty}(a)\|<\ep\rforal a\in {\cal F}.
\eneq

%

\end{thm}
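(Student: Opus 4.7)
The statement has two parts: (i) that $d_\infty$ gives an amenable absorbing trivial extension of $A$ by $SB\otimes {\cal K}$, and (ii) the explicit approximate unitary equivalence $V^*(\phi\oplus d_\infty)V-d_\infty\in SB\otimes {\cal K}$ with prescribed error on a finite set.  Part (i) is a consequence of (ii) together with simple observations: $d_\infty$ is amenable because the full embedding $j$ is and direct sums of amenable \morp s (convergent in strict topology) are again amenable; $d_\infty$ is a trivial extension by construction, since it lifts from $B\subset M(SB\otimes {\cal K})$; and the ``absorbing'' property is precisely the content of (ii).  So I focus on (ii).

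The plan is to combine Lemma \ref{553} with the self-absorption of $d_\infty$ as a countable direct sum.  First, apply Lemma \ref{553} to the amenable monomorphism $\phi$: there is an isometry $Z\in M(\widetilde{SB}\otimes {\cal K})$ with $\phi(a)-Z^*d_\infty(a)Z\in SB\otimes {\cal K}$ and of norm less than $\ep/2$ on ${\cal F}$.  Second, because $d_\infty=\bigoplus_{n\ge 1} j$ is by construction a countable direct sum of copies of $j$, there is a canonical unitary $U_0\in M_2(M(\widetilde{SB}\otimes {\cal K}))$ (implemented by reshuffling the copies, using that a countable direct sum of countable direct sums is a countable direct sum) realizing $d_\infty\oplus d_\infty\cong d_\infty$.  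Combining these: form the isometry $W=Z\oplus 1$ in $M_2(M(\widetilde{SB}\otimes {\cal K}))$, which gives $W^*(d_\infty\oplus d_\infty)W=Z^*d_\infty Z\oplus d_\infty\approx \phi\oplus d_\infty$ modulo $SB\otimes {\cal K}$; then $T:=U_0W$ is an isometry in $M(\widetilde{SB}\otimes {\cal K})$ with $T^*d_\infty T\approx \phi\oplus d_\infty$ modulo the ideal, with error at most $\ep/2$ on ${\cal F}$.

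The final and most delicate step is to upgrade the isometry $T$ to a bona fide unitary $V$ with the required intertwining property.  I would carry this out via an Elliott-type two-sided approximate intertwining: on a nested sequence of finite subsets ${\cal F}_n\uparrow A$ with summable errors $\ep_n$, alternately construct an isometric intertwining from $d_\infty$ into $\phi\oplus d_\infty$ (via the construction above, applied to a finer approximation) and an isometric intertwining in the reverse direction (tautological, since $d_\infty$ is a direct summand of $\phi\oplus d_\infty$), arranging at each stage that consecutive isometries agree modulo the ideal up to $\ep_n$; a standard diagonal argument yields a unitary $V\in M(\widetilde{SB}\otimes {\cal K})$ satisfying the claimed conclusion.

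The main obstacle I expect is precisely this passage from isometric to unitary intertwining: Lemma \ref{553} directly produces only isometries modulo the ideal.  The essential inputs that make the unitization possible are (a) the infinite multiplicity of $d_\infty$, so that the defect $1-TT^*$ at each stage can be absorbed into the infinite tail of the next $d_\infty$ via $U_0$, and (b) the suspension $S$ in the codomain (together with stabilization), which ensures that $M(\widetilde{SB}\otimes {\cal K})$ contains enough unitaries to unitarize isometries modulo $SB\otimes {\cal K}$ without $K$-theoretic obstruction.  Once these ingredients are in place, the Elliott intertwining assembles them into the final unitary $V$.
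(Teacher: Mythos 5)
Your construction up to the isometry $T$ is fine, but it is applied to the wrong object, and the step you defer to an ``Elliott-type two-sided approximate intertwining'' is exactly where the argument breaks down; this is a genuine gap, not a routine technicality. From Lemma \ref{553} applied to $\phi$ you get an isometry $T$ with $T^*d_{\infty}(x)T-(\phi\oplus d_{\infty})(x)\in SB\otimes {\cal K}$. Writing $q=TT^*$, the multiplicativity of $\phi\oplus d_{\infty}$ forces $q$ to commute with $d_{\infty}$ modulo the ideal, so what you actually obtain (after a unitary $M_2$-rotation) is $d_{\infty}\approx (\phi\oplus d_{\infty})\oplus \psi'$ modulo $SB\otimes {\cal K}$, where $\psi'=(1-q)d_{\infty}(1-q)$ is an uncontrolled extra summand living on the defect corner. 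You cannot simply discard or ``absorb'' $\psi'$: passing from the isometry $T$ to a unitary $V$ is not a $K$-theoretic unitarization problem (the suspension does not help here), because any unitary conjugation must account for the compression of $d_{\infty}$ to $1-TT^*$. Your proposed fix, alternating the map $T$ with the tautological inclusion of $d_{\infty}$ as a direct summand of $\phi\oplus d_{\infty}$, does not constitute an approximate intertwining in any usable sense: nothing forces the compositions of these conjugations to be close to the identity on elements, the errors do not telescope, and there is no strict-topology convergence mechanism to produce a limiting unitary in $M({\widetilde{SB}}\otimes {\cal K})$.

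The missing idea, and the route the paper takes, is to apply Lemma \ref{553} not to $\phi$ but to the infinite repeat $\phi_{\infty}=\bigoplus_{n=1}^{\infty}\phi$ (defined by cutting $1_{M(SB\otimes{\cal K})}$ into countably many equivalent projections). Then the isometry $s$ from Lemma \ref{553} has range projection $p=ss^*$ essentially commuting with $d_{\infty}$ (same multiplicativity argument as above), so $d_{\infty}\approx s\phi_{\infty}s^*\oplus(1-p)d_{\infty}(1-p)$ modulo the ideal, with all identifications implemented by genuine unitaries (including a unitary conjugating $1$ to $p$ inside $M_2$ of the multipliers, and $U_3=s\oplus 1$, which is a unitary from the $1$-corner onto the $p$-corner picture). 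The point of the infinite repeat is the exact unitary equivalence $\phi_{\infty}\cong\phi\oplus\phi_{\infty}$: substituting it gives $d_{\infty}\approx \phi\oplus\phi_{\infty}\oplus\psi\approx\phi\oplus d_{\infty}$ modulo $SB\otimes{\cal K}$ in a single step, with the defect corner $\psi$ reabsorbed rather than left over. In other words, the absorption has to be built into the map you feed to Lemma \ref{553}; it cannot be recovered afterwards by an intertwining of isometries.
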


\begin{proof}
Let $\phi: A\to M(SB\otimes {\cal K})$ be an  amenable monomorphism.
One defines  an amenable monomorphism
$\phi_{\infty}=\oplus_{n=1}^{\infty} \phi: A\to M(SB\otimes {\cal K})$ by
dividing the identity of $M(SB\otimes {\cal K})$ into countably many
copies of equivalent projections in $M(SB\otimes {\cal K})$ each of which is equivalent to
the identity of $M(SB\otimes {\cal K}).$

Let $\ep>0$ and
${\cal F}\subset A$ be a finite subset.
By \ref{553}, there is an isometry $s\in M_2(M({\widetilde{SB}}\otimes {\cal K}))$ such
that
\beq\label{Abs-1}
&&\phi_{\infty}(x)-s^*d_{\infty}(x)s\in SB\otimes {\cal K}\rforal x\in A \andeqn\\
 &&\|\phi_{\infty}(a)-s^*d_{\infty}(a)s\|<\ep/12\rforal a\in {\cal F}.
 \eneq
Put $ss^*=p$ which is a projection in $M_2(M({\widetilde{SB}}\otimes {\cal K})).$
Since $\phi_{\infty}$ is a \hm,
we have, for any $a\in A_{s.a.},$
$$
pd_{\infty}(a)pd_{\infty} (a)p-pd_{\infty}(a)p\in M_2(SB\otimes {\cal K})
\rforal a\in A.
$$
Therefore
$$
pd_{\infty}(a)(1-p)d_{\infty}(a)p\in M_2(SB\otimes {\cal K})\rforal a\in A.
$$
Since $SB\otimes {\cal K}$ is an ideal of $M(SB\otimes {\cal K}),$
this implies that
$$
pd_{\infty}(a)(1-p)\in M_2(SB\otimes {\cal K})\rforal a \in A.
$$
Therefore
$$
pd_{\infty}(a)-d_{\infty}(a)p\in M_2(SB\otimes {\cal K}) \rforal a\in A.
$$
We also assume that
$$
\|pd_{\infty}(a)-d_{\infty}(a)p\|<\ep/6\rforal a\in {\cal F}.
$$
Note that
 there is $U_1\in M_2(M({\widetilde{SB}}\otimes {\cal K})$
such that $U_1^*1_{M({\widetilde{SB}}\otimes {\cal K})}U_1=p.$
We also have
\beq\label{Abs-2}
&&(U_1s)\phi_{\infty}(x)(U_1s)^*-U_1pd_{\infty}(x)pU_1^*\in SB\otimes {\cal K}\rforal x\in A\andeqn\\
&&\|(U_1s)\phi_{\infty}(x)(U_1s)^*-U_1pd_{\infty}(x)pU_1^*\|<\ep/12\rforal a \in {\cal F}.
\eneq
We may also write, for all $x\in A,$
\beq\label{ABs-3}
&&d_{\infty}(x)-pd_{\infty}(x)p\oplus (1-p)d_{\infty}(x)(1-p)\in M_2(SB\otimes {\cal K})\andeqn\\.
&&\|d_{\infty}(a)-pd_{\infty}(a)p\oplus (1-p)d_{\infty}(a)(1-p)\|<\ep/6\rforal a\in {\cal F}.
\eneq
Put $\psi=(1-p)d_{\infty}(1-p).$ Then  there is a unitary $U_2\in M_2(M(\C\otimes {\cal K})$
such that
\beq\label{Abs-4}
&&\hspace{-0.4in}U_2^*d_{\infty}(x)U_2-s\phi_{\infty}(x)s^*\oplus \psi(x) \\
&&=
U_2^*(d_{\infty}(x)-(pd_{\infty}(x)p+\psi(x)))U_2+\\
&&(pd_{\infty}(x)p\oplus \psi(x)-s\phi_{\infty}(x)s^*\oplus \psi(x)\in M_2(SB\otimes {\cal K})\rforal x\in A \\
&&\andeqn\|U_2^*d_{\infty}(a)U_2-s\phi_{\infty}(a)s^*\oplus \psi(a)\|<\ep/3\rforal a\in {\cal F}.
\eneq
Put $U_3=s\oplus 1_{M({\widetilde{SB}\otimes {\cal K})}}.$ Then
$$
U_3^*U_3=s^*s\oplus 1_{M(\widetilde{SB}\otimes {\cal K})}=1_{M_2(M(\widetilde{SB}\otimes {\cal K}))}
$$
and
\beq\label{Abs-5}
&&U_3^*U_2^*d_{\infty}(x)U_2U_3-\phi_{\infty}(x)\oplus \psi(x)\in M_2(SB\otimes {\cal K})\rforal x\in A\andeqn\\
&&\|U_3^*U_2^*d_{\infty}(a)U_2U_3-\phi_{\infty}(a)\oplus \psi(a)\|<\ep/3\rforal a\in {\cal F}.
\eneq
Equivalently,
\beq\label{ABs-5+}
&&d_{\infty}(x)-U_2U_3(\phi_{\infty}(x)\oplus \psi(x))U_3^*U_2^*\in M_2(SB\otimes {\cal K})\rforal x\in A\andeqn\\
&&\|d_{\infty}(a)-U_2U_3(\phi_{\infty}(a)\oplus \psi(a))U_3^*U_2^*\|<\ep/3\rforal a\in {\cal F}.
\eneq

There is  $U_4\in M_2(M(\C\otimes {\cal K}))$ such that $U_4^*U_4=1_{M(\C\otimes {\cal K})},$
$U_4U_4^*=1_{M_2(M(\C\otimes {\cal K}))}$ and
$$
U_4^*\phi_{\infty}U_4=\phi\oplus \phi_{\infty}.
$$
Let $U_5=U_4\oplus 1_{M(\C\otimes {\cal K})}.$ Then, for all $x\in A,$
\beq\label{Abs-6}
&&U_5^*(U_3^*U_2^*d_{\infty}(x)U_2U_3)U_5-\phi(x)\oplus \phi_{\infty}(x)\oplus \psi(x)\in M_3(SB\otimes {\cal K})\andeqn\\
&&\|U_5^*(U_3^*U_2^*d_{\infty}(a)U_2U_3)U_5-\phi(a)\oplus \phi_{\infty}(a)\oplus \psi(a)\|<\ep/3
\rforal a\in {\cal F}.
\eneq
Let $U_6=1_{M({\widetilde{SB}}\otimes {\cal K})}\oplus (U_2U_3)^*.$
Then, for all $x\in A,$
\beq\label{Abs-7}
&&U_6^*(U_5^*(U_3^*U_2^*d_{\infty}(x)U_2U_3)U_5)U_6-\phi(x)\oplus d_{\infty}(x)\in M_2(SB\otimes {\cal K})\andeqn\\
&& \|U_6^*(U_5^*(U_3^*U_2^*d_{\infty}(a)U_2U_3)U_5)U_6-\phi(a)\oplus d_{\infty}(a)\|<\ep\rforal a\in {\cal F}.
\eneq

\end{proof}

\begin{df}
Let $A$ be a separable \CA\, and let $B$ be a $\sigma$-unital \CA.
Consider two essential extensions $\tau_1, \tau_2: A\to M(SB\otimes {\cal K})/SB\otimes {\cal K}.$
We write $\tau_1\approxeq \tau_2$ if there exists
a unitary $u\in M({\widetilde{SB}}\otimes {\cal K})\subset M(SB\otimes {\cal K})$ such that
$$
\pi(u)^*\tau_1\pi(u)=\tau_2,
$$
where $\pi: M(SB\otimes {\cal K})\to M(SB\otimes {\cal K})/SB\otimes {\cal K}$ is the quotient map.
We will use this convention in the next theorem and its proof.
\end{df}

\begin{thm}\label{Textabs}
Let $A$ be a separable  amenable \CA\, and $B$ be a $\sigma$ -unital \CA.
Suppose that $j: A\to B$ be a  full embedding.
Suppose that $\tau_1, \tau_2: A\to M(SB\otimes {\cal K})/SB\otimes {\cal K}$ are two  essential
extensions. Suppose that $[\tau_1]=[\tau_2]$ in $KK^1(A, SB).$
Then
$$
\tau_1\oplus \pi\circ d_{\infty}\approxeq \tau_2\oplus \pi\circ d_{\infty}.
$$
\end{thm}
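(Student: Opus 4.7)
The plan is to combine the absorption property of $d_\infty$ (Theorem \ref{Abs}) with the standard description of $KK^1$-equivalence for amenable extensions: two extensions are $KK^1$-equivalent precisely when they become unitarily equivalent after adding trivial (split) extensions, and the absorbing property of $d_\infty$ will let us replace any such trivial extension by $d_\infty$ itself.

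First, I would unpack what $[\tau_1]=[\tau_2]$ in $KK^1(A,SB)$ gives us. Since $A$ is separable amenable, Kasparov's identification gives $KK^1(A,SB)\cong\mathrm{Ext}(A,SB)$, where the latter consists of amenable essential extensions modulo unitary equivalence and addition of trivial extensions. Moreover, essential extensions of $SB\otimes\mathcal{K}$ by an amenable $A$ admit amenable c.p.\ liftings (Choi--Effros plus amenability of $A$), so we may work with amenable Busby invariants $\tau_i = \pi\circ\Phi_i$. The equality $[\tau_1]=[\tau_2]$ therefore produces amenable $*$-homomorphisms $\sigma_1,\sigma_2:A\to M(SB\otimes\mathcal{K})$ and a unitary $u\in M(\widetilde{SB}\otimes\mathcal{K})$ with
\[
\pi(u)^{*}\bigl(\tau_1\oplus\pi\circ\sigma_1\bigr)\pi(u)=\tau_2\oplus\pi\circ\sigma_2 .
\]
To be able to plug into \ref{Abs}, I would also replace $\sigma_i$ by $\sigma_i\oplus d_\infty$, which is an amenable monomorphism because $j:A\to B$ is a full embedding and hence $d_\infty$ is injective; this only changes the representatives by a further trivial summand.

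Next I would apply Theorem \ref{Abs} to each $\sigma_i\oplus d_\infty$ to obtain unitaries $V_i\in M(\widetilde{SB}\otimes\mathcal{K})$ implementing
\[
V_i^{*}\bigl((\sigma_i\oplus d_\infty)(x)\oplus d_\infty(x)\bigr)V_i-d_\infty(x)\in SB\otimes\mathcal{K}\qquad(x\in A),
\]
so that after passing to the Calkin-type quotient, $\pi\circ\sigma_i\oplus\pi\circ d_\infty\approxeq\pi\circ d_\infty$. Chaining this with the unitary equivalence coming from $[\tau_1]=[\tau_2]$, I would write
\[
\tau_1\oplus\pi\circ d_\infty\approxeq\tau_1\oplus\pi\circ\sigma_1\oplus\pi\circ d_\infty\approxeq\tau_2\oplus\pi\circ\sigma_2\oplus\pi\circ d_\infty\approxeq\tau_2\oplus\pi\circ d_\infty,
\]
with the unitary that effects the overall equivalence obtained by composing the three unitaries; each one lies in $M(\widetilde{SB}\otimes\mathcal{K})$, hence so does the composition.

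The main obstacle I expect is the first step: making precise, in the non-unital amenable but not-necessarily-UCT setting used here, that $[\tau_1]=[\tau_2]$ in $KK^1(A,SB)$ actually yields a genuine unitary equivalence of $\tau_1\oplus\pi\circ\sigma_1$ and $\tau_2\oplus\pi\circ\sigma_2$ for honest $*$-homomorphisms $\sigma_i:A\to M(SB\otimes\mathcal{K})$ (not merely into the corona), together with the verification that the implementing unitary can be chosen in $M(\widetilde{SB}\otimes\mathcal{K})$ rather than only in $M(SB\otimes\mathcal{K})$. Everything after that is essentially formal: the absorbing theorem does all the work, since it converts the "any trivial extension" freedom on the right-hand side of the equivalence into the fixed extension $\pi\circ d_\infty$.
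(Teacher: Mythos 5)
Your overall engine is the same as the paper's (everything is pushed through the absorption theorem \ref{Abs}), but the way you exploit the hypothesis $[\tau_1]=[\tau_2]$ is different, and it is exactly there that your argument has a genuine gap -- the one you yourself flag. You need that equality in $KK^1(A,SB)\cong Ext(A,SB)$ produces honest trivial extensions $\sigma_1,\sigma_2: A\to M(SB\otimes{\cal K})$ and a unitary $u$ with $\pi(u)^*(\tau_1\oplus\pi\circ\sigma_1)\pi(u)=\tau_2\oplus\pi\circ\sigma_2$, with $u$ lying in $M(\widetilde{SB}\otimes{\cal K})$; recall that the relation $\approxeq$ in the statement is \emph{defined} by conjugation by a unitary of $M(\widetilde{SB}\otimes{\cal K})$, not merely of $M(SB\otimes{\cal K})$. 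The standard identification of the $Ext$-group for separable amenable $A$ only gives stable unitary equivalence implemented by a unitary of $M(SB\otimes{\cal K})$ (or even only of the corona, depending on which equivalence one takes), and nothing in the paper's framework upgrades that unitary to the subalgebra $M(\widetilde{SB}\otimes{\cal K})$. Since your final unitary is a composition in which this $u$ is the middle factor, your chain only yields an equivalence implemented in $M(SB\otimes{\cal K})$, which is weaker than the asserted conclusion; the first step of your outline is therefore not established, and it is not a routine verification.

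The paper's proof avoids this entirely by never unpacking the $KK^1$-equality into a unitary equivalence of cycles. It uses Arveson's result to produce an essential extension $\tau_1^*$ with $\tau_1\oplus\tau_1^*$ trivial; since $[\tau_2]=[\tau_1]$, the extension $\tau_2\oplus\tau_1^*$ is trivial as well, and then \ref{Abs} gives $\tau_i\oplus\tau_1^*\oplus\pi\circ d_\infty\approxeq\pi\circ d_\infty$ for $i=1,2$, with unitaries coming from \ref{Abs} itself and hence already in $M(\widetilde{SB}\otimes{\cal K})$. The conclusion then follows from the purely formal rotation computation $\tau_1\oplus\pi\circ d_\infty\approxeq\tau_1\oplus(\tau_2\oplus\tau_1^*\oplus\pi\circ d_\infty)\approxeq\tau_2\oplus(\tau_1\oplus\tau_1^*\oplus\pi\circ d_\infty)\approxeq\tau_2\oplus\pi\circ d_\infty$, where the middle step is just a permutation of summands. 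In other words, the hypothesis is used only through the group structure of $Ext$ (existence of an inverse), so every implementing unitary is supplied by \ref{Abs} and the placement problem you worried about never arises. If you want to salvage your route, you would have to prove a lemma saying that the equivalence coming from $[\tau_1]=[\tau_2]$ can be implemented in $M(\widetilde{SB}\otimes{\cal K})$ after adding $\pi\circ d_\infty$; the paper's argument is precisely a way of getting the theorem without that lemma.
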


\begin{proof}
By \cite{Arv}, there is an essential extension $\tau_1^*: A\to M(SB\otimes {\cal K})/SB\otimes {\cal K}$
such that
$$
\tau_1\oplus \tau_1^*
$$
is trivial.  It follows that $\tau\otimes \tau_1^*$ is trivial.
By \ref{Abs}, there is unitary $v\in M(SB\otimes {\cal K})$ such that
$$
\pi(v)^*(\tau_2\oplus \tau_1^*\oplus \pi\circ d_{\infty})\pi(v)=\pi\circ d_{\infty}.
$$
In particular, $\tau_2\oplus \tau_1^*\oplus \pi\circ d_{\infty}$ is trivial.
It follows \ref{Abs}, in fact,  that
\beq\label{Texabs-1}
\tau_2\oplus \tau_1^*\oplus \pi\circ d_{\infty}\approxeq \pi\circ d_{\infty}.
\eneq
We also have
\beq\label{Texabs-2}
\tau_1\oplus \tau_1^*\oplus \pi\circ d_{\infty}\approxeq \pi\circ d_{\infty}.
\eneq
Therefore
\beq\label{Texabs-3}
\tau_1\oplus (\tau_2\oplus \tau_1^*\oplus \pi\circ d_{\infty}) &\approxeq& \tau_1\oplus \pi\circ d_{\infty}\\
&\approxeq& \tau_1\oplus (\tau_1\oplus \tau_1^*\oplus \pi\circ d_{\infty})\\
&\approxeq& \tau_1\oplus \pi\circ d_{\infty}.
\eneq
However,
it is easy to check that
\beq\label{Texabs-4}
\tau_2\oplus (\tau_1\oplus \tau_1^*\oplus \pi\circ d_{\infty}) \approxeq \tau_1\oplus (\tau_2\oplus \tau_1^*\oplus \pi\circ d_{\infty}).
\eneq
It follows from \eqref{Texabs-3}  and \eqref{Texabs-4}
that
\beq\nonumber
\tau_2\oplus \pi\circ d_{\infty} &\approxeq & \tau_2\oplus (\tau_1\oplus \tau_1^*\oplus \pi\circ d_{\infty})\\
&\approxeq &\tau_1\oplus (\tau_2\oplus \tau_1^*\oplus \pi\circ d_{\infty})\\
&\approxeq & \tau_1\oplus \pi\circ d_{\infty}.
\eneq
\end{proof}

\section{Stable uniqueness theorems for non-unital \CA s,
cutting the tails}

{\it In this section and the next, we will use $Ext(A, SB)$ to identify $KK(A,B)$ when
$A$ is a separable amenable \CA.}

\begin{df}\label{Ddinfty-1}
Let $A\subset B$ be two \CA s. Recall
that $d_{\infty}(a)=\diag(a,a,...,a,...)$ is a diagonal element in $M(B\otimes {\cal K})$ for
all $a\in B.$ Let $\{e_{i,j}\}$ be the matrix units for ${\cal K}$ and let
$e_m=\sum_{i=1}^m e_{i,i}.$ For convenience, in what follows, we use
$d_{\infty-m}(a)$ for
$(1-e_m)d_{\infty}(a).$ In other words,
$d_{\infty-m}(a)=\diag(\overbrace{0,0,...,0}^m, a,a,...).$
\end{df}

In the following lemma, we note that $e_n\not\in A\otimes {\cal K}$ and
$M(A\otimes {\cal K})\not=M({\tilde A}\otimes {\cal K})$ in general.

\begin{lem}\label{562}{\rm (Lemma 5.6.2 of \cite{Lnbk})}
Let $C$ be a $\sigma$-unital \CA\, and let $A$ be a separable \SCA\, of $C.$
Let $\{c_n\}$ be an approximate identity for $C.$
Suppose that $U\in M({\tilde C}\otimes {\cal K})$ is a unitary such that
\beq\label{562-1}
U^*\diag(b(a),d_{\infty}(a))U-\diag(c(a), d_{\infty}(a))\in C\otimes {\cal K}
\eneq
for some $b(a)$ and $c(a)$ (depending on $a$) and for all $a\in A.$ Then, for any $\ep>0$ and
for any finite subset ${\cal F}\subset A,$

(1) there is an  integer $N>1$   such that, for all $a\in {\cal F},$
$$
\|(1-e_N)U^*d_{\infty-1}(a)U(1-e_N)-(1-e_N)d_{\infty-1}(a)(1-e_N)\|<\ep/32,
$$
where $e_N=\sum_{i=1}^n e_{i,i},$

(2) Let $p=Ue_NU^*.$ If $N_1\ge N$ such that
$\|e_{N_1}p-p\|<\ep/8,$ then
$$
\|\diag(0, d_{N_1-1}(a))-[pd_{\infty-1}(a)p+(e_{N_1}-p)d_{\infty-1}(a)(e_{N_1}-p)\|<\sqrt{\ep}+\ep
$$
for all $a\in {\cal F}.$
\end{lem}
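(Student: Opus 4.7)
The plan is to first show that $w(a) := U^* d_{\infty-1}(a) U - d_{\infty-1}(a)$ lies in $\tilde C \otimes {\cal K}$, and then exploit the fact that $y := d_{\infty-1}(a)$ commutes with every $e_n$ (both are diagonal in the same matrix units). Under the usual identification of $M_2(M(\tilde C \otimes {\cal K}))$ with $M(\tilde C \otimes {\cal K})$, one has $\diag(b(a), d_\infty(a)) = b(a) e_{1,1} + d_{\infty-1}(a)$ and similarly $\diag(c(a), d_\infty(a)) = c(a) e_{1,1} + d_{\infty-1}(a)$, so the hypothesis rearranges to
\[U^*(b(a) e_{1,1}) U + U^* d_{\infty-1}(a) U - c(a) e_{1,1} - d_{\infty-1}(a) \in C \otimes {\cal K}.\]
Both rank-one correction terms $c(a) e_{1,1}$ and $U^*(b(a) e_{1,1}) U$ lie in $\tilde C \otimes {\cal K}$ (a two-sided ideal in $M(\tilde C \otimes {\cal K})$), and $C \otimes {\cal K} \subset \tilde C \otimes {\cal K}$, so $w(a) \in \tilde C \otimes {\cal K}$. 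Since every element of $\tilde C \otimes {\cal K}$ is norm-approximated by elements of $\tilde C \otimes M_n$ (which are annihilated by $(1-e_N)$ on either side for $N\ge n$), one has $\|(1-e_N) w(a)(1-e_N)\| \to 0$ as $N \to \infty$, and taking the maximum over the finite set ${\cal F}$ proves (1).

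For (2), I first expand using $e_{N_1} = p + (e_{N_1}-p)$ to obtain
\[e_{N_1} y e_{N_1} - [p y p + (e_{N_1}-p) y (e_{N_1}-p)] = (e_{N_1}-p) y p + p y (e_{N_1}-p),\]
so the task is to bound each cross term. The hypothesis $\|e_{N_1} p - p\| < \ep/8$ gives $\|(e_{N_1}-p) - e_{N_1}(1-p)\| < \ep/8$, which reduces the problem to bounding $\|(1-p) y p\|$ and $\|p y (1-p)\|$ (with an extra $\ep/8$ slack). Conjugating by $U$, using $p = U e_N U^*$ together with the commutation $(1-e_N) y e_N = 0$, gives
\[(1-p) y p = U\bigl[(1-e_N) T e_N\bigr] U^*, \qquad T := U^* y U,\]
so $\|(1-p) y p\| = \|(1-e_N) T e_N\|$, and symmetrically $\|p y (1-p)\| = \|e_N T (1-e_N)\|$.

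The main obstacle is to bound the off-diagonal $\|(1-e_N) T e_N\|$ from the two-sided compression bound in (1) alone; this is what forces the $\sqrt{\ep}$. I handle it by enlarging ${\cal F}$ at the outset to include $\{a^*, a^*a, aa^*:a\in {\cal F}\}$ and applying (1) to each. Since $U^* d_{\infty-1}(aa^*) U = TT^*$, (1) yields $(1-e_N) TT^*(1-e_N) \approx yy^*(1-e_N)$, and likewise $(1-e_N) T(1-e_N) \approx y(1-e_N)$, $(1-e_N) T^*(1-e_N) \approx y^*(1-e_N)$, all within $\ep/32$. The Pythagorean expansion
\[(1-e_N) TT^*(1-e_N) = (1-e_N) T e_N T^*(1-e_N) + \bigl((1-e_N) T(1-e_N)\bigr)\bigl((1-e_N) T^*(1-e_N)\bigr)\]
combined with the fact that the product on the right approximates $y(1-e_N)\cdot y^*(1-e_N) = yy^*(1-e_N)$ forces the off-diagonal piece to satisfy $\|(1-e_N) T e_N T^*(1-e_N)\| = O(\ep)$. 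Writing this positive element as $Z'(Z')^*$ with $Z' := (1-e_N) T e_N$ and taking a square root gives $\|(1-e_N) T e_N\| = O(\sqrt{\ep})$, and the companion bound $\|e_N T (1-e_N)\| = O(\sqrt{\ep})$ follows from the same argument with $aa^*$ replaced by $a^*a$. Combining the two cross-term bounds with the $\ep/8$ slack yields the claimed estimate $\sqrt{\ep} + \ep$ after absorbing constants.
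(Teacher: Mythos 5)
Your proof is correct and follows essentially the same route as the paper: part (1) is the same approximate-identity argument (the relevant difference lies in ${\tilde C}\otimes {\cal K}$, so compressions by $1-e_N$ vanish in the limit), and part (2) uses the same square-root trick, bounding the off-diagonal compression by the defect in the Pythagorean identity coming from applying (1) to products, then assembling with the $\|e_{N_1}p-p\|<\ep/8$ hypothesis. The only (harmless) variations are that you use $aa^*$ and $a^*a$ in place of the paper's reduction to self-adjoint $a\in{\cal F}$ and its square $a^2$, and you estimate $(1-e_N)Te_N$ with $T=U^*d_{\infty-1}(a)U$ rather than the unitarily equivalent $(1-p)d_{\infty-1}(a)p$.
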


\begin{proof}
To simplify notation, we may assume that ${\cal F}$ is in the unit ball of $A_{s.a.}.$
Let ${\cal G}={\cal F}\cup \{ab: a,\, b\in {\cal F}\}.$ Let
$M=\max\{\|b(a)\|\|c(a)\|: a\in {\cal G}\}.$

Note that $e_{i,j}\in {\tilde C}\otimes {\cal K}.$
Therefore $U^*e_{1,1}\in {\tilde C}\otimes {\cal K}.$ Since $\{e_n\}$ forms
an approximate identity for ${\tilde C}\otimes {\cal K},$  there is an integer $N>1$ such that
\beq\label{562-1+}
&&\|(1-e_N)U^*e_{1,1}\|<\ep/64(M+1)\andeqn\\\label{562-2}
&&(1-e_N)U^*\diag(b(a), d_{\infty})U(1-e_n)\approx_{\ep/64} (1-e_N)\diag(c(a), d_{\infty}(a))(1-e_N)
\eneq
for all $b(a), c(a)\in C$ and $a\in {\cal G}.$
Note that
$$
(1-e_N)\diag(c(a), d_{\infty}(a))(1-e_N)=(1-e_N)d_{\infty-1}(a)(1-e_N)=d_{\infty-N}(a)
$$
for all $a\in A.$
Therefore, by \eqref{562-1+} and \eqref{562-2},
\beq\label{562-2+}
\|(1-e_N)U^*d_{\infty-1}(a)U(1-e_N)-d_{\infty-N}(a)\|<{M\ep\over{64(M+1)}}+\ep/64<\ep/32
\eneq
for all $a\in {\cal G}.$ This proves (1).

Suppose now that $N_1>N$ and $\|e_{N_1}p-p\|<\ep/8.$
From what has been established, if $a\in {\cal F},$
\beq\label{562-3}
&&\|(1-e_N)U^*d_{\infty-1}(a^2)U(1-e_N)-d_{\infty-N}(a^2)\|<\ep/32\andeqn\\
&&\|(1-e_N)U^*d_{\infty-1}(a)(1-p)d_{\infty-1}(a)U(1-e_N)-d_{\infty-N}(a^2)\|<\ep/16.
\eneq
Therefore, for $a\in {\cal F},$
\beq\label{562-4}
&&\hspace{-0.4in}(1-e_N)U^*d_{\infty-1}(a)(1-p)d_{\infty-1}(a)U(1-e_N)\\
&\approx_{3\ep/32}& (1-e_N)U^*d_{\infty-1}(a^2)U(1-e_N).
\eneq
Consequently (since $(1-p)=U(1-p)U^*$), for all $a\in {\cal F},$
\beq\label{562-5}
\|(1-p)d_{\infty-1}(a)(1-p)d_{\infty-1}(a)(1-p)-(1-p)d_{\infty-1}(a^2)(1-p)\|<3\ep/32.
\eneq
On the other hand, for all $a\in {\cal F},$
\beq\label{562-6}
(1-p)d_{\infty-1}(a^2)(1-p) &=& (1-p)d_{\infty-1}(a)(1-p)d_{\infty-1}(a)(1-p)\\
            \hspace{0.2in}&+& (1-p)d_{\infty-1}(a)pd_{\infty-1}(a)(1-p).
\eneq
Therefore, for all $a\in {\cal F},$
\beq\label{562-7}
\|(1-p)d_{\infty-1}(a)pd_{\infty-1}(a)(1-p)\|<3\ep/32
\eneq
Since we assume that $a=a^*$ in ${\cal F},$ we have
\beq\label{562-8}
\|pd_{\infty-1}(a)(1-p)\|<\sqrt{3}\sqrt{\ep}/\sqrt{32}<\sqrt{\ep}/2\sqrt{2}\rforal a\in {\cal F}.
\eneq
It follows that
$$
\|d_{\infty-1}(a)-(pd_{\infty-1}(a)p+(1-p)d_{\infty}(a)(1-p)\|<\sqrt{\ep/2}\rforal a\in {\cal F}.
$$
Finally, for all $a\in {\cal F},$
$$
\|\diag(0, d_{N_1-1}(a))-[pd_{\infty-1}(a)p+ (e_{N_1}-p)d_{\infty-1}(a)(e_{N_1}-p)\|<\sqrt{\ep}+\ep.
$$
\end{proof}

The following is a non-unital version of  Theorem 4.3 \cite{Lnsuniq} (see also \cite{ED} and \cite{Lnauct}).
The idea of the proof  presented below is based on
 that of Theorem 5.6.4. of \cite{Lnbk} using what have been proved here.

\begin{thm}\label{T564}
Let $B$ be a $\sigma$-unital \CA\, and let $A$ be a separable \SCA\, of $B.$
Suppose that $j: A\to B$ is full and amenable.
Let $\af,\, \bt: A\to B$ be two \hm s. If $[\af]=[\bt]$ in $KK(A, B),$ then, for any $\ep>0,$ and any
finite subset ${\cal F}\subset A,$ there exists an integer $L>1$ and a unitary
$U\in M_{L+1}({\tilde B})$ such that
$$
\|U^*\diag(\af(a),d_L(a))U-\diag(\bt(a), d_L(a))\|<\ep\rforal a\in {\cal F}.
$$
\end{thm}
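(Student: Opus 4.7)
The plan is to encode $[\alpha]=[\beta]$ in $KK(A,B)\cong{\rm Ext}(A,SB)$ as an equality of essential extension classes, absorb both extensions into the standard amenable absorbing map $d_\infty$ (Theorem \ref{Textabs}), lift the resulting unitary equivalence modulo $SB\otimes{\cal K}$ to a unitary in $M(\widetilde{SB}\otimes{\cal K})$, and finally truncate down to a unitary in a finite matrix over $\widetilde B$ using Lemma \ref{562}. Throughout I invoke the assumption (implicit in the opening of Section 5) that $A$ is separable amenable, so $\alpha$ and $\beta$ are automatically amenable.

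The first step is to construct two essential extensions encoding $\alpha$ and $\beta$. Embed $B$ into $M(SB)$ as constant $B$-valued functions in $IB=C([0,1],B)$ and define monomorphisms $\Phi_\alpha,\Phi_\beta:A\to M(SB\otimes{\cal K})$ by
\[
\Phi_\alpha(a)=\alpha(a)\oplus d_\infty(a),\qquad \Phi_\beta(a)=\beta(a)\oplus d_\infty(a),
\]
with $\alpha(a)$ (resp.~$\beta(a)$) in the $(1,1)$-corner and the absorbing amenable $d_\infty$ from Theorem \ref{Abs} in the orthogonal corner. Both maps are injective (since $d_\infty\circ j$ is) and amenable; their quotients $\tau_\alpha=\pi\circ\Phi_\alpha$, $\tau_\beta=\pi\circ\Phi_\beta$ are essential extensions. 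Under the identification ${\rm Ext}(A,SB)\cong KK(A,B)$ used throughout Section 5, the class difference $[\tau_\alpha]-[\tau_\beta]$ corresponds to $[\alpha]-[\beta]=0$ (the $d_\infty$ summands cancel), so $[\tau_\alpha]=[\tau_\beta]$ in $KK^1(A,SB)$. Theorem \ref{Textabs} then produces a unitary $V\in M(\widetilde{SB}\otimes{\cal K})$ with
\[
V^*(\Phi_\alpha(a)\oplus d_\infty(a))V-(\Phi_\beta(a)\oplus d_\infty(a))\in SB\otimes{\cal K}\quad\text{for all } a\in A.
\]

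Next, absorb the extra $d_\infty$ summand using a shift isometry in $M(\C\otimes{\cal K})$ to obtain a unitary $W\in M(\widetilde{SB}\otimes{\cal K})$ with $W^*(\alpha(a)\oplus d_\infty(a))W-(\beta(a)\oplus d_\infty(a))\in SB\otimes{\cal K}$ for all $a\in A$, and after a small perturbation arrange this difference to have norm less than $\eta:=(\varepsilon/16)^2$ on ${\cal F}$ (chosen to dominate the $\sqrt{\eta}$-type errors arising below). Evaluating $W$ at $t=1$ gives a unitary $W_1\in M(\widetilde B\otimes{\cal K})$ with the analogous intertwining modulo $B\otimes{\cal K}$. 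Now apply Lemma \ref{562} with $C=B$, $U=W_1$, $b(a)=\alpha(a)$, $c(a)=\beta(a)$, yielding integers $N$ and $N_1\ge N$ and the projection $p=W_1e_NW_1^*$ with $\|e_{N_1}p-p\|<\eta/8$ and
\[
\|\diag(0,d_{N_1-1}(a))-[pd_{\infty-1}(a)p+(e_{N_1}-p)d_{\infty-1}(a)(e_{N_1}-p)]\|<\sqrt{\eta}+\eta
\]
for $a\in{\cal F}$.

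The main obstacle is the polishing step: converting these approximate data into a genuine unitary $U\in M_{L+1}(\widetilde B)$ (with $L=N_1-1$) that implements the stated intertwining. Since $p$ is a rank-$N$ projection with $\|e_{N_1}p-p\|<\eta/8$, the partial isometry $W_1e_N$ (satisfying $(W_1e_N)^*(W_1e_N)=e_N$ and $(W_1e_N)(W_1e_N)^*=p$) lies within $\eta/8$ of $M_{N_1}(\widetilde B)$ and can be polished to a partial isometry $v_1\in M_{N_1}(\widetilde B)$ with $v_1^*v_1=e_N$ and $v_1v_1^*=p$. In the unital matrix algebra $M_{N_1}(\widetilde B)$ the complementary projections $e_{N_1}-e_N$ and $e_{N_1}-p$ (both of rank $N_1-N$) are Murray--von Neumann equivalent by cancellation, giving a partial isometry $v_2\in M_{N_1}(\widetilde B)$ between them. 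Setting $U=v_1+v_2$ produces a unitary in $M_{L+1}(\widetilde B)$; combining its definition with the block decomposition of Lemma \ref{562}(2) and the original intertwining of $W_1$ yields
\[
\|U^*\diag(\alpha(a),d_L(a))U-\diag(\beta(a),d_L(a))\|<\varepsilon\quad \text{for all } a\in{\cal F},
\]
as required. The technical heart is controlling all the polishing errors (the $\eta/8$ supports and the $\sqrt{\eta}+\eta$ block error) against the single target $\varepsilon$, which motivates the choice $\eta=(\varepsilon/16)^2$ made above.
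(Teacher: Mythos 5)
There is a genuine gap, and it is located exactly at the point where the hypothesis $[\af]=[\bt]$ must do its work. In your setup $\Phi_\af(a)=\af(a)\oplus d_\infty(a)$ and $\Phi_\bt(a)=\bt(a)\oplus d_\infty(a)$ are honest homomorphisms into $M(SB\otimes {\cal K})$ (constant functions), so the Busby invariants $\tau_\af=\pi\circ\Phi_\af$ and $\tau_\bt=\pi\circ\Phi_\bt$ are both \emph{liftable}, hence trivial extensions: their classes in $\mathrm{Ext}(A,SB)$ vanish no matter what $\af$ and $\bt$ are, and your appeal to Theorem \ref{Textabs} never uses $[\af]=[\bt]$. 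Since the conclusion of Theorem \ref{T564} is false without that hypothesis (it forces, e.g., $\af_{*0}=\bt_{*0}$ after cancelling $L[\,j\,]$), an argument that does not use it cannot be complete. What the constant-function extensions give you is only the Voiculescu-type fact that $\af\oplus d_\infty$ and $\bt\oplus d_\infty$ are approximately unitarily equivalent by unitaries in $M({\tilde B}\otimes{\cal K})$ --- true unconditionally, but useless for ``cutting the tail''. The paper instead encodes $\af$ and $\bt$ through the mapping tori $M_\af$, $M_\bt$, whose associated essential extensions $\tau_1,\tau_2$ of $A$ by $SB$ have classes differing by $[\af]-[\bt]$; only then does $[\af]=[\bt]$ make Theorem \ref{Textabs} applicable in a non-vacuous way, and, crucially, the interval parameter of the torus supplies path data: $W(t)$ conjugates (approximately) $\af\oplus d_\infty$ to $\bt\oplus d_\infty$ near $t_1$ while near $t_0$ it conjugates $\iota\oplus d_\infty$ to itself, and after extending over $[0,t_0]$ with $V(0)=1$ one gets a \emph{norm-continuous path of projections} $p(t)=V(t)e_NV(t)^*$ with $p(0)=e_N$ exactly.

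That path is what your final polishing step is missing. You assert that $e_{N_1}-p$ and $e_{N_1}-e_N$ are Murray--von Neumann equivalent in $M_{N_1}({\tilde B})$ ``by cancellation, both of rank $N_1-N$''. Over a general ${\tilde B}$ there is no rank, no cancellation (stable rank one is not assumed in \ref{T564}; that is the separate Theorem \ref{T565}), and the class $[p]-[e_N]\in K_0(B)$ is precisely the K-theoretic obstruction to truncation --- it is not shown to vanish in your construction, and in general it does not vanish without the KK hypothesis. The paper circumvents both problems simultaneously: the homotopy $p(t)$ from $p(t_1)$ to $e_N$ yields the needed equivalence without any cancellation assumption, and the partition $0=t_m<\cdots<t_1$, the commuting approximants $q_i'$, and the interlaced diagonal maps $\eta_i,\gamma_i,\Delta_1,\Delta_2,\Delta_3$ implement a Berg-type rotation that produces the finite unitary $U\in M_{L+1}({\tilde B})$ with controlled error. (A further, smaller, point: a multiplier of $SB\otimes{\cal K}$ is a strictly continuous function on the open interval, so ``evaluating $W$ at $t=1$'' is not defined; one must evaluate at $t_1<1$ and use that elements of $SB\otimes{\cal K}$ have small norm near the endpoints, as the paper does.) To repair your proof you would have to replace the constant-function extensions by the mapping-torus extensions and import the path/homotopy argument; at that point you are essentially reproducing the paper's proof.
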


\begin{proof}
If either $A$ or $B$ are unital, then this can be easily reduced to the cases both are unital
and $\af$ and $\bt$ are unital.

We now consider the case that neither $A$ nor $B$ are unital.

Fix $1>\ep>0$ and a finite subset ${\cal F}\subset A.$ \Wlog, we may assume that ${\cal F}$ is in the unit ball
of $A_{s.a.}.$ Let ${\cal F}_1={\cal F} \cup\{1_{\tilde A}\}$ and ${\cal G}={\cal F}_1\cup \{ab: a, b\in {\cal F}\}.$
Let $\tau_1$ and $\tau_2$ be two unital essential extensions of $A$ by $SB$
 via
$\af$ and $\bt$ as mapping tori:
\beq\nonumber
&&M_\af=\{(f,a)\in C([0,1],B)\oplus A: f(0)=a\andeqn f(1)=\af(a)\}\andeqn\\
&&M_\bt=\{(f,a)\in C([0,1], B)\oplus A: f(0)=a\andeqn f(1)=\bt(a)\}.
\eneq
Since $[\af]=[\bt]$ in $KK(A,B),$  by \ref{Abs}, there is a unitary $W\in M(S{\tilde{B}}\otimes {\cal K})$
(which is identified with $C^b((0,1) M({\tilde B}\otimes {\cal K})_{\sigma})$) such that
\beq\label{564-2}
\pi(W)^*\diag(\tau_1, \tau_{\infty})\pi(W)=\diag(\tau_2, \tau_{\infty}),
\eneq
where $\pi: M(SB\otimes {\cal K})\to M(SB\otimes {\cal K})/SB\otimes {\cal K}$
is the quotient map and where
$\tau_{\infty}=\pi(d_{\infty}).$

For any $a\in A,$ let $f_a\in M_\af$ and $g_a\in M_\bt$ with
$f_a(0)=a,$ $f_a(1)=\af(a),$ $g_a(0)=a$ and $g_a(1)=\bt(a).$
Set
$$
c(a, f_a, g_a)(t)=W^*(t)\diag(f_a(t), d_{\infty}(a))W-\diag(g_a(t),d_{\infty}(a))
$$
for $t\in (0,1).$  Then, by \eqref{564-2},
$$
c(a, f_a, g_a)\in S{\tilde B}\otimes {\cal K}.
$$
In particular,
$$
\|c(a, f_a, g_a)(t)\|\to 0\,\,\,\, {\rm if}\,\,\, t\to 0\,\,\,{\rm or}\,\,\, t\to 1.
$$
Choose $0<t_1<1$ such that, for all $a\in {\cal G}$ and $t_1\le t<1,$
\beq\label{564-3}
\|c(a, f_a, g_a)(t)\|<\ep/64,\,\,\, \|f_a(t)-\af(a)\|<\ep/64\andeqn \|g_a(t)-\bt(a)\|<\ep/64
\eneq
Choose $0<t_0<t_1<1$ such that, for all $a\in {\cal G}$ and $0<t<t_0,$
\beq\label{564-4}
\|c(a, f_a, g_a)(t)\|<\ep/64,\,\,\, \|f_a(t)-a\|<\ep/64\andeqn \|g_a(t)-a\|<\ep/64.
\eneq
Thus,
\beq\nonumber
&&\hspace{-0.6in}\|W^*(t_1)\diag(\af(a), d_{\infty}(a))W(t_1)-\diag(\bt(a), d_{\infty}(a))\|\\\label{564-5}
&\le&\|f_a(t_1)-\af(a)\|+\|g_a-\bt(a)\|+\|c(a,f_a, g_a)(t_1)\|<3\ep/64
\eneq
for all $a\in {\cal G}.$ Similarly
\beq\label{564-6}
\|W^*(t_0)d_{\infty}(a)W(t_0)-d_{\infty}(a)\|<3\ep/64\rforal a\in {\cal G}.
\eneq
Set
$V(t)=\diag(W(t), W(t_0)^*)$ for $t\in [t_0, t_1].$ We have
that
\beq\label{564-7}
V(t_1)^*\diag(\af(a),d_{\infty}(a), d_{\infty}(a))V(t_1)\approx_{3\ep/32} \diag(\bt(a), d_{\infty}(a), d_{\infty}(a))
\eneq
for all $a\in {\cal G}.$
There is a norm-continuous path of
unitaries $\{V(t): t\in [0, t_0]\}\subset M_2(S{\tilde B}\otimes {\cal K})$ such that
$V(0)=1$ and $V(t_0)=V(t_0)$ (notation already defined) and, for all $t\in [0, t_0],$
\beq\label{564-8}
V(t)^*\diag(d_{\infty}(x), d_{\infty}(x))V(t)-(d_{\infty}(x), d_{\infty}(x))\in {\tilde B}\otimes {\cal K}
\eneq
for all $x\in A$ and
\beq\label{564-9}
V(t)^*\diag(d_{\infty}(a), d_{\infty}(a))V(t)\approx_{3\ep/32} \diag(d_{\infty}(a), d_{\infty}(a))
\eneq
for all $a\in {\cal G}.$

We now identify $d_{\infty}(a)$ with $\diag(d_{\infty}(a), d_{\infty}(a))$ and
$M_2(M({\tilde B}\otimes {\cal K}))$ with $M({\tilde B}\otimes {\cal K}).$ Set
$C=C([0,t_1], {\tilde B}).$ We also identify ${\tilde B}$ with constant
functions in $C.$
We also have that (denote also by
$V, f_a$ and $g_a$ the restriction of $V,$ $f_a$ and $g_a$ on $[0,t_1],$ respectively)
\beq\label{564-10}
V^*\diag(f_a, d_{\infty}(a))V-\diag(g_a, d_{\infty}(a))\in C\otimes {\cal K}
\eneq
for all $a\in {\cal G}.$

For any $(\ep/2^{12})^2>\dt>0,$ it follows from \eqref{564-10} and  \ref{562} that there is an integer $N>1$ such that
\beq\label{564-11}
&&\|e_NV(t)^*\diag(f_a, d_{\infty}(a))V(t)-V(t)^*\diag(f_a, d_{\infty}(a))V(t)e_N\|<\dt\andeqn\\
&&\|(1-e_N)V(t)^*(d_{\infty-1}(a))V(t)(1-e_N)-d_{\infty-1}(a)\|<\dt
\eneq
for all $a\in {\cal G}$ and for all $t\in [0,t_1].$
It follows from \eqref{564-5} and \eqref{564-6} that, for all $a\in {\cal G},$
$$
e_NV(t_1)^*\diag(f_a, d_{\infty}(a))V(t_1)e_N\approx_{6\ep/64} \diag(\bt(a), d_{N-1}(a))
$$
for all $a\in {\cal G}.$

Set $p(t)=V(t)e_NV(t)^*$ for $t\in [0, t_1].$ Since $W(t)|_{(0,1)}$ is in
$C^b((0,1), M({\tilde B}\otimes {\cal K})_\sigma)$ and $V(t)$ is norm-continuous on $[0,t_0],$
$p(t)$ is norm-continuous on $[0,t_1].$ It is important to note that
$p(0)=e_N.$

There is an integer $N_1>N$ such that $\|e_{N_1}p(t)-p(t)\|<\dt/16$ for all $t\in [0, t_1].$
It follows from \ref{562} (with $C=C([0,1], {\tilde B})$) that, for all $t\in [0, t_1],$
\beq\label{564-12}
\hspace{-0.6in}\diag(0, d_{N_1-1}(a))\approx_{\ep/2^{11}}
(p(t)d_{\infty-1}(a)p(t)+(e_{N_1}-p(t))d_{\infty-1}(a)(e_{N_1}-p(t))
\eneq
for all $a\in  {\cal F}_1={\cal F}\cup \{1_{\tilde A}\}$.  By \eqref{564-12},
$$
p(t)\diag(0, e_{N_1-1})\approx_{\ep/2^{10}} \diag(0, e_{N_1-1})p(t)\rforal t\in [0,t_1].
$$

There is a partition:
$$
0=t_m<t_{m-1}<\cdots t_2<t_1
$$
such that
$$
\|p(t_{n+1})-p(t_n)\|<\ep/2^{10},\,\,\, n=1,2,...,m-1.
$$
There are $q_i'\le e_{N_1}$ such that
$$
\|q_i'-p(t_i)\|<\ep/256\andeqn q_i'\diag(0, e_{N_1-1})=\diag(0, e_{N_1-1})q_i',\,\,\,i=1,2,...,m.
$$
Choose $q_m'=p(0)=e_N.$
Set $q_i=q_i'\diag(0, e_{N_1-1}).$ Then
$$
q_m=e_N\diag(0, e_{N_1-1})=\diag(0, e_{N_1-1}).
$$
We have that
\beq\label{564-13}
\|q_i-q_{i+1}\|<\ep/64,\,\,\, i=1,2,...,m-1.
\eneq

Set
\beq
\eta_1(a)&=&p(t_1)d_{\infty-1}(a)p(t_1),\\
\eta_2(a)&=&q_i\diag(0, d_{N_1-1}(a))q_i,\\
\gamma_1(a)&=&(e_{N_1}-p(t_1))\diag(0, d_{N_1-1}(a))(e_{N_1}-p(t_1))\andeqn\\
\gamma_i(a)&=&(e_{N_1}-q_i)\diag(0, d_{N_1-1}(a))(e_{N_1}-q_i),
\eneq
$i=2,3,...,m.$ Immediately we have
$$
\gamma_1(a)\approx_{\ep/128} (e_{N_1}-q_1)\diag(0, d_{N_1-1}(a))(e_{N_1}-q_1)\rforal a\in {\cal F}_1.
$$
Note also that
\beq\nonumber
q_1d_{\infty-1}(a)q_1&\approx_{\ep/64}& \eta_1(a)\rforal a\in {\cal F}_1,\\
\gamma_m(x)&=&(e_{N_1}-e_N)d_{N_1}(x),\\
q_i'\diag(0, d_{N_1-1}(x))q_i'&=&\eta_i(x),\,\,\,i=2,3,...,m-1\andeqn\\\label{564-12-1}
\eta_m(x)&=&\diag(0, d_{N-1}(x))
\eneq
for all $a\in {\tilde A}.$
Moreover, by \eqref{564-12},
\beq\label{564-14}
\diag(0, d_{N-1}(a))\approx_{3\ep/64} \eta_i(a)+\gamma_i(a)
\eneq
for all $a\in {\cal F}_1$ and $1\le i\le m.$ Furthermore, by \eqref{564-13},
\beq\label{564-15}
\|\eta_i(a)-\eta_{i+1}(a)\|<\ep/32\andeqn \|\gamma_i(a)-\gamma_{i+1}(a)\|<\ep/32
\eneq
for all $a\in {\cal F}_1$ and $i=1,2,...,m-1.$  Set
\beq\nonumber
d'_{m(N_1-1)}(a) &=& (e_{N_1}-e_N)d_{N_1}(a)\oplus d_{m-1}(\diag(0, d_{N_1-1}(a))\oplus d_{N-1}(a)),\\\nonumber
d'_{m(N_1-1)+(N-1)}(a)&=&d_m(\diag(0,d_{N_1-1}(a)))\oplus d_{N-1}(a)
\eneq
for all $a\in {\tilde A}.$
Also
\beq\nonumber
\Delta_1(a) &=& \diag(\gamma_1(a), \eta_2(a),\gamma_2(a),...,\eta_m(a), \gamma_m(a), d_{N-1}(a))\\
\Delta_2(a) &=& \diag(\gamma_m(a), \eta_m(a), \gamma_{m-1}(a),...,\eta_2(a),\gamma_1(a), d_{N-1}(a))\\
\Delta_3(a) &=& \diag(\gamma_m(a),\eta_{m-1}(a), \gamma_{m-1}(a),...,\eta_1(a),\gamma_1(a), d_{N-1}(a))
\eneq
for all $a\in {\tilde A}.$  Then, by \eqref{564-14}, \eqref{564-15} and \eqref{564-12-1},
\beq\label{564-16}
&&\|\eta_1(a)\oplus \Delta_1(a)-d'_{m(N_1-1)+(N-1)}(a)\|<3\ep/64\\
&&\|\Delta_2(a)-\Delta_3(a)\|<\ep/32\\
&&\|\Delta_3(a)-d'_{m(N_1-1)}(a)\|<3\ep/64
\eneq
for all $a\in {\cal F}_1.$

Note that
\beq\nonumber
\gamma_1(1_{\tilde A})&=&(e_{N_1}-p(t_1))\diag(0, d_{N_1-1}(1_{\tilde A}))\andeqn\\
\gamma_m(1_{\tilde{A}})&=&e_{N_1}-e_N.
\eneq
Define $P=\Delta_1(1_{\tilde A})$ and $P'=\Delta_2(1_{\tilde A}).$
Then there is a unitary $z\in M_{(m+1)N_1+(N-1)}({\tilde{B}})$ such that
$$
z^*\Delta_1(a)z=\Delta_2(a)\rforal a\in {\tilde A}.
$$
Let $Z=p(t_1)V(t_1)e_N+PzP'.$ Then
\beq\nonumber
ZZ^*=p(t_1)V(t_1)e_NV(t_1)^*p(t_1)+PzP'z^*P=p(t_1)+P\andeqn\\\nonumber
Z^*Z=e_NV(t_1)^*p(t_1)V(t_1)e_N+P'z^*PZP'=e_N+P'=e_N\oplus d_{m(N_1-1)}(1_{\tilde A}).
\eneq
For any $a\in {\cal F}_1,$  by \eqref{564-16},
\beq\nonumber
&&\hspace{-1in}Z^*\diag(\af(a), d'_{m(N_1-1)+(N-1)}(a))Z\\
&\approx_{3\ep/64}& Z^*\diag(\af(a), \eta_1(a)\oplus \Delta_1(a))Z\\
&\approx_{\ep/2^{11}}& e_NV(t_1)^*\diag(\af(a), d_{\infty}(a))V(t_1)e_N\oplus z^*\Delta_1(a)z\\
&\approx_{6\ep/64}& \diag(\bt(a), d_{N-1}(a))\oplus \Delta_2(a)\\
&\approx_{3\ep/64}&\diag(\bt(a),d_{N-1}(a))\oplus \Delta_3(a)\\
&\approx_{3\ep/64}& \diag(\bt(a),d_{N-1}(a))\oplus d'_{m(N_1-1)}(a)
\eneq
for all $a\in {\cal F}_1.$ It follows that there exists a unitary $U\in M_{L+1}({\tilde B})$
($L\ge (m+1)N_1$) such that
$$
\|U^*\diag(\af(a), d_L(a))U-\diag(\bt(a), d_L(a))\|<\ep\rforal a\in {\cal F}.
$$
\end{proof}

\begin{thm}\label{T565}
Let $B$ be a $\sigma$-unital  \CA\, which almost has stable rank one and let $A$ be a separable \SCA\, of $B.$
Suppose that $j: A\to B$ is full and amenable.
Let $\af,\, \bt: A\to B$ be two \hm s. If $[\af]=[\bt]$ in $KK(A, B),$ then, for any $\ep>0,$ and any
finite subset ${\cal F}\subset A,$ there exists an integer $L>1$ and a unitary
$U\in {\widetilde{M_{L+1}(B)}}$ such that
$$
\|U^*\diag(\af(a),d_L(a))U-\diag(\bt(a), d_L(a))\|<\ep\rforal a\in {\cal F}.
$$
\end{thm}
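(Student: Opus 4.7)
The plan is to apply Theorem~\ref{T564} first, producing a unitary $U_{0}\in M_{L_{0}+1}(\tilde B)$ with the desired approximate intertwining for a slightly smaller tolerance and the same finite set, and then to refine $U_{0}$ to a unitary $U\in\widetilde{M_{L+1}(B)}$ (for possibly larger $L$) using the almost stable rank one hypothesis on $B$.  Decompose $U_{0}=W+Y_{0}$ with $W\in U(M_{L_{0}+1}(\mathbb{C}))$ the scalar image and $Y_{0}\in M_{L_{0}+1}(B)$; then the product $\tilde U_{0}:=U_{0}W^{*}$ is a unitary in $\widetilde{M_{L_{0}+1}(B)}$, but its conjugation of $\diag(\af(a),d_{L_{0}}(a))$ only approaches $W\,\diag(\bt(a),d_{L_{0}}(a))\,W^{*}$, which in general differs from $\diag(\bt(a),d_{L_{0}}(a))$ by an element of $M_{L_{0}+1}(B)$ of uncontrolled norm.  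Hence a post hoc scalar correction of this kind is insufficient, and we need to intervene inside the construction carried out in the proof of Theorem~\ref{T564}.

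The key use of almost stable rank one is the following.  In the proof of Theorem~\ref{T564}, the unitary is extracted from a partial isometry $Z=p(t_{1})V(t_{1})e_{N}+PzP^{\prime}$ whose source and range projections match, and the extension of $Z$ to a unitary rests on a Murray--von Neumann equivalence between the complementary projections $1-Z^{*}Z$ and $1-ZZ^{*}$.  Under the almost stable rank one hypothesis, Lemma~\ref{Lalmstr1} guarantees that any such equivalence of projections sitting in $M_{L+1}(B)$ is implemented by a unitary in $\widetilde{M_{L+1}(B)}$ rather than only in the larger $M_{L+1}(\tilde B)$.  Consequently, the partial isometry $Z$ can be absorbed into a unitary lying in $\widetilde{M_{L+1}(B)}$ once each constituent piece (the absorbing factor $p(t_{1})V(t_{1})e_{N}$ and the finite-dimensional, essentially permutation-like, $z$) has been replaced by a counterpart in $\widetilde{M_{?}(B)}$ via Lemma~\ref{Lalmstr1}.

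The main obstacle is propagating and controlling the scalar twist introduced when one passes from $M_{L+1}(\tilde B)$ to $\widetilde{M_{L+1}(B)}$, and making sure the approximate intertwining survives this refinement.  This is handled by invoking Lemma~\ref{Lalmstr1} with a sufficiently small tolerance $\dt$ at each step, so that the replacement unitaries in $\widetilde{M_{L+1}(B)}$ still satisfy the intertwining to within $\ep/2$, say, and then collecting the small errors.  After possibly enlarging $L$ and choosing all such $\dt$'s small enough, we obtain a unitary $U\in\widetilde{M_{L+1}(B)}$ with $\|U^{*}\diag(\af(a),d_{L}(a))U-\diag(\bt(a),d_{L}(a))\|<\ep$ for every $a\in{\cal F}$, as required.
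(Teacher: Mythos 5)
Your reduction to Theorem \ref{T564} is the right starting point, but the way you propose to finish has a genuine gap. You rule out only the \emph{scalar} post hoc correction and then conclude that one must re-enter the proof of Theorem \ref{T564}; however, the intervention you describe there is not available. The unitary built in that proof is assembled from pieces such as $e_N,$ $p(t_1)=V(t_1)e_NV(t_1)^*,$ $P=\Delta_1(1_{\tilde A}),$ $P'$ and $z,$ all of which intrinsically involve the unit of ${\tilde B}$ (or of ${\tilde A}$): they lie in $M_{\cdot}({\tilde B})$ but not in $M_{\cdot}(B),$ and not even in $\widetilde{M_{L+1}(B)}=M_{L+1}(B)+\C\cdot 1$ when $B$ is non-unital. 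Consequently there is no meaning to ``replacing each constituent piece by a counterpart in $\widetilde{M_?(B)}$ via Lemma \ref{Lalmstr1}'': that lemma concerns Cuntz subequivalence of positive elements of $M_m(A)$ and produces a unitary moving $f_\dt(a)$ into a hereditary subalgebra; it neither implements Murray--von Neumann equivalences of the complementary projections $1-Z^*Z,$ $1-ZZ^*$ (which are not in $M_{L+1}(B)$ anyway), nor can any unitary of $\widetilde{M_{L+1}(B)}$ reproduce those scalar-like projections exactly. The point you are missing is that the final unitary does not have to match $V$ globally; it only has to act like $V$ on the hereditary subalgebra of $M_{L+1}(B)$ where the elements $\diag(\af(a),d_L(a))$ live.

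That observation yields a short, purely post hoc argument, which is how the paper proceeds. Take $V\in M_{L+1}({\tilde B})$ from Theorem \ref{T564} with tolerance $\ep/4.$ Since the finitely many elements $\diag(\af(a),d_L(a))$ ($a\in{\cal F}$) lie in the $\sigma$-unital algebra $M_{L+1}(B),$ choose $e\in M_{L+1}(B)$ with $0\le e\le 1$ acting on them as an approximate unit, and fix a suitable $\dt$ so that $f_\dt(e)$ still almost fixes them. Then $Ve\in M_{L+1}(B)\subset\overline{GL(\widetilde{M_{L+1}(B)})}$ by the almost stable rank one hypothesis, and Theorem 5 of \cite{Pedjot87} provides a unitary $U\in\widetilde{M_{L+1}(B)}$ with $U^*f_\dt(e)=V^*f_\dt(e).$ Writing $\diag(\af(a),d_L(a))\approx f_\dt(e)\diag(\af(a),d_L(a))f_\dt(e)$ up to $\ep/8,$ conjugation by $U$ agrees with conjugation by $V$ on these elements up to small error, and the desired estimate follows with the same $L.$ So the statement is correct, but your route would need to be replaced by (or repaired into) this cut-down argument; as written, the key step relying on Lemma \ref{Lalmstr1} inside the proof of Theorem \ref{T564} does not go through.
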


\begin{proof}
For any $\ep>0$ and any finite subset ${\cal F}\subset A,$ by \ref{T564}, there exists an integer
$L>1$ and
 a unitary $V\in M_{L+1}({\tilde B})$ such that
 such that
$$
\|V^*\diag(\af(a),d_L(a))V-\diag(\bt(a), d_L(a))\|<\ep/4\rforal a\in {\cal F}.
$$
\Wlog, we may assume that ${\cal F}$ is in the unit ball of $A.$
There exists $e\in M_{L+1}(B)$ with $0\le e\le 1$
such that
\beq\label{565-1}
\|e\diag(\af(a),d_L(a))e-\diag(\af(a),d_L(a))\|<\ep/16
\rforal a\in {\cal F}.
\eneq
Thus one can choose $1/2>\eta>0$ such that
\beq\label{565-2}
\|f_\eta(e)\diag(\af(a), d_L(a))f_\eta(e)-\diag(\af(a), d_L(a))\|<\ep/8 \rforal a\in {\cal F}.
\eneq
Consider the element $Ve\in M_{L+1}(B).$ Since $M_{L+1}(B)$ almost has stable rank one,
$$M_{L+1}(B)\subset \overline{GL(\widetilde{M_{L+1}(B)}}.$$
By
Theorem 5 of \cite{Pedjot87}, there is a unitary $U\in {\widetilde{M_{n+1}(B)}}$ such
that
$$
U^*f_\dt(e)=V^*f_\dt(e).
$$
Therefore
\beq\nonumber
U^*\diag(\af(a),d_L(a))U&\approx_{\ep/8}&U^*f_\dt(e)\diag(\af(a),d_L(a))f_\dt(e)U\\
&=& V^*f_\dt(e)\diag(\af(a), d_L(a))f_\dt(e)V\\
&\approx_{\ep/4}& \diag(\bt(a), d_L(a))
\eneq
for all $a\in {\cal F}.$

\end{proof}

\section{Stable uniqueness theorems for non-unital \CA s, general cases}

\begin{df}\label{fullunif}
Let $A$ be a \SCA\, of a $\sigma$-unital \CA\, $B.$
An element $a\in A_+\setminus \{0\}$ is said to be {\it uniformly full} in $B,$ if
 there are positive number $M(a)>0$ and an integer $N(a)\ge 1$ such
that, for any $b\in B_+$ with $\|b\|\le 1$ and any $\ep>0,$  there are $x_i(a), x_2(a),...x_{n(a)}(a)\in B$ such
that $\|x_{i}(a)\|\le M(a),$ $n(a)\le N(a)$ and
$$
\|\sum_{i=1}^{n(a)}(x_{i}(a))^*ax_{i}(a)-b\|<\ep.
$$

In this case, we also say $a$ is $(N(A), M(a))$ full.

We say $a$ is {\it strongly uniformly full} in $B,$ if the above holds
for $\ep=0.$

We say $A$ is {\it locally uniformly full},
if every element $ a\in A_+\setminus \{0\}$ is uniformly full;
and we say $A$ is {\it strongly locally uniformly full} if every $a\in A_+\setminus \{0\}$ is
strongly uniformly full.


If $B$ is unital and $A$ is full in $B,$ then $A$ is always  strongly locally uniformly full.
In fact, for each $a\in A\setminus \{0\},$ there are
$x_1, x_2,...,x_m\in B$ such that
$$
\sum_{i=1}^m x_i^*ax_i=1_B.
$$
Choose $M(a)=\max\{\|x_i\|: 1\le i\le m\}$ and $N(a)=m.$
\end{df}

\begin{lem}\label{Lsep}
Let $B$ be a non-unital \CA\, and let $A$ be a separable amenable \CA\, which is a \SCA\,  of  $B.$
Suppose that $h_1, h_2: A\to B$ are two \hm s such that
$[h_1]=[h_2]$ in $KK(A,B).$
Then there exists a separable \SCA\, $C\subset B$ such that
$A, h_1(A), h_2(A)\subset C$ and $[h_1]=[h_2]$ in $KK(A, C).$
If $A$ is full
(locally uniformly full),
then we may assume that $A\subset C$ is
also  full
(locally uniformly).
\end{lem}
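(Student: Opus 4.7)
The plan is to build $C$ by two successive separable enlargements inside $B$. First I fix any separable \SCA\, $C_0\subset B$ containing $A\cup h_1(A)\cup h_2(A)$ and produce a separable $C_1\supset C_0$ in which the $KK$-equality already holds; then I further enlarge $C_1$ to secure the fullness clause. Since $A$ is separable and amenable, $KK(A,-)$ commutes with directed unions of \SCA s of $B$, so any equality in $KK(A,B)$ is witnessed by only countably many elements of $B$. Concretely, in the $Ext(A,SB)$-picture adopted in this section, Theorem \ref{Textabs} expresses $[h_1]=[h_2]$ as a unitary equivalence---implemented by some $u\in M(\widetilde{SB}\otimes{\cal K})$---of the two mapping-torus extensions of $A$ by $SB$ after adding an absorbing trivial extension. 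Only countably many elements of $B$ appear in $u$ and in the absorbing representative; adjoining them to $C_0$ produces the desired separable $C_1\supset C_0$ with $[h_1]=[h_2]$ in $KK(A,C_1)$.

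For the second enlargement I inductively build separable $C_n\subset C_{n+1}\subset B$ starting from $C_1$, with $C_{n+1}$ generated by $C_n$ together with the following witnesses. For every $b$ in a countable dense subset of the unit ball of $C_n$ and every rational $\ep>0$, include finitely many $x_i,y_i\in B$ and $a_i\in A$ with $\|\sum_ix_ia_iy_i-b\|<\ep$; this handles the merely full case. In the locally uniformly full case, additionally fix a countable dense subset $E_A\subset A_+\setminus\{0\}$ and, for each $a\in E_A$, each $b$ in the chosen dense subset of the positive unit ball of $C_n$, and each rational $\ep>0$, include a uniform-fullness family $x_1,\dots,x_{n(a)}\in B$ with $\|x_i\|\le M(a)$, $n(a)\le N(a)$, and $\|\sum_ix_i^*ax_i-b\|<\ep$. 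Set $C:=\overline{\bigcup_n C_n}$. Then $C$ is separable, contains $A\cup h_1(A)\cup h_2(A)$, and satisfies $[h_1]=[h_2]$ in $KK(A,C)$ by naturality along $C_1\hookrightarrow C$; $A$ is full in $C$, each $a\in E_A$ is uniformly full in $C$ with its original constants $(N(a),M(a))$, and a general $a'\in A_+\setminus\{0\}$ is uniformly full in $C$ by approximating $a'$ in norm by a suitable $a\in E_A$ and transferring witnesses via Lemma \ref{Lrorm} (using $(a-\rho)_+=z^*a'z$ for an appropriate $z\in B$, which is adjoined to $C$ during the iteration).

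The only non-routine step is the $KK$-descent in the first paragraph: one must verify that the witness provided by Theorem \ref{Textabs} involves only a countable amount of data in $B$, so that after adjoining those countably many elements the equality $[h_1]=[h_2]$ indeed descends to the resulting separable subalgebra. Once this is arranged, the remaining enlargement is a standard countable union construction; the one subtlety in the locally uniformly full case is transferring uniform fullness from $E_A$ to every element of $A_+\setminus\{0\}$, which is where Lemma \ref{Lrorm} enters.
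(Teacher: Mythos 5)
Your overall architecture (a first separable enlargement that captures the $KK$-equality, followed by a countable iteration that secures fullness, with Lemma \ref{Lrorm} used to pass from a dense set of positive elements of $A$ to all of $A_+\setminus\{0\}$) is the same as the paper's, and your second enlargement is essentially correct as a sketch --- note only that the contraction $z$ in Lemma \ref{Lrorm} already lies in $C^*(a,a')\subset A\subset C$, so nothing needs to be adjoined there, and that to get \emph{fixed} constants $(N(a'),M(a'))$ for a given $a'$ you should take your dense set $E_A$ closed under a cut-down such as $f_{1/2}$ (as the paper does), writing $f_{1/2}(b)=g(b)z^*a'zg(b)$ for $b\in E_A$ close to $a'$.

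However, there is a genuine gap at the central step, which you yourself flag as ``the only non-routine step'' and then do not carry out. First, the principle you lead with --- that $KK(A,-)$ commutes with directed unions of subalgebras of $B$ because $A$ is separable and amenable --- is not available: $KK(A,-)$ is not continuous in the second variable in general (already along countable inductive sequences there are $\lim^1$-obstructions), and no such continuity statement is proved or cited in the paper; the separable descent of a single relation is precisely what this lemma must establish, so invoking it is circular. Second, the concrete witness supplied by Theorem \ref{Abs}/\ref{Textabs} consists of a monomorphism $\phi_0\colon A\to M(SB_1\otimes{\cal K})$ and a unitary $w\in M(S{B_1}\otimes{\cal K})$ with $w^*\,{\rm diag}(h_1(a),\phi_0(a))\,w-{\rm diag}(h_2(a),\phi_0(a))\in SB_1\otimes{\cal K}$; these are \emph{multipliers}, not elements of $B$, so the phrase ``adjoining the countably many elements of $B$ appearing in $u$'' does not parse, and the naive enlargement does not yield an algebra of which $w$ is a multiplier. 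The paper's proof is exactly the missing verification: one adjoins the elements $E_n$, $wE_n$, $E_nw$, $E_n\phi_0(a_k)E_n$, the difference elements $w^*{\rm diag}(h_1(a_k),\phi_0(a_k))w-{\rm diag}(h_2(a_k),\phi_0(a_k))$ for a suitable dense sequence $\{a_k\}\subset A$, and the point evaluations $p_{t_n}$ at a dense set $\{t_n\}\subset(0,1)$, builds $C$ and $C_{1s}={\tilde C}\otimes{\cal K}$, and then proves (using that $\{E_n\}$ is an approximate identity for $SC_{1s}$, together with an interpolation argument over the dense $t_n$) that $w\in M(SC_{1s}\otimes{\cal K})$, $\phi_0(A)\subset M(SC_{1s})$, and the difference elements lie in $SC_{1s}$, so that the equivalence, and hence $[h_1]=[h_2]$, descends to $KK(A,{\tilde C})=KK(A,C)$. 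Without an argument of this kind your first enlargement is unsupported, and the proof is incomplete at its core.
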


\begin{proof}
Put $B_1={\tilde B}.$
Let $\tau_1, \tau_2: A\to M(SB_1)$ be two induced
essential extensions given by
mapping tori:
\beq\label{Lsep-n1}
M_{h_i}=\{(f,a)\in C([0,1], B_1)\oplus A: f(0)=a\andeqn f(1)=h_i(a)\},\,\,\, i=1,2.
\eneq
There is a monomorphism  $\phi_0: A\to M(SB_1\otimes {\cal K})$
and a unitary $w\in M(SB_1\otimes {\cal K})$ such that
$$
w^*(\diag(h_1(a), \phi_0(a))w-\diag(h_2(a), \phi_0(a))\in SB_1\otimes {\cal K}\rforal a\in A.
$$
Let $C_{000}$ be the \SCA\, of $B$ generated by $A, h_1(A)$ and $h_2(A).$
In particular $C_{000}$ is separable.

Let $\{e_{i,j}\}$ be the matrix unit for ${\cal K}$ and choose
a dense sequence $\{t_n\}\subset (0,1).$
Choose an approximate identity $\{E_n\}$ of $SB_1\otimes {\cal K}$
such that $E_n\subset M_{k(n)}(SB_1),$ $n=1,2,....$
Let $\{a_n'\}$ be a dense sequence of $A$ which also contains
a subsequence in $A_{s.a}$  which is dense in $A_{s.a.}$ and a subsequence in $A_+$
which is dense in $A_+.$
Let $\{a_n\}$ be a sequence of elements  which contains $\{a_n'\}$ and if $a_n'\in A_+$
then $f_{1/2}(a_n')\in \{a_n\}.$

Put $D_0$ the \CA\, generated by the subset
$$
\{w^*(\diag(h_1(a_n), \phi_0(a_n))w-\diag(h_2(a_n), \phi_0(a_n)):  n\in \N\}.
$$
Then $D_{0}$ is a separable \SCA\, of $SB_1\otimes {\cal K}$
such that
\beq\label{Lsep-1}
w^*(\diag(h_1(a), \phi_0(a))w-\diag(h_2(a), \phi_0(a))\in D_{0}\rforal a\in A.
\eneq

Put $D_{00}$ the \SCA\, of $SB_1\otimes {\cal K}$ generated by
$$
\{E_n, wE_n, E_nw, E_n\phi_0(a)E_n: a\in A, n\in \N\}.
$$
Let $D_{000}$ be the separable \SCA\, of $SB_1\otimes {\cal K}$ generated by $D_{00}$ and $D_0.$
Denote by $p_{t_n}: SB_1\otimes {\cal K}\to B_1\otimes {\cal K}$
the point-evaluation at $t_n,$ $n=1,2,....$
Let $C_{00}$ be the \SCA\, of $B_1\otimes {\cal K}$ generated by
$$
\{p_{t_n}(D_{000})+C_{000}: n=1,2,...\}.
$$
Denote by $C_{0,n}$ the \SCA\, generated by
$$
\{e_{n,n}C_{00}e_{n,n} +\lambda e_{n,n}: \lambda\in C\},
$$
$n=1,2,....$  Let $C_{0,n}'=e_{1,n}C_{0,n}e_{n,1}.$
So $C_{0,n}'$ is a \SCA\, of $B_1.$
Let $C'$ be the \CA\, generated by $\cup_{n=1}^{\infty} C_{0,n}'.$  Note that $C'$ is unital.
Since $C'\subset B_1={\tilde B},$ we may write $C'={\tilde C''},$ where $C''\subset B$
is a separable \SCA.
Let $\{c_n\}$ be an approximate identity
of $C''$ such that $c_{n+1}c_n=c_nc_{n+1}=c_n,$ $n=1,2,...$
For each $a_k,$ there are
$M(a_k,n)>0,$  $N(a_k,n)\ge 1$  and
$x_{1,n}, x_{2,n}(a_k),...,x_{m(n),a_k}\in B$ such that
$\|x_{i,n}(a_k)\|\le M(a_k,n),$ $m(n,a_k)\le N(a_k,n)$ and
$$
\|\sum_{i=1}^{m(n)}x_{i,n}^*a_kx_{i,n}-c_{n+1}^{1/4}\|<1/2^{n+2},
$$
$n=1,2,....$
Let $C_0'$ be the \SCA\, generated by $C''$ and $\{x_{i,n}(a_k): i, k,n\}.$
Let $C=\overline{\cup_{n=1}^{\infty}c_n(C_0')c_n}.$

It is clear that $C\supset C''.$
We claim that
$A$ is
full
in $C.$  Clearly $\{c_n\}$ is an approximate identity for $C.$

For any $c\in C_+$ with $\|c\|\le 1$ and any $\ep>0,$ there exists $n_1\ge 1$ such that
$$
\|c^{1/2}c_nc^{1/2}-c^{1/2}\|<\ep/4\rforal n\ge n_1.
$$
For each $k,$ choose $n_2\ge 1$ such that
$$
\|c_na_kc_n-a_k\|<\ep/16(((M(a_k, n_1)+1)\cdot m(n_1))^2\rforal n\ge n_2
$$
and $1/2^{n_2}<\ep/4.$
Let $y_{i,k}=c_{n_2}x_{i,n_2}(a_k)c_{n_2}^{1/4}c^{1/2},$ $i=1,2,...,m(n,a_k).$
Then $y_{i,k}\in C$ and $\|y_{i,k}\|\le M(a_k)$ for all $i.$
We have
$$
\|\sum_{i=1}^{m(n,a_k)} y_{i,k}^* a_k y_{i,k}-c^{1/2}c_{n_2}c^{1/2}\|<\ep/2.
$$
It follows that
$$
\|\sum_{i=1}^{m(n,a_k)}y_{i,k}^*a_ky_{i,k}-c\|<\ep.
$$
It follows that each $a_k$ is full in $C.$

Let $I$ be an ideal of $C$ and
let
$$
J=\{a\in A: a\in I\}.
$$
Let  $\pi_J: A\to A/J$ be the quotient map.
Fix $a\in J_+$ with $\|a\|=1.$ For any $1/4>\ep>0,$ by the assumption on $\{a_n'\},$ there is $b\in \{a_n'\}\cap A_+$
such that $\|a-b\|<\ep.$
This implies $\|\pi_J(b)\|<\ep,$ whence $\pi_J(f_{1/2}(b))=f_{1/2}(\pi_J(b))=0.$ In other words,
$f_{1/2}(b)\in J.$ Since $0<\ep<1/4,$ $\|b\|\ge 1-1/4.$ Therefore $f_{1/2}(b)\not=0.$
However, $f_{1/2}(b)\in \{a_k\}.$  So $f_{1/2}(b)$ is full.
This can happen only when $J=\{0\}.$
This holds for any ideal $I.$ Thus $A$ is full in $C.$


Consider $C_{1s}$ the \SCA\, of $B_1\otimes {\cal K}$ generated by ${\tilde C}$ and $\{e_{i,j}\}.$
Then $C_{1s}={\tilde C}\otimes {\cal K}.$
Let $SC_{1s}=C_0((0,1), C_{1s}).$
Suppose that
\beq\label{Lsep-n2}
b\in \{E_n, wE_n, E_nw, E_n\phi_0(a)E_n, E_n\phi_0(a), \phi_0(a)E_n: a\in A, n\in \N\}.
\eneq
Keep in mind that $E_n\in SB_1\otimes {\cal K},$ in particular, $E_n(0)=E_n(1)=0$ $n\in \N.$
Then, for each $t_n\in (0,1),$
$$
\pi_{t_n}(b)\in \pi_{t_n}(SC_{1,s}\otimes {\cal K}).
$$
It follows that
$b\in SC_{1,s}.$  To see this, let $\ep>0,$ there are
$$
0=t_{n_0}<t_{n_1}<t_{n_2}<\cdots t_{n_k}<t=t_{n_{k+1}}
$$
such that $t_{n_i}\in \{t_n\}$, $i=1,2,...,k,$
$$
\|b(t)-b(t_{n_i})\|<\ep/4\rforal t\in(t_{n_i}, t_{n_{i+1}}),\,\,i=0,1,...,k.
$$
Let
$c(t)=(t/t_{n_1}) b(t_{n_1})$ if $t\in (0,t_{n_1}),$
$c(t)=((t_{n_{i+1}}-t)/(t_{n_{i+1}}-t_{n_i}))b(t_{n_i})+((t-t_{n_i})/(t_{n_{i+1}}-t_{n_i}))b(t_{n_{i+1}})$
for $t\in [t_{n_i}, t_{n_{i+1}}),$ $i=1,2,...,k,$ and
$c(t)=((1-t)/(1-t_{n_k}))b(t_{n_k})$ for $t\in (t_{n_k},1).$
Then $c\in SC_{1s}\otimes {\cal K}.$
On the other hand
$$
\|b(t)-c(t)\|<\ep\rforal t\in (0,1).
$$
This proves that $b\in SC_{1s}.$
In particular, $E_n\in SC_{1s}$ and
$\{E_n\}$ forms an approximate identity for
$SC_{1s}.$
Since $wE_n, E_nw \in SC_{1s}\otimes {\cal K},$
this implies that $w\in M(SC_{1s}\otimes {\cal K}).$  Similarly, since
$\phi_0(a)E_n, E_n\phi_0(a)E_n\in SC_{1s}\otimes {\cal K})$ for all $a\in A$ and $n\ge1 ,$
one concludes that we may view that  $\phi_0$ is a monomorphism from
$A$ to $M(SC_{1s}).$

A similar argument shows that $D_0\subset SC_{1s}.$

We have
$$
w^*(\diag(h_1(a), \phi_0(a))w-\diag(h_2(a), \phi_0(a))\in SC_{1s}
$$
for all $a\in A.$
This implies that $[h_1]=[h_2]$ in $KK(A, C_1)=KK(A,{\tilde C}).$
It follows that $[h_1]=[h_2]$ in $KK(A,C).$ Note that if $A$ is locally uniformly full in $B,$
as constructed  above, $A$ is also locally uniformly full in $C.$

\end{proof}

\begin{rem}\label{Rlsep}
The assumption that $A$ is amenable could be weakened to the assumption
that $h_1$ and $h_2$ are amenable.
\end{rem}

%
%
%
%

\begin{lem}\label{Lsep2}
Let $B$ be a non-unital \CA\, and let $A$ be a separable amenable \CA\, which is a \SCA\,  of  $B.$
Suppose that $h_1, h_2: A\to B$ are two \hm s such that
$[h_1]=[h_2]$ in $Hom_{\Lambda}(\underline{K}(A), \underline{K}(B)).$
Then there exists a separable \SCA\, $C\subset B$ such that
$A, h_1(A), h_2(A)\subset C$ and $[h_1]=[h_2]$ in $Hom_{\Lambda}(\underline{K}(A), \underline{K}(C)).$
If $A$ is full
(locally uniformly full),
then we may assume that $A\subset C$ is
also  full
(locally uniformly).
\end{lem}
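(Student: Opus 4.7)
The plan is to adapt the proof of Lemma \ref{Lsep} to the weaker hypothesis, replacing the mapping-torus/absorbing-extension witness (which pins down the full $KK$-class) by $K$-theoretic witnesses that suffice for the $\Hom_\Lambda$-class. The key point is that $\Hom_\Lambda(\underline{K}(A),\underline{K}(B))$ only depends on $K$-theoretic data with coefficients, and since $A$ is separable the group $\underline{K}(A)=\bigoplus_{i=0,1}K_i(A)\oplus\bigoplus_{i=0,1,\,k\ge 2}K_i(A,\Z/k\Z)$ is countable. Using the identification $K_i(A,\Z/k\Z)=K_i(A\otimes C_k)$ from \ref{DKLtriple}, each class in $\underline{K}(A)$ is representable by a difference of projections (for $i=0$) or by a unitary (for $i=1$) in some $M_n(\widetilde{A}\otimes \widetilde{C_k})$, hence by finitely many elements of $A$.

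First I would fix a countable generating set $\{x_n\}\subset \underline{K}(A)$, with chosen representatives $p_n,p_n'$ or $u_n$ in $M_{N(n)}(\widetilde{A}\otimes \widetilde{C_{k_n}})$. The hypothesis guarantees that, in each corresponding $K_*(B\otimes C_{k_n})$, the images under $h_1\otimes \id$ and $h_2\otimes \id$ coincide. Each such equality is witnessed by a finite or countable family of elements of $B\otimes \widetilde{C_{k_n}}$: in the projection case, by a rectangular partial isometry $v_n$ implementing a Murray--von Neumann equivalence (after stabilization by a common projection), and in the unitary case, by a continuous path $t\mapsto w_n(t)$ between $\lceil (h_1)(u_n)\rceil$ and $\lceil (h_2)(u_n)\rceil$ in some $GL_m(\widetilde{B\otimes \widetilde{C_{k_n}}})$, which can be recovered from its values on a countable dense subset of $[0,1]$. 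Collect all such witness elements into a countable set $W\subset B\otimes {\cal K}$ (spread across the various tensor factors $C_{k_n}$).

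Next I would run the construction of Lemma \ref{Lsep} with the enlarged seed: start with the separable \SCA\ of $B$ generated by $A$, $h_1(A)$, $h_2(A)$, the images of the matrix entries of $W$ under the fixed embeddings $B\otimes C_{k_n}\hookrightarrow B\otimes {\cal K}$, and the fullness witnesses $\{x_{i,n}(a_k)\}$ chosen so that each $a_k$ (from a dense sequence in $A$ that also contains representatives of $\{f_{1/2}(a_n'):a_n'\in A_+\}$) becomes full via the same $c_n$-construction used in the proof of Lemma \ref{Lsep}. Taking $C=\overline{\bigcup c_n(C_0')c_n}$ as there produces a separable \SCA\ in which $A$ is full (respectively locally uniformly full) and which contains all the $K$-theoretic witnesses.

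By construction, for every generator $x_n$ the induced maps $(h_1|_C)_*$ and $(h_2|_C)_*$ agree on $x_n$ in $\underline{K}(C)$; since the $x_n$ generate $\underline{K}(A)$ and both induced maps automatically commute with the natural $\Lambda$-action (Bockstein operations and coefficient change maps, which are functorial under $*$-homomorphisms), the two maps coincide in $\Hom_\Lambda(\underline{K}(A),\underline{K}(C))$. The main obstacle, as in Lemma \ref{Lsep}, is the simultaneous maintenance of separability of $C$ and of fullness of $A$ in $C$; the former is ensured by countability of $\underline{K}(A)$ together with the separability of the individual witnesses, while the latter is handled by the same $c_n$-approximation scheme, provided the fullness witnesses are chosen for a dense sequence that includes all the $a_k$ involved in the representatives of the $x_n$.
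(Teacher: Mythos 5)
Your proposal is correct and is precisely the argument the paper intends: Lemma \ref{Lsep2} is stated without proof as the routine variant of Lemma \ref{Lsep} in which the single $KK$-witness unitary is replaced by countably many $K$-theory-with-coefficients witnesses (partial isometries for $K_0$ classes, unitary paths recorded on a countable dense set of parameters for $K_1$ classes), adjoined to the seed before running the same fullness construction. The only imprecision is the phrase about ``matrix entries'' of elements of $B\otimes C_{k_n}$ lying in $B\otimes{\cal K}$; the clean fix is to approximate each witness by finite sums of elementary tensors, which places it in $M_N(\widetilde{C_0\otimes C_{k_n}})$ for a separable $C_0\subset B$ generated by countably many elements of $B$, and to add those elements to the seed.
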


\begin{thm}\label{T39}
Let $A$ be a separable amenable \CA\, and let $B$ be a $\sigma$-unital \CA.
Suppose that $h_1, h_2: A\to B$ are two \hm s such that
$$
[h_1]=[h_2]\,\,\, {\rm in}\,\,\, KL(A, B).
$$
Suppose that there is an embedding $j: A\to B$ which is locally uniformly full.
Then, for any $\ep>0$ and finite subset ${\cal F}\subset A,$  there is an integer $n\ge 1$ and
a unitary $u\in M_{n+1}({\tilde B})$  such that
$$
\|u^*\diag(h_1(a), d_n(a))u-\diag(h_2(a), d_n(a))\|<\ep\rforal a\in {\cal F}.
$$
\end{thm}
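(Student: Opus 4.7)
My plan is to bridge from the $KL$-equality hypothesis to the $KK$-equality required by Theorem \ref{T564} by absorbing the $KK$-defect against sufficiently many copies of the uniformly full embedding $j$, and then invoke that theorem for the stabilized maps. I first reduce to the separable case: by Lemma \ref{Lsep2} there is a separable \SCA\, $C\subset B$ containing $A$, $h_1(A)$, and $h_2(A)$, in which $A$ remains locally uniformly full, and such that $[h_1]=[h_2]$ in $\Hom_\Lambda(\underline{K}(A),\underline{K}(C))=KL(A,C)$. Replacing $B$ by $C$, I may assume $B$ is separable. Given $\ep$ and ${\cal F}$, I then use the $KL$-triple machinery of Definition \ref{DKLtriple} to produce $(\ep_1,{\cal F}_1,{\cal P})$ with $\ep_1<\ep/2$ and ${\cal F}_1\supset{\cal F}$, so that the desired approximate equivalence on $(\ep,{\cal F})$ is implied by $KK$-equality on the finite subset ${\cal P}\subset\underline{K}(A)$. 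By hypothesis $[h_1]|_{\cal P}=[h_2]|_{\cal P}$.

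The heart of the argument is to promote this $KL$-agreement on ${\cal P}$ to genuine $KK$-agreement after a finite stabilization whose size is controlled by ${\cal P}$. By R\o rdam's presentation of $KL(A,B)$ as the quotient of $KK(A,B)$ by limits of trivial extensions, the class $\eta:=[h_1]-[h_2]$ in $KK(A,B)$ is pure. The local uniform fullness of $j$ provides, for each $a\in A_+\setminus\{0\}$, explicit constants $M(a)$ and $N(a)$ governing how many uniformly bounded copies of $j(a)$ are needed to approximate arbitrary positive contractions in $B$. I will use this to show that for a suitable integer $N$, depending only on ${\cal P}$ and on the fullness constants for a finite set of elements of $A$ whose $\underline{K}$-theory classes generate the subgroup $G({\cal P})$, one has $[h_1\oplus d_N\circ j]=[h_2\oplus d_N\circ j]$ in $KK(A,M_{N+1}(B))$. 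The underlying idea is that $d_N\circ j$, because of uniform fullness, induces a $KK$-class whose effect on $G({\cal P})$ grows with $N$ in a controlled, uniform way and therefore annihilates any pure obstruction once $N$ is sufficiently large, via a finite-stage refinement of the absorbing extension theorem \ref{Textabs}.

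Once the stabilized $KK$-equality on ${\cal P}$ is secured, Theorem \ref{T564} applied to $h_1\oplus d_N\circ j$ and $h_2\oplus d_N\circ j$ yields an integer $L'$ and a unitary $V\in M_{(N+1)(L'+1)}(\tilde B)$ implementing the required approximate intertwining up to $\ep/2$ on ${\cal F}$. Absorbing the $N$ copies of $j$ into the enlarged diagonal $d_n$ with $n:=N+L'(N+1)-1$ and relabeling $V$ accordingly yields the unitary $u\in M_{n+1}(\tilde B)$ claimed by the theorem. The main obstacle is the quantitative absorption step of Paragraph 2: making precise the claim that local uniform fullness of $j$ allows any pure $KK$-class to be trivialized on a prescribed finite ${\cal P}\subset\underline{K}(A)$ at a finite stabilization level $N=N({\cal P})$. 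This amounts to a finite, controlled version of the Dadarlat--Eilers stabilization trick, combining R\o rdam's quotient description of $KL$ with a careful accounting of how uniformly full positive elements act on the relevant portion of $\underline{K}$-theory.
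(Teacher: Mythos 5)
There is a genuine gap, and it sits exactly at the step you call the heart of the argument. You claim that for a suitable finite $N$, controlled by ${\cal P}$ and the fullness constants of $j$, one gets $[h_1\oplus d_N\circ j]=[h_2\oplus d_N\circ j]$ in $KK(A,M_{N+1}(B))$, i.e.\ that sufficiently many copies of the uniformly full embedding ``annihilate'' the pure class $\eta=[h_1]-[h_2]$. This cannot work: $KK(A,-)$ is additive under direct sums and $M_{N+1}(B)$ is $KK$-equivalent to $B$, so under the canonical identification one has $[h_1\oplus d_N\circ j]-[h_2\oplus d_N\circ j]=\eta$ for every $N$. Hence such an $N$ exists if and only if $\eta=0$ in $KK(A,B)$ already, which is strictly stronger than the hypothesis $[h_1]=[h_2]$ in $KL(A,B)$ (in general ${\cal P}$-agreement, or $KL$-agreement, does not give $KK$-agreement, pure classes being precisely the potential difference). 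Absorption results such as Theorem \ref{Textabs} are of a different nature: they upgrade a \emph{given} $KK$ (or $Ext$) equality to unitary equivalence after adding an absorbing summand; they never change the $KK$ class of anything, so no ``finite-stage refinement'' of them can trivialize a nonzero pure obstruction. The weaker reading of your claim --- agreement only on $G({\cal P})$ after stabilization --- is true but already follows from the $KL$ hypothesis with $N=0$ via the D\u{a}d\u{a}rlat--Loring identification, and it is not enough to feed into Theorem \ref{T564}, which requires genuine $KK$-equality.

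The paper bridges $KL$ to $KK$ by a different device: it passes to the sequence algebra. Set $C=\prod_{k}B$, $C_0=\bigoplus_k B$, $\pi:C\to C/C_0$, and let $H_i(a)=\{h_i(a)\}$, $H_0(a)=\{j(a)\}$. By 3.5 of \cite{Lnauct}, $[h_1]=[h_2]$ in $KL(A,B)$ forces $[\pi\circ H_1]=[\pi\circ H_2]$ in $KK(A,C/C_0)$ --- the ``limit of trivial extensions'' defect dies in the corona-type quotient, which is where the $KL$-to-$KK$ promotion really happens. The role of \emph{local uniform} fullness (rather than mere fullness) is precisely to ensure that $\pi\circ H_0$ stays full in $C/C_0$: the uniform bounds $N(a)$, $M(a)$ let one assemble the coordinatewise witnesses $x_{i,n}$ into elements $\{x_{i,n}\}_n$ of $\prod_n B$. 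One then invokes Lemma \ref{Lsep} to cut down to a separable \SCA\, $D\subset C/C_0$ retaining both the fullness and the $KK$-equality, applies Theorem \ref{T564} inside $D$, lifts the resulting unitary to $\prod_k{\tilde B}$, and takes a coordinate $v_k$ for large $k$. If you want to repair your proposal, you must replace the finite-stabilization absorption of Paragraph 2 by an argument of this kind (or by a direct, controlled ${\cal P}$-version of stable uniqueness in the spirit of Theorem \ref{Lauct2}, which requires restrictive hypotheses on $B$); as written, the central claim is false.
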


\begin{proof}

Let $C=\prod_{k=1}^{\infty} B,$  $C_0=\bigoplus_{k=1}^{\infty} B$ and let
$\pi: C\to C/C_0$ be the quotient map.
Let $H_i=\{h_i\}: A\to C$ be defined by $H_i(a)=\{h_i(a)\}$ for all $a\in A,$ $i=1,2.$
Define $H_0: A\to C$ by $H_0(a)=\{j(a)\}$ for all $a\in A.$
It follows from 3.5 of \cite{Lnauct} that
\beq\label{T39-2}
[\pi\circ H_1]=[\pi\circ H_2]\,\,\,{\rm in}\,\,\, KK(A, C/C_0).
\eneq
Since $j: A\to B$ is locally uniformly full, for any $a\in A\setminus\{0\},$ let
$M(a)$ and $N(a)$ be defined as in \ref{fullunif} associated with element
$a.$
Let $\{b_n\}\subset (\prod_{n=1}^{\infty} B)_+$ with
$\|\{b_n\}\|\le 1.$
There are
there are $x_{1,n},x_{2,n},...,x_{m(n),n}\in B$ such that
$\|x_{i,n}\|\le M(a)$ and $m(n,a)\le N(a)$ and
$$
\|\sum_{i=1}^{m(n,a)}x_i^*j(a)x_i-b_n\|<\ep.
$$
By adding zeros, if necessarily, \wilog, we may assume that $m(n,a)=N(a)$ for all $n.$
Put $z_i=\{x_{i,n}\},$ $i=1,2,...,N(a).$ Then
$\|z_i\|=\sup\{\|x_{i,n}\|: n\in \N\}\le M(a),$ $i=1,2,...,N(a).$
Therefore $z_i\in \prod_{n=1}^{\infty}B.$
We have
that
$$
\|\sum_{i=1}^{N(a)} z_i^*H_0(a) z_i-\{b_n\}\|<\ep.
$$
This implies that $H_0$ is locally uniformly full in $\prod_{n=1}^{\infty}B.$
Viewing $\pi\circ H_0: A\to C/C_0$ as an embedding. Then it is locally uniformly full.
Combining this with \eqref{T39-2}, applying \ref{Lsep}, we obtain
a separable \SCA\, $D\subset C/C_0$ such that
$\pi\circ H_0(A), \pi\circ H_1(A), \pi\circ H_2(A)\subset D,$
$\pi\circ H_0$ is locally uniformly full in $D$ and
$$
[\pi\circ H_1]=[\pi\circ H_2]\,\,\, {\rm in}\,\,\, KK(A, D).
$$
Applying \ref{T564},
there exists an integer $n\ge 1$ and unitary $U\in M_{n+1}({\tilde D})$ such that
$$
\|U^*\diag(\pi\circ H_1(a), d_n(\pi\circ H_0(a)))U-\diag(\pi\circ H_2(a), d_n(\pi\circ H_0(a)))\|<\ep/4
$$
for all $a\in {\cal F}.$
Note that $U\in M_{n+1}({\widetilde{C/C_0}}).$
There is a unitary $V=\{v_k\}\in {\tilde C}$ such that
$\pi(V)=U.$
Therefore, for all sufficiently large $n,$
$$
\|v_k^*\diag(h_1(a), d_n(a))v_k-\diag(h_2(a), d_n(a))\|<\ep
\rforal a\in {\cal F}.
$$
Choose $u=v_k$ for some sufficiently large $k.$
\end{proof}

\begin{thm}\label{T39sr1}
Let $A$ be a separable amenable \CA\, and let $B$ be a $\sigma$-unital \CA\,  which almost
has stable rank one.
Suppose that $h_1, h_2: A\to B$ are two \hm s such that
$$
[h_1]=[h_2]\,\,\, {\rm in}\,\,\, KL(A, B).
$$
Suppose that there is an embedding $j: A\to B$ which is locally uniformly full.
Then, for any $\ep>0$ and any finite subset ${\cal F}\subset A,$  there is an integer $n\ge 1$ and
a unitary $u\in  {\widetilde{M_{n+1}(B)}}$  such that
$$
\|u^*\diag(h_1(a), d_n(a))u-\diag(h_2(a), d_n(a))\|<\ep\rforal a\in {\cal F}.
$$
\end{thm}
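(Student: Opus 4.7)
The plan is to reduce Theorem \ref{T39sr1} to Theorem \ref{T39} exactly as Theorem \ref{T565} was obtained from Theorem \ref{T564}; the only extra input needed is the ``almost stable rank one'' hypothesis, which lets us replace a unitary in $M_{n+1}(\tilde B)$ by a unitary in $\widetilde{M_{n+1}(B)}$ via Pedersen's lifting trick.

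First, given $\varepsilon>0$ and a finite subset ${\cal F}\subset A$, I would apply Theorem \ref{T39} with tolerance $\varepsilon/4$ and the finite subset ${\cal F}$ to obtain an integer $n\ge 1$ and a unitary $V\in M_{n+1}(\tilde B)$ with
\[
\|V^*\diag(h_1(a),d_n(a))V-\diag(h_2(a),d_n(a))\|<\varepsilon/4\rforal a\in{\cal F}.
\]
Without loss of generality, ${\cal F}$ lies in the unit ball of $A$. Since $\{h_1(a),d_n(a):a\in{\cal F}\}$ is a finite subset of $M_{n+1}(B)$, using an approximate identity for $M_{n+1}(B)$ I can choose $e\in M_{n+1}(B)_+$ with $0\le e\le 1$ such that
\[
\|e\diag(h_1(a),d_n(a))e-\diag(h_1(a),d_n(a))\|<\varepsilon/16\rforal a\in{\cal F}.
\]
Then, by standard functional calculus, there is $1/2>\eta>0$ with
\[
\|f_\eta(e)\diag(h_1(a),d_n(a))f_\eta(e)-\diag(h_1(a),d_n(a))\|<\varepsilon/8\rforal a\in{\cal F}.
\]

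Next, I would invoke the hypothesis that $B$ almost has stable rank one: this gives $M_{n+1}(B)\subset\overline{GL(\widetilde{M_{n+1}(B)})}$. Consequently $Ve\in\overline{GL(\widetilde{M_{n+1}(B)})}$, and Theorem~5 of Pedersen's paper~\cite{Pedjot87} (as used already in the proof of Theorem \ref{T565}) yields a unitary $U\in\widetilde{M_{n+1}(B)}$ with
\[
U^*f_\eta(e)=V^*f_\eta(e).
\]

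Finally I would chain the three estimates: for $a\in{\cal F}$,
\begin{align*}
U^*\diag(h_1(a),d_n(a))U
&\approx_{\varepsilon/8} U^*f_\eta(e)\diag(h_1(a),d_n(a))f_\eta(e)U\\
&= V^*f_\eta(e)\diag(h_1(a),d_n(a))f_\eta(e)V\\
&\approx_{\varepsilon/8} V^*\diag(h_1(a),d_n(a))V\\
&\approx_{\varepsilon/4} \diag(h_2(a),d_n(a)),
\end{align*}
which gives the desired estimate with a small constant multiple of $\varepsilon$ (or, by adjusting the tolerances at the start, exactly $\varepsilon$). There is essentially no obstacle here since all nontrivial work has been done in Theorem \ref{T39}; the only subtle point is the standard use of Pedersen's theorem to promote the invertible element $Ve$ to a unitary lift in $\widetilde{M_{n+1}(B)}$ matching $V$ on the cut-off $f_\eta(e)$, which is exactly the mechanism underlying the passage from Theorem \ref{T564} to Theorem \ref{T565}.
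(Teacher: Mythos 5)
Your proposal is correct and follows the paper's own route: the paper proves this result exactly by combining Theorem \ref{T39} with the Pedersen-lifting argument from the proof of Theorem \ref{T565} (cut down by $f_\eta(e)$, use almost stable rank one to get $M_{n+1}(B)\subset\overline{GL(\widetilde{M_{n+1}(B)})}$, and apply Theorem 5 of \cite{Pedjot87} to produce $U$ with $U^*f_\eta(e)=V^*f_\eta(e)$). Your spelled-out estimate chain is just the detailed version of what the paper leaves implicit, so there is nothing to add.
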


\begin{proof}
This follows from \ref{T39} together with the proof of \ref{T565}.
\end{proof}

\begin{df}{\rm \hspace{-0.05in}(Definition 2.1 of \cite{GL1})}\label{Blbm}
Fix a map $r_0: \N\to \Z_+,$  a map $r_1: \N\to \Z_+,$ a map $T: \N\times \N\to \N,$ integers
$s\ge 1$ and $R\ge 1.$
  We say a   \CA\,
$A\in {\bf C}_{(r_0, r_1,T, s, R)}$ if

(a) for any integer $n\ge 1$ and any {pair} of projections $p,\, q\in M_n({\tilde A})$ with
$[p]=[q]$ in $K_0(A),\,\,\,$  $p\oplus 1_{M_{r_0(n)}(A)}$ and
$q\oplus 1_{M_{r_0(n)}({\tilde A})}$ are Murr{a}y-von Neumann equivalent,
moreover, if $p\in M_n({\tilde A})$ and $q\in M_m({\tilde A})$ and
$[p]-[q]\ge 0,$ there exists $p'\in M_{n+r_0(n)}({\tilde A})$
such that $p'\le p\oplus 1_{M_{r_0(n)}}$ and $p'$ is equivalent to $q\oplus 1_{M_{r_0(n)}};$

(b) if $k\ge 1,$ and $x\in K_0(A)$ such that  $-n[1_{\tilde A}]\le kx \le n[1_{\tilde A}]$ for some integer $n\ge 1,$ then
$${-}T(n,k)[1_{\tilde A}]\le x \le T(n,k)[1_{\tilde A}];$$

 (c) the canonical map $U(M_s({\tilde A}))/U_0(M_s({\tilde A}))\to K_1(A)$ is surjective;

(d) {i}f $u\in {U(M_n({\tilde A}))}$ and $[u]=0$ in $K_1({\tilde A}),$ then $u\oplus 1_{M_{r_1(n)}}\in U_0(M_{n+r_1(n)}({\tilde A}));$

(f) {${\rm cer}(M_m({\tilde A}))\le R$} 
for all $m\ge 1.$

Note that if $K_0(A)=\{0\},$ then $A$ satisfies condition (a) and (b) for any $r_0$ and $T.$
If $A$ has stable rank one, then $r_0$ and $r_1$ can be chosen to be zero.
\end{df}

\begin{df}\label{localfull}
Let $A$ be a separable \CA, let $B$ be non-unital \CA\, and let $L: A\to B$ be a positive linear map.
 Let $F: A_+\setminus \{0\}\to \N\times \R.$
Suppose that ${\cal H}\subset A_+\setminus \{0\}$ is  a subset.
We say $L$ is $F$-${\cal H}$-full, if, for any $b\in B_+$ with $\|b\|\le 1,$ any $\ep>0,$
there are $x_1, x_2,...,x_m\in B$ such that
$m\le N(a)$ and $\|x_i\|\le M(a),$ where
$(N(a), M(a))=F(a),$  and
\beq\label{localfull-1}
\|\sum_{i=1}^mx_i^*L(a)x_i-b\|\le \ep.
\eneq

We say $L$ is exactly $F$-${\cal H}$-full, if in \eqref{localfull-1} holds for $\ep=0.$
\end{df}

\begin{thm}\label{Lauct2} {\rm \hspace{-0.06in}({c.f.} 5.3 of \cite{Lnsuniq}, Theorem 3.1 of \cite{GL1}
\cite{ED}, 5.9 of \cite{Lnauct} and
Theorem 7.1 of \cite{Ln-hmtp})}
Let $A$ be a non-unital separable amenable  \CA\, which satisfies the UCT,  let
$r_0, r_1: \N\to \Z_+,$ $T: \N\times \N\to \N$ be three maps, $s, R\ge 1$ be integers
and let $F: A_+\setminus \{0\}\to \N\times \R_+\setminus \{0\}$  and ${\bf L}: U(M_{\infty}({\tilde A}))\to \R_+$ be  two additional  maps.
  For any $\ep>0$ and any finite subset ${\cal F}\subset A,$ there exists
$\dt>0,$ a finite subset ${\cal G}\subset A,$
a finite subset ${\cal P}\subset \underline{K}(A),$ a finite subset
${\cal U}\subset U(M_{\infty}({\tilde A})),$ a finite subset ${\cal H}\subset A_+\setminus \{0\}$ and an integer $K\ge 1$ satisfying the following:
For any two ${\cal G}$-$\dt$-multiplicative \morp s $\phi, \psi: A\to B,$  where
$B\in {\bf C}_{r_0, r_1, T, s, R},$ and any
${\cal G}$-$\dt$-multiplicative \morp\, $\sigma: A\to M_l(B)$ (for any integer $l\ge 1$) which is also $F$-${\cal H}$-full  such that
\beq\label{Lauct-1}
\hspace{0.3in} &&{\rm cel}(\lceil \phi(u)\rceil  \lceil \psi(u^*)\rceil)\le {\bf L}(u)\tforal u\in {\cal U}\\
&&\tand\,
[\phi]|_{\cal P}=[\psi]|_{\cal P},
\eneq
there exists a unitary $U\in M_{1+Kl}({\tilde B})$ such that
\beq\label{Lauct-2}
\|{\rm Ad}\, U\circ (\phi\oplus S)(a)-(\psi\oplus S)(a)\|<\ep\tforal a\in {\cal F},
\eneq
where
\vspace{-0.14in} $$
S(f)={\rm diag}(\overbrace{\sigma(a), \sigma(a),...,\sigma(a)}^K)\tforal a\in A.
$$
If furthermore, $B$ almost has stable rank one, one can choose $U\in {\widetilde{ M_{1+Kl}(B)}}.$
\end{thm}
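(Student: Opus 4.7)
The plan is to argue by contradiction, reducing the approximately multiplicative case to the honest homomorphism stable uniqueness theorem \ref{T39} (or \ref{T39sr1} when $B$ almost has stable rank one) via a sequence algebra construction. Negate the conclusion: there exist $\ep_0>0$ and a finite subset ${\cal F}_0\subset A$ such that for any choice of ingredients one can find a counterexample. Choosing sequences $\dt_n\searrow 0,$ ${\cal G}_n\nearrow A$ (dense), ${\cal P}_n\nearrow \underline{K}(A),$ ${\cal U}_n\nearrow U(M_{\infty}({\tilde A})),$ ${\cal H}_n\nearrow A_+\setminus\{0\},$ and integers $K_n\to \infty,$ one obtains $B_n\in {\bf C}_{r_0,r_1,T,s,R}$ together with ${\cal G}_n$-$\dt_n$-multiplicative \cpc s $\phi_n,\psi_n\colon A\to B_n$ and $F$-${\cal H}_n$-full \cpc s $\sigma_n\colon A\to M_{l_n}(B_n)$ satisfying the cel bound via ${\bf L}$ and the $KL$-type hypothesis on ${\cal P}_n,$ yet for which the conclusion \eqref{Lauct-2} fails with $K=K_n$ copies of $\sigma_n.$

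Form the sequence algebras ${\bf B}:=(\prod_n B_n)/(\bigoplus_n B_n)$ and ${\cal M}:=(\prod_n M_{l_n}(B_n))/(\bigoplus_n M_{l_n}(B_n)).$ Asymptotic multiplicativity promotes $(\phi_n),(\psi_n)$ to genuine $*$-homomorphisms $\Phi,\Psi\colon A\to {\bf B},$ and $(\sigma_n)$ to a homomorphism $\Sigma\colon A\to {\cal M}.$ Because the pair $F(a)=(N(a),M(a))$ depends only on $a,$ any norm-one positive lift $\{b_n\}$ of a positive element of ${\cal M}$ can, slot by slot, be approximated by sums of at most $N(a)$ conjugates of $\sigma_n(a)$ of norm at most $M(a);$ hence $\Sigma$ is locally uniformly full in ${\cal M}$ in the sense of \ref{fullunif}.

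Next I would verify $[\Phi]=[\Psi]$ in $KL(A,{\bf B}).$ The exhausting sequence ${\cal P}_n,$ combined with the class assumption $B_n\in {\bf C}_{r_0,r_1,T,s,R},$ supplies uniform-size witnesses (projections and unitaries in matrices of bounded size, by clauses (a)--(d) of \ref{Blbm}) for the agreement $[\phi_n]|_{{\cal P}_n}=[\psi_n]|_{{\cal P}_n};$ these assemble in the quotient to give agreement of $[\Phi]$ and $[\Psi]$ on every element of $\underline{K}(A),$ and the bounded cel hypothesis together with exhausting ${\cal U}_n$ rules out the pure-extension obstruction, yielding equality in $KL.$ Applying \ref{Lsep2} we descend to a separable \SCA\ $D\subset {\bf B}$ containing $\Phi(A),\Psi(A),\Sigma(A)$ with $[\Phi]=[\Psi]$ in $KL(A,D)$ and $\Sigma$ still locally uniformly full (as a map into $M_l(D)$ for an appropriate $l$). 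Theorem \ref{T39} (or \ref{T39sr1} when $B$ almost has stable rank one, a property inherited by $D$) then provides an integer $K$ and a unitary $U\in M_{1+Kl}({\tilde D})$ (resp.\ $\widetilde{M_{1+Kl}(D)}$) with
\beq\nonumber
\|U^*(\Phi(a)\oplus d_K(\Sigma(a)))U-(\Psi(a)\oplus d_K(\Sigma(a)))\|<\ep_0/2 \quad (a\in {\cal F}_0).
\eneq

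Finally, lift $U$ to a representative $(u_n)$ in the corresponding matrix unitization of the product. The cer bound $R$ in clause (f) of \ref{Blbm}, combined with clauses (c)--(d), allows us to perturb each $u_n$ to a genuine unitary in $M_{1+Kl_n}({\tilde B_n})$ (resp.\ $\widetilde{M_{1+Kl_n}(B_n)}$) at arbitrarily small cost. For all sufficiently large $n,$ the resulting $u_n$ then witnesses \eqref{Lauct-2} within $\ep_0$ for $(\phi_n,\psi_n,\sigma_n)$ with $K=K_n$ copies, contradicting the construction. I expect the main obstacle to be the third step: assembling the pointwise data ($[\phi_n]|_{{\cal P}_n}=[\psi_n]|_{{\cal P}_n}$ together with the cel bounds) into a bona fide $KL$-equivalence in ${\bf B},$ and then, at the very end, converting the abstract unitary $U$ back into genuine unitaries $u_n$ in each slot. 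Both directions depend essentially on the uniformity packaged into the class ${\bf C}_{r_0,r_1,T,s,R}.$
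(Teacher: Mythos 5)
Your skeleton is the paper's proof: negate the statement, collect sequences $\dt_n\searrow 0,$ ${\cal G}_n,{\cal P}_n,{\cal U}_n,{\cal H}_n$ exhausting, $k(n)\to\infty,$ pass to $C=\prod_n M_{l(n)}(B_n)$ and its quotient $Q(C),$ promote $\phi_n,\psi_n,\sigma_n$ to homomorphisms $\pi\circ\Phi,\pi\circ\Psi,\pi\circ S,$ prove fullness of $\pi\circ S,$ prove $[\pi\circ\Phi]=[\pi\circ\Psi]$ in $KL$ using the uniformity built into ${\bf C}_{r_0,r_1,T,s,R}$ together with the cel bound (this is exactly where Lemma 1.1 of \cite{GL1} and the computation of $\underline{K}$ of products enter), cut down to a separable $D$ by \ref{Lsep2}, apply the homomorphism stable uniqueness theorem, and lift back coordinatewise to reach a contradiction. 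Two of your supporting claims, however, are off, and one of them affects a clause of the statement. First, minor: the fullness hypothesis only provides uniform witnesses for $a\in{\cal H}_n,$ so your assertion that $\Sigma$ is locally uniformly full for \emph{all} positive elements needs the density of $\cup_n{\cal H}_n$ (and the closure of the ${\cal H}_n$ under $f_{1/2}$) plus the \ref{Lrorm}-type argument used in \ref{Lsep}; similarly, the cer bound (f) of \ref{Blbm} is not what lets you lift the unitary from the quotient — any lift is almost unitary in each coordinate, so polar decomposition in $\widetilde{M_{1+Kl}(B_n')}$ already does it; the class conditions are consumed earlier, in assembling the $KL$-equality.

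The genuine gap is your treatment of the final clause (almost stable rank one). You propose to apply \ref{T39sr1} to $D,$ asserting that almost stable rank one is "inherited by $D.$" That is unjustified: $D$ is an essentially arbitrary separable \SCA\, of the corona-type quotient $\prod_n M_{l(n)}(B_n)/\bigoplus_n M_{l(n)}(B_n),$ and properties of this kind pass neither to subalgebras nor obviously through this quotient, so this step of your argument has no support and I see no way to repair it at the level of $D.$ The paper avoids the issue entirely: it first proves the unrefined conclusion, producing for large $n$ a genuine unitary in $M_{1+Kl}({\tilde B_n})$ witnessing \eqref{Lauct-2}, and only then, back at the level of $B$ itself, runs the perturbation argument of \ref{T565} (invoked through \ref{T39sr1}): choose $e\in M_{1+Kl}(B)$ with $0\le e\le 1$ acting almost as a unit on the finitely many relevant elements, observe $Ue\in M_{1+Kl}(B)\subset\overline{GL(\widetilde{M_{1+Kl}(B)})},$ and use Pedersen's theorem to replace $U$ by a unitary in $\widetilde{M_{1+Kl}(B)}$ agreeing with it against $f_\dt(e).$ With that substitution for your last step the proposal matches the paper's argument.
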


\begin{proof}
We may also use $\phi$ and $\psi$ for $\phi\otimes {\rm id}_{M_m}$ and
$\psi\otimes {\rm id}_{M_m},$ respectively.
Fix $A,$ $r_0, r_1, T, s, R,$ $F$ and ${\bf L}$ as described above.
Suppose that the theorem is false.  Then there exists  $\ep_0>0$ and a finite subset ${\cal F}\subset A$
such that there are a sequence of positive numbers $\{\dt_n\}$ with $\dt_n\searrow 0,$ an increasing sequence
$\{{\cal G}_n\} \subset A$ of finite subsets such that $\cup_n {\cal G}_n$ is dense in $A,$
an increasing sequence $\{{\cal P}_n\}\subset \underline{K}(A)$ of finite subsets  such that
$\cup_{{n}}{\cal P}_n=\underline{K}(A),$  an increasing sequence of finite subsets
$\{{\cal U}_n\}\subset U(M_{\infty}({\tilde A}))$ such that $\cup_{{n}}{\cal U}_n\cap U(M_m({\tilde A}))$ is dense
in $U(M_m({\tilde A}))$ for each integer $m\ge1,$  an increasing sequence of finite subsets
$\{{\cal H}_n\}\subset  A_+^{\bf 1}\setminus \{0\}$ such that if $a\in {\cal H}_n$
and $f_{1/2}(a)\not=0,$ then
$f_{1/2}(a)\in {\cal H}_{n+1}$ and $\cup_{{n}}{\cal H}_n$ is dense
in $A_+^{\bf 1},$  a sequences of integers $\{k(n)\}$ with
$\lim_{n\to\infty} k(n)=\infty$,  a sequence of unital \CA s $B_n\in {\bf C}_{r_0, r_1, T, s, R},$
two sequences of ${\cal G}_n$-$\dt_n$-multiplicative \morp s $\phi_{n}, \psi_{n}: A\to B_n$
such that
\beq\label{stableun2-1}
[\phi_{n}]|_{{\cal P}_n}=[\psi_{n}]|_{{\cal P}_n}\andeqn
{\rm cel}(\lceil \phi_{n}(u)\rceil \lceil \psi_{n}(u^*)\rceil)\le {\bf L}(u)
\eneq
for all $u\in {\cal U}_n$
and a sequence of unital ${\cal G}_n$-$\dt_n$-multiplicative \morp s $\sigma_n: A\to
M_{l(n)}(B_n)$ which is also $F$-${\cal H}_n$-full satisfying
\beq\label{stableun2-2}
&&\hspace{0.3in} \inf\{\sup\|v_n^*{\diag}( \phi_{n}(a), S_n(a))v_n-{\diag}(\psi_n(a), S_n(a))\|: a\in {\cal F}\|\}\ge \ep_0,
\eneq
where the infimum is taken among all unitaries $v_n\in M_{k(n)l(n)+1}(B_n)$
and
\vspace{-0.1in} $$
S_n(a)={\diag}(\overbrace{\sigma_n(a),\sigma_n(a),...,\sigma_n(a)}^{k(n)})\rforal a\in A.
$$

Put $B_n'=M_{l(n)}(B_n).$
Let $C_0=\bigoplus_{n=1}^{\infty}B_n',$ $C=\prod_{n=1}^{\infty}B_n',$ $Q(C)=C/C_0$ and
$\pi: C\to Q(C)$ is the quotient map.  Define $\Phi, \Psi, S: A\to C$ by
$\Phi(a)=\{\phi_n(a)\},$ $\Psi(a)=\{\psi_n(a)\}$  and  $S(a)=\{\sigma_n(a)\}$  for
all $a\in A.$  Note that $\pi\circ \Phi,$
$\pi\circ \Psi$ and $\pi\circ {S}$ are \hm s.  Denote also 
$\Phi^{(m)}, \Psi^{(m)}, S^{(m)}: A\to \prod_{{n\ge m}} B_n'$ by
$\Phi^{(m)}(a)=\{\phi_n(a)\}_{n\ge m},$ $\Psi^{(m)}(a)=\{\psi_n(a)\}_{n\ge m}$
and $S^{(m)}(a)=\{\sigma_n(a)\}_{n\ge m}.$

For each $u\in {\cal U}_m,$ we may assume that
$u\in M_{L(m)}({\tilde A})$ {for some integer $L(m)\ge 1$.}  When $n\ge m,$  by the assumption,
there exists a continuous path of unitaries  $\{u_n(t): t\in [0,1]\}\subset M_{L(m)}({\tilde B_n'})$ such that
\vspace{-0.1in} \beq \nonumber
u_n(0)=\lceil \phi_n(u)\rceil, u_n(1)=\lceil \psi_n(u)\rceil\andeqn
{\rm cel}(\{u_n(t)\})\le {\bf L}(u).\nonumber
\eneq
It follows from Lemma 1.1 of \cite{GL1} that, for all $n\ge m,$ there exists a continuous path
$\{U(t): t\in [0,1]\}\subset U_0(\prod_{n{\ge}m} {\tilde B_n'})$ such that
$U(0)=\{\langle \phi_n(u)\rangle\}_{n\ge m}$ and $U(1)=\{\langle \psi_n(u)\rangle\}_{n\ge m}.$
This in particular implies that
\beq\label{stableun2-4}
&&\hspace{0.2in} \langle\Phi^{(m)}(u)\rangle \langle \Psi^{(m)}(u^*) \rangle \in U_0(M_{L(m)}(\prod_{n{\ge}m} {\tilde B_n'}))
\andeqn (\pi\circ \Phi)_{*1}=(\pi\circ \Psi)_{*1}.
\eneq
By (\ref{Lauct-1}),  for all $n\ge m,$
\vspace{-0.14in} \beq\label{stableun2-5}
[\phi_n]|_{{\cal P}_m}=[\psi_n]|_{{\cal P}_m}.
\eneq
By the assumption and by \cite{GL1},  $K_0(C)=\prod_bK_0(B_n').$
It follows that
\beq\label{stableun2-6}
[\Phi^{(m)}]|_{K_0(A)\cap {\cal P}_m}=[\Psi^{(m)}]|_{K_0(A)\cap {\cal P}_m},\,\,m=1,2,....
\eneq
In particular,
\vspace{-0.14in} \beq\label{stableun2-7}
(\pi\circ \Phi)_{*0}=(\pi\circ \Psi)_{*0}.
\eneq
Now let $x_0\in {\cal P}_m\cap K_0(A,\Z/k\Z)$ for some $k\ge 2.$   Let ${\tilde x_0}\in K_1(A)$ be the image
of $x_0$ under the map $K_0(A, \Z/k\Z)\to K_1(A).$
We may assume that ${\tilde x_0}\in {\cal P}_{m_0}$ for some
$m_0\ge m.$  By  (\ref{stableun2-6}), $[\Phi^{(m_0)}]({\tilde x})=[\Psi^{(m_0)}]({\tilde x}).$
Put $y_0=[\Phi^{(m_0)}](x)-[\Psi^{(m_0)}](x).$ Then
$y_0\in K_0((\prod_{n{\ge}m_0}B_n'), \Z/k\Z)$ must be in the image of $K_0(\prod_{n{\ge}m_0}B_n')$
which may be identified with
$K_0(\prod_{n{\ge}m_0}B_n')/kK_0(\prod_{n{\ge}m_0}B_n').$
(see \cite{GL1}).
However, by  (\ref{stableun2-5}),
$$y_0\in {\rm ker}\,\psi_0^{(k)},$$
where $\psi_0^{(k)}: K_0(\prod_{n{\ge}m_0}B_n', \Z/k\Z)\to \prod_{n{\ge}m_0}K_0(B_n', \Z/k\Z)$
is  as in 4.1.4 of \cite{Lncbms}.
By \cite{GL1},
$y_0=0.$ In other words,
$$[\Phi](x)=[\Psi](x)$$
which implies that
\vspace{-0.1in} \beq\label{stableun2-10}
[\pi\circ \Phi]|_{K_0(A,\Z/k\Z)}=[\pi\circ \Psi]|_{K_0(A, \Z/k\Z)},\,\,k=2,3,....
\eneq

Now let $x_1\in K_1(A,\Z/k\Z).$ We may assume that $x\in {\cal P}_m$ for some $m\ge 1.$
Let ${\tilde x_1}\in K_0(A)$ be the image of $x$ under the map $K_1(A,\Z/k\Z)\to K_0(A).$
There is $m_1\ge m$ such that ${\tilde x}\in {\cal P}_{m_1}.$
By (\ref{stableun2-6}), $[\Phi^{(m_1)}]({\tilde x})=[\Psi^{(m_1)}]({\tilde x}).$
Put $y_1=[\Phi^{(m_1)}](x_1)-[\Psi^{(m_1)}](x_1).$ Then
$y_1\in K_1(\prod_{n=m_1}B_n')/kK_1(\prod_{n=m_1}B_n')$ (see (\cite{GL1})).
However, by (\ref{stableun2-4}), $y_1\in {\rm ker}\psi_1^{(k)}$ (see 4.1.4 of \cite{Lncbms})
It follows from
\cite{GL1} that $y_1=0.$ In other words,
$$[\Phi](x_1)=[\Psi](x_1).$$
Thus
\vspace{-0.14in} \beq\label{stableun2-12}
[\pi\circ \Phi]|_{K_1(A,\Z/k\Z)}=[\pi\circ \Psi]|_{K_1(A, \Z/k\Z)}.
\eneq
Therefore, combining (\ref{stableun2-4}), (\ref{stableun2-7}), (\ref{stableun2-10}) and (\ref{stableun2-12}),
$$[\pi\circ \Phi]=[\pi\circ \Psi]\,\,\,{\rm in} \,\,\,  Hom_{\Lambda}(\underline{K}(A), \underline{K}(Q(C)).
$$
For each $a\in {\cal H}_m\subset A_+^{\bf 1}\setminus \{0\},$
any $\{b_n\}\in C_+^{\bf 1},$ and any $\eta>0,$  since $\sigma_n$ is $F$-${\cal H}_n$-full,
for all $n\ge m,$
there are $x_{i,n}(a)\in B_n'$ with $\|x_{i,n}\|\le M(a),$ $i=1,2,...,N(a),$
where $F(a)=M(a)\times N(a),$ such that
$$\|\sum_{i=1}^{N(a)} x_{i,n}(a)^*\sigma_n(a) x_{i,n}(a)- b_n\|<\eta.$$
Define $x(i,a)=\{x_{i,n}(a)\}.$ Then $x(i,a)\in C.$
It follows
that
$$\|\sum_{i=1}^{N(a)}x(i,a)^*S^{(m)}(a)x(i,a)-\{b_n\}\|<\ep.
$$
This shows that $\pi\circ S(a)$ is a full element in $Q(C)$ for all $a\in \cup_{n=1}^{\infty} {\cal H}_n.$
Let $I$ be an ideal of $C$ and
let
$$
J=\{a\in A: \pi\circ S(a)\in I\}.
$$
The same argument used in the proof of \ref{Lsep} shows that $J=\{0\}.$ It follows
that $\pi\circ S$ is full.

It follows from \ref{Lsep2} that there exists a separable \SCA\,
$D\subset Q(C)$ such that
$\pi\circ S(A), \pi\circ \Phi(A), \pi\circ \Psi(A)\subset D$ and
$$
[\pi\circ \Phi]=[\pi\circ \Psi]\,\,\, {\rm in}\,\,\, KL(A, D).
$$


It follows from the end of the proof \ref{T39} that there exists an integer $K\ge 1$ and
unitary $V\in M_{K+1}({\widetilde{Q(C)}})$  such that
$$\|V^*{\diag}(\pi\circ \Phi(a), \Sigma(a))V-{\diag}(\pi\circ \Psi(a), \Sigma(a))\|<\ep_0/4$$
for all $a\in {\cal F},$ where
\vspace{-0.12in} $$\Sigma(a)={\diag}(\overbrace{\pi\circ S(a), \pi\circ S(a),...,\pi\circ S(a)}^K)\rforal a\in A.$$
Therefore there exists a sequence of  unitaries $\{v_n\}\subset M_{K+1}({\tilde C})$ and an integer
$N_1$ such that $k(n)\ge K$ for all $n\ge N_1$ and
$$\|v_n^*{\diag}(\phi_n(a), S_{n,K}(a))v_n-{\diag}(\psi_n(a), S_{n,K}(a))\|<\ep_0/2$$
for all $a\in {\cal F},$ where
\vspace{-0.13in} $$
S_{n,K}(a)={\diag}(\overbrace{\sigma_n(a),\sigma_n(a),...,\sigma_n(a)}^K)\rforal a\in A.
$$
This contradicts  (\ref{stableun2-1}).

If furthermore $B$ almost has stable rank one, then one can deploy the same proof as that of
\ref{T39sr1}.
\end{proof}

\begin{rem}\label{RRLuniq}
Let $K_0(A)\cap {\cal P}={\cal P}_1$ and write
${\cal P}_1=\{z_1,z_2,...,z_m\}.$ Then, by choosing sufficiently large ${\cal P},$
we can always choose ${\cal U}=\{w_1, w_2,...,w_m\}$ so that
$[w_i]=z_i,$ $i=1,2,...,m.$  In other words, that we do not need to consider
unitaries in $U_0(M_{\infty}({\tilde A}).$ In particular, if $K_1(A)=\{0\},$
then we can omit the condition \eqref{Lauct-1}.
Moreover, if $B$ is restricted in the class of \CA s of real rank zero, then
one can choose ${\bf L} \equiv 2\pi +1$ and \eqref{Lauct-1} always holds if
${\cal P}$ is sufficiently large. In other words, in this case, condition \eqref{Lauct-1} can
also be dropped.

\end{rem}

\begin{cor}\label{CLuniq}
Let $A$ be a non-unital separable amenable  \CA\, which satisfies the UCT such that $K_i(A)=\{0\}$
($i=0,1$),
and let $T: A_+\setminus \{0\}\to \N\times \R_+\setminus \{0\}$
be a  map.
  For any $\ep>0$ and any finite subset ${\cal F}\subset A,$ there exists
$\dt>0,$ a finite subset ${\cal G}\subset A,$
a finite subset ${\cal H}\subset A_+\setminus \{0\}$ and an integer $K\ge 1$ satisfying the following:
For any two ${\cal G}$-$\dt$-multiplicative \morp s $\phi, \psi: A\to B,$  where
$B$ is any  $\sigma$-unital \CA,
and any
${\cal G}$-$\dt$-multiplicative \morp\, $\sigma: A\to M_l(B)$ (for any integer $l\ge 1$) which is also
$T$-${\cal H}$-full, 
there exists a unitary $U\in M_{1+Kl}({\tilde B})$ such that
\beq\label{CLauct-2}
\|{\rm Ad}\, U\circ (\phi\oplus S)(a)-(\psi\oplus S)(a)\|<\ep\tforal a\in {\cal F},
\eneq
where
\vspace{-0.1in} $$
S(f)={\rm diag}(\overbrace{\sigma(a), \sigma(a),...,\sigma(a)}^K)\tforal a\in A.
$$
If furthermore, $B$ almost has stable rank one, one can choose $U\in {\widetilde{ M_{1+Kl}(B)}}.$
\end{cor}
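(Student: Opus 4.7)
The proof will be a direct deduction from Theorem \ref{Lauct2}, leveraging the simplifications that arise from $K_i(A)=\{0\}$. The key observations are: (i) $\underline K(A)=0$, so the finite subset ${\cal P}\subset\underline K(A)$ can be chosen empty and the condition $[\phi]|_{\cal P}=[\psi]|_{\cal P}$ is vacuous; (ii) by Remark \ref{RRLuniq}, with $K_1(A)=\{0\}$ the length condition \eqref{Lauct-1} involving ${\rm cel}$ and ${\bf L}$ can be dropped, so no ${\cal U}\subset U(M_\infty({\tilde A}))$ is needed; (iii) the class ${\bf C}_{r_0,r_1,T,s,R}$ restriction on $B$ in \ref{Lauct2} is used inside its proof only to identify $K$-theory of the product $\prod B_n'$ with $\prod K(B_n')$, which becomes unnecessary when $\underline K(A)=0$, since by UCT we then have $KK(A,D)=0$ for every separable \CA\ $D$ automatically.

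To turn these observations into a proof, I would re-run the contradiction argument of \ref{Lauct2} with no class restriction on $B$. Suppose the conclusion fails for some $\ep_0>0$ and ${\cal F}\subset A$. Extract sequences of ${\cal G}_n$-$\dt_n$-multiplicative \morp s $\phi_n,\psi_n:A\to B_n$ and $T$-${\cal H}_n$-full ${\cal G}_n$-$\dt_n$-multiplicative $\sigma_n:A\to M_{l(n)}(B_n)$ violating the conclusion for all $k(n)\to\infty$, with $\cup_n{\cal G}_n$ dense in $A$ and $\cup_n{\cal H}_n$ dense in $A_+^{\bf 1}\setminus\{0\}$. Form $C=\prod_n M_{l(n)}(B_n)$, $C_0=\bigoplus_n M_{l(n)}(B_n)$, $Q(C)=C/C_0$, and $\Phi,\Psi,S:A\to C$ as in the proof of \ref{Lauct2}; the compositions with $\pi:C\to Q(C)$ are then genuine \hm s. The same fullness computation as in \ref{Lauct2} shows that $\pi\circ S$ is locally uniformly full in $Q(C)$ with constants determined by the map $T$.

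Now by Lemma \ref{Lsep} (together with the trivial identity $KK(A,D)=0$ coming from UCT and $K_*(A)=0$), there is a separable \SCA\ $D\subset Q(C)$ containing the images $\pi\circ\Phi(A),\pi\circ\Psi(A),\pi\circ S(A)$ in which $\pi\circ S$ remains locally uniformly full and in which $[\pi\circ\Phi]=[\pi\circ\Psi]$ in $KK(A,D)$. Apply Theorem \ref{T39} to $\pi\circ\Phi,\pi\circ\Psi:A\to D$, using $\pi\circ S$ as the locally uniformly full embedding. This produces an integer $K\ge 1$ and a unitary $V\in M_{K+1}({\widetilde{Q(C)}})$ with
\[
\|V^*\diag(\pi\circ\Phi(a),\Sigma(a))V-\diag(\pi\circ\Psi(a),\Sigma(a))\|<\ep_0/4\qquad(a\in{\cal F}),
\]
where $\Sigma(a)$ denotes $K$ copies of $\pi\circ S(a)$ in diagonal sum. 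Lifting $V$ coordinatewise to a sequence $\{v_n\}\subset M_{K+1}(\widetilde{M_{l(n)}(B_n)})$ (routine, as $Q(C)$ is a quotient of $C$ by the ideal $C_0$) yields, for all sufficiently large $n$, a unitary realising the desired inequality with tolerance $<\ep_0/2$, contradicting the hypothesis. For the refinement when $B$ almost has stable rank one, one replaces \ref{T39} by \ref{T39sr1} in the same argument, which lands $V$ in ${\widetilde{M_{K+1}(Q(C))}}$ and hence produces unitaries in ${\widetilde{M_{Kl+1}(B)}}$.

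The main point to get right is ensuring that $\pi\circ S$ is \emph{locally uniformly} full (not merely full), so that Theorem \ref{T39} applies; this is precisely where the $T$-${\cal H}_n$-fullness of the $\sigma_n$ enters, since it provides uniform bounds $(N(a),M(a))=T(a)$ on the number and norm of elements implementing the fullness, bounds that survive passage to the product $C$ and the quotient $Q(C)$. The remaining steps---choosing $\dt$, the finite subsets ${\cal G}$ and ${\cal H}$, and the integer $K$ at the outset so that the quantifier order in the contradiction works---follow the standard sequence-of-sequences scheme used throughout \ref{Lauct2}.
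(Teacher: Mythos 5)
Your proposal is correct and follows essentially the same route as the paper: the paper's own (one-line) proof of this corollary simply observes that the class restriction on $B$ in Theorem \ref{Lauct2} was needed only for the $K$-theory computations of $\phi$ and $\psi$, which become vacuous since $\underline{K}(A)=0$, and your detailed re-run of the sequence/product/quotient contradiction argument (with $KK(A,D)=0$ making \ref{Lsep} and the end of the proof of \ref{T39} apply, and \ref{T39sr1} for the almost stable rank one refinement) is exactly what that remark encodes.
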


\begin{proof}
The only reason that the restriction has to be placed on $B$ is for the computation
of $K$-theory of maps $\phi$ and $\psi.$ Since now $K_i(A)=\{0\},$ this problem disappears.
\end{proof}

\begin{cor}\label{CCLuniq}
Let $A$ be a non-unital separable amenable  \CA\, which satisfies the UCT, 
and let $T: A_+\setminus \{0\}\to \N\times \R_+\setminus\{0\}$
be a  map.
  For any $\ep>0$ and any finite subset ${\cal F}\subset A,$ there exists
$\dt>0,$ a finite subset ${\cal G}\subset A,$
a finite subset ${\cal H}\subset A_+\setminus \{0\}$ and an integer $K\ge 1$ satisfying the following:
For any two ${\cal G}$-$\dt$-multiplicative \morp s $\phi, \psi: A\to B,$  where
$B$ is any  $\sigma$-unital \CA\, with $K_i(B)=\{0\}$ ($i=0,1$)
and any
${\cal G}$-$\dt$-multiplicative \morp\, $\sigma: A\to M_l(B)$ (for any integer $l\ge 1$) which is also
$T$-${\cal H}$-full, 
there exists a unitary $U\in M_{1+Kl}({\tilde B})$ such that
\beq\label{CCLauct-2}
\|{\rm Ad}\, U\circ (\phi\oplus S)(a)-(\psi\oplus S)(a)\|<\ep\tforal a\in {\cal F},
\eneq
where
\vspace{-0.14in} $$
S(f)={\rm diag}(\overbrace{\sigma(a), \sigma(a),...,\sigma(a)}^K)\tforal a\in A.
$$
If furthermore, $B$ almost has stable rank one, one can choose $U\in {\widetilde{ M_{1+Kl}(B)}}.$
\end{cor}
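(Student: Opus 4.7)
The plan is to adapt the contradiction argument used in the proof of Theorem \ref{Lauct2}, exploiting the hypothesis $K_i(B)=\{0\}$ for $i=0,1$ to trivialize the $K$-theoretic and exponential length conditions that appear as extra hypotheses there. Since $K_0(B)=K_1(B)=0$ forces $\underline{K}(B)=0$, any pair of approximately multiplicative \cpc\ maps $\phi,\psi:A\to B$ induce trivial partial $K$-theory morphisms $[\phi]|_{\mathcal{P}}=[\psi]|_{\mathcal{P}}=0$ automatically, so the $K$-theoretic compatibility hypothesis in \ref{Lauct2} is vacuous. Moreover, the restriction $B\in\mathbf{C}_{r_0,r_1,T,s,R}$ of \ref{Lauct2} is imposed only to enable identifications of $K$-theory of products (via the cited results of \cite{GL1}), and becomes irrelevant once the relevant $K$-theory vanishes.

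Assume the statement fails. As in the opening of the proof of \ref{Lauct2}, extract $\varepsilon_0>0$, a finite $\mathcal{F}_0\subset A$, increasing exhausting sequences $\{\mathcal{G}_n\}, \{\mathcal{H}_n\}$, positive $\delta_n\searrow 0$, integers $k(n)\to\infty$, $\sigma$-unital C*-algebras $B_n$ with $K_i(B_n)=0$, $\mathcal{G}_n$-$\delta_n$-multiplicative maps $\phi_n,\psi_n:A\to B_n$, and $T$-$\mathcal{H}_n$-full $\sigma_n:A\to M_{l(n)}(B_n)$, such that no unitary in $M_{k(n)l(n)+1}(\widetilde{B_n})$ implements the $\varepsilon_0$-approximate intertwining on $\mathcal{F}_0$. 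Form $C=\prod M_{l(n)}(B_n)$, $C_0=\bigoplus M_{l(n)}(B_n)$, $Q(C)=C/C_0$ with quotient $\pi$, so that $\pi\circ\Phi, \pi\circ\Psi, \pi\circ S$ are honest $*$-homomorphisms $A\to Q(C)$ and $\pi\circ S$ is locally uniformly full, by the same reasoning as in \ref{Lauct2}.

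The key step is to verify $[\pi\circ\Phi]=[\pi\circ\Psi]$ in $\Hom_{\Lambda}(\underline{K}(A),\underline{K}(Q(C)))$. The $K_0$ piece and the coefficient $K$-theory pieces $K_*(\cdot,\mathbb{Z}/k\mathbb{Z})$ are immediate: each $[\phi_n]|_{\mathcal{P}_n}$ and $[\psi_n]|_{\mathcal{P}_n}$ lands in $\underline{K}(B_n)=0$, so both induced maps are zero. For the $K_1$ piece, since $K_1(\widetilde{B_n})=0$ each comparison unitary $\lceil\phi_n(u)\rceil\lceil\psi_n(u^*)\rceil$ lies in $U_0(M_L(\widetilde{B_n}))$; although the connected exponential lengths are not uniformly bounded in $n$, the pointwise connectedness to the identity persists modulo the direct sum ideal $C_0$ (one can build an approximate homotopy in $C$ that agrees with the identity modulo $C_0$ by choosing individual cel paths of any finite length in each fiber), which gives $(\pi\circ\Phi)_{*1}=(\pi\circ\Psi)_{*1}$ in $K_1(Q(C))$ without any uniform length bound. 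Having confirmed equality in $\Hom_\Lambda$, invoke Lemma \ref{Lsep2} to pass to a separable sub-\CA\ $D\subset Q(C)$ containing the ranges of $\pi\circ\Phi,\pi\circ\Psi,\pi\circ S$ with $\pi\circ S$ locally uniformly full in $D$ and $[\pi\circ\Phi]=[\pi\circ\Psi]$ in $KL(A,D)$. Apply Theorem \ref{T39} (or Theorem \ref{T39sr1} under the stable-rank-one hypothesis) to obtain, for all sufficiently large $n$, a unitary in $M_{Kl(n)+1}(\widetilde{B_n})$ implementing the desired approximate intertwining, contradicting the construction of the sequences.

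The main obstacle is the $K_1$ descent argument: without the uniform cel bound $\mathbf{L}(u)$ used in \ref{Lauct2}, one must argue that pointwise triviality of each $[\lceil\phi_n(u)\rceil\lceil\psi_n(u^*)\rceil]$ in $K_1(\widetilde{B_n})=0$ suffices for vanishing of the associated class in $K_1(Q(C))$, since loops null-homotopic in each fiber can be trivialized modulo $C_0$ despite the possibly unbounded total lengths. This is the sole technical distinction from the original proof of \ref{Lauct2}, and together with the automatic vanishing of all other $K$-theoretic compatibility conditions it yields the corollary.
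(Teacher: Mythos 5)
Your overall strategy (rerunning the contradiction argument of Theorem \ref{Lauct2} and arguing that the $K$-theoretic hypotheses become vacuous) is the route the paper implicitly intends, but the justification you give for the crucial descent step is not valid, and this is a genuine gap. The whole point of the class ${\bf C}_{(r_0,r_1,T,s,R)}$ and of the bound ${\bf L}(u)$ in \ref{Lauct2} is to convert \emph{fiberwise} $K$-theoretic information about $\phi_n,\psi_n\colon A\to B_n$ into statements about the sequence algebra $Q(C)=\prod M_{l(n)}(B_n)/\bigoplus M_{l(n)}(B_n)$: $K$-theory does not commute with infinite products, and $K_*(Q(C))\cong K_*\big(\prod M_{l(n)}(B_n)\big)$ need not vanish even though every $K_*(B_n)=\{0\}$. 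Hence the fact that each $[\phi_n]|_{{\cal P}_n}$ and $[\psi_n]|_{{\cal P}_n}$ lands in $\underline{K}(B_n)=\{0\}$ does not by itself make $[\pi\circ\Phi]$ and $[\pi\circ\Psi]$ agree in $\Hom_{\Lambda}(\underline{K}(A),\underline{K}(Q(C)))$; in the proof of \ref{Lauct2} this passage uses $K_0(C)=\prod K_0(B_n')$ and the kernel computations from \cite{GL1}, which is exactly what conditions (a), (b), (d), (f) of \ref{Blbm} buy and what an arbitrary $\sigma$-unital $B$ with trivial $K$-theory does not provide.

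The $K_1$ step you single out as the ``sole technical distinction'' is where this breaks most visibly. A sequence of unitaries $v_n=\lceil\phi_n(u)\rceil\lceil\psi_n(u^*)\rceil$ with each $v_n$ (stably) in $U_0(M_L(\widetilde{B_n}))$ but with ${\rm cel}(v_n)\to\infty$ does not admit the ``approximate homotopy modulo $C_0$'' you describe: fiberwise paths of unbounded length cannot be assembled into a norm-continuous path of unitaries in matrices over $\widetilde{C}$ (after reparametrization the family fails to be equicontinuous), and passing to the quotient by $C_0=\bigoplus$ does not remedy this, because the obstruction lives in the tail of the sequence; such a $(v_n)$ can represent a nonzero class in $K_1(Q(C))$. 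This is precisely why Lemma 1.1 of \cite{GL1} needs the uniform bound ${\rm cel}\le {\bf L}(u)$ and why hypothesis \eqref{Lauct-1} is kept in \ref{Lauct2} even though the algebras there have controlled exponential rank. The same uniformity issue, now in the guise of unbounded stabilization sizes, also undercuts your claim that the $K_0$ and $\Z/k\Z$-coefficient parts are ``immediate'': fiberwise equality of classes in $K_0(B_n)=\{0\}$ only gives Murray--von Neumann equivalence after adding $1_{M_{r_n}}$ with $r_n$ possibly unbounded, so it does not directly yield equality in $K_0(Q(C))$. Contrast this with Corollary \ref{CLuniq}, which the paper proves by observing that $K_i(A)=\{0\}$ forces $\underline{K}(A)=0$, so the \emph{domain} of $\Hom_{\Lambda}$ vanishes and no uniformity on the target is needed; with the triviality on the $B$-side, as in \ref{CCLuniq}, that shortcut is unavailable, and your argument as written does not close the gap.
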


\begin{rem}\label{Rstuniq}
In Theorem \ref{Lauct2}, \ref{CLuniq} as well as \ref{CCLuniq},
if $B$  is $\sigma$-unital and has almost stable rank one, and there is a  $\sigma$-unital hereditary \SCA\, $B_0\subset B$ such that
$\phi(A), \psi(A)\subset B_0,$ then the unitary $U$ can be chosen in
${\tilde B_1},$
where $B_1$ is the hereditary \SCA\, of $M_{1+Kl}(B)$ generated by
$B_0\oplus M_{Kl}(B).$

This can be seen as follows.

First one  may assume that ${\cal F}\subset A^{\bf 1}.$ Let $\ep>0.$
Then  note that $B_1$ is $\sigma$-unital.

Fix $\ep>\eta>0.$
Let $e\in B_1$ be a strictly positive  element such that
$$
\|f_{\dt}(e)x-x\|<\eta/16\andeqn \|x-xf_\dt(e)\|<\eta/16
$$
for all $x\in \{(\phi\oplus S)(a), (\psi\oplus S)(a): a\in {\cal F}\},$
where $1/2>\dt>0.$
Suppose that there is a unitary $U$ in the unitization of $M_{1+Kl}(B)$ such that
$$
\|U^*(\phi\oplus S)(a)U(a)-(\psi\oplus S)(a)\|<\eta/16\rforal a\in {\cal F}.
$$
Put $z=(f_{\dt}(e)Uf_{\dt/2}(e))\in B_1.$
Then
\beq\nonumber
&&zz^*(\phi\oplus S)(a)\approx_{\eta/16}zf_{\dt/2}U^*(\phi\oplus S)(a)\approx_{\eta/16}
zf_{\dt/2}(e)(\psi\oplus S)(a)U^*\\\nonumber
&& \approx_{\eta/16} z(\psi\oplus S)(a)U^*\approx_{\eta/16} f_{\dt}(e)U(\psi\oplus S)(a)U^*\\\nonumber
&&\approx_{\eta/16} f_{\dt}(e) (\phi(a)\oplus S)(a)\approx_{\eta/16} (\phi\oplus S)(a)
\eneq
for all $a\in {\cal F}.$  Exactly the same estimates shows that
$$
(\phi\oplus S)(a)zz^*\approx_{6\eta/16} (\phi\oplus S)(a)\rforal a\in {\cal F}.
$$
With a sufficiently small $\eta,$ one has
$$
|z^*|(\phi\oplus S)(a)\approx_{\ep/64} (\phi\oplus S)(a)\approx_{\ep/64} (\phi\oplus S)(a)|z^*|
$$
for all $a\in {\cal F}.$
One also has
$$
\|z^*(\phi\oplus S)(a)z-(\psi\oplus S)(a)\|<\eta/8\rforal a\in {\cal F}.
$$

Choose $1/2>\dt_1>0$ such that
\vspace{-0.1in} $$
f_{\dt_1}(|z^*|)|z^*|\approx_{\ep/64} |z^*|.
$$
Write $z^*=w|z^|$ as a poler decomposition in $B_1^{**}.$
Since $B_1$ has almost stable rank one, there exists a unitary $v\in {\tilde B_1}$
such that
$$
vf_{\dt_1}(|z^*|)=wf_{\dt_1}(|z^*|).
$$
Note that
\vspace{-0.1in} $$
vf_{\dt_1}(|z^*|)\approx_{\eta/64} z^*.
$$
It follows that
$$
\|v(\phi\oplus S)(a)v^*-(\psi\oplus S)(a)\|<\ep\rforal a\in {\cal F}.
$$
\end{rem}

\begin{rem}\label{Ruct}
Suppose that $A$ has the property that $KK(A,B)=\{0\}$ for every \CA s $B.$
Then the assumption of UCT can be dropped. Note that
UCT condition is used only to ensure that $[\pi\circ \Phi]=[\pi\circ \Psi]$ in $KL(A,D).$
Moreover, in this case, the condition related to $K$-theory can also dropped.
In particular, \ref{CLuniq}  holds if we replace $A$ by  a non-unital separable amenable  \CA\, which satisfies
$KK(A,B)=\{0\}$ for all \CA s $B.$
\end{rem}

\section{Quasi-compact \CA s}

\begin{df}\label{Dcompact}
A $\sigma$-unital  \CA\, is said to be {\it quasi-compact}, if there is $e\in (A\otimes {\cal K})_+$
with $0\le e_1\le 1$
and a partial isometry $w\in (A\otimes {\cal K})^{**}$ such
that
\beq\nonumber
w^*a,\, w^*aw\in A\otimes {\cal K}, ww^*a=aww^*=a \andeqn ew^*aw=w^*awe=w^*aw\rforal a\in A.
\eneq

\end{df}

\begin{prop}
Let $C$ be $\sigma$-unital  \CA\, and let $c\in C\setminus \{0\}$ with $0\le c \le 1$ be a full element in $C.$
Suppose that there is $e_1\in C$  with $0\le e_1\le 1$ such that $e_1c=ce_1=c.$
Then $\overline{cCc}$ is quasi-compact.
\end{prop}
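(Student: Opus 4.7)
The plan is to apply Brown's stable isomorphism theorem to the inclusion $A\subset C$ and transport the unit-action of $e_1$ to $A\otimes {\cal K}$, constructing the required partial isometry via the Kasparov realization of the Morita equivalence.

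First I would verify the structural properties: since $c$ is full in $C$ and lies in $A=\overline{cCc}$, the algebra $A$ is a full $\sigma$-unital hereditary \SCA\ of $C$ with strictly positive element $c$. Moreover, $e_1$ acts as a two-sided unit on $A$: for $a=\lim c x_n c\in A$ one has $e_1 a=\lim (e_1c)x_n c=\lim c x_n c=a$, and similarly $ae_1=a$. Consequently $(e_1\otimes e_{11})(a\otimes e_{11})=(a\otimes e_{11})(e_1\otimes e_{11})=a\otimes e_{11}$ in $C\otimes {\cal K}$ for every $a\in A$.

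Next, by Brown's stable isomorphism theorem---equivalently, by applying Kasparov's absorption to the full, countably generated Hilbert $C$-module $X=\overline{cC}$ (which satisfies $\mathcal{K}(X)=A$ and $\langle X,X\rangle_C=\overline{Cc^{*}cC}=\overline{CcC}=C$)---there is a unitary $U\colon H_C\to X\otimes \ell^2$ of Hilbert $C$-modules inducing an isomorphism $\Phi\colon C\otimes {\cal K}\to A\otimes {\cal K}$ via $\Phi(x)=UxU^{*}$. I would set $e:=\Phi(e_1\otimes e_{11})\in (A\otimes {\cal K})_+$ with $0\le e\le 1$; transporting the identity above through $\Phi$ gives $eb=be=b$ for every $b\in \Phi(A\otimes e_{11})$. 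For the partial isometry, I would view $U$ inside the multiplier algebra of the linking algebra $L=\mathcal{K}(X\otimes \ell^2\oplus H_C)$, where $UU^{*}=p_1$ and $U^{*}U=p_2$ are the corner projections with $p_1 L p_1\cong A\otimes {\cal K}$ and $p_2 L p_2\cong C\otimes {\cal K}$; since both projections are full in $M(L)$, Brown's theorem yields $L\otimes {\cal K}\cong A\otimes {\cal K}$, and under this identification $U$ (placed in the $e_{11}$-corner of $M(L\otimes {\cal K})$) embeds as a partial isometry $w\in M(A\otimes {\cal K})\subset (A\otimes {\cal K})^{**}$. The quasi-compactness conditions then follow: $ww^{*}$ equals the support of $A\otimes e_{11}$ in $(A\otimes {\cal K})^{**}$ so $ww^{*}a=aww^{*}=a$; the elements $w^{*}a$ and $w^{*}aw$ lie in $A\otimes {\cal K}$ because $w\in M(A\otimes {\cal K})$; and $w^{*}aw=\Phi(a\otimes e_{11})$, whence $ew^{*}aw=w^{*}awe=w^{*}aw$ by the unit-action of $e$.

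The main obstacle I anticipate is this last identification: ensuring that the image of $U$ under the Brown identification is a partial isometry in $(A\otimes {\cal K})^{**}$ specifically implementing the isomorphism $\Phi|_{A\otimes e_{11}}$ (and not some other $*$-isomorphism differing by an outer automorphism). This reduces to the fact that $(A\otimes {\cal K})^{**}\cong A^{**}\overline{\otimes} B(H)$ is properly infinite---so that full projections with central support $1$ are Murray--von Neumann equivalent---together with the explicit Kasparov construction of $\Phi$ via the unitary $U$, which guarantees that the extension of $\Phi|_{A\otimes e_{11}}$ to biduals is spatial and hence realized by a partial isometry in $(A\otimes {\cal K})^{**}$.
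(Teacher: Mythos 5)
Your setup coincides with the paper's strategy: the paper also runs the argument through Brown's theorem, working with the linking algebra of the pair $\overline{cCc}\subset C$ and the partial isometry of Brown's Theorem 2.8 in the multiplier algebra of its stabilization, and your module picture ($X=\overline{cC}$, $\mathcal{K}(X)=A$, $\langle X,X\rangle_C=C$, the unitary $U\colon H_C\to X\otimes\ell^2$ and $e:=\Phi(e_1\otimes e_{11})$) is the same mechanism in Hilbert-module dress. The genuine gap is in your last step, exactly where you flag it, and your proposed patch does not close it. Transporting $U$ through a second, abstract identification $L\otimes{\cal K}\cong A\otimes{\cal K}$ destroys all control: that isomorphism is only known to exist, so the resulting $w\in M(A\otimes{\cal K})$ bears no computable relation to the canonical corner copy $A\cong A\otimes e_{11}\subset A\otimes{\cal K}$, and none of the identities $ww^*a=aww^*=a$, $w^*aw=\Phi(a\otimes e_{11})$, $w^*a\in A\otimes{\cal K}$ can be checked for it. The bidual repair does not help either: Murray--von Neumann equivalence of projections in $(A\otimes{\cal K})^{**}$ (even granting proper infiniteness, which is unclear, since $P_1(A\otimes{\cal K})^{**}P_1\cong A^{**}$ for the open projection $P_1$ of $A\otimes e_{11}$) produces some partial isometry between supports, not one implementing the prescribed corner isomorphism; spatial implementation of a given isomorphism between corners of a von Neumann algebra also requires compatibility on the centre, which you do not address. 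Most decisively, the definition of quasi-compactness demands $w^*a\in A\otimes{\cal K}$ for all $a\in A$, i.e.\ membership in the $C^*$-algebra, not the bidual. An abstractly produced $w\in (A\otimes{\cal K})^{**}$ gives no reason for this: knowing $(w^*a)^*(w^*a)=aww^*a=a^2$ and $(w^*a)(w^*a)^*=\Phi(a^2\otimes e_{11})$ lie in $A\otimes{\cal K}$ does not place $w^*a$ itself there. That membership is exactly what multiplier-level control buys, and it is the content of the paper's verification.

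The fix is already in your hands and needs neither the second identification nor any bidual argument: stay inside $M(L)$ with $L=\mathcal{K}(X\otimes\ell^2\oplus H_C)$. There $p_1Lp_1$ is literally $A\otimes{\cal K}$, with $\mathcal{K}(X\otimes\delta_1)$ the canonical corner $A\otimes e_{11}$, and $U$, being a multiplier of $L$ with $U^*U=p_2$, $UU^*=p_1$, satisfies $Ux,\,xU\in L$ for all $x\in L$. Compressing $U$ by the corner projections (exactly as the paper compresses its Brown partial isometry $W\in M(E\otimes{\cal K})$ by the range projection of $B\otimes{\cal K}$) produces the required partial isometry $w$: the products $w^*a$ and $w^*aw=U(a\otimes e_{11})U^*$ land in $A\otimes{\cal K}$ by corner bookkeeping, $ww^*$ acts as the identity on the corner copy of $A$, and your element $e=U(e_1\otimes e_{11})U^*$ absorbs $w^*aw$ because $e_1a=ae_1=a$ for all $a\in A$. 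With that modification your argument becomes essentially the paper's proof, with the linking algebra $E\otimes{\cal K}$ replaced by $\mathcal{K}(X\otimes\ell^2\oplus H_C)$.
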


\begin{proof}
$B=\overline{cCc}.$
Let $E\subset M_2(C)$ be a \SCA\,
of the form
$$
E=\{(a_{ij})_{2\times 2}: a_{11}\in B, a_{12}\in \overline{BC}, a_{21}\in \overline{CB}, a_{22}\in C\}.
$$
We view $B\otimes {\cal K}$ and $C\otimes {\cal K}$ as full corners of $E\otimes {\cal K}.$
Moreover, let $p_1$ be the range projection of $B\otimes {\cal K}$ and let $p_2$
be the range projection of $C\otimes {\cal K},$ then
$p_1, p_2\in M(E\otimes {\cal K}).$
By  2.8 of \cite{Br1}, there  is a partial  isometry $W\in M(E\otimes {\cal K})$
such that $W^*(B\otimes {\cal K})W=C\otimes {\cal K},$
$WW^*=p_1$ and $WW^*=p_2.$ Moreover,
$$
p_1Wb=p_1Wbp_1\rforal b\in B\otimes {\cal K}.
$$
Since $W\in M(E\otimes {\cal K}),$ $Wb\in E\otimes {\cal K}.$
Therefore $p_1Wbp_1\in B\otimes {\cal K}.$
Put  $w=(Wp_1)^*.$
Then, for any $b\in B,$
$$
(w)^*b=p_1Wb\in B\otimes {\cal K}, w^*bw=p_1WbW^*p_1\in B\otimes {\cal K}.
$$
Moreover, let $e=p_1We_1W^*p_1\in B\otimes {\cal K},$
$$
ew^*bw=p_1We_1W^*p_1WbW^*p_1=p_1We_1p_2bW^*p_1=p_1We_1bW^*p_1=p_1WbW^*p_1=w^*bw
$$
for all $b\in B.$
We also have $w^*bwe=ewbw^*$ and $ww^*b=p_1W^*Wp_1b=p_1b=b$ and
$bww^*=b$ for all $b\in B.$

 Thus
 $B$ is quasi-compact.
\end{proof}

\begin{cor}\label{CherePA=A}
A $\sigma$-unital full hereditary \SCA\, of a $\sigma$-unital quasi-compact \CA\, is quasi-compact.
\end{cor}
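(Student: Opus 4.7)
The plan is to reduce to the preceding Proposition, applied with $C=A$. Let $e_A\in(A\otimes{\cal K})_+$ with $0\le e_A\le 1$ and $w_A\in (A\otimes{\cal K})^{**}$ witness the quasi-compactness of $A$. First I will pick a strictly positive element $c\in B$ with $0\le c\le 1$, so that $B=\overline{cAc}$ and $c$ is full in $A$ (by fullness of $B\subset A$). The preceding Proposition will give quasi-compactness of $B$ as soon as I exhibit $e_1\in A_+$ with $0\le e_1\le 1$ and $e_1c=ce_1=c$.

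To construct a bidual-level candidate for $e_1$, I will use the quasi-compact identities. From $e_Aw_A^*aw_A=w_A^*aw_A$ and $w_Aw_A^*a=aw_Aw_A^*=a$ for $a\in A$, multiplying the first on the right by $w_A^*$ and using the second yields $e_Aw_A^*a=w_A^*a$ for all $a\in A$. Multiplying by $w_A$ on the left and using $w_Aw_A^*a=a$ once more, the element $\tilde e:=w_Ae_Aw_A^*\in (A\otimes{\cal K})^{**}$ satisfies $0\le\tilde e\le 1$ and $\tilde e\,a=a\,\tilde e=a$ for every $a\in A$; in particular $\tilde e\,c=c\,\tilde e=c$ in the bidual.

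The central technical step, and the main obstacle, will be to upgrade this bidual-level identity to a genuine element of $A$. After possibly replacing $c$ by $f_\delta(c)$ for small $\delta>0$ — which still satisfies $B=\overline{f_\delta(c)Af_\delta(c)}$ and leaves $f_\delta(c)$ full in $A$ — I expect that the concrete form $\tilde e=w_Ae_Aw_A^*$, together with the $\sigma$-unitality of $A$ and a suitable functional calculus on $e_A$, yields an honest $e_1\in A_+$ with $0\le e_1\le 1$ and $e_1\,f_\delta(c)=f_\delta(c)\,e_1=f_\delta(c)$. With $e_1$ in place, the preceding Proposition applied to $C=A$, the refined $c$, and $e_1$ gives that $B$ is quasi-compact.

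If the direct descent from the bidual proves recalcitrant, I will instead mimic the proof of the preceding Proposition itself: form the linking algebra $E$ of $B$ and $A$ viewed as full hereditary \SCA s of $E$, apply Theorem 2.8 of \cite{Br1} to produce a partial isometry $W\in M(E\otimes{\cal K})$ with $W^*W=p_A$, $WW^*=p_B$ and $W^*(B\otimes{\cal K})W=A\otimes{\cal K}$, and transfer the quasi-compact data by setting $e_B=We_AW^*\in B\otimes{\cal K}$ and choosing $w_B\in (B\otimes{\cal K})^{**}$ built from $w_A$ and $W$. The quasi-compact conditions for $B$ are then verified via the intertwining relations $W p_A=W=p_B W$, $W^*W=p_A$, $WW^*=p_B$, together with the corresponding conditions for $A$.
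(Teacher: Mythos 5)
Your overall target (reduce to the preceding Proposition) is the right one, but both routes you propose have genuine gaps. The main plan — apply the Proposition with $C=A$ — needs an exact local unit $e_1\in A$ with $e_1c=ce_1=c$ for a strictly positive element $c$ of $B$, and no such element need exist: the corollary allows $B=A$, and then $e_1c=c$ with $c$ strictly positive gives $(1-e_1)cxc(1-e_1)=0$ for all $x$, hence $(1-e_1)a(1-e_1)=0$ for all $a\in A$, i.e.\ $e_1$ would be a unit of $A$; so the strategy already breaks for every non-unital quasi-compact $A$. This is precisely why quasi-compactness is formulated with $e\in A\otimes{\cal K}$ and a partial isometry $w$: the local unit exists only after moving $A$ by $w$ inside $A\otimes{\cal K}$. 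Your bidual element $\tilde e=w_Ae_Aw_A^*$ carries no usable information (any positive contraction in $(A\otimes{\cal K})^{**}$ dominating the open projection of $A$ satisfies $\tilde e a=a\tilde e=a$), and there is no mechanism to descend it, since the definition puts $w^*aw$, not $waw^*$, back into $A\otimes{\cal K}$. The proposed repair of replacing $c$ by $f_\dt(c)$ is not available either: $\overline{f_\dt(c)Af_\dt(c)}$ is a proper hereditary subalgebra of $B$ whenever $0$ is not isolated in ${\rm sp}(c)$, so you would prove quasi-compactness of the wrong algebra. The fallback also fails as stated: with $w_B=Ww_AW^*$, $e_B=We_AW^*$ one must control $w_A^*(W^*bW)w_A$, but $W^*bW$ lies only in $A\otimes{\cal K}$, while the quasi-compactness data for $A$ is quantified only over the corner $A$; more structurally, no argument that merely transports $(e_A,w_A)$ across a Brown equivalence $B\otimes{\cal K}\cong A\otimes{\cal K}$ can be correct, because quasi-compactness is not a stable-isomorphism invariant ($\C$ versus ${\cal K}$; cf.\ Theorem \ref{Tpedersen}).

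The corollary does follow from the Proposition, but applied with $C=A\otimes{\cal K}$ rather than $C=A$: take $c':=w_A^*cw_A$ and $e_1:=e_A$. Then $0\le c'\le 1$, $e_Ac'=c'e_A=c'$ by the definition of quasi-compactness, and $c'$ is full because $c^3=(cw_A)c'(w_A^*c)$ with $cw_A,\,w_A^*c\in A\otimes{\cal K}$ and $c$ is full. Moreover $\overline{c'(A\otimes{\cal K})c'}=w_A^*Bw_A\cong B$: for $x\in A\otimes{\cal K}$ one has $c'xc'=w_A^*\bigl(c^{1/2}((c^{1/2}w_A)x(w_A^*c^{1/2}))c^{1/2}\bigr)w_A\in w_A^*Bw_A$ because $A$ sits as a (corner, hence hereditary) subalgebra of $A\otimes{\cal K}$, conversely $w_A^*(cac)w_A=c'(w_A^*aw_A)c'$, and $b\mapsto w_A^*bw_A$ is an injective homomorphism since $w_Aw_A^*b=bw_Aw_A^*=b$. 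Since quasi-compactness is an isomorphism invariant, the Proposition applied to $(A\otimes{\cal K},c',e_A)$ yields the statement; this, rather than a local unit inside $A$ itself, is the intended immediate consequence of the Proposition.
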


\begin{lem}\label{subcommatrix}
Let $A$ be a $\sigma$-unital and non-unital  \CA\, which is quasi-compact.
Then, there exists an integer $N\ge 1,$  a partial isometry $w\in M_N(A)^{**}$ and $e\in M_N(A)$
with $0\le e\le 1$
such that
$$
w^*aw\in M_N(A), \,\,\, ww^*a=aww^*=a\andeqn w^*awe=ew^*aw=w^*aw\rforal a\in A.
$$
\end{lem}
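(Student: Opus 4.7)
The strategy is to promote the abstract $w_1\in(A\otimes{\cal K})^{**}$ and $e_1\in A\otimes{\cal K}$ furnished by quasi-compactness into a pair $(w,e)\in M_N(A)^{**}\times M_N(A)$ by composing $w_1$ with a partial isometry produced by Lemma \ref{LRL}. First I would normalize: the projection $1_{A^{**}}\otimes e_{11}$ is dominated by $w_1w_1^*$ (the relation $w_1w_1^*a=a$ for $a\in A$, applied along an approximate identity of $A$, yields $w_1w_1^*(1_{A^{**}}\otimes e_{11})=1_{A^{**}}\otimes e_{11}$), so replacing $w_1$ by $(1_{A^{**}}\otimes e_{11})w_1$ preserves all quasi-compact relations and forces $w_1w_1^*=1_{A^{**}}\otimes e_{11}$. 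Letting $a$ run through an approximate identity in $e_1w_1^*aw_1=w_1^*aw_1$ extracts $e_1w_1^*w_1=w_1^*w_1$, hence $e_1w_1^*=w_1^*$ and $w_1e_1=w_1$. Since $w_1$ is consequently supported on the spectral subspace $\{e_1=1\}$, functional calculus yields $w_1f_\sigma(e_1)=w_1$ and $f_\sigma(e_1)w_1^*=w_1^*$ for every $0<\sigma<1$.

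Next, because $e_1\in A\otimes{\cal K}=\overline{\bigcup_N M_N(A)}$, for any $\delta>0$ I would choose $N$ large enough that $e_0:=p_N e_1 p_N\in M_N(A)_+$ satisfies $\|e_0\|\le 1$ and $\|e_0-e_1\|<\delta$, where $p_N=\sum_{i=1}^N 1\otimes e_{ii}$. Fixing $\sigma\in(0,1/4)$ and some $\epsilon>0$, and taking $\delta$ below the threshold of Lemma \ref{LRL}, that lemma delivers a partial isometry $v\in(A\otimes{\cal K})^{**}$ satisfying $vv^*f_\sigma(e_1)=f_\sigma(e_1)vv^*=f_\sigma(e_1)$ and $v^*cv\in\overline{e_0(A\otimes{\cal K})e_0}\subset M_N(A)$ for every $c\in\overline{f_\sigma(e_1)(A\otimes{\cal K})f_\sigma(e_1)}$.

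Setting $w:=w_1v$ and $e:=v^*f_\sigma(e_1)v$, I verify the conditions directly. The chain $vv^*w_1^*=vv^*f_\sigma(e_1)w_1^*=f_\sigma(e_1)w_1^*=w_1^*$ yields $ww^*=w_1vv^*w_1^*=w_1w_1^*=1_{A^{**}}\otimes e_{11}$, hence $ww^*a=aww^*=a$ for $a\in A$. For $a\in A$, $w_1^*aw_1\in\overline{f_\sigma(e_1)(A\otimes{\cal K})f_\sigma(e_1)}$, so $w^*aw=v^*(w_1^*aw_1)v\in M_N(A)$ by Lemma \ref{LRL}; the same reasoning applied to $f_\sigma(e_1)$ itself (which lies in its own hereditary subalgebra) delivers $e\in M_N(A)$ with $0\le e\le 1$. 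The local-unit identity follows from $ew^*aw=v^*f_\sigma(e_1)(vv^*)w_1^*aw_1v=v^*f_\sigma(e_1)w_1^*aw_1v=v^*w_1^*aw_1v=w^*aw$ (using $f_\sigma(e_1)vv^*=f_\sigma(e_1)$ and $f_\sigma(e_1)w_1^*=w_1^*$), and analogously on the right.

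The last point, which I anticipate as the main technical obstacle, is that $w$ must lie in $M_N(A)^{**}=M_N(A^{**})$ rather than merely in $(A\otimes{\cal K})^{**}$. I would argue this automatically: the relation $ww^*=1_{A^{**}}\otimes e_{11}$ forces all rows of $w$ except the first to vanish, so $w=(w_{1j})_{j\ge 1}$ with $w_{1j}\in A^{**}$; and writing $(w^*aw)_{ij}=w_{1i}^*aw_{1j}$, the inclusion $w^*Aw\subset M_N(A)$ forces $w_{1i}^*aw_{1i}=0$ for $i>N$ and every $a\in A$, whence (taking $a$ in an approximate identity of $A$ and passing to the SOT limit) $w_{1i}^*w_{1i}=0$ and thus $w_{1i}=0$ for $i>N$. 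Hence $w$ has matrix support in the $1\times N$ block and lies in $M_N(A)^{**}$. Apart from this support argument, the principal care required is to tune the parameters of Lemma \ref{LRL} so that its image really falls inside $\overline{e_0(A\otimes{\cal K})e_0}\subset M_N(A)$.
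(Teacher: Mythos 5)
Your argument is correct, but it takes a genuinely different route from the paper's. The paper identifies $A$ with the hereditary subalgebra $w^*Aw$ of $A\otimes {\cal K}$ sitting under the local unit $e_1,$ uses fullness of a strictly positive element $b$ of $A$ to write $f_{1/4}(e_1)=\sum_{i=1}^m x_i^*bx_i,$ and then builds everything explicitly from the row $X^*=(x_1^*b^{1/2},\dots,x_m^*b^{1/2})$: one gets $X^*X=f_{1/4}(e_1),$ $XX^*\in M_m(A),$ and the polar decomposition together with the open projection of $b$ yields $w=(Xp)^*$ and $e=XX^*,$ with $N=m$ the number of terms in the fullness identity. You instead stay with the defining data $(w_1,e_1)$ of quasi-compactness, normalize so that $w_1w_1^*=1_{A^{**}}\otimes e_{11}$ and $w_1f_\sigma(e_1)=w_1,$ and then exploit only the norm-density $A\otimes{\cal K}=\overline{\bigcup_N M_N(A)}$: approximating $e_1$ by $e_0=p_Ne_1p_N$ and invoking the perturbation Lemma \ref{LRL} gives a partial isometry $v$ carrying $\overline{f_\sigma(e_1)(A\otimes{\cal K})f_\sigma(e_1)}$ into $\overline{e_0(A\otimes{\cal K})e_0}\subset M_N(A),$ and $w=w_1v,$ $e=v^*f_\sigma(e_1)v$ do the job; your verifications of the relations, and of $ww^*$ being a projection (hence $w$ a partial isometry), are sound. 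The trade-off: the paper's route needs fullness of $A$ in $A\otimes{\cal K}$ and an exact algebraic identity but gets the membership $w\in M_m(A)^{**}$ almost for free from $Xb^{1/n}\in M_m(A),$ whereas your route avoids fullness entirely (using only approximation plus Lemma \ref{LRL}) at the cost of the final corner-support argument; that argument is correct as you give it — $ww^*=1_{A^{**}}\otimes e_{11}$ kills all rows but the first, and $w^*Aw\subset M_N(A)$ plus an approximate identity kills the columns beyond $N,$ so $w$ lies in $P_N(A\otimes{\cal K})^{**}P_N\cong M_N(A)^{**}$ — and it could even be shortened by noting $w^*w\le v^*Rv$ with $R$ the range projection of $f_\sigma(e_1),$ which is a weak limit of the elements $v^*f_\sigma(e_1)^{1/n}v\in M_N(A).$
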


\begin{proof}
Let $b\in  A$ be a strictly positive element with $0\le b\le 1.$
We may assume that   $A$ is a full hereditary \SCA\, of a
$\sigma$-unital \CA\, $C$  such that $b\in C$ and
there is  $e_1\in C$ with $0\le e_1\le 1$ such that
$be_1=b=be_1.$ Moreover, $bCb=A.$
Since  $b$ is full in $C,$  by  applying \ref{Lrorm},
there exists  $x_1, x_2, ...,x_m\in C$ such that
$$
\sum_{i=1}^m x_i^*bx_i=f_{1/4}(e_1).
$$
Note that $f_{1/4}(e_1)b=b=f_{1/4}(e_1).$
Put $X^*=(x_1^*b^{1/2}, x_2^*b^{1/2},...,x_m^*b^{1/2})$ as an 1-row element in $M_m(C).$
Then
$$
X^*X=f_{1/4}(e_1)\andeqn XX^*\in M_m(b^{1/2}Cb^{1/2})=M_m(A).
$$
Let $X=v|X^*X|^{1/2}$ be the polar decomposition of $X$ in $M_m(C)^{**}.$
Then
$$
vav^*=v|X^*X|^{1/2}a|X^*X|^{1/2}v^*=XaX^*\in M_m(A)\rforal a\in A.
$$
Note $Xb^{1/n}\in M_m(A).$  Let $p$ be the open projection corresponding to $b.$
Then $Xp\in M_m(A)^{**}.$ Note also
that $Xp=v|X^*X|^{1/2}p=vp.$ Set $w=(Xp)^*.$ Then $w^*=vp$ and $ww^*=pX^*Xp=pf_{1/4}(e_1)p=p.$
So $w$ is a partial isometry.  Set $e=XX^*.$
$$
w^*aw=XaX^*\in M_m(A)\rforal a\in A.
$$
Moreover,
\beq\nonumber
w^*awe=XaX^*XX^*=Xaf_{1/4}(e_1)X^*=XaX^*=w^*aw\andeqn\\
ew^*aw=XX^*XaX^*=Xf_{1/4}(e_1)aX^*=XaX^*=w^*aw.
\eneq

\end{proof}



\begin{lem}\label{compactrace}
Let $A$ be a $\sigma$-unital  quasi-compact  \CA\, with $T(A)\not=\{0\}.$
Then the weak *-closure $\overline{T(A)^w}$ of $T(A)$ in ${\tilde T}(A)$ is compact and
$0\not\in \overline{T(A)}^w.$
  Moreover,
if $a\in A$ with $0\le a\le 1$ is strictly positive, then
there is $d>0$ such
that
$$
d_\tau(a)\ge d\rforal \overline{T(A)^w}.
$$

\end{lem}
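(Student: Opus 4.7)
The plan is to replace $A$ by a matrix amplification where tracial states become manifestly norm-bounded, so that compactness of $\overline{T(A)}^w$ reduces to Banach--Alaoglu. By Lemma \ref{subcommatrix} I fix $N\ge 1$, a partial isometry $w\in M_N(A)^{**}$, and a positive contraction $e\in M_N(A)$ so that $\phi(x):=w^*xw$ defines a $*$-embedding $A\to M_N(A)$ with $ww^*x=xww^*=x$ and $e\phi(x)=\phi(x)e=\phi(x)$ for all $x\in A$. Since $e$ and $\phi(x)$ then commute and $0\le e\le 1$, one gets $\phi(x)=e\phi(x)e\le \|x\|\,e^2\le \|x\|\,e$ for every $x\in A_+$, which is the key inequality driving the argument.

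For each $\tau\in T(A)$ I consider $\tilde\tau:=\tau\otimes \Tr$ on $M_N(A)$, a bounded tracial positive functional of norm exactly $N$. Its canonical normal extension to $M_N(A)^{**}$ remains tracial, so using $ww^*x=x$ and the bidual trace property,
\[
\tilde\tau(\phi(x))=\tilde\tau(w^*xw)=\tilde\tau(xww^*)=\tilde\tau(x)=\tau(x)\qquad(x\in A).
\]
Applying this with $x=a^{1/n}$ and using $\phi(a^{1/n})\le e$ yields $\tau(a^{1/n})\le \tilde\tau(e)$; letting $n\to\infty$ and noting that a state $\tau$ on $A$ satisfies $\tau(a^{1/n})\to d_\tau(a)=\|\tau\|=1$ for strictly positive $a$ with $\|a\|\le 1$ (by normality of the extension to $A^{**}$, since $p_a=1_{A^{**}}$), I obtain the uniform lower bound $\tilde\tau(e)\ge 1$ for every $\tau\in T(A)$.

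The assignment $\tau\mapsto \tilde\tau$ embeds $T(A)$ into the closed ball of radius $N$ in $M_N(A)^*$; uniform boundedness together with the density of $M_N(P(A))=P(M_N(A))$ in $M_N(A)$ make this a weak$^*$ homeomorphism onto its image, so Banach--Alaoglu gives compactness of $\overline{T(A)}^w$ in $\tilde T(A)$. Every limit point $\sigma$ is a bounded positive tracial functional on $A$, hence sits in $\tilde T(A)$; by weak$^*$ continuity on $e\in M_N(A)$ the inequality $\tilde\sigma(e)\ge 1$ survives the limit. Combined with the trivial bound $\tilde\sigma(e)\le \|\tilde\sigma\|\cdot\|e\|=N\|\sigma\|$ this forces $\|\sigma\|\ge 1/N$, so $\sigma\neq 0$. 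Finally, normally extending $\sigma$ to $A^{**}$ and using $p_a=1_{A^{**}}$, one gets $d_\sigma(a)=\sigma(p_a)=\|\sigma\|\ge 1/N$, so $d:=1/N$ is the required uniform lower bound.

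The main obstacle is topological: convergence in $\tilde T(A)$ is only tested against $P(A)$, whereas I want to invoke Banach--Alaoglu in $M_N(A)^*$. The reconciliation is the uniform norm bound $\|\tilde\tau\|=N$, which promotes pointwise convergence on the dense ideal $P(M_N(A))=M_N(P(A))$ to weak$^*$ convergence on all of $M_N(A)$. The bidual manipulations with the partial isometry $w$ (applying traciality to $\tilde\tau(w^*xw)$) are routine once one invokes the standard fact that a bounded tracial functional extends normally and tracially to the bidual.
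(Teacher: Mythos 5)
Your proof is correct, but it travels a genuinely different route from the paper's. Both arguments start from Lemma \ref{subcommatrix}, i.e.\ from a ``local unit'' dominating a copy of $A$; from there the paper passes to $C=B\otimes{\cal K}$, extends each $\tau\in T(A)$ to an element of ${\tilde T}(C)$ via fullness, places $T(A)$ inside the set $S=\{\tau\in{\tilde T}(C):\tau(f_{1/4}(e_1))\ge 1\}$ (whose compactness in the trace cone is taken as known), and gets the lower bound $d=1/m$ from the fullness relation $\sum_{i=1}^m x_i^*ax_i=f_{1/4}(e_1)$ via Cuntz comparison $[f_{1/4}(e_1)]\lesssim m[a]$. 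You instead stay inside $M_N(A)$: you amplify tracial states to $\tau\otimes\Tr$, use the bidual trace property of the normal extension together with $\phi(x)\le\|x\|e$ to get the uniform bound $\tilde\tau(e)\ge 1$, obtain compactness from Banach--Alaoglu in $M_N(A)^*$, and get $d=1/N$ from $d_\sigma(a)=\|\sigma\|\ge 1/N$ (strict positivity of $a$ plus normality). What your approach buys is that it avoids both the extension of traces to $B\otimes{\cal K}$ and the structural compactness fact about the cone ${\tilde T}(C)$, replacing them with elementary functional-analytic ingredients; what the paper's approach buys is that it avoids bidual manipulations and produces the constant directly from the Cuntz-comparison data. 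Two small glosses in your write-up are worth tightening but are not gaps: since $A$ is quasi-compact, Theorem \ref{Tpedersen} gives $P(A)=A$, so the topology on ${\tilde T}(A)$ is already pointwise convergence on all of $A$ and your density/uniform-boundedness step is automatic; and the passage from Alaoglu to compactness of $\overline{T(A)}^w$ is cleanest phrased as: the weak$^*$-closure of $\{\tilde\tau:\tau\in T(A)\}$ consists of positive traces of norm at most $N$, the restriction map to $A$ is continuous into ${\tilde T}(A)$, so its image is a compact (hence closed) set containing $T(A)$, and $\overline{T(A)}^w$ is a closed subset of it --- which is exactly what your discussion of limit points supplies.
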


\begin{proof}
By \ref{subcommatrix}, \wilog, we  may assume that $A$ is a full hereditary \SCA\, of $B\otimes {\cal K}$ for some
$\sigma$-unital
\CA\, $B$ such that  there is $e_1\in (B\otimes {\cal K})_+$ such that $0\le e_1\le 1$ and
$e_1x=xe_1=x$ for all $x\in A.$  Put $C=B\otimes {\cal K}.$
Let ${\tilde T}(C)$ be the convex set of all traces defined on the Pedersen ideal $P(C).$
Note that $A\subset P(C).$
Since $A$ is full in $C,$ we may also assume that each $\tau\in T(A)$ has been extended  to
an element in ${\tilde T}(C).$ Let $a\in A_+$ be a strictly positive element of $A.$
There are $x_1, x_2,....,x_m\in C$ such that
\beq\label{tcompact-2}
\sum_{i=1}^mx_i^*ax_i=f_{1/4}(e_1).
\eneq
Note that $f_{1/4}(e_1)x=xf_{1/4}(e_1)=x$ for all $x\in A.$
But $f_{1/4}(e_1)\in P(C).$
Consider
$$
S=\{\tau\in {\tilde T}(C): \tau(f_{1/4}(e_1))\ge 1\}.
$$
Then $S$ is compact.
Note that
$$
T(A)=\{\tau\in {\tilde T}(C): d_\tau(a)=1\}.
$$
It follows that, for any $\tau\in T(A),$
\beq\label{tcompact-4}
\tau(f_{1/4}(e_1))\ge d_\tau(a)=1.
\eneq
It follows that $T(A)\subset S.$
Therefore the weak*-closure of $T(A)$ is compact and
$0\not\in \overline{T(A)}^w.$

By \eqref{tcompact-2},
\beq\label{tcompact-3}
[f_{1/4}(e_1)]\lesssim  m[a]\,\,\, {\rm in}\,\,\, W(C).
\eneq
Put $d=1/m.$
It follows that, for any $t\in S,$
$$
d_\tau(a)\ge t(f_{1/4}(e_1))/m\ge 1/m=d.
$$

\end{proof}

\begin{cor}\label{Csemicon}
Let $A$ be a $\sigma$-unital \CA\, which is quasi-compact  with  $QT(A)=T(A)\not=\{0\}$ and
let $a\in A$ be a strictly positive element with $0\le a\le 1.$
Suppose
that
$$d=\inf \{d_\tau(a): \tau\in \overline{T(A)}^w\}>0.$$
Then, for any $d/3<d_0<d,$  there exists an integer $n\ge 1$ such that, for all $m\ge n,$
$$
\tau(f_{1/m}(a))\ge d_0\tand \tau(a^{1/m})\ge d_0.
$$
\end{cor}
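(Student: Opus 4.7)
The approach is a compactness (Dini-style) argument on $K := \overline{T(A)}^w$. By Lemma \ref{compactrace}, $K$ is weak*-compact and bounded away from $0$. The plan is to run this compactness argument on the sequence of continuous functions $g_m(\tau) := \tau(f_{1/m}(a))$, and then to bootstrap to $\tau(a^{1/m})$ via an elementary spectral comparison rather than fighting for continuity of $\tau \mapsto \tau(a^{1/m})$ directly.

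First I would check that each $g_m$ is continuous on $K$. This is automatic from the definition of the weak*-topology on $\tilde T(A)$ in \ref{DTtilde}, because $f_{1/m}$ has compact support in $(0,1]$, so $f_{1/m}(a) \in P(A)_+$. The sequence $(g_m)$ is pointwise increasing in $m$ (since $\ep \mapsto f_\ep$ is pointwise decreasing) with $\sup_m g_m(\tau) = d_\tau(a) \ge d$ on all of $K$. Fixing any $d_0'$ with $d_0 < d_0' < d$, the sets $U_m := \{\tau \in K : g_m(\tau) > d_0'\}$ form an increasing open cover of $K$, so compactness yields an $n_1$ with $U_{n_1} = K$. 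Hence $\tau(f_{1/m}(a)) > d_0' > d_0$ for every $m \ge n_1$ and every $\tau \in K$.

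For the second inequality I would sidestep the question of whether $\tau \mapsto \tau(a^{1/m})$ is weak*-continuous on $K$ (since $a^{1/m}$ need not lie in $P(A)$) by using the elementary spectral inequality
\[ t^{1/m} \ge (1/m)^{1/m}\, f_{1/m}(t), \qquad t \in [0,1]. \]
This is immediate on $[0, 1/(2m)]$ and on $[1/m, 1]$; on the ramp interval $[1/(2m), 1/m]$ the function $\phi(t) := t^{1/m} - (1/m)^{1/m}(2mt-1)$ is concave (as $(1/m)(1/m-1) \le 0$) with $\phi(1/(2m)) = (1/(2m))^{1/m} > 0$ and $\phi(1/m) = 0$, hence nonnegative. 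Applying functional calculus to $a$ and pairing with $\tau$ gives $\tau(a^{1/m}) \ge (1/m)^{1/m}\, g_m(\tau)$, so for $m \ge n_1$ one has $\tau(a^{1/m}) \ge (1/m)^{1/m} d_0'$ uniformly on $K$. Since $(1/m)^{1/m} \to 1$, I would pick $n_2 \ge n_1$ so that $(1/m)^{1/m} \ge d_0/d_0'$ for every $m \ge n_2$ and set $n := n_2$; this gives $\tau(a^{1/m}) \ge d_0$ on $K$ for all $m \ge n$.

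The only genuinely delicate point is the spectral inequality on the ramp interval, which reduces to concavity of $t \mapsto t^{1/m}$ and a check at the two endpoints. Everything else is the standard open-cover mechanism applied to the increasing sequence of continuous functions $(g_m)$ on the compact base $\overline{T(A)}^w$.
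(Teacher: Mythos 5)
Your proof is correct and is exactly the ``standard compactness argument'' the paper alludes to without writing out: continuity of $\tau\mapsto\tau(f_{1/m}(a))$ on the compact set $\overline{T(A)}^w$ (since $f_{1/m}(a)\in P(A)$), monotone pointwise convergence to $d_\tau(a)\ge d$, and a Dini-type open cover; your spectral inequality $t^{1/m}\ge (1/m)^{1/m}f_{1/m}(t)$ is also verified correctly and cleanly transfers the bound to $\tau(a^{1/m})$. As a minor remark, the detour is not strictly needed: since $A$ is quasi-compact, Theorem \ref{Tpedersen} gives $P(A)=A$, so $\tau\mapsto\tau(a^{1/m})$ is itself continuous on $\overline{T(A)}^w$ and the same Dini argument applies verbatim to the increasing sequence $\tau(a^{1/m})\nearrow d_\tau(a)$ noted in \ref{Ddimf}.
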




This is a standard compactness argument.

\begin{thm}\label{Tpedersen}
Let $A$ be a $\sigma$-unital  \CA.  Then $A$ is quasi-compact if and only if
$P(A)=A.$

\end{thm}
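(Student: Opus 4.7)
The plan is to prove both directions, with the forward implication (quasi-compact $\Rightarrow P(A)=A$) being the substantive step. The key will be a Cuntz-style identity $(aw)f_\ep(e)(aw)^* = aa^*$, which forces $aa^*$ into the Pedersen ideal of $A\otimes{\cal K}$; from there I would descend to $P(A)$ by extracting the $(1,1)$-entry of the matrix local unit. The reverse implication should follow because $P(A)=A$ for a $\sigma$-unital algebra provides a local unit for any strictly positive element, and a continuous-functional-calculus argument will promote this local unit to a two-sided unit of $A$, after which the quasi-compactness witnesses are trivial.

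For the forward direction, fix $a\in A_+$ together with the witnesses $e,\,w$ from Definition~\ref{Dcompact}, and (after replacing $w$ by $pw$, where $p\in(A\otimes{\cal K})^{**}$ is the support projection of $A$ inside the bidual) I may assume $ww^*=p$. Applying the identity $ew^*aw=w^*aw$ along an approximate unit of $A$ and passing to the weak*-limit in $(A\otimes{\cal K})^{**}$ gives $e(w^*w)=w^*w$. Continuous functional calculus at the eigenvalue $1$ then yields $f_\ep(e)(w^*w)=w^*w$ for every $\ep\in(0,1)$, so $f_\ep(e)w^* = f_\ep(e)(w^*w)w^* = w^*$ and symmetrically $wf_\ep(e)=w$. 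Since $aw=(w^*a^*)^*\in A\otimes{\cal K}$ and $ww^*a=aww^*=a$, one computes
\[
(aw)\,f_\ep(e)\,(aw)^* \;=\; a\,(wf_\ep(e))\,w^*a^* \;=\; aww^*a^* \;=\; aa^*.
\]
Because $f_\ep(e)\in P(A\otimes{\cal K})$ (with local unit $f_{\ep/2}(e)$) and $P(A\otimes{\cal K})$ is a two-sided ideal, this forces $aa^*\in P(A\otimes{\cal K})$. Regarding $a^2=aa^*$ as $a^2\otimes e_{11}$ and choosing a positive local unit $b\in A\otimes{\cal K}$ with $b(a^2\otimes e_{11})=a^2\otimes e_{11}$, the $(1,1)$-entry $b_{11}$ lies in $A_+$ and satisfies $b_{11}a^2=a^2$; hence $a^2\in P(A)_+$, and closure of $P(A)$ under the continuous function $t\mapsto\sqrt{t}$ (which vanishes at $0$) then gives $a\in P(A)_+$. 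By linearity $P(A)=A$.

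For the reverse direction, pick a strictly positive $h\in A_+$ with $\|h\|\le 1$; since $h\in P(A)$, there exists $c\in A_+$ with $\|c\|\le 1$ and $ch=hc=h$. Multiplying by $h$ on the right gives $ch^n=h^n$ for all $n\ge 1$, hence $cp(h)=p(h)$ for every polynomial $p$ with $p(0)=0$; approximating any $f\in C(\sigma(h))$ with $f(0)=0$ uniformly by polynomials of this form (Weierstrass, then subtract the small $p(0)\to 0$) yields $cf(h)=f(h)$, and in particular $cf_\ep(h)=f_\ep(h)$ for every $\ep>0$. Since $h$ is strictly positive, $\{f_\ep(h)\}_{\ep\to 0}$ is an approximate unit of $A$, whence for any $a\in A$,
\[
ca \;=\; \lim_{\ep\to 0}\,c\,f_\ep(h)\,a \;=\; \lim_{\ep\to 0}\,f_\ep(h)\,a \;=\; a,
\]
with $ac=a$ by the analogous argument; thus $c$ is a two-sided unit of $A$, so $A$ is unital. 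The witnesses $e:=1_A\otimes e_{11}\in(A\otimes{\cal K})_+$ and $w:=1_A\otimes e_{11}\in A\otimes{\cal K}\subset(A\otimes{\cal K})^{**}$ then verify every condition of Definition~\ref{Dcompact}. The main obstacle will be the descent from $P(A\otimes{\cal K})$ to $P(A)$ via the $(1,1)$-corner extraction in the forward direction, together with verifying that the functional-calculus identity $cf(h)=f(h)$ extends from polynomials vanishing at $0$ to all continuous $f$ vanishing at $0$, which is what ultimately promotes $c$ to a global unit in the reverse direction.
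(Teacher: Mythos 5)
Your proposal founders on a single misreading of the Pedersen ideal that you use in both directions: you assume that a positive element of $P(A)$ admits a local unit. By Pedersen's construction, $x\in P(A)_+$ only means $x\le\sum_{i=1}^{n}x_i$ where each $x_i$ (not $x$ itself) has a local unit, or equivalently each $x_i=g_i(b_i)$ with $g_i$ supported away from $0$. In your reverse direction this assumption makes you conclude that $P(A)=A$ forces $A$ to be unital, which is false and would render the theorem pointless: the notion of quasi-compactness is introduced precisely to cover non-unital algebras. Indeed, the proposition following Definition \ref{Dcompact} exhibits non-unital quasi-compact hereditary subalgebras $\overline{cCc}$ whenever $0$ is not isolated in the spectrum of $c$, and Theorem \ref{Pconscale} (together with \ref{R2} and \ref{Lbkqc}) records non-unital simple algebras with continuous scale, for which $A=P(A)$; a strictly positive element $h$ of such an algebra (e.g.\ of $W$) lies in $P(A)=A$ but has no $c$ with $ch=h$, since such a $c$ would be a unit. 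The paper's converse is entirely different: starting from the correct Pedersen-ideal statement $a^{1/2}\le\sum_{i=1}^{m}g_i(b_i)$ with each $g_i$ supported in $[\sigma,N]$, it sets $c_i=g_i(b_i)^{1/2}$, forms the row $Z=(c_1,\dots,c_m)\in M_m(A)$ with $ZZ^*\ge a$, checks that $E=\diag(f_{\sigma/2}(b_1),\dots,f_{\sigma/2}(b_m))$ commutes with $Z^*Z$ and acts as a local unit, and extracts the required partial isometry from the polar decomposition $Z^*=V|Z^*|$; no unitality is (or could be) obtained.

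The same misconception breaks your forward direction at the descent step: from $aa^*\in P(A\otimes{\cal K})$ you "choose a positive local unit $b$ with $b(a^2\otimes e_{11})=a^2\otimes e_{11}$", which again does not exist in general, so the $(1,1)$-corner extraction has nothing to act on. Your computation $(aw)f_\ep(e)(aw)^*=aa^*$ is correct but more roundabout than needed: the definition already gives $e$ as a local unit for $w^*aw$, so with $z=w^*a^{1/2}$ one gets $zz^*=w^*aw\in P(A\otimes{\cal K})_+$ and $a=z^*z\in P(A\otimes{\cal K})_+$, which is how the paper argues, concluding $A\subseteq P(A\otimes{\cal K})$. The remaining passage to $P(A)=A$ is exactly what the paper delegates to Theorem 2.1 of \cite{aTz}; it is not achieved by a corner of a local unit, and your appeal to "closure of $P(A)$ under $t\mapsto\sqrt t$" is also asserted without proof (it is true, but requires an argument, e.g.\ via $a_i\le\|a_i\|e_i$ and operator subadditivity of $t^{1/2}$). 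So the forward direction has a real, though repairable, gap, while the reverse direction proves a false statement and needs to be replaced by an argument along the lines of the paper's matrix construction.
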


\begin{proof}
First assume that $A$ is quasi-compact.
Let $a\in A_+$ be a strictly positive element. Then there exists
$e\in (A\otimes {\cal K})_+$ and a partial isometry $v\in (A\otimes {\cal K})^{**}$ such that
\beq\nonumber
w^*xw\in A\otimes {\cal K},\,\,\, ww^*x=xww^*=x \andeqn w^*xwe=ew^*xw=w^*xw\rforal x\in A.
\eneq
Let $z=w^*a^{1/2}.$ Then $zz^*\in P(A\otimes {\cal K})_+.$
It follows that $a=z^*z\in P(A\otimes {\cal K})_+.$ Therefore
the hereditary \SCA\, generated by $a$ is in $P(A\otimes {\cal K}).$  Consequently
$A\subset P(A\otimes {\cal K}).$

To see  that $A=P(A), $   one applies Theorem 2.1 of \cite{aTz}.




Conversely, suppose that
there are $b_1, b_2,...,b_m\in A_+$ such that
$$
a^{1/2}\le \sum_{i=1}^mg_i(b_i)\andeqn 0\le b_1, b_2,...,b_m\le 1,
$$
where $g_i\in C_0((0,N)$ for some  $N\ge 1$ and the support of $g_i$ is in $ [\sigma, N]$
for some $1/2>\sigma>0.$ Note each $g_i=\sum_{j=1}^K g_{i,j}$ for some $\|g_{i,j}\|+1\ge K\ge 1,$
where  $0\le g_{i,j}\le 1$ and its support still in $[\sigma, N].$ \Wlog, we may assume
that $0\le g_i\le 1.$

Let $c_i=(g_i(b_i))^{1/2},$ $i=1,2,...,m.$

Define
$$
Z=(c_1, c_2,...,c_m)
$$
which we view as a $m\times m$ matrix with zero rows other than the first row.
Define
$$E=\diag(f_{\sigma/2}(b_1), f_{\sigma/2}(b_2),...,f_{\sigma/2}(b_m))\in M_m(A).$$
Note that
\vspace{-0.12in} $$
ZZ^*=\sum_{i=1}^mc_i^2\ge a\andeqn Z^*Z=(d_{i,j})_{m\times m},
$$
where
\vspace{-0.1in} $$
d_{i,j}=c_ic_j,\,\,\,i,j=1,2,...,m.
$$
It follows that
$$
E(Z^*Z)=E(c_ic_j)_{m\times m}=(c_ic_j)_{m\times m}=(Z^*Z)E.
$$
Write $Z^*=V|Z^*|.$ Then
$$
Vx\in M_m(A), VV^*|Z|=|Z|VV^*=|Z|\andeqn
(VxV^*)E=E(VxV^*)=(VxV^*)
$$
for all $x\in \overline{(ZZ^*)M_m(A)(ZZ^*)}.$
Note that $A\subset \overline{(ZZ^*)M_m(A)(ZZ^*)}.$ Therefore
$A$ is quasi-compact.

\end{proof}

The number $n$ below will be used later.

\begin{lem}\label{Lbkqc}
Let $A$ be a $\sigma$-unital   \CA\,  with $QT(A)=T(A)\not=\emptyset$ and
$0\not\in \overline{T(A)}^w.$
Suppose that $A$ has the strong strict comparison for positive elements and has almost stable rank one.
Then $A$ is quasi-compact.
Moreover, let $a\in A$ with $0\le a\le 1$ be a strictly positive element,  let
$$
d=\inf\{d_\tau(a): \tau\in \overline{T(A)}^w\},
$$
and  let $n$ be an integer such that $nd>1,$
there exists elements $e_1, e_2\in M_n(A)$  with $0\le e_1, e_2\le 1,$
$e_1e_2=e_2e_1=e_1$ and $w\in M_n(A)^{**}$
such that
\beq\label{Lbkqc-1}
w^*c, cw\in M_n(A), ww^*c=cww^*=c\rforal c\in A,\\
w^*cwe_1=e_1w^*cw=c\rforal c\in A.
\eneq
Furthermore,
 there exists a full element $b_0\in P(A)$ with $0\le b_0\le 1$ and $e_0\in P(A)_+$
such that $b_0e=eb_0=b_0.$
\end{lem}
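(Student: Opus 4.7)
The plan is to Cuntz-dominate $a$ by a suitable diagonal element in $M_n(A)$, convert that comparison into concrete operator data via almost stable rank one, and read off the required $w, e_1, e_2$ from a polar decomposition; the ``furthermore'' clause appears immediately.

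For any integer $m\ge1$ set $b_0:=f_{1/m}(a)$, $e_0:=f_{1/(2m)}(a)$. Both lie in $P(A)_+$ with $0\le b_0,e_0\le 1$ and $b_0 e_0=e_0 b_0=b_0$. Since $d_\tau(b_0)=d_\tau(a)\ge d$ on $\overline{T(A)}^w$, the bounds
$$
d_\tau(d_n(b_0))=n\,d_\tau(a)\ge nd>1\ge d_\tau(a)>0\qquad(\tau\in\overline{T(A)}^w)
$$
combined with strong strict comparison give $a\lesssim d_n(b_0)$ in $M_n(A)$. In particular $a$ lies in the closed ideal of $A$ generated by $b_0$; as $a$ is strictly positive it is full in $A$, whence $b_0$ is also full. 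This yields the ``furthermore'' clause.

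Applying Lemma~\ref{Lalmstr1} (noting that $M_n(A)$ inherits almost stable rank one from $A$), there is $x\in M_n(A)$ with $x^*x=a$ and $xx^*\in\overline{d_n(b_0)M_n(A)d_n(b_0)}$. Because $d_n(e_0)$ is a local unit for that hereditary subalgebra, $d_n(e_0)xx^*=xx^*d_n(e_0)=xx^*$ and $xx^*\le d_n(e_0)$; evaluating the positive element $(d_n(e_0)x-x)(d_n(e_0)x-x)^*$ with these identities forces it to vanish, so $d_n(e_0)x=x$ and, by adjointing, $x^*d_n(e_0)=x^*$. Now take the polar decomposition $x=va^{1/2}$ in $M_n(A)^{**}$, so $v^*v=p$ is the support projection of $a$, which acts as identity on $A$. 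From $d_n(e_0)va^{1/2}=va^{1/2}$ we get $(d_n(e_0)v-v)p=0$; combined with $v(1-p)=0$ this gives $d_n(e_0)v=v$, and dually $v^*d_n(e_0)=v^*$. Set
$$
w:=v^*,\qquad e_1:=d_n(e_0),\qquad e_2:=d_n(f_{1/(4m)}(a));
$$
then $e_1,e_2\in M_n(A)$, $0\le e_i\le 1$, and $e_1 e_2=e_2 e_1=e_1$.

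To verify the three displayed identities I use that, since $a$ is strictly positive, $\overline{a^{1/2}A}=\overline{Aa^{1/2}}=A$, and Pedersen factorization yields, for every $c\in A$, elements $c',c''\in A$ with $c=a^{1/2}c'=c''a^{1/2}$, and (for self-adjoint $c$) some $c_1\in A$ with $c=a^{1/2}c_1 a^{1/2}$. Then $w^*c=va^{1/2}c'=xc'\in M_n(A)$, $cw=c''a^{1/2}v^*=c''x^*\in M_n(A)$, and $w^*cw=xc_1x^*\in M_n(A)$ (the general case by polarization); also $ww^*c=pc=c=cp=cww^*$, and $e_1\cdot w^*cw=(d_n(e_0)v)cv^*=vcv^*=w^*cw$ with the symmetric identity on the other side. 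Viewing $M_n(A)\subset A\otimes\mathcal K$, this choice of $w$ and $e:=e_1$ also verifies the definition of quasi-compactness for $A$. The main technical point is the transfer of $d_n(e_0)x=x$ to $d_n(e_0)v=v$ across the polar decomposition in $M_n(A)^{**}$; everything else is routine.
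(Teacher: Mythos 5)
Your overall strategy is the same as the paper's: dominate $a$ by $\diag(f_\ep(a),\dots,f_\ep(a))$ using strong strict comparison, invoke Lemma~\ref{Lalmstr1} (almost stable rank one) to get $x$ with $x^*x=a$ and $xx^*$ in the hereditary subalgebra generated by the diagonal element, and extract $w$ from the polar decomposition. However, there is a genuine gap at the very first step. You take \emph{any} $m\ge 1$, set $b_0=f_{1/m}(a)$, and assert $d_\tau(d_n(b_0))=n\,d_\tau(a)$, i.e.\ $d_\tau(f_{1/m}(a))=d_\tau(a)$. This is false in general: $d_\tau(f_{1/m}(a))$ only sees the part of the spectrum of $a$ above $1/2m$, so $d_\tau(f_{1/m}(a))\le d_\tau(a)$ with equality only in the limit $m\to\infty$, and for a fixed $m$ it can well be $\le 1/n$ for some $\tau$, in which case strong strict comparison gives you nothing. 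Consequently neither the comparison $a\lesssim d_n(b_0)$, nor the fullness of $b_0$ (your ``furthermore'' clause), nor the existence of the element $x$ is justified for an arbitrary $m$. What is needed, and what the paper does via Corollary~\ref{Csemicon}, is to \emph{choose} $m$ (depending on $n$): since $\tau(f_{1/m}(a))\nearrow d_\tau(a)\ge d>1/n$ pointwise, $\tau\mapsto\tau(f_{1/m}(a))$ is continuous, $\tau\mapsto d_\tau(a)$ is lower semicontinuous and $\overline{T(A)}^w$ is compact, a Dini-type argument produces $m$ with $\inf\{\tau(f_{1/m}(a)):\tau\in\overline{T(A)}^w\}>1/n$; only then does $n\,d_\tau(f_{1/m}(a))>1\ge d_\tau(a)$ hold uniformly and the comparison go through. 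Your proof must insert this selection step; ``for any integer $m\ge1$'' cannot stand.

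A smaller point: the exact factorization you attribute to Pedersen, namely $c=a^{1/2}c'$ with $c'\in A$ for every $c\in A$, is not available (already $c=a^{1/2}$ in $C_0((0,1])$ with $a(t)=t$ has no such factorization). You only need $\overline{a^{1/2}A}=A$ and $c=\lim_j a^{1/2}c_j a^{1/2}$ for suitable $c_j$, and then $w^*c=\lim v a^{1/2}c_j'=\lim xc_j'$, $w^*cw=\lim xc_jx^*$ lie in $M_n(A)$ because $v$ is a contraction and $M_n(A)$ is norm closed; so the conclusions survive, but the justification should be by this limiting argument. The remainder of your argument — the identity $(d_n(e_0)x-x)(d_n(e_0)x-x)^*=0$, the transfer $d_n(e_0)v=v$ across the polar decomposition, the choice $e_1=d_n(e_0)$, $e_2=d_n(f_{1/4m}(a))$, and the verification of Definition~\ref{Dcompact} — is correct, and in fact spells out a detail the paper's proof leaves implicit.
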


\begin{proof}
Let $a\in A_+$ with $0\le a\le 1$ be a strictly positive element.
Since $0\not\in \overline{T(A)}^w$ and $\overline{T(A)}^w$  is compact,
$$
\inf\{d_\tau(a): \tau\in \overline{T(A)}^w\}=d>0.
$$
Suppose that $nd>1.$

By \ref{Csemicon}
there exists  $\ep >0$  such that
$$
\inf\{\tau(f_\ep(a)): \tau\in \overline{T(A)}^w\}=d_0>2d/3
$$
such that $nd_0>1.$
Therefore, by the strong strict comparison,
\vspace{-0.1in} $$
a\lesssim \diag(\overbrace{f_\ep(a), f_\ep(a),...,f_\ep(a)}^n)\,\,\, {\rm in}\,\, M_n(A).
$$
Put $b=\diag(\overbrace{f_\ep(a), f_\ep(a),...,f_\ep(a)}^n).$
Since $A$ is assumed to almost have stable rank one, by \ref{Lalmstr1}, there exists $x\in M_n(A)$
such that
$$
x^*x=a^{1/2}\andeqn xx^*\in \overline{bM_n(A)b}.
$$
There is a partial isometry $w\in M_n(A)^{**}$ such that
$w^*A, Aw\in M_n(A),$ $ww^*c=cww^*=c$ for all $c\in A$ and
$w^*Aw\in  \overline{bM_n(A)b}.$ Put $e=\diag(\overbrace{f_{\ep/2}(a), f_{\ep/2}(a),...,f_{\ep/2}(a)}^n).$
Then $0\le e\le 1$ and
$$
ew^*cw=w^*cwe=w^*cw\rforal c\in A.
$$
Thus $A$ is quasi-compact.
The second part of the statement also holds.

For the last part of the lemma, choose $b_0=f_{\ep}(a)$ and
$e_0=f_{\ep/2}(a).$

\end{proof}

\begin{rem}\label{Dboundedscale}
Let $A$ be a $\sigma$-unital  exact simple \CA. Let $e\in P(A)_+\setminus\{0\}.$
Let $T_e(A)=\{\tau\in {\tilde T}(A): \tau(e)=1\}.$
It is a compact convex set. Let $a\in A$ be the strictly positive element.
A is said to have bounded scale if $d_\tau(a)$ is a bounded function
on $T_e(A)$ (see \cite{Btrace}).  In the absence of strict
comparison, one defines that $A$ has bounded scale if there exists an integer $n\ge 1$ such
that $n\la e\ra \ge \la b\ra$  for any $b\in A_+.$ However, as shown in \cite{Btrace}, this is equivalent
to say $A$ is algebraic simple which in turn is equivalent to say that $A$ is quasi-compact.

\end{rem}

\begin{prop}\label{Pheretc}
Let $A$ be a $\sigma$-unital  \CA\, with $0\not\in \overline{T(A)}^w$ and
every trace in ${\tilde T}(A)$ is finite on $A.$
Let $B\subset A$ be a $\sigma$-unital full hereditary \SCA.
Then $0\not\in \overline{T(B)}^w.$
\end{prop}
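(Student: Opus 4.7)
The plan is to establish the contrapositive in quantitative form: to exhibit a specific element of $P(B)_+$ whose value is uniformly bounded below on $T(B).$ Since the weak*-topology on $\tilde T(A)$ is induced by $P(A),$ the assumption $0\notin \overline{T(A)}^w$ furnishes $c\in P(A)_+$ and $\delta>0$ with $\tau(c)\ge \delta\rforal \tau\in T(A).$ I will transfer this separation to $\tilde T(B)$ by fixing a convenient full positive element of $B$ and bounding $\tau(c)$ in terms of its trace.

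Fix a strictly positive element $b\in B_+$ with $\|b\|\le 1$ and set $b':=f_\ep(b)\in P(B)_+$ for some $\ep>0.$ Since $B$ is full in $A,$ the elements $b$ and $b'$ generate the same closed two-sided ideal of $A,$ namely $A$ itself, so $b'$ is full in $A.$ Using Pedersen's description of $P(A)_+$ as the hereditary cone generated by $\{g(a):a\in A_+,\,g\in C_c((0,\infty))_+\},$ write $c\le \sum_{i=1}^n g_i(a_i)$ for a finite family of such $g_i$ and $a_i.$ Lemma \ref{Lfullep}, applied with $b'$ playing the role of the full element, then expresses $g_i(a_i)=\sum_j x_{ij}^*b'x_{ij}$ for finitely many $x_{ij}\in A.$ The trace identity $\tau(x_{ij}^*b'x_{ij})=\tau((b')^{1/2}x_{ij}x_{ij}^*(b')^{1/2})\le \|x_{ij}\|^2\tau(b')$ yields
\beq\nonumber
\tau(c)\le M\tau(b'),\qquad M:=\sum_{i,j}\|x_{ij}\|^2<\infty,
\eneq
for every $\tau\in \tilde T(A)$ which is finite on $A.$

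To conclude, let $\sigma\in T(B)$ and let $\hat\sigma\in \tilde T(A)$ be its unique extension (well-defined because $B$ is a full hereditary \SCA). By hypothesis $\hat\sigma$ is finite on $A,$ hence bounded, and since $\hat\sigma|_B=\sigma$ we have $\|\hat\sigma\|_A\ge \|\sigma\|_B=1.$ Thus $\hat\sigma/\|\hat\sigma\|_A\in T(A),$ and the separation on $T(A)$ gives $\hat\sigma(c)\ge \delta\|\hat\sigma\|_A\ge \delta.$ Combining with the uniform bound above, $\sigma(b')=\hat\sigma(b')\ge \hat\sigma(c)/M\ge \delta/M>0.$ Since this positive bound holds uniformly over $\sigma\in T(B)$ with the single element $b'\in P(B)_+,$ the basic weak*-neighborhood $\{\sigma\in \tilde T(B):\sigma(b')<\delta/M\}$ of $0$ avoids $T(B),$ so $0\notin \overline{T(B)}^w.$

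The main obstacle is the trace comparison $\tau(c)\le M\tau(b')$ with a genuinely finite constant $M$: if one instead tried to approximate $c$ by $f_\eta(c)$ and apply Lemma \ref{Lfullep} to each cut-down separately, the resulting constants could blow up as $\eta\to 0,$ so it is essential to exploit the hereditary cone structure of the Pedersen ideal at the outset in order to keep $M$ independent of $\tau.$
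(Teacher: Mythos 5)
Your third paragraph (extend $\sigma\in T(B)$ to a finite trace on $A,$ normalize, and use the separation on $T(A)$) is exactly the paper's argument, and replacing the paper's infimum of $\tau(b)$ over $\overline{T(A)}^w$ by a single separating element $c\in P(A)_+$ obtained from the definition of the weak*-topology is a legitimate variant. The genuine gap is in your second paragraph, at the sentence ``since $B$ is full in $A,$ the elements $b$ and $b'$ generate the same closed two-sided ideal of $A$.'' Fullness of $B$ gives fullness of $b,$ but it gives no control whatsoever over $f_\ep(b)$: cutting the spectrum of $b$ below $\ep/2$ can strictly shrink the generated ideal, and the hypotheses of the proposition do not rule this out. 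Concretely, let $A=B=\C\oplus\bigl(C_0((0,1])\otimes {\cal O}_2\bigr).$ Every densely defined lower semicontinuous trace on $A$ has the form $(\lambda,f)\mapsto c\lambda$ and hence is finite, $T(A)$ is the single point $(\lambda,f)\mapsto \lambda,$ so $0\notin \overline{T(A)}^w,$ and $B=A$ is a $\sigma$-unital full hereditary \SCA. But any strictly positive $b=(\lambda,h)\in A_+$ has $\|h(t)\|\to 0$ as $t\to 0,$ so for every $\ep>0$ the element $f_\ep(b)$ vanishes on the second summand near $t=0$ and generates the proper ideal $\C\oplus J$ with $J\subsetneq C_0((0,1])\otimes{\cal O}_2.$ Thus $b'$ is not full for any choice of $b$ and $\ep,$ Lemma \ref{Lfullep} cannot be applied with $b'$ as the full element, and your key estimate $\tau(c)\le M\tau(b')$ is not established. (Your closing remark misidentifies the delicate point: it is not the finiteness of $M$ but the fullness of the cut-down element.)

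The repair is short and brings you essentially onto the paper's route: apply Lemma \ref{Lfullep} with the full element $b$ itself, so that $g_i(a_i)=\sum_j x_{ij}^*\,b\,x_{ij}$ and hence $\hat\sigma(c)\le M\hat\sigma(b)=M\sigma(b),$ giving $\sigma(b)\ge \delta/M$ for every $\sigma\in T(B);$ only at the very end pass into the Pedersen ideal, using $f_\eta(t)\ge t-\eta$ on $[0,1],$ hence $\sigma(f_\eta(b))\ge \sigma(b)-\eta\ge \delta/M-\eta>0$ for any $0<\eta<\delta/M,$ so the basic neighborhood $\{\sigma\in {\tilde T}(B): \sigma(f_\eta(b))<\delta/M-\eta\}$ separates $0$ from $T(B).$ After this correction your argument has the same shape as the paper's, which bounds $\tau(b)$ below on $\overline{T(A)}^w$ using fullness of $b$ (and never cuts $b$ down before the final step); your single element $c\in P(A)_+$ is then a perfectly good substitute for the paper's infimum, and has the mild advantage of not invoking compactness of $\overline{T(A)}^w.$
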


\begin{proof}
Let $b\in B$ with $\|b\|=1$ is a strictly positive element of $B$ and
$B=\overline{bBb}.$ Let $e\in A_+$ with $\|e\|=1$ such that
$A=\overline{eAe}.$

Since $b$ is full,  then $\tau(b)>0$ for all $\tau\in \overline{T(A)}^w.$
Then
$$
1>r_0=\inf\{\tau(b): \tau\in \overline{T(A)}^w\}>0.
$$

For any $t\in T(B),$ there is an extension $\tau\in {\tilde T}(A)$
which is finite. Since $\tau$ is positive linear functional, it is continuous.
Therefore
$\tau_0=\tau/\|\tau\|\in T(A).$   Therefore
$t=\|\tau\|\cdot \tau_0|_{B}.$
It follows that
$$
t(b)\ge \|\tau\|\cdot r_0\ge r_0.
$$
This shows that $0\not\in \overline{T(B)}^w.$

\end{proof}

\begin{df}\label{Dkerrho}
Let $A$ be a $\sigma$-unital \CA\, with ${\tilde T}(A)\not=\{0\}.$
Suppose that there  is $e\in P(A)_+$ which is full.

Let $A_e=\overline{eAe}.$ Then $A_e$ is quasi-compact.
Then $0\not\in \overline{T(A_e)}^w.$
Assume that $A$ is not unital. One extends each $\tau\in \overline{T(A_e)}^w$
to a tracial state on ${\tilde A}_e.$
There is an order preserving \hm\, $\rho_{{\tilde A}_e}: K_0({\tilde A}_e)\to \Aff(\overline{T(A_e)}^w).$
By \cite{Br1}, one may identify $K_0(A)$ with $K_0(A_e).$
The composition of maps from $K_0(A)$ to $K_0(A_e),$ then from
$K_0(A_e)$ to $K_0({\tilde A}_e)$ and then to $\Aff(\overline{T(A_e)}^w)$ is an order preserving
\hm\, which will be denoted  by $\rho_A.$
Denote by ${\rm ker}\rho_{A}$ the subgroup
of  $K_0(A)$  consisting of those $x\in K_0(A)$ such that
${\rm ker}\rho_{A}(x)=0.$ Elements in ${\rm ker}\rho_A$  are called infinitesimal elements.
\end{df}

\section{Continuous scale and fullness}

\begin{df}\label{Dconscale}
The previous section discussed \CA s with bounded scales. Let us recall the definition of continuous scale
(\cite{Lncs1} and \cite{Lncs2})

Let $A$ be a $\sigma$-unital and non-unital \CA.
Fix an approximate identity $\{e_n\}$ for $A$
with the property that
$$
e_{n+1}e_n=e_ne_{n+1}=e_n\rforal n\ge 1.
$$
Then $A$ has continuous scale, if for any $b\in A_+\setminus \{0\},$
there exists $N\ge 1$ such that
$$
e_m-e_n\lesssim b
$$
for all $m>n\ge N.$
This definition does not depend on the choice of $\{e_n\}.$

\end{df}

\begin{rem}\label{Rcontext}
It should be noted that for any non-elementary separable simple \CA\, $A$
there is a non-zero hereditary \SCA\, $B\subset A$ such that $B$ has continuous scale (2.3 of \cite{Lncs2}).
\end{rem}

\begin{thm}[cf. \cite{Lncs2}]\label{Pconscale}
Let $A$ be a non-elementary $\sigma$-unital simple \CA\, with continuous scale such that $T(A)\not=\emptyset.$
Then

{\rm (1)} $T(A)$ is compact;

{\rm (2)} $d_\tau(a)$ is continuous on ${\tilde T}(A)$ for any strictly positive element $a$ of $A;$


{\rm (3)} $d_\tau(a)$ is continuous on $\overline{T(A)}^w$  for any strictly positive element $a$ of $A.$

Conversely, if $A$  is exact, has strict comparison for
positive elements and  is quasi-compact, then {\rm (1)}, {\rm (2)} and {\rm (3)} are  equivalent  and also
equivalent to the following:

{\rm (4)} $A$ has continuous scale;

{\rm (5)} $d_\tau(a)$ is continuous on $\overline{T(A)}^w$  for some  strictly positive element $a$ of $A.$

{\rm (6)} $d_\tau(a)$ is continuous  on ${\tilde T}(A)$ for some strictly positive element $a$ of $A.$

\end{thm}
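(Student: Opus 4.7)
The plan is to split the proof into (A) the forward direction---continuous scale together with $T(A)\neq\emptyset$ yields (1), (2), (3)---and (B) the equivalence of (1)--(6) under exactness, strict comparison, and quasi-compactness. In both parts the setup is the same: since $T(A)\neq\emptyset$ forces $A=P(A)$, Theorem~\ref{Tpedersen} makes $A$ quasi-compact, and Lemma~\ref{compactrace} supplies a compact $\overline{T(A)}^w\subset\tilde T(A)$ separated from $0$. I fix a strictly positive $a\in A$ with $\|a\|\le 1$ and an approximate identity $\{e_n\}$ with $e_{n+1}e_n=e_n$, so that $d_\tau(a)=\|\tau\|=\sup_n\tau(e_n)$ on $\tilde T(A)$ and the continuous functions $\tau\mapsto\tau(e_n)$ increase pointwise to the lower semicontinuous $d_\tau(a)$.

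For (A), the core lemma I will prove is: for every $\varepsilon>0$ there exists $b\in A_+\setminus\{0\}$ with $d_\sigma(b)<\varepsilon$ uniformly on $\overline{T(A)}^w$. I take pairwise orthogonal nonzero $b_1,\dots,b_k\in A_+$ (using non-elementariness) with $k>M/\varepsilon$, where $M:=\sup_{\sigma\in\overline{T(A)}^w}\|\sigma\|<\infty$; since $\sum_i d_\sigma(b_i)\le d_\sigma(a)\le M$, each $\sigma$ lies in some open set $U_i:=\{\sigma:d_\sigma(b_i)<\varepsilon\}$, a finite subcover reduces things to finitely many $b_{i_j}$, and simplicity of $A$ then produces a nonzero $b$ Cuntz-subordinate to each $b_{i_j}$ (by iterating the observation that each $\overline{b_{i_j}Ab_{i_j}}$ is a full hereditary $C^*$-subalgebra, so a small cutdown of any candidate sits Cuntz-below $b_{i_j}$). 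Feeding this $b$ into continuous scale gives $N$ with $e_m-e_n\lesssim b$ for $m>n\ge N$, whence $\tau(e_m-e_n)\le d_\tau(e_m-e_n)\le d_\tau(b)<\varepsilon$; letting $m\to\infty$ yields $\|\tau\|-\tau(e_n)\le\varepsilon$ uniformly on $\overline{T(A)}^w$. Thus $d_\tau(a)=\|\tau\|$ is the uniform limit of continuous functions on a compact set, so it is continuous on $\overline{T(A)}^w$, and via the cone structure $\tilde T(A)=\mathbb{R}_+\cdot\overline{T(A)}^w$ the continuity extends to $\tilde T(A)$, giving (2) and (3); then (1) follows as $T(A)=\overline{T(A)}^w\cap\{\|\tau\|=1\}$ is closed in a compact set.

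For (B), the implications (2)$\Rightarrow$(3), (3)$\Rightarrow$(5), and (2)$\Rightarrow$(6) are immediate from inclusion; (5)$\Leftrightarrow$(6) and (2)$\Leftrightarrow$(3) follow because any two strictly positive elements of $A$ generate the same hereditary subalgebra and so are Cuntz-equivalent, making $d_\tau$ independent of the chosen strictly positive element; and (3)$\Rightarrow$(1) is direct compactness. The substantive remaining step is (3)$\Rightarrow$(4): $d_\tau(e_n)$ is lower semicontinuous and increases pointwise to the continuous $d_\tau(a)$ on the compact $\overline{T(A)}^w$, so Dini's theorem gives uniform convergence, and a short spectral-calculus calculation using $e_{n+1}e_n=e_n$ upgrades this to $d_\tau(e_m-e_n)\to 0$ uniformly. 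For any prescribed $b\in A_+\setminus\{0\}$, simplicity makes $b$ full so $d_\tau(b)>0$ on $\overline{T(A)}^w$, and lower semicontinuity on a compact set forces $\delta:=\inf_\tau d_\tau(b)>0$; once $N$ is large enough that $d_\tau(e_m-e_n)<\delta$ uniformly, strong strict comparison---which follows from strict comparison on $\tilde T(A)$ because every $\sigma\in\tilde T(A)\setminus\{0\}$ is a positive scalar multiple of some $\tau\in\overline{T(A)}^w$---yields $e_m-e_n\lesssim b$, i.e., continuous scale. Combined with (4)$\Rightarrow$(1),(2),(3) from (A), this closes the loop.

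I expect the main obstacle to lie in (A), specifically the construction of the uniform witness $b$: pointwise smallness from orthogonal decompositions, a compact-set cover by lower-semicontinuous open sets, and a simplicity-driven Cuntz-subordination step need to be glued together to produce one nonzero element sitting Cuntz-below finitely many full hereditary $C^*$-subalgebras. The converse (B) is comparatively routine modulo Dini's theorem and the transfer of strict comparison to strong strict comparison via quasi-compactness.
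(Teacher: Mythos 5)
Your overall architecture is the same as the paper's: in the forward direction you produce, for each $\varepsilon>0$, a nonzero positive element whose $d_\sigma$ is uniformly below $\varepsilon$ on $\overline{T(A)}^w$, feed it into the definition of continuous scale to get $\tau(e_m)-\tau(e_n)<\varepsilon$ uniformly, conclude continuity of $d_\tau(a)$ and then compactness of $T(A)$; the converse is Dini plus strict comparison, exactly as in the paper (your closed-level-set argument for (1) is a harmless variant of the paper's density argument showing $T(A)=\overline{T(A)}^w$). However, two of the justifications you give are false as written, although both are repairable.

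First, ``$T(A)\neq\emptyset$ forces $A=P(A)$'' is not true in general: a $\sigma$-unital simple \CA\ can carry a bounded trace while its scale is unbounded (take a hereditary \SCA\ of $B\otimes \mathcal{K}$, with $B$ unital simple having two extreme traces, cut down by a positive element whose rank function is finite at one trace and infinite at the other), so it need not be algebraically simple. In the forward direction the quasi-compactness you need must instead be extracted from the continuous-scale hypothesis: continuous scale implies algebraic simplicity, hence $A=P(A)$ and quasi-compactness via \ref{Tpedersen}; this is exactly how the paper proceeds, citing \cite{Lncs1}. In the converse direction quasi-compactness is part of the hypothesis, so nothing is lost there. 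Second, the sets $U_i=\{\sigma: d_\sigma(b_i)<\varepsilon\}$ are not open: $\sigma\mapsto d_\sigma(b_i)$ is lower semicontinuous, so it is the superlevel sets $\{\sigma: d_\sigma(b_i)>c\}$ that are open, not the sublevel sets. Fortunately that covering step is redundant: your family $b_1,\dots,b_k$ is already finite, so by simplicity choose a single nonzero $b$ with $b\lesssim b_i$ for every $i$; the pigeonhole then gives $d_\sigma(b)\le \min_i d_\sigma(b_i)<\varepsilon$ for every $\sigma\in\overline{T(A)}^w$, with no topology needed. With these two repairs your argument goes through and coincides with the paper's proof.
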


\begin{proof}
Most part of the theorem  are well-known.  It is that (1) holds is  perhaps not well-known.

Suppose that $A$ has continuous scale. Then $A$ is algebraically simple  (3.3 of \cite{Lncs1}).
In particular, $A=P(A).$ It follows from \ref{Csemicon} that
$K=\overline{T(A)}^w$ is compact.
Fix an element $b\in P(A)_+\setminus \{0\}.$
Put $B=\overline{f_{1/2}(b)Af_{1/2}(b)}.$ Note that $A$ is not elementary.
$B_+$ contains infinitely many mutually orthogonal elements $\{x_n\}$
with $0\le x_n\le 1,$ $n=1,2,....$

Thus, for $\ep>0,$ there exists $x_{k(\ep)}$ such that $d_\tau(x_{k(\ep)})<\ep$ for all
$\tau\in K.$
Note that
$$
d_\tau(a)=\lim_{n\to \infty}\tau(f_{1/2^n}(a))\rforal \tau\in K.
$$
Denote $e_n=f_{1/2^n}(a),$ $n=1,2,....$ Since $A$ has continuous scale,  for any $\ep>0,$
there exists $N\ge 1$ such that
$$
e_m-e_n\lesssim x_{k(\ep)}\rforal m>n\ge N.
$$
In particular,
\vspace{-0.12in} \beq\label{Pconscale-2}
\tau(e_m)-\tau(e_n)<\ep\rforal \tau\in K.
\eneq
It follows that $d_\tau(a)$ is continuous on $K.$
Since $d_\tau(a)=1$ on $T(A)$ and $T(A)$ is dense in $K,$
$d_\tau(a)=1$ for all $\tau\in K.$ This implies that
$T(A)=K.$ This proves (1)  and (3).
It is clear that (2) are equivalent to (3).

Conversely,  if $A$ is as stated and if $d_\tau(a)$ is continuous
on $K$ for some strictly positive element,
 then,
\eqref{Pconscale-2} holds for every $\ep.$
Since $A$ has strict comparison for positive elements,
it follows, for any $b\in A_+,$  there exists $N\ge 1$ such that
$$
e_m-e_n\lesssim b\rforal m>n\ge N.
$$
This implies that $A$ has continuous scale.

In other words, in this case, if $A$ does not have continuous scale,
$d_\tau(a)$ is not continuous on $K.$ In particular, $d_\tau(a)$ is not identically
1. This implies $K\not=T(A).$
The above shows that, under the assumption that $A$ as stated in the second part of the theorem,
(1), (4) and (5) are equivalent.  Since (5) and (6) are equivalent, these are also equivalent to (6).
Since the notion of continuous scale is independent of choice of $a,$ these are also equivalent
to (2) and (3).

\end{proof}

\begin{cor}\label{Subcontsc}
Let $A$ be a $\sigma$-unital exact simple \CA\, with
strict comparison and with continuous scale.   Suppose that $T(A)\not=\emptyset.$
Suppose $a\in A$ such that $d_\tau(a)$ is continuous on $T(A).$
Then $\overline{aAa}$ has continuous scale.
\end{cor}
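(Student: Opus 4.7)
The plan is to verify the definition of continuous scale for $B=\overline{aAa}$ directly, using the approximate identity $e_n=f_{1/2^n}(a)$ (which satisfies $e_{n+1}e_n=e_n$) and comparing $e_m-e_n$ to any nonzero $b\in B_+$ via strict comparison in $A$. Because $A$ already has continuous scale, Theorem~\ref{Pconscale} (and its proof) tells us that $T(A)$ is compact and $A=P(A)$; in particular every element of $\tilde T(A)\setminus\{0\}$ is a positive scalar multiple of a tracial state, so the trace inequalities required for strict comparison need only be verified on $T(A)$.

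Three ingredients combine. First, by Dini's theorem the continuous functions $\tau\mapsto\tau(e_n)$ increase monotonically on the compact set $T(A)$ to the continuous function $\tau\mapsto d_\tau(a)$ (our hypothesis), so the convergence is uniform: $d_\tau(a)-\tau(e_n)\to 0$ uniformly in $\tau\in T(A)$. Second, given $b\in B_+\setminus\{0\}$, choose $\epsilon_0\in(0,\|b\|)$ so that $f_{\epsilon_0}(b)\ne 0$; since $A$ is simple, every nonzero tracial state of $A$ is faithful, and $\tau\mapsto\tau(f_{\epsilon_0}(b))$ is a strictly positive weak*-continuous function on the compact $T(A)$, hence has a uniform positive lower bound $\delta>0$. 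Third, a short spectral computation shows
\[
d_\tau(e_m-e_n)\le d_\tau(a)-\tau(e_{n-1}),\qquad m>n\ge 2,
\]
obtained by writing the support projection of $e_m-e_n$ in $A^{**}$ as $\chi_{(1/2^{m+1},1/2^n)}(a)=\chi_{(1/2^{m+1},\infty)}(a)-\chi_{[1/2^n,\infty)}(a)$ and using $\chi_{[1/2^n,\infty)}(a)\ge f_{1/2^{n-1}}(a)=e_{n-1}$.

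Choose $N$ large enough that $d_\tau(a)-\tau(e_{n-1})<\delta$ uniformly in $\tau\in T(A)$ for all $n\ge N+1$. Then for $m>n\ge N+1$,
\[
d_\tau(e_m-e_n)<\delta\le\tau(f_{\epsilon_0}(b))\le d_\tau(b)\le d_\tau(a)<\infty,\qquad \tau\in T(A),
\]
where $d_\tau(b)\le d_\tau(a)$ holds because $b\in\overline{aAa}$ forces the support projection of $b$ to lie under that of $a$. By homogeneity these inequalities extend to $\tilde T(A)\setminus\{0\}$. Strict comparison in $A$ therefore yields $e_m-e_n\lesssim b$ in $A$; since both elements lie in the hereditary subalgebra $B$, the same subequivalence holds in $B$ (Cuntz subequivalence of positive elements of a hereditary subalgebra agrees with that computed in the ambient algebra). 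This is precisely the defining property of continuous scale for $B$. The main technical step is the Dini uniform-convergence argument, which is where the continuity hypothesis on $d_\tau(a)$ is essential; the rest is routine spectral bookkeeping together with the simplicity of $A$.
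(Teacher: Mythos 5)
Your argument is correct, but it is organized quite differently from the paper's own proof of this corollary. The paper disposes of it in two lines: continuity of $d_\tau(a)$ on $T(A)$ extends to ${\tilde T}(A)$, so the normalized set $\{\tau\in {\tilde T}(A): d_\tau(a)=1\}$ --- which is identified with the tracial state space of $B=\overline{aAa}$ --- is compact, and one then appeals to the equivalences of Theorem \ref{Pconscale} applied to $B$ itself (which implicitly requires knowing that $B$ inherits exactness, strict comparison, and the quasi-compactness hypothesis of the converse direction). You instead verify the definition of continuous scale for $B$ directly inside $A$: Dini's theorem converts the continuity hypothesis into uniform convergence of $\tau(e_n)$ to $d_\tau(a)$ on the compact set $T(A)$; the spectral estimate $d_\tau(e_m-e_n)\le d_\tau(a)-\tau(e_{n-1})$ makes the tails uniformly small; faithfulness of traces on the simple algebra $A$ together with compactness of $T(A)$ gives the uniform lower bound $\delta\le \tau(f_{\ep_0}(b))\le d_\tau(b)$; and strict comparison in $A$ --- legitimately tested only on $T(A)$, since continuous scale makes $A$ algebraically simple, so every element of ${\tilde T}(A)\setminus\{0\}$ is a positive multiple of a tracial state --- yields $e_m-e_n\lesssim b$, which passes from $A$ to $B$ by the standard fact that Cuntz subequivalence between positive elements of a hereditary subalgebra agrees with that computed in the ambient algebra. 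In effect you replay, at the level of the hereditary subalgebra, the mechanism used in the converse half of the proof of Theorem \ref{Pconscale}; what this buys is a self-contained argument that never needs $B$ itself to be quasi-compact or to have strict comparison, which is precisely what the paper's terse proof leaves implicit. The ingredients you quietly import --- compactness of $T(A)$ and $A=P(A)$ --- are indeed available from Theorem \ref{Pconscale} and its proof, as you note, so there is no gap.
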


\begin{proof}
Put $B=\overline{aAa}.$
If $d_\tau(a)$ is continuous on $T(A),$
then $d_\tau(a)$ is a continuous function on ${\tilde T}(A).$
Therefore $\{\tau: d_\tau(a)=1\}$ is compact.
\end{proof}

Now we turn to local uniform fullness.

\begin{prop}\label{localunit}
Let $A$ be a  $\sigma$-unital full \SCA\, of a non-unital \CA\, of $B.$
Suppose  that $B$ is quasi-compact.
Then $A$ is strongly locally uniformly full in $B.$
\end{prop}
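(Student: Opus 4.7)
The plan is to exploit the ``artificial unit'' provided by Lemma~\ref{subcommatrix}: applied to the quasi-compact algebra $B$, it yields an integer $N\ge 1$, a partial isometry $w\in M_N(B)^{**}$, and a positive contraction $e\in M_N(B)$ such that $ww^*b=bww^*=b$ and $ew^*bw=w^*bw\,e=w^*bw$ for every $b\in B$, with $w^*bw\in M_N(B)$. An inspection of that proof also yields $w^*b\in M_N(B)$ for every $b\in B$ (there $w^*=Xp$ with $p$ the range projection of the strictly positive element and $X\in M_N(C)$). Hence $\iota\colon B\to M_N(B)$, $\iota(b)=w^*bw$, is a $*$-embedding for which $e$ plays the role of a two-sided identity, effectively ``unitizing'' $B$ inside $M_N(B)$.

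Fix $a\in A_+\setminus\{0\}$. The fullness of $A$ in $B$ makes $a$ full in $B$, and therefore the inflation $a\otimes e_{11}\in M_N(B)$ is full in $M_N(B)$. I would choose $g\in C_c((0,\infty))_+$ with $g(1)=1$ and compact support in some $[\sigma,N_0]\subset (0,\infty)$. Since $e\iota(c)=\iota(c)=\iota(c)e$, functional calculus gives $g(e)\iota(c)=g(1)\iota(c)=\iota(c)$, so $g(e)$ remains a two-sided identity on $\iota(B)$. Applying Lemma~\ref{Lfullep} inside $M_N(B)$ to the full element $a\otimes e_{11}$, the positive element $e$, and the function $g$, I obtain an integer $m=m(a)$ and elements $Y_1,\dots,Y_m\in M_N(B)$ with $\|Y_i\|\le M_0(a)$, depending only on $a$, such that
\[
g(e)=\sum_{i=1}^{m}Y_i^*(a\otimes e_{11})Y_i.
\]

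Now let $b\in B_+$ with $\|b\|\le 1$ be arbitrary. The identity $(1-g(e))\iota(b)(1-g(e))=0$ together with the Douglas-type argument used at the beginning of the proof of Lemma~\ref{LRL} gives $g(e)\iota(b)^{1/2}=\iota(b)^{1/2}$, whence $\iota(b)=\iota(b)^{1/2}g(e)\iota(b)^{1/2}=\sum_i Z_i^*(a\otimes e_{11})Z_i$ for $Z_i:=Y_i\iota(b)^{1/2}$. Using $\iota(b)^{1/2}=\iota(b^{1/2})=w^*b^{1/2}w$ together with $b^{1/2}ww^*=b^{1/2}$, I obtain $\iota(b)^{1/2}w^*=w^*b^{1/2}$, which lies in $M_N(B)$ by $w^*B\subset M_N(B)$; hence $Z_iw^*=Y_iw^*b^{1/2}\in M_N(B)$. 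Applying $w(\,\cdot\,)w^*$ to both sides and using $w\iota(b)w^*=(ww^*)b(ww^*)=b$ (an identity in $M_N(B)^{**}$ whose value lies in $B\subset M_N(B)$), I get a matrix identity in $M_N(B)$ whose $(1,1)$-entry reads
\[
b=\sum_{i=1}^{m}z_i^*\,a\,z_i,\qquad z_i:=(Z_iw^*)_{11}\in B,\qquad \|z_i\|\le \|Y_i\|\le M_0(a).
\]

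The pair $(m(a),M_0(a))$ depends only on $a$, so $a$ is strongly $(m(a),M_0(a))$-full in $B$; as $a\in A_+\setminus\{0\}$ was arbitrary, $A$ is strongly locally uniformly full in $B$. The main technical hurdle is the bookkeeping: $w$ lives in $M_N(B)^{**}$ rather than $M_N(B)$, so at each step one must track which products still belong to $M_N(B)$ (rather than only to $M_N(B)^{**}$), so that the $z_i$ handed back at the end actually lie in $B$ as required by Definition~\ref{fullunif}. This is handled by the single identity $w^*B\subset M_N(B)$ visible in the construction of $w$ in Lemma~\ref{subcommatrix}; with that, the rest is matrix arithmetic and functional calculus.
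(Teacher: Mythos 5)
Your proof is correct and follows essentially the same route as the paper: both arguments use the quasi-compactness data $(w,e)$ of Lemma \ref{subcommatrix}, apply Lemma \ref{Lfullep} to write a function of $e$ exactly as $\sum_i Y_i^*\,a\,Y_i$ with constants depending only on $a$, and then conjugate by $b^{1/2}$ and $w$ (using $ew^*xw=w^*xw$ and $ww^*x=x$) to land back in $B$ with the same bounds. The only differences are cosmetic -- you extract the $(1,1)$ entry of a matrix identity where the paper sets $z_i=x_iw^*b^{1/2}$ directly, and your standing assumption that each $a\in A_+\setminus\{0\}$ is full in $B$ is the same implicit convention the paper uses when invoking Lemma \ref{Lfullep}.
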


\begin{proof}
Let $a\in A$ be a strictly positive element.
Then $\overline{aBa}$ is full hereditary \SCA\, $B.$
It suffices to show that $\overline{aBa}$ is locally uniformly full in $B.$
Put $B_1=\overline{aBa}.$

Let $e\in M_n(B)$  with $0\le e\le 1$ and $w\in M_n(B)^{**}$ such that
$$
w^*a,\,w^*aw\in M_n(B), \,\,\, ww^*a=aww^*=a\andeqn w^*awe=ew^*aw=w^*aw\rforal a\in B.
$$
Note also that $aw\in M_n(B)$ for all $a\in B.$

 By Lemma \ref{Lfullep}, for any $1/4>\ep>0$ and any $a\in (B_1)_+\setminus \{0\},$
there are $x_1,x_2,...,x_m\in M_n(B)$ such that
$$
f_{\ep}(e)=\sum_{i=1}^m x_i^*ax_i.
$$
Let $p$ be the range projection of $B.$  Then $px_i\in M_n(B)$ for all
$x_i\in M_n(B).$ We may assume that $px_i=x_i,$ $i=1,2,...,m.$

Note that $f_{\ep}(e)w^*bw=w^*bwf_{\ep}(e)=w^*bw.$   It follows that, for any $x\in B,$
$$
f_\ep(e) w^*xw=w^*xwf_\ep(e) =w^*xw.
$$
Let $M(a)=\max\{\|x_i\|: 1\le i\le m\}$ and $N(a)=m.$
Then, for any $x\in B_+$ with $\|x\|\le 1,$ $w^*x^{1/2}wf_\ep(e)w^*x^{1/2}w=w^*xw.$
Therefore
$$
x^{1/2}wf_\ep(e)w^*x^{1/2}=w(w^*xw)w^*=x.
$$
Put $z_i=x_iw^*x^{1/2},$ $i=1,2,...,m.$
Since $w^*z^{1/2}\in M_n(B)$  and $px_i=x_i,$ $z_i\in B.$
 Then $\|z_i\|\le M(a)$ and
\beq\nonumber
\sum_{i=1}^mz_i^*az_i&=&x^{1/2}w(\sum_{i=1}^mx_i^*ax_i)w^*x^{1/2}\\
&=&x^{1/2}wf_\ep(e)w^*x^{1/2}=x.
\eneq

\end{proof}

\begin{thm}\label{Tqcfull}
Let $A$ be a non-unital separable simple \CA\, which is quasi-compact and with
$T(A)\not=\emptyset.$
Fix an element $e\in A_+\setminus \{0\}$ with $\|e\|=1$ and
$$0<d<\min\{\inf\{\tau(e): \tau\in T(A)\}, \inf\{\tau(f_{1/2}(e)): \tau\in T(A)\}\}.$$
Then there exists a map
$T: A_+\setminus \{0\}\to \N\times \R_+\setminus \{0\}$ satisfying the following:
For any finite subset ${\cal H}_1\subset A_+^{\bf 1}\setminus \{0\},$
there is a finite subset ${\cal G}\subset A$ and $\dt>0$ satisfying the following:
For any exact \CA\, $B$ with $T(B)\not=\emptyset$ and
$0\not\in \overline{T(B)}^w$  which has the strong strict comparison  and has almost stable rank one, and
for any
${\cal G}$-$\dt$-multiplicative \cpc\, $\phi: A\to B$
such that,
$$
\tau(f_{1/2}(\phi(e)))>d/2\rforal \tau\in T(B),
$$
then $\phi$ is exactly $T$-${\cal H}_1$-full.
Moreover, for any $c\in {\cal H}_1,$
$$
\tau(f_{1/2}(c))\ge {d\over{8\min\{M(c)^2\cdot N(c): c\in {\cal H}_1\}}}\rforal \tau\in T(B).
$$

\end{thm}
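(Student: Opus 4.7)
The strategy is to use the fullness of each $c\in A_+\setminus\{0\}$ in the simple quasi-compact algebra $A$ to define the map $T$, then transfer a trace bound from $\phi(e)$ to $\phi(c)$, and finally combine strong strict comparison with almost stable rank one to obtain an exact sum decomposition. To define $T$, we note that since $A$ is simple and quasi-compact (so $A=P(A)$), every nonzero $c\in A_+$ is full in $A$; applying Lemma \ref{Lfullep} with $b=e$ and $g=f_{1/2}$ yields elements $y_1(c),\ldots,y_{m(c)}(c)\in A$ with $f_{1/2}(e)=\sum_{i=1}^{m(c)} y_i(c)^*\,c\,y_i(c)$. Setting $M_0(c):=\max_i\|y_i(c)\|$, we define $T(c)=(N(c),M(c))$ with $N(c)$ slightly larger than $4m(c)/d$ and $M(c)$ slightly larger than $M_0(c)$, the precise safety factors being fixed by the Cuntz-comparison and exactness steps below.

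Given the finite set ${\cal H}_1$, the finite subset ${\cal G}\subset A$ will contain $e$, $f_{1/2}(e)$, $f_{1/4}(e)$, and the elements $c$, $f_{1/2}(c)$, $f_{1/4}(c)$, $y_i(c)$, $y_i(c)^*cy_i(c)$ for $c\in {\cal H}_1$, together with enough products to force approximate multiplicativity; $\dt$ will be taken so small that any ${\cal G}$-$\dt$-multiplicative c.p.c.\ $\phi:A\to B$ satisfies both $\|\phi(f_{1/2}(e))-f_{1/2}(\phi(e))\|<\eta$ and $\|\phi(f_{1/2}(e))-\sum_i\phi(y_i(c))^*\phi(c)\phi(y_i(c))\|<\eta$ with $\eta$ controllably small compared to $d$. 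Lemma \ref{Lrorm} will then convert these approximate equalities into the Cuntz subequivalence $(f_{1/2}(\phi(e))-4\eta)_+\lesssim m(c)\langle\phi(c)\rangle$ in $M_{m(c)}(B)$; combined with the hypothesis $\tau(f_{1/2}(\phi(e)))>d/2$ (and $d_\tau(x)\ge\tau(x)$ for contractions $x$), this will give the uniform bound $d_\tau(\phi(c))\ge d/(4m(c))$ over $\tau\in\overline{T(B)}^w$. Since every $\tau\in\overline{T(B)}^w$ has $\|\tau\|\le 1$, any $b\in B_+^{\bf 1}$ satisfies $d_\tau(b)\le 1<N(c)\,d_\tau(\phi(c))$, so the strong strict comparison hypothesis on $B$ yields $b\lesssim \diag(\phi(c),\ldots,\phi(c))$ in $M_{N(c)}(B)$.

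The final and most delicate step is upgrading this Cuntz subequivalence to an exact sum $b=\sum_{k=1}^{N(c)}z_k^*\phi(c)z_k$ with $\|z_k\|\le M(c)$. Lemma \ref{Lalmstr1}, together with the almost stable rank one of $M_{N(c)}(B)$, will furnish $x\in M_{N(c)}(B)$ with $x^*x=b$ exactly and $xx^*\in\overline{D\,M_{N(c)}(B)\,D}$ where $D=\diag(\phi(c),\ldots,\phi(c))$; because $b$ is concentrated in the $(1,1)$-corner, $x$ must be a single column $(v_1,\ldots,v_{N(c)})^T$ with $\sum v_k^*v_k=b$ and each $v_kv_k^*\in\overline{\phi(c)B\phi(c)}$. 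A further application of almost stable rank one inside the hereditary subalgebra $\overline{\phi(c)B\phi(c)}$, exploiting the fact that $\phi(c)^{1/n}v_k\to v_k$ in norm, will let us rewrite $v_k=\phi(c)^{1/2}z_k$ modulo controllable error; absorbing the residue by another use of Lemma \ref{Lrorm} and a minor enlargement of $N(c)$ will then produce the exact identity. The main obstacle is precisely this passage from Cuntz subequivalence to an exact decomposition with a norm bound $\|z_k\|\le M(c)$ that is uniform in $B$ and $\phi$, and it is what forces the safety factors in the definitions of $N(c)$ and $M(c)$. The ``Moreover'' clause will follow from tracing both sides of $f_{1/2}(\phi(e))\approx\sum_i\phi(y_i(c))^*\phi(c)\phi(y_i(c))$, using $\tau(\phi(y_i)^*\phi(c)\phi(y_i))\le\|\phi(y_i)\|^2\tau(\phi(c))$ together with $\tau(f_{1/2}(\phi(e)))>d/2$, and then relating $\tau(\phi(c))$ to $\tau(f_{1/2}(\phi(c)))$ via the approximate identity $\phi(f_{1/2}(c))\approx f_{1/2}(\phi(c))$ provided by a sufficiently large ${\cal G}$.
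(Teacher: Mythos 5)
Your reduction to the uniform trace bound $d_\tau(\phi(c))\ge d/4m(c)$ and to the Cuntz subequivalence $b\lesssim \diag(\phi(c),\dots,\phi(c))$ is fine, but the final step --- upgrading this subequivalence to an \emph{exact} identity $b=\sum_k z_k^*\phi(c)z_k$ with $\|z_k\|\le M(c)$ uniformly in $b,$ $B$ and $\phi$ --- is where the argument genuinely breaks down. Lemma \ref{Lalmstr1} does give $x$ with $x^*x=b$ exactly and $xx^*\in \overline{D M_{N(c)}(B)D},$ hence $b=\sum_k v_k^*v_k$ with $v_k\in \overline{\phi(c)B};$ but membership in the closed left ideal only yields $\phi(c)^{1/n}v_k\to v_k,$ not a factorization $v_k=\phi(c)^{1/2}z_k$ with a norm bound independent of how deep $v_k$ sits in that ideal (a bounded factorization would require a domination $v_kv_k^*\le \lambda\,\phi(c),$ which you do not have). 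Moreover, the proposed repair ``absorbing the residue by another use of Lemma \ref{Lrorm}'' cannot restore exactness: R\o rdam's lemma converts $\|b-\sum_k z_k^*\phi(c)z_k\|<\eta$ into an exact decomposition of $(b-2\eta)_+,$ not of $b,$ whereas Definition \ref{localfull} requires $\ep=0$ for $b$ itself. So as written your argument proves approximate $T$-${\cal H}_1$-fullness with a constant depending on $b,$ not exact fullness with a uniform $M(c).$

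The paper's proof avoids decomposing each $b$ separately. It invokes Lemma \ref{Lbkqc} (strong strict comparison, almost stable rank one and $0\notin \overline{T(B)}^w$ make $B$ quasi-compact) to obtain $e_2\in M_n(B)_+$ and a partial isometry $w\in M_n(B)^{**}$ so that $f_{1/16}(e_2)$ acts as a unit on $w^*Bw$; R\o rdam's Lemma \ref{Lrorm} is used once to get an exact identity $\sum_i y_i^*\phi(h)^2y_i=f_{1/16}(\phi(e))$; strict comparison (using $\tau(f_{1/2}(\phi(e)))\ge d/2$ and $nd/2>1$) gives $e_2\lesssim \diag(f_{1/8}(\phi(e)),\dots),$ and almost stable rank one conjugates $f_{1/16}(e_2)$ into the hereditary algebra of that diagonal, on which $\diag(f_{1/16}(\phi(e)),\dots)$ acts as a unit; hence $f_{1/16}(e_2)$ decomposes exactly in terms of $\phi(h),$ and then every contraction decomposes exactly via $b=b^{1/2}w\,f_{1/16}(e_2)\,w^*b^{1/2},$ with coefficients of norm at most $2M_1(h).$ Note the role of the gap between the two cut-downs $f_{1/8}$ and $f_{1/16}$: it is precisely what replaces your unbounded factorization by a bounded one. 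Your scheme could be salvaged in the same spirit by comparing $b$ against $\diag((\phi(c)-\dt)_+,\dots)$ for a $\dt>0$ fixed in advance (a uniform trace bound for the cut-down follows from $t^2\le f_{\dt}(t)+\dt$), since then $f_{\dt}(\phi(c))v_k=v_k$ and $z_k:=g(\phi(c))v_k$ with $g(t)=f_{\dt}(t)t^{-1/2}$ gives $v_k=\phi(c)^{1/2}z_k$ exactly with $\|z_k\|\le \sqrt{2/\dt}$ --- but this cut-down device is a genuine additional idea, not the ``minor enlargement of $N(c)$'' your sketch relies on.
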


\begin{proof}
Since $A$ is a $\sigma$-unital simple \CA\, which is quasi-compact,
then there is $T_1: A_+\setminus \{0\}\to \N\times R_+\setminus\{0\}$ such that
the identity map ${\rm id}_A$ is $T_1$-$A_+\setminus\{0\}$-full.

Write $T_1=(N_1,M_1),$ where $N_1: A_+\setminus\{0\}\to \N$ and
$M_1:  A_+\setminus\{0\}\to \R_+\setminus \{0\}.$

Let $n\ge 2$ be  an integer such that $nd/2>1.$
Define $N=2nN_1$ and $M=2M_1$ and $T=(N, M).$

Let ${\cal H}_1\subset A_+\setminus \{0\}$ be a fixed finite subset.

Suppose that $x_{i,h},...,x_{N_(h), h}\in A$ with
$\|x_{i,h}\|\le M_1(h)$ such that
\beq\label{Tqcfull-2}
\sum_{i=1}^{N_1(h)} x_{i,h}^* h^2 x_{i,h}=f_{1/64}(e)\rforal h\in {\cal H}_1.
\eneq
We choose a large ${\cal G}$ and small $\dt>0$ such that, for any ${\cal G}$-$\dt$-multiplicative
\cpc\, $\phi$ from $A$  with $\|\phi\|\not=0$ has
the property
that
\beq\label{Tqcfull-3}
&&\|\phi(f_{1/64}(e))-f_{1/64}(\phi(e))\|<1/64\andeqn\\
&&\|\sum_{i=1}^{N(h)} \phi(x_{i,h})^* \phi(h)^2\phi(x_{i,h})-\phi(f_{1/64}(e))\|<1/64\rforal h\in {\cal H}_1.
\eneq
Now suppose that $\phi: A\to B$ (for any $B$ fits the description in the theorem)
which is ${\cal G}$-$\dt$-multiplicative \cpc\,
such that
\beq\label{Tqcfull-4}
\tau(f_{1/2}(\phi(e)))\ge d/2\rforal \tau\in \overline{T(B)}^w.
\eneq

Applying \ref{Lrorm}, one finds $y_{i,h}\in A$ with
$\|y_{i,h}\|\le 2\|x_{i,h}\|,$ $i=1,2,...,N_1(h)$ such that
\beq\label{Tqcfull-5}
\sum_{i=1}^{N_1(h)}y_{i,h}^*\phi(h)^2y_{i,h}=f_{1/16}(\phi(e))\rforal h\in {\cal H}_1.
\eneq

By the assumption of $B,$ applying \ref{Lbkqc},  choose $e_1, e_2\in M_n(B)_+$
and $w\in M_n(B)^{**}$ as described there.
Put
\vspace{-0.14in} \beq\label{Tqcfull-6}
{\bar E}_0&=&\diag(\overbrace{f_{1/8}(\phi(e)), f_{1/8}(\phi(e)), ..., f_{1/8}(\phi(e))}^{2n})\andeqn\\
{\bar E}_1&=&\diag(\overbrace{f_{1/16}(\phi(e)), f_{1/16}(\phi(e)), ..., f_{1/16}(\phi(e))}^{2n})
\in M_{2n}(B)_+.
\eneq
Then, by the strong strict comparison,
$$
e_2\lesssim {\bar E}_0\in M_{2n}(B).
$$
Since $B$ has almost stable rank one, there exists a unitary $u\in M_{2n}(B)$ such
that
$$
u^*f_{1/6}(e_2)u \in \overline{{\bar E}_0(M_{2n}(B)){\bar E}_0}.
$$
Then
\vspace{-0.11in} \beq\label{Tqcfull-8}
u^*f_{1/16}(e_2)u{\bar E}_1={\bar E}_1u^*f_{1/16}(e_2)u=u^*f_{1/16}(e_2)u.
\eneq
We then write
$$
\sum_{i=1}^{2nN_1(h)} y_{i,h}' \phi(h) y_{i,h}'={\bar E}_1,
$$
where $\|y_{i,h}'\|=\|y_{j,h}\|$ for some $j\in \{1,2,...,N_1(h)\},$ $i=1,2,...,2nN_1(h),$ for all $h\in {\cal H}_1.$
Then
$$
\sum_{i=1}^{2nN_1(h)} (f_{1/16}(e_2)^{1/2}u y_{i,h}' )\phi(h) (y_{i,h}'u^*f_{1/16}(e_2)^{1/2})= f_{1/16}(e_2).
$$
Therefore, for any $b\in B_+$ with $\|b\|\le 1,$
$$
\sum_{i=1}^{2nN_1(h)} (w^*b^{1/2}w)(f_{1/16}(e_2)^{1/2}u y_{i,h}' )\phi(h)^{1/2}\phi(h)\phi(h)^{1/2} (y_{i,h}'u^*f_{1/16}(e_2)^{1/2})(w^*b^{1/2}w)=w^*bw.
$$
Then
$$
\sum_{i=1}^{2nN_1(h)}(b^{1/2}w)(f_{1/16}(e_2)u y_{i,h}'\phi(h)^{1/2} )\phi(h) (\phi(h)^{1/2}(y_{i,h}'u^*f_{1/16}(e_2))w^*b^{1/2}=b.
$$
Note that $b^{1/4}w\in M_n(B)$ and $f_{1/16}(e_2)\in M_n(B).$
Therefore
$$
(b^{1/4}w)f_{1/16}(e_2)\in M_n(B).
$$
It follows that
\vspace{-0.1in} \beq\label{Tqcfull-16}
&&(b^{1/2}w)(f_{1/16}(e_2)^{1/2}u y_{i,h}'\phi(h)^{1/2} \in B
\andeqn\\
&&\|(b^{1/2}w)(f_{1/16}(e_2)^{1/2}u y_{i,h}'\phi(h)^{1/2}\|\le 2M(h)\rforal h\in {\cal H}_1.
\eneq
This implies that $\phi$ is exactly $T$-${\cal H}_1$-full.
\end{proof}

\begin{rem}\label{Rqcfull}
In the light of \ref{Pcom4C}, Theorem \ref{Tqcfull} works for \CA s $B\in {\cal C}'.$
\end{rem}

\section{Non-unital and non-commutative  one dimensional complices }

\begin{df}\label{Dbuild1}
{\rm
Let $F_1$ and $F_2$ be two finite dimensional \CA s.
Suppose that there are two  \hm s
$\phi_0, \phi_1: F_1\to F_2.$
Denote the mapping torus $M_{\phi_1, \phi_2}$ by
$$
A=A(F_1, F_2,\phi_0, \phi_1)
=\{(f,g)\in  C([0,1], F_2)\oplus F_1: f(0)=\phi_0(g)\andeqn f(1)=\phi_1(g)\}.
$$

For $t\in (0,1),$ define $\pi_t: A\to F_2$ by $\pi_t((f,g))=f(t)$ for all $(f,g)\in A.$
For  $t=0,$ define $\pi_0: A\to \phi_0(F_1)\subset F_2$ by $\pi_0((f, g))=\phi_0(g)$ for all $(f,g)\in A.$
For $t=1,$ define $\pi_1: A\to \phi_1(F_1)\subset F_2$ by $\pi_1((f,g))=\phi_1(g))$ for all $(f,g)\in A.$
In what follows, we will call $\pi_t$  a point-evaluation of $A$ at $t.$
There is a canonical map $\pi_e: A \to F_1$ defined by $\pi_e(f,g)=g$ for all
pair $(f, g)\in A.$  It is a surjective map.

The class of all \CA s described above is denoted by ${\cal C}.$

If $A\in {\cal C}$, then $A$ is the pull-back of
\begin{equation}\label{pull-back}
\xymatrix{
A \ar@{-->}[rr] \ar@{-->}[d]^-{\pi_e}  && C([0,1], F_2) \ar[d]^-{(\pi_0, \pi_1)} \\
F_1 \ar[rr]^-{(\phi_0, \phi_1)} & & F_2 \oplus F_2.
}
\end{equation}
Every such pull-back is an algebra in ${\cal C}.$
Infinite dimensional  C*-algebras in ${\cal C}$ are also called {\it one-dimensional non-commutative finite CW complexes} (NCCW)
(see \cite{ELP1} and \cite{ELP2}) and Elliott-Thomsen building blocks.

Denote by ${\cal C}_{0}$ the class of all \CA s $A$  in ${\cal C}$ which
satisfies the following conditions:

(1) $K_1(A)=\{0\},$

(2) $K_0(A)_+=\{0\},$

(3) $0\not\in \overline{T(A)}^w.$


\CA s  in  ${\cal C}_0$ are stably projectionless.

First such examples  can be found in \cite{Raz}.
Let $F_1=M_k$ for some $k\ge 1$ and $F_2=M_{(m+1)k}$
for some $m\ge 1.$
Define $\psi_0, \psi_1: F_1\to F_2$ by
$$
\psi_0(a)=\diag(\overbrace{a,a,...,a}^m, 0)\andeqn
\psi_1(a)=\diag(\overbrace{a,a,..., a,a}^{m+1})
$$
for all $a\in F_2.$
Let
\beq\label{ddraz}
A=A(F_1, F_2, \psi_0, \psi_1)=R(k, m, m+1).
\eneq
Then,  as  shown in \cite{Raz},  $K_0(A)=\{0\}=K_1(A)$ and it is easy to check that $0\not\in \overline{T(A)}^w.$
Denote by ${\cal R}$ the class of \CA s which finite direct sums of \CA s in the form in \eqref{ddraz}.



Denote by ${\cal C}_0^0$ the subclass of \CA s in ${\cal C}_0$ which
also satisfies the condition

(2)' $K_0(A)=\{0\}.$

{{ Let $F_1=\C\oplus \C, F_2=M_{2n}(\C)$. For $(a,b)\in \C\oplus \C=F_1$, define
$$\psi_0(a, b)=\diag(\underbrace{a,a...a}_{n-1},\underbrace{b,b...b}_{n-1}, 0, 0)
~~~\mbox{and}~~~~\psi_1(a, b)=\diag(\underbrace{a,a...a}_n,\underbrace{b,b...b}_n).$$
Then $A(F_1, F_2, \psi_0,\psi_1)=A$ has the property that
$K_0(A)=\{(k,-k )\in \Z\oplus \Z)\}$ which is isomorphic to $\Z$ but $K_0(A)_+=\{0\}$.
Also $K_1(A)=\{0\}.$
Thus $A\in {\cal C}_0$ but $A\notin {\cal C}_0^0.$}}

Let  ${\cal C}'$ denote the class of all full hereditary \SCA s of \CA s in ${\cal C},$
let ${\cal C}_0'$ denote the class of all full hereditary \SCA s of \CA s in ${\cal C}_0$
let ${\cal C}_0^{0'}$ denote the class of all full hereditary \SCA s of \CA s in ${\cal C}_0^0.$

}

\end{df}

%
%
%
%
%



\begin{rem}\label{Runitz}
Let $A=A(F_1, F_2, \psi_0, \psi_1)\in {\cal C}_0.$
Then ${\tilde A}\in {\cal C}.$ Moreover
${\tilde A}=A(F_1', F_2, \psi_0',\psi_1')$ with both
$\psi_0'$ and $\psi_1'$ being unital.

Let $F_1'=F_1\oplus \C$ and let
$p=\psi_0(1_{F_1})\in F_2$ and $q=\psi_1(1_{F_1})\in F_2$.  Define $\psi_0',~{\psi_1'}: F_1'\to F_2$ by
$$
\psi_0'((a,\lambda))=\psi_0(a)\oplus (\lambda\cdot (1_{F_2}-p) {)}~~\mbox{and}~~\psi_1'((a,\lambda))=\psi_{ 1}(a)\oplus (\lambda\cdot (1_{F_2}-q))
$$
 for all $a\in F_1\andeqn \lambda\in \C.$

Since $A\in {\cal C}_0,$ then either $\psi_0$ or $\psi_1$ are not unital. Hence at least one of $\psi_0'$ and $\psi_1'$ is nonzero on the second direct summand $\C$ in $F_1'=F_1\oplus \C$.

\end{rem}


\begin{prop}\label{Pcom4C}
{\rm (1)} Let $A\in {\cal C}'$.
 Then, for any $a_1, a_2\in A_+,$  $a_1\lesssim a_2$ if and only if
 $d_{tr\circ \pi}(\pi(a_1))\le d_{tr\circ \pi}(\pi(a_2))$ for all irreducible representations $\pi$
 of $A.$

{\rm (2)} Let $A\in {\cal C}',$ and
let $c\in A_+\setminus \{0\}.$ Then $c$ is full if and only if, for any
$\tau\in T(A),$ $\tau(c)>0.$

\end{prop}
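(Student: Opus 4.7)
The plan is to treat the two parts separately, with Part (2) being essentially formal once subhomogeneity is exploited, and Part (1) being the substantive assertion.

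For Part (1), the forward direction is immediate: any irreducible representation $\pi\colon A\to M_{n_\pi}$ preserves Cuntz subequivalence, so $\pi(a_1)\lesssim\pi(a_2)$ in $M_{n_\pi}$ and the rank/dimension-function inequality follows. For the converse, I would first reduce to the case $A\in \mathcal{C}$. If $A$ is a full hereditary subalgebra of $B\in \mathcal{C}$ and $a_1,a_2\in A_+$, then $a_1\lesssim a_2$ in $A$ iff $a_1\lesssim a_2$ in $B$ (cut down any Cuntz witness in $B$ by an element of $A_+$ acting as a unit on $a_1$), and by Brown's theorem the irreducible representations and their dimension functions match under the Morita equivalence. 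Now for $A=A(F_1,F_2,\phi_0,\phi_1)\in \mathcal{C}$, viewed as the pullback inside $C([0,1],F_2)$, the irreducible representations split into two families: for $t\in(0,1)$ and each simple summand of $F_2$ the point-evaluation $\pi_t$ composed with the summand projection, and for each simple summand of $F_1$ the map $\pi_e$ composed with the summand projection. The hypothesis becomes pointwise rank domination $\mathrm{rank}\,\pi_t(a_1)\le\mathrm{rank}\,\pi_t(a_2)$ for all such $t$, plus the analogous inequality via $\pi_e$.

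To conclude $a_1\lesssim a_2$ it suffices to prove $(a_1-\varepsilon)_+\lesssim a_2$ for every $\varepsilon>0$. Fix $\varepsilon>0$ and observe that $t\mapsto\mathrm{rank}\,\pi_t((a_1-\varepsilon)_+)$ is upper semicontinuous and integer-valued, while $t\mapsto\mathrm{rank}\,\pi_t(a_2)$ is lower semicontinuous. The former is pointwise dominated by the latter. This allows a finite partition of $[0,1]$ into closed subintervals on each of which both rank functions are locally controlled; on each subinterval one produces a local Cuntz witness in $C([0,1],F_2)$ via pointwise polar decomposition combined with functional calculus (the standard argument for rank-determined Cuntz comparison in $C([0,1],M_n)$, following Ciuperca--Elliott--Santiago and Robert for one-dimensional NCCW). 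These local witnesses can be glued using a partition of unity; the key extra point, compared with $C([0,1],F_2)$, is that the glued witness must respect the pullback conditions $x(0)\in\phi_0(F_1),\,x(1)\in\phi_1(F_1)$ so as to land in $A$. This compatibility at the endpoints is exactly encoded by the rank inequalities at the $\pi_e$-family of representations, which factor the endpoint evaluations through $\phi_0,\phi_1$.

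For Part (2), the forward direction is standard: if $c$ is full and $\tau\in T(A)$ has $\tau(c)=0$, then for every $b\in A_+$ one has $0\le\tau(c^{1/2}bc^{1/2})\le\|b\|\tau(c)=0$, so $\tau$ vanishes on $\overline{cAc}$, and by the trace identity $\tau(x^*cy)=\tau(c^{1/2}yx^*c^{1/2})$ together with polarization it vanishes on the closed two-sided ideal generated by $c$, which is $A$; this contradicts $\|\tau\|=1$. For the converse, suppose $c$ is not full; then $I:=\overline{AcA}$ is a proper closed two-sided ideal of $A$. Since any $B\in\mathcal{C}$ is subhomogeneous (irreducible representations have dimension at most $\dim F_2$), its full hereditary subalgebra $A\in \mathcal{C}'$ is also subhomogeneous, and the same bound holds for the nonzero quotient $A/I$. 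Thus $A/I$ admits a finite-dimensional irreducible representation $\sigma\colon A/I\to M_n$ for some $n$; composing with the quotient map $q$ and the normalized trace $\mathrm{tr}$ on $M_n$ produces $\tau:=\mathrm{tr}\circ\sigma\circ q$. Tracial and positivity are clear; $\|\tau\|\le 1$ since $|\tau(a)|\le\|\sigma(q(a))\|\le\|a\|$, and $\|\tau\|\ge 1$ because $\sigma\circ q$ is surjective onto $M_n$ so one can lift a positive element of norm close to $1$ in $M_n$ with trace close to $1$. Hence $\tau\in T(A)$, yet $\tau(c)=\mathrm{tr}(\sigma(q(c)))=0$, contradicting the hypothesis.

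The principal obstacle is the endpoint gluing in Part (1): even after rank domination is established pointwise, producing a single Cuntz witness in $A$ (rather than only in $C([0,1],F_2)$) requires that the local-to-global construction be compatible with the pullback constraints at $t=0,1$. The semicontinuity of the rank functions together with the inequalities on the $\pi_e$-family makes this an exercise in continuous selection/partition-of-unity patching, parallel to the known analyses of $\mathrm{Cu}(A)$ for one-dimensional NCCW complexes.
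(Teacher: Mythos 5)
Your treatment of part (2) is correct and in fact takes a different (and arguably cleaner) route than the paper: for the implication ``$\tau(c)>0$ for all $\tau$ implies $c$ full'' the paper argues directly, using part (1) together with compactness of $\overline{T(A)}^w$ to get a uniform lower bound on $d_{tr\circ\pi}(\pi(c))$ and then dominating a strictly positive element by finitely many copies of $c$; you instead pass to the quotient by the ideal generated by $c$ and use subhomogeneity to produce a finite-dimensional irreducible representation, hence a tracial state killing $c$. Both are fine; your version avoids any appeal to part (1) or to $0\notin\overline{T(A)}^w$, while the paper's version is what it actually needs later (a quantitative ``$n$ copies of $c$ dominate a strictly positive element''). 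Your reduction of part (1) to the case $A\in{\cal C}$ via the hereditary-subalgebra/Brown argument, and the identification of the irreducible representations and of the hypothesis as summand-wise rank domination on $(0,1)$ together with the $\pi_e$-family inequalities, also match what is needed.

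The genuine gap is in the converse direction of part (1) for $A\in{\cal C}$. The paper does not prove this from scratch at all: it unitizes and cites the comparison theorem for Elliott--Thomsen building blocks (3.18 of \cite{GLN}). You instead propose to prove it, but the core step is only asserted. ``Local Cuntz witnesses glued by a partition of unity'' is not a valid construction as stated: if $x_i^*a_2x_i\approx (a_1-\ep)_+$ on overlapping subintervals and you form $x=\sum_i\lambda_i^{1/2}x_i$, the cross terms $\lambda_i^{1/2}\lambda_j^{1/2}x_i^*a_2x_j$ are not controlled, so the glued element need not be a witness; the known proofs for $C([0,1],F_2)$ and for one-dimensional NCCW complexes (Ciuperca--Elliott--Santiago, Robert, and 3.18 of \cite{GLN}) instead proceed by careful selection of continuous families of almost-partial-isometries/eigenprojections, or by computing the Cuntz semigroup, and the compatibility with the pullback conditions $x(0)\in\phi_0(F_1)$, $x(1)\in\phi_1(F_1)$ is exactly the nontrivial part, not an ``exercise'' that follows formally from the $\pi_e$-inequalities. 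So as written, the hard implication of (1) is deferred rather than established. If you are content to quote the known comparison result for algebras in ${\cal C}$ (as the paper does) the gap disappears and the rest of your argument stands; if you insist on reproving it, you must supply the actual local-to-global construction, which is substantially more delicate than your sketch suggests.
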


\begin{proof}
 For {\rm (1)}, we first  consider the case that $A\in {\cal C}.$ By considering ${\tilde A},$ one sees that
this case follows from 3.18 of  \cite{GLN}.

Since \CA s $A\in {\cal C}'$ are hereditary \SCA\, of $A,$
it is easy to see that $A$ also has the above mentioned comparison property.

For {\rm (2)}, we first assume that $A\in {\cal C}.$ It is clear that if $\tau(c)=0,$ for some $\tau,$ then $c$ has zero value
somewhere in $\mathrm{Sp}(A)=\sqcup_j(0,1)_j\cup\mathrm{Sp}(F_1).$ Therefore $c$ is in an proper ideal of $A.$

Now assume that $\tau(c)>0$ for all $\tau\in T(A).$
Write $c=(a,b),$ where $a\in C([0,1], F_2)$ and $b\in F_1.$ Then $b>0$ and $a(t)>0$
for all $t\in [0,1].$  It follows that
$$
d_{tr\circ \pi}(\pi(c))>0
$$
for all $c\in A.$ Since we assume that $0\not\in \overline{T(A)}^w,$
this implies that
$$
\inf\{d_{tr\circ \pi}(\pi(c)): \pi \}>0.
$$
There is an integer $n\ge 1$ such that
$$
d_{tr\circ \pi}(\pi({\bar c}))>2
$$
for all irreducible representations $\pi$ of $A$ (or $M_n(A)$),
where
$$
{\bar c}=\diag(\overbrace{c,c,...,c}^n).
$$
By (1), this implies $a\lesssim {\bar c},$ where $a$ is a strictly positive element.
This implies that $c$ is a full element in $A.$

In general, let $A$ be a full hereditary \SCA\, of $C\in {\cal C}.$
Let $c\in A_+.$ Then  $c\in A_+$ is full if and only if it is full in $C.$
Thus the general case follows from the case that $A\in {\cal C}.$

\end{proof}

\begin{prop}\label{str=1}

{\rm (1)} Every \CA\, in ${\cal C}'$  has stable rank one;

{\rm (2)}  If $A\in {\cal C}$ and $A$ is unital,
 the exponential rank of $A$   is at most $2+\ep,$
 If $A\in {\cal C}$ and $A$ is not unital, then ${\tilde A}$ has exponential rank at most $2+\ep;$

{\rm (3)} Every \CA\, in ${\cal C}'$ is semiprojective.

{\rm (4)} Let $A\in {\cal C}$ and let $k\ge 1$ be an integer.  Suppose that every irreducible representation 
of $C$  has rank at least $k.$ 
 Then, for any $f\in {\rm
LAff}_b(T(C))_+$ with $0\le f \le 1,$ there exists a positive
element $a\in M_2(C)$ such that
$$
\max_{\tau\in T(C)}|d_{\tau}(a)-f(\tau)|\le 2/k.
$$


\end{prop}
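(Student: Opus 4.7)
The plan is to handle the four assertions in sequence, reducing the $\mathcal{C}'$ statements to the corresponding $\mathcal{C}$ statements and then exploiting the pullback structure (\ref{pull-back}).

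For (1), I would first dispose of the unital $\mathcal{C}$ case using the pullback
$$A \xrightarrow{\pi_e} F_1 \xrightarrow{(\phi_0,\phi_1)} F_2\oplus F_2 \xleftarrow{(\pi_0,\pi_1)} C([0,1], F_2).$$
Both $F_1$ and $C([0,1], F_2)$ have stable rank one, and $(\pi_0,\pi_1)$ is surjective, so invertibles in $F_2\oplus F_2$ lift through $C([0,1], F_2)$; an approximation argument in the pullback then shows that invertibles are dense in $A$. (This is essentially in Elliott's work on one-dimensional NCCWs.) The non-unital $\mathcal{C}$ case reduces to the unital one via $\widetilde A \in \mathcal{C}$ (Remark \ref{Runitz}). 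For $B \in \mathcal{C}'$, $B$ is a full hereditary \SCA\ of some $A \in \mathcal{C}$, and stable rank one passes to hereditary subalgebras by Rieffel.

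For (2), given a unitary $u \in A$ (unital $\mathcal{C}$ case), write $u = (f,v)$ where $f : [0,1] \to U(F_2)$ is continuous with $f(i) = \phi_i(v)$. Since $C([0,1], F_2)$ has exponential rank $\le 1+\epsilon$, I can approximately express the path $f$ as a single exponential $e^{i h(t)}$ on $[0,1]$ after perturbing the endpoint contributions; a second exponential then absorbs the mismatch at $\{0,1\}$ coming from the lift across $(\phi_0,\phi_1)$. This yields $\mathrm{cer}(A) \le 2+\epsilon$. The non-unital statement is immediate from the unital one applied to $\widetilde A$.

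For (3), semiprojectivity of $A \in \mathcal{C}$ is the main theorem of Eilers--Loring--Pedersen \cite{ELP1,ELP2}. For $B \in \mathcal{C}'$, $B = \overline{eAe}$ for some full positive $e \in A$, $A \in \mathcal{C}$; a standard argument (lifting an approximate identity and using semiprojectivity of $A$) transfers semiprojectivity from $A$ to its full hereditary \SCA.

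For (4), the construction is explicit. Write $F_2 = \bigoplus_j M_{n_j}$ with each $n_j \ge k$. The tracial states of $C$ are parametrized (via Remark \ref{Runitz}/Definition \ref{Dbuild1}) by the points of $\mathrm{Sp}(C) = \bigsqcup_j (0,1)_j \sqcup \mathrm{Sp}(F_1)$, with a normalized trace $\tau_{t,j}$ at each summand. Given $f \in \mathrm{LAff}_b(T(C))_+$ with $0 \le f \le 1$, I would for each $j$ and each $t$ choose the integer $m_j(t) = \lfloor 2n_j f(\tau_{t,j})\rfloor \in \{0,1,\ldots,2n_j\}$; then a rank-$m_j(t)$ projection in $M_2(M_{n_j})$ has normalized (by $\tau_{t,j}$) dimension within $1/(2n_j) \le 1/(2k)$ of $f(\tau_{t,j})$. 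The discrete function $m_j$ is lower semicontinuous, and within the $2n_j$ available dimensions I can build a positive element $a_j(t) \in M_2(M_{n_j})$ whose rank follows $m_j(t)$ up to continuous interpolations where $m_j$ jumps; the factor $M_2$ provides the necessary room to patch projections continuously and to accommodate the boundary matching at $t=0,1$ through $(\phi_0,\phi_1)$. The total error is at most $2/k$, combining the discretization error $1/(2k)$ (times $2$ since the trace on $M_2(C)$ is normalized accordingly) with the interpolation slack. The main obstacle here will be carrying out the continuous interpolation across rank jumps and verifying compatibility with the endpoint boundary conditions while keeping the error uniform in $\tau$.
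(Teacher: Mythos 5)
Your items (1) and (2) are acceptable in spirit: the paper disposes of them by citing 3.3 and 3.16 of \cite{GLN}, and your pullback argument for (1), followed by the reduction to hereditary subalgebras (stable rank one does pass to hereditary \SCA s), is a legitimate, if sketchy, replacement; your (2) is vaguer about how the exponentials are made to satisfy the boundary conditions $h(0)=\phi_0(h_v),\ h(1)=\phi_1(h_v)$, but the route is the expected one. The genuine gap is in (3). There is no ``standard argument'' that transfers semiprojectivity from $A$ to a full hereditary \SCA\ by lifting an approximate identity: semiprojectivity is not a permanence property of this kind (it is already destroyed by stabilization, and by Brown's theorem a full hereditary subalgebra of $A$ is in general only \emph{stably} isomorphic to $A$, which gives you nothing here). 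The paper's proof instead uses a fact specific to the class ${\cal C}$: writing $B=\overline{bAb}$, for every $\ep>0$ one can choose $b_0\le b$ with $\|b-b_0\|<\ep$ such that $\overline{b_0Ab_0}$ is itself an algebra in ${\cal C}$, so semiprojectivity of $B$ is obtained through the Eilers--Loring--Pedersen theorem applied to these perturbed cut-downs, not through an abstract hereditary-subalgebra permanence claim. As written, your (3) would fail for a general semiprojective $A$, and you have not used the structure of ${\cal C}$ at the point where it is needed.

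For (4) you have essentially conceded the gap yourself: the continuous interpolation across rank jumps and, above all, the compatibility of the element at $t=0,1$ with the images of $(\phi_0,\phi_1)$ is exactly where the cited argument (the proof of 10.4 of \cite{GLN}, which the paper invokes verbatim) does its work, and it is also where the hypothesis that every irreducible representation has rank at least $k$ is actually used -- it guarantees enough room (together with the factor $M_2$) to realize the prescribed ranks at the endpoints. Your sketch never verifies this, so the construction is incomplete precisely at its crux. A smaller but real defect: $\lfloor 2n_j f\rfloor$ need not be lower semicontinuous when $f$ is (the floor function is not lsc), so even the discretization step has to be set up differently (e.g.\ via a strictly increasing approximation from below) before one can hope that the pointwise ranks of a single positive element, which form an lsc function of $t$, can track it within the allowed error $2/k$.
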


\begin{proof}
(1) follows from 3.3 of \cite{GLN}. (2) follows from 3.16 of \cite{GLN} (see also 5.19 of \cite{Lncbms}).

It follows from \cite{ELP1} that every \CA\, in ${\cal C}$ is semiprojective.
Now let $B\subset A$ be a full hereditary \SCA\, of $A.$
We may write $B=\overline{bAb}.$
Then, it is known and standard that, for any $\ep>0,$ there is $b_0\le b$ and
$\|b-b_0\|<\ep$ such that $\overline{b_0Bb_0}=\overline{b_0Ab_0}\in {\cal C}.$
(4) follows exactly the same proof of  10.4 of \cite{GLN}.

\end{proof}




\section{Maps from 1-dimensional non-commutative complices}

\begin{lem}[Lemma 2.1 of \cite{BT}]\label{Lbt1}
Let $A$ be a simple exact \CA\, with strict comparison of positive elements which is also quasi-compact.  Assume  that
$\iota: W(A)_+\to {\rm LAff}_{b+}(\overline{T(A)}^w)$ is  surjective.
Let $a\in A_+\setminus \{0\}$ which is not Cuntz equivalent to a projection.
Then, for any $\ep>0,$  there exists $\dt>0$ and a continuous affine
function  $f: \overline{T(A)}^w\to \R^+$ such that
$$
d_\tau((a-\ep)_+)<f(\tau)<d_\tau((a-\dt)_+)\rforal \tau\in \overline{T(A)}^w.
$$
\end{lem}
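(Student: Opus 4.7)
The target is to produce, on the compact Choquet simplex $K := \overline{T(A)}^w$ (which is compact by quasi-compactness of $A$ and Lemma~\ref{compactrace}), a continuous affine $f$ strictly between the bounded lower semi-continuous affine functions $g_\ep(\tau) := d_\tau((a-\ep)_+)$ and $g_\dt(\tau) := d_\tau((a-\dt)_+)$ for some $\dt \in (0,\ep)$. The plan has three stages.

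\textbf{Stage one: construct an orthogonal gap element.} I will use the hypothesis ``$a$ not Cuntz equivalent to a projection'' together with simplicity of $A$. Simplicity forces every tracial state to be faithful, and non-Cuntz-equivalence to a projection forces $0$ to be an accumulation point of $\sigma(a)$---for if $\sigma(a) \setminus \{0\} \subset [c,\infty)$ for some $c>0$, then $\chi_{[c,\infty)}(a)$ would be a projection in $A$ Cuntz equivalent to $a$. Pick $0 < \dt_1 < \dt_2 < \ep$ with $\sigma(a)\cap(\dt_1,\dt_2)\ne\emptyset$, let $h \geq 0$ be a continuous bump supported in $(\dt_1,\dt_2)$ and set $c := h(a) \ne 0$. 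With $\dt := \dt_1/2$, the elements $c$ and $(a-\ep)_+$ have disjoint functional supports in $C^*(a)$, so they are orthogonal positive elements both contained in $\overline{(a-\dt)_+ A (a-\dt)_+}$, giving the Cuntz inequality $(a-\ep)_+ \oplus c \lesssim (a-\dt)_+$.

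\textbf{Stage two: establish a uniform positive gap.} Simplicity of $A$ makes $c$ full, hence $\tau(c) > 0$ for every $\tau \in K$. Since $\tau \mapsto \tau(c)$ is continuous affine and $K$ is compact, $\inf_K \tau(c) =: \eta_0 > 0$, which gives a uniform lower bound $d_\tau(c) \geq \eta_0 / \|c\|$ on the lower semi-continuous function $d_\tau(c)$. Applying strict comparison to the Cuntz inequality of Stage one yields $g_\dt(\tau) - g_\ep(\tau) \geq d_\tau(c) \geq \eta_0/\|c\| > 0$ uniformly on $K$.

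\textbf{Stage three: extract a continuous affine $f$.} The naive candidate $f_0(\tau) := g_\ep(\tau) + \tau(c)/2$ satisfies $g_\ep < f_0 \leq g_\dt$ pointwise, but $f_0$ is only lower semi-continuous (not continuous). I will promote $f_0$ to a genuine continuous affine function by combining Edwards' separation theorem on the Choquet simplex $K$ with the surjectivity of $\iota$: use $\iota$ to realize suitable intermediate bounded lsc affine functions as rank functions of positive elements, then select $f$ in the form $\tau \mapsto \tau(b)$ for some $b \in A_+$ constructed from the functional calculus of $a$ and the gap element $c$ (for instance, a suitable $f_{\ep'}(a)$-type element with $\ep' \in (\dt,\ep)$, perturbed by a multiple of $c$), chosen so that the uniform gap from Stage two allows $\tau(b)$ to lie strictly between $g_\ep$ and $g_\dt$ everywhere.

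The main obstacle is Stage three: translating a uniform gap between two lower semi-continuous affine functions into a continuous affine function strictly between them. In general, lsc functions on a compact space cannot be uniformly approximated from above by continuous functions, so the promotion of $f_0$ to continuity requires the Choquet-simplex structure of $K$ (which ensures that lsc affine functions are pointwise suprema of continuous affine functions in a compatible way) together with the surjectivity of $\iota$ to provide the necessary flexibility in realizing intermediate rank functions.
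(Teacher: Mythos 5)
Your Stages one and two are fine (though the appeal to strict comparison in Stage two is not needed: the relation $(a-\ep)_+\oplus c\lesssim (a-\dt)_+$ already gives $d_\tau((a-\ep)_+)+d_\tau(c)\le d_\tau((a-\dt)_+)$ by monotonicity and additivity of $d_\tau$). The problem is Stage three, which you yourself flag as the main obstacle: the continuous affine function $f$ is never actually produced, and the tools you invoke do not produce it in the form stated. Edwards' separation theorem sandwiches a continuous affine function between an \emph{upper} semicontinuous convex function and a lower semicontinuous concave function dominating it; here both $g_\ep(\tau)=d_\tau((a-\ep)_+)$ and your candidate lower envelope $f_0=g_\ep+\tau(c)/2$ are merely lower semicontinuous affine, so the separation theorem does not apply, and the Choquet-simplex fact that an lsc affine function is a pointwise supremum of continuous affine ones only approximates $g_\ep$ from \emph{below}, which cannot place a continuous function strictly above $g_\ep$ while keeping it below $g_\dt$. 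Surjectivity of $\iota$ also points the wrong way: it realizes a given affine function as a rank function, whereas the task is to manufacture the continuous function in the first place. So as written the proof is incomplete at its decisive step.

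The repair is more elementary than what you sketch and needs neither Edwards nor surjectivity of $\iota$: since $0$ is an accumulation point of ${\rm sp}(a),$ choose spectral points $t_0\in(0,\ep)$ and $t_1\in (0,\dt_2)$ with $0<\dt<t_1<\dt_2<\dt_1<t_0<\ep,$ take $g$ continuous with $0\le g\le 1,$ $g=1$ on $[\dt_1,\infty),$ $g=0$ on $[0,\dt_2],$ and bumps $h_1,h_2$ supported in $(\dt_1,\ep)$ and $(\dt,\dt_2)$ with $h_i(t_i)>0,$ so $h_i(a)\not=0.$ Pointwise on ${\rm sp}(a)$ one has $f_{1/n}((t-\ep)_+)+h_1(t)\le g(t)$ for all $n$ and $g(t)+h_2(t)\le f_{1/n}((t-\dt)_+)$ for large $n,$ so $f(\tau):=\tau(g(a))$ satisfies $d_\tau((a-\ep)_+)+\tau(h_1(a))\le f(\tau)$ and $f(\tau)+\tau(h_2(a))\le d_\tau((a-\dt)_+)$; simplicity, compactness of $\overline{T(A)}^w$ and $0\notin \overline{T(A)}^w$ (Lemma \ref{compactrace}) give $\tau(h_i(a))>0$ there, yielding both strict inequalities, and $f$ is continuous affine because $g(a)\in A=P(A)$ (Theorem \ref{Tpedersen}). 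For comparison, the paper does not reprove the statement at all: its proof is a reduction to Lemma 2.1 of \cite{BT}, observing only that the stable rank one hypothesis of \cite{BT} enters just to guarantee that $0$ is a limit point of ${\rm sp}(a),$ which here follows directly from $a$ not being Cuntz equivalent to a projection (exactly your Stage one observation); so a completed version of your argument would be a self-contained alternative to the paper's citation-based proof, but in its present form it has a genuine gap.
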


\begin{proof}
Lemma 2.1 of \cite{BT} assumes  that $A$ has stable rank one.
It is used in the first sentence of the proof, namely, one can assume that zero is a limit point
of $sp(a).$  If $0$ is an isolated point of $sp(a),$ then there is $0<\dt<1/2$ such that
$f_\dt(a)$ is a projection. It is easy to see that $\la a \ra =\la f_\dt(a) \ra $ in this case.
The rest of the proof works exactly the same as that of Lemma 2.1 of \cite{BT}
which does require $A$ has an identity.

\end{proof}

\begin{lem}[Lemma 2.2 of \cite{BT}]\label{Lbt2}
Let $A$ be as in \ref{Lbt1} and let $a\in A\otimes {\cal K}.$  It follows that there exists
a sequence $\{a_n\}$ of elements  in $(A\otimes {\cal K})_+$  which satisfies the following:

{\rm (1)} $\la a\ra =\sup_n \la a_n \ra ;$

{\rm (2)} $a_n\in M_{n(k)}(A)$  for some $n(k)\in \N;$

{\rm (3)} the function $\tau\to d_\tau(a_n)$ is continuous  on $\overline{T(A)}^w$ for each $n\in \N$ and

{\rm (4)}  $d_\tau(a_n)<d_\tau(a_{n+1})$ for all $\tau\in \overline{T(A)}^w$ and $n\in \N.$

\end{lem}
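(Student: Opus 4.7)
The plan is to realize $\la a\ra$ as the supremum in the Cuntz semigroup of a sequence whose dimension functions are continuous and strictly pointwise increasing on the compact set $\overline{T(A)}^w$ (compactness comes from quasi-compactness via \ref{compactrace}). The strategy is to interpolate, between consecutive cut-downs $(a-\ep_n)_+$ of $a$, elements whose rank functions are continuous affine, using Lemma \ref{Lbt1} to locate continuous affine functions strictly between neighboring cut-down ranks and the surjectivity of $\iota:W(A)_+\to\mathrm{LAff}_{b+}(\overline{T(A)}^w)$ to realize them as dimension functions of matrix-algebra elements.

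First I would exploit that $A\otimes{\cal K}=\overline{\bigcup_n M_n(A)}$, so that for any null sequence $\ep_n\searrow 0$ each $(a-\ep_n)_+$ lies in some $M_{k(n)}(A)$, with $\la a\ra=\sup_n\la(a-\ep_n)_+\ra$ in $Cu(A)$. Assuming the generic case that no cut-down $(a-\ep)_+$ is Cuntz equivalent to a projection, Lemma \ref{Lbt1} applied to each $(a-\ep_n)_+$ yields some $\dt_n\in(0,\ep_n)$ and a continuous affine $f_n:\overline{T(A)}^w\to\R_+$ with
\[
d_\tau\bigl((a-\ep_n)_+\bigr)<f_n(\tau)<d_\tau\bigl((a-\dt_n)_+\bigr)\quad\text{for every }\tau.
\]
Inductively imposing $\ep_{n+1}\le\dt_n$ I obtain $f_n(\tau)<d_\tau((a-\dt_n)_+)\le d_\tau((a-\ep_{n+1})_+)<f_{n+1}(\tau)$, securing the strict pointwise inequality $f_n<f_{n+1}$ on $\overline{T(A)}^w$. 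Surjectivity of $\iota$ then produces $a_n\in M_{N_n}(A)_+$ with $d_\tau(a_n)=f_n(\tau)$; boundedness of $f_n$ by $d_\tau((a-\dt_n)_+)$ on the compact set $\overline{T(A)}^w$ (and the fact that $W(A)$ exhausts $\bigcup_n M_n(A)_+/{\sim}$) ensures the realizing element lives in some finite matrix stage.

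I would then verify (1)--(4). Conditions (2) and (3) are immediate from the construction, and (4) is just the strict inequality $f_n<f_{n+1}$. For (1), strict comparison combined with $d_\tau(a_n)=f_n(\tau)<d_\tau((a-\dt_n)_+)\le d_\tau(a)$ and the finiteness of $d_\tau(a_n)$ (itself a consequence of quasi-compactness) gives $a_n\lesssim a$, hence $\la a_n\ra\le\la a\ra$. Conversely, for each $\ep>0$, choosing $n$ with $\ep_n<\ep$ yields $d_\tau((a-\ep)_+)\le d_\tau((a-\ep_n)_+)<f_n(\tau)=d_\tau(a_n)$ for all $\tau\in\overline{T(A)}^w$, so strict comparison gives $(a-\ep)_+\lesssim a_n$. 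Since $\la a\ra=\sup_{\ep>0}\la(a-\ep)_+\ra$, this forces $\la a\ra\le\sup_n\la a_n\ra$ and thus equality.

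The main obstacle is coordinating the parameters $\ep_n,\dt_n$ so that the strict pointwise inequalities $f_n<f_{n+1}$ persist uniformly — exploiting compactness of $\overline{T(A)}^w$ to convert the pointwise strict gap from Lemma \ref{Lbt1} into something robust under the next interpolation step. A secondary delicacy is the exceptional case in which some $(a-\ep)_+$ is Cuntz equivalent to a projection $p$, making $\la a\ra$ a compact element of $Cu(A)$: there $\la a\ra$ cannot be reached as a supremum of strictly smaller classes, so the conclusion must be understood with the sequence eventually stabilizing at $a_n$ Cuntz equivalent to $p$ (or the case is tacitly set aside, as in \ref{Lbt1} which handles it by a trivial observation about isolated spectrum).
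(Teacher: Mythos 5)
The paper does not prove this lemma at all: it is quoted verbatim as Lemma 2.2 of \cite{BT}, and your argument is essentially the Brown--Toms interpolation proof (apply \ref{Lbt1} between successive cut-downs $(a-\ep_n)_+$ and $(a-\dt_n)_+$ with $\ep_{n+1}\le\dt_n$, realize the interpolating continuous affine functions through the surjectivity of $\iota$, and recover (1) and (4) from strict comparison), so it is the same route as the cited source and is essentially correct. Two small repairs: $(a-\ep_n)_+$ need not lie in any $M_{k}(A)$, so before invoking \ref{Lbt1} (stated for elements of $A_+$, hence of $M_k(A)_+$) you should first compress $a$ into corners $(1\otimes p_{k_n})a(1\otimes p_{k_n})\in M_{k_n}(A)$ and run the interpolation with cut-downs of these, which still have supremum $\la a\ra$; and in your closing caveat, a single cut-down being Cuntz equivalent to a projection does not make $\la a\ra$ compact (such stages are handled by a compactness argument giving a uniform strict gap), the only genuine obstruction to (1) together with the strict inequality (4) being the case where $\la a\ra$ itself is a compact element, which is exactly the case excluded in the paper's applications such as \ref{LLbt}.
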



The following will not be used until later sections.

\begin{lem}\label{Lbt1ACT}
Let $A$ be a non-unital exact  simple \CA\, with strict comparison for positive element
which almost has stable rank one.
Suppose that $A$ is quasi-compact and $\imath: W(A)_+\to {\rm LAff}_{b+}(\overline{T(A)}^w)$
is surjective.

Let $0\le a\le 1$ be a non-zero element in $A$ which is not Cuntz equivalent to
a projection.
Then, for any $\ep>0$   there exists $\dt>0$ and  an element  $e\in A$
with
\beq\label{Lbt1act-1}
0\le f_\ep(a)\le e\le f_{\dt/2}(a)
\eneq
such that the function $\tau\mapsto d_\tau(e)$ is continuous on $\overline{T(A)}^w.$
\end{lem}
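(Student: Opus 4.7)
My plan is to combine Lemma \ref{Lbt1} (which interpolates between the lsc rank functions $d_\tau(f_\ep(a))$ and $d_\tau(f_{\dt/2}(a))$ by a continuous affine function $f$), the surjectivity of $\iota$ (to realize $f$ as the dimension function of a concrete positive contraction), and Lemma \ref{Lalmstr1} (to transport that contraction into the hereditary subalgebra of $f_{\dt/2}(a)$), and then to promote the resulting Cuntz comparison to the required operator sandwich by exploiting the functional-calculus identity $f_{\dt/2}(a)f_\ep(a)=f_\ep(a)$, which is valid because $f_{\dt/2}\equiv 1$ on the support of $f_\ep$.

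First, I apply Lemma \ref{Lbt1} to $a$ with $\ep/2$ in place of $\ep$. Because $(a-\ep/2)_+$ and $f_\ep(a)$ share the same support projection in $A^{**}$, one has $d_\tau((a-\ep/2)_+)=d_\tau(f_\ep(a))$, and the lemma produces $\dt_0>0$ together with a continuous affine function $f\colon \overline{T(A)}^w\to \R^+$ with $d_\tau(f_\ep(a))<f(\tau)<d_\tau((a-\dt_0)_+)$. Taking $\dt=\dt_0$, one has $\dt/4<\dt_0$, so $(a-\dt_0)_+\lesssim f_{\dt/2}(a)$ inside $C^*(a)$, and therefore $d_\tau(f_\ep(a))<f(\tau)<d_\tau(f_{\dt/2}(a))$ for all $\tau\in\overline{T(A)}^w$. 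Surjectivity of $\iota$ supplies $c\in M_n(A)_+$ of norm at most $1$ with $d_\tau(c)=f(\tau)$; strict comparison then gives $c\lesssim f_{\dt/2}(a)$, and Lemma \ref{Lalmstr1} produces $x\in M_n(A)$ with $x^*x=c$ and $b:=xx^*\in \overline{f_{\dt/2}(a)Af_{\dt/2}(a)}$, i.e., a positive contraction in $A$ with continuous dimension function $d_\tau(b)=f(\tau)$.

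The remaining task, and the main obstacle, is promoting these Cuntz data into the genuine operator sandwich $f_\ep(a)\le e\le f_{\dt/2}(a)$. My plan is to set $e:=f_{\dt/2}(a)^{1/2}\,u\,f_{\dt/2}(a)^{1/2}$ for a suitable positive contraction $u\in\overline{f_{\dt/2}(a)Af_{\dt/2}(a)}$ with $u\ge f_\ep(a)$. The outer conjugation gives $e\le f_{\dt/2}(a)$ automatically, while $f_{\dt/2}(a)^{1/2}f_\ep(a)f_{\dt/2}(a)^{1/2}=f_\ep(a)$ yields $e\ge f_\ep(a)$. The subtle point is arranging that $\tau\mapsto d_\tau(e)$ be continuous: writing $u=f_\ep(a)+k$ with $k\ge 0$ contractive in $\overline{f_{\dt/2}(a)Af_{\dt/2}(a)}$, the Kaplansky identity $\tau(p\vee q)=\tau(p)+\tau(q)-\tau(p\wedge q)$ reduces continuity of $d_\tau(e)$ to a careful arrangement of the support projections of $f_\ep(a)$ and $k$ in $A^{**}$. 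I would realize this using $b$ (from the previous paragraph) together with a further application of Lemma \ref{Lalmstr1} to construct $k$ so that $p_{f_\ep(a)}\vee p_k$ coincides with the support projection of $b$, making $d_\tau(e)=d_\tau(b)=f(\tau)$ continuous. Verifying that such a $k$ can be produced subject to the size constraint $u\le 1$, in particular controlling the overlap $p_{f_\ep(a)}\wedge p_k$, is the delicate bookkeeping step, and is where the full strength of strict comparison, surjectivity of $\iota$, and almost stable rank one are simultaneously invoked.
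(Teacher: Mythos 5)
Your first half is exactly the paper's opening move: interpolate with Lemma \ref{Lbt1}, realize the continuous affine function by some $c\in M_n(A)_+$ via surjectivity of $\iota,$ and use Lemma \ref{Lalmstr1} to replace $c$ by $b=xx^*\in \overline{f_{\dt/2}(a)Af_{\dt/2}(a)}$ with $d_\tau(b)$ continuous. But the second half, which is the actual content of the lemma, is left as a plan rather than a proof: you yourself flag the construction of $k$ (equivalently of $u\ge f_\ep(a)$ with $d_\tau(e)$ continuous) as ``delicate bookkeeping'' to be ``verified,'' and that is precisely the step that needs an argument. Moreover, the route you sketch has a concrete obstruction: writing $u=f_\ep(a)+k$ and asking that $p_{f_\ep(a)}\vee p_k$ equal the support projection of $b$ presupposes $p_b\ge p_{f_\ep(a)},$ and nothing in the construction of $b$ gives that --- $b$ is just some element of $\overline{f_{\dt/2}(a)Af_{\dt/2}(a)}$ with the right rank function, and its support projection need not dominate that of $f_\ep(a).$ Repairing this requires a further comparison and conjugation step, and for that you need a \emph{uniform} strict gap between $d_\tau(f_\ep(a))$ and a genuine cut-down of $b$ (not just $d_\tau(b)$), which your single interpolation does not provide.

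This is exactly what the paper's proof supplies and your proposal omits. The paper interpolates twice, $d_\tau(f_{\ep/8}(a))<g_1(\tau)<d_\tau(f_{\dt_1}(a))<g_2(\tau)<d_\tau(f_{\dt_2}(a)),$ realizes $g_2$ by $c_0\in \overline{f_{\dt_2}(a)Af_{\dt_2}(a)},$ and then uses compactness of $\overline{T(A)}^w$ (a Dini-type argument, cf.\ 5.4 of \cite{Lnloc}) to find $m$ with $g_1(\tau)<\tau(f_{1/m}(c_0))$ for all $\tau.$ Strict comparison plus a second application of Lemma \ref{Lalmstr1} then yields a unitary $u$ in the unitization of $\overline{f_{\dt_2}(a)Af_{\dt_2}(a)}$ with $u^*f_{\ep/8}(f_{\ep/8}(a))u\in \overline{f_{1/m}(c_0)Af_{1/m}(c_0)};$ setting $c_1=uc_0u^*$ and $e=g(c_1)$ with $g$ nonvanishing on $(0,1]$ and $g f_{1/m}=f_{1/m},$ one gets $f_\ep(a)\le f_{\ep/8}(f_{\ep/8}(a))\le e$ because $e$ acts as a unit on a hereditary subalgebra containing $f_{\ep/8}(f_{\ep/8}(a))$ (note that mere domination of support projections, which is what your $p\vee q$ bookkeeping aims at, would not give the operator inequality $e\ge f_\ep(a)$), while $e\le f_{\dt_2/2}(a)$ because the conjugating unitary lives in the unitization of $\overline{f_{\dt_2}(a)Af_{\dt_2}(a)},$ and $d_\tau(e)=d_\tau(c_1)=g_2(\tau)$ is continuous by construction. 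Without the second interpolating function, the uniform $f_{1/m}$-gap, and this conjugation, your sketch cannot be completed as stated, so the proposal has a genuine gap at its central step.
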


\begin{proof}
By \ref{Lbt1}, there is a continuous affine function function
$g_1, g_2\in \Aff(\overline{T(A)}^w$ such that
\beq\label{Lbt1act-2}
d_\tau(f_{\ep/8}(a))<g_1(\tau)<d_\tau(f_{\dt_1}(a))<g_2(\tau)<d_\tau(f_{\dt_2}(a)) \rforal \tau\in \overline{T(A)}^w,
\eneq
where $0<\dt_2<\dt_1<1.$
Since $\iota$ is surjective, there is $c\in M_m(A)$ for some integer $m\ge 1$
such that $0\le c\le 1$ and
$d_\tau(c)=g_2(\tau)$ for all $\tau\in \overline{T(A)}^w.$
It follows from \ref{Lalmstr1}  and \eqref{Lbt1act-2} that there exists $x\in M_m(A)$ such that
\vspace{-0.08in} $$
x^*x=c\andeqn xx^*\in \overline{f_{\dt_2}(a)Af_{\dt_2}(a)}.
$$
Put $c_0=xx^*.$ Then $0\le c_0\le 1.$
Note that
\beq\label{Lbt1act-2+}
d_\tau(c_0)=d_\tau(c)\rforal \tau\in \overline{T(A)}^w.
\eneq
Since $g_1$  and $f_2$ are continuous, there is $m\ge 2$ such that
\beq\label{Lbt1act-2++}
d_\tau(g_1)<\tau(f_{1/m}(c_0))\rforal \tau\in \overline{T(A)}^w.
\eneq
By \eqref{Lbt1act-2} again, applying
\ref{Lalmstr1}, there is a unitary $u$ in the unitization of $\overline{f_{\dt_2}(a)Af_{\dt_2}(a)}$
 such that
\vspace{-0.12in} \beq\label{Lbt1act-3}
u^*f_{\ep/8}(f_{\ep/8}(a))u\in \overline{f_{1/m}(c_0)Af_{1/m}(c_0)}.
\eneq
Define $c_1=uc_0u^*.$
Then
\beq\label{Lbt1act-4}
f_{\ep/8}(f_{\ep/8}(a))\in \overline{f_{1/m}(c_1)Af_{1/m}(c_1)}\subset  \overline{c_1Ac_1}\overline{f_{\dt/2}(a)Af_{\dt/2}(a)}.
\eneq
There is a $g\in C_0((0,1])$ with $0\le g\le 1$ such that $g(t)\not=0$ for all $t\in (0,1],$
$g(t)f_{1/m}=f_{1/m}.$
Put $e=g(c_1).$
Then
\vspace{-0.1in} \beq\label{Lbt1act-5}
d_\tau(e)=d_\tau(c_1)=g_2(\tau) \rforal \tau\in \overline{T(A)}^w\andeqn\\
f_\ep(a)\le f_{\ep/8}(f_{\ep/8}(a)\le e\le f_{\dt_2/2}(a).
\eneq
Choose $\dt=\dt_2/2.$
\end{proof}

\begin{lem}\label{LLbt}
Let $A$ be as in \ref{Lbt1}. Suppose that $a, b\in A_+$
(with $0\le a \le 1$ and $0\le b\le 1$)  such that neither are Cuntz equivalent
to a projection.  Suppose that $a\ll  b.$
Then there exist $\dt>0$ and $c \in A_+$ with $0\le c\le 1$ such that
\beq\label{LLbt-0}
\la a \ra \le \la f_{\dt}(c)\ra, \,\,\, f_{\dt/2}(c)\le f_{\dt/4}(b)\andeqn
\inf \{\tau(f_{\dt}(c))-d_\tau(a): \tau\in \overline{T(A)}^w\}>0.
\eneq
\end{lem}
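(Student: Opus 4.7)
The plan is to combine the Cuntz-semigroup hypothesis $a\ll b$ with the continuous-rank technology of Lemma~\ref{Lbt2} to extract a uniform spectral gap, then to realize a witness $c\in A$ sitting inside a cutdown of $b$ via strict comparison and R{\o}rdam's Lemma~\ref{Lrorm}.

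First, I unpack $a\ll b$ by applying the definition to the increasing sequence $\la(b-1/n)_+\ra\uparrow\la b\ra$: this yields $\ep_1>0$ with $a\lesssim(b-\ep_1)_+$, the quantitative form of the hypothesis. Next I apply Lemma~\ref{Lbt2} to $b$ to obtain a sequence $b_n\in M_{n(k)}(A)$ with $\la b\ra=\sup_n\la b_n\ra$, each $d_\tau(b_n)$ continuous on $\overline{T(A)}^w$, and $d_\tau(b_n)<d_\tau(b_{n+1})$ pointwise. Continuity on the compact space $\overline{T(A)}^w$ upgrades each pointwise strict inequality to a uniform gap $\eta_n:=\inf_\tau(d_\tau(b_{n+1})-d_\tau(b_n))>0$. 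Using $\la a\ra\le\la(b-\ep_1)_+\ra\ll\la b\ra=\sup_n\la b_n\ra$, choose $n_0$ with $a\lesssim b_{n_0}$; then $d_\tau(a)\le d_\tau(b_{n_0})$, whence $d_\tau(b_{n_0+1})-d_\tau(a)\ge\eta_{n_0}$ uniformly in $\tau$.

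Second, I transport $b_{n_0+1}$ into a cutdown of $b$. Since each $d_\tau(b_m)$ is continuous and $d_\tau(b_{n_0+1})<d_\tau(b_{n_0+2})\le d_\tau(b)$ pointwise, the LSC function $d_\tau((b-1/N)_+)-d_\tau(b_{n_0+1})$ is eventually everywhere positive, and a positive LSC function on a compact set attains a positive minimum; hence for some $N$ one has $d_\tau((b-1/N)_+)>d_\tau(b_{n_0+1})$ uniformly, and strict comparison yields $b_{n_0+1}\lesssim(b-1/N)_+$. Put $\dt:=4/N$, and apply R{\o}rdam's Lemma~\ref{Lrorm}: for any small $\eta''>0$ there is $r$ with $(b_{n_0+1}-\eta'')_+=r^*(b-\dt/4)_+r$. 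Setting $Y:=\sum_k r_{1k}r_{1k}^*\in A$ (the $(1,1)$-corner of $rr^*$, normalized so $\|Y\|\le 1$), define
\begin{equation*}
c\;:=\;(b-\dt/4)_+^{1/2}\,Y\,(b-\dt/4)_+^{1/2}\;\in\;A,
\end{equation*}
so that $c\le(b-\dt/4)_+$ and $c\sim(b_{n_0+1}-\eta'')_+$ in the Cuntz semigroup.

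Finally I verify the conclusions. Condition~(b) is cleanest: from $c\le(b-\dt/4)_+$ the support projection $p_c$ satisfies $p_c\le p_{(b-\dt/4)_+}$, which is the open spectral projection of $b$ on $(\dt/4,1]$; combined with $\|f_{\dt/2}(c)\|\le 1$ and $f_{\dt/2}(c)=p_c\,f_{\dt/2}(c)\,p_c$, this forces $f_{\dt/2}(c)\le p_c\le \mathbbm{1}_{[\dt/4,1]}(b)\le f_{\dt/4}(b)$. For~(a), note that $\la a\ra\ll\la c\ra$ (chaining $a\lesssim b_{n_0}\lesssim(b_{n_0+1}-2\eta'')_+$ with the standard fact $(x-\eta)_+\ll x$), so $a\lesssim(c-\dt_0)_+$ for some $\dt_0>0$; shrinking $\dt$ further if necessary so that $\dt/2\ge\dt_0$ yields $\la a\ra\le\la f_\dt(c)\ra$. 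For~(c) I chain the uniform gap $d_\tau(b_{n_0+1})-d_\tau(a)\ge\eta_{n_0}$ with the estimate $\tau(f_\dt(c))\approx d_\tau(c)\approx d_\tau(b_{n_0+1})$. The main obstacle is precisely this last comparison: ensuring $\tau(f_\dt(c))$ is uniformly close to $d_\tau(c)$, i.e., that the spectral mass of $c$ in $(0,\dt)$ is uniformly small over $\tau$. This will require a careful refinement of $Y$ (for instance, passing to a suitable $f_{2\dt}(c)$-type cutdown and starting from $b_{n_0+2}$ rather than $b_{n_0+1}$ to absorb the loss), so that the residual deviation is swallowed by the uniform gap $\eta_{n_0}/2$.
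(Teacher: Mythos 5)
Your overall strategy is the same as the paper's (Lemma \ref{Lbt2}, the way-below relation to get $a\lesssim b_{n_0}$, then compactness/Dini plus strict comparison), and your verifications of the first two conclusions are essentially sound, apart from a sign slip: to pass from $a\lesssim (c-\delta_0)_+$ to $a\lesssim f_\delta(c)\sim (c-\delta/2)_+$ you need $\delta/2\le \delta_0$, not $\delta/2\ge\delta_0$ (shrinking $\delta$ is still the right move). The genuine gap is that the lemma's central conclusion, $\inf\{\tau(f_\delta(c))-d_\tau(a):\tau\in\overline{T(A)}^w\}>0$, is exactly the one your construction does not deliver. Your $c=(b-\delta/4)_+^{1/2}Y(b-\delta/4)_+^{1/2}$ is only Cuntz equivalent to $(b_{n_0+1}-\eta'')_+$, and Cuntz equivalence controls $d_\tau(c)$ but gives no information about $\tau(f_\delta(c))$; worse, normalizing $Y$ rescales the spectrum of $c$ downward by the factor $\|Y\|$, which the R{\o}rdam-type factorization does not bound, so essentially all the spectral mass of $c$ could sit below $\delta$. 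You concede this yourself ("the main obstacle\dots"), and the proposed remedy (refine $Y$, start from $b_{n_0+2}$) is not carried out. Note also that the natural repair, namely bounding $\tau(f_\delta(c))\ge d_\tau((c-\delta)_+)$, identifying $(c-\delta)_+$ with a further cutdown of $b_{n_0+1}$ via the $xx^*$ versus $x^*x$ trick, and then taking the infimum, runs into a semicontinuity trap: $d_\tau((c-\delta)_+)-d_\tau(a)$ is a difference of two lower semicontinuous functions, so pointwise positivity on the compact set does not yield a positive infimum. One must interpose the continuous function $d_\tau(b_{n_0})\ge d_\tau(a)$ and then rechoose $\eta''$ and $\delta$ against the (now fixed) constant $\|Y\|$; none of this appears in your sketch.

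For comparison, the paper sidesteps the problem by taking $c$ to be (a unitary conjugate of) $b_{n_0+1}$ itself: stable rank one (via Lemma \ref{Lalmstr1}, after the comparison $b_{n_0+1}\lesssim f_{1/2n}(b)$ obtained from the same Dini argument) is used to conjugate a cutdown of $b_{n_0+1}$ into the hereditary subalgebra of $f_{\delta/4}(b)$, which gives $f_{\delta/2}(c)\le f_{\delta/4}(b)$, and then both $\tau\mapsto\tau(f_{1/n}(c))$ and $\tau\mapsto d_\tau(b_{n_0})$ are continuous on the compact set $\overline{T(A)}^w$, so compactness immediately produces the uniform gap in the third condition. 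If you rebuild your argument so that $c$ is such a conjugate of $b_{n_0+1}$, or carry out the repair indicated above in full (with the order of choices of $\eta''$, $\|Y\|$ and $\delta$ made explicit), the proof closes; as submitted it does not establish the third condition.
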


\begin{proof}
By \ref{Lbt2}, choose $b_n\in A_+$ such that $\{b_n\}$ satisfies {\rm (1)}, {\rm (3)} and {\rm (4)}
in \ref{Lbt2}.  Since $a\ll b,$ there is $n_0\ge 1$ such that
$\la a\ra \le b_n$ for all $n\ge n_0.$
Therefore we have
\beq\label{LLbt-1}
d_\tau(a)\le d_{\tau}(b_{n_0})<d_\tau(b_{n_0+1})<d_\tau(b_{n_0+2})<d_\tau(b_{n_0+3})\le d_\tau(b).
\eneq
Note that
$$
\tau(f_{1/n}(b))\nearrow d_\tau(b)\andeqn \tau(f_{1/n}(b_{n_0+1}))\nearrow d_\tau(b_{n_0+1}) .
$$
It follows, for example,  from 5.4 of \cite{Lnloc} that there exists $n_1\ge 1$ such that,
for all $n\ge n_1,$
$$
\tau(f_{1/n}(b))>d_\tau(b_{n_0+2})\andeqn
\tau(f_{1/n}(b_{n_0+1}))>d_{\tau}(b_{n_0}))\rforal \tau\in \overline{T(A)}^w.
$$
By the strict comparison of positive element, we conclude that
$$
\la f_{1/2n}(b)\ra \ge \la b_{n_0+2} \ra \andeqn  \la f_{1/2n}(b_{n_0+1})\ra \ge \la b_{n_0} \ra .
$$
Put $c=b_{0+1}.$
Since $A$ has stable rank one, one may assume  that
$f_{1/2n}(c)\le f_{1/4n}(b).$
Thus we may choose $0<\dt<1/2n_1.$

Since $\overline{T(A)}^w$ is compact and both
functions in the above inequality are continuous, combining with \eqref{LLbt-1}, we obtain
\vspace{-0.12in} $$
\inf\{\tau(f_\dt(b))-d_\tau(a): \tau\in \overline{T(A)}^w\}>0.
$$

\end{proof}

\begin{thm}[see Theorem 3.3.1 of \cite{Rl}, Theorem 5.2.7 of \cite{Lncbms} and Theorem 8.4 of \cite{GLN}] \label{Lrluniq}
Let $C$ be
a full hereditary \SCA s of
1-dimensional non-commutative complices with $K_1(C)=\{0\}.$
Assume that
$0\not\in \overline{T(C)}^w$
and  let
$\Delta: C^{q,{\bf 1}}\setminus \{0\}\to (0,1)$ be an order preserving map.
Then, for any $\ep>0$ and any finite subset
${\cal F}\subset C,$ there exists a finite subset ${\cal G}\subset C,$ a finite subset ${\cal P}\subset K_0(C),$
a finite subset ${\cal H}_1\subset C_+^{\bf 1}\setminus \{0\},$ a finite subset ${\cal H}_2\subset C_{s.a.},$
$\dt>0,$ $\gamma>0$  satisfying the following:
for any two ${\cal G}$-$\dt$-multiplicative \morp s $\phi_1, \phi_2: C\to A$ for some $A$
which is $\sigma$-unital, simple exact, with strict comparison for positive elements, almost has
stable rank one, is quasi-compact, and the map $W(A)_+\to {\rm LAff}_{b+}(\overline{T(A)}^w)$ is surjective,   such that
\beq\label{Lrluniq-1}
&&[\phi_1]|_{\cal P}=[\phi_2]|_{\cal P},\\
&&\tau(\phi_i)(a)\ge \Delta(\hat{a})\rforal a\in {\cal H}_1\andeqn \rforal \tau\in \overline{T(A)}^w\andeqn\\
&&|\tau(\phi_1(b))-\tau(\phi_2(b))|<\gamma\rforal b\in {\cal H}_2\andeqn \rforal \tau\in \overline{T(A)}^w,
\eneq
there exists a unitary $u\in {\tilde A}$ such that
$$
\|u^*\phi_2(f)u-\phi_1(f)\|<\ep\rforal f\in {\cal F}.
$$

\end{thm}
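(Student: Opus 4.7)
The plan is to reduce the statement to the stable uniqueness theorem \ref{Lauct2} and then cancel the stabilizing summand using the regularity hypotheses on $A$. Since $K_1(C)=\{0\}$, Remark \ref{RRLuniq} tells us that no unitary comparison data enters, and the $K_0$-hypothesis in \eqref{Lrluniq-1} provides precisely the input required by \ref{Lauct2}. The essential content of the theorem is therefore the construction of a suitable ``small'' stabilizing map together with a cancellation step that allows us to return from an approximate unitary equivalence modulo $S$ to one on the nose in $\widetilde A$.

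First I would construct an approximately multiplicative map $\sigma\colon C\to M_l(A)$ to serve as the stabilizing piece. Composing the quotient $\pi_e\colon C\to F_1$ with point-evaluations at finitely many interior points of the spectrum, one obtains a finite-dimensional approximation $\pi\colon C\to F$ detecting every element of ${\cal H}_1$ up to an error controlled by $\Delta$. Using that $\iota\colon W(A)_+\to{\rm LAff}_{b+}(\overline{T(A)}^w)$ is surjective, together with Lemmas \ref{Lbt1ACT} and \ref{LLbt}, we can select $b\in A_+$ so that $\tau\mapsto d_\tau(b)$ is continuous on $\overline{T(A)}^w$ and as small as we like; embedding $F\hookrightarrow M_l(\overline{bAb})$ via Proposition \ref{str=1}(4) yields $\sigma$. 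By Theorem \ref{Tqcfull} the resulting $\sigma$ is exactly $T$-${\cal H}_1$-full for a map $T$ depending only on $C,\Delta,{\cal H}_1$ (not on $A$). Applying \ref{Lauct2} (specialized as in \ref{CLuniq} to the present $K$-theoretic setting) and using that $A$ almost has stable rank one, we obtain an integer $K\ge 1$ and a unitary $U\in\widetilde{M_{1+Kl}(A)}$ such that
\[
\bigl\|U^{*}(\phi_1\oplus S)(f)U-(\phi_2\oplus S)(f)\bigr\|<\ep/3\rforal f\in{\cal F},
\]
where $S=\sigma^{\oplus K}$; by Remark \ref{Rstuniq} we may further place $U$ in the unitization of a $\sigma$-unital hereditary \SCA\, containing $\phi_1(C),\phi_2(C)$ and $S(C)$.

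The main obstacle is the cancellation of $S$. The idea is to use the tracial lower bound $\tau(\phi_i(a))\ge\Delta(\hat a)$ on ${\cal H}_1$ together with strict comparison in $A$ to realize (a Cuntz-equivalent copy of) $S$ as a subobject of $\phi_1$ itself. Concretely, taking $b$ small enough when constructing $\sigma$ ensures $d_\tau(S(c))<d_\tau(\phi_1(c'))$ uniformly on $\overline{T(A)}^w$ for appropriate strictly positive $c\in C,c'\in C$, so by strict comparison applied inside the full hereditary \SCA\, $\overline{\phi_1(C)A\phi_1(C)}$ we get a Cuntz inequality $\la S(\cdot)\ra\le\la\phi_1(\cdot)\ra$ on a dense set. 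Combined with Lemma \ref{LLbt} and almost stable rank one (via \ref{Lalmstr1}), this produces an element $V\in A\otimes{\cal K}$ implementing an approximate Murray--von Neumann equivalence between $S$ and a piece of $\phi_1$ sitting orthogonally inside $\overline{\phi_1(C)A\phi_1(C)}$. Composing $U$ with the conjugation by $1\oplus V$ and absorbing the image back into the first coordinate yields a unitary $u\in\widetilde A$ with $\|u^{*}\phi_2(f)u-\phi_1(f)\|<\ep$ for all $f\in{\cal F}$.

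The delicate part of the argument is the simultaneous choice of the parameters ${\cal G},\dt,{\cal P},{\cal H}_1,{\cal H}_2,\gm$: they must be chosen first so that the stable uniqueness input to \ref{Lauct2} is satisfied (this fixes $K$ and a lower bound on how small the trace of $\sigma$ must be), and then refined so that the Cuntz-comparison estimates needed for the absorption step hold with the tolerance dictated by ${\cal F}$ and $\ep$. The tracial closeness condition $|\tau(\phi_1(b))-\tau(\phi_2(b))|<\gm$ on ${\cal H}_2$ is used precisely to ensure that this absorption can be performed symmetrically for $\phi_1$ and $\phi_2$, so that the stable unitary $U$ and the absorption unitary combine coherently into a single $u\in\widetilde A$.
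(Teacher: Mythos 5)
Your reduction to the stable uniqueness theorem cannot be made to work, and the difficulty is not a technicality but the central point of the statement. First, the hypotheses of \ref{Lauct2} and its corollaries are not met in this setting: \ref{Lauct2} requires the codomain to lie in a class ${\bf C}_{(r_0,r_1,T,s,R)}$ (with cancellation and exponential rank bounds), \ref{CLuniq} requires $K_0(C)=K_1(C)=\{0\}$, and \ref{CCLuniq} requires $K_i(A)=\{0\}$; here only $K_1(C)=\{0\}$ is assumed, $K_0(C)$ may be nonzero, and $A$ is merely simple, exact, quasi-compact, with strict comparison, almost stable rank one and surjective $W(A)_+\to{\rm LAff}_{b+}(\overline{T(A)}^w)$. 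Second, and more fundamentally, the cancellation step fails. Stable uniqueness gives $U^*(\phi_1\oplus S)U\approx \phi_2\oplus S$ with $S=\sigma^{\oplus K}$ full, but a Cuntz subequivalence $\la S(c)\ra\le\la \phi_1(c')\ra$ does not let you absorb the summand $S$ into $\phi_1$: conjugating $\phi_1\oplus S$ by a partial isometry into a corner of $A$ produces a \cpc\, map whose tracial data is $\tau\circ\phi_1+\tau\circ S$, and since $A$ is simple and $S$ is full this differs from $\tau\circ\phi_1$ by a definite amount on elements of ${\cal H}_1$ (fullness with the fixed constants $T$ even bounds $\|S(c)\|$ below), so the resulting map is not $\ep$-close to $\phi_1$ on ${\cal F}$ and no unitary in ${\tilde A}$ can repair this. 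In the places in this paper where a stabilizer is genuinely cancelled (e.g.\ the proof of \ref{TTMW}), this is possible only because the maps being compared are themselves decomposed, via the ${\cal D}$-structure of the domain, as a small piece plus many repeated copies of the very map used as the stabilizer; no such multiplicity structure is available for arbitrary $\phi_1,\phi_2:C\to A$ here.

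The paper's proof takes an entirely different route: using weak semiprojectivity of $C$ (\cite{ELP1}) one perturbs the ${\cal G}$-$\dt$-multiplicative maps $\phi_i$ to honest \hm s $\psi_i$ agreeing with $\phi_i$ on ${\cal F}\cup{\cal H}_1\cup{\cal H}_2$ up to small error and with $(\psi_i)_{*0}|_{\cal P}=[\phi_i]|_{\cal P}$; then the tracial hypotheses \eqref{Lrluniq-1}, fed through \ref{Lbt2} and \ref{LLbt} (this is where strict comparison, surjectivity of $W(A)_+\to{\rm LAff}_{b+}$ and almost stable rank one are used), are converted into the Cuntz comparisons $Cu^{\sim}(\psi_1)(g)\le Cu^{\sim}(\psi_2)(g')$ for $g\ll g'$ in the finite subset $G\subset Cu^{\sim}(C)$ demanded by Robert's uniqueness theorem (Theorem 3.3.1 of \cite{Rl}), which then directly produces the unitary $u\in{\tilde A}$. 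If you want to keep your outline, you would have to replace the cancellation step by this $Cu^{\sim}$-argument, at which point the stable uniqueness machinery is no longer needed for this statement.
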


\begin{proof}
By the terminology in \cite{Rl},  as proved in
\cite{Rl}, $Cu^{\sim}$ classifies \hm s from $C.$

Upon examining the proofs of \cite{Rl}, one sees that Lemma 3.3.1
and Theorem 1.0.1 of \cite{Rl} hold for $B$ almost having stable rank one
instead of having stable rank one as in \ref{Lalmstr1}.

Fix $\ep$ and ${\cal F}\subset C.$
Let $G\subset Cu^{\sim}(C)$ be required by Theorem 3.3.1 of \cite{Rl}  for
$\ep/2$ (in place of $\ep$) and ${\cal F}.$

Note that $C$ has stable rank one, as computed in \cite{Rl}, we may assume
that $G$ consists a finite subset ${\cal P}\subset K_0(C)$
and a finite subset $\{x_1, x_2,...,x_m\}$ in Cuntz group
which can be represented by positive elements
$0\le a_i\le 1$ in $C\otimes {\cal K}$ which are  not Cuntz equivalent to a projection,
$i=1,2,...,m$ (see 3.1.2 of \cite{Rl}).  Put ${\cal P}=\{z_1, z_2,..., z_{m_0}\}.$
Note if $a_i\le z_l$ (for some $1\le l\le m_0$), then $z_l\ge 0$ and
is therefore represented by a projection.

Suppose that $\la a_i\ra \ll \la a_j \ra.$
Then, by \ref{LLbt},  there is a number  $1/4 > \eta(i,j)>0$   and  an element
$0\le c_{i,j}\le 1$ in $A$ such that
\beq\label{Lrluniq-3}
\la a_i \ra \le  \la  f_{\eta_{i,j}}(c_{i,j})\ra\andeqn  f_{\eta_{i,j}/2}(c_{i,j})\le f_{\eta_{i,j}/4}(a_j).
\eneq
Since $a_i$ ($i=1,2,...,m$) is not Cuntz equivalent to a projection
(and assume that $x_i\not=x_j$ if $i\not=j$),
we may choose $\eta(i,j)$ so that
$$
f_{\eta_{i,j}/4}(a_j)-f_{\eta_{i,j}/2}(c_{i,j})\not=0.
$$
Choose  a finite subset ${\cal H}_1\subset C_+$
which contains  nonzero positive elements $b_{i,j}$ such that
$$
b_{i,j}\lesssim f_{\eta_{i,j}/4}(a_j)-f_{\eta_{i,j}/2}(c_{i,j}).
$$
for all possible pair of $i,j$ so that $\la a_i\ra \ll  \la a_j \ra.$
Moreover, if $a_i\ll  z_l,$

Let
\vspace{-0.12in} \beq\label{Lrluniq-4}
\dt_0=\inf \{ \Delta(\hat{g}): g\in {\cal H}_1\}.
\eneq

Let ${\cal H}_2'$ be a finite subset  (possibly in $C\otimes {\cal K})_+$) which contains
$f_{\eta_{i,j}}(c_{i,j}), f_{\eta_{i,j}/2}(c_{i,j}),
f_{\eta_{i,j}/4}(a_i) $ for all possible $i,j$ as described above.

Let ${\cal H}_2\subset C_{s.a.}$ be a finite subset
which contains ${\cal H}_1$ and let
$\dt_1>0$ which have   the following
property:
\beq\label{Lrluniq-4+}
|\tau(h_1(g))-\tau(h_2(g))|<\dt_0/16\tforal g\in {\cal H}_1\cup {\cal H}_2'
\eneq
and for all $\tau\in \overline{T(B)}^w,$
provided that $h_1, h_2: C\to B$ are two \hm s and $B$ is any \CA\,
with $T(C)\not=\emptyset$ and $0\not\in \overline{T(B)}^w$ such that
\beq\label{Lrluniq-4++}
|\tau\circ h_1(f)-\tau\circ h_2(f)|<\dt_1\rforal f\in {\cal H}_2\andeqn \tau\in \overline{T(B)}^w.
\eneq

Put $\gamma=\min\{\dt_0/16, \dt_1/4\}.$
Since $C$ is weakly semiprojective (\cite{ELP1}),
by choosing a large ${\cal G}$ and small $\dt,$ we may assume
that  there are \hm s $\psi_i: C\to A$ such that
\beq\label{Lrluniq-5}
(\psi_i)_{*0}|_{\cal P}=[\phi_i]|_{\cal P}\andeqn \|\psi_i(g)-\phi_i(g)\|<\min\{\ep/16, \gamma\},\,\,\,i=1,2,
\eneq
for all $g\in {\cal F}\cup{\cal H}_1\cup {\cal H}_2,$
provided that $\phi_1$ and $\phi_2$ are ${\cal G}$-$\dt$-multiplicative \cpc s from $C$ to any $A$
which satisfies the assumption of this theorem.

Now assume that $\phi_1, \phi_2: C\to A$ have  the described properties for the above
defined ${\cal G},$ $\dt,$  ${\cal P},$ ${\cal H}_1, $ ${\cal H}_2,$ $\gamma.$

Let $\psi_i: C\to A$ be provided as in \eqref{Lrluniq-5},\,\,\, $i=1,2.$
Then
\beq\label{Lrluniq-6}
&&(\psi_1)_{*0}|_{\cal P}=(\psi_2)_{*0}|_{\cal P},\\
&&\tau\circ \psi_i(g)\ge \dt_0/2\rforal g\in {\cal H}_1\andeqn\\
&&|\tau\circ \psi_1(b)-\tau\circ \psi_2(b)|<\dt_0/2\rforal b\in {\cal H}_2
\eneq
for all $\tau\in \overline{T(A)}^w.$
In particular,  if $a_i\ll a_j,$  by the choice of ${\cal H}_1,$ ${\cal H}_2$ and $\gamma$ above,
\beq\label{Lrluniq-7}
d_\tau(\psi_1(a_i)) &\le& \tau(\psi_1(f_{\eta_{i,j}}(c_{i,j})))<\dt_0/2+\tau(\psi_2(f_{\eta_{i,j}}(c_{i,j})))\\
&\le  &  (\tau(\psi_2(f_{\eta_{i,j}/4}(a_j))-\tau(\psi_2(f_{\eta_{i,j}}(c_{i,j}))))+\tau(\psi_2(f_{\eta_{i,j}}(c_{i,j})))\\
&\le & d_{\tau}(a_j)
\eneq
for all $\tau \in \overline{T(A)}^w.$
Therefore, if $a_i\ll a_j,$
\beq\label{Lruniq-8}
\la \psi_1(a_i)\ra \le \la \psi_2(a_j) \ra.
\eneq
Note also, if $\la a_i\ra \ll z_l,$ then $\la \psi_1(a_i)\ra \ll Cu^{\sim}(\psi_1)(z_l)=Cu^{\sim}(\psi_2)(z_l).$
Combing these with \eqref{Lrluniq-6}, we conclude that, using the terminology in \cite{Rl}
\beq\label{Lruniq-9}
Cu^{\sim}(\psi_1(g))\le Cu^{\tilde {}}(\psi_2(g')\andeqn Cu^{\sim}(\psi_2(g))\le Cu^{\sim}(\psi_1(g'))
\eneq
for all $g,\, g'\in G$ and $g\ll g'.$  By 3.3.1 of \cite{Rl}, there exists a unitary
$u\in A$ such that
$$
\|u^* \psi_2(f)u-\psi_1(f)\|<\ep/2\rforal f\in {\cal F}.
$$
From this and \eqref{Lrluniq-5}, we obtain that
$$
\|u^*\phi_2(f)u-\phi_2(f)\|<\ep\rforal f\in {\cal F}.
$$
\end{proof}

\begin{cor}\label{Cruniq}
The exactly the same statement holds for $C$ being replaced by
full hereditary \SCA s of inductive limits of \CA s in ${\cal C}.$
\end{cor}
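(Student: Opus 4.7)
\textbf{The plan} is to bootstrap Theorem~\ref{Lrluniq} from an NCCW--hereditary algebra to its inductive-limit analogue by a standard weak-semiprojectivity and approximation argument, leveraging the semiprojectivity of algebras in ${\cal C}$ (Proposition~\ref{str=1}(3)) and the fact that the hypotheses of Theorem~\ref{Lrluniq} (strict comparison, almost stable rank one, surjectivity of $\iota$, quasi-compactness, $0\notin\overline{T(A)}^w$) are all preserved by the construction.

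Write $C$ as a full hereditary $\sigma$-unital subalgebra of $X=\varinjlim(X_n,\iota_n)$ with $X_n\in{\cal C}$, and fix a strictly positive $b\in C_+$ with $\|b\|\le 1$. Choose $\tilde b_n\in X_n$ with $\iota_{n,\infty}(\tilde b_n)\to b$, and set $C_n:=\overline{\iota_{n,\infty}(\tilde b_n)\,X\,\iota_{n,\infty}(\tilde b_n)}$; these form an increasing chain exhausting $C$. Each $C_n$ is isomorphic to a hereditary subalgebra of a $C^*$-subalgebra of $X_n$, which---after absorbing the approximant into a full hereditary subalgebra of an appropriate algebra still in ${\cal C}$---lies in ${\cal C}'$.

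Given $\ep>0$ and a finite ${\cal F}\subset C$, pick $n$ so that ${\cal F}$ is within $\ep/4$ of a finite ${\cal F}_n\subset C_n$; using $K_1(C)=\{0\}$ and cofinality, we may further enlarge $n$ so that the $K_1$-elements appearing in Robert's Cuntz-semigroup decomposition of $C_n$ are killed by the inclusion $C_n\hookrightarrow C$, hence by any map out of $C$ restricted to $C_n$. Apply \ref{Lrluniq} to $C_n$, $\ep/4$, and ${\cal F}_n$ (with $\Delta$ induced from $C$) to obtain data $({\cal G}_n,\dt_n,{\cal P}_n,{\cal H}^{(n)}_1,{\cal H}^{(n)}_2,\gamma_n)$. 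Define the corollary's data by ${\cal G}:={\cal G}_n\cup{\cal F}$, $\dt:=\dt_n/2$, ${\cal P}:=$ image of ${\cal P}_n$ in $K_0(C)$, ${\cal H}_i:={\cal H}^{(n)}_i$, and $\gamma:=\gamma_n$. For any ${\cal G}$-$\dt$-multiplicative $\phi_1,\phi_2:C\to A$ satisfying the corollary's hypotheses, the restrictions $\phi_i|_{C_n}$ are ${\cal G}_n$-$\dt_n$-multiplicative and inherit the $K_0$-agreement on ${\cal P}_n$ (since $K_0(C_n)\to K_0(C)$ carries ${\cal P}_n$ to ${\cal P}$), the trace lower bound on ${\cal H}^{(n)}_1$, and the $\gamma$-closeness on ${\cal H}^{(n)}_2$. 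Theorem~\ref{Lrluniq} then yields a unitary $u\in\tilde A$ with $\|u^*\phi_2(f)u-\phi_1(f)\|<\ep/4$ for all $f\in{\cal F}_n$, and the triangle inequality together with ${\cal F}_n\approx_{\ep/4}{\cal F}$ gives the desired estimate within $\ep$ on ${\cal F}$.

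\textbf{The main obstacle} is the structural verification that each $C_n\in{\cal C}'$ and that the $K_1$-contribution in the Cuntz-semigroup decomposition of $C_n$ can be trivialized. The first is a consequence of the structure of NCCWs---a suitably chosen hereditary subalgebra is full in the ideal it generates, which is itself essentially in ${\cal C}$---and the second uses $K_1(C)=\{0\}$ together with the cofinality of the inductive system to push any offending $K_1$-element to zero at a later stage. Once these two structural points are in place, the corollary follows from \ref{Lrluniq} via the approximate-intertwining scheme described above.
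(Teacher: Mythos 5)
The paper itself states this corollary without proof (the intended reading being that the argument of \ref{Lrluniq} goes through, with Robert's Theorems 1.0.1 and 3.3.1 of \cite{Rl} already valid for inductive limits and the weak-semiprojectivity step carried out at a finite stage), so your proposal has to stand on its own; as written it has a genuine gap. The decisive problem is your structural claim about the approximants: with $\tilde b_n\in X_n$, the algebra $C_n:=\overline{\iota_{n,\infty}(\tilde b_n)\,X\,\iota_{n,\infty}(\tilde b_n)}$ is a hereditary subalgebra of the \emph{limit} $X$, not of (the image of) $X_n$; being generated by an element coming from a finite stage does not place it inside that stage. In fact $C$ itself is of exactly this form, so if your claim ``$C_n$ lies in ${\cal C}'$ after absorbing the approximant'' were correct, the corollary would already be an instance of \ref{Lrluniq} and the argument is circular. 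Consequently the application of \ref{Lrluniq} to $C_n$ is unjustified. The cut-downs one can actually control are of the form $D_n=\overline{c_n\,\iota_{n,\infty}(X_n)\,c_n}$ with $c_n$ a functional-calculus perturbation of $\tilde b_n$ pushed into the image of the building block; one then needs the standard local-approximation fact (cf. \ref{PM0her} and the perturbation lemmas \ref{Lrorm}, \ref{LRL}) that every finite subset of $C$ is, up to a small twist, contained in such a $D_n$, together with a check that $D_n$ is full and $0\notin\overline{T(D_n)}^w$, before any finite-stage uniqueness statement can be invoked.

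A second, independent issue is your treatment of $K_1$. Theorem \ref{Lrluniq} assumes $K_1=\{0\}$ for the \emph{domain algebra}; a finite-stage algebra $D_n$ (or your $C_n$) may well have $K_1(D_n)\neq\{0\}$ even though $K_1(C)=\{0\}$, since the $K_1$ of the building blocks may only die in the limit. Saying that the offending $K_1$-classes are ``killed by the inclusion, hence by any map out of $C$ restricted to $C_n$'' does not license quoting \ref{Lrluniq} as a black box, because its hypothesis is on the algebra, not on the maps; to handle this one must either restrict to limits of blocks with trivial $K_1$ (which covers the paper's applications, e.g. ${\cal C}_0$ and ${\cal M}_0$) or rerun the proof of \ref{Lrluniq} at the level of $Cu^\sim$ of the limit, adding exponential-length/unitary control of the type appearing in \ref{Lauct2}, rather than cite the theorem verbatim at a finite stage. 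With the approximating subalgebras corrected as above and the $K_1$ point addressed, your reduction scheme (push the finite data to a weakly semiprojective stage, perturb to homomorphisms, apply Robert's uniqueness, transfer by the triangle inequality) is the right skeleton and matches what the paper intends.
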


\begin{cor}\label{CDdiag}
Let $C\in {\cal C}_0'$ and $A$ be a separable simple exact \CA\,  with strict comparison for positive elements, 
with $K_0(A)=\{0\},$   with stable rank one and with continuous scale.
Suppose also that $W(A)={\rm LAff}_{b+}(T(A)).$ 
Suppose that $\phi: C\to A$ is a \hm. Then, for any $\ep>0,$  any finite subset 
${\cal F}\subset C,$  and any integer $n\ge 1,$ there is another \hm\, $\phi_0: C\to B\subset  M_n(B)\subset A$ 
such that
\beq
\|\phi(x)-\diag(\overbrace{\phi_0(x), \phi_0(x),...,\phi_0(x)}^n)\|<\ep\rforal x\in {\cal F}.
\eneq
\end{cor}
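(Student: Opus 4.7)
The plan is to apply the uniqueness theorem \ref{Lrluniq} to $\phi$ and an $n$-fold inflation $d_n\circ\phi_0$ of an auxiliary homomorphism $\phi_0:C\to B_0$, then to absorb the resulting unitary equivalence into the choice of hereditary subalgebra $B$. Since $K_0(A)=\{0\}$ and $K_1(C)=\{0\}$ (as $C\in{\cal C}_0'$), the $K$-theoretic hypothesis of \ref{Lrluniq} is trivially satisfied: $(\phi)_{*0}=(d_n\circ\phi_0)_{*0}=0$ on $K_0(C).$ The real content lies in producing $\phi_0$ with the correct tracial data.

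First I would set up the framework. Define $\Delta:C^{q,{\bf 1}}\setminus\{0\}\to(0,1)$ by $\Delta(\hat{h})=\tfrac{1}{2n}\inf\{\tau(\phi(h)):\tau\in\overline{T(A)}^w\},$ which is positive because $A$ is simple (so $\phi(h)\neq 0$ gives $\tau(\phi(h))>0$ for all $\tau$) and $\overline{T(A)}^w$ is compact (by continuous scale and \ref{compactrace}). Let $(\dt,{\cal G},{\cal P},{\cal H}_1,{\cal H}_2,\gamma)$ be the data furnished by \ref{Lrluniq} for $C$, this $\Delta$, $\ep,$ and ${\cal F}.$ Using $W(A)\cong\mathrm{LAff}_{b+}(T(A))$ together with strict comparison and stable rank one, pick $e\in A_+$ with $d_\tau(e)<\Sigma_A(\tau)/n$ for all $\tau\in T(A)$; strict comparison and \ref{Lalmstr1} then produce $n$ pairwise orthogonal elements of $A$ each Cuntz equivalent to $e$, together with partial isometries implementing the equivalences, yielding an embedding $\iota:M_n(B_0)\hookrightarrow A$ where $B_0=\overline{eAe}.$ Choosing $d_\tau(e)$ suitably close to $\Sigma_A(\tau)/n$ ensures that $B_0$ is large enough to accommodate a $^*$-homomorphism from $C$ realizing the functional $(1/n)\,\tau\circ\phi.$

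Next I would construct $\phi_0:C\to B_0$ so that $|n\tau(\phi_0(c))-\tau(\phi(c))|<\gamma$ for $c\in{\cal H}_2$ and $\tau(\phi_0(h))\ge\Delta(\hat{h})$ for $h\in{\cal H}_1$, uniformly over $\tau\in\overline{T(A)}^w.$ Writing $C$ as a full hereditary subalgebra of some $A(F_1,F_2,\psi_0,\psi_1)\in{\cal C}_0,$ one uses divisibility (from $W(A)\cong\mathrm{LAff}_{b+}$ and stable rank one) to split matrix unit systems of $F_1$ and $F_2$ into $n$ orthogonal pieces of equal trace inside $B_0$, and then assembles these into a homomorphism of the mapping-torus structure via a suitable connecting path in $C([0,1],F_2\otimes B_0)$; semiprojectivity of $C$ (Proposition \ref{str=1}(3)) promotes the resulting approximately multiplicative data to an honest $^*$-homomorphism. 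Having $\phi_0,$ Theorem \ref{Lrluniq} yields a unitary $u\in\tilde{A}$ with $\|u^*\phi(x)u-d_n(\phi_0(x))\|<\ep$ for all $x\in{\cal F}.$ Setting $B=u^*B_0u$ (a hereditary C*-subalgebra of $A$ since $u\in\tilde{A}$ normalizes $A,$ isomorphic to $B_0$), the conjugated embedding $\mathrm{Ad}(u^*)\circ\iota:M_n(B)\hookrightarrow A$ together with $\phi_0'=\mathrm{Ad}(u^*)\circ\phi_0:C\to B$ deliver the required estimate $\|\phi(x)-d_n(\phi_0'(x))\|<\ep$ for $x\in{\cal F}.$

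The main obstacle is the existence step for $\phi_0.$ Producing a genuine $^*$-homomorphism that realizes a tracial scaling by $1/n$ while respecting the mapping-torus relations of $C$ requires carefully coordinating the divisibility of $A$ (splitting into $n$ equal tracial pieces at the level of matrix units) with the coupling imposed by the NCCW structure. The tools are the regularity of $A$ (strict comparison, stable rank one, $W(A)\cong\mathrm{LAff}_{b+}$) combined with the semiprojectivity of $C$, but simultaneously matching traces uniformly on ${\cal H}_2$ and arranging the lower bound $\Delta$ from \ref{Lrluniq} is the technical heart of the proof.
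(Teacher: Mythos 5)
Your overall strategy (produce a homomorphism $\phi_0$ realizing the tracial data $\tfrac1n\,\tau\circ\phi$ and then invoke a uniqueness theorem, absorbing the unitary into the choice of $B$) is the right shape, but the step you yourself flag as ``the technical heart'' is a genuine gap, and it is precisely the step the paper handles by a tool you did not use. Your sketch for constructing $\phi_0$ --- splitting matrix unit systems of $F_1$ and $F_2$ into $n$ tracially equal orthogonal pieces inside $B_0$, joining them by a path in $C([0,1],F_2\otimes B_0)$, and then invoking semiprojectivity --- does not constitute a proof: semiprojectivity of $C$ only upgrades an already given approximately multiplicative map to a \hm, it does not produce the approximately multiplicative map in the first place, and building one that is compatible with the mapping-torus boundary conditions $f(0)=\psi_0(g)$, $f(1)=\psi_1(g)$ while carrying the prescribed traces is exactly an existence theorem of the type you would have to prove from scratch. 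In other words, the existence of $\phi_0$ with $n\,\tau\circ\phi_0\approx\tau\circ\phi$ is asserted, not established.

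The paper closes this gap by using Robert's classification \cite{Rl} for both halves of the argument. Since $K_0(A)=\{0\},$ the scaled map $\lambda=\tfrac1n\,Cu(\phi)$ is a legitimate morphism at the level of $Cu^{\sim},$ and Theorem 1.0.1 of \cite{Rl} (existence) yields a \hm\, $\phi_0':C\to B_1=\overline{e_1Ae_1}$ with $Cu^{\sim}(\phi_0')=\lambda,$ where $e_1,\dots,e_n$ are mutually orthogonal with $\la e_i\ra=\la e_1\ra$ and $\la\sum_{i=1}^n e_i\ra=\la e\ra$ (this is where $W(A)={\rm LAff}_{b+}(T(A))$ and strict comparison enter, much as in your choice of $e$). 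Then the diagonal map $\psi=\diag(\phi_0',\dots,\phi_0')$ satisfies $Cu^{\sim}(\psi)=Cu^{\sim}(\phi),$ and Robert's uniqueness theorem gives approximate unitary equivalence of $\phi$ and $\psi$ directly, with no need for the $\Delta$-lower bounds, the finite sets ${\cal H}_1,{\cal H}_2,$ or the $\gamma$-trace-matching bookkeeping of \ref{Lrluniq}. Your uniqueness step via \ref{Lrluniq} would work once a correct $\phi_0$ is in hand (with $K_0(A)=\{0\}$ the $K$-theory condition is vacuous, as you note), but as written the proposal is incomplete: to repair it, replace the hand construction of $\phi_0$ by an appeal to the existence part of \cite{Rl}, which is available exactly because $K_0(A)=\{0\}$ removes the obstruction to dividing $Cu(\phi)$ by $n.$
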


\begin{proof}
Fix a strictly positive element $e\in A_+$ with $\|e\|=1.$ 
There are  mutually orthogonal elements $e_1, e_2,...,e_n\in A_+$ 
such that $\la e_i\ra =\la e_1\ra$ in $Cu(A)$ and 
$\la \sum_{i=1}^n e_i \ra =\la e\ra.$ 
Let $B_1=\overline{e_1Ae_1}\subset A.$ There exist a \SCA\, $D\subset A$ 
such  that $M_n(B_1)\cong D.$ We write $D=M_n(B_1).$ 
Let $\lambda: Cu(C)\to Cu(A)$ defined by
$\lambda(\la a\ra)=1/n (Cu(\phi))(\la a\ra)$ for all $\la a\ra \in Cu(C).$
Note that $K_0(A)=\{0\}.$ Therefore $\lambda$ determines a ${\bf Cu}$ \hm.
(see \cite{Rl}).
It follows from \cite{Rl} that there exists a \hm\, $\phi_0': C\to B_1$ such that
$Cu^{\sim}(\phi_0')=\lambda.$ 
Define $\psi: C\to M_n(B_1)$ by 
$\psi(a)=\diag(\overbrace{\phi_0'(a), \phi_0'(a),...,\phi_0'(a)}^n)$ for all $a\in C.$
Then $Cu^{\sim}(\psi)=Cu^{\sim}(\phi).$ 
It follows from \cite{Rl} that $\phi$ and $\psi$ are approximately unitarily equivalent.
Lemma then follows.
\end{proof}


\section{Tracially one dimensional complices}

\begin{df}\label{Dtr1div}
Let $A$ be a non-unital simple \CA\,  with a strictly positive element $a\in A$
with $\|a\|=1.$  Suppose that
for any $\ep>0,$  any
finite subset ${\cal F}\subset A$ and any $b\in A_+\setminus \{0\},$  there are ${\cal F}$-$\ep$-multiplicative \cpc s $\phi: A\to A$ and  $\psi: A\to D$  for some
\SCA\, $D\subset A$ such that
\beq\label{Dtr1div-1}
&&\|x-\diag(\phi(x),\psi(x))\|<\ep\rforal x\in {\cal F}\cup \{a\},\\\label{Dtrdiv-2}
&& D\in {\cal C}_0^{0'} ({\rm or}\,\, {\cal C}_0'),\\\label{Dtrdiv-3}
&&\phi(a)\lesssim b,\\\label{Dtrdiv-4}
&& \|\psi(x)\|\ge (1-\ep)\|x\|\rforal x\in {\cal F},\\\label{Dtrdiv-4+}
&&f_{1/4}(\psi(a))\,\,\,
{\rm is\,\,\, full\,\,\, in}\,\, D \andeqn
\eneq
$\psi(a)$ is strictly positive in $D.$
  Then we say $A$   is in TA$C_0^{0'}$  (TA${\cal C}_0'$)

Let $A_0=\overline{\phi(a)A\phi(a)}.$ Then  \eqref{Dtrdiv-2} also means
$A_0\perp D.$
Moreover, \eqref{Dtrdiv-3} could be replaced by
$c\lesssim b$ for some strictly positive element $c$ of $A_0.$

\end{df}

\begin{prop}\label{DpreTADone}
Let ${\cal S}={\cal C}_0',$ or ${\cal C}_0^{0'}.$

Suppose that $A$ is a non-unital simple \CA\, which is in TA${\cal S}.$
Then the following holds:

Let  $a\in A$
with $\|a\|=1$ be a strictly positive element.
For any $\ep>0,$  any
finite subset ${\cal F}\subset A,$
any finite subset ${\cal F}_0\subset A_+\setminus \{0\},$ and any $b\in A_+\setminus \{0\},$  there are ${\cal F}$-$\ep$-multiplicative \cpc s $\phi: A\to A$ and  $\psi: A\to D$  for some
\SCA\, $D\subset A$ such that
\beq\label{Ptad1-1}
&&\|x-\diag(\phi(x),\psi(x))\|<\ep\rforal x\in {\cal F}\cup \{a\},\\\label{Ptad1-2}
&& D\in {\cal C}_0^{0'} ({\rm or}\,\, {\cal C}_0'),\\\label{Ptad1-3}
&&\phi(a)\lesssim b,\\\label{Ptad1-4}
&& \|\psi(x)\|\ge (1-\ep)\|x\|\rforal x\in {\cal F},\\\label{Ptad1-4+}
&&f_{1/4}(\psi(x))\,\,\,
{\rm is\,\,\, full\,\,\, in}\,\, D \rforal x\in {\cal F}_0\andeqn
\eneq
$\psi(a)$ is strictly positive in $D.$

\end{prop}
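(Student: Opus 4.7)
The plan is to apply the definition of TA${\cal S}$ (Definition~\ref{Dtr1div}) with an enlarged finite subset ${\cal F}'\supset{\cal F}$ and a smaller tolerance $\ep'<\ep$, chosen so that the fullness of each $x\in{\cal F}_0$ in the simple algebra $A$ is transported through $\psi$ into fullness of $f_{1/4}(\psi(x))$ in $D$. The key observation is that \ref{Dtr1div} already gives fullness of $f_{1/4}(\psi(a))$ in $D$, and Lemma~\ref{Lfullep} lets me express any full positive element as a finite sum of ``conjugates'' of any other full positive element in a simple algebra.

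Fix $\sigma_0\in(0,1/4)$. For each $x\in{\cal F}_0$, the simplicity of $A$ and Lemma~\ref{Lfullep} applied to the full positive element $f_{1/2}(x)$ (elements of very small norm are handled by using $f_{\|x\|/2}$ in place of $f_{1/2}$, a technicality I suppress) produce $z_{1,x},\ldots,z_{n(x),x}\in A$ with
\[
\sum_{i=1}^{n(x)} z_{i,x}^*\,f_{1/2}(x)\,z_{i,x}=f_{\sigma_0}(a).
\]
I enlarge ${\cal F}$ by adjoining $a$, $f_{\sigma_0}(a)$, and the elements $x$, $f_{1/2}(x)$, $z_{i,x}$, $z_{i,x}^*f_{1/2}(x)z_{i,x}$ for $x\in{\cal F}_0$ and all $i$, together with enough extra elements of $C^*(a)\cup C^*(x)$ to force the continuous functional calculus estimates $\|\psi(f_{1/2}(x))-f_{1/2}(\psi(x))\|<\et$ and $\|\psi(f_{\sigma_0}(a))-f_{\sigma_0}(\psi(a))\|<\et$ for any ${\cal F}'$-$\ep'$-multiplicative \cpc\ $\psi$, with $\et>0$ a small number to be fixed. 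Applying the TA${\cal S}$ hypothesis to $({\cal F}',\ep',b)$ produces $\phi$ and $\psi$ as in \ref{Dtr1div}. Combining the functional calculus estimates with approximate multiplicativity on the $z_{i,x}$ and $f_{1/2}(x)$, I obtain, for each $x\in{\cal F}_0$,
\[
\Bigl\|\sum_i \psi(z_{i,x})^*\,f_{1/2}(\psi(x))\,\psi(z_{i,x})-f_{\sigma_0}(\psi(a))\Bigr\|<\delta
\]
for $\delta$ as small as desired. Since $\sigma_0<1/4$, one has $f_{\sigma_0}(\psi(a))\ge f_{1/4}(\psi(a))$, which is full in $D$ by \ref{Dtr1div}; for $\delta<1$ the element $(f_{\sigma_0}(\psi(a))-\delta)_+$ is itself full in $D$, as its closed ideal still contains $f_{1/4}(\psi(a))$. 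Lemma~\ref{Lrorm} then yields
\[
(f_{\sigma_0}(\psi(a))-\delta)_+\;\lesssim\;\sum_i \psi(z_{i,x})^*\,f_{1/2}(\psi(x))\,\psi(z_{i,x}),
\]
placing the full left-hand side inside the closed two-sided ideal of $D$ generated by $f_{1/2}(\psi(x))$, which forces $f_{1/2}(\psi(x))$ to be full in $D$; since $f_{1/4}\ge f_{1/2}$ pointwise, $f_{1/4}(\psi(x))\ge f_{1/2}(\psi(x))$ is full as well. All remaining clauses of the conclusion are inherited directly from the TA${\cal S}$ application.

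The main obstacle I foresee is uniform bookkeeping across ${\cal F}_0$: I must choose a single $\ep'$, a single ${\cal F}'$, and a single $\sigma_0$ handling all $x\in{\cal F}_0$ simultaneously, and track error propagation through the continuous functional calculus and Rørdam's perturbation lemma in a way compatible with the approximate multiplicativity supplied by \ref{Dtr1div}. A minor additional nuisance is the normalization of elements of very small norm in ${\cal F}_0$, which requires rescaling the cutoff $f_{1/2}$ to $f_{\|x\|/2}$ as mentioned above.
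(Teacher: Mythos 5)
Your proposal is correct and follows essentially the same route as the paper's own proof: express a cutoff of $a$ as an exact finite sum of conjugates of a cutoff of each $x\in{\cal F}_0$ using simplicity (Lemma \ref{Lfullep}), transfer this relation through $\psi$ via approximate multiplicativity and functional-calculus stability, convert the approximate relation to an exact one with R\o rdam's Lemma \ref{Lrorm}, and then invoke the fullness of $f_{1/4}(\psi(a))$ in $D$ supplied by the definition of TA${\cal S}$. The only imprecision is the parenthetical claim that $\delta<1$ alone makes $(f_{\sigma_0}(\psi(a))-\delta)_+$ full (its zero set is $[0,\sigma_0(1+\delta)/2]$, so one needs $\sigma_0(1+\delta)\le 1/4$, e.g. $\sigma_0\le 1/8$ or $\delta$ small compared with $1/4-\sigma_0$), which your stated freedom to take $\delta$ as small as desired already accommodates.
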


\begin{proof}
We may assume that ${\cal F}\cup {\cal F}_0\subset A^{\bf 1}.$
We may also assume, \wilog, that $\|c\|=1$ for all $c\in {\cal F}_0.$
\Wlog, we may assume that $f_\eta(a)x=xf_\eta(a)$ for all
$x\in {\cal F}\cup {\cal F}_0$ for some $1/4>\eta>0.$
Let ${\cal F}_{0,1}=\{f_{1/4}(c): c\in {\cal F}_0\}\cup {\cal F}_0.$

There are, for each $c\in {\cal F}_{0,1},$  $x_i(c)\in A,i=1,2,...,n(c)$
such that
\beq\label{Ptad1-10}
\sum_{i=1}^{n(c)}x_i(c)^*cx_i(c)=f_{\eta/8}(a).
\eneq

We will choose $0<\dt<\ep$ and a finite subset ${\cal G}\subset A$ satisfy the following:
If $\psi': A\to B'$ is any ${\cal G}$-$\dt$-multiplicative \cpc,
then
\beq\label{Ptad1-11}
&&\hspace{-0.5in}\|\sum_{i=1}^{n(c)}\psi'(x_i(x))^*\psi'(c)\psi'(x_i(c))-f_{\eta/8}(\psi'(c))\|<\min\{\ep/128, \eta/128\}\rforal c\in {\cal F}_{0,1}\\\label{Ptad1-11+}
&&\andeqn\|\psi'(f_{\sigma}(a))-f_{\sigma}(\psi'(a))\|<{\min\{\ep/128,\eta/128\}\over{\max\{2n(c)\cdot \|x_i(c)\|:c\in {\cal F}_{0,1}\}}}
\eneq
for $\sigma=\{\eta/8, 1/4\}.$
Let
$$
{\cal F}_2={\cal F}\cup {\cal F}_{0,1}.
$$
By the assumption, there are ${\cal G}$-$\dt$-multiplicative \cpc s $\phi: A\to A$ and  $\psi: A\to D$  for some
\SCA\, $D\subset A$ such that
\beq\label{Ptad1-12}
&&\|x-\diag(\phi(x),\psi(x))\|<\min\{\ep/64, \eta/64\}\rforal x\in {\cal F}\cup {\cal F}_{0,1},\\\label{Ptad1-13}
&& D\in {\cal S},
\phi(a)\lesssim b,\\
&&\|\psi(x)\|=(1-\ep/16)\|x\| \rforal x\in {\cal F}.
\eneq
 and $f_{1/2}(\psi(a))$ is full in $D.$


By the choice of ${\cal G}$ and $\dt,$ by \eqref{Ptad1-11} and \eqref{Ptad1-11+},  we obtain that
\beq\nonumber
\|\sum_{i=1}^{n(c)}\psi(x_i(c))^*f_{1/4}(\psi(c))\psi(x_i(c))-f_{\eta/2}(\psi(a))\|<\min\{\ep/16, \eta/16\},
\eneq
for all $c\in {\cal F}_0.$
It follows from \ref{Lrorm} that there are $y_i(c)\in D$ such that
$$
\sum_{i=1}^{n(c)}y_i(c)^*f_{1/4}(\psi(c))y_i(c)=f_{\eta}(\psi(a))
$$
for all $c\in {\cal F}_0.$
Since $f_{1/4}(\psi(a))$ is full in $D,$ so is $f_{\eta}(\psi(a)).$ Therefore $f_{1/4}(\psi(c))$
is full in $D$ for all $c\in {\cal F}_0.$



\end{proof}


\begin{cor}\label{ChighrankD}
In the definition of \ref{Dtr1div},  for any integer $k\ge 1,$ one may assume that every irreducible representation 
of $D$ has rank at least $k.$ 
\end{cor}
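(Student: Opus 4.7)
The plan is to apply Definition \ref{Dtr1div} (via Proposition \ref{DpreTADone}) a total of $k$ times to obtain mutually orthogonal approximately multiplicative pieces $\psi_1,\dots,\psi_k:A\to D_1,\dots,D_k$ with a common building block $D_0\in{\cal C}_0'$ (or ${\cal C}_0^{0'}$), and then assemble the direct sum $\bigoplus D_i\subset A$ into a matrix amplification $M_k(D_0)\subset A$ via partial isometries in $A$. Since matrix amplifications of $1$-dimensional NCCWs stay in the corresponding class (preserving the conditions $K_0=0$, $K_1=0$, $0\notin\overline{T(\cdot)}^w$ where required), $M_k(D_0)$ still lies in ${\cal C}_0'$ (resp.\ ${\cal C}_0^{0'}$), and every irreducible representation of $M_k(D_0)$ has rank at least $k$.

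Concretely, set $\ep':=\ep/(2k+2)$, and using that $A$ is simple with strict comparison (which follows from $A$ being $\mathrm{TA}{\cal C}_0'$), choose $b'\in A_+\setminus\{0\}$ with $(k+1)\langle b'\rangle\le\langle b\rangle$ in $\mathrm{Cu}(A)$. Apply Proposition \ref{DpreTADone} with $(\ep',{\cal F},{\cal F}_0,b')$ to obtain the first decomposition $\|x-\phi_1(x)\oplus\psi_1(x)\|<\ep'$ with $\psi_1:A\to D_1\in{\cal C}_0'$ and $\phi_1(a)\lesssim b'$. Then iterate on the hereditary subalgebra $A^{(1)}:=\overline{\phi_1(a)A\phi_1(a)}$, which inherits $\mathrm{TA}{\cal C}_0'$ as a nonzero hereditary \SCA\ of a simple $\mathrm{TA}{\cal C}_0'$ algebra; in each step, use the semiprojectivity of NCCWs (Proposition \ref{str=1}(3)) to perturb the target so that the resulting $D_i$ is isomorphic to $D_1=:D_0$. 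After $k$ steps one has mutually orthogonal $\psi_1,\dots,\psi_k$ with common target $D_0$ up to isomorphism, and a residual $\phi:A\to A$ with $\phi(a)\lesssim b$.

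To assemble the $M_k(D_0)$ structure in $A$, let $d_i$ denote the strictly positive element of $D_i\subset A$. The $d_i$ are mutually Cuntz equivalent (by construction), so by Lemma \ref{Lalmstr1} and the almost-stable-rank-one of $A$ there exist partial isometries $w_{ij}\in\widetilde A$ (or in an appropriate corner) with $w_{ij}^*w_{ij}=d_j$, $w_{ij}w_{ij}^*=d_i$, and $w_{ij}D_jw_{ji}=D_i$, satisfying the matrix-unit relations. The \SCA\ $\tilde D\subset A$ generated by $\bigoplus D_i$ and the $w_{ij}$'s is then isomorphic to $M_k(D_0)$; define $\tilde\psi(x):=\sum_i\psi_i(x)$ as the diagonal sum into $\tilde D$, and verify that together with a suitable remainder $\tilde\phi$ the conditions of Definition \ref{Dtr1div} hold with $\tilde D$ in place of $D$ (orthogonality, fullness of $f_{1/4}(\tilde\psi(c))$ for $c\in{\cal F}_0$, strict positivity of $\tilde\psi(a)$, Cuntz-smallness of $\tilde\phi(a)$).

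The main obstacle is carrying out the second paragraph rigorously: one must propagate the TA${\cal C}_0'$ property to the hereditary \SCA s $A^{(j)}$ arising in the iteration, arrange all $D_i$'s to be genuinely isomorphic (rather than merely approximately so) using semiprojectivity and a careful perturbation, and control the total error across $k$ applications so as not to exceed $\ep$. Once this bookkeeping is complete, the construction of partial isometries and the verification of the definition are routine applications of the lemmas already established.
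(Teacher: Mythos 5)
There is a genuine gap, and the approach itself overshoots the target. The corollary does not ask you to realize $D$ as a matrix amplification $M_k(D_0)$ inside $A$; it only asks that every irreducible representation of $D$ have rank at least $k$. The paper's proof is a one-line consequence of Proposition \ref{DpreTADone}: choose ${\cal F}_0$ to contain $k$ mutually orthogonal nonzero positive elements $e_1,\dots,e_k$ of norm one (these exist since $A$ is simple and non-elementary). Then, for $\ep$ small, the elements $f_{1/4}(\psi(e_i))$ are full in $D$ and, after a small perturbation, one may assume $D$ contains $k$ mutually orthogonal nonzero full elements. Fullness means none of them lies in $\ker\pi$ for any irreducible representation $\pi$, so $\pi(D)$ contains $k$ mutually orthogonal nonzero positive elements, forcing the rank of $\pi$ to be at least $k$. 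No iteration, no matrix units, no isomorphism of building blocks is needed.

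Your second and third paragraphs, which you yourself flag as the ``main obstacle,'' would not go through as written. First, successive applications of Definition \ref{Dtr1div} (or Proposition \ref{DpreTADone}) on the hereditary subalgebras $A^{(j)}$ produce subalgebras $D_j$ with no control whatsoever on their isomorphism class; semiprojectivity of $D_1$ lets you perturb maps defined \emph{on} $D_1$, but it gives no mechanism to replace the $D_2$ arising from the second decomposition by a copy of $D_1$, so ``arrange all $D_i$'s to be genuinely isomorphic'' is unsupported. Second, the strictly positive elements $d_i$ of the $D_i$ are not known to be mutually Cuntz equivalent by the construction, and even if they were, Cuntz equivalence (via Lemma \ref{Lalmstr1}) only yields $x$ with $x^*x=d_j$ and $xx^*\in\overline{d_iAd_i}$; it does not produce partial isometries satisfying matrix-unit relations, and conjugation by such elements does not carry the subalgebra $D_j\subset\overline{d_jAd_j}$ onto $D_i$, so the claim $w_{ij}D_jw_{ji}=D_i$ fails. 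What you are attempting is essentially the tracial approximate divisibility theorem (Theorem \ref{TDapprdiv}/\ref{TCCdvi}), which the paper proves much later by a genuinely different method (inductive limits of the $D_n$'s and $D\otimes Q\cong D$), and which is far more than this corollary requires. In addition, invoking strict comparison of $A$ at this point is premature, since it is only established for algebras in ${\cal D}$ later in the paper.
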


\begin{proof}
Fix an integer $k\ge 1.$ 
This corollary  can be easily seen by taking ${\cal F}_0$ containing $k$ mutually orthogonal non-zero 
positive elements  $e_1, e_2,...,e_k$  with $\|e_i\|=1$ in \ref{DpreTADone}.
  More precisely,  since $k$ is given,
by taking sufficiently small $\ep,$ we may assume that $D$ contains 
$k$ mutually orthogonal non-zero elements which are full. This forces $\pi(D)$ admits 
$k$ mutually orthogonal non-zero elements in  each irreducible representation 
$\pi.$
\end{proof}

The following follows immediately from the definition.

\begin{prop}\label{PMnTAD}
Let $A$ be a $\sigma$-unital simple \CA\, which is in TA${\cal C}_0^{0'}$ (TA${\cal C}_0'$).
Then, for any integer $k\ge 1,$ $M_k(A)$ is also in TA${\cal C}_0^{0'}$( or TA${\cal C}_0'$).
\end{prop}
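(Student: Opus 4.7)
The plan is to show that the TA-property passes to matrix algebras by extracting a finite subset of $A$ from the matrix entries of the given finite subset of $M_k(A)$, applying the TA-property of $A$ to this data, and then inflating the resulting cpc maps entrywise. Given $\varepsilon > 0$, a finite subset $\mathcal{F} \subset M_k(A)$, and a nonzero $b \in M_k(A)_+$, I would first fix a strictly positive $a \in A_+$, so $a_k := \operatorname{diag}(a,\dots,a)$ is strictly positive in $M_k(A)$; extract a finite subset $\mathcal{F}_A \subset A$ consisting of all matrix entries of elements of $\mathcal{F}$ (and $a$); and, using that $M_k(A)$ is simple so $b$ is full there, choose $b_A \in A_+\setminus\{0\}$ such that $\operatorname{diag}(b_A,\dots,b_A) \lesssim b$ in $M_k(A)$.

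Next, apply the hypothesis that $A$ is in TA$\mathcal{C}_0^{0'}$ (resp.\ TA$\mathcal{C}_0'$) with a suitably small $\varepsilon' > 0$ (of order $\varepsilon/k^2$) and data $(\mathcal{F}_A, b_A)$, obtaining cpc maps $\phi_A: A\to A$ and $\psi_A: A\to D$ with $D \in \mathcal{C}_0^{0'}$ (resp.\ $\mathcal{C}_0'$) satisfying the conditions of Definition~\ref{Dtr1div}. Then define $\Phi, \Psi$ on $M_k(A)$ by applying $\phi_A$ and $\psi_A$ entrywise; $\Psi$ lands in $M_k(D)$. One then verifies the six conditions: the entrywise application preserves $(\mathcal{G},\delta)$-multiplicativity up to a factor of $k^2$; $\Phi(a_k) = \operatorname{diag}(\phi_A(a),\dots,\phi_A(a)) \lesssim \operatorname{diag}(b_A,\dots,b_A) \lesssim b$; $\Psi(a_k)$ is strictly positive in $M_k(D)$ and $f_{1/4}(\Psi(a_k))$ is full in $M_k(D)$ because $\psi_A(a)$ has the corresponding properties in $D$; and the approximate decomposition for entries yields $\|x - (\Phi(x) + \Psi(x))\| < \varepsilon$ for $x \in \mathcal{F}$.

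For the closure of $\mathcal{C}_0^{0'}$ (resp.\ $\mathcal{C}_0'$) under $M_k$: if $D$ is a full hereditary subalgebra of $C = A(F_1, F_2, \psi_0, \psi_1) \in \mathcal{C}_0^0$, then $M_k(D)$ is a full hereditary subalgebra of $M_k(C) = A(M_k(F_1), M_k(F_2), \psi_0 \otimes \mathrm{id}_{M_k}, \psi_1 \otimes \mathrm{id}_{M_k})$, which again lies in $\mathcal{C}$. The $K$-theoretic vanishing $K_0(M_k(C)) = K_0(C) = 0$ and $K_1(M_k(C)) = K_1(C) = 0$ is Morita invariant, and $0 \notin \overline{T(M_k(C))}^w$ follows from the corresponding property for $C$. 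So $M_k(C) \in \mathcal{C}_0^0$ and $M_k(D) \in \mathcal{C}_0^{0'}$ (the $\mathcal{C}_0'$ case is identical, without requiring $K_0 = 0$).

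The only point requiring slight care is the norm lower bound $\|\Psi(x)\| \ge (1-\varepsilon)\|x\|$ for $x \in \mathcal{F}$, but this follows from the approximate decomposition: since $\Phi(x)$ and $\Psi(x)$ lie in the orthogonal hereditary subalgebras $M_k(A_0)$ and $M_k(D)$ respectively (where $A_0 = \overline{\phi_A(a)A\phi_A(a)}$), we have $\|\Phi(x) + \Psi(x)\| = \max(\|\Phi(x)\|, \|\Psi(x)\|)$, and combining this with the entrywise norm bounds for $\psi_A$ on $\mathcal{F}_A$ yields the required estimate. Thus the main ``obstacle'' is really just bookkeeping; this is why the author remarks the proposition ``follows immediately from the definition.''
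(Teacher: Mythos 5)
Your overall strategy --- collecting the matrix entries of ${\cal F}$ into a finite subset ${\cal F}_A\subset A$, applying the definition in $A$, inflating entrywise via $\Phi=\phi_A\otimes{\rm id}_{M_k}$ and $\Psi=\psi_A\otimes{\rm id}_{M_k}$, and observing that $M_k$ of a full hereditary subalgebra of an algebra in ${\cal C}_0^0$ (resp. ${\cal C}_0$) lies again in ${\cal C}_0^{0'}$ (resp. ${\cal C}_0'$) --- is exactly the intended "immediate from the definition" argument; the paper gives no written proof. One small point of care: "$M_k(A)$ is simple so $b$ is full" does not by itself yield $b_A$ with $\diag(b_A,\dots,b_A)\lesssim b$; you need the standard fact that in a non-elementary simple \CA\ one can find $k$ mutually orthogonal, mutually Cuntz-equivalent nonzero positive elements in $\overline{b\,M_k(A)\,b}$ and then a nonzero positive element of the corner copy of $A$ Cuntz-dominated by one of them (the same device is used in the paper, e.g. in the proof of \ref{PD0qc}). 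This is routine but should be said.

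The substantive gap is your justification of $\|\Psi(x)\|\ge(1-\ep)\|x\|$. Orthogonality gives $\|\Phi(x)+\Psi(x)\|=\max\{\|\Phi(x)\|,\|\Psi(x)\|\}$, but the maximum may well be attained by $\|\Phi(x)\|$, so this says nothing about $\|\Psi(x)\|$; and the entrywise bounds $\|\psi_A(x_{ij})\|\ge(1-\ep')\|x_{ij}\|$ only give $\|\Psi(x)\|\ge\max_{i,j}\|\psi_A(x_{ij})\|\ge(1-\ep')\|x\|/k$, which loses a factor of $k$ and is not the required estimate. So the step as written fails. It is, however, recoverable by the same argument the paper uses at the end of the proof of \ref{PD0=tad}: run your construction along a sequence $\ep_n'\to 0$, so that the maps $\Psi_n=\psi_{A,n}\otimes{\rm id}_{M_k}$ are asymptotically multiplicative and do not tend to zero in norm (for instance $\|\Psi_n(a_k)\|\ge\|\psi_{A,n}(a)\|\to\|a\|$, or use the trace condition); the induced map into $\prod_n M_k(D_n)/\bigoplus_n M_k(D_n)$ is then a nonzero homomorphism on the simple algebra $M_k(A)$, hence injective and isometric, which gives $\lim_n\|\Psi_n(x)\|=\|x\|$ for all $x\in M_k(A)$ and in particular $\|\Psi_n(x)\|\ge(1-\ep)\|x\|$ on ${\cal F}$ for large $n$. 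With that substitution (and the remark on $b_A$), your proof is complete and coincides with the intended one.
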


\begin{df}\label{DNtr1div}
Let $A$ be a non-unital simple \CA\,  with a strictly positive element $a\in A$
with $\|a\|=1.$  Suppose that there exists
$1> \mathfrak{f}_a>0,$ for any $\ep>0,$  any
finite subset ${\cal F}\subset A$ and any $b\in A_+\setminus \{0\},$  there are ${\cal F}$-$\ep$-multiplicative \cpc s $\phi: A\to A$ and  $\psi: A\to D$  for some
\SCA\, $D\subset A$ such that
\beq\label{DNtr1div-1}
&&\|x-\diag(\phi(x),\psi(x))\|<\ep\rforal x\in {\cal F}\cup \{a\},\\\label{DNtrdiv-2}
&& D\in {\cal C}_0^{0'} ({\rm or}\,\, {\cal C}_0'),\\\label{DNtrdiv-3}
&&\phi(a)\lesssim b,\\\label{DNtrdiv-4}
&&t(f_{1/4}(\psi(a)))\ge \mathfrak{f}_a\rforal t\in T(D).
\eneq
  Then
then we say $A\in {\cal D}_{0}$ (or ${\cal D}$).

\end{df}

\begin{prop}\label{PD0=tad}
Let $A$ be a $\sigma$-unital simple \CA\, in ${\cal D}$ (${\cal D}_{0}$).
Then $A$ is TA${\cal S}$ (${\cal S}={\cal C}_0',$ ${\cal C}_0^{0'}$).
\end{prop}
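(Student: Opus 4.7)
The plan is to refine the decomposition produced by Definition \ref{DNtr1div} so that it meets every clause of Definition \ref{Dtr1div}. The only substantive gaps are: promoting the trace bound $t(f_{1/4}(\psi(a))) \ge \mathfrak{f}_a$ to the fullness of $f_{1/4}(\psi(a))$ in $D$ together with the strict positivity of $\psi(a)$ in $D$; and producing the norm estimate $\|\psi(x)\| \ge (1-\ep)\|x\|$ on ${\cal F}$. The first is essentially bookkeeping via Proposition \ref{Pcom4C}(2); the second is the main obstacle and requires a careful choice of the auxiliary positive element fed into the ${\cal D}$-decomposition.

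Given $\ep > 0$, ${\cal F} \subset A^{\mathbf 1}$, and $b \in A_+\setminus\{0\}$, I would fix $\ep_0 \ll \ep$, enlarge ${\cal F}$ to ${\cal F}^+ := {\cal F} \cup \{a, a^{1/2}\} \cup \{x^*x : x \in {\cal F}\}$, and for each $x \in {\cal F}$ with $\|x\|>0$ form the nonzero element $y_x := (x^*x - (1-\ep/8)\|x\|^2)_+$. Since $A \in {\cal D}$ is simple and stably projectionless (hence non-elementary and stably finite), one can use functional-calculus splitting together with the common-Cuntz-sub-element trick in simple C*-algebras to find a nonzero $b' \in A_+$ with $b' \lesssim b$ and $2\la b'\ra \le \la y_x\ra$ for every such $x$; stable finiteness then guarantees $y_x \not\lesssim b'$ for each $x$. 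Applying Definition \ref{DNtr1div} with data $(\ep_0, {\cal F}^+, b')$ yields cpc maps $\phi_0 : A \to A$ and $\psi_0 : A \to D_0$ with $D_0 \in {\cal C}_0'$, with $A_0 := \overline{\phi_0(a)A\phi_0(a)} \perp D_0$ and $\phi_0(x) \in A_0$, with $\phi_0(a) \lesssim b' \lesssim b$, and with $t(f_{1/4}(\psi_0(a))) \ge \mathfrak{f}_a$ for all $t \in T(D_0)$. Since $D_0 \in {\cal C}_0'$ forces $0 \notin \overline{T(D_0)}^w$, Proposition \ref{Pcom4C}(2) converts the trace bound into fullness of $f_{1/4}(\psi_0(a))$ in $D_0$, and hence fullness of $\psi_0(a)$. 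Put $D := \overline{\psi_0(a)D_0\psi_0(a)}$: this is a full hereditary \SCA\ of $D_0$ and therefore of the parent ${\cal C}_0$-algebra, so $D \in {\cal C}_0'$; moreover $\psi_0(a)$ is strictly positive in $D$, and approximate multiplicativity at $a^{1/n}$ places $\psi_0(x) \in D$ for each $x \in {\cal F}$. Taking $\psi := \psi_0$ and $\phi := \phi_0$ fulfills all clauses of Definition \ref{Dtr1div} except the norm estimate.

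It remains to verify the norm condition, which is the main obstacle. Fix $x \in {\cal F}$ with $\|x\| > 0$. Approximate multiplicativity gives $\psi_0(x^*x) \approx_{\ep_0} \psi_0(x)^*\psi_0(x)$, while orthogonality $\phi_0(x^*x) \perp \psi_0(x^*x)$ in $A$ and the decomposition of $x^*x \in {\cal F}^+$ give, by continuous functional calculus and with $s := (1-\ep/8)\|x\|^2$,
\[
(x^*x - s)_+ \approx (\phi_0(x^*x) - s)_+ + (\psi_0(x^*x) - s)_+ .
\]
Suppose $\|\psi_0(x)\|^2 < (1-\ep/4)\|x\|^2$. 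For $\ep_0$ small enough (chosen after the minimum nonzero $\|x\|$ on ${\cal F}$ is fixed), $\|\psi_0(x^*x)\| < s$, so the second summand vanishes and $(x^*x-s)_+ \approx (\phi_0(x^*x) - s)_+ \in A_0$. Because $\phi_0(a)$ is strictly positive in the hereditary subalgebra $A_0$, every positive element of $A_0$ is Cuntz-dominated by $\phi_0(a)$; hence $(\phi_0(x^*x)-s)_+ \lesssim \phi_0(a) \lesssim b'$. R\o rdam's Lemma \ref{Lrorm} upgrades the approximate equality to $(y_x - \delta)_+ \lesssim b'$ for an appropriate $\delta$ below the spectral gap of $y_x$, yielding $y_x \lesssim b'$, which contradicts the selection of $b'$. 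Therefore $\|\psi_0(x)\| \ge \sqrt{1-\ep/4}\,\|x\| \ge (1-\ep)\|x\|$, completing the proof. The parallel ${\cal D}_0 \Rightarrow $ TA${\cal C}_0^{0'}$ case is identical, since the difference between ${\cal C}_0'$ and ${\cal C}_0^{0'}$ is a $K_0$-condition preserved under hereditary \SCA\ passage.
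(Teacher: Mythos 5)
Your reduction of the fullness and strict-positivity clauses to \ref{Pcom4C}(2) is fine and matches the paper, but the core of your argument --- the norm estimate $\|\psi(x)\|\ge (1-\ep)\|x\|$ --- has two genuine gaps. First, the assertion that $y_x\not\lesssim b'$ follows from $2\la b'\ra\le \la y_x\ra$ together with ``stable finiteness'' is not available at this point: stable projectionlessness of $A\in{\cal D}$ (\ref{Pprojless}) and the existence of traces on $A$ (\ref{PD0qc}) are proved \emph{later}, and their proofs use exactly the strengthened decomposition (in particular $\lim_n\|\psi_n(x)\|=\|x\|$) that this proposition is meant to supply, so invoking them here risks circularity; moreover, in a projectionless algebra the absence of infinite projections does not by itself rule out a properly infinite positive element, i.e.\ $2\la b'\ra\le\la b'\ra$ is not a formal contradiction without a nonzero (quasi)trace or some comparison on $A$, which you have not produced. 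Second, even granting $y_x\not\lesssim b'$, R\o rdam's Lemma \ref{Lrorm} only yields $(y_x-\delta)_+\lesssim b'$ for a $\delta$ of the order of your approximation errors; the element $y_x=(x^*x-(1-\ep/8)\|x\|^2)_+$ has no spectral gap in general, so the step ``yielding $y_x\lesssim b'$'' is false as written. (One could repair this by choosing $b'$ against $(y_x-\delta_0)_+$ for a pre-fixed $\delta_0$ dominating the errors, but the first problem would remain.) There is also a smaller defect: taking $\psi:=\psi_0$ does not give a map into your $D=\overline{\psi_0(a)D_0\psi_0(a)}$; approximate multiplicativity only places $\psi_0({\cal F})$ approximately in $D$, and likewise $\phi_0(x^*x)\in A_0$ is not part of Definition \ref{DNtr1div}. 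One must compress, e.g.\ replace $\psi_0$ by $x\mapsto f_{\eta/2}(\psi_0(a))^{1/2}\psi_0(x)f_{\eta/2}(\psi_0(a))^{1/2}$ after arranging $f_\eta(a)x=x$ for $x\in{\cal F}$, so that the corrected map lands in $D$ exactly while remaining close to $\psi_0$ on ${\cal F}$.

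For contrast, the paper's route to the norm bound is soft and avoids comparison entirely: since $T(D_n)\ne\emptyset$ and $\tau(f_{1/4}(\psi_n(a)))\ge\mathfrak{f}_a$ for all $\tau\in T(D_n)$, one has $\|\psi_n\|\ge\mathfrak{f}_a$ uniformly; the induced map into $\prod_n D_n/\bigoplus_n D_n$ is then a nonzero homomorphism, injective because $A$ is simple, hence isometric, which forces $\lim_n\|\psi_n(x)\|=\|x\|$ for every $x$, and in particular $\|\psi_n(x)\|\ge(1-\ep)\|x\|$ on ${\cal F}$ for large $n$. If you want to keep a comparison-style argument, you would first have to extract a nonzero trace on $A$ from the ${\cal D}$ data (essentially redoing \ref{PD0qc}), at which point the paper's simplicity argument is both shorter and strictly weaker in its hypotheses.
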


\begin{proof}
Fix a strictly positive element $a\in A$ with $\|a\|=1.$
We will show in  the definition, we can further assume that $\|\psi(x)\|\ge (1-\ep)\|x\|$ for all $x\in {\cal F}$ and
$\psi(a)$ is strictly positive in $D.$

Let $\ep>0,$ ${\cal F}\subset A$ be a finite subset and $b_0\in A_+\setminus\{0\}$ be given.
\Wlog, we may assume that there is $1/16>\eta>0$ such
that
$$
f_{\eta}(a)x=xf_{\eta}(a)=x\rforal x\in {\cal F}.
$$
By the assumption, there exists  a sequence
of $D_n\in {\cal S}$
and two sequences of \cpc s $\phi_n: A\to A_n,$ and  $\psi_n: \to D_n$
such that
\beq\label{=tad-2}
\lim_{n\to\infty}\|\phi_n(xy)-\phi_n(x)\phi_n(y)\|=0\andeqn\\\label{=tad-3}
\lim_{n\to\infty}\|\psi_n(xy)-\psi_n(x)\psi_n(y)\|=0\rforal x, \, y\in A,\\\label{=tad-4}
\lim_{n\to\infty}\|x-\diag(\phi_n(x),\psi_n(x))\|=0\rforal x\in A\\
\label{=tad-5}
\phi_n(a)\lesssim b_0,\\\label{=tad-6}
\tau(f_{1/4}(\psi_n(a)))\ge \mathfrak{f}_a \rforal \tau\in T(D_n)
\eneq
Put $D_n'=\overline{f_{\eta/2}(\psi_n(a))D_nf_{\eta/2}(\psi_n(a))},$ $n=1,2,....$
By \eqref{=tad-6} and \ref{Pcom4C}, $f_{1/4}(\psi_n(a))$ is full in $D_n.$
Therefore $f_{\eta/2}(\psi_n(a))$ is also full in $D_n.$  This implies that $D_n'\in {\cal S}.$
Define $\psi_{n,0}: A\to D_n'$ by
$$
\psi_{n,0}(x)=(f_{\eta/2}(\psi_n(a)))^{1/2}\psi_n(x)(f_{\eta/2}(\psi_n(a)))^{1/2}\rforal x\in A.
$$
It follows that $\psi_{n,0}(a)$ is full in $D_n'.$
Note that
$$
f_{1/4}(\psi_{n,0}(a))=f_{1/4}(\psi(a)).
$$
Therefore
\vspace{-0.1in} $$
\tau(f_{1/4}(\psi_{n,0}(a)))\ge \mathfrak{f}_a\rforal \tau\in T(D_n').
$$
By choosing large $n,$ using \eqref{=tad-3} and \eqref{=tad-4},
we see that in the definition of \ref{DNtr1div}, we can add the condition
that $\psi(a)$ is full in $D.$

To get  inequality $\|\psi(x)\|\ge (1-\ep)\|x\|$ for all $x\in {\cal F},$  we repeat the above argument. As shown above, we can add
that $\psi_n(a)$ is full in each $D_n$ along with \eqref{=tad-2} to \eqref{=tad-6}.
The condition \eqref{=tad-6} also implies that
\beq\label{=tad-8}
\lim_{n\to\infty}\|\psi_n\|\ge \mathfrak{f}_a.
\eneq
Then, by \eqref{=tad-3}, since $A$ is simple,
\beq\label{=tad-9}
\lim_{n\to\infty}\|\psi_n(x)\|=\|x\|.
\eneq
This implies that, with sufficiently large $n,$ we can always assume
that $\|\psi(x)\ge (1-\ep)\|x\|$ for all $x\in {\cal F}.$
\end{proof}


\begin{thm}\label{UnifomfullTAD}
Let $A$ be a $\sigma$-unital simple \CA\, in ${\cal D}_{0}$ (or  in ${\cal D}$). Then the following holds.
Fix  a strictly positive element $a\in A$
with $\|a\|=1$ and let $1>\mathfrak{f}_a>0$ be a positive number associated
with $a$ in the definition \ref{DNtr1div}. There is a map $T: A_+\setminus \{0\}\to \N\times \R$
($a\mapsto (N(a), M(a))\rforal a\in A_+\setminus \{0\}$) satisfying the following:
For any  finite subset ${\cal F}_0\subset A_+\setminus \{0\}.$
 for any $\ep>0,$  any
finite subset ${\cal F}\subset A$ and any $b\in A_+\setminus \{0\},$   there are ${\cal F}$-$\ep$-multiplicative \cpc s $\phi: A\to A$ and  $\psi: A\to D$  for some
\SCA\, $D\subset A$ such that
\beq\label{Dtad-1}
&&\|x-\diag(\phi(x),\psi(x))\|<\ep\rforal x\in {\cal F}\cup \{a\},\\\label{2Dtrdiv-2}
&& D\in {\cal C}_0^{0'}\,({\rm or}\,\,\,{\cal C}_0'),\\\label{Dtad-3}
&&\phi(a)\lesssim b,\\\label{2Dtrdiv-4}
&& \|\psi(x)\|\ge (1-\ep)\|x\|\rforal x\in {\cal F}\andeqn\\
\eneq
$\psi(a)$ is strictly positive in $D.$
Moreover,  $\psi$ is $T$-${\cal F}_0\cup\{f_{1/4}(a)\}$-full in $\overline{DAD}.$

Furthermore, we may assume that
$$
t\circ f_{1/4}(\psi(a))\ge \mathfrak{f}_a\andeqn t\circ f_{1/4}(\psi(c))\ge \mathfrak{f}_a/4\inf\{M(c)^2\cdot N(c): c\in {\cal F}_0\cup\{f_{1/4}(a)\}\}
$$
for all $c\in {\cal F}_0$ and for all $t\in T(D),$
where $\mathfrak{f}_a$ is given by the definition of ${\cal D}$ (or ${\cal D}_{0}$) associated with $a.$

\end{thm}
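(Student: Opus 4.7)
The approach is to adapt the proof of Theorem~\ref{Tqcfull} while combining it with the TA${\cal S}$-decomposition from Propositions~\ref{DpreTADone} and \ref{PD0=tad}. Although $A$ itself need not be quasi-compact, simplicity of $A$ supplies element-by-element fullness witnesses, and the target subalgebra $\overline{DAD}=\overline{\psi(a)A\psi(a)}$ will inherit sufficient structure from $D$ and $\psi(a)$ to execute the absorption construction of Theorem~\ref{Tqcfull} with uniformity constants depending only on $\mathfrak{f}_a$. First I would define $T:A_+\setminus\{0\}\to\N\times\R$: for each $c\in A_+\setminus\{0\}$ (reducing to $\|c\|=1$ by rescaling), use simplicity of $A$ and Lemma~\ref{Lfullep} to select $x_1(c),\ldots,x_{N_0(c)}(c)\in A$ with
$$\sum_{i=1}^{N_0(c)} x_i(c)^*\, f_{1/2}(c)\, x_i(c) = f_{1/16}(a),$$
set $M_0(c):=\max_i\|x_i(c)\|$, fix an integer $n_0\ge 2$ (depending only on $\mathfrak{f}_a$) with $n_0\mathfrak{f}_a/2>1$, and put $T(c):=(2n_0 N_0(c),\, 2M_0(c))$.

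Given $\varepsilon,{\cal F},b,{\cal F}_0$, I would apply Propositions~\ref{DpreTADone} and \ref{PD0=tad} with the finite subset ${\cal F}':={\cal F}\cup{\cal F}_0\cup\{a,f_{1/2}(a),f_{1/16}(a),f_{1/4}(a)\}\cup\{x_i(c):c\in{\cal F}_0\cup\{f_{1/4}(a)\},\,i\le N_0(c)\}$ and sufficiently small $\delta'>0$ to produce $\phi:A\to A$ and $\psi:A\to D\subset A$ meeting the TA${\cal S}$-conditions of the statement, with $\psi(a)$ strictly positive in $D\subset\overline{DAD}$ and $t(f_{1/4}(\psi(a)))\ge\mathfrak{f}_a$ for every $t\in T(D)$ by the defining property of ${\cal D}_0$ (resp.~${\cal D}$). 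The main step is establishing uniform fullness in $\overline{DAD}$. Since $D$ is a full hereditary \SCA\, of a \CA\, in ${\cal C}_0$ (which is quasi-compact because $0\notin\overline{T}^w$), $D$ is quasi-compact by \ref{CherePA=A}, so $\psi(a)\in P(D)\subset P(\overline{DAD})$; by \ref{Pcom4C} and \ref{str=1}, $\overline{DAD}$ inherits strong strict comparison and almost stable rank one. Approximate multiplicativity of $\psi$ on ${\cal F}'$ yields, for each $c\in{\cal F}_0\cup\{f_{1/4}(a)\}$,
$$\Big\|\sum_{i=1}^{N_0(c)} \psi(x_i(c))^*\, f_{1/2}(\psi(c))\,\psi(x_i(c)) - f_{1/16}(\psi(a))\Big\|<\delta'',$$
and Lemma~\ref{Lrorm} then produces $y_i\in D$ with $\|y_i\|\le 2M_0(c)$ and $\sum_i y_i^*\, f_{1/2}(\psi(c))\, y_i = f_{1/8}(\psi(a))$. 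Following the template of Theorem~\ref{Tqcfull}, invoke Lemma~\ref{Lbkqc} inside $\overline{DAD}$ with $n_0$ in the role of ``$n$'' (the inequality $n_0\mathfrak{f}_a/2>1$ validates $n_0\inf_t d_t(\psi(a))>1$ via the trace bound and a continuity argument), producing partial-isometry data and positive elements $e_1,e_2\in M_{n_0}(\overline{DAD})$; strong strict comparison dominates $e_2$ by a diagonal of $2n_0$ copies of $f_{1/8}(\psi(a))$, and almost stable rank one allows conjugation. Chaining with the preceding decomposition yields, for any $b\in\overline{DAD}_+^{\mathbf 1}$, elements $z_j\in\overline{DAD}$ with $j\le 2n_0 N_0(c) = N(c)$, $\|z_j\|\le 2M_0(c)=M(c)$, and $\|\sum_j z_j^*\psi(c)\, z_j - b\|<\varepsilon$.

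Finally, the trace bound follows by taking $t\in T(D)$ of the displayed approximate equation and using $t(y^* e y)\le \|y\|^2\, t(e)$ for $e\ge 0$, together with $f_{1/2}\le f_{1/4}$ and $t(f_{1/16}(\psi(a)))\ge t(f_{1/4}(\psi(a)))\ge \mathfrak{f}_a$; this gives $t(f_{1/4}(\psi(c)))\ge \mathfrak{f}_a/(4 N_0(c) M_0(c)^2)$, of the claimed form. The principal obstacle is executing the $\overline{DAD}$-side argument of Theorem~\ref{Tqcfull} while keeping $n_0$, and hence $N(c),M(c)$, dependent only on $\mathfrak{f}_a$ and $c\in A$, independent of the specific $D$ produced; this uniformity is precisely what the defining lower bound $t(f_{1/4}(\psi(a)))\ge \mathfrak{f}_a$ of the class ${\cal D}_0$ (or ${\cal D}$) supplies, since that constant $\mathfrak{f}_a$ is attached to the fixed strictly positive element $a\in A$ and is independent of every admissible $D$.
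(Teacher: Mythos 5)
Your proposal is correct and follows essentially the same route as the paper: the paper defines $T$ by exactly the same device (simplicity witnesses $x_i(b)$ with $\sum_i x_i(b)^*bx_i(b)=f_{1/32}(a)$, an integer $n_0$ chosen so that $n_0\mathfrak{f}_a\ge 4$, and $N(b)=n_0N_0(b)$, $M(b)=2M_0(b)$), then invokes the ${\cal D}_0$-decomposition with the trace bound $\tau(f_{1/4}(\psi(a)))\ge\mathfrak{f}_a$, and concludes fullness by citing the argument of Theorem \ref{Tqcfull} together with Remark \ref{Rqcfull}, which is precisely the absorption argument (via \ref{Lrorm}, \ref{Lbkqc}, strict comparison and almost stable rank one) that you spell out, followed by the same closing trace estimate. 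The only difference is cosmetic: you rederive the \ref{Tqcfull}-type step explicitly where the paper merely cites it, with slightly different cut-off functions and constants.
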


\begin{proof}
Since $A$ is simple, for any $b\in A_+\setminus \{0\},$ there exists $N_0(b)\in \N,$
$M_0(b)>0$  and $x_1(b), x_2(b),...,x_{N_0(b)}(b)\in A$ such that $\|x_i(b)\|\le M_0(b)$ and
\beq\label{UnifomfullTAD-n1}
\sum_{i=1}^{N_0(b)} x_i(b)^* bx_i(b)=f_{1/32}(a).
\eneq
Let $\mathfrak{f}_a>0$ be given  as in the definition of \ref{Dtr1div}.

Let $n_0\ge 1$ be an integer such that $n_0\mathfrak{f}_a\ge 4.$

Set $N(b)=n_0N_0(b)$ and $M(b)=2M_0(b)$ for all $b\in A_+\setminus \{0\}.$
Let $T: A_+\setminus \{0\}\to \N\times \R_+\setminus \{0\}$ be defined
by $T(b)=(N(b), M(b))$ for all $b\in A_+\setminus \{0\}.$

Choose $\dt_0>0$ and finite subset ${\cal G}_0\subset A$ such that
\beq\label{UnifomfullTAD-n2}
\|\sum_{i=1}^{N_0}\psi( x_i(b))^* \psi(b)\psi(x_i(b))-f_{1/32}(\psi(a))\|<1/2^{10}\rforal b\in {\cal F}_0,
\eneq
provided $\psi$ is a ${\cal G}_0$-$\dt_0$-multiplicative \cpc\, from $A$ into a \CA .

Let $\ep>0$ and a finite subset ${\cal F}\subset A$ be given.
Let $\dt=\min\{\ep/4, \dt_0/2\}$ and ${\cal G}={\cal F}\cup {\cal G}_0\cup \{a, f_{1/4}(a)\}.$
Let $n\ge 1$ be an integer and let $b_0\in A_+\setminus \{0\}.$

By the assumption one has the following:
 there are ${\cal G}$-$\dt$-multiplicative \cpc s $\phi: A\to A$ and  $\psi: A\to D$  for some
\SCA\, $D\subset A$ such that
\beq\label{UnifomfullTAD-n3}
&&\|x-\diag(\phi(x), \psi(x))\|<\ep\rforal x\in {\cal G},\\\label{UnifomfullTAD-n4}
&& D\in {\cal C}_0^{0'}\,({\rm or}\,\,\,{\cal C}_0'),\\\label{UnifomfullTAD-n5}
&&\phi(a)\lesssim b_0,\\\label{UnifomfullTAD-n6}
&& \|\psi(x)\|\ge (1-\ep)\|x\|\rforal x\in {\cal F}\andeqn
\eneq
$\psi(a)$ is strictly positive in $D,$ and
\beq\label{UnifomfullTAD-n7}
\tau(f_{1/4}(\psi(a)))\ge \mathfrak{f}_a\rforal \tau\in T(D).
\eneq
At this point, we can apply  the argument of \ref{Tqcfull} and its remark \ref{Rqcfull} to conclude
that $\psi$ is $T$-${\cal F}_0\cup \{f_{1/4}(a_0)\}$-full. Lemma then follows.

\end{proof}

\begin{thm}\label{T1D0}
Let ${\cal S}={\cal C}_0', $ or ${\cal C}_0^{0'}.$
Let  $A$ be a non-unital separable  simple \CA\, in  TA${\cal S}$  which is quasi-compact.
Then  $A\in {\cal D}$ (or ${\cal D}_{0}$).
\end{thm}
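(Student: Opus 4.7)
Plan: I would convert the fullness condition from the TA${\cal S}$ decomposition into a uniform trace lower bound, using the quasi-compactness of $A$. First fix a strictly positive element $a_0\in A$ with $\|a_0\|=1$. By \ref{compactrace}, quasi-compactness gives $\overline{T(A)}^w$ compact with $0$ outside and $d:=\inf_\tau d_\tau(a_0)>0$. A compactness argument (continuity of $\tau\mapsto\tau(f_\eta(a_0))$ combined with monotone pointwise convergence to $d_\tau(a_0)$, as in \ref{Csemicon}) yields $\eta_0>0$ with $\tau(f_{\eta_0}(a_0))\ge d/2$ uniformly. I then rescale via functional calculus, replacing $a_0$ by $a:=g(a_0)$ for an appropriate continuous strictly increasing $g:[0,\eta_0]\to[0,1]$ extended by $1$ on $[\eta_0,1]$; this arranges $f_{1/4}(a)\ge f_{\eta_0}(a_0)$, so $\tau(f_{1/4}(a))\ge d/2$ uniformly on $\overline{T(A)}^w$. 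I set $\mathfrak{f}_a:=d/4$.

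Given $(\ep,{\cal F},b)$ in the definition of ${\cal D}$, the plan is to apply the TA${\cal S}$ hypothesis to refined parameters $(\ep',{\cal F}',b')$: $\ep'\ll\ep$; ${\cal F}'\supset{\cal F}\cup\{a,f_{1/4}(a),a^{1/m}:m\le M\}$ with $M$ large; and $b'\lesssim b$ with $d_\tau(b')$ uniformly small on $\overline{T(A)}^w$ (achievable since $A$ is simple and non-elementary: take many mutually orthogonal Cuntz-equivalent subelements of $b$, whose dimension functions must sum inside the bounded $d_\tau(a)$). The resulting $(\phi,\psi,D)$ satisfies all TA${\cal S}$ conditions, $\psi$ is highly multiplicative on the powers $a^{1/m}$, and the estimate $\tau(\phi(a))\le d_\tau(\phi(a))\le d_\tau(b')$ is uniformly small for $\tau\in\overline{T(A)}^w$.

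The central claim—that $t(f_{1/4}(\psi(a)))\ge\mathfrak{f}_a$ for every $t\in T(D)$—I would prove by contradiction. Assume failure and iterate the construction with $(\ep_n,{\cal F}_n,b_n)$ where $\ep_n\to 0$, $\bigcup{\cal F}_n$ is dense, and $d_\tau(b_n)\to 0$ uniformly; this produces $(\phi_n,\psi_n,D_n)$ and bad $t_n\in T(D_n)$. Fix a free ultrafilter $\omega$ and define $\tau_\omega(x):=\lim_\omega t_n(\psi_n(x))$. This is positive, bounded by $1$, and tracial on $A$ (by approximate multiplicativity of $\psi_n$ together with $\ep_n\to 0$). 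Provided $\|\tau_\omega\|=1$, one obtains $\tau_\omega\in T(A)\subset\overline{T(A)}^w$, giving $\tau_\omega(f_{1/4}(a))\ge d/2$; on the other hand, approximate multiplicativity yields $\tau_\omega(f_{1/4}(a))\approx\lim_\omega t_n(f_{1/4}(\psi_n(a)))\le\mathfrak{f}_a=d/4$, the desired contradiction.

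The main obstacle is establishing $\|\tau_\omega\|=1$: ultralimits of tracial states composed with cpc maps are typically subnormalized, which would make the contradiction vacuous (the resulting subtracial $\tau_\omega=c\tau$ with $c$ too small to force the lower bound $cd/2>d/4$). The resolution will exploit the strict positivity of $\psi_n(a)$ in $D_n$, so that $\lim_m t_n(\psi_n(a)^{1/m})=1$ for each $n$, together with the multiplicativity of $\psi_n$ on $\{a^{1/m}\}_{m\le M_n}$ giving $\psi_n(a^{1/m})\approx\psi_n(a)^{1/m}$. A diagonalization—choosing $m_n\to\infty$ slowly with $t_n(\psi_n(a)^{1/m_n})\ge 1-1/n$ and ensuring $a^{1/m_n}\in{\cal F}_n$—transfers the approximate-identity behaviour of $\psi_n(a)$ in $D_n$ to normalization of $\tau_\omega$ on $A$. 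Once this delicate step is completed, the contradiction outlined above concludes the proof that $A\in{\cal D}$ (respectively $A\in{\cal D}_0$).
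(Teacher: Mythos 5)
Your argument has a genuine gap, and it sits exactly at the step you yourself flag as ``delicate''. In the ultralimit argument, $\|\tau_\omega\|=\sup_m\tau_\omega(a^{1/m})=\sup_m\lim_\omega t_n(\psi_n(a)^{1/m})$, where each inner limit must be taken at a \emph{fixed} $m$. Strict positivity of $\psi_n(a)$ in $D_n$ only gives $t_n(\psi_n(a)^{1/m})\to 1$ as $m\to\infty$ for each fixed $n$, and nothing in the TA${\cal S}$ data makes this convergence uniform in $n$: the bad traces $t_n$ may concentrate on parts of $D_n$ where $\psi_n(a)$ has more and more spectrum near $0$, so that for every fixed $m$ the ultralimit stays bounded away from $1$ (or even tends to $0$). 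The diagonal quantity $\lim_\omega t_n(\psi_n(a)^{1/m_n})$ with $m_n\to\infty$ is not $\tau_\omega$ evaluated at any element of $A$, so it gives no lower bound on $\|\tau_\omega\|$; thus the contradiction $\tau_\omega(f_{1/4}(a))\ge d/2$ versus $\le d/4$ never gets off the ground. Fullness of $f_{1/4}(\psi_n(a))$ in $D_n$ is purely qualitative (it gives a positive but $n$-dependent lower bound on traces), and converting it into a bound uniform over $T(D_n)$ is precisely the content separating TA${\cal S}$ from ${\cal D}$. A secondary problem: your choice $\mathfrak{f}_a=d/4$ presupposes $T(A)\neq\emptyset$ and $d=\inf\{d_\tau(a_0):\tau\in\overline{T(A)}^w\}>0$, which are not hypotheses of \ref{T1D0}; in the paper such trace properties are \emph{consequences} of membership in ${\cal D}$ (see \ref{PD0qc}), so establishing them first runs a real risk of circularity.

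The paper closes the gap by using quasi-compactness in a way your proposal never does. By \ref{subcommatrix} there are a partial isometry $w\in M_N(A)^{**}$ and $e\in M_N(A)_+$ with $e\,w^*xw=w^*xw$ for all $x\in A$, and then one fixes a single algebraic fullness identity $\sum_{i=1}^m y_i^*f_{1/4}(w^*aw)y_i=e$ with $m$ and $M=\max_i\|y_i\|$ chosen once and for all. Applying the TA${\cal S}$ hypothesis to $M_N(A)$ and pushing this fixed identity through the approximately multiplicative maps $\psi_n$ (the elements $e$, $y_i$, $f_{1/4}(w^*aw)$ are put into the finite sets from the start), together with the fact that $e$ dominates every positive contraction of $w^*Aw$, forces every tracial state of $D_n$ (after a weak$^*$-limit/subsequence argument) to satisfy $t(f_{1/4}(\psi_n(w^*aw)))\ge c/Mm^2$ for a universal constant $c$; this is the uniform $\mathfrak{f}_a$, obtained without ever invoking traces on $A$. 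The decomposition is then transported from $M_N(A)$ back into $A$ via \ref{LRL}. If you want to keep your ultralimit framework, you would need to import exactly such a uniformly bounded fullness identity (this is where $A=P(A)$, i.e.\ quasi-compactness, really enters), not merely the compactness of $\overline{T(A)}^w$.
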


\begin{proof}
Fix a strictly positive element $a\in A_+.$  We may assume that
there is $e_1\in M_N(A)$ with $0\le e_1\le 1$ and
a partial isometry $w\in  M_N(A)^{**}$
such that
\beq\label{T1D0-1}
w^*xwe_1=e_1w^*xw=w^*xw,\,\,\, x(ww^*)=(ww^*)x=x, w^*xw, xw\in M_N(A)
\eneq
for all $x\in A.$
We may also assume, \wilog, that there are $z_1', z_2',...,z_m'\in M_N(A)$
such that
$$
\sum_{i=1}^m(z_i')^*f_{1/4}(w^*aw)^{1/2}w^*awf_{1/4}(w^*aw)^{1/2}(z_i')=f_{1/8}(e_1).
$$
Therefore
\beq\label{T1D0-n1}
\sum_{i=1}^m f_{1/8}(e_1)(z_i')^*f_{1/4}(w^*aw)^{1/2}w^*awf_{1/4}(w^*aw)^{1/2}(z_i')f_{1/8}(e_1)=(f_{1/8}(e_1))^3.
\eneq
We may also assume that, there are $y_i\in \overline{f_{1/8}(e_1)M_N(A)f_{1/8}(e_1)},$ $i=1,2,...,m,$ such that
\beq\label{T1D0-n2}
\sum_{i=1}^m y_i^*f_{1/4}(w^*aw)y_i=f_{1/8}(e_1)^3.
\eneq
Set $e=f_{1/8}(e_1)^3$ and $z_i=z_i'f_{1/8}(e_1).$  Note that
$$
w^*xwe=ew^*xw=w^*xw\andeqn z_i\in \overline{eM_N(A)e}.
$$
Let $M=\max\{\|y_i\|+\|z_i\|+1: 1\le i\le m\}.$

Let $\mathfrak{f}_a=1/64Mm^2.$  Fix an integer $k\ge 1.$
Fix a finite subset ${\cal F}$ which contains $a$ and $f_{1/4}(a).$
Fix an element $a_0\in A\setminus \{0\}.$
Since $M_N(A)$ is TA${\cal S}$ (for ${\cal S}={\cal C}_0^{0'}, \, {\cal C}_0'$), by \ref{DpreTADone}, there exists a sequence
of $D_n\in {\cal S}$
and two sequences of \cpc s $\phi_n,\, \psi_n: M_N(A)\to M_N(A)$
such that
\beq\label{T1D0-10}
\lim_{n\to\infty}\|\phi_n(xy)-\phi_n(x)\phi_n(y)\|=0\andeqn\\\label{T1D0-11}
\lim_{n\to\infty}\|\psi_n(xy)-\psi_n(x)\psi_n(y)\|=0\rforal x, \, y\in M_N(A),\\\label{T1D0-12}
\lim_{n\to\infty}\|x-\diag(\phi_n(x),\psi_n(x))\|=0\rforal x\in M_N(A),\\
\label{T1D0-13}
\psi_n(x)\in D_n\rforal x\in M_N(A),\\\label{T1D0-14}
\lim_{n\to\infty}\|\psi_n(x)\|=\|x\|\rforal x\in M_N(A),\\\label{T1D0-15}
\phi_n(e)\lesssim a_0\\\label{T1D0-15+}
f_{1/4}(\psi_n(w^*aw))\,\,{\rm is\,\,\, full\,\,\, in}\,\,\, D_n.
\eneq

Note that
\eqref{T1D0-15} implies that
\beq\label{T1D0-15++}
\phi_n(w^*aw)\lesssim w^*a_0w.
\eneq

Put $A_1=\overline{(w^*aw)M_N(A)(w^*aw)}=w^*Aw$ and $A_2=\overline{eM_N(A)e}.$

There are $g_n\in  \psi_n(w^*aw)D_n\psi_n(w^*aw)$ with $0\le g_n\le 1$ such that
\beq\label{T1D0-16}
\lim_{n\to\infty}\|g_nxg_n-g_n\psi_n(w^*xw)g_n\|&=&0\rforal w^*xw\in M_N(A),\\
\lim_{n\to\infty}\|g_n\psi_n(w^*xw)-\psi_n(w^*xw)\|&=&0\rforal x\in A\andeqn\\
\lim_{n\to\infty}\|\psi_n(w^*xw)g_n-\psi(w^*xw)\| &=& 0 \rforal x\in A.
\eneq
It follows that
\beq\label{T1D0-16+}
&&\lim_{n\to\infty}\|g_nw^*xw-w^*xwg_n\|=0\,\,\, {\rm as\,\,\, well \,\,\, as}\\
&&\lim_{n\to\infty}\|w^*x^{1/2}wg_n^2w^*x^{1/2}w-\psi_n(w^*xw)\|=0\rforal x\in A.\label{T1D0-16++}
\eneq
From these, we also have
$$
\lim_{n\to\infty}\|\psi_n|_{A_1}\|=1\andeqn \lim_{n\to\infty}\|\psi_n|_{A_2}\|=1.
$$
Put $\psi_n'=(1/\|\psi_n|_{A_1}\|)\psi_n|_{A_2},$ $n=1,2,....$
Note
$$
\|\psi_n'\|\le 1\andeqn \lim_{n\to\infty}\|\psi_n'\|=1.
$$

Put
$$
\sigma_n=\|\sum_{i=1}^m\psi_n'(z_i)^*\psi_n'(f_{1/4}(w^*aw))\psi_n'(z_i)-f_{1/8}(\psi_n'(e))\|,\,\,\, n=1,2,....
$$
By \eqref{T1D0-11} and \eqref{T1D0-n2},  we also have
that
\beq\label{T1D0-17}
\lim_{n\to\infty}\sigma_n=0.
\eneq
\Wlog, we may assume that
\beq\label{T1D0-18}
\sigma_n< \min\{1/128 Mm^2\}\rforal n.
\eneq
Note that
$$
f_{1/32}(e)w^*xw=w^*xwf_{1/32}(e)=w^*xw\rforal x\in A.
$$
Put $H_n=\overline{\psi_n(w^*aw)D_n\psi_n(w^*aw)},$ $n=1,2,....$
Thus, for any tracial state of $t_n\in T(H_n),$
$t_n\circ \psi_n'$ is a state of $A_1.$ One can uniquely extended to a state on $A_2.$
 Since
$$
1\ge e\ge x\rforal 0\le x\le 1\andeqn x\in A_1,
$$
$$
t_n\circ \psi_n'(e)=1\rforal n.
$$
Take any weak*-limit $\tau$ of $\{t_n\circ \psi_n'\}.$
Then $\tau(e)=1.$
Moreover, $\tau$ is a trace on $A_2.$   By \eqref{T1D0-17},
\beq\label{T1D0-19}
\tau(f_{1/4}((w^*aw))\ge  1/4Mm^2\andeqn \tau(w^*aw))\ge 1/16Mm^2.
\eneq
We may assume, \wilog,  that, for all $n,$
\beq\label{T1D0-20}
t\circ (\psi_n(f_{1/4}(w^*aw)))\ge 1/5Mm^2\rforal t\in T(D_n).
\eneq
By \eqref{T1D0-11},
\beq\label{T1D0-20-1}
\lim_{n\to\infty}\|\psi_n(f_{1/4}(w^*aw))-f_{1/4}(\psi_n(w^*aw))\|=0.
\eneq
Therefore, \wilog, we may assume that, for all $n,$
\beq\label{T1D0-20-2}
t\circ (f_{1/4}(\psi_n(w^*aw)))\ge 1/6Mm^2\rforal t\in T(D_n).
\eneq
We also have that
\beq\label{T1D0-20+1}
&&\lim_{j\to\infty}\|w^*aw-f_{1/2j}(w^*aw)^{1/2}w^*awf_{1/2j}(w^*aw)^{1/2}\|=0
\andeqn\\\label{T1D0-20+2}
&&\lim_{j\to\infty}\|w^*xw-f_{1/2j}(w^*aw)^{1/2}w^*xwf_{1/2j}(w^*aw)^{1/2}\|=0
\rforal  x\in A.
\eneq

Put $L_n(a)=\diag(\phi_n(w^*aw), ,\psi_n(w^*aw)).$
It follows from \eqref{T1D0-12} and \ref{LRL}  that,
for any $j\ge 2,$  there exists $n(j)\ge j$ and  a partial isometry
$v_{j}\in M_N(A)^{**}$ such that
\beq\label{T1D0-23}
&&\hspace{-0.8in}v_{j}v_{j}^*f_{1/2j}(L_{n(j)}(w^*aw))=f_{1/2j}(L_{n(j)}(w^*aw)v_jv_{j}^*=f_{1/2j}(L_{n(j)}(w^*aw)),\\
&&\hspace{-0.8in}v_{j}^*cv_{j}\in A_1\rforal c\in \overline{f_{1/2j}(L_{n(j)}(w^*aw))M_N(A)f_{1/2j}(L_{n(j)}(w^*aw))}
\andeqn\\\label{T1D0-23+}
&&\hspace{-0.9in}\lim_{j\to\infty}\sup\{\|v_{j}^*cv_{j}-c\|: 0\le c\le 1\andeqn c\in\overline{f_{1/2j}(L_{n(j)}(w^*aw))Af_{1/2j}(L_{n(j)}(w^*aw))}\}=0.
\eneq
Note that $f_{1/2j}(\psi_{n(j)}(w^*aw))\le f_{1/2j}(L_{n(j)}(w^*aw)),$ $j=1,2,....$
It follows that
$$
v_{j}^*cv_{j}\in A_1
$$
for all $c\in \overline{f_{1/2j}(\psi_{n(j)}(w^*aw)Af_{1/2j}(\psi_{n(j)}(w^*aw))}.$
By \eqref{T1D0-20-2},
$f_{1/2j}(\psi_{n(j)}(w^*aw))$ is full in $D_{n(j)}$ for all $j\ge 2.$
Put
$$
E_{n(j)}'=\overline{f_{1/2j}(\psi_n(w^*aw))D_{n(j)}f_{1/2j}(\psi_n(w^*aw))},
\,\,\,j=2,3,....
$$
Then $E_{n(j)}'\in {\cal S},$ $j=2,3,....$
Put
$$
E_{j}=v_{j}^*E_{n(j)}'v_{j},\,\,\,j=j_0+1,j_0+2,....
$$
Then $E_{j}\in {\cal S}$ and $E_{j}\subset A_1,$ $j=j_0+1, j_0+2,....$

Define $\Phi_{j}=v^*_j\phi_{n(j)}v_j^*$
and $\Psi_j: A_1\to E_j,$ by $\Psi_j(x)=v_jf_{1/2j}(\psi(w^*aw)\psi_{n(j)}'(x)f_{1/2j}(\psi_{n(j)}(w^*aw)v_j^*,$ $j=3, 4,....$
Note that $\Psi_j$ maps $A_1$ into $E_{j}$ with
$\|\Psi_j\|=1,$ $j=1,2,....$
We have
\beq\label{T1D0-25}
\lim_{j\to\infty}\|\Phi_{j}(xy)-\Phi_{j}(x)\Phi_j(y)\|=0\rforal x, y\in A_1,\\
\limsup_{j\to\infty}\|\Psi_j(xy)-\Psi_j(x)\Psi_j(y)\|=0\rforal x,y\in A_1.
\eneq
Moreover,  by applying \eqref{T1D0-12}, \eqref{T1D0-23+} and \eqref{T1D0-20+2},
\beq\label{T1D0-27}
\limsup_{j\to\infty}\|x-\diag(\Phi_j(x),\Psi_j(x))\|=0\rforal x\in A_1.
\eneq
We also have
that
$$
\Psi_j(w^*aw)\lesssim a_0.
$$
Moreover, by \eqref{T1D0-20-2},
$$
t\circ f_{1/4}(\Psi_j(w^*aw))\ge \mathfrak{f}_a\rforal t\in T(E_{n(j)}).
$$
The theorem follows from the fact that $a\mapsto w^*aw$ ($a\in A$) is an isomorphism
from $A_1$ onto $A$ (and choosing a sufficiently large $j$).

\end{proof}

\begin{prop}\label{Phered}
Let ${\cal S}$ denote ${\cal C}_0^{0'},$ or ${\cal C}_0'.$
Let $A$ be a non-unital separable simple \CA\, and let $B\subset A$ be a hereditary \SCA.
Then

(1) If $A$ is TA${\cal S},$ so is $B;$

(2) If $A$ is in ${\cal D}$ (or ${\cal D}_{0}$), so is $B.$

\end{prop}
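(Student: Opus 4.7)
The plan is as follows. Both parts (1) and (2) are handled in parallel, using the same partial-isometry technique that appeared in the proof of Theorem \ref{T1D0}. Let $e_B \in B_+$ be strictly positive in $B$ with $\|e_B\|=1$. Given $\ep>0$, a finite set ${\cal F}\subset B^{\bf 1}$ with $e_B \in {\cal F}$, and $b_0 \in B_+ \setminus \{0\}$, first pick $0<\eta<1/8$ small enough that $\|f_\eta(e_B)^{1/2} x f_\eta(e_B)^{1/2} - x\| < \ep/16$ for all $x \in {\cal F}$, and set ${\cal F}_A := {\cal F} \cup \{e_B, f_\eta(e_B), f_{\eta/2}(e_B)\}$, viewed inside $A$. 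Now apply the TA${\cal S}$ (resp.\ ${\cal D}$ or ${\cal D}_0$) hypothesis on $A$ to ${\cal F}_A$ with small tolerance $\eta_1 \ll \ep$ and small element $b_0$; by Proposition \ref{DpreTADone} applied with ${\cal F}_0 \supset \{f_\eta(e_B)\}$, one may further arrange that $f_{1/4}(\psi_A(f_\eta(e_B)))$, and hence $\psi_A(f_\eta(e_B))$ itself, is full in $D_A$.

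Next, put $L := \phi_A + \psi_A$ and note $\|L(f_\eta(e_B)) - f_\eta(e_B)\| < \eta_1$. Invoke Lemma \ref{LRL} on this pair to produce a partial isometry $v \in A^{**}$ and small $\sigma>0$ with the property that $v^*(\cdot)v$ carries the hereditary \SCA\ $H := \overline{f_\sigma(L(f_\eta(e_B))) A f_\sigma(L(f_\eta(e_B)))}$ into $\overline{f_\eta(e_B) A f_\eta(e_B)} = B$ and acts as a near-identity on $H$. Because $\phi_A(f_\eta(e_B))$ and $\psi_A(f_\eta(e_B))$ are orthogonal positives, $H$ decomposes as the orthogonal direct sum of the hereditary subalgebras generated by each summand; in particular $D_A'' := \overline{f_\sigma(\psi_A(f_\eta(e_B))) D_A f_\sigma(\psi_A(f_\eta(e_B)))}$ is a full hereditary \SCA\ of $D_A$ for appropriately small $\sigma$, hence in ${\cal S}$, and it sits inside $H$. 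Conjugation by $v^*(\cdot)v$ carries $D_A''$ isomorphically to a \SCA\ $D_B \subset B$ with $D_B \in {\cal S}$. Then define the cut-down maps
\begin{equation*}
\psi_B(x) := v^* \bigl[f_\sigma(\psi_A(f_\eta(e_B)))^{1/2}\, \psi_A(x)\, f_\sigma(\psi_A(f_\eta(e_B)))^{1/2}\bigr] v \qquad (x \in B),
\end{equation*}
and $\phi_B(x)$ analogously using $\phi_A$ and $f_\sigma(\phi_A(f_\eta(e_B)))$. Combining approximate multiplicativity of $\psi_A, \phi_A$ with the near-identity behaviour of both the functional-calculus cut and of $v^*(\cdot)v$ on the relevant hereditary subalgebras, one checks $\psi_B(x) \approx \psi_A(x)$ and $\phi_B(x) \approx \phi_A(x)$ for $x \in {\cal F}$, so that $\phi_B(x) + \psi_B(x) \approx x$ in $B$.

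Part (1) now reduces to verification: $\psi_B(B) \subset D_B$ and $\phi_B(B) \perp D_B$ in $B$ by construction, $\phi_B(e_B) \lesssim b_0$ is inherited from $\phi_A(a) \lesssim b_0$ (with $a$ strictly positive in $A$ and $e_B$ dominated in the Cuntz sense by $a$), the fullness of $f_{1/4}(\psi_B(e_B))$ in $D_B$ transports from the corresponding property in $D_A''$ via the $v^*(\cdot)v$ isomorphism, and the norm-lower bound on $\psi_B$ is arranged by taking $\eta_1$ small. For part (2), the additional trace condition $t(f_{1/4}(\psi_B(e_B))) \ge \mathfrak{f}_{e_B}$ for all $t \in T(D_B)$ is obtained from the corresponding bound on $D_A$: via the isomorphism $D_B \cong D_A''$ traces correspond, and since $A$ is simple the element $f_\eta(e_B)$ is full in $A$, so the argument of Theorem \ref{UnifomfullTAD} applied with ${\cal F}_0 = \{f_\eta(e_B)\}$ yields a uniform lower bound $t(f_{1/4}(\psi_A(f_\eta(e_B)))) \ge \mathfrak{f}_a/(N(e_B)M(e_B)^2)$, which we take as $\mathfrak{f}_{e_B}$. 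The main obstacle is the bookkeeping in paragraph two: one must choose ${\cal F}_A$ and the parameters $\eta_1, \sigma$ finely enough relative to $\eta$ that the compounded errors from approximate multiplicativity, the two functional-calculus cuts by $f_\sigma$ applied to $\phi_A(f_\eta(e_B))$ and $\psi_A(f_\eta(e_B))$, and the conjugation by $v$ do not exceed $\ep$ on the given ${\cal F}$. This is an elaboration of the same bookkeeping carried out in the proof of Theorem \ref{T1D0}.
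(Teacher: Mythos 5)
Your proposal is correct and follows essentially the same route as the paper's proof: apply the decomposition of $A$ (via \ref{DpreTADone}/\ref{UnifomfullTAD}) to test data built from a strictly positive element of $B$, use the partial isometry from Lemma \ref{LRL} to carry the cut-down maps and a full hereditary \SCA\ of $D_A$ (still in ${\cal S}$) into $B$, and transport the fullness/trace bounds, exactly as in the proof of Theorem \ref{T1D0}. The only differences are cosmetic (a fixed cut $f_\eta(e_B)$ and square-root cut-downs instead of the paper's $f_{1/2j}(\psi_n(b))$ with $j\to\infty$), and the bookkeeping you defer is the same as the paper's.
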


\begin{proof}
Let ${\cal S}$ denote ${\cal C}_0^{0'},$ or ${\cal C}_0'.$
Let $b\in A_+$ with $\|b\|=1$ and $B=\overline{bBb}.$
Let $e\in A_+$ be a strictly positive element with $\|e|=1.$
We assume that $\|b\|=1$ and $\|e\|=1.$
Fix $b_0\in B_+\setminus \{0\}.$

In both cases (1) and (2),  by \ref{DpreTADone},
there exists
a sequence
of $D_n\in {\cal S}$
and two sequences of \cpc s $\phi_n,\, \psi_n: A\to A$
such that
\beq\label{Phered-1}
\lim_{n\to\infty}\|\phi_n(xy)-\phi_n(x)\phi_n(y)\|=0\andeqn\\
\lim_{n\to\infty}\|\psi_n(xy)-\psi_n(x)\psi_n(y)\|=0\rforal x, \, y\in A,\\\label{Phered-2}
\lim_{n\to\infty}\|x-\diag(\phi_n(x),\psi_n(x))\|=0\rforal x\in A,\\
\phi_n(e)\lesssim b_0,\\\label{Phered-2+}
\lim_{n\to\infty}\|\psi_n(x)\|=\|x\|\rforal x\in  A,
\eneq
$f_{1/4}(\psi_n(b))$ is full in $D_n,$  and $\psi_n(e)$ is a strictly positive element of $D_n,$ $n=1,2,....$
Moreover, in the case (2),  by \ref{UnifomfullTAD}, we can also have
\beq\label{Phered-1n}
t\circ f_{1/4}(\psi_n(e))\ge \mathfrak{f}_e,\,\,\, t\circ f_{1/4}(\psi_n(b))\ge r_0
\eneq
for all $t\in T(D_n)$ and $n,$   where  $r_0$ is previously given
(as $\mathfrak{f}_e/4\inf\{M(c)\cdot N^2(c): c=\{b, f_{1/4}(e)\}\}$).


By \eqref{Phered-2+},
$$
\lim_{n\to\infty}\|\psi_n|_{B}\|=1.
$$

We also have that
\beq\label{Phered-20+1}
&&\lim_{j\to\infty}\|b-f_{1/2j}(b)^{1/2}bf_{1/2j}(b)^{1/2}\|=0
\andeqn\\\label{Phered-20+2}
&&\lim_{j\to\infty}\|x-f_{1/2j}(b)^{1/2}xf_{1/2j}(b)^{1/2}\|=0
\rforal  x\in B.
\eneq

Put $L_n(x)=\diag(\phi_n(x),\psi_n(x))$ for all $x\in A.$
As in the proof of \ref{T1D0}, by applying \ref{LRL},
for any $j\ge 2,$  there exists $n(j)\ge j$ and  a partial isometry
$v_{j}\in A^{**}$ such that
\beq\label{Phere-23}
&&\hspace{-0.8in}v_{j}v_{j}^*f_{1/2j}(L_{n(j)}(b))=f_{1/2j}(L_{n(j)}(b))v_jv_{j}^*=f_{1/2j}(L_{n(j)}(b)),\\
&&\hspace{-0.8in}v_{j}^*cv_{j}\in B\rforal c\in \overline{f_{1/2j}(L_{n(j)}(b))Af_{1/2j}(L_{n(j)}(b))}
\andeqn\\\label{Phere-23+}
&&\hspace{-0.8in}\lim_{j\to\infty}\sup\{\|v_{j}^*cv_{j}-c\|: 0\le c\le 1\andeqn c\in
\overline{f_{1/2j}(L_{n(j)}(b))Af_{1/2j}(L_{n(j)}(b))}\}=0.
\eneq
Note that $f_{1/2j}(\psi_{n(j)}(b))\le f_{1/2j}(L_{n(j)}(b)),$ $j=1,2,....$
It follows that
$$
v_{j}^*cv_{j}\in B
$$
for all $c\in \overline{f_{1/2j}(\psi_{n(j)}(b)Af_{1/2j}(\psi_{n(j)}(b))}.$
Since $f_{1/4}(\psi_{n(j)}(f_{1/4}(b)))$ is full in $D_{n(j)},$
$f_{1/2j}(\psi_{n(j)}(f_{1/4}(b)))$ is full in $D_{n(j)}$ for all $j\ge 2.$
Put
$$
E_{n(j)}'=\overline{f_{1/2j}(\psi_n(b))D_{n(j)}f_{1/2j}(\psi_n(b))},
\,\,\,j=2,3,....
$$
Then $E_{n(j)}'\in {\cal S},$ $j=2,3,....$
Put
$$
E_{j}=v_{j}^*E_{n(j)}'v_{j},\,\,\,j=3,4,....
$$
Then $E_{j}\in {\cal S}$ and $E_{j}\subset B,$ $j=3, 4,....$

Define $\Phi_{j}=v^*_j\phi_{n(j)}v_j^*$
and $\Psi_j: B\to E_j,$ by $\Psi_j(x)=v_jf_{1/2j}(\psi_{n(j)}(b))\psi_{n(j)}(x)f_{1/2j}(\psi_{n(j)}(b))v_j^*,$ $j=3, 4,....$
For $j>4,$
\beq\label{Phere-n10}
&&\hspace{-0.4in}f_{1/4}(f_{1/2j}(\psi_{n(j)}(b))\psi_{n(j)}(b)f_{1/2j}(\psi_{n(j)}(b)))=f_{1/4}(\psi_{n(j)}(b))\\\label{Phere-n11}
\hspace{0.4in} &&=
 f_{1/2j}(\psi(b))f_{1/4}(\psi_{n(j)}(b))f_{1/2j}(\psi_{n(j)}(b)).
\eneq
It follows that
$f_{1/4}(\Psi_j(b))$ is full in $E_j,$ $j=4,5,....$
We have
\beq\label{Phere-n13}
\lim_{j\to\infty}\|\Phi_{j}(xy)-\Phi_{j}(x)\Phi_j(y)\|&=&0\rforal x, y\in B\andeqn\\
\limsup_{j\to\infty}\|\Psi_j(xy)-\Psi_j(x)\Psi_j(y)\|&=&0\rforal x,y\in B.
\eneq
Moreover,  by applying \eqref{Phered-2}, \eqref{Phere-23+} and \eqref{Phered-20+2},
\beq\label{Phere-n14}
\limsup_{j\to\infty}\|x-\diag(\Phi_j(x),\Psi_j(x))\|&=&0\rforal x\in B\andeqn\\
\lim_{n\to\infty}\|\Psi_n(x)\|&=&\|x\|\rforal x\in  B.
\eneq
We also have
that
$$
\Phi_j(b)\lesssim b_0.
$$
These implies that $B$ is  in TA${\cal S}.$
Moreover,  in case (2), by \eqref{Phered-1n} and \eqref{Phere-n10},
$$
t\circ f_{1/4}(\Psi_j(b))\ge r_0/2\rforal t\in T(E_{n(j)}).
$$

The proposition follows when  one chooses  a sufficiently large $j.$



\end{proof}

\begin{prop}\label{PtadMk}
Let $A\in {\cal D}$ (or ${\cal D}_{0}$) which is   quasi-compact.  Then, for every integer $k\ge 1,$ $M_k(A)\in {\cal D}$ (or
${\cal D}_0,$).
\end{prop}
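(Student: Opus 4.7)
The plan is to derive $M_k(A)\in{\cal D}$ (resp.\ ${\cal D}_0$) as a consequence of Theorem~\ref{T1D0} applied to $M_k(A)$. That theorem requires $M_k(A)$ to be non-unital, separable, simple, quasi-compact, and in TA${\cal C}_0'$ (resp.\ TA${\cal C}_0^{0'}$). Non-unitality, separability, and simplicity are immediate from the corresponding properties of $A$. Quasi-compactness of $M_k(A)$ follows from Theorem~\ref{Tpedersen}, which identifies quasi-compactness with $P(A)=A$, together with the identity $P(M_k(A))=M_k(P(A))$. For the TA condition, Proposition~\ref{PD0=tad} converts $A\in{\cal D}$ (resp.\ ${\cal D}_0$) into the statement that $A$ is TA${\cal C}_0'$ (resp.\ TA${\cal C}_0^{0'}$), and Proposition~\ref{PMnTAD} propagates this to $M_k(A)$. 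Theorem~\ref{T1D0} then yields $M_k(A)\in{\cal D}$ (resp.\ ${\cal D}_0$).

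The essential technical content sits in Proposition~\ref{PMnTAD}, whose proof proceeds by direct amplification. Given data $(\epsilon,{\cal F},b)$ for $M_k(A)$, one chooses a finite ${\cal F}_0\subset A$ so that the matrix entries of elements of ${\cal F}$ lie in ${\cal F}_0$, selects $b_0\in A_+\setminus\{0\}$ with $b_0\otimes 1_k\lesssim b$ in $M_k(A)$, and applies the TA-decomposition of $A$ to $(\epsilon/k^2,{\cal F}_0\cup\{a\},b_0)$ to obtain $\phi_A\colon A\to A$ and $\psi_A\colon A\to D$ with $D\in{\cal C}_0'$ (resp.\ ${\cal C}_0^{0'}$). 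The amplified maps $\Phi:=\phi_A\otimes\id_{M_k}$ and $\Psi:=\psi_A\otimes\id_{M_k}$, with target $M_k(D)\subset M_k(A)$, then supply the required decomposition: the approximation and near-multiplicativity transfer directly, $\Phi(a\otimes 1_k)=\phi_A(a)\otimes 1_k\lesssim b_0\otimes 1_k\lesssim b$, and $M_k(D)\in{\cal C}_0'$ (resp.\ ${\cal C}_0^{0'}$) because if $D$ is a full hereditary $C^*$-subalgebra of $C=A(F_1,F_2,\psi_0,\psi_1)\in{\cal C}_0$ (resp.\ ${\cal C}_0^0$), then $M_k(D)$ is a full hereditary $C^*$-subalgebra of $M_k(C)\cong A(M_k(F_1),M_k(F_2),M_k(\psi_0),M_k(\psi_1))$, which remains in ${\cal C}_0$ (resp.\ ${\cal C}_0^0$) since the $K$-theoretic and tracial conditions defining ${\cal C}_0$ (resp.\ ${\cal C}_0^0$) are preserved under matrix amplification.

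The hard step, which I expect to be the main obstacle, is the selection of $b_0$. I would exploit that $A$ is simple and non-elementary---the latter being forced by the TA${\cal C}_0'$-structure, since the subalgebras $D\in{\cal C}_0'$ are stably projectionless and hence non-elementary. The hereditary subalgebra $\overline{bM_k(A)b}$ of $M_k(A)$ is then $\sigma$-unital, simple, and non-elementary, so it contains $k$ pairwise orthogonal, pairwise Cuntz-equivalent nonzero positive elements $b_1,\ldots,b_k$. Since $b_1\ge 0$ is nonzero in $M_k(A)$, some diagonal entry $(b_1)_{ii}\in A_+\setminus\{0\}$, and I set $b_0:=(b_1)_{ii}$. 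Using approximate compression by $e_\lambda\otimes e_{ii}$ (where $\{e_\lambda\}$ is an approximate identity of $A$) together with Lemma~\ref{Lrorm}, one obtains $\langle b_0\rangle\le\langle b_1\rangle$ in $Cu(M_k(A))\cong Cu(A)$; hence $k\langle b_0\rangle\le k\langle b_1\rangle=\langle b_1+\cdots+b_k\rangle\le\langle b\rangle$, i.e., $b_0\otimes 1_k\lesssim b$ in $M_k(A)$, closing the argument.
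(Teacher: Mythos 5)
Your argument is correct and is essentially the intended one: the paper gives no separate proof of this proposition, evidently regarding it as the immediate assembly of \ref{PD0=tad}, \ref{PMnTAD} and \ref{T1D0} together with quasi-compactness of $M_k(A)$ (your route via \ref{Tpedersen} and $P(M_k(A))=M_k(P(A))$), which is exactly the chain you run. In your optional fleshing-out of \ref{PMnTAD}, the selection of $b_0$ with $b_0\otimes 1_k\lesssim b$ is sound and is indeed the only non-trivial point; the one condition of Definition \ref{Dtr1div} you leave untouched is $\|\psi(x)\|\ge (1-\ep)\|x\|$, which does not follow from entrywise estimates (they only give a loss of a factor $1/k$) but is recovered by the simplicity/approximate-multiplicativity argument used at the end of the proof of \ref{PD0=tad} --- or avoided altogether by simply citing \ref{PMnTAD}, as your main chain in fact does.
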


\section{
Traces and the comparison  in \CA s in ${\cal D}$}

\begin{prop}\label{PD0qc}
Let $A$ be a separable simple \CA\, in ${\cal D}.$
Then $QT(A)=T(A).$ Moreover, $0\not\in \overline{T(A)}^w.$
\end{prop}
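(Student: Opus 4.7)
The plan is to prove the two assertions $QT(A)=T(A)$ and $0\notin \overline{T(A)}^w$ separately, both exploiting the tracial decomposition furnished by Definition \ref{DNtr1div}.

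For $QT(A)=T(A)$: the approximating algebras $D\in {\cal C}_0'$ (or ${\cal C}_0^{0'}$) are full hereditary \SCA s of one-dimensional NCCW complexes, which are subhomogeneous and therefore exact. By Haagerup's theorem every $2$-quasitrace on an exact \CA\ is a trace, so for any $\tau\in QT(A)$ the restriction $\tau|_D$ is automatically linear. Given $\tau\in QT(A)$ and $x,y\in A_{s.a.}$, I apply Definition \ref{DNtr1div} with ${\cal F}=\{x,y\}$, small $\epsilon$, and an auxiliary $b\in A_+\setminus\{0\}$ chosen so that $\tau$ is tracially small on $\overline{bAb}$. Linearity of $\psi$ together with linearity of $\tau|_D$ yields $\tau(\psi(x+y))=\tau(\psi(x))+\tau(\psi(y))$; the $\phi$-contribution is controlled via $\phi(a)\lesssim b$ using standard estimates for quasitraces on hereditary \SCA s. Sending $\epsilon\to 0$ and shrinking $b$ then gives additivity of $\tau$ on $A_{s.a.}$, whence $\tau$ is a trace.

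For $0\notin \overline{T(A)}^w$: fix a strictly positive $a\in A$ with $\|a\|=1$ and let $\mathfrak{f}_a>0$ be the constant from Definition \ref{DNtr1div}. It suffices to establish the uniform bound $\tau(f_{1/4}(a))\ge \mathfrak{f}_a/3$ for every $\tau\in T(A)$, since $f_{1/4}(a)\in P(A)$ and weak-* continuity of evaluation at this element then excludes $0$ from the closure in $\tilde T(A)$. Given $\tau\in T(A)$, I first choose $b_\tau\in A_+\setminus\{0\}$ with $d_\tau(b_\tau)<1/4$: a preliminary application of \ref{DNtr1div} produces a hereditary $D_0\in {\cal C}_0'$ in $A$, and inside $D_0$, whose 1-dimensional NCCW parent has continuous spectrum, a suitable thin spectral band has arbitrarily small $d_\tau$. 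Now apply \ref{DNtr1div} with ${\cal F}=\{a,f_{1/4}(a)\}$, small $\epsilon$, and $b=b_\tau$ to obtain $\phi,\psi,D$. By \ref{PD0=tad} one may assume $\psi(a)$ is strictly positive in $D$, so $\|\tau|_D\|=d_\tau(\psi(a))$. Exact orthogonality of $\phi(a),\psi(a)$ in $A$ combined with Lemma \ref{Lrorm} gives $(a-2\epsilon)_+\lesssim \phi(a)+\psi(a)$, hence $d_\tau((a-2\epsilon)_+)\le d_\tau(\phi(a))+d_\tau(\psi(a))$; letting $\epsilon\to 0$ and using $d_\tau(a)=1$ forces $\|\tau|_D\|>1/2$. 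Normalizing $\tau|_D/\|\tau|_D\|\in T(D)$ and invoking $t(f_{1/4}(\psi(a)))\ge \mathfrak{f}_a$ for all $t\in T(D)$ yields $\tau(f_{1/4}(\psi(a)))\ge \mathfrak{f}_a/2$; orthogonality then gives $f_{1/4}(\phi(a)+\psi(a))=f_{1/4}(\phi(a))+f_{1/4}(\psi(a))$, and continuity of $f_{1/4}$ transfers the bound to $f_{1/4}(a)$ with error $O(\epsilon)$, delivering $\tau(f_{1/4}(a))\ge \mathfrak{f}_a/3$.

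The principal obstacle is the rigorous tracial accounting in the second argument: since $d_\tau$ is only lower semicontinuous in norm, the passage from the norm-approximate $a\approx \phi(a)+\psi(a)$ to the tracial inequality $d_\tau(a)\le d_\tau(\phi(a))+d_\tau(\psi(a))+o(1)$ must be executed along a sequence $\epsilon_n\to 0$ with corresponding $\phi_n,\psi_n,D_n$, applying Lemma \ref{Lrorm} on each stage to replace $a$ by $(a-2\epsilon_n)_+$ before comparing. A secondary technical point is the existence of the auxiliary element $b_\tau$ with $d_\tau(b_\tau)<1/4$; while this follows from the continuous spectrum of any $D_0\in {\cal C}_0'$, one must verify compatibility of $\tau|_{D_0}$ with the tracial structure of $D_0$ to shrink the spectral band uniformly.
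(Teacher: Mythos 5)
Your proposal is correct and follows essentially the same route as the paper's proof: decompose $A$ along the maps $\phi,\psi$ from the definition of ${\cal D}$, use that (quasi)traces are linear/tracial on the exact building blocks $D\in {\cal C}_0'$, make the $\phi$-corner tracially negligible via Cuntz comparison with a small element $b$, and extract the uniform lower bound $\tau(f_{1/4}(a))\ge c>0$ from the trace condition on $T(D)$ to exclude $0$ from $\overline{T(A)}^w$. The paper only packages things differently — one sequence with $b_n$ chosen rapidly Cuntz-decreasing so the estimates are uniform over $QT(A)$, and weak*-limits of the states $\tau\circ\phi_{1,n}$ — whereas you argue one (quasi)trace at a time; the substance is the same.
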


\begin{proof}
Let $a_0\in A$ be a strictly positive element of $A$ with $\|a_0\|=1.$
Let $\mathfrak{f}_{a_0}>0$ be in the definition of \ref{DNtr1div}.
Fix any $b_0\in A_+\setminus \{0\}.$
Choose a sequence of positive elements
$\{b_n\}$ which has the following:
$b_{n+1}\lesssim b_{n,1},$
where $b_{n,1}, b_{n,2},....,b_{n,n}$ are mutually orthogonal positive elements
in $\overline{b_nAb_n}$ such that $b_nb_{n,i}=b_{n,i}b_n=b_{n,i},$
$i=1,2,...,n,$ and $\la b_{n,i}\ra =\la b_{n,1}\ra,$ $i=1,2,...,n.$

One obtains two sequences of \SCA s $A_{0,n},$ $D_n$ of $A,$ where
$D_n\in {\cal C}_0',$  two sequences
of \cpc s $\phi_{0,n}: A\to A_{0,n}$  with
and
$\phi_{1,n}: A\to D_n$ satisfy the following:
\beq\label{TD0qc-1}
\lim_{n\to\infty}\|\phi_{i,n}(ab)-\phi_{i,n}(a)\phi_{i,n}(b)\|=0\rforal a,\, b\in A,\\
\lim_{n\to\infty}\|a-\diag(\phi_{0,n}(a), \phi_{1,n}(a))\|=0\rforal a\in A,\\
c_n\lesssim b_n,\\\label{TD0qc-4}
\lim_{n\to\infty}\|\phi_{1,n}(x)\|=\|x\|\rforal x\in A,\\
\tau(f_{1/4}(\psi_{1,n}(a_0)))\ge \mathfrak{f}_{a_0}\rforal \tau\in T(D_n)
\eneq
 and $\phi_{1,n}(a_0)$ is a strictly positive element in $D_n,$
 where $c_n$ is a strictly positive element of $A_{0,n}.$

It follows that
\beq\label{TD0qc-10}
\lim_{n\to\infty}\sup\{|\tau(a)-\tau\circ \phi_{1,n}(a)|: \tau\in QT(A)\}=0\rforal a\in A.
\eneq
Then $\lim_{n\to\infty}\|\tau|_{D_n}\|=1$ for all $\tau\in QT(A).$

Let $t={\tau\over{\|\tau|_{D_n}\|}}|_{D_n}.$ Then $t\in T(D_n).$
Given any $\tau\in QT(A).$
Define $t_n(a)=\tau\circ \phi_{1,n}(a)$ for all $a\in A.$
Then $t_n$ is a state of $A.$ Let $t_0$ be a weak*-limit of $\{t_n\}.$
Then $t_0\not=0,$ since by
\eqref{TD0qc-4} and \eqref{TD0qc-1},
\vspace{-0.1in} \beq\label{TD0qc-11}
t_0(f_{1/4}(a_0))\ge \mathfrak{f}_{a_0}.
\eneq
It follows from \eqref{TD0qc-1} that $t_0$ is a trace. It follows from \eqref{TD0qc-10}
that every $\tau\in QT(A)$ is a tracial state.

Let $\tau\in T(A).$ Then \eqref{TD0qc-11} shows that  $0\not\in \overline{T(A)}^w.$

\end{proof}

\begin{rem}\label{Rfa0}
Let $A\in {\cal D}$ and let $a\in A_+$ be a strictly positive element with $\|a\|=1.$
Define
$$
r_0=\inf\{\tau(f_{1/4}(a)): \tau\in T(A)\}>0.
$$
The above proof also shows that we can choose
$
\mathfrak{f}_{a}=r_0/2.
$
In fact in the case that $A$ is quasi-compact, one can choose
$\mathfrak{f}_{a}$  arbitrarily close to
$$
\inf\{\tau(a): \tau\in \overline{A}^w\}.
$$
and
in the case that $A$ has continuous scale, we can always choose
such a strictly positive element that the above $r_0$ could be arbitrarily close
to 1.
\end{rem}

\begin{prop}\label{Pprojless}
Every $\sigma$-unital simple \CA\, in ${\cal D}$ is stably projectionless.
\end{prop}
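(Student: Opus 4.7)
My plan is to argue by contradiction. Suppose that some $M_n(A)$ contains a non-zero projection $p$ for some $n\ge 1$. By Proposition~\ref{PD0=tad}, the algebra $A$ lies in $\mathrm{TA}{\cal C}_0'$, and then by Proposition~\ref{PMnTAD} this property passes to $M_n(A)$. I will apply the $\mathrm{TA}{\cal C}_0'$ decomposition of Definition~\ref{Dtr1div} (or equivalently its refined form in Proposition~\ref{DpreTADone}) directly to $M_n(A)$, with finite subset ${\cal F}=\{p\}$, an arbitrary non-zero $b\in M_n(A)_+$, and $\ep>0$ small enough that $\ep<1/16$. This yields a hereditary \SCA\ $D\subseteq M_n(A)$ with $D\in {\cal C}_0'$ together with ${\cal F}$-$\ep$-multiplicative \cpc s $\phi$ into $M_n(A)$ and $\psi$ into $D$ such that
\[
\|p-\phi(p)-\psi(p)\|<\ep, \qquad \|\psi(p)\|\ge (1-\ep)\|p\|=1-\ep,
\]
where the last inequality is condition \eqref{Dtrdiv-4} applied with $x=p\in {\cal F}$.

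The key observation is that $\psi(p)$ is a positive element of $D$ which is very close to a projection. Because $p^2=p$, the ${\cal F}$-$\ep$-multiplicativity of $\psi$ gives $\|\psi(p)-\psi(p)^2\|<\ep$, so the spectrum of $\psi(p)$ is contained in $\{t\in [0,\|\psi(p)\|]: |t-t^2|\le \ep\}$, which for $\ep<1/16$ is a disjoint union of a small neighborhood of $0$ and a small neighborhood of $1$. Since $\|\psi(p)\|\ge 1-\ep>1/2$, the spectrum of $\psi(p)$ must meet the component near $1$. Choosing a function $g\in C_0((0,\infty))$ which equals $0$ on $[0,1/3]$ and $1$ on $[2/3,\infty)$, continuous functional calculus produces an element $q:=g(\psi(p))\in D$ which is a non-zero projection.

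The contradiction then comes from the fact that $D\in {\cal C}_0'$ is, by definition, a hereditary \SCA\ of some $C\in {\cal C}_0$; by the remark following Definition~\ref{Dbuild1}, every algebra in ${\cal C}_0$ is stably projectionless, and stable projectionlessness passes to hereditary subalgebras because every projection in $M_k(D)$ is still a projection in $M_k(C)\subseteq M_k(C_1)$ for any containing algebra. Hence $D$ has no non-zero projections, contradicting $q\neq 0$, and no such $p$ can exist.

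The one point requiring care is that the whole argument be run inside $M_n(A)$ rather than inside $A$ itself, so that the projection $p$ may actually be inserted into the finite subset ${\cal F}$ governing the approximate multiplicativity of $\psi$. This is what makes the amplification step, Proposition~\ref{PMnTAD}, essential: attempting to transport $p\in M_n(A)$ down to $A$ via cutdowns or Brown's stable isomorphism would instead only give a projection in a hereditary \SCA\ of $A\otimes {\cal K}$, which is not directly in the reach of the $\mathrm{TA}{\cal C}_0'$ property for $A$. With the amplification in hand, however, no further technical obstacle appears.
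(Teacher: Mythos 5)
Your argument is correct, and it reaches the conclusion by a genuinely different route than the paper. The paper also reduces to a single projection $p$ (after noting matrix amplifications stay in the class), but it then works straight from the definition of ${\cal D}$: it sets $r=\inf\{\tau(p):\tau\in \overline{T(A)}^w\}>0$, chooses the decomposition so that the ``leftover'' corner $A_0=\overline{bAb}$ satisfies $d_\tau(b)<r/4$, perturbs $p$ to a sum of projections $p_1\oplus p_2$ with $p_1\in A_0$, $p_2\in D$, kills $p_2$ by projectionlessness of $D\in{\cal C}_0'$, and then gets a contradiction from $\tau(p)<\tau(p_1)+\ep<r/2$ --- i.e.\ the obstruction is measured in trace. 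You instead route through Proposition~\ref{PD0=tad} to use the TA${\cal C}_0'$ formulation of Definition~\ref{Dtr1div}, whose clause $\|\psi(x)\|\ge(1-\ep)\|x\|$ lets you see directly that the $D$-component $\psi(p)$ is an almost-idempotent positive element of norm close to $1$; functional calculus then manufactures a non-zero projection inside the projectionless algebra $D$, so the contradiction is measured in norm and no trace estimate (in particular no use of $0\notin\overline{T(A)}^w$, compactness of $\overline{T(A)}^w$, or the bound $r>0$) is needed. The trade-off is that your proof leans on \ref{PD0=tad} and \ref{PMnTAD} (both of which precede this proposition, so there is no circularity), while the paper's proof uses only the raw definition of ${\cal D}$ together with the same projectionlessness of ${\cal C}_0'$; both are legitimate, and yours is arguably the shorter and more elementary of the two. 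The only cosmetic points: your appeal to ``$M_k(C)\subseteq M_k(C_1)$'' is an unnecessary flourish (you only need a projection in $D$ itself, and $D\subset C$ with $C\in{\cal C}_0$ stably projectionless already suffices), and the closing claim that cutting down to $A\otimes{\cal K}$ could not work is an overstatement, though harmless since it is commentary rather than part of the argument.
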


\begin{proof}
Let $A\in {\cal D}.$
Since $A\in {\cal D}$ if and only if $M_n(A)\in {\cal D}$ for each $n,$ we only need to show that
$D$ has no non-zero projections.
Let $p\in A$ be a projection and let
$$
r=\inf\{\tau(p): \tau\in \overline{T(A)}^w\}>0.
$$
Choose $r/4>\ep>0.$
Then
\vspace{-0.14in}\beq\label{Pprojless-1}
\|p-(x_1\oplus x_2)\|<\ep/2,
\eneq
where $x_1\in (A_0)_+$ and $x_2\in  D_+,$ where $A_0=\overline{bAb}$ for some
$b\in A_+$ with
$d_\tau(b)<r/4$ for all $\tau\in \overline{T(A)}^w,$
$x_2\in D_+$ and $D\in {\cal D}.$
A  standard argument shows that there are projections
$p_1\in A_0$ and $p_2\in D$ such that
\beq\label{Pprojless-2}
\|p-(p_1\oplus p_2)\|<\ep.
\eneq
Since $D$ is projectionless, $p_2=0.$
This implies that
$\tau(p)<\tau(p_1)+\ep<r/2$ for all $\tau\in T(A).$
Impossible.

\end{proof}


\begin{thm}\label{Comparison}
Let $A\in {\cal D}$ be a
a separable simple \CA\,
Suppose that $a, b\in A$ with $0\le a\le b\le 1.$
If $d_\tau(a)< d_{\tau}(b)$ for all $\tau\in \overline{T(A)^w},$
then
$$
a\lesssim b.
$$
\end{thm}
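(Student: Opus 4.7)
The idea is to reduce strict comparison in $A$ to strict comparison in a $\mathcal{C}_0'$ subalgebra supplied by the $\mathcal{D}$-decomposition, where comparison is controlled fibrewise by traces via Proposition \ref{Pcom4C}.

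\textbf{Step 1 (uniform tracial gap).} Fix $\ep\in(0,\|a\|)$; it suffices to prove $(a-\ep)_+\lesssim b$. By Proposition \ref{PD0qc}, $\overline{T(A)}^w$ is weak-$*$ compact. The function $\tau\mapsto\tau((a-\ep/2)_+)$ is continuous on $\overline{T(A)}^w$, while $d_\tau(b)=\sup_n\tau(f_{1/n}(b))$ is a pointwise increasing supremum of continuous functions. A standard finite-open-cover argument then yields $\eta>0$ and $\sigma>0$ with
\[
\tau((a-\ep/2)_+)+3\sigma\le\tau(f_{2\eta}(b))\qquad\forall\,\tau\in\overline{T(A)}^w.
\]

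\textbf{Step 2 (choosing $b_0$ and applying the $\mathcal{D}$-decomposition).} Since $A$ is stably projectionless by Proposition \ref{Pprojless}, $b$ has no spectral gap at $0$: in particular $sp(b)\cap(0,2\eta)\neq\emptyset$, else $f_{c}(b)$ would be a projection for small $c$. Pick a continuous $g\colon[0,1]\to[0,1]$ supported in $(0,2\eta)$ with $g(t_0)>0$ for some $t_0\in sp(b)$, and set $b_0:=g(b)\in A_+\setminus\{0\}$. Then $b_0\perp f_{2\eta}(b)$ and both lie in $\overline{bAb}$, so in the Cuntz semigroup
\[
\langle b_0\rangle+\langle f_{2\eta}(b)\rangle=\langle b_0+f_{2\eta}(b)\rangle\le\langle b\rangle.
\]
Apply Definition \ref{DNtr1div} to $A$ with target $b_0$, finite subset $\mathcal{F}\supset\{(a-\ep)_+,(a-\ep/2)_+,f_\eta(b),f_{2\eta}(b),b\}$, and sufficiently small tolerance $\ep_0>0$: one obtains approximately multiplicative c.p.c.\ maps $\phi\colon A\to A$ and $\psi\colon A\to D\subset A$ with orthogonal ranges, $D\in\mathcal{C}_0'$, $\phi(a)\lesssim b_0$, $\|x-\phi(x)\oplus\psi(x)\|<\ep_0$ on $\mathcal{F}$, and $t(f_{1/4}(\psi(a)))\ge \mathfrak{f}_a$ for every $t\in T(D)$.

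\textbf{Step 3 (comparison inside $D$).} The core of the proof is to establish
\[
\psi((a-\ep/2)_+)\lesssim\psi(f_{2\eta}(b))\quad\text{in } D.
\]
By Proposition \ref{Pcom4C} applied to $D\in\mathcal{C}_0'$, it suffices to verify $d_{tr\circ\pi}(\pi\psi((a-\ep/2)_+))<d_{tr\circ\pi}(\pi\psi(f_{2\eta}(b)))$ for every irreducible representation $\pi$ of $D$. Any such $t=tr\circ\pi$ is a tracial state on $D$; the composition $t\circ\psi$ is an approximately tracial positive functional on $A$, whose weak-$*$ cluster points — along a refining sequence $\phi_n,\psi_n,D_n,t_n$ supplied by the $\mathcal{D}$-property with $\ep_0\to 0$ and $\mathcal{F}$ exhausting $A$ — yield genuine traces in $\overline{T(A)}^w$ (non-vanishing because of the lower bound $t(f_{1/4}(\psi(a)))\ge \mathfrak{f}_a$ and the fact that $0\notin\overline{T(A)}^w$). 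Approximate multiplicativity of $\psi$ translates the cut-down functional calculus applied to $\psi((a-\ep/2)_+)$ and $\psi(f_{2\eta}(b))$ into statements about $(a-\ep/2)_+$ and $f_{2\eta}(b)$ evaluated at such a limit trace, so a failure of the required strict inequality would, via the diagonal argument, contradict the uniform gap $3\sigma$ established in Step 1.

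\textbf{Step 4 (assembly) and the main obstacle.} From $\|x-\phi(x)\oplus\psi(x)\|<\ep_0$ on $\mathcal{F}$ and Lemma \ref{Lrorm}, one has $((a-\ep)_+-2\ep_0)_+\lesssim\phi((a-\ep)_+)\oplus\psi((a-\ep)_+)$. Since $\phi((a-\ep)_+)\lesssim\phi(a)\lesssim b_0$, and by Step 3 together with approximate multiplicativity of $\psi$ near $f_{2\eta}(b)$,
\[
\psi((a-\ep)_+)\lesssim\psi((a-\ep/2)_+)\lesssim\psi(f_{2\eta}(b))\lesssim f_{2\eta}(b),
\]
orthogonality of both summand pairs and the Cuntz inequality from Step 2 give
\[
((a-\ep)_+-2\ep_0)_+\lesssim b_0+f_{2\eta}(b)\lesssim b.
\]
Letting $\ep_0\to 0$ and then $\ep\to 0$ yields $a\lesssim b$. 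The main obstacle is Step 3: converting the approximate multiplicativity of $\psi$ into an honest comparison inside the $\mathcal{C}_0'$ block $D$, and in particular pulling tracial functionals back and forth between $D$ and $A$ so that the gap $3\sigma$ of Step 1 survives — this is handled by passing to a sequential version of the $\mathcal{D}$-decomposition and taking weak-$*$ limits, using the compactness of $\overline{T(A)}^w$ and $0\notin\overline{T(A)}^w$.
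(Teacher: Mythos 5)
Your overall architecture does track the paper's proof: a uniform tracial gap from compactness of $\overline{T(A)}^w$, a small element inside $\overline{bAb}$ orthogonal to a large cut-down of $b$ obtained from projectionlessness, a ${\cal D}$-decomposition with that element as the "small" target, comparison inside the ${\cal C}_0'$ block via \ref{Pcom4C}, and assembly. But the step you yourself flag as the main obstacle, Step 3, is exactly where the paper's key idea lives, and your substitute for it is not carried out and, as stated, does not work. The inequality you must verify in $D$ is between \emph{dimension functions} $d_t(\psi((a-\ep/2)_+))$ and $d_t(\psi(f_{2\eta}(b)))$ for every trace $t$ of $D$. Dimension functions are only lower semicontinuous suprema, so a violating sequence $t_n\in T(D_n)$ does not pass to a weak-$*$ limit of the functionals $t_n\circ\psi_n$ in the way your contradiction argument needs: on the side where you need an upper bound you cannot replace $d_{t_n}$ by the value of $t_n$ at a fixed continuous function of the element without interposing an extra layer of cut-downs of $a$ and $b$ (a sandwich $d_\tau((x-\dt)_+)\le\tau(f_{\dt/2}(x))\le d_\tau(x)$ applied at several levels, which is why the paper's proof juggles $f_{\ep/16}(a)$, $f_{\ep/64}(a)$, $f_{\ep^2/2^{12}}(c)$, $f_{\dt_1}(b)$, $f_{\dt_2}(b_2)$, \dots), and this bookkeeping is precisely what is missing from your sketch. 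The paper avoids any limit argument over decompositions altogether: it first converts the uniform tracial gap in $A$ into explicit algebraic witnesses via the Cuntz--Pedersen theorem \cite{CP} — elements $z_1,\dots,z_K$ and $b'$ with $f_{\ep^2/2^{12}}(c)\approx\sum_j z_j^*z_j$ and $f_{\dt_2}(b_2)\approx b'+e_1+\sum_j z_jz_j^*$ — and then pushes these finitely many relations through the almost multiplicative maps into $D$; evaluating an \emph{arbitrary} trace of $D$ on the transported relations gives the inequality needed for \ref{Pcom4C} directly, with no claim that traces of $D$ are related to traces of $A$. That use of \cite{CP} (or an equally effective replacement for it) is the missing ingredient in your argument.

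Two smaller points you should also repair. First, with the paper's convention for $f_\ep$, the function $f_{2\eta}$ is already nonzero on $(\eta,2\eta]$, so for $b_0=g(b)\perp f_{2\eta}(b)$ you must take $g$ supported in $(0,\eta)$ (and correspondingly check $sp(b)\cap(0,\eta)\neq\emptyset$ from projectionlessness). Second, in Step 4 the relation $\phi(a)\lesssim b_0$ furnished by Definition \ref{DNtr1div} is for the \emph{strictly positive} element of $A$, not for the element $a$ of the theorem; to get $\phi((a-\ep)_+)\lesssim b_0$ you need an extra (routine, but necessary) argument using the ${\cal F}$-$\ep_0$-multiplicativity of $\phi$ and functional calculus to place $\phi((a-\ep)_+)$, up to a small cut-down, into the hereditary subalgebra generated by $\phi$ of the strictly positive element, or else use the reformulation in the remark following \ref{Dtr1div} that the strictly positive element of $A_0=\overline{\phi(a_0)A\phi(a_0)}$ is $\lesssim b_0$.
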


\begin{proof}
Fix a strictly positive element $a_0\in A$ with $0\le a_0\le 1.$
Let $a,\,b\in A_+$ be two non-zero elements such that
\beq\label{scomp-1}
d_\tau(a)<d_\tau(b)\tforal \tau\in \overline{T(A)}^w.
\eneq
For convenience, we assume that $\|a\|{=} \|b\|= 1.$
Let $1/2>\ep>0.$   Put $c=f_{\ep/16}(a).$
If $c$ is Cuntz equivalent to $a,$ then
zero is an isolated point in ${\rm sp}(a).$ So, $a$ is Cuntz equivalent to a projection.
Since $A$ is projectionless (see \ref{Pprojless}),
 zero is not an isolated point in ${\rm sp}(a).$
 There is a nonzero element $c'\in \overline{aAa}_+$ such that
$c'c=cc'=0.$  Therefore
$$r_0:=\inf\{d_\tau(b)-d_{\tau}(c): \tau\in \overline{T(A)}^w\}>0.$$

Put $c_1=f_{\ep/64}(a).$  Then a standard compactness argument (see Lemma 5.4 of \cite{Lnloc}) shows that
{t}here is $1>\dt_1>0$ such that
$$\tau(f_{\dt_1}(b))>\tau(c)\ge d_{\tau}(c_1)\tforal \tau\in \overline{T(A)}^w.$$
Put $b_1=f_{\dt_1}(b).$
Then
\beq\label{scomp-3+2}
&& \,\,\,\,\,\,\,r=\inf\{\tau(b_1)-d_{\tau}(c_1): \tau\in T(A)\}
\ge \inf\{\tau(b_1)-\tau(c): \tau\in T(A)\}>0.
\eneq
Note that $\|b\|=1.$
By choosing smaller $\dt_1,$ we may assume that there exist non-zero elements
 $e, e'\in f_{2\dt_1}(b)Af_{2\dt_1}(b)$ with
$0\le e\le e'\le 1$ and $e'e=ee'=e$ such that
$$\tau(e')<r/8\tforal \tau\in \overline{T(A)}^w.$$
Let $r_1=\inf\{\tau(e): \tau\in T(A)\}.$  Note that, since $A$ is simple and $\overline{T(A)}^w$ is compact, $r_1>0.$
Let $b_2=(1-e')b_1(1-e').$
Thus, there is $0<\dt_2<\dt_1/2<1/2$ such that
$$7r/8<\inf\{\tau(f_{\dt_2}(b_2))-\tau(c_1):\tau\in T(A)\}<r-r_1.$$

Since $f_{\dt_2}(b_2)f_{3/4}(b_2)=f_{3/4}(b_2)$ and since $\overline{f_{3/4}(b_2)Af_{3/4}(b_2)}$ is non-zero,
there is a non-zero $e_1\in A$  with $0\le e_1\le 1$ such that $e_1f_{\dt_2}(b_2)=e_1$ with
$\tau(e_1)<r/18$ for all $\tau\in T(A).$


There are $x_1, x_2,...,x_m\in A$ such that
\beq\label{scomp-5}
\sum_{i=1}^m x_i^*e_1x_i=f_{1/4}(a_0).
\eneq
Let $\mathfrak{f}_{a_0}>0$ be given as part of the definition for
$A\in {\cal D}.$

Let $\sigma=\mathfrak{f}_{a_0}\cdot \min\{\ep^2/2^{17}(m+1), \dt_1/8, r_1/2^7(m+1)\}.$

By \eqref{scomp-3+2}  and \cite{CP}, there are $z_1, z_2,...,z_K\in A$  and $b'\in A_+$ such that
\beq\label{scomp-6-1}
&&\|f_{\ep^2/2^{12}}(c)-\sum_{j=1}^Kz_j^*z_j\|<\sigma/4\andeqn\\\nonumber
&&\|f_{\dt_2}(b_2)-(b'+e_1+\sum_{j=1}^Kz_jz_j^*)\|<\sigma/4.
\eneq
 Since $A\in {\cal D},$
for any $\eta>0,$ there exist \SCA s\,   $A_0, D\subset A,$  $D\in {\cal C}_0'$
with $A_0\perp D,$
such that
\beq\label{scomp-6}
&&\|f_{\ep^2/2^{14}}(c)- (f_{\ep^2/2^{14}}(c_{0,2})+f_{\ep^2/2^{14}}(c_2))\|<\sigma\\\label{scomp-6+1}
&&\|f_{\dt_1/2}(b_2)-(f_{\dt_1/2}( b_{0,3})+f_{\dt_1/2}(b_3))\|<\sigma,\\\label{scomp-6+2}
&&\|f_{\dt_2/4}(b_2)-(f_{\dt_2/4}(b_{0,3})+f_{\dt_2/4}(b_3))\|<\sigma,\\
&&\|e_1-(e_{0,1}+e_{1,1})\|<\sigma\\
&&c \lesssim e,\\
&& \|a_0-(a_{0,0}\oplus a_{1,0})\|<\sigma,\\
&& \tau(f_{1/4}(a_{1,0}))\ge \mathfrak{f}_{a_0}\rforal \tau\in \overline{T(D)}^w,
\eneq
where $c\in A_0$ is a strictly positive element of $A_0,$
and,  (using \eqref{scomp-5} and (\ref{scomp-6-1})),  such that
\beq\label{scomp-9}
&&\|\sum_{i=1}^m (x_i')^*e_{1,1}x_i'-f_{1/4}(a_{1,0})\|<\sigma,\\\label{scomp-9-}
\vspace{-0.8in} &&\|f_{\ep^2/2^{14}}(c_2)-\sum_{j=1}^K (z_j')^*z'_j\|<\sigma \andeqn\\\label{scomp-9+}
&&\|f_{\dt_2}(b_3)-(\sum_{j=1}^K z_j'(z_j')^*+e_{1,1}+b'')\|<\sigma,
\eneq
where $0\le a_{0,0},  b_{0,2}, b_{0,3}, c_{0,0}, e_{0,1} \le 1$ are in $A_0,$
$0\le a_{1,0}, b_{2}, b_{3}, c_2, e_{1,1}\le 1$ are in $D,$
$z_j', z_j' , x_i',  b''\in D,$ ($i=1,2,...,m$ and $j=1,2,...,K$)

Note that, by \eqref{scomp-9}
\beq\label{scomp-n9}
t(e_{1,1})\ge \mathfrak{f}_{a_0}/2(m+1)\tforal  t\in  T(D).
\eneq

Therefore, by (\ref{scomp-9-}), \eqref{scomp-9} and \eqref{scomp-9+},
\vspace{-0.14in} {$$\begin{aligned}d_t(f_{\ep^2/2^{13}}(c_2))
  &\le  t(f_{\ep^2/2^{14}}(c_2))
  \le \sigma+\sum_{j=1}^Kt((z_j')^*z_j') \\
&= \sigma+\sum_{j=1}^Kt(z_j'(z_j')^*)
\le  t(e_{1,1})+\sum_{j=1}^Kt(z_j'(z_j')^*)\\
&\le  t(f_{\dt_1}(b_3))
\le d_t(f_{\dt_1/2}(b_3))
\end{aligned}$$}
for all $t\in T(D).$
It follows from \ref{Pcom4C}
that
$$f_{\ep^2/2^{13}}(c_2)\lesssim f_{\dt_1/2}(b_3).$$
By (\ref{scomp-6+1}) and Lemma 2.2 of  \cite{RorUHF2}
\vspace{-0.1in} $$f_{\dt_1/2}(b_3)\le f_{\dt_1/4}(b_2)\le b_2.$$
It the follows
\vspace{-0.12in} \beq\nonumber
\hspace{0.2in}f_{\ep/2}(c) &\lesssim & f_{\ep^2/2^{11}}(c_{0,2}+c_2)\lesssim f_{\ep^2/2^{11}}(c_2)\oplus c\\\nonumber
&\lesssim & b_2 +e \lesssim b_1\lesssim b.
\eneq
We also have
\beq\nonumber
f_{\ep}(a)\lesssim f_{\ep/2}(f_{\ep/16}(a))=f_{\ep/2}(c)\lesssim b.
\eneq
Since this holds for all $1>\ep>0,$ by 2.4 of  \cite{RorUHF2}, we conclude that
\vspace{-0.1in} $$
a\lesssim b.
$$

\end{proof}



\section{Tracially approximate divisibility}


%

\begin{df}\label{Dappdiv}
Let $A$ be a non-unital and $\sigma$-unital simple \CA.
We say that $A$ has (non-unital)  tracially approximately divisible property
if the following holds:

For any $\ep>0,$ any finite subset ${\cal F}\subset A,$ any
$b\in A_+\setminus \{0\},$ and any
integer $n\ge 1,$ there are $\sigma$-unital \SCA s
$A_0, A_1$ of $A$ such that
$$
{\rm dist}(x, B_d)<\ep \rforal x\in {\cal F},
$$
where $B_d\subset B\subset A,$
\vspace{-0.1in} \beq\label{Dappdiv-1}
B=A_0\bigoplus  M_n(A_1),\\
B_d=\{(x_0, \overbrace{x_1,x_1,...,x_1}^n): x_0\in A_0, x_1\in A_1\}
\eneq
and $a_0\lesssim  b,$ where $a_0$ is a strictly positive element of $A_0.$
\end{df}

\begin{lem}\label{Lmatrixpullb}
Let $D$ be a non-unital separable simple \CA\, which  can be written as 
$D=\lim_{n\to\infty}(D_n, \phi_n),$ where each $D_n\in {\cal C}_0^{0'}.$
Let $K\ge 1$ be an integer, 
let $\ep>0$ and let ${\cal F} \subset D_n$  for some $n\ge 1,$ there exists 
an integer $m\ge n$ and \SCA\, $D_m'=M_K(D_m'')\subset D_m$ and a finite subset ${\cal F}_1\subset D_m''$ satisfy the following:
\beq\label{Lmatripb-1}
(\phi_{n, m}(f), d({\cal F}_1))<\ep\rforal f\in {\cal F},
\eneq
where $d(x)=\diag(\overbrace{x,x,...,x}^K).$
If each $D_n\in {\cal C}_0',$ then
there exists an integer $m\ge n$ and $D_m'=M_K(D_m'')\subset D_m$ and a finite subset ${\cal F}_1\subset D_m''$ 
satisfy the following:
\beq\label{Lnz1702-1}
\|\phi_{n,m}(f)-(r(f)+d({\cal F}_1)\|<\ep\rforal f\in {\cal F},
\eneq
where $r(f)\in \overline{eD_me}$ for all $f\in {\cal F},$ $e\in (D_m)_+,$  $e\lesssim e_d,$ 
where $e_d$ is a strictly positive element of $D_m.$

\end{lem}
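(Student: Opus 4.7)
The plan is to exploit the inductive limit structure together with the simplicity and non-unitality of $D$ in order to pass to a sufficiently large index $m$ where the matrix multiplicities inside $D_m$ are divisible by $K$ up to a small complementary ``error'' piece. Once this is arranged, an $M_K$-subalgebra $D_m'\subset D_m$ arises naturally by grouping matrix blocks of size $K$, and the finite subset ${\cal F}_1\subset D_m''$ will be produced by averaging $\phi_{n,m}(f)$ across the $K$ copies.

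First I would pass to a subsequence of $(D_n,\phi_n)$ so that, writing $D_m$ as (a full hereditary \SCA\, of) $A(F_1^{(m)},F_2^{(m)},\psi_0^{(m)},\psi_1^{(m)})$, the matrix multiplicities of the summands of $F_1^{(m)}$ and $F_2^{(m)}$ that are non-trivially seen by $\phi_{n,m}({\cal F})$ grow without bound. This is a standard consequence of $D$ being simple and non-unital: if any such multiplicity stayed bounded along a subsequence, an irreducible representation of $D$ would be forced to have bounded rank, contradicting the simplicity and non-unitality (cf.\ the analogue argument in \ref{ChighrankD} transferred to the limit). In particular, given $K$ and $\ep$, one can arrange that every relevant multiplicity $n$ can be written $n=Kq+r$ with $0\le r<K$ and $r/n$ negligibly small.

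Next, in each matrix block $M_n\subset F_i^{(m)}$ decompose $M_n\supset M_K(M_q)\oplus M_r$ in the canonical way, and group the $M_K(M_q)$-pieces coherently across the data $(F_1^{(m)},F_2^{(m)},\psi_0^{(m)},\psi_1^{(m)})$. Choosing the decompositions compatibly at the two endpoints (which amounts to prescribing unitary equivalences inside the finite-dimensional $F_2^{(m)}$ respecting the maps $\psi_0^{(m)},\psi_1^{(m)}$) produces a canonical $M_K$-valued subalgebra $M_K(D_m'')\subset D_m$, where $D_m''$ is itself a 1-dimensional NCCW complex (or a hereditary \SCA\, thereof) built from the shrunken matrix sizes $M_q$. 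For each $f\in {\cal F}$, approximate $\phi_{n,m}(f)(t)$ fibrewise by its ``$K$-average'' on the $M_K(M_q)$-piece plus its restriction to the $M_r$-piece; since the $M_q$-factors are large once $m$ is large, path-connectedness of $U(M_{Kq})$ lets this averaging be realized by conjugation by a continuous path of unitaries, producing $g\in D_m''$ with $\|\phi_{n,m}(f)-(d(g)+r(f))\|<\ep$, where $r(f)$ is the part supported on the $M_r$-blocks. In the ${\cal C}_0^{0'}$ case, the vanishing of $K_0$ (combined with the shrinking of $r$) allows the residual term to be absorbed into the $\ep$-error via \ref{Pcom4C}, giving ${\rm dist}(\phi_{n,m}(f),d({\cal F}_1))<\ep$; in the ${\cal C}_0'$ case one genuinely keeps $r(f)\in\overline{eD_me}$ with $e$ the sum of the $M_r$-supports, and $e\lesssim e_d$ is automatic since $e_d$ sees every block.

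The main obstacle will be the relative-homotopy step in the third paragraph: choosing the continuous path of unitaries $u(t)\in U(F_2^{(m)})$ bringing $\phi_{n,m}(f)(t)$ into $K$-block-diagonal form \emph{compatibly} with the prescribed values at $t=0,1$ dictated by $\psi_0^{(m)},\psi_1^{(m)}$. This is a relative lifting problem in the unitary groups of finite-dimensional algebras, and it requires that the endpoint unitaries lie in a common component of $U(F_2^{(m)})$; this is ensured once the multiplicities are large, by the surjectivity of $U(M_N)\to U(M_N)/U_0(M_N)$ for $N\gg K$, and by the semiprojectivity of the building blocks (\ref{str=1}) which lets us perturb near-homomorphisms into honest ones. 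In the ${\cal C}_0^{0'}$ case the triviality of both $K_0$ and $K_1$ removes all obstructions, while in the ${\cal C}_0'$ case the residual $K_0$-obstruction is precisely what is encoded in the small remainder $r(f)$.
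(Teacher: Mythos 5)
Your approach is genuinely different from the paper's, but as written it has a gap at its central step. The paper does not attempt any block-multiplicity bookkeeping inside the building blocks: it first gets the divisibility in the limit algebra itself, using that $K_0(D)=K_1(D)=\{0\}$ forces $D\cong D\otimes Q$ (Razak's classification, via \cite{Raz}, \cite{Tsang}, \cite{Rl}) in the ${\cal C}_0^{0'}$ case, and $D\cong D\otimes{\cal Z}$ (\cite{aTz}) in the ${\cal C}_0'$ case, so that $\phi_{n,\infty}(f)$ is exactly (respectively, up to a dominated remainder coming from the element $d\lesssim e_1$ in ${\cal Z}$) of the form $c(f)\otimes 1_K$; it then pulls this structure back to a finite stage $D_m$ by semiprojectivity of $C_0((0,1])\otimes M_K$ (resp.\ by weak stability/perturbation of the finitely many relations among $e_j'',w_j'',d''$). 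Your proposal instead tries to manufacture the $M_K$-structure directly inside $D_m$ from growing partial multiplicities, and the step where this is supposed to happen does not work: ``approximate $\phi_{n,m}(f)(t)$ fibrewise by its $K$-average \ldots realized by conjugation by a continuous path of unitaries'' conflates averaging with conjugation. A generic element of $M_{Kq}$ is not approximately unitarily equivalent to anything of the form $x\otimes 1_K$ (conjugation preserves the spectral data, which need not be $K$-divisible), and the averaging map is not implemented by unitaries at all. What you would actually need is an approximate decomposition of the connecting \emph{homomorphism} $\phi_{n,m}$ as $K$ mutually equivalent summands plus a small piece, compatibly with the boundary maps $\psi_0^{(m)},\psi_1^{(m)}$ — an existence-and-uniqueness statement for morphisms of one-dimensional NCCW complexes (e.g.\ via Robert's $Cu^{\sim}$-classification, as in \ref{CDdiag}) which your sketch neither states nor proves; large multiplicities and connectedness of $U(M_N)$ do not by themselves remove the incompatibility of the groupings at the two endpoints.

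A second, independent problem is your treatment of the remainder in the ${\cal C}_0^{0'}$ case. The corner supported on the residual $M_r$-blocks is small in trace but not in norm (its norm is comparable to $\|f\|$), and Cuntz comparison (\ref{Pcom4C}) cannot convert tracial smallness into the norm estimate ${\rm dist}(\phi_{n,m}(f), d({\cal F}_1))<\ep$; so your construction would only ever yield the weaker conclusion with a remainder term, i.e.\ the ${\cal C}_0'$ form \eqref{Lnz1702-1}, not \eqref{Lmatripb-1}. In the actual proof the vanishing of $K_0$ is used through Razak's classification to give $D\cong D\otimes Q$, which produces exact $K$-divisibility with no remainder at all — that is the missing ingredient your multiplicity count cannot supply.
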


\begin{proof}
One note that $K_0(D)=K_1(D)=\{0\}.$ 
One also note that $D\otimes Q$ is an inductive limit of \CA s in ${\cal C}_0^{0'}.$ 
It follows from \cite{Raz} (see also \cite{Tsang} and \cite{Rl}) that $D\cong D\otimes Q.$
Therefore, there exists  a \SCA\, $C$ 
such that
\vspace{-0.12in}\beq\label{Lmatripb-2}
{\rm dist}(a, \af(a))<\ep/4\rforal a\in \phi_{n, \infty}({\cal F}),
\eneq
where 
\vspace{-0.12in} $$
\af(a)=\diag(\overbrace{c(a),c(a),...,c(a)}^K)\subset C\otimes M_K
$$
for all $a\in \phi_{n, \infty}({\cal F})$ and for some $c(a)\in C.$

To simplify notation, \wilog, we may assume that there is $c_0\in C_+$ with 
$\|c_0\|=1$ such that 
$c_0c(a)=c(a)c_0=c(a)$ for all $a\in \phi_{n, \infty}({\cal F}).$ 
Consider \SCA\, $C_0\otimes M_K,$ where $C_0$ is the \SCA\, of $C$ generated by 
$c_0.$ Then $C_0\otimes M_K$ 
is a quotient of $C_0((0,1])\otimes M_K$ 
(or $C([0,1])\otimes M_K$).  Let $B=C_0((0,1])\otimes M_K$  (or $C([0,1])\otimes M_K$)
as just mentioned and let $q: B\to C_0\otimes M_K$ be  the standard quotient map. 
Since $B$ is semprojective, there is a \hm\, 
$H: B\to D_{m_1}$ for some $m_1\ge n$ such that
$\phi_{m_1,\infty}\circ H=q.$  Let $c_{00}$ be the pre-image of $c_0$ under $q.$
The lemma follows if we choose a sufficiently large $m\ge m_1$ and 
$D_{m}''=\overline{\phi_{m_1, m}(c_{00})D_{m}\phi_{m_1, m}(c_{00})}$
as well as $D_m'=D_m''\otimes M_K.$

In the case that $D_n\in {\cal C}_0',$ by \cite{aTz}, 
$D\otimes {\cal Z}\cong D.$ 
In ${\cal Z}$ (see the proof of Lemma 2.1 of \cite{Rlz}, and also
Lemma  4.2 of \cite{Rrzstable}),  there are $e_1, e_2,...,e_K, b\in {\cal Z}_+$ 
such that $\sum_{j=1}^K e_j+d=1_{\cal Z},$ $e_1, e_2,...,e_K$ are mutually orthogonal,
there exist $w_1, w_2,...,w_K\in {\cal Z}$ such that $e_j=w_jw_j^*$ and 
$e_{j+1}=w_j^*w_j$ and $d\lesssim e_1.$  Moreover, from the proof of Lemma 4.2 
of \cite{Rrzstable}, there is a unitary $v\in {\cal Z}$ such that
$v^*dv\le e_1.$ 
\Wlog,  by identifying $D$ with $D\otimes {\cal Z},$ we may assume 
that  $\phi_{n, \infty}(x)=y\otimes 1$ for some $y\in D$ and for every element $x\in {\cal F}.$
Let $d'=c_0\otimes d, $ $v'=c_0^{1/2}\otimes v,$ $e_j'=c_0\otimes e_j, $ $w_j'=c_0^{1/2}\otimes w_j,$
$j=1,2,...,K.$ Note that $d'+\sum_{j=1}^K e_j'=c_0.$
With sufficiently large $m,$ with a standard perturbation, 
we may assume $,d', v', e_j', w_j'\in \phi_{m, \infty}(D_m),$ $j=1,2,...,m,$ and 
$\phi_{n,\infty}(x)$ commutes with $d', e_j'$ and $w_j'$ for all $x\in {\cal F}.$
With possibly even larger $m,$  \wilog, there are $d'', v'' e_j'', w_j''\in  D_m$
such that 
$d''+\sum_{j=1}^K=c_0',$ $d''=(v'')(v'')^*,$ $(v'')(v'')^*\le e_1'',$ 
$e_1'', e_2'',..., e_K''$ are mutually orthogonal, $(w_j'')(w_j'')^*=e_j'$ and
$e_{j+1}''=(w_j'')^*(w_j''),$ where $c_0'\in (D_m)_+$ such 
that $c_0'\phi_{n,m}(x)=\phi_{n,m}(x)c_0'=\phi_{n,m}(x)$ for all $x\in {\cal F}$ 
and 
\beq
\|[\phi_{n,m}(x), y]\|<\ep/16K^2\rforal x\in {\cal F}
\eneq
and $y\in \{d''^{1/2}, d'',  v'', e_j'', w_j'', j=1,2,...,K\}.$
Define  $D_m''=\overline{e_j''D_me_j''},$ $r(f)=(d'')^{1/2}\phi_{n,m}d''$ and 
${\cal F}_1=\{e_1''\phi_{n,m}(x)e_1'': x\in {\cal F}\}$ and identify
$d(e_1''\phi_{n,m}(x)e_1'')$ with 
$$
\sum_{j=1}^Ke_j''\phi_{n,m}(x)e_j''\in M_K(D_m'')\,\,\,\,\rforal x\in {\cal F}.
$$
Lemma follows.
\end{proof}

\begin{thm}\label{TDapprdiv}
Let $A$  be a separable simple \CA\, in ${\cal D}_0.$
Then $A$ has tracially approximately divisible property in the sense of  \ref{Dappdiv}.
\end{thm}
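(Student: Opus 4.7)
The strategy is to exploit the definition of ${\cal D}_0$ to cut off a small ``tail'' part of each element of ${\cal F}$ and replace the remainder by something lying in a ${\cal C}_0^{0'}$-subalgebra whose fibres are matrix algebras of high rank; inside such a subalgebra we then carve out a subalgebra of the form $M_n(A_1)$ on which the images of ${\cal F}$ become approximately constant diagonal matrices.

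First I fix $\ep > 0$, a finite subset ${\cal F} \subset A^{\mathbf{1}}$, a non-zero positive element $b \in A$, and an integer $n \geq 1$. Without loss of generality, a strictly positive element $a$ of $A$ with $\|a\| = 1$ lies in ${\cal F}$. By Proposition \ref{PD0=tad}, $A \in $ TA${\cal C}_0^{0'}$; applying Proposition \ref{DpreTADone} together with Corollary \ref{ChighrankD} (the latter with the integer $k$ taken much larger than $n$), I obtain \cpc s\, $\phi: A \to A$ and $\psi: A \to D$ for some \SCA\, $D \subset A$ with $D \in {\cal C}_0^{0'}$, which are sufficiently multiplicative on ${\cal F}$ so that $\|x - \phi(x) \oplus \psi(x)\| < \ep/4$ for $x \in {\cal F}$, $\phi(a) \lesssim b$, and every irreducible representation of $D$ has rank at least $Nn$, for an integer $N$ to be chosen.

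The next and central step is to construct, inside $D$, a system of approximate matrix units $\{v_{ij}\}_{i,j=1}^n$ that approximately commutes with $\psi({\cal F})$. Since $D$ is a full hereditary \SCA\, of some NCCW complex $C \in {\cal C}_0^0$ whose relevant fibres (cut down by the open projection of a strictly positive element of $D$) are matrix algebras of dimension at least $Nn$, I use the elementary fact that inside any matrix algebra $M_m$ with $m \geq Nn$ and given a finite subset, one can find a unital embedding of $M_n$ whose image commutes with that subset up to error $O(1/N)$. Piecing such fibre-wise embeddings together across the interval and at the boundary --- using the semiprojectivity of one-dimensional NCCW complexes (Proposition \ref{str=1}(3)) and the strict comparison inside $D$ (Proposition \ref{Pcom4C}) --- produces $\{v_{ij}\} \subset D$ satisfying $v_{ij}^* v_{ij} \approx v_{jj}$, $v_{ij}v_{kl} \approx \delta_{jk} v_{il}$, the $v_{ii}$ mutually orthogonal, $\|[v_{ij}, \psi(x)]\| < \ep/(8n^2)$ for $x \in {\cal F}$, and $\sum_i v_{ii}$ dominating $\psi({\cal F})$ modulo error $< \ep/8$. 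Setting $A_1 := \overline{v_{11} D v_{11}}$ and identifying the \SCA\, generated by $A_1$ and $\{v_{ij}\}$ with $M_n(A_1) \subset A$, and setting $A_0 := \overline{\phi(a) A \phi(a)}$, we obtain $B := A_0 \oplus M_n(A_1) \subset A$.

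Finally, for $x \in {\cal F}$, the approximate commutation gives $\psi(x) \approx \sum_{i=1}^n v_{i1}\, \psi_1(x)\, v_{1i}$, where $\psi_1(x) := v_{11}\psi(x) v_{11} \in A_1$; that is, $\psi(x)$ is close to the diagonal element $\diag(\psi_1(x), \ldots, \psi_1(x)) \in M_n(A_1)$. Combined with $x \approx \phi(x) \oplus \psi(x)$ and $\phi(x) \in A_0$, this yields that $x$ lies within $\ep$ of $B_d$, while $\phi(a)$ is a strictly positive element of $A_0$ satisfying $\phi(a) \lesssim b$, as required by \ref{Dappdiv}. The main technical obstacle is the matrix-unit construction: although fibre-wise it is a routine consequence of the approximate divisibility of $M_m$ for $m \gg n$, producing such units globally in the NCCW complex --- continuous along the interval and compatible with the boundary maps $\phi_0, \phi_1$ --- is delicate and relies crucially on both the rank lower bound supplied by Corollary \ref{ChighrankD} and the semiprojectivity of ${\cal C}_0$-algebras.
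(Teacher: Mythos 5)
There is a genuine gap, and it sits exactly at the step you yourself flag as the crux. The ``elementary fact'' you invoke is false: given an arbitrary finite subset of $M_m$, one cannot in general find a unital copy of $M_n$ ($n\ge 2$) whose image approximately commutes with it, no matter how large $m/n=N$ is. Take the finite subset to be a single rank-one projection $e\in M_m$. If $B\cong M_n\subset M_m$ is unital and every contraction in $B$ commutes with $e$ up to a small $\delta$, then (averaging over $U(B)$) $e$ lies within a small distance of the relative commutant $B'\cap M_m\cong 1_n\otimes M_{m/n}$, hence within distance $<1$ of a projection in $B'$; but every non-zero projection in $B'$ has rank a multiple of $n\ge 2$, while $e$ has rank one --- a contradiction, with an error bound independent of $N$. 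So the rank lower bound from Corollary \ref{ChighrankD} cannot produce, fibre-wise, matrix units approximately commuting with $\psi({\cal F})$: the image $\psi({\cal F})$ may fill up the fibres of $D$ so that its approximate commutant there is essentially scalar. Consequently the global construction ``piecing such fibre-wise embeddings together'' has nothing to piece together, and semiprojectivity plus strict comparison in $D$ do not repair this; a single application of the TA${\cal C}_0^{0'}$ decomposition is simply not enough to yield divisibility.

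For comparison, the paper's proof does not try to divide inside one $D\in{\cal C}_0^{0'}$ at all. It iterates the decomposition to get a sequence $D_n\in{\cal C}_0^{0'}$ and approximately multiplicative maps $\phi_{n+1}^{(1)}|_{D_n}$, uses semiprojectivity of the $D_n$ to replace these by honest homomorphisms $\psi_n:D_n\to D_{n+1}$ close to them (while controlling fullness/trace estimates, the ``Claims'' in the proof), forms the inductive limit $D=\lim_n(D_n,\psi_n)$, and proves $D$ is \emph{simple} with $K_0(D)=K_1(D)=\{0\}$. Only then does divisibility appear: by the Razak--Robert classification one has $D\cong D\otimes Q$, and Lemma \ref{Lmatrixpullb} converts this into the statement that, deep enough in the system, elements of $\phi_{n,m}({\cal F})$ are $\ep$-close to $K$-fold diagonals inside some $M_K(D_m'')\subset D_m\subset A$; combining with the tail estimate $\phi_m^{(0)}(a)\lesssim b$ gives the decomposition required by Definition \ref{Dappdiv}. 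So the divisibility is imported from classification of the limit algebra, not from high-rank fibres of a single building block; if you want to salvage your approach you would need to replace your matrix-unit step by this inductive-limit argument (or an equivalent source of divisibility), since the one-step fibre-wise version is not available.
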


\begin{proof}
Let $a_0\in A$ be a strictly positive element of $A$ with $\|a_0\|=1.$
Let $1>\mathfrak{f}_{a_0}>0$ be in the definition of \ref{Dtr1div}.
Fix an integer $k_0\ge 1$
such that $\mathfrak{f}_{a_0}>2^{-k_0}.$

Upon replacing $a_0$ by $g(a_0)$ for some $g\in C_0((0,1])$ with
$0\le g\le 1,$ we may assume that
\beq\label{TDIV-n1}
\tau(a_0)>\mathfrak{f}_{a_0}\rforal \tau\in T(A)
\eneq
(see \ref{Rfa0}).
Fix any $b_0\in A_+\setminus \{0\}.$
Choose a sequence of positive elements
$\{b_n\}$ which has the following property:
$b_{n+1}\lesssim b_{n,1},$
where $b_{n,1}, b_{n,2},....,b_{n,2^{n+k_0+5}}$ are mutually orthogonal positive elements
in $\overline{b_nAb_n}$ such that $b_nb_{n,i}=b_{n,i}b_n=b_{n,i},$
$i=0,1,2,...,n,$ and $\la b_{n,i}\ra =\la b_{n,1}\ra,$ $i=1,2,...,2^{n+k_0+3}.$

It should be noted that
\beq\label{TDadiv-0}
\sum_{k=m}^{\infty} \sup\{\tau(b_n): \tau\in \overline{T(A)}^w\}<\mathfrak{f}_{a_0}/2^{m+5}\rforal m\ge 1.
\eneq

One obtains two sequences of \SCA s $A_{0,n},$ $D_n$ of $A,$ two sequences
of \cpc s $\phi_n^{(0)}: A\to A_{0,n}$  with $\|\phi_n^{(1)}\|=1$ ($i=0,1$) and
$\phi_n^{(1)}: A\to D_n\in {\cal C}_0^{0'}$ satisfy the following:
\beq\label{TDappdiv-1}
&&\lim_{n\to\infty}\|\phi_n^{(i)}(ab)-\phi_n^{(i)}(a)\phi_n^{(i)}(b)\|=0\rforal a,\, b\in A, \,\,i=0,1,\\\label{TDappdiv-1+}
&&\lim_{n\to\infty}\|a-\diag(\phi_n^{(0)}(a), \phi_n^{(1)}(a))\|=0\rforal a\in A,\\\label{TDappdiv-1+2}
&&c_n\lesssim b_n,\\\label{TDappdiv-2}
&&\tau(f_{1/4}(\psi_n^{(1)}(a_0)))\ge \mathfrak{f}_{a_0}\rforal \tau\in T(D_n)
\eneq
 and $\phi_n^{(1)}(a_0)$ is a strictly positive element in $D_n,$
 where $c_n$ is a strictly positive element of $A_{0,n}.$
As in the proof of  \ref{PD0qc},
\beq\label{TDappdiv-6}
\lim_{n\to\infty}\sup\{|\tau(a)-\tau\circ \phi_n^{(1)}(a)|: \tau\in T(A)\}=0\rforal a\in A.
\eneq

Let $a_1=\phi_1^{(1)}(a_0),$ $a_2=\phi_2^{(1)}(a_1),..., a_n=\phi_n^{(1)}(a_{n-1}),$
$n=0,1,....$

\Wlog, by passing to a subsequence, if necessary,  by \eqref{TDappdiv-1+},
we may assume that, for all $m > n,$
\beq\label{TDapdiv-7-}
&&\|a_n-\diag(\phi_m^{(0)}(a_n), \phi_m^{(1)}(a_n))\|<{\mathfrak{f}_{a_0}\over{2^{(n+4)^2}}}\\\label{TDapdiv-7}
&&\|f_{1/4}(a_n)-\diag(f_{1/4}(\phi_m^{(0)}(a_n)), f_{1/4}(\phi_m^{(1)}(a_n)))\|<{\mathfrak{f}_{a_0}\over{2^{(n+4)^2}}},\,\,\, n=1,2,....
\eneq

Claim (1):  For any $n\ge 1,$
\beq\label{TDapdiv-7+1}
\liminf_n \{\tau(f_{1/4}(\phi_m^{(1)}(a_n))): \tau\in T(D_m)\andeqn m>n\}\ge {\mathfrak{f}_{a_0}\over{8}}.
\eneq

Claim (2):  If we first take a subsequence $\{N(k)\}$ and define
$a_1:=\phi^1_{(N(1))}(a_0),$ $a_2=\phi_{N(2)}^{(1)}(a_1),..., a_n=\phi_{N(n)}^{(1)}(a_{n-1}),$
$n=0,1,...,$  then the Claim (1) still holds, when
$m$ is replaced by $N(m).$

Let us first explain that Claim (2) follows from Claim (1) since we first take a subsequence
in the above construction and then apply Claim (1).

We now prove the Claim (1).

Assume  Claim (1) is false.

Then
there exists $\eta>0$ such that $ {\mathfrak{f}_{a_0}\over{8}}-\eta>0$ and
\beq\label{TDapdiv-7+2}
\lim_n\inf \{\tau(\phi_m^{(1)}(a_n)): \tau\in T(D_m)\andeqn m>n\}\le {\mathfrak{f}_{a_0}\over{8}}-\eta.
\eneq
There is $n_0\ge 1$ such that, for all $m>n\ge n_0$ and $k\ge 1,$
\beq\label{TDappdiv-7+3}
\tau(f_{1/4}(\phi_{m+k}^{(1)}(a_m)))\le \tau(f_{1/4}(\phi_{m+k}^{(1)}(a_n)))+\eta/2\rforal \tau\in T(D_{m+k}).
\eneq

Hence there exists a subsequence  $\{n_k\}$
 which has the following property:

If $k'\ge k,$
\beq\label{TDapdiv-8}
t_{n_{k'}}(f_{1/4}(\phi_{n_{k'}}^{(1)}(a_{n_k})))\le \mathfrak{f}_{a_0}/8-\eta/2.
\eneq

Consider the states $\tau_k$ defined by
$t_k(a)=t_k(\phi_{n_k}^{(1)}(a))$
for all $a\in A,$ $k=1,2,....$

Let $\tau$ be a weak*-limit of $\{t_k\}.$ It follows \eqref{TDappdiv-2} that
$\tau$ is not zero.  On the other hand, by \eqref{TDapdiv-8},
\beq\label{TDapdiv-9}
\tau(f_{1/4}(a_{n_k}))<\mathfrak{f}_{a_0}/8\rforal k.
\eneq

It follows from \eqref{TDadiv-0} and \eqref{TDapdiv-7} that, if $m>n\ge 1,$
\beq\label{TDapdiv-10}
t(f_{1/4}(\phi_m^{(1)}(a_n)))\ge \tau(f_{1/4}(a_n))-\mathfrak{f}_{a_0}/2^{m+5}-\mathfrak{f}_{a_0}/2^{(n+4)^2}
\eneq
for all $t\in T(A).$

Therefore, also using \eqref{TDIV-n1},  for all $k,$
\beq\label{TDapdiv-11}
t(f_{1/4}(a_{n_k}))\ge t(f_{1/4}(a_0))-(\sum_{j=1}^{n+k}(\mathfrak{f}_{a_0}/2^{j+5}-\mathfrak{f}_{a_0}/2^{(j+1)^2}))>\mathfrak{f}_{a_0}/4.
\eneq
This contradicts  with \eqref{TDapdiv-9} which proves the Claim (1).

Now define
$\psi_n': D_n\to D_{n+1}$ by
$\psi_n'(d)=\phi_{n+1}^{(1)}(d)$ for all $d\in D_n,$ $n=1,2,....$

Since $A$ is simple, there is, for each $n,$ a map
$T_n=(N_n, M_n): A_+\setminus \{0\}\to \N\times \R_+\setminus \{0\}$ such that, for any $a\in A_+\setminus\{0\},$
there are $x(a)_{1,n}, x(a)_{2,n},...,x(a)_{m_n(a),n}\in A$ with
$m_n(a)\le N_n(a)$ and $\|x(a)_j\|\le M_n(a)$ such that
\beq\label{TDapdiv-12}
\sum_{j=1}^{m_n(a)}x(a)_{j,n}^*ax(a)_{j,n}=f_{1/16}(\phi_{n}^{(1)}(a_1)), n=1,2,....
\eneq
Here we view $D_n\subset A.$

Now fix a finite subset ${\cal F}\subset A_+$ and $1/2>\ep>0.$

Let $\{{\cal F}_{k,n}'\}$ be an increasing sequence of finite subsets of $(D_k)_+$ such that
the union of these subsets is dense in $(D_k)_+.$
We assume that $\phi_k^{(1)}({\cal F})\subset {\cal F}_{k,1}'.$

Set
\vspace{-0.1in}\beq\label{Ddvi-n12}
{\cal F}_{1,n}''=\{(a-\|a\|/2)_+:a\in {\cal F}_{1,n}'\}\andeqn {\cal F}_{1,n}'''={\cal F}_{1,n}'\cup{\cal F}_{1,n}''
\eneq
Let
${\cal F}_{1,n}$ be a finite subset which also contains
$$
{\cal F}_{1,n}'''
\cup\{a_i,
f_{1/16}(a_i),  f_{1/4}(a_i), \,i=0,1\}.
$$
Since $D_1$ is semiprojective,
there exists a \hm\, $\psi_1: D_1\to D_{n_2}$ such that
\beq\label{TDapdiv-13}
&&\|\psi_1(a)-\psi_{n_2}'(a)\|<\min\{\mathfrak{f}_{a_0}/16, \ep/8\}\andeqn\\
&&\hspace{-0.3in}\|\sum_{i=1}^{m_n(a)}\phi_{n_2}^{(1)}(x(a)_{i,1})^*\psi_1(a)\phi_{n_2}^{(1)}(x(a)_{i,1})-f_{1/16}(\phi_{n_2}^{(1)}(a_1))\|<\min\{\mathfrak{f}_{a_0}/16, \ep/8\}
\eneq
for all  $a\in {\cal F}_{1,1}'''.$

Therefore, by applying \ref{Lrorm},  there are $y(a)_{i,n_2}\in D_{n_2}$ with $\|y(a)_{i,n_2}\|\le \|x(a)_{i,1}\|+\mathfrak{f}_{a_0}/16$ such that
\beq\label{TDapdiv-14}
\sum_{i=1}^{m_n(a)}y(a)_{i, n_2}^*\psi_1(a)y(a)_{i,n_2}=f_{1/8}(\phi_{n_2}^{(1)}(a_1))\rforal a\in {\cal  F}_{1,1}'''.
\eneq
To simplify notation, by passing to a subsequence, if necessary,   \wilog,
we may assume that $n_2=2.$


Set
\vspace{-0.1in}\beq\label{Ddvi-n12+}
{\cal F}_{2,n}''=\{(a-\|a\|/2)_+:a\in {\cal F}_{2,n}'\}\andeqn {\cal F}_{2,n}'''={\cal F}_{2,n}'\cup {\cal F}_{2,n}'''
\eneq

Let
${\cal F}_{2,n}$ be a finite subset which also contains
$$
{\cal F}_{2,n}''
\cup\{a_i,
f_{1/16}(a_i),  f_{1/4}(a_i),\, i=0,1,\, f_{1/16}(\phi_2^{(1)}(a_1))\}\cup \psi_1({\cal F}_{1,1}).
$$
Since $D_2$ is semiprojective,
there exists a \hm\, $\psi_2: D_2\to D_{n_3}$ such that
\beq\label{TDapdiv-15}
\|\psi_2(a)-\psi_{n_3}'(a)\|<\min\{\mathfrak{f}_{a_0}/2^{3+2}, \ep/2^{2+2}\}\rforal a\in {\cal F}_{2,2}\andeqn\\
\|\sum_{i=1}^{m_n(a)}\phi_{n_3}^{(1)}(x(a)_{i,2})^*\psi_2(a)\phi_{n_3}^{(1)}(x(a)_{i,2})-f_{1/16}(\phi_{n_3}^{(1)}(a_2))\|<\min\{\mathfrak{f}_{a_0}/2^{3+2}, \ep/2^{3+1}\}
\eneq
for all  $a\in {\cal F}_{2,2}'''.$

Therefore, by applying \ref{Lrorm},  there are $y(a)_{i,n_3}\in D_{n_3}$ with $\|y(a)_{i,n_3}\|\le \|x(a)_{i,2}\|+\mathfrak{f}_{a_0}/2^{3+2}$ such that
\beq\label{TDapdiv-16}
\sum_{i=1}^{m_n(a)}y(a)_{i,n_3}^*\psi_n(a)y(a)_{i,n_3}=f_{1/8}(\phi_{n_3}^{(1)}(a_2))\rforal a\in {\cal  F}_{2,2}'''
\eneq
To simplify notation, by passing to a subsequence, if necessary,   \wilog,
we may assume that $n_3=3.$

Continue this process,
one then obtains a sequence
of \hm\, $\psi_n: D_n\to D_{n+1}$ such that
\beq\label{TDapdiv-17}
\|\psi_n(a)-\psi_{n}'(a)\|<\min\{\mathfrak{f}_{a_0}/2^{3+n}, \ep/2^{2+n}\}\rforal a\in {\cal F}_{n,n}\andeqn\\
\|\sum_{i=1}^{m_n(a)}\phi_{n}^{(1)}(x(a)_{i,n})^*\psi_n(a)\phi_{n}^{(1)}(x(a)_{i,n})-f_{1/16}(a_{n})\|<\min\{\mathfrak{f}_{a_0}/2^{3+n}, \ep/2^{3+n}\}
\eneq
for all  $a\in {\cal F}_{n,n}'''.$
Moreover, there  are $y(a)_{i,n}\in D_{n}$ with $\|y(a)_{i,n}\|\le M_n(a)+\mathfrak{f}_{a_0}/2^{3+n}$ such that
\beq\label{TDapdiv-18}
\sum_{i=1}^{m_n(a)}y(a)_{i,n}^*\psi_n(a)y(a)_{i,n}=f_{1/8}(a_{n})\rforal a\in {\cal  F}_{n,n}'''.
\eneq

Let $D=\lim_{n\to\infty} (D_n, \psi_n).$ (Again, one should note that we have taken a subsequence to
simplify notation.)

We now verify $D$ is simple.  Fix a non-zero positive element $d_0\in D_+$ with $\|d_0\|=1.$
Since each $D_n$ is stably projectionless, so is $D.$
Fix $1/64>\ep_1>0.$ There is $d\in D_+$ such that
$d=\psi_{m,\infty}(d')$ for $d'\in (D_m)_+$ with $\|d'\|=1$
and
\beq\label{TDapdiv-19}
\|d-d_0\|<\ep_1/32.
\eneq
It follows from \ref{Lrorm} that there is $z\in D$ such
that
\beq\label{TDapdiv-20}
(d-\ep_1/16)_+=z^*d_0z.
\eneq

By the construction, there is $d''\in {\cal F}_{m',m'}'$ for some
$m'\ge m+16$ such that
\beq\label{TDapdiv-21}
\|\psi_{m,m'}((d'-\ep_1/16)_+)-d''\|<\ep_1/64
\eneq
There is $y\in D_{m'}$ such
that
\beq\label{TDapdiv-22}
(d''-\ep_1/8)_+=y^*\psi_{m,m'}((d'-\ep_1/4)_+)y.
\eneq
Note that $\ep_1/2\le \|d''\|/8.$
By the construction, there $x_1, x_2,...,x_L\in D_{m'+1}$
such that
\beq\label{TDapdiv-23}
\sum_{i=1}^L x_i^*\psi_{m', m'+1}((d''-\ep_1/2)_+)x_i=f_{1/8}(\phi_{m'+1}(a_{m'}))=f_{1/8}(a_{m'+1}).
\eneq

Claim (3): $a_{00}:=\psi_{m'+1, \infty}(f_{1/4}(a_{m'+1})$ is full.

In fact, for any $m'>m'+1,$  it follows from \eqref{TDapdiv-17} and Claim (2)  that
\beq\label{TDapdiv-24}
\tau(\psi_{m'+1, m''}(f_{1/4}(a_{m'+1})))&\ge& \tau(\phi_{m''}^{(1)}(f_{1/4}(a_{m'+1})-\mathfrak{f}_{a_0}/2^{m'+3}\\
&=&\tau(f_{1/4}(\phi_{m''}^{(1)}(a_{m'+1}))-\mathfrak{f}_{a_0}/2^{m'+3}> \mathfrak{f}_{a_0}/16
\eneq
for all $\tau\in T(D_{m''}).$

By \ref{Pcom4C}, we conclude that
$\psi_{m'+1, m''}(f_{1/4}(a_{m'+1}))$ is full in $D_{m''}.$
Therefore $a_{00}$ is full in $\psi_{m'', \infty}(D_{m''})$ for all
$m''>m'+1.$
Hence the closed ideal generated by $a_{00}$ contains all \\
$\cup_{m''>m'+1}\psi_{m'', \infty}(D_{m''}).$ This implies that $a_{00}$ is full in $D.$
This proves Claim (3).

It follows from \eqref{TDapdiv-23} that $\psi_{m', \infty}((d''-\ep_1)_+)$ is full
in $D.$  By \eqref{TDapdiv-22}, $\psi_{m, \infty}((d'-\ep_1/4)_+)$ is full in $D.$
This, in turn, by \eqref{TDapdiv-20}, $b_0$ is full in $D.$
Since $b_0$ is arbitrarily chosen in $D_+\setminus \{0\}$ with $\|b_0\|=1,$
this implies that $D$ is indeed simple.

On the other hand $D$ is non-unital and  an inductive limit of \CA s in ${\cal C}_0^{0'}.$
This lemma then follows a direct application of \ref{Lmatrixpullb}.

In fact,  for any fixed integer $K\ge 1,$ by \ref{Lmatrixpullb}, 
there exists $m\ge 1$ such that
\beq\label{TDapdiv-26}
 {\rm dist}(\psi_m(f), d({\cal F}_1))<\ep/8\rforal f\in {\cal F},
 \eneq
 where $d(x)=\diag(\overbrace{x,x,...,x})^K)$ for all $x\in {\cal F}_1$
 and where ${\cal F}_1\subset D_m'',$  $D_m'=M_K(D_m'')\subset D_m.$
 By choosing possibly even larger $m,$ by  \eqref{TDappdiv-1+},
 we may also assume that
 \beq\label{TDadpdiv-27}
 \|a-\diag(\phi_m^{(0)}(a), \phi_m^{(1)}(a))\|<\ep/4\rforal a\in {\cal F}.
 \eneq
 It follows from \eqref{TDapdiv-17} that
 \beq\label{TDapdiv-28}
 {\rm dist}(\phi_m^{(1)}(f), d({\cal F}_1))<\ep/4\rforal f\in {\cal F}.
 \eneq
Let  
$$
B_{1,d}=\{(\overbrace{x,x,...,x}^K): x\in D_m''\}\subset M_K(D_m'')\andeqn B_d=A_{0,m}\oplus B_{1,d}.
$$
Then, 
$$
{\rm dist}(a, B_d)<\ep\rforal a\in {\cal F}.
$$
Note also, by \eqref{TDappdiv-1+2},
$$
c_m\lesssim b_m\lesssim b_0
$$
(recall that $c_m$ is a strictly positive element for $A_{0,m}$). The lemma follows.
\end{proof}

\begin{cor}\label{LappZ}
Let $A$ be a simple  \CA\, in ${\cal D}.$ Then $A$ has the following property:
For any $\ep>0,$ any finite subset ${\cal F}\subset A,$ any $a_0\in A_+\setminus \{0\}$
and any integer $n\ge 1,$ 
there are  $e_0, e_{00}, e_{01}\in A_+,$ \cpc s $\phi_0: A\to E_0,$ $\phi_1: A\to E_1$
and $\phi_2: A\to D_n(E_3),$  where $E_0, E_1, E_2$ are \SCA s of $A,$ 
$E_0=\overline{e_0Ae_0},$ $e_{00}\in E_1,$ $e_{01}\in E_2,$ 
$E_0\perp E_1,$ $M_n(E_2)\subset E_1,$ $E_2\subset \overline{e_{01}Ae_{01}}$  such that 
\beq\label{DappZ-1}
&&\|x-\diag(\phi_0(x), \phi_1(x))\|<\ep/2\andeqn\\
&&\|\phi_1(x)-(r(x)+\diag(\overbrace{\phi_2(x), \phi_2(x), ...,\phi_2(x)}^n))\|<\ep/2,\\
&&r(x)\in \overline{e_{00}Ae_{00}}\rforal x\in {\cal F},
\eneq
and $e_0+e_{00}\lesssim a_0$ and $e_{00}\lesssim e_{01}.$
\end{cor}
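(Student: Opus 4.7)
The plan is to adapt the construction in the proof of Theorem \ref{TDapprdiv} to $A\in{\cal D}$ (rather than ${\cal D}_0$) and to combine it with the second case of Lemma \ref{Lmatrixpullb}, which is precisely the case where the building blocks live in ${\cal C}_0'$. The output of \ref{TDapprdiv} is an approximate splitting $x\approx \phi_0(x)\oplus\phi_1(x)$ into a tracially small piece $A_0$ and a piece landing in a \SCA $D\subset A$ which is itself an inductive limit of \CA s in ${\cal C}_0'$; the corollary is then obtained by feeding this splitting into \ref{Lmatrixpullb} applied inside $D$ to produce the diagonal matrix structure $M_n(E_2)\subset E_1$ together with the remainder term $r(x)\in\overline{e_{00}Ae_{00}}$.

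Concretely, I would first choose, as in the proof of \ref{TDapprdiv}, a strictly positive element $a\in A$ with $\|a\|=1$ and a sequence $\{b_k\}\subset A_+$ with $b_{k+1}\lesssim b_{k,1}$ for sufficiently many mutually orthogonal copies of $b_{k,1}$, arranged so that $b_1\lesssim a_0$ and $\sum_k\sup_{\tau\in\overline{T(A)}^w}\tau(b_k)$ is as small as needed. Iterating the definition of ${\cal D}$ (via \ref{DpreTADone} and \ref{UnifomfullTAD}) with these auxiliary elements produces \cpc\ maps $\phi_k^{(0)}\colon A\to A_{0,k}$ and $\phi_k^{(1)}\colon A\to D_k$ with $D_k\in{\cal C}_0'$, $A_{0,k}\perp D_k$, a strictly positive element of $A_{0,k}$ Cuntz below $b_k$, and lower tracial bounds on $f_{1/4}(\phi_k^{(1)}(a))$. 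Using the semiprojectivity of each $D_k\in{\cal C}_0'$ (item (3) of \ref{str=1}), and following the passage through Claims (1)--(3) in the proof of \ref{TDapprdiv} verbatim, I would perturb the connecting maps to honest $*$-homomorphisms $\psi_k\colon D_k\to D_{k+1}$ compatible (up to $\varepsilon/2^{k+2}$) with the $\phi_k^{(1)}$'s, and verify that the inductive limit $D=\varinjlim(D_k,\psi_k)$ is a simple non-unital \CA\ which is an inductive limit of \CA s in ${\cal C}_0'$.

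At this point I would invoke the second part of Lemma \ref{Lmatrixpullb}: for the prescribed integer $n$, the finite subset $\phi_m^{(1)}({\cal F})\subset D_m$ (pushed sufficiently far down the system), and tolerance $\varepsilon/4$, there exist an index $m'\ge m$, a \SCA\ $D_{m'}'=M_n(D_{m'}'')\subset D_{m'}$, an element $e\in (D_{m'})_+$ with $e\lesssim e_d$ for a strictly positive element $e_d$ of $D_{m'}$, and a finite subset ${\cal F}_1\subset D_{m'}''$ with
\begin{equation*}
\|\psi_{m,m'}(\phi_m^{(1)}(x))-(r(x)+d({\cal F}_1))\|<\varepsilon/4,\qquad x\in{\cal F},
\end{equation*}
where $r(x)\in\overline{eD_{m'}e}$ and $d(y)=\diag(\overbrace{y,\dots,y}^n)$. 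Then I would set $E_0=A_{0,m'}$, $E_1=D_{m'}$, $E_2=D_{m'}''$, let $e_0$ be strictly positive in $E_0$, let $e_{00}=e$, let $e_{01}$ be strictly positive in $E_2$, and define $\phi_0=\phi_{m'}^{(0)}$, $\phi_1=\phi_{m'}^{(1)}$ and $\phi_2\colon A\to E_2$ by reading off the $M_n$-diagonal entry in ${\cal F}_1$ (extended to a \cpc\ by cutting down with a suitable positive element of $E_2$). Combining the $\varepsilon/2$ splitting $x\approx\phi_0(x)\oplus\phi_1(x)$ from \ref{TDapprdiv} with the matrix approximation of $\phi_1(x)$ above yields the two displayed inequalities of the corollary.

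The remaining comparisons $e_0+e_{00}\lesssim a_0$ and $e_{00}\lesssim e_{01}$ follow from the tracial smallness built into the construction together with the strict comparison property established for ${\cal D}$ in Theorem \ref{Comparison}: the tracial value of $e_0$ is controlled by $\tau(b_{m'})$ which is arranged to be small compared with $\tau(a_0)$, while $e_{00}=e\lesssim e_d$ inside $D_{m'}$ is controlled by a ${\cal Z}$-type cut produced inside the \SCA\ $E_2$ in the proof of \ref{Lmatrixpullb}, which is precisely the ``small corner'' complementary to the $n$ mutually equivalent pieces generating $E_2$. The main technical obstacle I anticipate is the bookkeeping in step two: \ref{Lmatrixpullb} is stated for an inductive-limit \CA, so the matrix decomposition it produces has to be pulled back to a finite stage $m'$ along the $\psi_k$'s while preserving the approximate multiplicativity and tracial estimates coming from the TA${\cal C}_0'$ construction. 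This is the same kind of $\varepsilon/2^{k+\cdot}$ bookkeeping already carried out in \ref{TDapprdiv}, so no new ideas are required, but care is needed to ensure that the error in the final displayed inequality is genuinely bounded by $\varepsilon/2$ rather than accumulated along the inductive tail.
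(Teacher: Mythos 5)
Your proposal is correct and follows essentially the same route as the paper: the paper's proof consists precisely of rerunning the proof of Theorem \ref{TDapprdiv} with building blocks in ${\cal C}_0'$ in place of ${\cal C}_0^{0'}$ (up to the point where the inductive limit $D$ is shown to be simple) and then invoking the second part of Lemma \ref{Lmatrixpullb} instead of the first. Your final worry about pulling the decomposition back to a finite stage is already absorbed in the statement of \ref{Lmatrixpullb}, whose conclusion lives in a finite stage $D_m\subset A$, so no extra bookkeeping beyond what you describe is needed.
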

\begin{proof}
In the proof of \ref{TDapprdiv}, we replace ${\cal C}_0^{0'}$ by ${\cal C}_0$ and keep entire proof 
to the line ends ``... $D$ is indeed simple" shortly before \eqref{TDapdiv-26}.
Instead of applying the first part of \ref{Lmatrixpullb} in the last few lines 
of the proof, we apply the second part of \ref{Lmatrixpullb}.
\end{proof}

\begin{thm}\label{TCCdvi}
Let $A$ be a non-unital  and $\sigma$-unital simple \CA\, in ${\cal D}_{0}$
with a strictly positive element $a\in A_+$ with $\|a\|=1.$
Then the  following is true.

  There exists
$1> \mathfrak{f}_a>0,$ for any $\ep>0,$  any
finite subset ${\cal F}\subset A$ and any $b\in A_+\setminus \{0\}$ and any integer
$n\ge 1,$  there are ${\cal F}$-$\ep$-multiplicative \cpc s $\phi: A\to A$ and  $\psi: A\to D$  for some
\SCA\, $D\subset A$  with $D\in {\cal C}_0^{0'}$ such that $\|\psi\|=1,$ 
\beq\label{CCdiv-1}
&&\|x-\diag(\phi(x), \overbrace{\psi(x), \psi(x),...,\psi(x)}^n)\|<\ep\rforal x\in {\cal F}\cup \{a\},
\\\label{CCdiv-3}
&&\phi(a)\lesssim b,
\\\label{CCdiv-4+}
&&t(f_{1/4}(\psi(a)))\ge \mathfrak{f}_a\rforal t\in T(D)\andeqn
\eneq
$\psi(a)$ is strictly positive in $D.$

Let $A_0=\overline{\phi(a)A\phi(a)}.$ Then
$A_0\perp D.$ Moreover, \eqref{CCdiv-3} could be replaced by
$c\lesssim b$ for some strictly positive element $c$ of $A_0.$
\end{thm}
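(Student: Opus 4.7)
\medskip
\noindent\textbf{Proof proposal.}
The plan is to combine the defining property of $\mathcal{D}_{0}$ with the matrix-pullback mechanism of Lemma \ref{Lmatrixpullb} (first part, which is the one applicable in the $\mathcal{C}_0^{0\prime}$ setting). Morally, the definition of $\mathcal{D}_{0}$ already produces a one-shot decomposition $x \approx \phi(x) \oplus \psi_{1}(x)$ with $\psi_{1}$ landing in some $D_{1}\in\mathcal{C}_0^{0\prime}$, and what remains is to subdivide the $\psi_{1}$-component into $n$ identical copies up to $\ep$. The iteration carried out inside the proof of Theorem~\ref{TDapprdiv} already constructs, starting from any such one-shot decomposition, an inductive system of $\mathcal{C}_0^{0\prime}$-algebras $(D_{k},\psi_{k})$ whose limit $D$ is simple and stably projectionless. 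Lemma \ref{Lmatrixpullb} applied to that limit produces, for each prescribed $n$, an internal $M_{n}$-structure.

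First, given $\ep>0$, ${\cal F}\subset A$, $b\in A_{+}\setminus\{0\}$ and an integer $n\ge 1$, I run the iterated construction from the proof of \ref{TDapprdiv} (equations \eqref{TDappdiv-1}--\eqref{TDappdiv-2}) verbatim: this yields subalgebras $A_{0,k}$, $D_{k}\in\mathcal{C}_0^{0\prime}$ of $A$, c.p.c.\ maps $\phi_{k}^{(0)}:A\to A_{0,k}$, $\phi_{k}^{(1)}:A\to D_{k}$, connecting \hm s $\psi_{k}:D_{k}\to D_{k+1}$, and an element $c_{k}$ strictly positive in $A_{0,k}$ with $c_{k}\lesssim b_{k}\lesssim b$, together with the tracial lower bound $\tau(f_{1/4}(\phi_{k}^{(1)}(a)))\ge \mathfrak{f}_{a}$ on $T(D_{k})$. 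By Claim (3) in the proof of \ref{TDapprdiv} the inductive limit $D_{\infty}=\varinjlim(D_{k},\psi_{k})$ is a simple $\sigma$-unital \CA\, which is an inductive limit of algebras in $\mathcal{C}_{0}^{0\prime}$.

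Second, I apply the first part of \ref{Lmatrixpullb} to $D_{\infty}$ with the integer $K=n$ and with the finite subset ${\cal F}'=\{\phi_{k_{0}}^{(1)}(f): f\in {\cal F}\cup\{a\}\}\subset D_{k_{0}}$ for some fixed initial $k_{0}$. This produces $m\ge k_{0}$, a hereditary \SCA\, $D_{m}''\subset D_{m}$ with $M_{n}(D_{m}'')\subset D_{m}$, and a finite subset $\{x_{f}:f\in {\cal F}\cup\{a\}\}\subset D_{m}''$ with
\[
\operatorname{dist}\bigl(\psi_{k_{0},m}(\phi_{k_{0}}^{(1)}(f)),\,\diag(\overbrace{x_{f},\ldots,x_{f}}^{n})\bigr)<\ep/4.
\]
I then define $\psi:A\to D_{m}''$ as the assignment $f\mapsto x_{f}$, made into an actual c.p.c.\ map by compressing by a cut-down of a strictly positive element of $D_{m}''$ (standard in this context). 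Choosing $\phi=\phi_{m}^{(0)}$ together with \eqref{TDappdiv-1+}, \eqref{TDappdiv-1+2}, the defining decomposition \eqref{CCdiv-1} holds with the required $n$ identical copies, and \eqref{CCdiv-3} follows from $\phi_{m}^{(0)}(a)=c_{m}'\lesssim b_{m}\lesssim b$. The fact that $\psi(a)$ is strictly positive in $D_{m}''$ and that $D_{m}''\in \mathcal{C}_0^{0\prime}$ (being a full hereditary \SCA\, of $D_{m}\in\mathcal{C}_0^{0\prime}$) is automatic.

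The delicate point, and what I expect to be the main obstacle, is verifying the tracial estimate $t(f_{1/4}(\psi(a)))\ge \mathfrak{f}_{a}$ for every $t\in T(D_{m}'')$ with $\mathfrak{f}_{a}$ \emph{independent of $n$}. The decomposition produced by \ref{Lmatrixpullb} gives, essentially, $\phi_{k_{0},m}(\phi_{k_{0}}^{(1)}(a))\approx \sum_{j=1}^{n}e_{j}''\,\phi_{m}^{(1)}(a)\,e_{j}''$ where $e_{1}'',\ldots,e_{n}''$ are pairwise orthogonal, mutually Cuntz-equivalent positive elements whose sum is an approximate unit for the relevant piece, and $D_{m}''=\overline{e_{1}''D_{m}e_{1}''}$. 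The crucial observation is that a tracial state $t$ on $D_{m}''$ \emph{rescales} when extended to $D_{m}$: concretely, a trace on $D_{m}$ restricts and renormalizes as $t=t_{m}(e_{1}''\cdot\, e_{1}'')/t_{m}(e_{1}'')$, and by Cuntz equivalence of the $e_{j}''$ and traciality, $\sum_{j}t_{m}(e_{j}''\phi_{m}^{(1)}(a)e_{j}'')=t_{m}(\phi_{m}^{(1)}(a))$ while $\sum_{j}t_{m}(e_{j}'')=t_{m}(\sum e_{j}'')$, and so the $n$'s cancel in the ratio. Hence $t(f_{1/4}(\psi(a)))$ is, up to an error controlled by the multiplicative deficit in \eqref{TDappdiv-1} and the approximation in \ref{Lmatrixpullb}, equal to the normalized evaluation of $f_{1/4}(\phi_{m}^{(1)}(a))$ by the extension of $t$ to $D_{m}$, which the iterative construction (Claim (1) in the proof of \ref{TDapprdiv}) bounds below by $\mathfrak{f}_{a}/8$. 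Thus, after possibly replacing the original $\mathfrak{f}_{a}$ of the $\mathcal{D}_{0}$-definition by a smaller absolute multiple of itself, one obtains a bound independent of $n$. Handling this rescaling cleanly, and choosing $m$ large enough that the multiplicative deficits of $\phi_{m}^{(0)},\phi_{m}^{(1)}$ and the approximation error from \ref{Lmatrixpullb} do not swamp the $\mathfrak{f}_{a}/8$ margin, is the technical core of the argument.
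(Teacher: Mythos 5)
Your route is genuinely different from the paper's, and as written it has two gaps. First, the object you get from the first part of Lemma \ref{Lmatrixpullb} is only a distance statement: for finitely many elements $f$ one has $\psi_{k_0,m}(f)$ within $\ep$ of the diagonal subalgebra $\{\diag(x,\dots,x):x\in D_m''\}$. That lemma does not assert that $D_m''\in{\cal C}_0^{0'}$, gives no matrix units approximately commuting with $\phi_m^{(1)}({\cal F})$, and produces no linear (let alone c.p.c.) map $A\to D_m''$; your "compress by a cut-down of a strictly positive element of $D_m''$" does yield a c.p.c.\ map into a hereditary subalgebra, but to recover \eqref{CCdiv-1} you would need the cut-down and the corner structure to approximately commute with $\phi_m^{(1)}(x)$ for all $x\in{\cal F}$, and you would still have to verify strict positivity of $\psi(a)$, fullness of $f_{1/4}(\psi(a))$, and membership of the receiving algebra in ${\cal C}_0^{0'}$ — none of which is supplied by \ref{Lmatrixpullb} as stated. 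Second, the tracial estimate: your rescaling argument presumes that every $t\in T(D_m'')$ is the restriction of a tracial \emph{state} $t_m$ of $D_m$ renormalized by $t_m(e_1'')\le 1/n$. This needs $D_m''$ (equivalently $M_n(D_m'')$) to be full in $D_m$ and the extension of $t$ to $D_m$ to be bounded; neither is established, and if the extension is unbounded the inequality $n\,t_m(e_1'')\le 1$ and hence the cancellation of $n$ fails, while Claim (1) of \ref{TDapprdiv} only bounds tracial states of $D_m$ and so cannot be applied to the extension. So the "technical core" you flag is exactly where the argument is incomplete.

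The paper avoids both problems by not re-entering the proof of \ref{TDapprdiv} at all: it uses \ref{TDapprdiv} only as a black box giving, inside $A$, a subalgebra $A_0\oplus M_n(A_1)$ with $x\approx\diag(y_0,y_1,\dots,y_1)$ for $x\in{\cal F}$ and the $A_0$-part small, checks via \ref{Pheretc} that the corner $A_1'=\overline{y_1Ay_1}$ retains a definite fraction of the trace of $a$ ($\tau(f_{1/4}(y_1))\ge r_0/3$ on $T(A_1')$, $r_0$ independent of $n$), and then applies the ${\cal D}_0$ \emph{definition} to $A_1'$ (legitimate by \ref{Phered}, with constant $r_0/6$ by \ref{Rfa0}). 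This hands over, in one shot, honest c.p.c.\ maps $\phi_0,\psi_0$ with $\psi_0$ into some $D\in{\cal C}_0^{0'}$, $\psi_0(y_1)$ strictly positive, and the bound $t(f_{1/4}(\psi_0(y_1)))\ge r_0/6$ for $t\in T(D)$ — i.e., \eqref{CCdiv-4+} is over $T(D)$ directly, so no trace-extension or rescaling is ever needed; copying $\psi_0$ down the diagonal of the $M_n(A_1)$ structure and absorbing the leftovers into $\phi$ (with \eqref{CCdiv-3} arranged by choosing orthogonal pieces $b_0,\dots,b_n$ inside $\overline{bAb}$) finishes the proof. If you want to salvage your approach, you would essentially have to prove the fullness and trace-extension facts for $D_m''$ inside $D_m$, at which point you are redoing, in a harder setting, what the corner-plus-definition argument gives for free.
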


Note that there is a \SCA\, $M_n(D)$ in $A,$ where $D\oplus D\oplus \cdots \oplus D$ sitting
in the diagonal.

\begin{proof}
By \ref{TDapprdiv}, $A$ has the property of tracially approximate divisibility.
Fix a strictly positive element $a\in A_+$ with $\|a\|=1.$
It follows from \ref{PD0qc} that $0\not\in \overline{T(A)}^w.$
Let
\beq\label{CCdvi-10}
r_0=\inf\{\tau(f_{1/2}(a)): \tau\in \overline{T(A)}^w\}>0.
\eneq

Let $\mathfrak{f}_{a_0}=r_0/4.$ Choose an integer $k_0\ge 1$ such that
$r_0/16>1/k_0.$

Let $1>\ep>0$ and ${\cal F}\subset A$ be a finite subset.
Choose $\ep_1=\min\{\ep/16, r_0/128\}.$ Let ${\cal F}_1\supset {\cal F}\cup \{a, f_{1/4}(a)\}$
be a finite subset of $A.$
Let
$b\in A_+\setminus \{0\},$ and any
integer $n\ge 1$  be given.

Choose $b_0',b_1',...,b_{n+2k_0}'\in \overline{bAb}$ such
that $b_0'$  and $b_1'$ are mutually orthogonal and mutually equivalent in the sense of Cuntz
and there are non-zero elements $b_0, b_1,...,b_{n+2k_0}\in A_+$ such
that $b_ib_0'=b_i,$ $i=0,1,...,n+2k_0$

By \ref{TDapprdiv}, $A$ has the property of tracially approximate divisibility.
Therefore
there are $\sigma$-unital \SCA s
$A_0, A_1$ of $A$ such that
\vspace{-0.1in}$$
{\rm dist}(x, B_d)<\ep_1/2 \rforal x\in {\cal F}_1,
$$
where $B_d\subset B\subset A,$
\vspace{-0.12in}\beq\label{CCdvi-11}
&&B=A_0\bigoplus  M_n(A_1),\\
&&B_d=\{(x_0, \overbrace{x_1,x_1,...,x_1}^n): x_0\in A_0, x_1\in A_1\}
\eneq
and $a_0\lesssim  b_0,$ where $a_0$ is a strictly positive element of $A_0.$
Moreover, there are $y_0\in A_0$ and $y_1\in A_1$ such that
\vspace{-0.12in}\beq\label{CCdvi-12}
&&\|a-\diag(y_0, \overbrace{y_1,y_1,...,y_1}^n)\|<\ep/2\andeqn\\
&&\|f_{1/4}(a)-\diag(f_{1/4}(y_0), \overbrace{f_{1/4}(y_1),f_{1/4}(y_1),...,f_{1/4}(y_1)}^n)\|<\ep_1/2.
\eneq

Note that
\vspace{-0.1in}\beq\label{CCdvi-12+}
\tau(\diag(0, \overbrace{f_{1/4}(y_1),f_{1/4}(y_1),...,f_{1/4}(y_1)}^n))\ge r_0-1/(n+2k_0)-\ep_1/2>r_0/3
\eneq
for all $\tau\in T(A).$

Let $A_0'=\overline{a_0Aa_0}$ and $A_1'=\overline{y_{1}Ay_{1}}.$
Note that $0\not\in \overline{T(A_1')}^w$ as \ref{Pheretc}.
Moreover, if $\tau\in T(A),$ then $\|\tau|_{A_1}\|\ge r_0/3.$
We also have
\beq\label{CCdvi-12+1}
\tau(f_{1/4}(y_1))\ge r_0/3\rforal \tau\in T(A_1').
\eneq

Note, by \ref{Rfa0}, in the definition of \ref{DNtr1div}
the constant $\mathfrak{f}_{y_1}$ can be chosen
$r_0/6.$

Let ${\cal G}\subset A_1$ be a finite subset such that the following holds
\beq\label{CCdvi-13}
{\rm dist}(f, \diag(x_0, \overbrace{x_1,x_1,...,x_1}^n): x_0\in A_0, x_1\in {\cal G}\})<\ep_1/2\rforal f\in {\cal F}_1
\eneq
and $y_1\in {\cal G}.$

Note that $A_1'$ is a hereditary \SCA\, of $A.$
Therefore, there exist  two \SCA s  $B_0$ and $D$ of $A_1',$ where
$D\in {\cal C}_0^{0'}$ 
and two
${\cal G}$-$\ep_1$-multiplicative \cpc s
$\phi_0: A_1'\to B_0$ and $\psi_0: A_1'\to D$ such that
\beq\label{CCdvi-13+}
&&\|x-\diag(\phi_0(x), \psi_0(x))\|<\ep_1/2\rforal x\in {\cal G},\\
&&\phi_0(c_0)\lesssim b_1,\\
&&\|\psi_0\|=1\andeqn\\
&&\tau\circ f_{1/4}(\psi_0(y_1))\ge r_0/6\rforal \tau\in T(D)
\eneq
and $\psi_0(y_1)$ is a strictly positive element in $D,$ where $c_0$ is a strictly positive element of
$A_1'.$

Let $A_{00}=A_0\oplus \overbrace{A_0', A_0',..., A_0'}^n$
and let
\vspace{-0.12in}$$
c=\diag(a_0, \overbrace{c_0,c_0,...,c_0}^n).
$$
Choose  a function $g\in C_0((0,1]),$ define
$\phi_{00}: A\to A_{00}$ by
$$
\phi_{00}(x)=\diag(g(a_0)xg(a_0), \overbrace{\phi_0(x),\phi_0(x),...,\phi_0(x)}^n)\rforal x\in A.
$$
Then, with a choice of $g,$ we have
\vspace{-0.12in}\beq\label{CCdvi-20}
\|x-\diag(\phi_{00}(x), \overbrace{\psi_0(x),\psi_0(x),...,\psi_0(x)}^n)\|<\ep\rforal x\in {\cal F}.
\eneq
Moreover,
$$
c\lesssim b_0\oplus b_1\oplus \cdots \oplus b_n\lesssim b.
$$

\end{proof}

The following follows from  the combination of \ref{UnifomfullTAD}  and \ref{TCCdvi}.

\begin{cor}\label{Cuniformful}
Let $A$ be a $\sigma$-unital simple \CA\, in ${\cal D}_{0}$ 
Then the following holds.
Fix  a strictly positive element $a\in A$
with $\|a\|=1$ and let $1>\mathfrak{f}_0>0$ be as in \ref{DNtr1div} (see also \ref{Rfa0}). There is a map $T: A_+\setminus \{0\}\to \N\times \R$
($a\mapsto (N(a), M(a))\rforal a\in A_+\setminus \{0\}$) satisfying the following:
For any  finite subset ${\cal F}_0\subset A_+\setminus \{0\}.$
 for any $\ep>0,$  any
finite subset ${\cal F}\subset A$ and any $b\in A_+\setminus \{0\}$ and any integer
$n\ge 1,$  there are ${\cal F}$-$\ep$-multiplicative \cpc s $\phi: A\to A$ and  $\psi: A\to D$  for some
\SCA\, $D\subset A$ such that
\beq\label{CDtad-1}
&&\|x-\diag(\phi(x), \overbrace{\psi(x), \psi(x),...,\psi(x)}^n)\|<\ep\rforal x\in {\cal F}\cup \{a\},\\\label{CDtrdiv-2}
&& D\in {\cal C}_0^{0'}
\\\label{CDtad-3}
&&a_{0}\lesssim b,\\\label{CDtrdiv-4}
&& \|\psi\|=1\andeqn
\eneq
$\psi(a)$ is strictly positive in $D,$  where $a_{0}\in \overline{\phi(a)A\phi(a)}$ is
a strictly positive element.
Moreover,  $\psi$ is $T$-${\cal F}_0\cup\{f_{1/4}(a)\}$-full in $\overline{DAD}.$

Furthermore, we may assume that
\beq
t\circ f_{1/4}(\psi(a))\ge \mathfrak{f}_a\andeqn\\
 t\circ f_{1/4}(c)\ge \mathfrak{f}_a/4\inf\{M(c)\cdot N^2(c): c\in {\cal F}_0\cup\{f_{1/4}(a)\}\}
\eneq
for all $c\in {\cal F}_0$ and for all $t\in T(D).$

\end{cor}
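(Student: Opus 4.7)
The plan is to invoke Theorem \ref{TCCdvi} to produce the decomposition with $n$ diagonal copies of $\psi$ and the tracial lower bound on $t\circ f_{1/4}(\psi(a))$, and then to run the fullness argument from the proof of Theorem \ref{UnifomfullTAD} (equivalently, of \ref{Tqcfull} via \ref{Rqcfull}) inside the hereditary \SCA\, $\overline{DAD}$. Since \CA s in ${\cal C}_0^{0'}$ have stable rank one (\ref{str=1}), strict comparison (\ref{Pcom4C}), and $0\notin\overline{T(D)}^w$, they fit the hypothesis of \ref{Tqcfull}.

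First I would define the map $T$ using only the simplicity of $A$. Fix the strictly positive element $a\in A_+$ with $\|a\|=1$. For each $h\in A_+\setminus\{0\}$, choose (by simplicity of $A$) $N_0(h)\in\N$, $M_0(h)>0$, and $x_1(h),\dots,x_{N_0(h)}(h)\in A$ with $\|x_i(h)\|\le M_0(h)$ and $\sum_{i=1}^{N_0(h)} x_i(h)^* h\, x_i(h)=f_{1/32}(a)$. Let $n_0\ge 1$ be an integer with $n_0\mathfrak{f}_a\ge 4$, and set $N(h)=n_0N_0(h)$, $M(h)=2M_0(h)$, $T(h)=(N(h),M(h))$. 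This is independent of ${\cal F}_0$, $\ep$, $n$, etc.

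Next, given ${\cal F}_0$, pick a finite subset ${\cal G}_0\supset {\cal F}_0\cup\{a,f_{1/4}(a)\}\cup\bigcup_{h\in{\cal F}_0\cup\{f_{1/4}(a)\}}\{x_i(h)\}$ and $\dt_0>0$ small enough that any ${\cal G}_0$-$\dt_0$-multiplicative \cpc\, $\psi$ satisfies
\vspace{-0.06in}
\[
\Bigl\|\sum_{i=1}^{N_0(h)}\psi(x_i(h))^*\psi(h)\psi(x_i(h))-f_{1/32}(\psi(a))\Bigr\|<\tfrac{1}{2^{10}}
\]
for every $h\in{\cal F}_0\cup\{f_{1/4}(a)\}$, and $\|\psi(f_\sigma(a))-f_\sigma(\psi(a))\|$ is negligibly small for $\sigma\in\{1/32,1/4\}$. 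Then, for the given $\ep,{\cal F},b,n$, apply Theorem \ref{TCCdvi} with finite subset ${\cal G}={\cal F}\cup{\cal G}_0$ and tolerance $\dt=\min\{\ep/4,\dt_0\}$ to obtain \cpc s $\phi:A\to A$, $\psi:A\to D$ with $D\in{\cal C}_0^{0'}$ satisfying \eqref{CCdiv-1}--\eqref{CCdiv-4+} and $\psi(a)$ strictly positive in $D$. The uniform tracial lower bound $t\circ f_{1/4}(\psi(a))\ge\mathfrak{f}_a$ on $T(D)$ then transfers, via the approximate identity $\sum_i\psi(x_i(h))^*\psi(h)\psi(x_i(h))\approx f_{1/32}(\psi(a))$ and Cauchy--Schwarz, to
\vspace{-0.06in}
\[
t\bigl(f_{1/4}(\psi(h))\bigr)\ge \frac{\mathfrak{f}_a}{4\,M(h)^2 N(h)}\quad\text{for all }t\in T(D),\ h\in{\cal F}_0.
\]

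Finally I would prove $T$-${\cal F}_0\cup\{f_{1/4}(a)\}$-fullness of $\psi$ in $\overline{DAD}$ by repeating the argument from the proof of \ref{UnifomfullTAD}. Namely, given $b'\in(\overline{DAD})_+^{\bf 1}$, use that $D\in{\cal C}_0^{0'}$ is quasi-compact and almost has stable rank one (\ref{str=1}, \ref{Lbkqc}) to extract a partial isometry $w$ and positive element $e_2$ with $ww^*c=cww^*=c$ for $c\in D$ and $w^*cw\in M_n(D)$, together with an $n$-fold diagonal ${\bar E}_1$ majorising $e_2$ in the sense of strict comparison. Then, for each $h\in{\cal F}_0\cup\{f_{1/4}(a)\}$, the approximate sum relation above, combined with \ref{Lrorm} and \ref{Lalmstr1}, produces $y_i(h)\in D$ with $\|y_i(h)\|\le 2M(h)$ and $n_0N_0(h)$ summands realising $f_{1/16}(\psi(a))$ as $\sum y_i(h)^*\psi(h)y_i(h)$; conjugating by $w^*b^{'1/2}w$ and pulling back through $w$ produces the required $N(h)$ elements in $\overline{DAD}$ of norm at most $M(h)$ writing $b'$ as $\sum z_i^*\psi(h)z_i$. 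The main technical obstacle is precisely this last dilation: upgrading the norm-control on the dilation from inside $D$ to inside the (a priori larger) hereditary \SCA\, $\overline{DAD}$, which is handled exactly as in \eqref{Tqcfull-16} of the proof of \ref{Tqcfull}. The moreover clause on the tracial lower bounds has already been recorded above.
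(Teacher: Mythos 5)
Your proposal is correct and follows essentially the same route as the paper: the paper's own proof is just the one-line observation that the corollary follows by combining Theorem \ref{UnifomfullTAD} with Theorem \ref{TCCdvi}, and your argument is exactly that combination spelled out — the map $T$ defined from simplicity as in the proof of \ref{UnifomfullTAD}, the $n$-fold decomposition supplied by \ref{TCCdvi} with the extra approximate-multiplicativity requirements built into ${\cal G}$ and $\dt$, and the fullness in $\overline{DAD}$ obtained by re-running the argument of \ref{Tqcfull} (with \ref{Rqcfull} and \ref{Pcom4C} supplying the comparison data for $D\in{\cal C}_0^{0'}$). No gaps beyond the level of detail the paper itself leaves implicit.
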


\begin{rem}\label{Rm17div}
It is clear from the proof that, both \ref{TCCdvi} and \ref{Cuniformful} hold 
if $A\in {\cal D}$ with $n=1.$ 
\end{rem}

\section{Stable rank}

The proof of the following is very similar to that of  Lemma 2.1 of \cite{Rlz}.

\begin{lem}\label{NTstr1pre}
Let $A$ be a non-unital and $\sigma$-unital simple \CA\,
with $0\not\in \overline{T{{(}}A)}^w $ which
has strict comparison for positive elements.
Suppose that every hereditary \SCA\, $B$ of $A$
satisfies the conclusion of \ref{LappZ}.
Then, for any hereditary \SCA\, $B$ of $A,$
$$
B\subset {\overline{GL({\tilde B})}}
$$
\end{lem}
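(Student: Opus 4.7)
The plan is to adapt R\o rdam's proof of Lemma 2.1 of \cite{Rlz}, with the tracial approximate divisibility furnished by \ref{LappZ} playing the role that $\mathcal{Z}$-stability plays there. Fix a hereditary \SCA\ $B$ of $A$, $x\in B$ with $\|x\|\le 1$, and $\ep>0$; the aim is to produce $y\in GL(\tilde B)$ with $\|y-x\|<\ep$. Note that the content is \emph{not} the trivial observation ``$x+\lambda\cdot 1_{\tilde B}\in GL(\tilde B)$ for $\lambda$ avoiding $-\sigma(x)$'', since $\sigma_{\tilde B}(x)$ may fill an entire neighbourhood of $0\in\C$; one must first perturb $x$ into an element whose spectrum in $\tilde B$ acquires a genuine gap near $0$.

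First I would apply \ref{LappZ} to $B$ with a finite subset containing $x,x^*,|x|$ and relevant functional-calculus cutoffs of $|x|$, with a sufficiently large integer $n$, and with a positive element $a_0\in B_+\setminus\{0\}$ whose Cuntz class is very small relative to both $\ep$ and the quantity $\inf\{d_\tau(f_{1/4}(x^*x)):\tau\in\overline{T(A)}^w\}$, which is positive by the hypothesis $0\notin\overline{T(A)}^w$ together with the strict comparison for positive elements. This yields hereditary \SCA s $E_0,E_1,E_2$ of $B$ with $E_0\perp E_1$ and $M_n(E_2)\subset E_1$, cpc maps $\phi_0,\phi_1,\phi_2$, and a residual $r(x)\in\overline{e_{00}Be_{00}}$ obeying $e_0+e_{00}\lesssim a_0$ and $e_{00}\lesssim e_{01}$, such that
\[
\|x-(\phi_0(x)+r(x)+\diag(\phi_2(x),\ldots,\phi_2(x)))\|<\ep/4.
\]

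Second, the main piece $\diag(\phi_2(x),\ldots,\phi_2(x))\in M_n(E_2)$ admits, for generic small $\delta>0$, a perturbation by a cyclic-shift-type element $\delta u\in M_n(\tilde E_2)$ rendering it invertible in $M_n(\tilde E_2)$: Fourier diagonalisation inside $M_n(\C)$ decomposes the result into $\bigoplus_{k=0}^{n-1}(\phi_2(x)+\delta\omega^k)$ with $\omega=e^{2\pi i/n}$, and invertibility of each summand reduces to avoiding the compact set $-\sigma(\phi_2(x))\subset\C$, which holds for $\delta$ in an open dense subset of $(0,\ep/8)$. The small residuals $\phi_0(x)\in E_0$ and $r(x)\in\overline{e_{00}Be_{00}}$ are then absorbed using the Cuntz inequalities $e_0+e_{00}\lesssim a_0$ and $e_{00}\lesssim e_{01}$: strict comparison of positive elements, combined with \ref{Lalmstr1}, allows one to route these residuals into orthogonal corners of $M_n(E_2)$ at a cost of less than $\ep/4$. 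Reassembling through $M_n(E_2)\subset E_1\subset B$ and coupling to a scalar $\lambda\cdot 1_{\tilde B}$ with $\lambda\ne 0$ of modulus $<\ep/4$ (chosen to match the $M_n(\C)$-scalar component of the inner construction) then produces the desired $y\in GL(\tilde B)$.

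The principal obstacle is reconciling the matrix-scalar adjunction inside $M_n(\tilde E_2)$ with the single-scalar adjunction of $\tilde B$: elements of $M_n(\C)\subset M_n(\tilde E_2)$ do not canonically embed into $\tilde B$ when $E_2$ is non-unital, so the abstract off-diagonal correction must ultimately be realised by a genuine element of $M_n(E_2)\subset B$ (for instance $\delta\sum_i e_{i+1,i}\otimes e_{01}$) combined with a scalar in $\tilde B$, and one must verify that this assembled perturbation still preserves the invertibility established abstractly inside $M_n(\tilde E_2)$. The quantitative heart of the argument is the careful choice of $a_0$, exploiting that $0\notin\overline{T(A)}^w$ yields uniform positive lower bounds on $d_\tau$, so that all the reassembly perturbations remain below $\ep$; this plays the role of R\o rdam's $\mathcal{Z}$-divisibility estimates in \cite{Rlz}.
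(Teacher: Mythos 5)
Your reduction to the tracially approximately divisible decomposition from \ref{LappZ} matches the paper's first step, but the core of your argument has a genuine gap. In your second step you claim that the diagonal piece $\diag(\phi_2(x),\ldots,\phi_2(x))\in M_n(E_2)$ becomes invertible in $M_n(\tilde E_2)$ after adding $\delta u$ for a cyclic shift $u$ and generic small $\delta$, because Fourier diagonalisation reduces this to the invertibility of $\phi_2(x)+\delta\omega^k$, i.e.\ to avoiding $-\sigma(\phi_2(x))$. This fails exactly in the hard case: $\phi_2(x)$ is not normal, and its spectrum in $\tilde E_2$ may contain an entire disc around $0$ (shift-like elements are the standard example), so \emph{every} small $\delta$ is bad and no ``open dense'' set of good $\delta$ exists. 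This is precisely the obstruction you yourself flagged in your opening paragraph for $x$ itself, and cutting by $\phi_2$ does nothing to remove it. In addition, even granting that step, the passage from an invertible in $M_n(\tilde E_2)$ (whose unit is not $1_{\tilde B}$) back to an element of $GL(\tilde B)$, and the ``routing of the residuals into orthogonal corners,'' are left unresolved; you acknowledge the first of these but do not close it.

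The paper's proof avoids adding scalars to the central block altogether. After the \ref{LappZ}-type decomposition $x\approx x_0+\diag(x_1,\ldots,x_1)$ (with the small corner controlled by $a_1+a_2\lesssim b_{0,1}$, using strict comparison and $0\notin\overline{T(A)}^w$), it uses the orthogonal complement of the hereditary subalgebra containing $x$ (nonzero because the strictly positive element is not a projection) to build shift-type elements $w_i$ moving the $i$-th diagonal copy of a cutoff of $d_0$ to the $(i{+}1)$-st, with the last leg landing in that complement, together with a partial-isometry-type element $v$ with $v^*v=x_0+\diag(x_1',0,\ldots,0)$ and $vv^*$ supported in the complement. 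Setting $z_1=v^*$, $z_2=v$, $z_3=\sum_i w_i^*x_i''$, $z_4=\sum_i y_i''w_i$, one gets $(z_1+z_3)(z_2+z_4)\approx x$ while $(z_1+z_3)^{n+1}=(z_2+z_4)^{n+1}=0$; adding a tiny nonzero scalar of $1_{\tilde A}$ to each \emph{nilpotent} factor (for which invertibility is automatic, with no spectral condition to arrange) yields a product of two invertibles within $\ep$ of $x$. If you want to salvage your write-up, you should replace the Fourier/scalar-shift step by this two-nilpotent factorization; the divisibility data you extracted from \ref{LappZ} is exactly what is needed to build the shifts $z_3,z_4$ and the corner element $v$.
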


\begin{proof}
Since every hereditary \SCA\, $B$ of $A$ has the same said properties,
it suffices to show that $A\subset {\overline{GL({\tilde {{A}}})}}.$

Fix an element $x\in A$
and $\ep>0.$
Let $e\in A$ with $0\le e\le 1$ be a strictly positive element.
Upon replacing $x$ by
$f_\eta(e)xf_\eta(e)$ for some small $1/8>\eta>0,$  we may assume that
$x\in \overline{f_\eta(e)Af_{\eta}(e)}.$ Put
$B_1=\overline{f_\eta(e)Af_{\eta}(e)}.$

{{By the assumption, we know that}} $e$ is not a projection.
we obtain a positive element ${{b_0}}\in B_1^{\perp}\setminus \{0\}.$

Note that
$$
B_{{1}}^{\perp}=\{a\in A: ab=ba=0\rforal b\in B_1\}
$$
is a non-zero  hereditary \SCA\, of $A.$
Since we assume that $A$ is infinite dimensional, $\overline{b_0Ab_0}$
contains  non-zero positive elements $b_{0,1},b_{0,1}', b_{0,2}, b_{0,2}\in B_1^{\perp}$
such that
$$
b_{0,1}'\lesssim b_{0,2}'\andeqn b_{0,1}b'_{0,1}=b_{0,1}, b_{0,2}b_{0,2}'=b_{0,2}\andeqn
b_{0,1}'b_{0,2}'=0.
$$

Choose an integer $n\ge 2,$ since $A$ has strict comparison for positive element as
given in \ref{Comparison},
such that, if there are $n$-mutually orthogonal and mutually equivalent positive elements
$a_1,a_2,...,a_n\in A_+,$
then
$$
a_1+a_2\lesssim b_{0,1},\,\,\,i=1,2,...,n.
$$

There is $B_1'\subset B_1$
such that
$$
B_1'=B_{1,1}+ \overbrace{D\oplus D\oplus \cdots \oplus D}^n,
$$
where $B_{1,1}$ is a hereditary \SCA\, with a strictly positive element $b_{11}\lesssim b_{0,1}$
and
there are $x_0\in B_{1,1}$ and $x_1\in D\setminus \{0\}$ such that
\beq\label{NTstr1-pre-10}
\|x-(x_0+{\bar x}_1)\|<\ep/16\andeqn\\
{\bar x_1}= \diag(\overbrace{x_1,x_1,...,x_1}^n)\|<\ep/16.
\eneq

Let $d_0\in D$ be a strictly positive element.  By the choice of
$n,$  $d_0\lesssim b_{0,1}.$

Choose $0<\eta_1<1/4$ such that
\beq\label{NTstr1-pre-11}
\|f_{\eta_1}(d_0)x_1f_{\eta_1}(d_0)-x_1\|<\ep/16.
\eneq
Put $x_1'=f_{\eta_1}(d_0)x_1f_{\eta_1}(d_0).$
Note
\beq\label{NTstr1-pre-11+}
f_{\eta_1/8}(d_0)\lesssim  b{{'}}_{0,2}.
\eneq
There are $w_i\in A$ such that
\vspace{-0.12in}\beq\label{NTstr1-pre-12}
w_iw_i^*&=&\diag(\overbrace{0,0,...,0}^{i-1},f_{\eta_1/4}(d_0),0,...,0),i=1,2,...,n,\\
 w_i^*w_i&=&\diag((\overbrace{0,0,...,0}^{i},f_{\eta_1/4}(d_0),0,...,0),\,\,\, i=1,2,...,n-1,\andeqn\\
 w_n^*w_n&\in& \overline{b{{'}}_{0,2}Ab{{'}}_{0,2}}.\,\,\,
\eneq

There is $v\in A$
such that
\beq\label{NTstr1-pre-13}
v^*v=x_0+\diag(x_1',0,...,0)\andeqn vv^*\in \overline{(b_{0,1}'+b_{0,2}')A(b_{0,1}'+b_{0,2}')}.
\eneq

Put
\vspace{-0.12in}\beq
&&x_i''=\diag(\overbrace{0,0,...,0}^{i-1},x_1',0,...,0),\,\,\,i=1,2,...,n,\\
&&y_i''=\diag(\overbrace{0,0,...,0}^{i-1},f_{\eta_1/4}(d_0),0,...,0),\,\,\,i=1,2,...,n,\\
&&z_1=v^*,\,\,\, z_2=v, \\
&&z_3=\sum_{i=1}^{n-1}w_i^*x_{i}''\andeqn
z_4=\sum_{i=1}^{n-1}y_i''w_i.
\eneq
Note  that
\vspace{-0.12in}\beq
z_3z_2=0, z_1z_4=0.
\eneq
Therefore
\beq
(z_1+z_3)(z_2+z_4)&=&z_1z_2+z_3z_4\\
&=& v^*v+\diag(0, x_1',x_1',...,x_1')\\
&=&x_0+ \diag(\overbrace{x_1',x_1',...,x_1'}^n).
\eneq
On the other hand,
\vspace{-0.12in}\beq\label{NTstr1-pre-14}
z_1^2=v^*v^*=0,\,\,\, z_1z_3=0.
\eneq
We also compute
that
\vspace{-0.12in}\beq\label{NTstr1-pre-15}
z_3^2=\sum_{i,j}w_i^*x_{i}''w_j^*x_j''=\sum_{i=2}^{n-1}w_i^*x_{i}''w_{i-1}^*x_{i-1}''.
\eneq
Inductively, we compute that
\vspace{-0.12in}\beq\label{NTstr1-pre-16}
z_3^n=0.
\eneq
Thus, by \eqref{NTstr1-pre-14},
\beq\label{NTstr1-pre-17}
(z_1+z_3)^k=\sum_{i=1}^k z_3^iz_1^{k-i}\rforal k.
\eneq
Therefore, by \eqref{NTstr1-pre-14},
and \eqref{NTstr1-pre-16}, for $k=n+1,$ 
\vspace{-0.1in}\beq\label{NTstr1-pre-18}
(z_1+z_3)^{n+1}=0.
\eneq
We also have that  $z_2z_4=0$ and $z_2^2=0.$
A similar computation shows that $z_4^n=0.$
Therefore as above, $(z_2+z_4)^{n+1}=0.$
We estimate
that
$$
\|x-(z_1+z_3)(z_2+z_4)\|<\ep/4.
$$
Suppose that $\|z_i\|\le M$ for $i=1,..,4.$
Consider
$$
z_5=z_1+z_3+\ep/16(M+1)\andeqn z_6=z_2+z_4+\ep/16(M+1).
$$
Since $(z_1+z_3)$ and $(z_2+z_4)$ are nilpotents,
both $z_5$ and $z_6$ are invertible in ${\tilde A}.$
We also estimate
that, by \eqref{NTstr1-pre-10},
$$
\|x-z_5z_6\|<\ep.
$$

\end{proof}

\begin{cor}\label{CD0str1}
Let $A$ be a non-unital simple separable \CA\, which is in ${\cal D}.$
Then $A$ almost has stable rank one.
\end{cor}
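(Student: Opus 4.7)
The plan is to invoke Lemma \ref{NTstr1pre} to the algebra $A$ itself and to every matrix amplification $M_m(A)$. Since $A\in{\cal D}$ is separable, simple and non-unital, it is $\sigma$-unital; Proposition \ref{PD0qc} supplies $0\notin\overline{T(A)}^w$, and Theorem \ref{Comparison} supplies strict comparison for positive elements. Proposition \ref{Phered}(2) shows every hereditary \SCA\ of $A$ is again in ${\cal D}$, so Corollary \ref{LappZ} furnishes the precise decomposition required in the hypothesis of \ref{NTstr1pre}. Consequently \ref{NTstr1pre} immediately yields $B\subset\overline{GL(\tilde B)}$ for every hereditary \SCA\ $B$ of $A$.

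To upgrade this to almost stable rank one in the sense of \ref{Dalst1}, i.e.\ to hereditary \SCA s of $M_m(A)$ for every $m\ge 1$, the key intermediate step is to verify $M_m(A)\in{\cal D}$. I would take $d_m(a)=\operatorname{diag}(a,\dots,a)$ as a strictly positive element, where $a$ is strictly positive in $A$, and apply the ${\cal D}$-decomposition of $A$ entrywise to approximations of elements of ${\cal F}\subset M_m(A)$ by linear combinations of matrix units with entries from $A$. The resulting $\psi$-part lands in $M_m(D)\subset M_m(A)$, and since the classes ${\cal C}_0'$ and ${\cal C}_0^{0'}$ are stable under $D\mapsto M_m(D)$ (as $M_m$ of a $1$-dimensional NCCW complex is again a $1$-dimensional NCCW complex, and full hereditary \SCA s are preserved under matrix amplification), this places the decomposition in the class required by \ref{DNtr1div}. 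The trace estimate carries through with $\mathfrak{f}_{d_m(a)}=\mathfrak{f}_a/m$ (or similar normalization).

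Once $M_m(A)\in{\cal D}$ is in hand, the argument of the first paragraph applies verbatim with $M_m(A)$ in place of $A$: Propositions \ref{PD0qc}, \ref{Phered}, Theorem \ref{Comparison} and Corollary \ref{LappZ} all transfer, and Lemma \ref{NTstr1pre} gives $B\subset\overline{GL(\tilde B)}$ for every hereditary \SCA\ $B$ of $M_m(A)$. Quantifying over $m\ge 1$ is the definition of $A$ almost having stable rank one.

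The main obstacle is the matrix-amplification step $M_m(A)\in{\cal D}$, which is not recorded in the paper for general $A\in{\cal D}$: Proposition \ref{PtadMk} gives the corresponding statement only under the additional hypothesis of quasi-compactness. If one prefers to avoid rewriting the definitional verification, an alternative is to pass to a quasi-compact hereditary \SCA\ $A_0\subset A$ (which exists by Remark \ref{Rcontext}) and use Brown's theorem together with Proposition \ref{Phered} to transport the conclusion from hereditary \SCA s of $M_m(A_0)$ to those of $M_m(A)$; but the direct coordinatewise verification is cleaner and conceptually more transparent, and should go through with no new ideas beyond what already appears in \ref{DNtr1div} and the proof of \ref{PD0qc}.
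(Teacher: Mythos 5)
Your argument is correct and follows exactly the route the paper intends: Corollary \ref{CD0str1} is stated there without proof as an immediate consequence of Lemma \ref{NTstr1pre} combined with \ref{PD0qc}, \ref{Comparison}, \ref{Phered} and \ref{LappZ}. Your additional verification that $M_m(A)$ again lies in ${\cal D}$ (forced by Definition \ref{Dalst1}) fills in a step the paper leaves implicit, and you are right that \ref{PtadMk} cannot simply be quoted, since quasi-compactness of algebras in ${\cal D}$ is only obtained after this corollary via \ref{Lbkqc}; the direct entrywise verification, using that ${\cal C}_0'$ and ${\cal C}_0^{0'}$ are stable under $D\mapsto M_m(D)$ and that the tracial bound persists, is the appropriate way to close it.
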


\begin{cor}
Every separable simple \CA\, in ${\cal D}$ is quasi-compact.
\end{cor}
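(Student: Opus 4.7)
The plan is to reduce to Lemma~\ref{Lbkqc}, which states that a $\sigma$-unital \CA\ with $QT=T\neq\emptyset$, $0\notin\overline{T(A)}^w$, strong strict comparison, and almost stable rank one is quasi-compact. So the task is just to collect all four hypotheses for a simple separable $A\in{\cal D}$.

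First I would verify $T(A)\neq\emptyset$. Fix a strictly positive $a\in A$ with $\|a\|=1$ and let $\mathfrak{f}_a>0$ be given by Definition~\ref{DNtr1div}. Applying that definition with ${\cal F}=\{a,f_{1/4}(a)\}$, $\varepsilon=1/n$ and an arbitrary $b\in A_+\setminus\{0\}$ produces a sequence of $D_n\in{\cal C}_0'$ and approximately multiplicative maps $\psi_n:A\to D_n$ with $\tau\circ f_{1/4}(\psi_n(a))\ge\mathfrak{f}_a$ for every $\tau\in T(D_n)$. Picking any $t_n\in T(D_n)$ (nonempty since $D_n\in{\cal C}_0'$ has nonzero finite traces) and passing to a weak-$*$ limit of the states $t_n\circ\psi_n$ gives a nonzero positive trace-like functional on $A$; approximate multiplicativity forces it to be a trace, which can be normalized to lie in $T(A)$. (This is essentially the extraction argument already carried out inside the proof of Proposition~\ref{PD0qc}.)

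Next, Proposition~\ref{PD0qc} yields $QT(A)=T(A)$ and $0\notin\overline{T(A)}^w$. Theorem~\ref{Comparison} provides precisely the strong strict comparison property: $d_\tau(a)<d_\tau(b)$ for all $\tau\in\overline{T(A)}^w$ implies $a\lesssim b$, and one can iterate this on $M_n(A)$ using Proposition~\ref{PtadMk} (which tells us $M_n(A)\in{\cal D}$ as well). Finally Corollary~\ref{CD0str1} gives that $A$ almost has stable rank one.

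With all four hypotheses of Lemma~\ref{Lbkqc} in hand, that lemma immediately yields the quasi-compactness of $A$. I do not anticipate a genuine obstacle: the only point that requires slight care is confirming $T(A)\neq\emptyset$ directly from the definition of ${\cal D}$, rather than citing \ref{PD0qc} (whose statement presupposes a trace exists); once this is done, the corollary is a one-line synthesis of \ref{PD0qc}, \ref{Comparison}, \ref{CD0str1}, and \ref{Lbkqc}.
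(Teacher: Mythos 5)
Your proposal is correct and follows essentially the same route as the paper, whose proof simply cites Proposition~\ref{PD0qc}, Theorem~\ref{Comparison}, Corollary~\ref{CD0str1} and Lemma~\ref{Lbkqc}. Your extra care in extracting a tracial state directly from the definition of ${\cal D}$ (the argument already embedded in the proof of \ref{PD0qc}) and in passing to $M_n(A)$ via \ref{PtadMk} only makes explicit what the paper leaves implicit.
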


\begin{proof}
This follows from  \ref{PD0qc}, \ref{Comparison}, \ref{CD0str1} and \ref{Lbkqc}.
\end{proof}

\begin{lem}\label{Lorthperp}
Let $A$ be a separable  simple  \CA\, and let $a, a_1\in A_+$ be two non-zero elements
such that $aa_1=a_1a=a.$
Then
$$
B_0=\{b\in {\tilde A}: ba=ab=0\}
$$
is a  hereditary \SCA\, of $A.$

Let $b\in B_0$ be a strictly positive element with $0\le b\le 1$ and
$p=\lim_{n\to\infty} b^{1/n}$ be the open projection in $\pi_U(A)'',$
where $\pi_U$ is the universal representation of $A.$
Then, either $B_0$ has a unit,  in which case,  $B_0=B_0\cap A+\C\cdot 1_{B_0},$
or  $B_0\cap A+ \C \cdot p=B_0+\C\cdot p\cong {\tilde B_0}.$
\end{lem}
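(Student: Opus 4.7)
The plan is to verify the three assertions in order. First, that $B_0$ is a norm-closed hereditary $*$-subalgebra (of $\tilde A$): $*$-closure and multiplicative closure follow from $b^*a=(ab)^*=0$ and $(b_1b_2)a=b_1(b_2a)=0$ for $b_1,b_2\in B_0$, plus the symmetric statement on the other side, while norm closure is immediate from continuity of multiplication. Heredity uses the standard fact that $0\le y\in B_0$ forces $a^{1/2}ya^{1/2}=0$, so for $0\le x\le y$ one gets $0\le a^{1/2}xa^{1/2}\le a^{1/2}ya^{1/2}=0$, and hence $xa=ax=0$.

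The key structural step is the decomposition $B_0=(B_0\cap A)+\mathbb{C}(1-a_1)$. For $g\in B_0$, writing $g=g_A+\mu\cdot 1_{\tilde A}$ in the canonical splitting of $\tilde A$, the condition $ga=0$ gives $g_Aa=-\mu a$; combined with $a_1a=a$, the element $h:=g_A+\mu a_1\in A$ satisfies $ha=g_Aa+\mu a_1 a=-\mu a+\mu a=0$, and symmetrically $ah=0$ using $aa_1=a$, so $h\in B_0\cap A$. Since $(1-a_1)a=a(1-a_1)=0$, we have $1-a_1\in B_0$, giving $g=h+\mu(1-a_1)$. I would then analyze the open projection $p=\lim_n b^{1/n}\in\pi_U(A)''$: because $b$ is strictly positive in $B_0$, $\{b^{1/n}\}$ is an approximate identity for $B_0$, so $b^{1/n}x\to x$ in norm for every $x\in B_0$; passing to the strong operator topology yields $px=xp=x$ for all $x\in B_0$, so $p$ acts as the unit of $B_0$ inside $\pi_U(A)''$.

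The dichotomy is then clean. If $B_0$ is unital with unit $e$, uniqueness of the unit forces $e=p\in\tilde A$; applying $e$ from the left to $g=g_A+\mu$ gives $g=eg=(eg_A)+\mu e$, and since $A$ is an ideal of $\tilde A$, $eg_A\in A$, with $(eg_A)a=e(g_Aa)=-\mu(ea)=0$, so $eg_A\in B_0\cap A$ and $B_0=(B_0\cap A)+\mathbb{C}\cdot 1_{B_0}$. Otherwise $B_0$ is non-unital, which forces $p\notin\tilde A$ (else $p$ would be a unit in $B_0$), and the assignment $x+\lambda\cdot 1_{\tilde B_0}\mapsto x+\lambda p$ extends the inclusion $B_0\hookrightarrow\pi_U(A)''$ to a $*$-homomorphism $\tilde B_0\to B_0+\mathbb{C}p$ by the universal property of unitization, with injectivity from $p\notin B_0$; the image is exactly $B_0+\mathbb{C}p$. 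The equality $(B_0\cap A)+\mathbb{C}p=B_0+\mathbb{C}p$ is the main technical point I expect to require care: using the decomposition of paragraph two together with the identity $p(1-a_1)=(1-a_1)p=1-a_1$ (valid since $1-a_1\in B_0$), one wants to rewrite $1-a_1$ modulo $B_0\cap A$ as a scalar multiple of $p$ inside $\pi_U(A)''$, carefully tracking scalar components through the embedding $\tilde A\hookrightarrow\pi_U(A)''$. This last manipulation, rather than any of the algebraic closure or decomposition verifications, is the step I expect to be most delicate.
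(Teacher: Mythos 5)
Your verification that $B_0$ is hereditary in $\tilde A$, the decomposition $B_0=(B_0\cap A)+\C\cdot(1-a_1)$ (which is precisely the paper's short exact sequence $0\to B_0\cap A\to B_0\to \C\to 0$, obtained there from $\pi(1-a_1)=1$), the unital case, and the identification $B_0+\C\cdot p\cong {\tilde B_0}$ in the non-unital case are all correct and run along the same lines as the paper. The genuine gap is exactly the step you flag and postpone: the set equality $(B_0\cap A)+\C\cdot p=B_0+\C\cdot p$. You do not prove it, and the route you sketch -- rewriting $1-a_1$ modulo $B_0\cap A$ as a scalar multiple of $p$ -- provably cannot work. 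From $p(1-a_1)=1-a_1$ one gets $p-(1-a_1)=pa_1$, and an identity $1-a_1=h+\lambda p$ with $h\in B_0\cap A\subset A$ would give $\lambda p=(1-a_1)-h\in\tilde A$. If $\lambda\neq 0$, then $p\in\tilde A$; but then $p\in B_0$ (since $pa=ap=0$) and $p$ is a unit for $B_0$ (since $px=xp=x$ for all $x\in B_0$), contradicting the non-unitality of $B_0$ which defines this case. If $\lambda=0$, then $1=h+a_1\in A$, also impossible. Hence in the non-unital case $1-a_1\in B_0\setminus\bigl((B_0\cap A)+\C\cdot p\bigr)$, so the displayed equality cannot be reached by the manipulation you propose.

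You should also be aware that the paper's own treatment of this point consists of the single assertion that $B_1/B_{00}\cong\C$ for $B_1=B_0+\C\cdot p$ and $B_{00}=B_0\cap A$, and the computation above shows this assertion meets the same obstruction: the classes of $p$ and $1-a_1$ in $B_1/B_{00}$ are linearly independent precisely because $p\notin\tilde A$, so that quotient is two-dimensional (if $B_{00}$ there is read as $B_0$, one only recovers $B_0+\C\cdot p\cong{\tilde B_0}$, not the displayed equality). What is solid -- and what is actually used later, e.g. in the proof of Theorem \ref{TTstr1}, where ``$x=p+a$'' should be read through this decomposition -- is the combination you did establish: $B_0=(B_0\cap A)+\C\cdot(1-a_1)$, $p$ acts as a unit on $B_0$ with $p\notin B_0$, and therefore $B_0+\C\cdot p\cong{\tilde B_0}$. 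So do not spend effort tracking scalar components through $\pi_U(A)''$ to force $1-a_1$ into $(B_0\cap A)+\C\cdot p$; that is exactly where the literal equality breaks down, and a correct write-up must either weaken that equality or replace it by the isomorphism ${\tilde B_0}\cong B_0+\C\cdot p$ together with the decomposition of $B_0$.
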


\begin{proof}
It is clear that $B_0$ is a hereditary \SCA\, of ${\tilde A}.$
Let $B_{00}=A\cap B_0.$  Note that $1-a_1\in B_0.$
Let $\pi: {\tilde A}\to \C$ be the quotient map.
Then $\pi(1-a_1)=1.$  Thus we have the following short exact sequence:
$$
0\to  B_{00}\to B_0\to \C\to 0.
$$
If $B_0$ has a unit, then the above short exact sequence shows that
$B_0=B_{00}+\C\cdot 1_{B_0}.$

Otherwise,
let $b\in B_0$ a strictly positive element of $B_0$ with $0\le b\le 1.$
Let $p=\lim_{n\to\infty}b^{1/n}$ in $\pi_U(A)''.$
Let $B_1=B_0+\C\cdot p.$
Note that $\lim_{n\to\infty}\pi(b^{1/n})=1$ for all $n.$ Therefore $B_1/B_{00}\cong \C.$

\end{proof}

\begin{thm}\label{TTstr1}
Let $A$ be a non-unital separable simple projectionless \CA\, which is in ${\cal D}.$
 Then $A$ has stable rank one.
\end{thm}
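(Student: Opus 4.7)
The plan is as follows. By Corollary~\ref{CD0str1}, $A$ already almost has stable rank one, so in particular $A\subset\overline{GL(\tilde A)}$. Consequently every element of $A$ is already handled. What remains is to approximate each $x=a+\lambda\cdot 1_{\tilde A}\in\tilde A$ with $\lambda\ne 0$ by invertibles of $\tilde A$; after rescaling I take $\lambda=1$, so that $x=a+1$ for some $a\in A$.

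First I would reduce to the case that $a$ has compact support. Using a strictly positive element $e\in A_+$ with $\|e\|=1$, replace $a$ by $a_0=f_{\delta}(e)\,a\,f_{\delta}(e)\in A$ for small $\delta>0$, and set $a_1=f_{\delta/2}(e)\in A_+$, so that $0\le a_1\le 1$, $a_0 a_1=a_1 a_0=a_0$ and $\|x-(a_0+1)\|<\epsilon/4$. Next I would invoke Lemma~\ref{Lorthperp} with $a=a_0$: the set $B_0=\{b\in\tilde A:b a_0=a_0 b=0\}$ is a hereditary \SCA\ of $\tilde A$, and it contains the non-zero element $1-a_1$. Here the projectionless hypothesis enters crucially. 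If $B_0$ were unital, then $1_{B_0}$ would be a projection of $\tilde A$; since $A$ is projectionless, the only projections of $\tilde A$ are $0$ and $1_{\tilde A}$, and both alternatives fail ($1_{B_0}=0$ contradicts $0\ne 1-a_1\in B_0$, and $1_{B_0}=1_{\tilde A}$ contradicts $1_{\tilde A}\cdot a_0=a_0\ne 0$). Therefore the non-unital alternative of Lemma~\ref{Lorthperp} applies, giving $B_0+\mathbb C p\cong\widetilde{B_0}$ for the open projection $p$ of $B_0$.

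Having set up this structural decomposition, I would then exploit that $B_{00}:=B_0\cap A$ is a hereditary \SCA\ of $A$, hence itself lies in $\mathcal D$ by Proposition~\ref{Phered} and almost has stable rank one. To assemble an invertible close to $x_0=a_0+1$, write $x_0=(a_0+a_1)+(1-a_1)$, with $a_0+a_1\in\overline{a_1 A a_1}$ and $1-a_1\in B_0$. These two summands are only approximately orthogonal — their cross term in $\tilde A$ is $a_1(1-a_1)\in A$, which never vanishes because $a_1$ is not a projection — but this error can be made arbitrarily small by refining $a_1$ through tracial approximate divisibility (Theorem~\ref{TDapprdiv}) or via the functional calculus of the strictly positive element, so that the spectrum of $a_1$ is concentrated near $\{0,1\}$. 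I would then apply almost stable rank one separately within $\overline{a_1 A a_1}$ and within $B_0+\mathbb C p\cong\widetilde{B_0}$ to approximate each summand by an invertible, and combine the two using a nilpotent--shift construction modelled on the proof of Lemma~\ref{NTstr1pre}: the approximating element is written as a product $(z_1+z_3)(z_2+z_4)$ with both factors nilpotent in $\tilde A$, and after shifting each factor by a small scalar the two factors become invertible with product within $\epsilon$ of $x_0$.

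The main obstacle I expect is the final assembly step, namely converting almost invertibility in two hereditary subalgebras into honest invertibility in $\tilde A$. The projectionless hypothesis plays a dual and slightly adversarial role: it is exactly what excludes the unital case of Lemma~\ref{Lorthperp}, yet it simultaneously prevents an exact orthogonal direct-sum splitting of $x_0$, so the nilpotent--shift construction must be executed with error bounds that propagate uniformly through the approximations and that use the orthogonality $a_0(1-a_1)=(1-a_1)a_0=0$ to compensate for the non-vanishing term $a_1(1-a_1)$.
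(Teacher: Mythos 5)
There is a genuine gap, and it occurs at the two places your own outline flags as delicate. First, the claim that the cross term $a_1(1-a_1)$ ``can be made arbitrarily small'' by arranging the spectrum of $a_1$ to concentrate near $\{0,1\}$ is impossible in this setting: you need $a_1$ to act as a unit on $a_0$ (so that $1-a_1\in B_0$), hence $\|a_1\|=1$, and if ${\rm sp}(a_1)\subset [0,\delta']\cup[1-\delta',1]$ with $\delta'<1/2$ then functional calculus across the spectral gap produces a nonzero projection in $A$, contradicting the projectionless hypothesis. So ${\rm sp}(a_1)$ meets the middle of $[0,1]$ and $\|a_1(1-a_1)\|$ stays bounded below (of order $1/4$); no amount of tracial approximate divisibility removes this obstruction, since it is a spectral fact about any positive contraction of norm one in a projectionless algebra. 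Precisely the hypothesis you invoke to force the non-unital case of \ref{Lorthperp} kills your error estimate.

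Second, the assembly step is circular as stated: ``apply almost stable rank one within $B_0+\C p\cong\widetilde{B_0}$'' only yields $B_0\subset\overline{GL(\widetilde{B_0})}$, i.e.\ it handles elements of $B_0$ itself, whereas what you must approximate is $p+b$ with a nonzero scalar/projection part --- and that is exactly stable rank one for the hereditary subalgebra $B_0\in{\cal D}$ (by \ref{Phered}), i.e.\ the statement of \ref{TTstr1} being proved. The paper's proof avoids this by a different route which your sketch omits: it first uses 3.2 and 3.5 of \cite{Rr11} to reduce to the case that $x$ itself annihilates a nonzero positive element $e_0\in A$ (a genuine ``hole'' orthogonal to $x$, not merely a truncation of $a$), then applies the ${\cal D}$-decomposition inside $B_0=B\cap A$ to write $a\approx x_0+x_1$ with $x_1\in D\in{\cal C}_0'$ and with the small piece $B_{0,1}$ Cuntz-subequivalent to an element of the orthogonal corner $\overline{e_{00}Ae_{00}}$; almost stable rank one then conjugates that small piece into the corner so that $z_0u$ becomes square-zero nilpotent, while the unit is absorbed into $D+\C(p-q_0)\cong\widetilde D$, which has honest stable rank one by \ref{str=1} since $D$ is a $1$-dimensional NCCW. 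Without the Rørdam reduction there is no orthogonal corner to receive the small piece, so the nilpotent--shift mechanism of \ref{NTstr1pre} you appeal to has no way to get started, and the scalar part is never dealt with by a known stable-rank-one algebra.
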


\begin{proof}
Let $x\in {\tilde A}.$ We will show that $x\in\overline{GL({\tilde A})}.$
By applying 3.2  and 3.5 of \cite{Rr11}, \wilog, we may assume that
there exists a non-zero positive element $e_0'\in {\tilde A}$ such that
$xe_0'=e_0'x=0.$ We may further assume that there exists a non-zero element
that $e_0\in {\tilde A}$ such that $e_0e_0'=e_0'e_0=e_0.$

Let $A_0=\overline{e_0{\tilde A}e_0}$ and let $\pi: {\tilde A}\to \C$ be
the quotient map. If $\pi(A_0)\not=\{0\},$
then $\pi(x)=0.$ It other words, $x\in A.$
It follows from \ref{CD0str1} that $x\in \overline{GL({\tilde A})}.$
So for the rest of the proof, we assume that $\pi(A_0)=\{0\}.$
In other words, $e_0\in A_+$ and $A_0$ is a hereditary \SCA\, of $A.$
We may assume that there is  a non-zero element
$e_{00}, e_{00}'\in A_0$ with $0\le e_{00}\le e_{00}'\le 1$
such that
$$
e_{00}e_{00}'=e_{00}.
$$

By multiplying a scalar of multiple of identity, \wilog, we may assume that
$x=1+a',$ where $a'\in A.$

Let
$$
B=\{y\in {\tilde A}: ye_0=e_0y=0\}.
$$
Then, by \ref{Lorthperp}, $B$  is a hereditary \SCA\, of ${\tilde A}.$
Moreover, ${\tilde B}\cong B\cap A+\C\cdot p,$
where $p=\lim_{n\to\infty}(b_0)^{1/n}$ in $\pi_U(A)''$ for some strictly positive element $b_0\in B.$
Therefore one may rewrite $x=p+a$ for some $a\in B\cap A.$
Put $B_0=B\cap A.$

Now since $B_0\in {\cal D}, $ there exist   a hereditary \SCA\, $B_{01}\subset B_0$ and a
\SCA\,
$D\subset B_0$ such that
\beq\label{TTstr1-5}
\|a-(x_0+x_1)\|<\ep/4,
\eneq
where $x_0\in B_{0,1}$ and $x_1\in D,$ $B_{0,1}D=DB_{0,1}=\{0\},$
\beq\label{TTstr1-6}
b_{0,1}\lesssim e_{00}
\eneq
where $b_{0,1}$ is a strictly positive element of $B_{0,1}$ and where $D\in {\cal C}_0'.$

We may assume, \wilog,  that there  are $e_{0,1}, e_{0,1}', e_{0,1}'' e_{0,1}'''\in B_{0,1}$ 
with $0\le e_{0,1}\le e_{0,1}'\le e_{0,1}''\le e_{0,1}'''\le 1$
such that
\beq\label{TTstr1-7}
e_{0,1}x_0=x_0e_{0,1}=x_0, \,\,\,e_{0,1}'''e_{0,1}''=e_{0,1}'',\,\,\,e_{0,1}''e_{0,1}'=e_{0,1}'\andeqn e_{0,1}'e_{0,1}=e_{0,1}.
\eneq
Let $A_2=\overline{(e_{0,0}'+e_{0,1}'')A(e_{0,0}'+e_{0,1}'')}.$
Since $b_{0,1}\lesssim e_{00}$ and since $A$ almost has stable rank one,
there is  unitary $u_1'\in {\tilde A_2}$ such that
\beq\label{TTstr1-8}
(u_1')^*e_{0,1}'(u_1')\in \overline{e_{0,0}'Ae_{0,0}'}\subset A_0.
\eneq

Let $A_3$ be the hereditary \SCA\, of $A$ generated by $A_0$ and $A_2.$
Let $q$ be the open projection  in $A^{**}$ corresponding to $A_3,$
and let $q_0$ be the open projection in $A^{**}$ corresponding to $\overline{e_{0,1}Ae_{0,1}}.$
Note that
\beq\label{TTstr1-9}
q_0\le e_{0,1}'\le e_{0,1}''\le q.
\eneq
Note also that
\beq\label{TTstr1-10}
&&\|x-(p+x_0+x_1)\|=\|a-(x_0+x_1)\|<\ep/4\andeqn\\
&&x-(p+x_0+x_1)=a-(x_0+x_1)\in B_0\cap A.
\eneq
In particular, $p+x_0+x_1\in {\tilde A}.$
Put $z=p+x_0+x_1.$ Then we also view that $z\in {\tilde B}_0.$
Put
\beq\label{TTstr1-11}
z_0=zq_0=(p+x_0)q_0=q_0(p+x_0)\andeqn z_1=z(p-q_0)=(p-q_0)z.
\eneq
Keep in mind
that
\vspace{-0.1in}\beq\label{TTstr1-11+}
z_0+z_1=z.
\eneq
Now write $u_1'=\lambda 1_{{\tilde A}_2}+y$ for some $y\in A_2$ and for some
scalar $\lambda\in \C$ with $|\lambda|=1.$
Let $u_1=\lambda q+y.$
Therefore, by multiplying ${\bar \lambda},$
we may choose so that $u_1$ has the form $q+y.$
Define
$$
u=1+y=u_1+(1-q).
$$
{{Note that $q_0\leq 1_{{\tilde A}_2}$, we have $q_0u_1=q_0u_1'$.}}
Then
\beq\label{TTstr1-12}
&&\hspace{-0.2in}z_0u=(p+x_0)q_0u_1=(q_0+x_0)e_{0,1}'u_1{{=(q_0+x_0)e_{0,1}'u_1'}}=e_{0,1}'(q_0+x_0)e_{0,1}'u{{'}}_1.\\
&&\hspace{-0.2in}z_0u=(p+x_0)q_0u_1=(q_0+x_0)e_{0,1}'u_1=e_{0,1}'(q_0+x_0)e_{0,1}'u_1.
\eneq
Since $e_{0,1}'e_{0,0}'=0,$  by \eqref{TTstr1-8},
\beq\label{TTstr1-13}
(z_0u)(z_0u)=((q_0+x_0)e_{0,1}'u_1')e_{0,1}'z_0u)=0.
\eneq
In other words, $z_0u$ is a nilpotent in $A^{**}.$

On the other hand,
$$
z_1=(p-q_0)z=(p-q_0)+x_1.
$$
Put $D_1=D+\C\cdot (p-q_0).$
Then $D_1\cong {\tilde D}.$  Since $D$ has stable rank one,
there is  an invertible element $z_1'\in D_1$
such that
\beq\label{TTstr1-14}
\|z_1-z_1'\|<\ep/4.
\eneq
Write $z_1'=\lambda_1\cdot (p-q_0)+d$ for some scalar $\lambda_1\in \C$
with $|\lambda_1|=1$ and $d\in D.$
By looking the quotient $D_1/D,$ we may also write
\beq\label{TTst1-15}
z_1'=(p-q_0)+d+\eta(p-q_0)\andeqn |\eta|<\ep/4.
\eneq
\Wlog, we may insist that $\eta\not=0$ (since elements near $z_1'$ are invertible).
We can also write that
\beq\label{TTstr1-15}
z_1'=z_1+(z_1'-z_1)=z_1+d-x_1+\eta(p-q_0).
\eneq

Now view $z_0u\in (1-(p-q_0))A^{**}(1-(p-q_0)).$
Since $z_0u$ is a nilpotent, $z_0u+\eta(1-(p-q_0))$ is invertible in $(1-(p-q_0))A^{**}(1-(p-q_0)).$
Let $\zeta_1$ be the inverse of $z_0u+\eta(1-(p-q_0))$ in $(1-(p-q_0))A^{**}(1-(p-q_0))$ and
$\zeta_2$ be the inverse of $z_1'$ in $D_1=D+\C\cdot (p-q_0).$
Then
\beq\label{TTstr1-15+}
(z_0u+\eta(1-(p-q_0))\oplus z_1')(\zeta_1\oplus \zeta_2)=(1-(p-q_0))+(p-q_0)=1.
\eneq
It follows that
\beq\label{TTstr1-16}
z_2=z_0u+\eta(1-(p-q_0))+z_1'\in GL(A^{**}).
\eneq
However, by \eqref{TTst1-15},
\beq\label{TTstr1-17}
z_2 &=&z_0u+\eta(1-(p-q_0)) +z_1'\\
&=& z_0u_1+\eta(1-(p-q_0))+z_1+(d-x_1)+\eta(p-q_0)\\
&=&z_0u_1+z_1 +(d-x_1)+\eta\cdot 1\\
&=& (z_0+z_1)u+(d-x_1)+\eta\cdot 1\\
&=& zu+(d-x_1)+\eta \cdot 1\in {\tilde A}.
\eneq
It follows that $z_2\in GL({\tilde A}).$
We have
\beq\label{TTstr1-18}
\|zu-z_2\| &=&\|(z_0u_1+z_1)-z_2\|\\
&\le & \|z_1-(\eta(1-(p-q_0))+z_1')\|\\
&\le &\|z_1-z_1'\|+\eta<\ep/4+\ep/4=\ep/2.
\eneq
Therefore (using also \eqref{TTstr1-10}),
\beq\label{TTstr1-19}
\|xu-z_2\|<\ep,\,\,\, {\rm or}\,\,\,\|x-z_2u^*\|<\ep.
\eneq
Since $z_2$ is invertible so is $z_2u^*.$ Since $u\in {\tilde A},$ $z_2u^*$ is in $GL({\tilde A}).$

\end{proof}

At this point, we would like to introduce the following:

\begin{df}\label{DgTR}
Let $A$ be a $\sigma$-unital simple \CA.
We say $A$ is a projectionless simple \CA s with generalized tracial rank at most one,
and write $gTR(A)\le 1,$ if $A$ is TA${\cal C}_0'.$

\end{df}

\begin{prop}\label{PDgTR}
A non-unital  separable  stably projectionless  simple \CA\, $A$ has  generalized tracial rank at most one,
i.e., $gTR(A)\le 1,$
if  and  only if,  for some $a\in P(A)_+\setminus \{0\},$ $\overline{aAa}\in {\cal D}.$
\end{prop}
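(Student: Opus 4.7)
The plan is to derive the equivalence from the structural bridges already established between TA${\cal C}_0'$ and ${\cal D}$---namely Theorem \ref{T1D0} (TA${\cal C}_0'$ plus quasi-compactness implies ${\cal D}$) and Proposition \ref{PD0=tad} (the converse)---together with the hereditary inheritance Proposition \ref{Phered} and Remark \ref{Rcontext} on the existence of a continuous-scale hereditary subalgebra.

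For the forward direction, assuming $A$ is TA${\cal C}_0'$, I would first invoke Remark \ref{Rcontext} to choose a non-zero hereditary \SCA\, $B = \overline{aAa} \subset A$ with continuous scale; the standard construction of such a $B$ produces $a$ in the form $f_\ep(c)$ with $c \in A_+$, automatically placing $a$ in $P(A)_+$. Continuous scale forces $B = P(B)$ (by 3.3 of \cite{Lncs1}, as used in the proof of Theorem \ref{Pconscale}), hence $B$ is quasi-compact by Theorem \ref{Tpedersen}. Proposition \ref{Phered}(1) will transfer TA${\cal C}_0'$ from $A$ to $B$, and Theorem \ref{T1D0} will then upgrade this to $B \in {\cal D}$.

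For the reverse direction, suppose $B := \overline{aAa} \in {\cal D}$ for some $a \in P(A)_+ \setminus \{0\}$. Proposition \ref{PD0=tad} gives that $B$ is TA${\cal C}_0'$, and Propositions \ref{PD0qc}, \ref{Comparison}, \ref{CD0str1} together with Lemma \ref{Lbkqc} force $B$ to be quasi-compact; Proposition \ref{PtadMk} then yields $M_k(B) \in {\cal D}$ for every $k \ge 1$. To verify $A$ is TA${\cal C}_0'$, I would fix a finite subset ${\cal F} \subset A$, $\ep > 0$, and $b_0 \in A_+\setminus\{0\}$, and choose an approximate identity $\{e_n\} \subset P(A)_+$ of $A$ with $e_n e_{n+1} = e_n$. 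For sufficiently large $n$, the cut-down $\af_n(x) := e_n^{1/2} x e_n^{1/2}$ will map ${\cal F}$ into $B_n := \overline{e_n A e_n}$ with error less than $\ep/4$. Since $a$ and $e_n$ both lie in $P(A)_+$ and are full in the simple algebra $A$, there will exist an integer $k$ with $e_n \lesssim k \cdot a$; the almost stable rank one of $B$ (Corollary \ref{CD0str1}), combined with Brown's stable isomorphism theorem, realizes this subequivalence by a partial isometry identifying $B_n$ with a full hereditary \SCA\, of $M_k(B) \in {\cal D}$. Proposition \ref{Phered}(2) will then give $B_n \in {\cal D}$, so $B_n$ is TA${\cal C}_0'$. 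Applying the TA${\cal C}_0'$ decomposition of $B_n$ to $\af_n({\cal F}) \cup \{\af_n(a_0)\}$ (with $a_0 \in A_+$ strictly positive) and to a suitable $b_0' \in (B_n)_+$ with $b_0' \lesssim b_0$, then composing with $\af_n$, will produce the required ${\cal F}$-$\ep$-multiplicative \cpc s $\phi: A \to A$ and $\psi: A \to D$ with $D \in {\cal C}_0'$ contained in $B_n \subset A$, verifying Definition \ref{Dtr1div}.

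The main obstacle will be the embedding step in the reverse direction: converting the Cuntz subequivalence $e_n \lesssim k \cdot a$ into a genuine hereditary inclusion $B_n \hookrightarrow M_k(B)$ via a partial isometry, which requires almost stable rank one of $A$ (inherited from $B \in {\cal D}$ through Brown's theorem in its Morita-invariant form), plus the bookkeeping to ensure that $\phi \circ \af_n$ remains approximately multiplicative on ${\cal F}$ and that the Cuntz domination $\phi(a_0) \lesssim b_0$ is preserved under strict comparison (Theorem \ref{Comparison}).
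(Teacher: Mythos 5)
Your forward direction is correct, though more roundabout than necessary: once $gTR(A)\le 1$ is assumed, Proposition \ref{Phered} makes $\overline{aAa}$ TA${\cal C}_0'$ for \emph{every} $a\in P(A)_+\setminus\{0\}$, and such a corner is automatically quasi-compact (the proposition following Definition \ref{Dcompact}, since $a$ admits a local unit inside $P(A)$), so Theorem \ref{T1D0} applies at once; the detour through Remark \ref{Rcontext} and continuous scale is harmless but obliges you to check the unverified side claim that the continuous-scale corner is generated by an element of $P(A)_+$.

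The reverse direction contains a genuine gap, and it sits exactly at the step you yourself flag as the main obstacle. You apply Lemma \ref{Lalmstr1} to the subequivalence $e_n\lesssim a\otimes 1_k$ inside $M_k(A)$, but that lemma requires the \emph{ambient} algebra $A$ to almost have stable rank one. At this point nothing of the kind is known about $A$: Corollary \ref{CD0str1} gives it only for $B=\overline{aAa}$ (hence, with Proposition \ref{PtadMk} and Definition \ref{Dalst1}, for hereditary subalgebras of $M_m(B)$), and your assertion that the property passes to $A$ ``through Brown's theorem in its Morita-invariant form'' is precisely what is neither proved by you nor available in the paper — hereditary subalgebras of $M_m(A)$ are hereditary subalgebras of $B\otimes{\cal K}$, not of any $M_{m'}(B)$, so some permanence argument is required, and using the resulting stable rank of $A$ here is borderline circular (it is a consequence of the proposition being proved, via Proposition \ref{Pgtr1}). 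The same circularity infects your closing appeal to Theorem \ref{Comparison} for $A$, which in any case is not needed (transitivity of $\lesssim$ suffices, since you may choose $b_0'\lesssim b_0$ at the outset). The gap can be closed in two ways. The paper's route avoids the issue entirely: by Brown's theorem $A$ is a hereditary subalgebra of $B\otimes{\cal K}$, so by Proposition \ref{Phered}(1) it suffices to show $B\otimes{\cal K}$ is TA${\cal C}_0'$, and this reduces, after approximating the given finite subset by elements of $M_{N_1}(B)$ and choosing the comparison element in $B\otimes M_N$, to Proposition \ref{PMnTAD} together with \ref{PD0=tad} — no partial-isometry conjugation and no stable-rank input at all. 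Alternatively, your cut-down scheme can be repaired without any stable-rank hypothesis by replacing Lemma \ref{Lalmstr1} with Lemma \ref{Lrorm}: under the identification $A\otimes{\cal K}\cong B\otimes{\cal K}$, approximate $e_n$ within $\dt/2$ by some $c'\in M_m(B)$, write $(e_n-\dt)_+=z^*c'z=y^*y$ with $y=(c')^{1/2}z$, and use that $\overline{y^*y(B\otimes{\cal K})y^*y}$ is isomorphic to $\overline{yy^*(B\otimes{\cal K})yy^*}\subset M_m(B)$, a hereditary subalgebra of $M_m(B)$; then Propositions \ref{PtadMk} and \ref{Phered}(2) give $\overline{f_\dt(e_n)Af_\dt(e_n)}\in{\cal D}$, which serves the same purpose as your $B_n$ in the remainder of the argument.
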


\begin{proof}
Suppose that $A$ is TA${\cal C}_0'.$ Then, by \ref{Phered}, for any $a\in P(A)_+\setminus \{0\},$
$B:=\overline{aAa}$ is TA${\cal C}_0'.$  Now $B$ is quasi-compact, by  \ref{T1D0},
$B\in {\cal D}.$

It follows from \ref{Phered}  and Brown's stable isomorphism theorem (\cite{Br1})
that it suffices to show that $gTR(B\otimes {\cal K})\le 1.$

Conversely, let $a\in P(A)_+\setminus \{0\}$ and $B=\overline{aAa}.$
Fix  $\ep>0,$ a finite subset ${\cal F}$ and $e\in B\otimes {\cal K}_+\setminus\{0\}$
with $\|e\|=1.$  It is easy to find $a_0\in (B\otimes M_N)_+$ with $\|a_0\|=1$ and
$a_0\lesssim e$ for some $N\ge 1.$

\Wlog, we may assume that ${\cal F}\subset M_{N_1}$ for some $N_1\ge N.$
Therefore it suffices to show that $B\otimes M_{N_1}$ is TA${\cal C}_0'.$
But this follows from the statement of \ref{PMnTAD}.

\end{proof}

\begin{prop}\label{PWAff}
Let $A\in {\cal D}$ with continuous scale.  Then the map $W(A)\to V(A)\sqcup {\rm LAff}_{b+}(T(A))$ 
is surjective.
\end{prop}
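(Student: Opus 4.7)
Since $A$ is stably projectionless (Proposition \ref{Pprojless}), $V(A)=\{0\}$, so surjectivity reduces to showing that every $f\in {\rm LAff}_{b+}(T(A))$ is realized as $\tau\mapsto d_\tau(a)$ for some $a\in (A\otimes{\cal K})_+$. I shall use that continuous scale forces $T(A)$ compact and $d_\tau(e)$ continuous for any strictly positive $e$ (Theorem \ref{Pconscale}); that $A$ has strict comparison and stable rank one (Theorems \ref{Comparison} and \ref{TTstr1}); and that $A$ satisfies the approximate divisibility of Corollary \ref{LappZ} with $E_2\in {\cal C}_0'$.

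The plan has three steps. (1) Since $T(A)$ is a compact Choquet simplex and $f$ is bounded lsc affine, write $f=\sup_n g_n$ pointwise with each $g_n$ a strictly positive continuous affine function on $T(A)$ satisfying $g_{n+1}-g_n\ge \eta_n>0$ uniformly. (2) For each $n$, construct $a_n\in (A\otimes{\cal K})_+$ with $\sup_{\tau\in T(A)}|d_\tau(a_n)-g_n(\tau)|<\eta_n/4$. (3) Assemble the $(a_n)$ into a $\ll$-increasing chain in $Cu(A)$ whose supremum $a$ realizes $f$.

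For step (2), fix $n$, set $M=\|g_n\|_\infty$, and choose $K\in \N$ with $2M/K<\eta_n/8$. Pick $a_0\in A_+\setminus\{0\}$ with $d_\tau(a_0)<\eta_n/(16M)$ uniformly on $T(A)$ (possible inside ${\cal D}$ by splitting off small-trace corners via \ref{LappZ} itself). Apply Corollary \ref{LappZ} with integer $K$, this $a_0$, and tolerance $\ep$ sufficiently small, together with Corollary \ref{ChighrankD} to further arrange that every irreducible representation of $E_2$ has rank $\ge K$. This yields an approximate decomposition in which $M_K(E_2)\subset E_1\subset A$, together with a corner $E_0=\overline{e_0Ae_0}$ satisfying $e_0+e_{00}\lesssim a_0$. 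Under the induced affine map $T(A)\to T(E_2)$ (cut down by a matrix entry of $M_K(E_2)$, then normalised), $g_n/M$ corresponds to a function $h\in {\rm LAff}_b(T(E_2))_+$ with $0\le h\le 1$, with an error $<\eta_n/(8M)$ attributable to the $E_0$-corner. Proposition \ref{str=1}(4) then provides $b\in M_2(E_2)_+$ with $|d_\tau(b)-h(\tau)|\le 2/K$ on $T(E_2)$. Taking a direct sum of $\lceil M\rceil$ copies of $b$ inside $M_{2K\lceil M\rceil}(A)$, with a fractional-power adjustment on the last summand, produces $a_n$ with the required estimate.

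For step (3), the uniform gap $d_\tau(a_{n+1})-d_\tau(a_n)\ge \eta_n/2$ and strict comparison (Theorem \ref{Comparison}) give $\langle a_n\rangle\ll \langle a_{n+1}\rangle$ in $Cu(A)$; stable rank one (Theorem \ref{TTstr1}) and Lemma \ref{Lalmstr1} then let us replace $a_{n+1}$ by a unitarily equivalent element so that $a_n\le a_{n+1}$ genuinely in $A\otimes{\cal K}$. The supremum $a=\sup_n a_n$ exists in $Cu(A)$, and lower semicontinuity of dimension functions yields $d_\tau(a)=\sup_n d_\tau(a_n)=f(\tau)$.

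The main obstacle is step (2): the decomposition from Corollary \ref{LappZ} is only approximate and carries a nontrivial corner $E_0$, so one must bookkeep how the continuous affine function $g_n$ on $T(A)$ pulls back to a function on $T(E_2)$ fitting the hypotheses of Proposition \ref{str=1}(4), and simultaneously enforce the high-rank condition on $E_2$ together with the tracial smallness of $E_0$. The remaining bookkeeping is standard under strict comparison and stable rank one.
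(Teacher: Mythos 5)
Your overall architecture (approximate $f$ from below by continuous affine functions, realize each one approximately using the tracial decomposition and \ref{str=1}(4), then take a supremum via strict comparison and stable rank one) is the same as the argument the paper cites (5.3 of \cite{BPT} as in 10.5 of \cite{GLN}, or 6.2.1 of \cite{Rl} with \ref{LappZ}), but there is a genuine gap in your Step (2), at the sentence asserting that under the induced map $T(A)\to T(E_2)$ the function $g_n/M$ ``corresponds to'' some $h\in{\rm LAff}_b(T(E_2))_+$. That map is restriction to a corner of $M_K(E_2)$ followed by normalisation, so it goes from $T(A)$ to $T(E_2)$, and a function $h$ on $T(E_2)$ with $h\circ r\approx g_n/M$ exists only if $g_n$ approximately factors through $r$. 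Nothing in Corollary \ref{LappZ} gives this: its conclusion only controls traces of the finitely many elements in ${\cal F}$, and it does not rule out that the restriction map collapses traces which $g_n$ separates (for instance that $\tau|_{E_2}$ is essentially independent of $\tau$). In that situation, since $\|\tau|_{M_K(E_2)}\|\approx 1$ for all $\tau$, every $b\in M_m(E_2)$ has $d_\tau(b)$ approximately constant in $\tau$, so no non-constant $g_n$ can be realized by elements sitting in matrices over $E_2$ chosen your way; Step (2) fails as stated and with it the construction of $a_n$.

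The missing idea—and the way the cited proofs actually proceed—is to route through an element rather than the function: first use the density of $A_{s.a.}^q$ in $\Aff(T(A))$ (Cuntz--Pedersen, \cite{CP}) to find $c\in A_+$ with $\hat c$ uniformly close to $g_n$; then feed $c$ into the finite set of \ref{LappZ}, so that $g_n(\tau)\approx\tau(c)\approx K\,\tau(\phi_2(c))$ for every $\tau\in T(A)$; and only then apply \ref{str=1}(4) inside the block, to the affine function induced by the element $\phi_2(c)$ (suitably scaled) on the trace space of the block—not to a pullback of $g_n$. It is cleaner to apply \ref{str=1}(4) to $M_K(E_2)$, whose irreducible representations automatically have rank at least $K$, so \ref{ChighrankD} is not needed. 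Your Steps (1) and (3) are fine, modulo the small point that, since $f$ is bounded, one should dominate everything by a fixed strictly positive element of $M_N(A)$ with $N>\sup f$ so that the supremum class lies in $W(A)$ rather than merely in $Cu(A)$; but the omission of the Cuntz--Pedersen step that transfers $g_n$ into the building block is fatal to the proof as written.
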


\begin{proof}
This follows from the same lines of the  proof of 5.3 of \cite{BPT} as shown in 10.5 of \cite{GLN}
using  (4) of \ref{str=1}. (One can also use the proof 6.2.1 of \cite{Rl} by 
applying  \ref{LappZ} as (D) in that proof.)
\end{proof}

\begin{prop}\label{D=D0K0}
Let $A\in {\cal D}$ with $K_0(A)=\{0\}.$  Then, $A$ has the properties described in  \ref{TCCdvi} as well 
as in \ref{Cuniformful} but replacing ${\cal C}_0^{0'}$ by ${\cal C}_0'.$
\end{prop}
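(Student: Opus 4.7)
The plan is to adapt the proof of \ref{TCCdvi} by replacing the role of the tracial approximate divisibility \ref{TDapprdiv} (which relied on $A \in {\cal D}_0$) with the weaker divisibility \ref{LappZ} (valid for $A \in {\cal D}$), and then applying the defining property of ${\cal D}$ (instead of ${\cal D}_0$) at the final step, which yields $D \in {\cal C}_0'$ in place of $D \in {\cal C}_0^{0'}$.

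First, given a strictly positive $a \in A$ with $\|a\|=1$, a finite subset ${\cal F} \subset A$, $\ep>0$, $b \in A_+\setminus\{0\}$, and an integer $n \geq 1$, I would pick an auxiliary $b_0 \in A_+$ with $b_0 \lesssim b$ and invoke \ref{LappZ} on the data ${\cal F} \cup \{a, f_{1/4}(a)\}$, $n$, and target $b_0$. This produces $\sigma$-unital subalgebras $E_0, E_1, E_2 \subset A$ with $E_0 \perp E_1$ and $M_n(E_2) \subset E_1$, \cpc s $\phi_0: A \to E_0$, $\phi_1: A \to E_1$, $\phi_2: A \to E_2$, and remainder terms $r(x) \in \overline{e_{00}Ae_{00}}$ satisfying $\|x - (\phi_0(x) \oplus (r(x) + d_n(\phi_2(x))))\| < \ep/4$ for $x \in {\cal F}$, with $e_0 + e_{00} \lesssim b_0$. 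Next, I would apply the defining property \ref{DNtr1div} of ${\cal D}$ to the hereditary \SCA\, $E_2$, which lies in ${\cal D}$ by \ref{Phered} and satisfies $K_0(E_2) \cong K_0(A) = \{0\}$ via Brown's theorem \cite{Br1}. This yields \cpc s $\phi': E_2 \to E_2$ and $\psi': E_2 \to D$ with $D \in {\cal C}_0'$, such that $\psi'(c_2)$ is strictly positive in $D$ (for $c_2$ strictly positive in $E_2$), $t(f_{1/4}(\psi'(c_2))) \geq \mathfrak{f}_{c_2}$ for all $t \in T(D)$, and $\phi'(c_2) \lesssim b_0$. Setting $\psi := \psi' \circ \phi_2$ and bundling $\phi_0(x)$, $r(x)$, and the $n$-fold diagonal of $\phi' \circ \phi_2$ into a single small \cpc\, $\phi$, one checks that $\phi(a) \lesssim b$ since all ingredients land in a hereditary subalgebra of $A$ whose strictly positive element is Cuntz-dominated by $b$. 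This delivers the \ref{TCCdvi}-style decomposition with $D \in {\cal C}_0'$ and trace lower bound $t(f_{1/4}(\psi(a))) \geq \mathfrak{f}_a$, from which the \ref{Cuniformful} assertion follows by the same fullness argument (applying \ref{UnifomfullTAD} and \ref{Rqcfull}, noting that ${\cal C}_0' \subset {\cal C}'$).

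The main obstacle is controlling the remainder $r(x)$ coming from \ref{LappZ}: algebras in ${\cal C}_0'$ are only ${\cal Z}$-stable rather than $Q$-stable as in the ${\cal C}_0^{0'}$ case, so the matrix tiling provided by the second part of \ref{Lmatrixpullb} genuinely produces this remainder, which is absent in the first part used for \ref{TDapprdiv}. The hypothesis $K_0(A) = \{0\}$ enters here: via Brown's theorem it forces every hereditary \SCA\, of $A$ to have trivial $K_0$, so the approximating ${\cal C}_0'$-pieces arising in the inductive limit underlying \ref{LappZ} have $K_0$-data compatible with being absorbed into $A$ without obstruction, allowing $\phi_0(x)$ and $r(x)$ to be combined cleanly into the small part $\phi$. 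The tracial lower bound $t(f_{1/4}(\psi(a))) \geq \mathfrak{f}_a$ is then propagated through $\phi_2$ from the corresponding bound for $A$, with the constant $\mathfrak{f}_a$ chosen as in \ref{Rfa0}.
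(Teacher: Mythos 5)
Your route is genuinely different from the paper's, and it has a gap at exactly the point where the hypothesis $K_0(A)=\{0\}$ has to do work. The paper's proof is short: reduce to the continuous scale case, use \ref{PWAff} to get surjectivity of $W(A)\to {\rm LAff}_{b+}(T(A))$, and then apply \ref{CDdiag} — where $K_0(A)=\{0\}$ is used to divide the Cuntz morphism $Cu^{\sim}(\phi)$ of a homomorphism from a ${\cal C}_0'$-algebra by $n$ and invoke Robert's classification — to conclude that $A$ is tracially approximately divisible in the \emph{exact} sense of \ref{Dappdiv}, i.e.\ with no remainder term; after that the proof of \ref{TCCdvi} runs verbatim with ${\cal C}_0'$ in place of ${\cal C}_0^{0'}.$ You instead replace this input by \ref{LappZ}. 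But \ref{LappZ} is the divisibility statement available for \emph{all} of ${\cal D}$ (no $K_0$ hypothesis), and its proof (the second part of \ref{Lmatrixpullb}, i.e.\ the ${\cal Z}$-type trick with $d\lesssim e_1$) produces a remainder $r(x)$ supported on $\overline{e_{00}Ae_{00}}$ with $e_{00}$ dominated only by one of the $n$ diagonal copies; for the prescribed $n$ this remainder carries tracial mass of order $1/(n+1)$ and cannot be Cuntz-absorbed into a corner $\lesssim b$ for an arbitrary given $b$, which is what the conclusion of \ref{TCCdvi} demands. One could try to repair this by running \ref{LappZ} with $nm$ copies for $m$ large and regrouping, but then $D$ must be replaced by a matrix amplification and one has to re-establish orthogonality of the remainder to the diagonal copies, strict positivity of $\psi(a)$, the trace bound, and the correct multiplicity in the choice of $b_0$ (your choice $b_0\lesssim b$ is in any case insufficient, since the $n$ copies of $\phi'(c_2)$ only give $n\langle b_0\rangle\lesssim\langle b\rangle$ after $b_0$ is chosen with the right multiplicity, as in the proof of \ref{TCCdvi}); none of this is addressed, and it is precisely the step the paper avoids by establishing remainder-free divisibility first.

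Relatedly, your stated use of $K_0(A)=\{0\}$ (``the approximating ${\cal C}_0'$-pieces have $K_0$-data compatible with being absorbed \dots allowing $\phi_0(x)$ and $r(x)$ to be combined'') is not a real use of the hypothesis: combining two Cuntz-small orthogonal corners needs no $K$-theory at all. If your argument as written were complete it would prove the proposition for every $A\in{\cal D},$ whereas the paper deliberately claims only the $n=1$ case for general ${\cal D}$ (Remark \ref{Rm17div}). The hypothesis enters through \ref{CDdiag}: triviality of $K_0(A)$ is what makes $\tfrac1n\,Cu(\phi)$ a legitimate $Cu^{\sim}$-morphism so that Robert's existence and uniqueness theorems yield an honest $n$-fold diagonal decomposition, and that is the ingredient your proposal is missing.
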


\begin{proof}
We may assume, \wilog, that $A$ has continuous scale. 
It follows from \ref{PWAff} that the map $W(A)\to {\rm LAff}_{b+}(T(A))$ is surjective.
Then, by \ref{CDdiag}, $A$ has tracially approximate divisible property.   
The proof of \ref{TCCdvi} applies to $A$ with ${\cal C}_0^{0'}$ replaced by ${\cal C}_0'.$ 
One then also obtains the conclusion of \ref{Cuniformful} with ${\cal C}_0^{0'}$ replaced by ${\cal C}_0'.$

\end{proof}

We would like to state the following
\begin{prop}\label{Pgtr1}
Let $A$ be a separable simple \CA\, which is stably projectionless and $gTR(A)\le 1.$
Then the following hold.

(1) $A$ has stable rank one;

(2) Every quasi-trace of $A$ is a trace;

(3) $A$ has strict comparison for positive elements;

(4) If $A=P(A),$  then $A\in {\cal D};$

(5) If $B\subset A$ is a  hereditary \SCA, then $B$ also has $gTR(B)\le 1;$

(6)  $M_n(A)$ is stably projectionless and has $gTR(M_n(A))\le 1$ for every integer $n\ge 1.$
\end{prop}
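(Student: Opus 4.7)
The plan is to prove the six assertions in the order $(5),(6),(4),(1),(2),(3)$, with each earlier part feeding the next. The unifying idea is that once one cuts $A$ by an element of its Pedersen ideal, the resulting quasi-compact hereditary \SCA\, lies in ${\cal D}$ by Theorems \ref{Tpedersen} and \ref{T1D0}, and at that point the strong theorems already proved in this section (Propositions \ref{Phered}, \ref{PMnTAD}, \ref{PD0qc} and Theorems \ref{Comparison} and \ref{TTstr1}) can be invoked directly.

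For (5), Definition \ref{DgTR} says $gTR(A)\le 1$ means $A$ is TA${\cal C}_0'$, and Proposition \ref{Phered}(1) propagates this property to any hereditary \SCA\, of $A$. For (6), $M_n(A)$ is stably projectionless whenever $A$ is, and Proposition \ref{PMnTAD} preserves the TA${\cal C}_0'$ property under passage to matrix algebras. For (4), if $A=P(A)$ then Theorem \ref{Tpedersen} makes $A$ quasi-compact, and Theorem \ref{T1D0} then promotes ``TA${\cal C}_0'$ plus quasi-compact'' to membership in ${\cal D}$.

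For (1), pick $a\in P(A)_+\setminus\{0\}$ and set $B=\overline{aAa}$. Then $B=P(B)$ is quasi-compact, and by (5) and (4) it lies in ${\cal D}$; since $A$ is stably projectionless so is $B$, so Theorem \ref{TTstr1} gives $B$ stable rank one. As $a$ is full in the simple \CA\, $A$, Brown's stable isomorphism theorem \cite{Br1} identifies $B\otimes{\cal K}$ with $A\otimes{\cal K}$, and stable rank one descends to the hereditary \SCA\, $A$. For (2), fix $\tau\in QT(A)$ with $\|\tau\|=1$ and an approximate identity $\{e_n\}\subset P(A)_+$. Each $B_n:=\overline{e_nAe_n}$ lies in ${\cal D}$ by the argument just given, so Proposition \ref{PD0qc} forces $\tau|_{B_n}$ to be a tracial state, in particular linear on $(B_n)_{sa}$. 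For $x,y\in A_{sa}$ this yields $\tau(e_n(x+y)e_n)=\tau(e_nxe_n)+\tau(e_nye_n)$, and norm-continuity of the bounded quasi-trace $\tau$ (from $|\tau(z_1)-\tau(z_2)|\le\tau(|z_1-z_2|)\le\|z_1-z_2\|$) lets us pass to the limit and conclude $\tau(x+y)=\tau(x)+\tau(y)$, so $\tau$ is a trace.

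For (3), by (6) it is enough to verify strict comparison inside $A$ itself. Given $a,b\in A_+$ with $d_\tau(a)<d_\tau(b)<\infty$ on ${\tilde T}(A)\setminus\{0\}$, the standard $(a-\ep)_+$ characterization of Cuntz subequivalence reduces us to showing $(a-\ep)_+\lesssim b$ for every $\ep>0$. Fix $\ep$, put $c=(a-\ep/4)_++(b-\ep/4)_+\in P(A)_+$, and let $B_\ep=\overline{cAc}$; this is quasi-compact and, by (5) and (4), lies in ${\cal D}$. Both $(a-\ep)_+$ and $(b-\ep/2)_+$ sit in $B_\ep$, and after a slight additional shrinking of $b$ to absorb lower-semi-continuous limits the trace hypothesis transfers to the strict inequality $d_\tau((a-\ep)_+)<d_\tau((b-\ep/2)_+)$ throughout the compact simplex $\overline{T(B_\ep)}^w$. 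Theorem \ref{Comparison} applied inside $B_\ep$ then produces $(a-\ep)_+\lesssim(b-\ep/2)_+\le b$. The main obstacle I anticipate is exactly this transfer: the hypothesis lives on the noncompact cone ${\tilde T}(A)\setminus\{0\}$ while Theorem \ref{Comparison} needs strict inequality on the weak-$*$-closed compact simplex $\overline{T(B_\ep)}^w$, which requires a careful lower-semi-continuity/perturbation argument shrinking $a$ and $b$ in tandem.
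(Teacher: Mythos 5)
The paper states Proposition \ref{Pgtr1} without proof, as a summary of the preceding sections, and your assembly of parts (1), (2), (4), (5), (6) from Definition \ref{DgTR}, Propositions \ref{Phered}, \ref{PMnTAD}, \ref{PD0qc}, Theorems \ref{Tpedersen}, \ref{T1D0}, \ref{TTstr1} and Brown's theorem is exactly the intended route and is essentially correct. Two small remarks: in (1) the descent of stable rank one from $B\otimes{\cal K}\cong A\otimes{\cal K}$ to the hereditary \SCA\, $A$ uses the (true, but external) fact that stable rank one passes to hereditary \SCA s; and in (2) the inequality $|\tau(z_1)-\tau(z_2)|\le\tau(|z_1-z_2|)$ is the tracial inequality, not automatic for a quasi-trace --- what you actually need is the known norm-continuity of bounded $2$-quasi-traces, which does the job.

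The genuine gap is in (3). First, the intermediate claim you aim for, $d_\tau((a-\ep)_+)<d_\tau((b-\ep/2)_+)$ on $\overline{T(B_\ep)}^w$, is false in general for a cut of $b$ at a level fixed in advance: if the trace gap between $d_\tau(a)$ and $d_\tau(b)$ lives entirely in the part of the spectrum of $b$ below $\ep/2$ (e.g. $a=(b-\ep/2)_+$), both sides can coincide after cutting. The correct statement is that there exists \emph{some} $\dt>0$, produced by a Dini-type compactness argument as in the proof of Theorem \ref{Comparison} (Lemma 5.4 of \cite{Lnloc}), with $\tau(f_{\ep/2}(a))<\tau(f_{\dt}(b))$ for all $\tau$ in the chosen compact trace set. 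But this exposes a quantifier/circularity problem in your setup: the quasi-compact algebra $B_\ep$ is built by cutting $b$ at $\ep/4$ \emph{before} $\dt$ is known, and the $\dt$ furnished by compactness may well be smaller than $\ep/4$, in which case $f_\dt(b)\notin B_\ep$ and Theorem \ref{Comparison} cannot be applied inside $B_\ep$ to the pair $((a-\ep)_+, f_\dt(b))$.

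This can be repaired, but it needs an extra step you do not indicate: after finding $\dt$ (using that each $t\in\overline{T(B_\ep)}^w$ extends uniquely to an element of ${\tilde T}(A)\setminus\{0\}$, that $f_\dt(b)\in P(A)$ so $t\mapsto {\tilde t}(f_\dt(b))$ is continuous for the topology of \ref{DTtilde}, and compactness of $\overline{T(B_\ep)}^w$), pass to the larger quasi-compact hereditary \SCA\, $B_1$ generated by $f_{\ep/16}(a)+f_{\dt/2}(b)$, which again lies in ${\cal D}$ by \ref{Phered} and \ref{T1D0}; then check that the strict inequality $d_t((a-\ep)_+)<t(f_\dt(b))$ persists for every $t\in\overline{T(B_1)}^w$, because such $t$ is faithful on the simple algebra $B_1$ and restricts on $B_\ep$ to a nonzero multiple of a trace already controlled; only then apply Theorem \ref{Comparison} in $B_1$ to get $(a-\ep)_+\lesssim f_\dt(b)\lesssim b$. (Alternatively one can argue along the lines of the proof of Theorem \ref{TRozs}, working with functionals on the Cuntz semigroup rather than with a fixed hereditary \SCA.) As written, your ``slight additional shrinking of $b$'' glosses over exactly this point, and the specific inequality you propose to verify is not the right one.
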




\section{ The range of invariant}

In this first part of this research,  the isomorphism theorem will be established only for  the case that
the non-unital simple
\CA s with $K_0(A)=K_1(A)=\{0\}.$   We would like to revisit some of known results.

\begin{df}\label{DM0}
 Let us denote by ${\cal M}_0$ for the class of non-unital simple \CA s which are inductive limits
of \CA s in ${\cal C}_0^0.$  We insist that the connecting maps are injective and
maps strictly positive elements to strictly positive elements.
\end{df}

\begin{df}\label{DW}
Recall $W$ is an inductive limit of \CA s of the form described in \eqref{ddraz} (see \cite{Tsang}
and \cite{Jb}) which has $K_0(W)=K_1(W)=\{0\}$ and
with a unique tracial state. In particular, $W\in {\cal M}_0.$ 
Moreover, $W$ is the closure of a union of an increasing sequence of \CA s in ${\cal R}$ (in 
fact in  \ref{ddraz}). 
\end{df}

The following is known (see \cite{Raz}, \cite{Tsang} and \cite{Jb}).

\begin{thm}\label{R2}
For any metrizable Choquet simplex $\Delta,$
there exists a non-unital simple \CA\, $A$  which has a continuous scale  and
$A=\lim_{n\to\infty}(B_n, \imath_n),$
 where each $B_n$ is a finite direct sum
of $W$  and  $\imath_n$ maps strictly positive elements to strictly positive elements such that
$$
(K_0(A),  K_1(A), T(A))=(\{0\}, \{0\}, \Delta).
$$
Moreover,  $A$ is also isomorphic to a \CA\,  in ${\cal M}_0$ with continuous scale.
\end{thm}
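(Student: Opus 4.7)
The plan is to realize $\Delta$ as the trace simplex of an inductive limit of finite direct sums of $W$, by first approximating $\Delta$ by finite-dimensional simplices and then dualizing. Since $\Delta$ is a metrizable Choquet simplex, a standard fact (Lazar--Lindenstrauss, or passage through the dual of $\operatorname{Aff}(\Delta)$ as a separable complete order-unit space) gives an inverse system $\Delta=\varprojlim(\Delta_n,\pi_n)$ where each $\Delta_n$ is a finite-dimensional simplex with $k(n)$ extreme points and each $\pi_n\colon\Delta_{n+1}\to\Delta_n$ is a continuous surjective affine map. Since $W$ has a unique tracial state, $T(W^{\oplus k})$ is naturally identified with a $k$-simplex, so setting $B_n:=W^{\oplus k(n)}$ realizes $\Delta_n$ on the level of trace spaces, and we need $\imath_n\colon B_n\to B_{n+1}$ inducing $\pi_n$ on $T$.

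The key technical ingredient I would establish first is: for any scalar $\lambda\in(0,1]$, there is an injective \hm\ $\phi_\lambda\colon W\to W$ with $\tau_W\circ\phi_\lambda=\lambda\tau_W$ and with $\phi_\lambda$ sending a strictly positive element to a strictly positive element (after composing with a suitable cut-down). This is built from the internal Razak structure $W=\varinjlim R(k,m,m+1)$ together with the existence/uniqueness results for \hm s from $\mathcal{C}_0^{0\prime}$-algebras into stably projectionless simple \CA s in ${\cal D}$ (cf.\ \ref{Lrluniq} and \ref{CDdiag}). Dualizing $\pi_n$ gives a nonnegative matrix $(\lambda_{ij}^{(n)})\in[0,1]^{k(n+1)\times k(n)}$, and one assembles $\imath_n$ block-by-block from the $\phi_{\lambda_{ij}^{(n)}}$'s; by a small perturbation of the inverse system (routine in Choquet-simplex approximation) we arrange $\lambda_{ij}^{(n)}>0$ for all $i,j,n$, which forces each $\imath_n$ to be injective and to map the $i$-th summand fully across all summands of $B_{n+1}$.

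The main obstacle is to achieve simultaneously: simplicity, continuous scale, vanishing $K$-theory, and the prescribed trace simplex. Vanishing $K$-theory is immediate from $K_i(B_n)=\{0\}$ and continuity of $K_i$ under inductive limits. The trace condition holds by construction since the trace functor takes the direct system $(B_n,\imath_n)$ to the inverse system $(\Delta_n,\pi_n)$. Simplicity of $A$ follows from the strict positivity of all matrix entries $\lambda_{ij}^{(n)}$ (each non-zero hereditary \SCA\ has full image in all sufficiently late $B_n$'s). For continuous scale, I would use \ref{Pconscale}: pick strictly positive $e_{B_n}$ inductively so that $\imath_n(e_{B_n})$ is strictly positive in $B_{n+1}$ with $\tau\mapsto d_\tau(e_{B_n})$ continuous on the compact $T(B_n)$, and scale the $\imath_n$'s so that the resulting strictly positive element $e_A$ of $A$ has $d_\tau(e_A)$ continuous on $T(A)$; the compactness of each $T(\Delta_n)$ combined with bounded row sums in the matrices $(\lambda_{ij}^{(n)})$ ensures the limit scale function is continuous.

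Finally, for the ``moreover'' statement, each summand $W$ of $B_n$ is itself the closure of an increasing union of \CA s in ${\cal R}\subset{\cal C}_0^{0}$; telescoping the double inductive system (the outer index $n$ together with the internal Razak level inside each $W$-summand of $B_n$) exhibits the same $A$ as an inductive limit of \CA s in ${\cal C}_0^{0}$ with injective connecting maps sending strictly positive elements to strictly positive elements, so $A\in{\cal M}_0$. Continuous scale is unchanged since we have only rewritten the inductive system for the same $A$.
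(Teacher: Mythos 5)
Your route is genuinely different from the paper's. The paper's proof of \ref{R2} is a two-line tensor trick: by 3.10 of \cite{Btrace} there is a unital simple AF algebra $D$ with $T(D)=\Delta$, and one sets $A=D\otimes W$. Then $A=\lim_n F_n\otimes W$ with $F_n$ finite dimensional and connecting maps $\iota_n\otimes\mathrm{id}_W$; each $F_n\otimes W$ is a finite direct sum of copies of $M_k(W)\cong W$, the maps are unital on the $F_n$-side so strictly positive elements go to strictly positive elements, $K_*(A)=0$, $T(A)=T(D)=\Delta$, continuous scale is immediate since $1_D\otimes e_W$ is strictly positive with $d_\tau\equiv 1$ on the compact $T(A)$ (cf. \ref{Pconscale}), and the ``moreover'' part telescopes honestly because $\iota_n\otimes\mathrm{id}$ carries $F_n\otimes R$ into $F_{n+1}\otimes R$ for each Razak block $R\subset W$. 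Your plan instead builds the inductive system by hand from a Lazar--Lindenstrauss presentation $\Delta=\varprojlim\Delta_n$ and Robert's classification \cite{Rl}; this is workable in principle, but it trades a one-line argument for several nontrivial verifications, two of which are genuine gaps as written.

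First, the assembled $\imath_n$ need not map strictly positive elements to strictly positive elements, and this is exactly the delicate point. Inside a target summand $W$ you place mutually orthogonal images with $d_{\tau_W}$-masses $\lambda_{ij}$ summing to $1$; but equality of $d_\tau$-values with that of $e_W$ does not imply that the sum of these images generates $W$ as a hereditary subalgebra (already in $C_0((0,1])$ a proper hereditary subalgebra can carry full mass), and Robert's theorem only controls maps up to approximate unitary equivalence, which cannot enforce exact hereditary fullness. The same issue infects your continuous-scale argument, since you need the scale to be exactly preserved, not just preserved at the level of $d_\tau$-values. The natural repair is to decompose a strictly positive element of $W$ itself into orthogonal pieces of prescribed masses, e.g.\ by writing $W\cong W\otimes D_0$ for a unital simple monotracial AF algebra $D_0$ whose $K_0$ contains the required weights and cutting by projections $p_j\in D_0$ with $\sum_j p_j=1$ --- but note that this fix is precisely the paper's tensor-with-an-AF-algebra idea in disguise. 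Second, the ``moreover'' step is not a literal telescope: your connecting maps come from abstract classification and have no reason to carry the internal Razak filtration of one $W$-summand into that of the next, so exhibiting $A$ as an honest inductive limit of algebras in ${\cal C}_0^0$ with injective, strictly-positivity-preserving maps requires an approximate-intertwining/perturbation argument using weak semiprojectivity of one-dimensional NCCW complexes (\cite{ELP1}), or else an appeal to classification plus \ref{PM0her}; in the paper's construction no perturbation is needed. If you patch these two points (and tighten the continuous-scale verification, where your appeal to \ref{Pconscale} needs the comparison hypotheses supplied, e.g.\ by \ref{R1}/\ref{Comparison}-type facts for the limit), your argument goes through, but the tensor-product construction gets all of the conclusions simultaneously and essentially for free.
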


\begin{proof}
By 3.10 of \cite{Btrace}, there exists a
a unital  simple AF-algebra  $D$ with $T(D)=\Delta.$
Define $A=D\otimes W.$

\end{proof}

\begin{cor}\label{R1}
For any metrizable Choquet simplex $\Delta,$
there exists a non-unital simple \CA\, $A$ in ${\cal M}_0$  which has continuous scale such that
$$
(K_0(A), K_1(A),  T(A))=(\{0\}, \{0\}, \Delta).
$$
Moreover, $A\in {\cal D}_{0}.$  In fact
$A\otimes Q\cong A.$
\end{cor}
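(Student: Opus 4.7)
The plan is to leverage Theorem~\ref{R2} for the existence and then upgrade the conclusion in three directions: realize $A$ as a member of ${\cal M}_0$, verify $A\otimes Q\cong A$, and verify $A\in{\cal D}_{0}$. First, Theorem~\ref{R2} supplies $A=D\otimes W$ where $D$ is a unital simple AF-algebra with $T(D)=\Delta$, and $A$ already has continuous scale with $(K_0(A),K_1(A),T(A))=(\{0\},\{0\},\Delta)$. Since by the definition of $W$ (see \ref{DW}) one has $W=\lim_{n\to\infty}(R_n,\sigma_n)$ with $R_n\in{\cal R}\subset{\cal C}_0^0$, and since $D=\lim_{n\to\infty}(F_n,\iota_n)$ is an inductive limit of finite-dimensional \CA s with unital injective maps, a standard diagonal telescope expresses $A=D\otimes W$ as an inductive limit of \CA s of the form $F_n\otimes R_n$. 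Each such tensor product is a finite direct sum of building blocks of the shape \eqref{ddraz} (with the matrix size multiplied by $\dim F_n$), hence belongs to ${\cal C}_0^0$ (it has $K_0=K_1=\{0\}$, being built from Razak blocks, and $0\notin\overline{T(\cdot)}^w$ for each finite stage). The connecting maps can be arranged to be injective and strictly-positive-preserving by the corresponding property of the approximants for $D$ and $W$, so $A\in{\cal M}_0$.

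Next, for $A\otimes Q\cong A$: both $A$ and $A\otimes Q$ have $K_0=K_1=\{0\}$ (since tensoring a trivial $K$-theory algebra with $Q$ remains trivial) and the same trace simplex $\Delta$ (since $Q$ has a unique tracial state). Razak's classification theorem for stably projectionless inductive limits of one-dimensional non-commutative complexes with trivial $K$-theory (cf.~\cite{Raz}, \cite{Tsang}, \cite{Jb}) applied to the two ${\cal M}_0$-presentations forces $A\otimes Q\cong A$; equivalently, since $W\otimes Q$ has the same invariants as $W$, Razak gives $W\otimes Q\cong W$ and hence $A\otimes Q=D\otimes W\otimes Q\cong D\otimes W=A$.

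Finally, to show $A\in{\cal D}_{0}$: fix a strictly positive $a\in A_+$ with $\|a\|=1$, a finite subset ${\cal F}\subset A$, $\epsilon>0$, and $b\in A_+\setminus\{0\}$. Using the ${\cal M}_0$-presentation just established, choose $n$ large enough that ${\cal F}\cup\{a\}$ is within $\epsilon/4$ of a subalgebra $B_n\in{\cal C}_0^0\subset{\cal C}_0^{0'}$; let $\phi_1:A\to B_n$ be a \cpc\ approximation induced by an approximate identity argument and let $\phi_0$ be the complementary compression onto a hereditary \SCA\ whose strictly positive element is Cuntz dominated by $b$, which is possible because $A$ has continuous scale (so one can split off a small-scale piece). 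The lower bound $\tau(f_{1/4}(\phi_1(a)))\ge\mathfrak{f}_a$ uniformly on $T(B_n)$ follows by choosing $\mathfrak{f}_a$ slightly below $\inf\{\tau(a):\tau\in\overline{T(A)}^w\}$ (positive by continuous scale and \ref{compactrace}) and using that, for $n$ large, the restriction $\tau\circ\phi_1$ of any $\tau\in T(B_n)$ recovers (up to a small error) the corresponding trace on $A$. This verifies \ref{DNtr1div} with ${\cal C}_0^{0'}$, so $A\in{\cal D}_{0}$.

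The main obstacle will be the last step: converting the inductive-limit approximation inside ${\cal M}_0$ into the tracially one-dimensional decomposition required by \ref{DNtr1div}, with a uniform constant $\mathfrak{f}_a$. The continuous-scale hypothesis and the fact that the connecting maps preserve strictly positive elements are what make this uniform lower bound available; the remaining points (being in ${\cal M}_0$, continuous scale, and $A\otimes Q\cong A$) are straightforward consequences of Theorem~\ref{R2} and Razak's classification.
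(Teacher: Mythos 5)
Your first two steps are fine and are essentially all the paper itself offers: Corollary \ref{R1} is stated with no proof, the ${\cal M}_0$-membership and continuous scale being already asserted in Theorem \ref{R2} (whose proof is just $A=D\otimes W$ with $D$ a unital simple AF algebra, $T(D)=\Delta$), and $A\otimes Q\cong A$ coming from Razak's classification exactly as you argue (the paper invokes the same fact later, in the proof of \ref{Lmatrixpullb}, in the form $D\cong D\otimes Q$ for inductive limits of ${\cal C}_0^{0'}$-algebras). The problem is your third step, the direct verification of the axioms of \ref{DNtr1div}. The map you call $\phi_1$ ("a cpc approximation $A\to B_n$ induced by an approximate identity argument") does not exist by that argument: compressing by an approximate unit $e$ of $B_n$ produces $e^{1/2}xe^{1/2}\in\overline{eAe}$, which is a hereditary subalgebra of $A$, not a subalgebra of $B_n$, and there is no conditional expectation or cpc approximate retraction of $A$ onto the image of a finite stage of an inductive limit; neither nuclearity nor semiprojectivity of $B_n$ supplies one. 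This is precisely why the paper's own tracial-approximation arguments (\ref{Phered}, \ref{T1D0}, \ref{PD0=tad}) never manufacture a map into a member of the class from scratch: they always start from a map that already lands in some $D\in{\cal S}$ and only compress it by elements of $D$ (so the image stays in a hereditary subalgebra of $D$) and conjugate by the partial isometries of \ref{LRL}. A second, independent gap is the uniform bound $t(f_{1/4}(\psi(a)))\ge\mathfrak{f}_a$ required for \emph{all} $t\in T(D)$: traces of the finite stage $D=B_n$ are not restrictions of traces of $A$ (the trace simplex of a Razak block is much larger than $T(A)$), so your statement that "$\tau\circ\phi_1$ recovers the corresponding trace on $A$" does not give the estimate; one would need the large-multiplicity structure of the Razak connecting maps, which makes elements coming from earlier stages have almost constant, uniformly large trace in every fibre of a deep stage, and you have not argued this.

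Note also that if a cpc $\psi:A\to D$ with $\|x-\psi(x)\|<\ep$ on ${\cal F}\cup\{a\}$ did exist, you could simply take $\phi=0$ (since $0\lesssim b$), so the continuous-scale "splitting off a small-scale piece" for $\phi_0$ is not where the difficulty lies; the entire burden is on $\psi$ and on the trace condition. Two honest repairs are available. Either verify the decomposition using the explicit Razak connecting maps (as in \cite{Raz}, \cite{Tsang}), which do provide, for elements of an early stage, orthogonal decompositions inside a deep stage with the required uniform trace bound; or stay inside this paper's machinery: every tracial state of $A=D\otimes W$ is a $W$-trace by \ref{Pwtrace}, $A$ is quasi-compact (continuous scale), has finite nuclear dimension and $K_0(A)=\{0\}={\rm ker}\rho_A$, so \ref{TMW} (or \ref{TWtrace}) gives $A\otimes Q\in{\cal D}_0$, and combining with your Step 2 ($A\cong A\otimes Q$) yields $A\in{\cal D}_0$. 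As written, however, the key decomposition in your Step 3 is asserted rather than proved, and the stated justifications would not produce it.
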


\begin{cor}[\cite{Tsang}]\label{Ctsang}
Let ${\tilde T}$ be a topological cone with a base $T$ which is a metrizable Choquet simplex
and let $\gamma: {\tilde T}\to (0, \infty]$ be a lower semi-continuous function.
Then there exist a non-unital simple \CA\, $A$ in ${\cal M}_0$  such that
$$
({\tilde T}(A), \gamma)=({\tilde T}(A), \Sigma_A).
$$
Moreover, $A$ is an inductive limit of finite direct sum of $W.$ 
\end{cor}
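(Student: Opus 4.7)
The plan is to bootstrap from Theorem \ref{R2} (which handles the case of continuous scale, i.e.\ $\gamma\equiv 1$ on the base) by a hereditary cutdown in the stabilization, and then appeal to an inductive-limit description to upgrade membership in ${\cal M}_0$ to membership in the class of inductive limits of finite direct sums of $W$. Concretely, I would first apply Theorem \ref{R2}, with $\Delta=T$, to produce a non-unital simple algebra $B$ with continuous scale, $K_0(B)=K_1(B)=\{0\}$, $T(B)=T$, and presented as $B=\lim_{n\to\infty}(B_n,\imath_n)$ with each $B_n$ a finite direct sum of copies of $W$. The topological cone ${\tilde T}(B)$ then canonically coincides with the given ${\tilde T}$, since both are cones over $T$ with the topology induced from $T$.

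Next I would realize $\gamma$ as a rank function in the stabilization $B\otimes\mathcal K$, which carries the same tracial cone ${\tilde T}$. Since $B$ has continuous scale and lies in ${\cal D}$ (by \ref{R1}), Proposition \ref{PWAff} gives that $W(B)\to \mathrm{LAff}_{b+}(T(B))$ is surjective; because $K_0(B)=\{0\}$ there is no projection obstruction, so every bounded, strictly positive, lower semi-continuous affine function on $T$ is the rank function of some positive element of $M_k(B)$. For unbounded $\gamma$, write $\gamma|_T=\sup_n g_n$ as an increasing limit of continuous strictly positive affine functions $g_n$ on $T$ (setting $g_0=0$), realize each increment $g_n-g_{n-1}$ by a positive element $b_n\in M_{k_n}(B)$, scale so that $\|b_n\|\to 0$, arrange $\{b_n\}$ to be mutually orthogonal within $B\otimes\mathcal K$, and put $b=\sum_{n\ge 1} b_n$. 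Then $d_\tau(b)=\sum_n d_\tau(b_n)=\gamma(\tau)$ on $T$, extended positively homogeneously to all of ${\tilde T}$. I would then set $A=\overline{b(B\otimes\mathcal K)b}$: this is a $\sigma$-unital simple hereditary \SCA\ with $K_*(A)=\{0\}$ (by Brown's theorem, since $b$ is full in $B\otimes\mathcal K$), ${\tilde T}(A)={\tilde T}$, and $\Sigma_A(\tau)=d_\tau(b)=\gamma(\tau)$.

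The remaining task, and the main obstacle, is to upgrade this hereditary cutdown to an inductive limit presentation of $A$ using finite direct sums of $W$. The hereditary subalgebra $A$ is automatically in ${\cal M}_0$ by \ref{Phered}-type considerations combined with the fact that $B\in{\cal M}_0$, but writing $A$ itself as $\lim_m(C_m,\sigma_m)$ with each $C_m$ a finite direct sum of $W$ is more delicate. I would do this by approximating $b$ by an increasing sequence of positive elements $b^{(m)}$ lying in $M_{K_m}(B_{n(m)})$, and considering the hereditary subalgebras $C_m=\overline{b^{(m)}M_{K_m}(B_{n(m)})b^{(m)}}$; these are hereditary in finite direct sums of $M_{K_m}(W)$. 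Using Razak's classification of the simple inductive limits within the Razak class by cone of lower semi-continuous traces with scale, together with the fact that $W$ is absorbing (Razak--Tsang) for $K$-trivial hereditary subalgebras of $M_k(W)$ in the sense that each $C_m$ admits a trace-preserving embedding into a (possibly larger) finite direct sum of $W$'s, I would arrange telescoping connecting maps $\sigma_m:C_m\to C_{m+1}$ (compatible with the ambient inclusions $b^{(m)}\cdot b^{(m)}\hookrightarrow b^{(m+1)}\cdot b^{(m+1)}$) so that $\lim_m(C_m,\sigma_m)\cong A$.

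The step I expect to be the hardest is the last one: showing that, after the hereditary cutdown, $A$ remains expressible as an inductive limit of finite direct sums of $W$ with the correct scale, rather than only as an inductive limit of arbitrary algebras in ${\cal C}_0^0$. This is precisely where Tsang's construction \cite{Tsang} is needed, and the argument above really relies on his absorption/classification result for the $W$-class. An alternative, more self-contained route would be to prove a surjectivity statement directly at the level of the inductive system: given $({\tilde T},\gamma)$, construct connecting \hm s $\psi_m$ between finite direct sums of $W$ by hand (using the explicit building blocks $R(k,m,m+1)$) so that the induced maps on traces produce the prescribed cone with scale in the limit; however this requires a careful diagonal construction and is essentially the content of Tsang's theorem.
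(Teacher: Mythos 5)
Your route is genuinely different from the paper's, and it stalls exactly where you predict. Starting from Theorem \ref{R2}, realizing $\gamma$ as $d_\tau(b)$ for $b\in B\otimes{\cal K}$ (via \ref{PWAff} for the bounded increments and an orthogonal sum for the unbounded part), and cutting down to $A=\overline{b(B\otimes{\cal K})b}$ does produce a simple algebra with the right $({\tilde T},\Sigma)$ and trivial $K$-theory. But the last step — exhibiting this hereditary cutdown as an inductive limit of finite direct sums of $W$ — is not proved: the subalgebras $C_m=\overline{b^{(m)}M_{K_m}(B_{n(m)})b^{(m)}}$ built from approximants of $b$ are not nested (that would require $b^{(m+1)}b^{(m)}=b^{(m)}$ with $b^{(m)}$ still living in a matrix algebra over a building block, which an approximation does not give), so the "telescoping connecting maps compatible with the ambient inclusions" have to be manufactured by a perturbation/approximate-intertwining argument that you do not supply; your fallback is to invoke Tsang's absorption/classification for the $W$-class, i.e.\ essentially the statement being proved. (Also, the claim that $A\in{\cal M}_0$ "by \ref{Phered}-type considerations" is off: \ref{Phered} concerns TA${\cal S}$/${\cal D}$; the relevant fact is \ref{PM0her}, and even then one must first place the ambient algebra in ${\cal M}_0$, which is not immediate for $B\otimes{\cal K}$ given the strict-positivity requirement on connecting maps in \ref{DM0}.)

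The paper avoids all of this by ordering the operations differently: take a \emph{unital simple AF} algebra $B$ with $T(B)=T$ (Blackadar, \cite{Btrace}), realize $\gamma$ as $d_\tau(a)$ for a positive $a\in B\otimes{\cal K}$, cut down to $B_1=\overline{a(B\otimes{\cal K})a}$ — still a simple AF algebra — and only then tensor with $W$, setting $A=B_1\otimes W$. Since $B_1=\lim_n F_n$ with $F_n$ finite dimensional and $M_k(W)\cong W$, the algebra $A=\lim_n F_n\otimes W$ is automatically an inductive limit of finite direct sums of $W$, and because $W$ has a unique tracial state the tracial cone and scale are those of $B_1$, namely $({\tilde T},\gamma)$. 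In other words, doing the hereditary cutdown at the AF level and tensoring with $W$ last makes the inductive-limit structure free, which is precisely the obstacle your construction runs into when the cutdown is performed inside the $W$-type limit. If you want to keep your route, you would need to actually prove the hereditary-subalgebra statement for limits of finite direct sums of $W$ (e.g.\ via Razak's classification applied building-block by building-block together with a genuine intertwining argument), rather than cite it.
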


\begin{proof}
Let $B$ be a unital simple AF-algebra with $T(B)=T.$
There is a positive element
$a\in B\otimes {\cal K}$ such that $d_\tau(a)=\gamma(\tau)$ for all $\tau\in T=T(B).$
Let $B_1=\overline{a(B\otimes {\cal K})a}.$
Put $A=B_1\otimes W.$  Since $B_1$ is a simple AF-algebra and since
$M_k(W)\cong W$ for all integer $k\ge 1,$ one sees
that $A$ is an inductive limit of finite direct sum $W.$
One then check that
$$
{\tilde T}(A)={\tilde T}\andeqn \Sigma_A=\gamma.
$$

\end{proof}

\begin{prop}[\cite{Rl}]\label{PM0her}
Let $A\in {\cal M}_0.$ Then every hereditary \SCA\, $B\subset A$ is in ${\cal M}_0.$
\end{prop}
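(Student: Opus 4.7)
\smallskip\noindent
\textbf{Proof plan.} My strategy is to reduce to Robert's classification theorem (Theorem 1.0.1 of \cite{Rl}), which classifies inductive limits of $1$-dimensional NCCW complexes with trivial $K_1$ by their (pointed) $Cu^{\sim}$ invariant. Since every $\sigma$-unital hereditary \SCA\, $B\subseteq A$ of a simple \CA\, $A$ is full, Brown's stable isomorphism theorem gives $B\otimes{\cal K}\cong A\otimes{\cal K}$; in particular $K_i(B)=K_i(A)=\{0\}$ for $i=0,1,$ and $B$ is stably projectionless. Moreover, by the proof of \ref{PM0her} in the sense of \ref{str=1}(1)--(3), $A$ (hence $B$) has stable rank one, so $B$ is automatically $\sigma$-unital, simple, stably projectionless, with trivial $K$-theory and stable rank one.

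First I would identify the $Cu^{\sim}$-invariant of $B$. Fix a strictly positive element $b\in B_+$ with $\|b\|=1.$ Since $B$ is a full hereditary \SCA\, of a simple $A\in{\cal M}_0,$ we may identify $\widetilde T(B)$ with $\widetilde T(A)$ (every lower semi-continuous densely defined trace on $B$ extends uniquely to one on $A\otimes{\cal K}$), and the scale $\Sigma_B(\tau)=d_\tau(b)$ is a lower semi-continuous affine function on $\widetilde T(A)$ with values in $(0,\infty].$ Because $K_0(B)=K_1(B)=\{0\}$ and $B$ has stable rank one, the pointed semigroup $Cu^{\sim}(B)$ is entirely determined, via Robert's description, by the pair $(\widetilde T(B),\Sigma_B).$

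Next I would construct a model algebra $B'\in{\cal M}_0$ realizing this invariant. Applying Tsang's range theorem \ref{Ctsang} with the cone $\widetilde T(B)$ (whose base is a metrizable Choquet simplex) and the lower semi-continuous function $\Sigma_B$, we obtain an inductive limit $B'$ of finite direct sums of $W$ (in particular $B'\in{\cal M}_0$) with
\[
(\widetilde T(B'),\Sigma_{B'})\cong(\widetilde T(B),\Sigma_B).
\]
Both $B$ and $B'$ then have isomorphic $Cu^{\sim}$-invariants, and both lie in the class to which Robert's uniqueness theorem applies ($B$ as a hereditary \SCA\, of an inductive limit of $1$-dimensional NCCW complexes with trivial $K_1,$ $B'$ as such an inductive limit itself). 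Invoking Theorem 1.0.1 of \cite{Rl}, we conclude $B\cong B',$ whence $B\in{\cal M}_0.$

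\smallskip\noindent
\textbf{Main obstacle.} The delicate point is verifying that Robert's classification theorem \emph{does} apply to $B$ on the left-hand side, not merely to inductive limits; that is, that the hereditary \SCA\, $B$ itself lies in the domain of his uniqueness statement. This requires checking that $B$ inherits from $A$ all the regularity hypotheses needed in \cite{Rl} (stable rank one, well-behaved $Cu^{\sim},$ and the absorption properties in the existence half of the classification). All of these follow from the fact that $A\otimes{\cal K}\cong B\otimes{\cal K}$ (so the abstract $Cu^{\sim}$ theory is identical) together with the properties already recorded for $A$ in the present section; nevertheless, the bookkeeping to translate between $A\otimes{\cal K}$ and $B$ (via Brown's theorem and compression to $b$) is the step that must be carried out with care.
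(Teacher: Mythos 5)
Your reduction of the invariant is fine (fullness from simplicity, Brown's theorem, trivial $K$-theory, stable rank one, and the identification of $(\tilde T(B),\Sigma_B)$ with a point of Tsang's range via \ref{Ctsang}), but the final step contains the real gap, and it is exactly the content of the proposition you are proving. Robert's Theorem 1.0.1 in \cite{Rl} has its hypotheses on the \emph{domain} algebra: existence and uniqueness of homomorphisms realizing a $Cu^{\sim}$-morphism are proved for maps \emph{out of} an inductive limit of $1$-dimensional NCCW complexes (with trivial $K_1$) \emph{into} an algebra of stable rank one. Knowing $B\otimes{\cal K}\cong A\otimes{\cal K}$ transfers the invariant $Cu^{\sim}$ (a stable functor), but it does not place $B$ in the domain class; "hereditary \SCA\, of an inductive limit of $1$-dimensional NCCW complexes" is not a hypothesis of that theorem, and asserting that it suffices is precisely the closure statement \ref{PM0her} asserts. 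Concretely, with what you have established you only get the existence half applied to the Tsang model: a homomorphism $\phi\colon B'\to B$ with $Cu^{\sim}(\phi)$ the given scale-preserving isomorphism (legitimate, since $B'\in{\cal M}_0$ and $B$ has stable rank one). That yields an embedding with full, scale-matching image, not an isomorphism: a map inducing an isomorphism on $Cu^{\sim}$ need not be surjective, and the two-sided Elliott intertwining you would need requires the uniqueness theorem for maps \emph{out of} $B$, which is unavailable until $B$ is known to be such an inductive limit.

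The paper itself offers no argument (the proposition is attributed to \cite{Rl}), so the fix is either to quote from \cite{Rl} the precise statement that the class in question is closed under passing to hereditary subalgebras (in which case your Tsang-model detour is unnecessary), or to prove that closure: approximate a strictly positive element $b$ of $B$ inside the building blocks $A_n$, use the perturbation fact recorded in the proof of \ref{str=1}(3) (a hereditary \SCA\, of an algebra in ${\cal C}$ contains, within any $\ep$, a hereditary \SCA\, that is again in ${\cal C}$) to see that $B$ is locally approximated by subalgebras in ${\cal C}$, and then use semiprojectivity of these building blocks to assemble an inductive-limit decomposition of $B$, checking afterwards that the blocks can be arranged to lie in ${\cal C}_0^0$ and that the connecting maps can be chosen injective and strictly-positive-element preserving as ${\cal M}_0$ demands. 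As written, the proposal assumes the hard point rather than proving it.
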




\section{ Non-unital version of  some results of Winter}

In the following statement, the words ``every hereditary \SCA\, $B$"  may be replaced
by each $B$ has "non-unital approximate divisible property".

\begin{lem}\label{LWL1}
Let $A$ be a  separable simple \CA\, which almost has stable rank one which  has continuous scale and
has strong strict comparison for positive element.
Suppose that  there is $1>\eta>0$ such that every   hereditary \SCA\, $B$ has the following property:

Let  $a\in B$ be a strictly positive element
with $\|a\|=1.$  Suppose that
for any $\ep>0,$  any
finite subset ${\cal F}\subset A$ and any $b\in A_+\setminus \{0\}$
there are ${\cal F}$-$\ep$-multiplicative \cpc s $\phi: B\to B$ and  $\psi: B\to D$  for some
\SCA\, $D\subset B$ such that
\beq\label{LWLtr1div-1}
&&\|x-\diag(\phi(x), \psi(x))\|<\ep\rforal x\in {\cal F}\cup \{a\},\\\label{LWLn-2}
&&D\in {\cal C}_0' (\in {\cal C}_0^{0'}),\\\label{LWLn-3}
&&\tau(\psi(a))\ge \eta\tforal \tau\in T(B),\\\label{LWLn-4}
&&f_{1/4}(\psi(a))\,\,\,
is\,\,\, full\,\,\, in
\,\, D \tand
\eneq
$\psi(a)$ is strictly positive in $D.$

  Then $A\in {\cal D}$ ( or ${\cal D}_0$).

\end{lem}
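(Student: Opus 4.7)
The plan is to promote the hypothesis — which provides a tracially substantial $\psi$ but no Cuntz-smallness of the $\phi$-part — to the full definition of ${\cal D}$ (or ${\cal D}_{0}$) by iterating the hypothesis along a descending chain of hereditary \SCA s. Fix a strictly positive element $a\in A_+$ with $\|a\|=1$. Since $A$ has continuous scale, $T(A)$ is compact (cf.\ \ref{Pconscale}); combined with strong strict comparison and the fullness of any $b\in A_+\setminus\{0\}$, this gives $d_0:=\inf_{\tau\in T(A)}d_\tau(b)>0$, so it suffices to arrange $\sup_\tau d_\tau(\phi(a))<d_0$. I therefore choose an integer $n$ with $(1-\eta)^n<d_0/2$ and positive tolerances $\ep_1,\ldots,\ep_n$ with $\sum\ep_k<\ep$.

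Set $A_{[0]}:=A$, $e_{[0]}:=a$, ${\cal F}_{[0]}:={\cal F}\cup\{a\}$. Inductively at step $k$, apply the hypothesis to the hereditary \SCA\ $A_{[k-1]}$ with strictly positive element $e_{[k-1]}$, tolerance $\ep_k$, and finite subset ${\cal F}_{[k-1]}$, obtaining c.p.c.\ maps $\phi_{[k]}:A_{[k-1]}\to A_{[k-1]}$ and $\psi_{[k]}:A_{[k-1]}\to D_{[k]}\subset A_{[k-1]}$ satisfying \eqref{LWLtr1div-1}--\eqref{LWLn-4}. Put $e_{[k]}:=\phi_{[k]}(e_{[k-1]})$, $A_{[k]}:=\overline{e_{[k]}\,A\,e_{[k]}}$ (so $A_{[k]}\perp D_{[k]}$), and ${\cal F}_{[k]}:=\phi_{[k]}({\cal F}_{[k-1]})\cup\{e_{[k]}\}$. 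Since $D_{[j]}\perp A_{[j]}\supset D_{[j+1]}\oplus\cdots\oplus D_{[n]}$, the algebras $D_{[1]},\ldots,D_{[n]}$ are pairwise orthogonal, and because ${\cal C}_0'$ (resp.\ ${\cal C}_0^{0'}$) is closed under finite direct sums, $D:=D_{[1]}\oplus\cdots\oplus D_{[n]}$ lies in that class. I then define $\phi:=\phi_{[n]}\circ\cdots\circ\phi_{[1]}$ and $\psi(x):=\sum_{k=1}^{n}\psi_{[k]}\bigl((\phi_{[k-1]}\circ\cdots\circ\phi_{[1]})(x)\bigr)$; telescoping the one-step bounds yields the desired $\ep$-decomposition on ${\cal F}\cup\{a\}$.

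For the Cuntz estimate $\phi(a)\lesssim b$, I would track $c_k:=d_\tau(e_{[k]})=\|\tau|_{A_{[k]}}\|$ for $\tau\in T(A)$. When $c_{k-1}>0$, normalising $\tau|_{A_{[k-1]}}/c_{k-1}$ to a tracial state of $A_{[k-1]}$ and invoking \eqref{LWLn-3} gives $\tau(\psi_{[k]}(e_{[k-1]}))\geq\eta\,c_{k-1}$; together with the approximate orthogonal decomposition $e_{[k-1]}\approx e_{[k]}+\psi_{[k]}(e_{[k-1]})$ inside $A_{[k-1]}$ and additivity of $d_\tau$ on orthogonal sums, this yields $c_k\leq(1-\eta)c_{k-1}+O(\ep_k)$. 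With $\ep_k$ chosen small enough, iteration gives $c_n<d_0\leq d_\tau(b)$ uniformly in $\tau\in T(A)$, and strong strict comparison then produces $\phi(a)\lesssim b$.

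The subtle point will be the $T(D)$-side trace bound \eqref{DNtrdiv-4}, since the hypothesis controls traces on $T(A_{[k-1]})$ whereas $D_{[k]}\subset A_{[k-1]}$ is only a \SCA, not a hereditary \SCA, so tracial states on $D_{[k]}$ do not automatically extend upwards. My plan is to pass to the hereditary closure $\widetilde D_{[k]}:=\overline{\psi_{[k]}(e_{[k-1]})\,A_{[k-1]}\,\psi_{[k]}(e_{[k-1]})}$: since $\psi_{[k]}(e_{[k-1]})$ is strictly positive in $D_{[k]}$ and $f_{1/4}(\psi_{[k]}(e_{[k-1]}))$ is full in $D_{[k]}$, each $\tau\in T(D_{[k]})$ extends canonically to a tracial state of $\widetilde D_{[k]}$ and then, using simplicity of $A_{[k-1]}$ and continuous scale to normalise, to a state in $T(A_{[k-1]})$ on which \eqref{LWLn-3} returns $\tau(\psi_{[k]}(e_{[k-1]}))\geq\eta$. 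A standard truncation — trading $\psi_{[k]}(e_{[k-1]})$ for its $f_{1/4}$-truncation at the price of a controlled multiplicative constant — should then furnish a uniform $\mathfrak{f}_a$ (e.g.\ $\mathfrak{f}_a=\eta/2$), valid across all summands $D_{[k]}$ and hence across all convex combinations in $T(D)$.
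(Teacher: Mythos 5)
Your iteration scheme is essentially the one the paper uses: apply the hypothesis repeatedly along the descending chain $A\supset A_{[1]}\supset A_{[2]}\supset\cdots$, collect the orthogonal algebras $D_{[1]},\dots,D_{[n]}$ into $D=\bigoplus_k D_{[k]}$, compose the $\phi$'s, and use strong strict comparison once $d_\tau$ of the leftover corner has dropped below $\inf_\tau d_\tau(b)$. (Two small remarks on that part: the paper additionally arranges, via \ref{Lbt1ACT} and \ref{Subcontsc}, that each successive hereditary algebra has continuous scale before re-applying the hypothesis; and your claimed inequality $c_k\le(1-\eta)c_{k-1}+O(\ep_k)$ is justified incorrectly, since $d_\tau$ is not norm-continuous, but it is repairable and in fact cleaner than you state: $e_{[k]}+\psi_{[k]}(e_{[k-1]})$ lies in $A_{[k-1]}$ and the two summands are orthogonal, so $d_\tau(e_{[k]})+d_\tau(\psi_{[k]}(e_{[k-1]}))\le d_\tau(e_{[k-1]})$ exactly, and $d_\tau(\psi_{[k]}(e_{[k-1]}))\ge\tau(\psi_{[k]}(e_{[k-1]}))\ge\eta\,\|\tau|_{A_{[k-1]}}\|=\eta\, d_\tau(e_{[k-1]})$ by normalizing $\tau|_{A_{[k-1]}}$ and invoking \eqref{LWLn-3}.)

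The genuine gap is exactly the point you flag as ``subtle'': the $T(D)$-side bound \eqref{DNtrdiv-4}. Your proposed fix does not work. The algebra $D_{[k]}$ is not a hereditary subalgebra of $A_{[k-1]}$ (membership in ${\cal C}_0'$ says it is a full hereditary subalgebra of some algebra in ${\cal C}_0$, not of $A_{[k-1]}$), so a tracial state of $D_{[k]}$ has no canonical extension to a trace on $\overline{\psi_{[k]}(e_{[k-1]})A_{[k-1]}\psi_{[k]}(e_{[k-1]})}$, let alone to $A_{[k-1]}$; and even if some tracial extension existed, hypothesis \eqref{LWLn-3} only controls traces that come from $T(A_{[k-1]})$ restricted downwards, so after normalizing an extension the lower bound $\eta$ is divided by the uncontrolled norm of the restriction, and no uniform $\mathfrak{f}_a$ (certainly not $\eta/2$) comes out. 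The paper avoids this entirely: its iteration only verifies the TA${\cal C}_0'$ conditions of \ref{Dtr1div} --- the relevant output being that $f_{1/4}(\Psi(a))$ is \emph{full} in $D$, which does follow componentwise from \eqref{LWLn-4} --- and then quotes Theorem \ref{T1D0} (TA${\cal C}_0'$ plus quasi-compactness implies membership in ${\cal D}$), whose proof is where the uniform $T(D)$-trace bound is actually produced, by fixing a fullness decomposition of $f_{1/4}(a)$ in $A$ with uniformly bounded data, pushing it through the approximately multiplicative maps, and running a weak-$*$ compactness/limit-trace argument. Unless you either invoke \ref{T1D0} at the end or reproduce an argument of that type, your proof does not close; the trace-extension step as written is the missing (and wrong) link.
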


\begin{proof}
Let $1>\eta_1>\eta.$
Let $a_0\in A$ be a strictly positive  element with $\|a_0\|=1$
such that
$\tau(a_0)>\max\{3/4, \eta_1\}.$  Let $b_0\in A_+\setminus \{0\}$ with $\|b_0\|=1.$

Fix $\ep>0$ and finite subset ${\cal F}\subset A^{\bf 1}.$
Choose some $g\in C_0((0,1])$ and let $a_1=g(a_0)$ such that
$a_1\ge a_0$ and
\vspace{-0.1in}\beq\label{LWL1-1-}
\|a_1xa_1-x\|<\ep/64\rforal x\in {\cal F}.
\eneq

Let ${\cal F}_1$ be a finite subset containing ${\cal F}\cup\{a_i, f_{1/4}(a_i), i=0,1\}$
and let $\dt_1>0$ with $\dt_1<\ep/16.$

By the assumption,   there are ${\cal F}_1$-$\dt_1$-multiplicative \cpc s $\phi_1: A\to A$ and  $\psi_1: A\to D_1$  for some
\SCA\, $D_1\subset B$ such that
\beq\label{LWL-1}
&&\|x-\diag(\phi_1(x), \psi_1(x))\|<\dt_1/32\rforal x\in {\cal F}_1,\\\label{LWL-2}
&& D_1\in {\cal C}_0^{0'} (\in {\cal C}_0'),\\\label{LWL-3}
&&\tau(\psi_1(a_0))\ge \eta\tforal \tau\in T(B),\\\label{LWL-4}
&&f_{1/4}(\psi_1(a_0))\,\,\,
is\,\,\, full\,\,\, in
\,\, D_1 \andeqn
\eneq
$\psi(a_0)$ is strictly positive in $D_1.$

By \eqref{LWL-3},
$$
\|\psi_1\|\ge \eta.
$$
It follows from the last part of the proof of \ref{PD0=tad} that  we may also assume
that
$$
\|\psi_1(x)\|\ge (1-\dt_1/4)\|x\|\rforal x\in {\cal F}.
$$

Put ${\cal F}_2'=\{\phi_1(a): a\in {\cal F}_1\}.$
By choosing a sufficiently small $\dt_1,$
we may assume
that
\beq\label{LWL-10}
\|\phi_1(a_1)\phi_1(x)\phi_1(a_1)-\phi_1(x)\|<\ep/64\rforal x\in {\cal F}\cup\{a_0\}.
\eneq
Therefore, for some $\sigma>0,$
\beq\label{LWL-11}
\|f_{\sigma}(\phi_1(a_1))\phi_1(x)f_{\sigma}(\phi_1(a_1))-\phi_1(x)\|<\ep/32\rforal x\in {\cal F}\cup\{a_0\}.
\eneq
By \ref{Lbt1ACT}, there exists $0\le e\le 1$ such that
\beq\label{LWL-12}
f_{\sigma}(\phi_1(a_1))\le e\le f_{\sigma'}(\phi_1(a_1))
\eneq
 and $d_\tau(e)$ is continuous on $\overline{T(A)}^w,$
 where $0<\sigma' <\sigma/2.$
Define
$\phi_1': A\to A$ by
\beq\label{LWL-13}
\phi_1'(a)=e^{1/2}\phi_1(a)e^{1/2}\rforal a\in A.
\eneq
We also have, by \eqref{LWL-12},
\beq\label{LWL-14}
e^{1/2}((\phi_1(a_1)-\sigma'/2)_+)e^{1/2}\le e^{1/2}\phi_1(a_1)e^{1/2}\le e.
\eneq
But
\beq\label{LWL-14+}
e&=&e^{1/2}f_{\sigma'/2}(\phi_1(a_1))e^{1/2}\le   e^{1/2}( (2/\sigma')(\phi_1(a_1)-\sigma'/2)_+)e^{1/2}\\
&=&(2/\sigma')(e^{1/2}((\phi_1(a_1)-\sigma'/2)_+)e^{1/2}).
\eneq
Combining these two inequalities, we conclude that
$d_\tau(\phi_1'(a_1))=d_\tau(e)$ for all $\tau\in T(A).$
In particular,
$\phi_1'(a_1)$  is continuous on $T(A).$
Note that, by the choice of ${\cal F}_1$ and $\eta,$
\beq\label{LWL-15}
\|\phi_1'(a_1)-\phi_1(a)\|<\ep/16\rforal a\in {\cal F}\cup\{a_0\}.
\eneq

Therefore,  \wilog,  we may assume that, in addition to  \eqref{LWL-1} to \eqref{LWL-4},
$\overline{\phi_1(a_1)A\phi_1(a_1)}$ has continuous scale and
let $B_1=\overline{\phi_1(a_1)A\phi_1(a_1)}.$
So we can apply these conditions to $B_1$ and $\phi_1({\cal F})$ (using $\ep/32^2$).
Therefore the process continues.
Let $k\ge 1$ such that
$$
(1-\eta)^k<\inf\{d_\tau(b_0): \tau\in T(A)\}.
$$

Then we can stop at the stage $k.$
That way, we obtain hereditary \SCA s $B_1,B_2,...,B_k,$
and \SCA s $D_1, D_2,...,D_k$ such that
$B_{i+1}\subset B_i,$ $B_i\perp D_i,$ $D_{i+1}\subset B_i,$
$D_i\in {\cal C}_0'$ (or ${\cal C}_0^{0'}$), ${\cal F}_i$-$\dt_i$-multiplicative
\cpc s $\phi_{i+1}: B_i\to B_{i+1}$ and $\phi_{i+1}: B_i\to D_{i+1}$
such that
\beq\label{LWL-20}
\|x-\diag(\phi_{i+1}(x), \psi_{i+1}(x))\|&<&\ep/32^{i+1}\rforal x\in \phi_i({\cal F}),\\
\tau(\psi_{i+1}(a_0)) &\ge& \eta\rforal \tau\in T(B_i),
\eneq
$f_{1/4}(\psi_{i+1}(a_0))$ is full in $D_{i+1}$ and $B_{i+1}$ has continuous scale,
$i=1,2,...,k-1.$
Now $D=\bigoplus_{i=1}^kD_i$ and let
$\Psi: A\to D$ be defined by
$$
\Psi(a)=\diag(\psi_1(a), \psi_2(a),...,\psi_k(a))\rforal a\in A.
$$
We have, by \eqref{LWL-20},
\beq
\|x-\diag(\Phi(x), \Psi(x))\|<\ep\rforal x\in {\cal F},\\
\|\Psi(x)\|\ge (1-\ep)\|x\|\rforal x\in {\cal F},
\eneq
$f_{1/4}(\Psi(a_0))$ is full in $D$ and $\Psi(a_0)$ is a strictly positive element of $D,$
where
$\Phi: A\to B_k$ is defined by
$\Phi=\phi_k\circ \phi_{k-1}\circ \cdots \phi_1.$

We compute that, if $b_k\in B_k$ is a strictly positive element for $B_k$ with $\|b_k\|=1,$
then
$$
d_\tau(b_k)\le (1-\eta)^k\tforal \tau\in T(A).
$$
This implies that
$$
b_k\lesssim b_0,
$$
since $A$ is assumed to have strong strict comparison for positive elements.

Note that $A$ has continuous scale,  in particular, it is quasi-compact. It follows from the above
and \ref{T1D0} that $A\in {\cal D}$ (or in ${\cal D}_0$).
\end{proof}

The following is a  non-unital variation of a result of W. Winter.
\begin{prop}\label{WinterP1}
Let $A$ be a non-unital separable simple \CA\, with strong strict comparison
for positive elements.
Let $F$ be a finite dimensional $C^*$-algebra and let
$\phi: F\to A$ and
$\phi_i: F\to A$ ($i\in \N$) be order zero \cpc s such that,
for each $x\in F_+$ and $f\in C_0((0,1])_+,$
\beq
&&\lim_{i\to\infty}\sup\{|\tau(f(\phi(x)))-\tau(f(\phi_i(x))|: \tau\in T(A)\}=0\tand\\
&&\limsup_{i\to\infty}\|f(\phi_i(x))\|\le \|f(\phi(x))\|.
\eneq
Then there are $s_i\in (M_4\otimes A)^{\bf 1},$ $i\in \N,$ such that,
for all $y\in F_+,$
\beq\label{WP1-3}
&&\lim_{i\to\infty}\|s_i(1_4\otimes \phi(y))-(e_{11}\otimes \phi_i(y))s_i\|=0\tand\\\label{WP1-4}
&&\lim_{i\to\infty}\|(e_{11}\otimes \phi_i(y))s_is_i^*-(e_{11}\otimes \phi_i(y))\|=0.
\eneq

\end{prop}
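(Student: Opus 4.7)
The plan is to adapt Winter's argument (from his work on $\mathcal{Z}$-stability and nuclear dimension of simple \CA s) to the present non-unital, stably projectionless setting, replacing unital hypotheses by considerations in appropriate hereditary \SCA s. First I would reduce to the case $F=M_n$: since an arbitrary finite dimensional $F$ decomposes as a direct sum $\bigoplus_k M_{n_k}$ of full matrix blocks, and the maps $\phi, \phi_i$ restrict to order zero \cpc s on each block into mutually orthogonal hereditary \SCA s, the individual elements $s_i$ may be assembled orthogonally in $M_4\otimes A$. Using the structure theorem for order zero maps, for $F=M_n$ I would write $\phi(e_{jk})=\pi(e_{jk})\,h$ with $h=\phi(1_F)\in A_+$ and $\pi\colon M_n\to M(\overline{hAh})$ a $*$-homomorphism commuting with $h$; analogously write $\phi_i(e_{jk})=\pi^{(i)}(e_{jk})\,h_i$ with $h_i=\phi_i(1_F)$.

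Next I would translate the tracial convergence hypothesis into Cuntz subequivalence. For any $\varepsilon>0$, a suitable choice of continuous $f\in C_0((0,1])_+$ together with the uniform tracial convergence and compactness of $\overline{T(A)}^w$ yields
\[
d_\tau\bigl((h-\varepsilon)_+\bigr) \;<\; d_\tau\bigl((h_i-\varepsilon/2)_+\bigr)
\]
for all $\tau\in\overline{T(A)}^w$ and all sufficiently large $i$. Strong strict comparison then gives $(h-\varepsilon)_+\lesssim(h_i-\varepsilon/2)_+$ in $A$. Since $A$ almost has stable rank one (by \ref{NTstr1pre}, using the divisibility-type property that, combined with strong strict comparison, is available in this setting), \ref{Lalmstr1} produces $x_i\in A$ with $x_i^*x_i=(h-\varepsilon)_+$ and $x_ix_i^*\in\overline{(h_i-\varepsilon/2)_+ A (h_i-\varepsilon/2)_+}$.

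I would then build $s_i$ by combining $x_i$ with the matrix unit data from the two order zero decompositions. Using the room provided by $M_4$ (which allows the matrix units $\pi(e_{1j})$ on the domain side and $\pi^{(i)}(e_{j1})$ on the codomain side to be placed into disjoint corners), one sets, schematically,
\[
s_i \;=\; \sum_{j=1}^{n} \bigl(e_{11}\otimes \pi^{(i)}(e_{j1})\bigr)\cdot \bigl(e_{11}\otimes x_i\bigr)\cdot \bigl(e_{jj}\otimes \pi(e_{1j})\bigr),
\]
after suitable truncation so that every factor lies in $A$ (rather than only in the multiplier algebra); this uses that $\pi(e_{1j})\,h^{1/2}\in A$ and $\pi^{(i)}(e_{j1})\,h_i^{1/2}\in A$. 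A direct computation, relying on $\pi(e_{jk})h=h\pi(e_{jk})$ and the analogous relation for $\pi^{(i)}$, then shows that $s_i$ approximately intertwines $\phi$ and $\phi_i$ in the sense of \eqref{WP1-3}, while $s_is_i^*$ is approximately a left unit for each $\phi_i(y)$, giving \eqref{WP1-4}. Letting $\varepsilon\to 0$ along a suitable subsequence and diagonalizing yields the claimed asymptotic statement.

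The main obstacle will be the bookkeeping needed to pass from the multiplier-algebra matrix units $\pi(e_{jk})$ and $\pi^{(i)}(e_{jk})$ to elements of $A$ itself while preserving the intertwining up to arbitrarily small error: one must simultaneously approximate the open support projection of $h$ by cutoffs $f_\delta(h)$, verify that the approximate support projections are almost central for the matrix units (a consequence of the order zero relation $\pi(e_{jk})h=h\pi(e_{jk})$), and check that the resulting $s_i$ remains a contraction in $M_4\otimes A$. Combined with the fact that only $(h-\varepsilon)_+$ (rather than $h$) is implemented by $x_i^*x_i$, this yields only approximate intertwining for each fixed $\varepsilon>0$, and a diagonal procedure in $(\varepsilon,i)$ is required to extract the final sequence $\{s_i\}$.
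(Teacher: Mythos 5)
There is a genuine gap, and it sits exactly at the heart of the statement. Your one-shot comparison $(h-\varepsilon)_+\lesssim (h_i-\varepsilon/2)_+$, yielding $x_i$ with $x_i^*x_i=(h-\varepsilon)_+$ and $x_ix_i^*\in\overline{(h_i-\varepsilon/2)_+A(h_i-\varepsilon/2)_+}$, only matches \emph{supports at a single cut level}. Writing $x_i=v|x_i|=v(h-\varepsilon)_+^{1/2}$, nothing in this data relates $vhv^*$ to $h_i$, so the intertwining \eqref{WP1-3} — which is a statement about the elements $\phi(y),\phi_i(y)$ themselves (equivalently about all of their functional calculus), not about Cuntz classes of one cutoff — does not follow from it. Moreover the direction of your comparison is the wrong one for \eqref{WP1-4}: that condition forces $s_is_i^*$ to be an approximate unit for $\overline{h_iAh_i}$, so what must be compared is a cutoff $g_\delta(h_i)$ \emph{into} (four copies of) cutoffs of $h$, i.e. into $1_4\otimes h$; with your $x_i$ the left support $x_ix_i^*$ lies inside $\overline{h_iAh_i}$ but need not come close to filling it (its trace is about $d_\tau((h-\varepsilon)_+)$ and misses the mass of $h_i$ below $\varepsilon$), so $(e_{11}\otimes h_i)s_is_i^*\approx e_{11}\otimes h_i$ fails for fixed $\varepsilon$. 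This is also why your outline never uses the hypothesis $\limsup_i\|f(\phi_i(x))\|\le\|f(\phi(x))\|$, nor the amplification by $M_4$ — both are essential in the correct argument. The paper's proof of the $F=\C$ case is simply Winter's Lemma 2.1 of \cite{Wccross}, which it quotes as working verbatim in the non-unital setting; the content of that proof (which you would have to reproduce) is a comparison carried out window-by-window along a partition of the spectrum of $h,h_i$, applying the tracial hypothesis to each member of a partition of unity in $C_0((0,1])$ and using strict comparison with the extra summands of $1_4$ to absorb the errors, and then summing the partial intertwiners so that $h$ and $h_i$ are matched spectrally, not just in support. That spectral-window matching is precisely what your construction omits and what cannot be recovered from a single comparison at level $\varepsilon$.

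Two secondary points. First, your appeal to almost stable rank one via \ref{NTstr1pre} is not justified under the stated hypotheses: the proposition assumes only strong strict comparison, not the tracial divisibility hypothesis of \ref{NTstr1pre}; this is repairable (one can work with R{\o}rdam's Lemma \ref{Lrorm} instead of \ref{Lalmstr1}), but as written it imports an unavailable assumption. Second, the schematic formula $s_i=\sum_j (e_{11}\otimes \pi^{(i)}(e_{j1}))(e_{11}\otimes x_i)(e_{jj}\otimes \pi(e_{1j}))$ collapses, since $e_{11}e_{11}e_{jj}=0$ in $M_4$ for $j\neq 1$ (and $j$ would have to range up to $n>4$ in general); in the paper's reduction from $M_n$ to the corner $\eta_{11}M_n\eta_{11}\cong\C$, the cutting elements $\psi_i(g_i\otimes\eta_{1j})^*$ and $\psi(g_i\otimes\eta_{1j})$ are elements of $A$ supplied by the cone homomorphisms of the order zero maps, and the $M_4$ tensor leg is left untouched, which is what makes the summed element $S_i$ again a contraction with the right intertwining properties.
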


\begin{proof}
Since $\phi$ and $\phi_i$ are order zero maps, it is easy to see
that we may reduce the general case to the case
that $F=M_n$ for some integer $n\ge 1.$

We first consider the case that $F=\C.$
Write $h=\phi(1_\C)$ and $h_i=\phi_i(1_\C),$ $i\in \N.$

Then exactly the same proof as in the proof of 2.1 of \cite{Wccross}, without changing a single symbol, provides
the elements $s_i$ such that \eqref{WP1-3} and \eqref{WP1-4} hold.
It should be noted that, for any $x\in A,$ $1_4\otimes x\in M_4(A)$ and
$e_{11}\otimes x\in M_4(A).$

The general case can also be reduced to this case in the non-unital case.
 In fact, since $\phi$ and $\phi_i$ are order zero maps, it is easy to see
that we may reduce the general case to the case
that $F=M_n$ for some integer $n\ge 1.$

By 1.2 of \cite{Wann05},  there are \hm  s  $\psi, \psi_i: C_0((0,1], F)\to A$
such that $\phi(x)=\psi(\iota\otimes x)$ and
$\phi_i(x)=\psi_i(\iota\otimes x)$ for all $x\in F,$ where
$\iota(t)=t$ for all $t\in [0,1].$

Let us use $\{\eta_{kj}\}$ for the matrix unit for $M_n.$
Consider $\phi^{(1)}=\phi|_{\eta_{11}M_n\eta_{11}}$ and
$\phi_i^{(1)}=\phi_i|_{\eta_{11}M_n\eta_{11}},$ $i\in \N.$

Let $s_i$ be the sequence satisfies \eqref{WP1-3} and \eqref{WP1-4}
for $F=\eta_{11}M_n\eta_{11}.$
We may assume that
\beq\label{WP1-6}
&&\lim_{i\to\infty}\|s_i(1_4\otimes \phi(y))-(e_{11}\otimes \phi_i(y))s_i\|<1/n^22^{i+1}\tand\\\label{WP1-7}
&&\lim_{i\to\infty}\|(e_{11}\otimes \phi_i(y))s_is_i^*-(e_{11}\otimes \phi_i(y))\|<1/n^22^{i+1}
\eneq
for all $y\in \eta_{11}M_n\eta_{11}$ with $\|y\|\le 1.$
For each $i,$ there is $g_i\in C_0((0,1])$ with $0\le g_i \le 1$ such that
\beq\label{WP1-6+4}
\|\iota g_i^4-\iota \|<1/n^22^{i+2}.
\eneq
Note that, for any $g\in C_0((0,1]),$
\beq\label{WP1-6+2}
1_4\otimes \psi(g\otimes \eta_{11})=1_4\otimes \psi(g(\iota\otimes \eta_{11}))=g(1_4\otimes \phi(\eta_{11}))\andeqn\\
\label{WP1-6+3}
e_{11}\otimes \psi_i(g\otimes \eta_{11})=e_{11}\otimes g(\psi_i(\iota\otimes \eta_{11}))=g(e_{11}\otimes \phi_i(\eta_{11}).
\eneq

For each $i,$ $k(i)\ge i$ such that
\beq\label{WP1-6+1}
\|s_{k(i)}(1_4\otimes \psi(g_i^2\otimes \eta_{11}))s_{k(i)}^*-(e_{11}\otimes \psi_{k(i)}(g_i^2\otimes \eta_{11})\|<1/n 2^{i+1}
\eneq
To simplify the notation, \wilog, we may assume that $k(i)=i.$
Define
$$
S_i=\sum_{j=1}^n\psi_i(g_i\otimes \eta_{1j})^*s_i(1_4\otimes \psi(g_i\otimes \eta_{1j})),\, i\in \N.
$$
Then $S_i\in M_4(A),$ $i\in \N.$ Moreover,
\beq\label{WP1-8}
S_iS_i^*=\sum_{j=1}^n\psi_i(g_i\otimes \eta_{1j})^*s_i(1_4\otimes \psi(g_i\otimes \eta_{1j})))(1_4\otimes \psi(g_i\otimes \eta_{1j}))^*s_i^*\psi(g_i\otimes \eta_{1j})\\
\le \sum_{j=1}^n\psi_i(g_i\otimes \eta_{1j})^*s_is_i^*\psi_i(g_i\otimes \eta_{1j})\\
\le \sum_{j=1}^n\psi_i(g_i\otimes \eta_{1j})^*\psi_i(g_i\otimes \eta_{1j})\\
\le \sum_{j=1}^n\psi_i(g_i^2\otimes \eta_{jj})\le \psi_i(g_i^2\otimes 1_{M_n})
\eneq
Therefore $S_i\in M_4(A)^{\bf 1},$ $i\in\N.$
Let $x=(a_{kj})_{n\times n}\in M_n$ such that $|a_{kj}|\le 1.$
We compute that
\beq\label{WP1-9}
\hspace{-0.4in}S_i(1_4\otimes \phi((a_{kj})))&=&\sum_{j=1}^n\psi_i(g_i\otimes \eta_{1j})^*s_i(1_4\otimes \psi(g_i\otimes \eta_{1j}))(1_4\otimes \psi((\iota\otimes (a_{kj})))\\\nonumber
&=& \sum_{j=1}^n \psi_i(g_i\otimes \eta_{1j})^*s_i(\sum_{k=1}^n(1_4\otimes \psi(\iota\otimes a_{jk}\otimes \eta_{11})\psi(g_i\otimes \eta_{1k}))\\\nonumber
&\approx_{1/2^{i+1}}&  \sum_{j=1}^n \psi_i(g_i\otimes \eta_{1j})^*\sum_{k=1}^n(e_{11}\otimes \psi_i(\iota\otimes a_{jk}\cdot\eta_{11})s_i(1_4\otimes \psi(g_i\otimes \eta_{1k}))\\\nonumber
&=& \sum_{j=1}^n \sum_{k=1}^n(e_{11}\otimes \psi_i(\iota\otimes a_{jk}\otimes \eta_{jk})\psi_i(g_i\otimes \eta_{k1})s_i(1_4\otimes \psi(g_i\otimes \eta_{1k})))\\\nonumber
&=&\sum_{k=1}^n(e_{11}\otimes \psi_i(\iota\otimes (\sum_{j=1}^n( a_{jk}\cdot \eta_{jk})))\psi_i(g_i\otimes \eta_{k1})s_i(1_4\otimes \psi(g_i\otimes \eta_{1k})))\\\nonumber
&=&\sum_{k=1}^n(e_{11}\otimes \psi_i(\iota\otimes (a_{st})_{n\times n})\psi_i(g_i\otimes \eta_{k1})s_i(1_4\otimes \psi(g_i\otimes \eta_{1k})))\\\nonumber
&=&e_{11}\otimes \psi_i(\iota\otimes (a_{st})_{n\times n}))(\sum_{k=1}^n\psi_i(g_i\otimes \eta_{k1})s_i(1_4\otimes \psi(g_i\otimes \eta_{1k})))\\\nonumber
&=&e_{11}\otimes \phi((a_{kj})_{n\times n})S_i.
\eneq

By \eqref{WP1-6+1} and \eqref{WP1-6+4},
\beq\label{WP1-11}
e_{11}\otimes \phi_i((a_{ij}))S_iS_i^*&=&\\\nonumber
&&\hspace{-1.2in}e_{11}\otimes \phi_i((a_{ij}))\sum_{j=1}^n\psi_i(g_i\otimes \eta_{1j})^*s_i(1_4\otimes \psi(g_i\otimes \eta_{1j})))(1_4\otimes \psi(g_i\otimes \eta_{1j}))^*s_i^*(\psi_i(g_i\otimes \eta_{1j})\\\nonumber
&=&e_{11}\otimes \phi_i((a_{ij}))\sum_{j=1}^n\psi_i(g_i\otimes \eta_{1j})^*s_i(1_4\otimes \psi(g_i^2\otimes \eta_{11})))s_i^*(\psi_i(g_i\otimes \eta_{1j})\\\nonumber
&\approx_{1/2^{i+1}}&e_{11}\otimes \phi_i((a_{ij}))\sum_{j=1}^n\psi_i(g_i\otimes \eta_{1j})^*\psi_i(g_i^2\otimes \eta_{11})))(\psi_i(g_i\otimes \eta_{1j}))\\\nonumber
&=&e_{11}\otimes \phi_i((a_{ij}))(\sum_{j=1}^n\psi_i(g_i^4\otimes \eta_{jj}))\\\nonumber
&\approx_{1/2^{i+1}}& e_{11}\otimes \phi_i((a_{ij})).
\eneq
\end{proof}

The following is a non-unital variation of Proposition 3.2 of \cite{Wnz}.

\begin{prop}[\cite{Wnz}]\label{WP2}
Let $A$ be a non-unital separable simple \CA\, with ${\rm dim}_{nuc} A\le m<\infty$
 and with strictly positive element $a\in A$ with $0\le a\le 1.$
Suppose that
$$
(F_n, F_n^{(1)}, F_n^{(2)},...,F_n^{(m)}, \psi_n, \psi_n)_{ n\in \N}
$$
is a system, where $F_n, F_n^{(i)}$ are finite dimensional \CA s with
$F_n=\bigoplus_{i=1}^m F_n^{(i)},$
$$
\psi_n: A\to F_n,\,\,\, \phi_n\circ \psi_n: A\to A
$$
are \cpc s,
$$ \phi_n: F_n\to A$$
are completely positive maps,
$\phi_n^{(i)}=\phi_n|_{F_n^{(i)}}$ are order zero \cpc s,  such that
\beq\label{WP2-1}
\lim_{n\to\infty}\|\phi_n\circ \psi_n(b)-b\|=0\tforal a\in A.
\eneq
Then, for any $\ep>0$ and $b\in \overline{f_{\ep}(a)Af_{\ep}(a)},$
\beq\label{WP2-2}
&&\lim_{n\to\infty}\|{[}\phi_n\circ \psi_n(b),\, \phi_n^{(i)}\circ \psi_n^{(i)}(g(a)){]}\|=0\tand\\\label{WP2-2+}
&&\lim_{n\to\infty}\|b\phi_n^{(i)}\circ \psi_n^{(i)}(g(a))-\phi_n^{(i)}\circ \psi_n^{(i)}(b)\|=0,
\eneq
where $\psi_n^{(i)}: A\to F_n^{(i)}$ is the composition of $\psi_n$ with
the projection map on the $i$-th summand $F_n^{(i)},$ $i=1,2,...,m$ and
$n\in \N,$ and where
$g\in C_0((0,1])$ is a function such that $0\le g(t)\le 1$ and
$g(t)=1$ for all $t\ge \ep/2.$
\end{prop}

\begin{proof}
Fix $1>\ep>0$ and $g$ as described.

Let $B_n=\overline{\psi_n(g(a))F_n\psi_n(g(a))},$ $n=1,2,....$
Note that $B_n$ are unital finite dimensional \CA s.
For each $n,$ define
$$
{\hat \psi}_n(\cdot)=\psi_n(g(a))^{-1/2}\psi_n(\cdot)\psi_n(g(a))^{-1/2},
$$
where the inverse is taken in $B_n.$  Also, for each $n,$ define
$$
{\hat \phi}_n(c)=\phi_n(\psi_n(g(a))^{1/2} c\psi_n(g(a))^{1/2})\rforal c\in A.
$$
Then
$$
{\hat \phi}_n\circ {\hat \psi}_n|_{C}=\phi_n\circ \psi_n|_{C},
$$
where $C=\overline{f_{\ep}(a)Af_{\ep}(a)}.$

From Lemma 3.6 of \cite{KW},  for $i\in \{1,2,...,m\},$ $x\in B_n$ and $b\in A,$
\beq\label{WP2-4}
\|{\hat\phi}_n(x{\hat \psi}_n(b))-{\hat \phi}(x){\hat \phi}\circ {\hat \psi}(b)\|<3\|x\|\max \{\|{\hat \phi}_n{\hat \psi}_n(b)-b\|,
\|{\hat \phi}_n{\hat \psi}_n(b^2)-b^2\|\}.
\eneq
Write $B_n=\bigoplus_{i=1}^m \psi_n^{(i)}(g(a))F_n^{(i)}\psi_n^{(i)}(g(a))$ and choose
$x=q_n^{(i)}$ which is the identity of $\psi_n^{(i)}(g(a))F_n^{(i)}\psi_n^{(i)}(g(a)).$
Then, by  \eqref{WP2-4}, one obtains
$$
\lim_{n\to\infty}\|\phi_n^{(i)}\circ \psi_n^{(i)}(b)-\phi_n^{(i)}\circ \psi_n^{(i)}(g(a))(\phi_n\circ \psi_n(b))\|=0
$$
for all $b\in C.$ This gives \eqref{WP2-2+}. From this, \eqref{WP2-2} also follows.

\end{proof}


The following is a   non-unital version  of 2.2 of \cite{Wccross}.
The proof is based on that of 2.2 of \cite{Wccross} with some subtle
modification to fit the non-unital case with continuous scale.

\begin{thm}\label{TWv}
Let $A$ be a non-unital   separable simple \CA\,
with $K_0(A)={\rm ker}\rho_A,$
with ${\rm dim}_{nuc} A=m<\infty$
which has continuous scale.

Suppose that for every non-zero hereditary \SCA\, $B$ of $A$ the following holds.

Fix a strictly positive element $a_0\in B_+$ with $0\le a_0\le 1.$

Let $C\in {\cal M}_0$ be a non-unital simple \CA\, with $K_0(C)=\{0\},$ $K_1(A)=\{0\},$
quasi-compact, and
$T(C)=T(B).$
Suppose that there is a sequence of \cpc s $\sigma_n: B\to C$ and
\hm s $\rho_n: C\to B$ such that
\beq\label{TWv-1}
&&\lim_{n\to\infty}\|\sigma_n(ab)-\sigma_n(a)\sigma_n(b)\|=0\tforal a, b\in A,\\
&&\lim_{n\to\infty}\sup\{\tau(\rho_n\circ \sigma_n(a))-\tau(a)\|:\tau\in T(A)\}=0\tand
\eneq
$\sigma_n(a_0)$ is a strictly positive in $C$ for all $n\in \N.$

Then, for any $\ep>0$ and any finite subset ${\cal F}\subset B$
and any integer $k_0\ge 1,$
 there are ${\cal F}$-$\ep$-multiplicative \cpc s $\phi: B\to M_{4(m+1)}(B)$ and  $\psi: B\to D\in {\cal C}_0^{0'}$
 for some
\SCA\, $D\subset M_{4(m+1)}(B)$ such that
\beq\label{TWvtr1div-1}
&&\|1_{4(m+1)}\otimes x-\diag(\phi(x),
\overbrace{\psi(x), \psi(x),...,\psi(x)}^{k_0})
\|<\ep\rforal x\in {\cal F},
\\\label{TWvtrdiv-2}
&&  D\in {\cal C}_0^{0'}\,\\\label{TWvtrdiv-3}
&&\tau(\psi(a_0))\ge 1/12k_0(m+1)\tforal \tau\in T(M_{4(m+1)}(B)),\\\label{TWvtrdiv-4}
&& \overline{\phi(a_0)B\phi(a_0)} \,\,\, \text{has\,\,continuous\,\,scale,}\\\label{TWvtrdiv-4-}
&& \|\psi\|=1,\\\label{TWvtrdiv-4+}
&&f_{1/4}(\psi(a_0))\,\,\,
is\,\,\, full\,\,\, in
\,\, D \tand
\eneq
$\psi(a_0)$ is strictly positive in $D.$

\end{thm}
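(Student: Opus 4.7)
The plan is to combine three ingredients in a single argument: the finite nuclear dimension of $B$ (which supplies $m+1$ order--zero summands), the good tracial divisibility of the auxiliary algebra $C\in\mathcal{M}_0$ (which lies in $\mathcal{D}_0$, so Corollary \ref{Cuniformful} applies inside $C$), and Winter's $M_4$--amplification trick from Proposition \ref{WinterP1}. The target factor $4(m+1)$ arises precisely as $4$ (from Winter) times $m+1$ (from the nuclear--dimension decomposition).

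First I would invoke $\dim_{\mathrm{nuc}} B \le \dim_{\mathrm{nuc}} A = m$ to produce, for each large $n$, c.p.c.\ systems
$(F_n,F_n^{(0)},\dots,F_n^{(m)},\psi_n,\phi_n)$
with each $\phi_n^{(i)}:=\phi_n|_{F_n^{(i)}}$ order zero and $\phi_n\circ\psi_n\to\mathrm{id}_B$ pointwise. Proposition \ref{WP2} then provides that $\phi_n^{(i)}\circ\psi_n^{(i)}(g(a_0))$ asymptotically commutes with every element of $\overline{f_{\varepsilon}(a_0)Bf_{\varepsilon}(a_0)}$, which gives the approximate orthogonality needed for the diagonal decomposition. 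I would also use Lemma \ref{Lbt1ACT} at this stage to replace the relevant cut--downs of $\phi_n^{(i)}\circ\psi_n^{(i)}(a_0)$ by nearby positive elements with rank function continuous on $T(B)$, so that the eventual orthogonal summand $\overline{\phi(a_0)B\phi(a_0)}$ will have continuous scale as required by \eqref{TWvtrdiv-4}.

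Second, work inside $C$. Since $C\in\mathcal{M}_0$ is non-unital, simple, quasi-compact, and has $K_0(C)=K_1(C)=0$, $C\in\mathcal{D}_0$ by Proposition \ref{PDgTR} together with the fact that inductive limits of $\mathcal{C}_0^0$ belong to $\mathrm{TA}\mathcal{C}_0^{0'}$. Hence Corollary \ref{Cuniformful} furnishes c.p.c.\ maps $\phi^C:C\to C$ and $\psi^C:C\to D^C\in\mathcal{C}_0^{0'}$ realising, on the finite subset $\sigma_n(\mathcal{F})$, the $k_0$--fold divisible approximation
$c\approx\mathrm{diag}\bigl(\phi^C(c),\overbrace{\psi^C(c),\dots,\psi^C(c)}^{k_0}\bigr)$,
with the required trace lower bound on $f_{1/4}(\psi^C(\sigma_n(a_0)))$ and with $\phi^C(\sigma_n(a_0))$ subequivalent to a prescribed small positive element. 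One then pulls this back to $B$ through the homomorphism $\rho_n$; note $\rho_n(D^C)\in\mathcal{C}_0^{0'}$ since $\rho_n$ is a homomorphism.

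Third, to build the $M_{4(m+1)}$--amplification, apply Proposition \ref{WinterP1} separately to each pair of order--zero maps $\bigl(\phi_n^{(i)},\,\rho_n\circ\phi^C\circ\sigma_n\circ\phi_n^{(i)}\bigr)$ for $i=0,1,\dots,m$. The hypothesis $\tau(\rho_n\sigma_n(\cdot))\to\tau(\cdot)$ uniformly on $T(B)$ verifies the trace hypothesis of \ref{WinterP1}, yielding elements $s_n^{(i)}\in(M_4\otimes B)^{\bf 1}$ that intertwine $\phi_n^{(i)}$ with the version of itself routed through $C$. Arranging these $m+1$ Winter elements in a diagonal $(m+1)$--block pattern inside $M_4\otimes M_{m+1}(B)\cong M_{4(m+1)}(B)$ assembles, together with the decomposition from step two, an approximate identity of the form $1_{4(m+1)}\otimes x\approx\phi(x)\oplus\mathrm{diag}\bigl(\psi(x),\dots,\psi(x)\bigr)$ with $k_0$ copies of $\psi$, where $\psi$ factors through a single copy of $\rho_n(D^C)$ sitting inside $M_{4(m+1)}(B)$.

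The principal obstacle is the simultaneous bookkeeping for three subtle requirements: (a) continuous scale of $\overline{\phi(a_0)B\phi(a_0)}$, handled by the Lemma \ref{Lbt1ACT} adjustment noted above; (b) arranging that the image of $\psi$ lies in a \emph{single} algebra $D\in\mathcal{C}_0^{0'}$ inside $M_{4(m+1)}(B)$ rather than in a direct sum of such algebras, which requires identifying $M_{m+1}$ copies of $\rho_n(D^C)$ with a single hereditary subalgebra via Brown's theorem (\cite{Br1}); and (c) deriving the explicit trace bound $\tau(\psi(a_0))\ge 1/12k_0(m+1)$ together with fullness of $f_{1/4}(\psi(a_0))$ in $D$, which follows from distributing the trace approximation $\tau(\rho_n\sigma_n(\cdot))\approx\tau(\cdot)$ across the $4(m+1)$ coordinates and using the trace lower bound on $\psi^C$ produced by Corollary \ref{Cuniformful}. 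Once these details are in place, feeding the conclusion into Lemma \ref{LWL1} will in subsequent applications yield $A\in\mathcal{D}_0$.
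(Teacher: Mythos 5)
Your proposal assembles the right ingredients (the $m+1$ order--zero colours from $\dim_{\rm nuc}A=m$, Proposition \ref{WP2}, Winter's Proposition \ref{WinterP1}, and Lemma \ref{Lbt1ACT} for the continuous--scale condition \eqref{TWvtrdiv-4}), but two steps at the heart of the argument do not work as described. First, the pair you feed to Proposition \ref{WinterP1} is $(\phi_n^{(i)},\,\rho_n\circ\phi^C\circ\sigma_n\circ\phi_n^{(i)})$, where $\phi^C$ is the \emph{small} complement of the divisibility decomposition in $C$ (you require $\phi^C(\sigma_n(a_0))$ subequivalent to a prescribed small element). For that pair the trace hypothesis of \ref{WinterP1} fails badly: $\tau\circ\rho_n\circ\phi^C\circ\sigma_n$ is uniformly small, not close to $\tau$, so the hypothesis $\tau(\rho_n\sigma_n(\cdot))\to\tau(\cdot)$ does not verify it. (There is also the smaller point that these compositions are only approximately order zero; one must first correct $\sigma_n\circ\phi_n^{(i)}$ to genuine order--zero maps into $C$ by weak stability, which you omit.) The intertwiners must be built for the colour maps against $\rho_n\circ\sigma_n$ itself, and the divisibility of $C$ can only be exploited \emph{afterwards}, inside the corner of $M_{4(m+1)}(B)$ where the $C$--routed copy of the identity lives; doing the decomposition in $C$ first and pushing it through $\rho_n$ only decomposes $\rho_n\sigma_n(x)$, not $1_{4(m+1)}\otimes x$.

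Second, the assembly by a ``diagonal $(m+1)$--block pattern'' of the $m+1$ Winter elements does not produce the decomposition. Proposition \ref{WinterP1} controls $s_i s_i^*$ against $e_{11}\otimes\phi_i(y)$ but gives no control of $s_i^*s_i$ against $1_4\otimes\phi(y)$, and a single colour $\phi_n^{(i)}\circ\psi_n^{(i)}$ is only an order--zero piece of the identity, so there is no way to split $1_4\otimes\phi_n^{(i)}\psi_n^{(i)}(x)$ blockwise. The essential move is the \emph{row} assembly $v=\sum_{l=0}^{m}e_{1l}\otimes\bigl(e_{11}\otimes(\gamma\circ\phi^{(l)}\circ\psi^{(l)}(g(a_0)))^{1/2}\bigr)s^{(l)}$: summing over the colours recombines the order--zero pieces into the honest homomorphism $\rho_n\circ\sigma_n$, so that $vv^*=e_{11}\otimes e_{11}\otimes\rho_n\sigma_n(g(a_0))$ sits in a single corner and $v^*v$ serves as the approximate support used to split $1_{m+1}\otimes1_4\otimes b$ into $L_0(b)=(1-v^*v)(\cdots)(1-v^*v)$ and $v^*\bigl(e_{11}\otimes e_{11}\otimes\rho_n\sigma_n(b)\bigr)v$. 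With a diagonal arrangement you would instead get $m+1$ orthogonal $C$--routed fragments, and the Brown's--theorem patch you propose addresses a different problem; it cannot produce one algebra $D\in{\cal C}_0^{0'}$ and one map $\psi$ repeated $k_0$ times. Finally, your appeal to ``$C\in{\cal D}_0$ because inductive limits of ${\cal C}_0^0$ are TA${\cal C}_0^{0'}$'' is not established in the paper; the argument instead uses $C'\cong C'\otimes Q$ (via Razak's classification), the building blocks $D_{2k}\otimes M_{k!}$, and cpc maps $s_k:C'\to\iota_{k,\infty}(D_k)$ approximating the identity, to extract the $k_0$--fold divisibility and the ${\cal C}_0^{0'}$ subalgebra $D$ inside the corner, together with the trace bound $\tau(L'(a_0))\ge 1/12(m+1)$ that yields \eqref{TWvtrdiv-3} after splitting into $k_0$ copies.
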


\begin{proof}
This is a modification of the proof of 2.2 of \cite{Wccross}.


To simplify notation, \wilog, we may assume
that ${\cal F}\subset f_{\dt}(a_0)Bf_{\dt}(a_0)$ for some
$0<\dt<1/4.$
We may assume, by \ref{Csemicon},
that
\beq\label{Wv-0}
\tau(f_{\dt}(a_0))\ge 1-1/64(m+1)\rforal \tau\in T(B).
\eneq

Let  $F^{(0)}_j, F_j^{(1)},..., F_j^{(m)}$ be
finite dimensional \SCA s,
$F_j=F^{(0)}_j\oplus \cdots F_j^{(m)},$
$\psi_j: A\to F_j$ be a sequence of \cpc s,
$\phi_j: F_j\to B$ be a sequence of completely positive linear maps
such that $\phi_j\circ \psi_j$ are \cpc s,
$\phi_j^{(l)}=\phi_j|_{F_j^{(l)}}$ ($l\in \{1,2,...,m\}$)
are order zero maps, and
$$
\lim_{n\to\infty}\|\phi_n\circ \psi_n(b)-b\|=0\rforal b\in B.
$$
Denote by $\psi_j^{(l)}$ be the composition of $\psi_j$ with
the projection from $F_j$ onto  $F_j^{(l)}.$
By replacing $F_j^{(l)}$ by the hereditary  \SCA\, generated by
$\psi_j^{(l)}(a_0),$ we may $\psi_j^{(l)}(a_0)$ is a strictly positive element which is invertible
in $F_j^{(l)}.$

By the weak stability of order zero maps,
there are ${\tilde \phi}_{j,n}^{(l)}:  F_n^{(l)}\to C$ $(n\in \N$) such that
$$
\lim_{n\to\infty}\|{\tilde \phi}_{j,n}^{(l)}(x)-\sigma_n\circ \phi_j^{(l)}(x)\|=0\rforal x\in F_j^{(l)}.
$$

For each $x\in (F_j^{(l)})_+$ and $f\in C_0((0,1])_+,$  we have
$$
\lim_{n\to\infty}\|f({\tilde \phi}_{j,n}^{(l)}(x))-\sigma_n(f(\phi_j^{(l)}(x)))\|=0,
$$
hence
\beq\label{Wv-10}
\lim_{n\to\infty}\sup_{\tau\in T(A)}\{|\tau(f(\rho_n\circ {\tilde \phi}_{j,n}^{(l)}(x)))-\tau(f(\phi_j^{(l)}(x)))|\}=0\andeqn\\
\lim_{n\to\infty}\|\rho_n(f({\tilde \phi}_{j,n}^{(l)}(x)))-\rho_n\circ \sigma_n(f(\phi_j^{(l)}(x)))\|=0.
\eneq
Since $\rho_n$ are injective \hm s, $\sigma_n$ are eventually nonzero and
approximately multiplicative, we see that
$$
\limsup_{n\to\infty}\|\rho_n\circ \sigma_n(f(\phi_j^{(l)}(x)))\|=\|f(\phi_j^{(l)}(x))\|,
$$
whence
$$
\limsup_{n\to\infty}\|f(\rho_n\circ {\tilde \phi}_{j,n}^{(l)}(x))\|\le \|f(\phi_j^{(l)}(x))\|
$$
for $x\in (F_j^{(l)})_+$ and $f\in C_0((0,1])_+.$

By \ref{WinterP1}, there are
$$
s_{j,n}^{(l)}\in M_4(A)^{\bf 1},\,\, n\in \N,
$$
such that
\beq\label{Wv-12}
\lim_{n\to\infty}\|s_{j,n}^{(l)}(1_4\otimes \phi_j^{(l)}(x))-(e_{11}\otimes \rho_n\circ {\tilde \phi}_{j,n}^{(l)})s_{j,n}^{(l)}\|=0\andeqn\\
\lim_{n\to\infty}\|(e_{11}\otimes \rho_n\circ {\tilde \phi}_{j,n}^{(l)}(x))s_{j,n}^{(l)}(s_{j,n}^{(l)})^*
-e_{11}\otimes \rho_n\circ {\tilde\phi}_{j,n}^{(l)}(x)\|=0
\eneq
for each $x\in F_j^{(l)}.$
Put $B_{\infty}=\prod B/\oplus B.$
We obtain contractions
$$
s_j^{(l)}\in (M_4\otimes B)_{\infty}\cong M_4\otimes B_{\infty}
$$
with
$$
s_j^{(l)}(1_4\otimes \imath\circ \phi_j^{(l)}(x))=
(e_{11}\otimes {\bar \rho}\circ {\bar \sigma}\circ \phi_j^{(l)}(x))s_j^{(l)}
$$
'and
$$
(e_{11}\otimes {\bar \rho}\circ {\bar \sigma}\circ \phi_j^{(l)}(x))s_j^{(l)}(s_j^{(l)})^*
=e_{11}\otimes {\bar \rho}\circ {\bar \sigma}\circ \phi_j^{(l)}(x)),
$$
where
$$
{\bar \sigma}: B\to C_{\infty}=\prod C/\oplus C
$$
is the \hm\, induced by $\sigma_n,$
$$
{\bar \rho}: C_{\infty}=\prod C/\oplus C\to B_{\infty}
$$
is the \hm\, induced by $\rho_n$ and $\imath: B\to B_{\infty}$
is the canonical embedding.
Let ${\bar \imath}: B_{\infty}\to (B_{\infty})_{\infty}$ be the map induced
by $\imath,$ i.e.,
$\{(a_m)_{m\in \N}\}\stackrel{\bar \imath}{ \mapsto}\{(\imath(a_m))_{m\in \N}\}.$
Let
$$
{\bar \gamma}: A_{\infty}\to (A_{\infty})_{\infty}
$$
be induced by ${\bar \rho}{\bar \sigma},$
$$
{\bar \phi}^{(l)}: \prod_jF_j^{(l)}/\oplus_j F_j^{(l)}\to B_{\infty}
$$
and
$$
{\bar \psi}^{(l)}: A\to \prod_jF_j^{(l)}/\oplus_j F_j^{(l)}
$$
be the order zero maps induced by $\phi_j^{(l)}$ and $\psi_j^{(l)},$ respectively.

Define
$$
{\bar s}^{(l)}=\{(s_j^{(l)})_{j\in\N}\}\in (M_4\otimes B_{\infty})_{\infty},
$$
then
\beq\label{Wv-15}
&&{\bar s}^{(l)}(1_4\otimes {\bar \imath}\circ {\bar \phi}^{(l)}\circ {\bar \psi}^{(l)}(b))=
(e_{11}\otimes {\bar \gamma}\circ {\bar \phi}^{(l)}\circ {\bar \psi}^{(l)}(b)){\bar s}^{(l)}\andeqn\\\label{Wv-16}
&&(e_{11}\otimes {\bar \gamma}\circ {\bar \phi}^{(l)}\circ {\bar \psi}^{(l)}(b)){\bar s}^{(l)}({\bar s}^{(l)})^*
=e_{11}\otimes {\bar \gamma}\circ {\bar \phi}^{(l)}\circ {\bar \psi}^{(l)}(b).
\eneq
for all $b\in B.$

Note that, for each $b\in \overline{f_{\dt}(a_0)Bf_{\dt}(a_0)}$ and
each $g\in C_0((0,1])$ with $0\le g\le 1,$
$g(t)=1$ for all $t\ge \dt/2,$ by \ref{WP2},
\beq\label{Wv-16+}
{\bar \phi}^{(l)}\circ {\bar \psi}^{(l)}(g(a))\imath(b)={\bar \phi}^{(l)}\circ {\bar \psi}^{(l)}(b).
\eneq
As a consequence,
$$
({\bar \phi}^{(l)}\circ {\bar \psi}^{(l)}(g(a)))^{1/2}\imath(b)\in  C^*({\bar \phi}^{(l)}\circ {\bar \psi}^{(l)}(b): b\in B)
$$
which implies that, for each $b\in \overline{f_{\dt}(a_0)Bf_{\dt}(a_0)}$
\beq\label{Wv-17}
{\bar s}^{(l)}(1_4\otimes ({\bar \imath}\circ {\bar  \phi}^{(l)}\circ {\bar \psi}^{(l)}(g(a))^{1/2})
(1_4\otimes {\bar \imath}\circ \imath(b))\\
={\bar s}^{(l)}(1_4\otimes ({\bar \imath}(( {\bar \phi}^{(l)}\circ {\bar \psi}^{(l)}(g(a))^{1/2}) \imath(b)))\\
=(e_{11}\otimes ({\bar \gamma}(\imath(b))(({\bar \gamma}\circ
 {\bar \phi}^{(l)}\circ {\bar \psi}^{(l)}(g(a)))^{1/2})){\bar s}^{(l)}\\
 =(e_{11}\otimes {\bar \gamma}\circ\imath(b))(e_{11}\otimes
 {\bar \gamma}\circ ({\bar \phi}^{(l)}\circ {\bar \psi}^{(l)}(g(a)))^{1/2}){\bar s}^{(l)}.
\eneq
We now fix  continuous functions $g_i$ (on $[0,1]$) such that $g_1(t)\not=0$ for all $t\in (0,1]$
and $g_i(t)=1$ for all $t\ge \dt/8(i+1),$ $i=0,1.$ In particular $g_1(a_0)$ is a strictly positive element.
We also require that $g_1g_0=g_0.$
Moreover,
by \eqref{Wv-0},
\beq\label{Wv-17+}
\tau(g_i(a_0))\ge (1-1/64(m+1))\rforal \tau\in T(A),\,\,\, i=0,1.
\eneq
Set, for $i=0,1,$
\beq\label{Wv-18}
{\bar v}_i&=&\sum_{l=0}^m e_{1l}\otimes (e_{11}\otimes ({\bar \gamma}\circ {\bar \phi}^{(l)}\circ {\bar \psi}^{(l)}
(g_i(a)))^{1/2}){\bar s}^{(l)}\\
&=& \sum_{l=0}^m e_{1l}\otimes ({\bar s}^{(l)}(1_4\otimes
 ({\bar \imath}\circ {\bar \phi}^{(l)}\circ {\bar \psi}^{(l)}(g_i(a_0)))^{1/2})
 \in M_{m+1}(M_4\otimes (B_{\infty})_{\infty}).
\eneq
Then (also using \eqref{Wv-16+})
\beq\label{Wv-20}
{\bar v}_i{\bar v}_i^*&=&
\sum_{l=0}^me_{11}\otimes e_{11}\otimes
{\bar \gamma}\circ {\bar \phi}^{(l)}\circ {\bar \psi}^{(l)}(g_i(a_0))\\
&=&e_{11}\otimes e_{11}\otimes {\bar \gamma}\circ {\bar \phi}\circ {\bar \psi}(g_i(a_0))\\
&=&e_{11}\otimes e_{11}\otimes {\bar \gamma}(\imath(g_i(a_0))),\,\,\,i=0,1.
\eneq
Moreover,  for $b\in \overline{f_{\dt}(a_0)Bf_{\dt}(a_0)},$
by \eqref{Wv-16}, \eqref{Wv-15} and \eqref{Wv-18},
\beq\label{Wv-25}
&&\hspace{-0.3in}{\bar v}_i(1_{m+1}\otimes 1_4\otimes {\bar \imath}(\imath(b))\\
&=&(\sum_{l=0}^m e_{1l}\otimes ({\bar s}^{(l)}(1_4\otimes
 ({\bar \imath}\circ {\bar \phi}^{(l)}\circ {\bar \psi}^{(l)}(g_i(a)))^{1/2}))
(1_{m+1}\otimes 1_4\otimes {\bar \imath}(\imath(b)))\\
&=&\sum_{l=0}^m e_{1l}\otimes ({\bar s}^{(l)}(1_4\otimes
 ({\bar \imath}\circ {\bar \phi}^{(l)}\circ {\bar \psi}^{(l)}(b))))\\
 &=&\sum_{l=0}^m e_{1l}\otimes (e_{11}\otimes {\bar \gamma}\circ {\bar \phi}^{(l)}\circ {\bar \psi}^{(l)}(b)){\bar s}^{(l)}\\
 &=&\sum_{l=0}^m e_{1l}\otimes (e_{11}\otimes {\bar \gamma}\circ {\imath}(b))
 (e_{11}\otimes {\bar \gamma}\circ  {\bar \phi}^{(l)}\circ {\bar \psi}^{(l)}(b)){\bar s}^{(l)}\\
&=&\sum_{l=0}^m e_{1l}\otimes (e_{11}\otimes {\bar \gamma}(\imath(b))){\bar v}_i.
\eneq
Hence, for $b\in \overline{f_{\dt}(a_0)Bf_{\dt}(a_0)},$
\beq\label{Wv-30}
{\bar v}_i^*{\bar v}_i(1_{m+1}\otimes 1_4\otimes {\bar \imath}\imath(b))&=&
{\bar v}_i^*(e_{11}\otimes {\bar \gamma}\circ \imath(b)){\bar v}_i\\
&=&(1_{m+1}\otimes 1_4\otimes {\bar \imath}\imath(b)){\bar v}_i^*{\bar v}_i.
\eneq

Now fix a finite subset ${\cal F}_1\subset {\cal F}\cup \{a\}$ and fix $0<\eta<\ep$
such that
\beq\label{Wv-n-1}
\|f_{\dt}(\Phi_0(a_0)\Phi(b)f_{\dt}(\Phi_0(a_0))-\Phi(b)\|<\ep/64\rforal b\in {\cal F},
\eneq
provided that $\Phi$ is a ${\cal F}_1$-$\eta$-multiplicative \cpc.
For $x\in {\cal F}_1,$
 there are $j,n\in \N$ and
$v_i\in M_{4(m+1)}(B)$ such that
\beq\label{Wv-32}
&&v_iv_i^*=e_{11}\otimes e_{11}\otimes \rho_n(\sigma_n(g_i(a_0)))\,\,
\\\label{Wv-32+1}
&&\tau(e_{11}\otimes e_{11}\otimes \rho_n(\sigma_n(f_\dt(a_0))))\ge {1-1/64(n+1)(m+1)\over{4(m+1)}}\\
&&\hspace{2in}\rforal \tau\in T(M_{4(m+1)}(B)),\\\label{Wv-33}
&&\|[v_i^*v_i,\, 1_{m+1}\otimes 1_4\otimes b]\|<\eta/16\rforal b\in {\cal F}_1\\\label{Wv-34}
&&\|v_i^*v_i(1_{m+1}\otimes 1_4\otimes  b)-v_i^*(e_{11}\otimes e_{11} \otimes \rho_n\circ \sigma_n(b))v_i\|<\eta/16
\\\label{Wv-35}
&&\hspace{2in}  \rforal  b\in {\cal F}_1 \andeqn\\\label{Wv-36}
&&\|v_1^*(e_{11}\otimes e_{11} \otimes \rho_n\circ \sigma_n(b))v_1-
v_0^*(e_{11}\otimes e_{11} \otimes \rho_n\circ \sigma_n(b))v_0\|<\eta/16
\eneq
for all $ b\in {\cal F}_1.$
Then
$$\kappa: \, b\to v_0^*(e_{11}\otimes e_{11}\otimes \rho_n(b))v_0$$ is
a \cpc\, on $C$ and is
a monomorphism on $\overline{f_{\dt}(c)Cf_{\dt}(c)},$
where $c=\sigma_n(a_0).$
Since $C\in {\cal M}_0,$ $C':=\overline{f_{\dt}(c)Cf_{\dt}(c)}\in {\cal M}_0.$
One can write
$C'=\lim_{k\to\infty} (D_k, \iota_k),$ where  $D_k\in {\cal C}_0^0$ and where
each $\iota_k$ is injective and maps strictly positive elements to
strictly positive elements.
Since $C'\otimes Q\cong C',$
we may assume that
$C_{2k+1}=C_{2k}\otimes M_{k!}$ and
$\iota_{2k+1}: C_{2k}\to C_{2k+1}$ is defined by
$\iota_{2k+1}(x)=x\otimes 1_{k!},$ $k=1,2,....$
Define $B_k=\kappa(\iota_{k,\infty}(D_k).$  Since $D_k$ is amenable, there is, for each $k,$ a
\cpc \,
$s_k: C'\to \iota_{k, \infty}(D_k)$ such that
\beq\label{Wv-37}
\lim_{k\to\infty}\|s_k(c)-c\|=0\rforal c\in  C.
\eneq
Now fix $k_0\ge 1$ and assume $k>k_0.$
Define,
$L_0: B\to M_{4(m+1)}(B)$ by
$$
L_0(b)=(1-v_1^*v_1)(1_{m+1}\otimes 1_4\otimes b)(1-v_1^*v_1)\rforal b\in A,
$$
define $L': A\to B_{2k}\to B_{2k+1}=B_{2k}\otimes M_{k!}$ by
$$
L'(b)=\iota_{2k+1}(s_{2k}(\kappa(e_{11}\otimes e_{11}\otimes \rho_n\circ \sigma_n(b)))\tforal b\in A.
$$
Note that, for any $b\in A,$
\beq\label{Wv-38}
L_0(b)L'(b)=L'(b)L_0(b)=0.
\eneq
By \ref{Lbt1ACT}, there exists $0\le e\le 1$ such that
\beq\label{Wv-n2}
f_{\dt/4}(L_0(a_0))\le e\le f_{\dt_1}(L_0(a_0))
\eneq
for some $0<\dt_1<\dt_2$ and $d_\tau(e)$ is continuous on $\overline{T(B)}^w.$
Define
$L_0': B\to B$ by
\beq\label{Wv-n3}
L_0'(b)=e^{1/2}L_0(b)e^{1/2}\rforal b\in B.
\eneq
We also have, by \eqref{Wv-n2} (see the lines from \eqref{LWL-14} to \eqref{LWL-14+}),
\beq\label{Wv-n4}
\la  L_0'(a_0)\ra =\la e^{1/2}L_0(a_0)e^{1/2}\ra =\la e\ra .
\eneq
So $d_\tau(L_0'(a_0))=d_\tau(e)$ is continuous on $\overline{T(B)}^w.$
Note that, by the choice of ${\cal F}_1$ and $\eta,$
\beq\label{Wv-39}
\|L_0(b)-L_0'(b)\|<\ep/64\rforal b\in {\cal F}.
\eneq
Moreover,
\beq\label{Wv-39+1}
L_0'(b)L'(b)=L'(b)L_0'(b)=0\rforal b\in B.
\eneq

By choosing a sufficiently large $k,$ we may assume, by \eqref{Wv-37} and \eqref{Wv-35},
\beq\label{Wv-39++}
\|v_1^*v_1(1_{m+1}\otimes 1_4\otimes b)-L'(b)\|<\ep/4\rforal b\in {\cal F}.
\eneq
Since $C'$ is simple, we may also assume
that $L'(f_{1/2}(a_0))$ is full in $B_k.$
We estimate that, by \eqref{Wv-33} and by \eqref{Wv-39},
\beq\label{Wv-41}
\|1_{m+1}\otimes 1_4\otimes b-{\rm diag}(L_0'(b), L'(b))\|<\ep\rforal b\in {\cal F}.
\eneq
By \eqref{Wv-32+1},
\beq\label{Wv-42}
\tau(L'(a_0))\ge 1/12(m+1)\rforal \tau\in T(M_{4(m+1)}(B)).
\eneq

\end{proof}


%


Combing the above with \ref{LWL1} and \ref{Subcontsc}, we obtain the following:
\begin{cor}\label{TWLQ}
Let $A$ be a non-unital   separable simple \CA\,
with $K_0(A)={\rm ker}\rho_A,$
with stable rank one  and with ${\rm dim}_{nuc} A=m<\infty$
which has  continuous scale.  Fix a strictly positive element $a\in A_+$ with $0\le a\le 1.$

Let $C\in {\cal M}_0$ be a non-unital simple \CA\,
which is quasi-compact with $K_0(C)=\{0\},$ $K_1(A)=\{0\}$ and
$T(C)=T(A).$
Suppose that there is a sequence of \cpc s $\sigma_n: A\to C$ and
\hm s $\rho_n: C\to A$ such that
\beq\label{CCWv}
&&\lim_{n\to\infty}\|\sigma_n(ab)-\sigma_n(a)\sigma_n(b)\|=0\tforal a, b\in A,\\\label{CCWv-2}
&&\lim_{n\to\infty}\sup\{|\tau(\rho_n\circ \sigma_n(a))-\tau(a)|:\tau\in T(A)\}=0\tand
\eneq
$\sigma_n(a)$ is a strictly positive in $C$ for all $n\in \N.$

Then $A\otimes U\in {\cal D}_{0}$ for any UHF-algebra $U.$

\end{cor}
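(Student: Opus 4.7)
\textbf{Proof plan for Corollary \ref{TWLQ}.}

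The strategy is to replace $A$ by $A'=A\otimes U$ and verify the hypotheses of Lemma \ref{LWL1} for $A'$, then conclude $A'\in\mathcal{D}_0$. First I would check that $A'$ inherits the structural assumptions: finite nuclear dimension (at most $m$ since $U$ is AF), stable rank one (preserved by tensoring with a UHF algebra for simple $C^*$-algebras), continuous scale (preserved via Brown's stable isomorphism together with $U\otimes\mathcal{K}$-absorption), and $K_0(A')=\ker\rho_{A'}$ (since $K_0(A\otimes U)=K_0(A)\otimes K_0(U)$ and every trace of $A'$ restricts to a (scaled) trace of $A$, so infinitesimality is preserved). Also $T(A')\cong T(A)$ canonically. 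The tensored sequences $\sigma_n\otimes\mathrm{id}_U: A'\to C'$ and $\rho_n\otimes\mathrm{id}_U: C'\to A'$ inherit the asymptotic multiplicativity and tracial approximation properties, and $C'=C\otimes U\in\mathcal{M}_0$ remains a quasi-compact simple $C^*$-algebra with $K_0(C')=K_1(C')=\{0\}$ and $T(C')=T(A')$ (using Proposition \ref{PM0her} and the fact that $U$-tensored limits of $\mathcal{C}_0^0$'s stay in $\mathcal{M}_0$).

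Next, to verify the hypothesis of Lemma \ref{LWL1}, I take an arbitrary non-zero hereditary $C^*$-subalgebra $B\subset A'$ with strictly positive element $a\in B$, $\|a\|=1$. Lemma \ref{LWL1} requires a decomposition of elements of $B$ into a small piece plus a piece in some $D\in\mathcal{C}_0^{0'}\subset B$ with uniform tracial lower bound $\tau(\psi(a))\ge\eta$. To apply Theorem \ref{TWv}, I need $B$ to have continuous scale; if it does not, I use Lemma \ref{Lbt1ACT} together with Corollary \ref{Subcontsc} to replace $a$ by a nearby positive element $e\in B$ whose dimension function is continuous on $\overline{T(B)}^w$, so that $\overline{eBe}$ has continuous scale. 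The sequences $\sigma_n\otimes\mathrm{id}_U$, $\rho_n\otimes\mathrm{id}_U$ restrict (after appropriate compression) to sequences between $\overline{eBe}$ and a hereditary subalgebra of $C'$ that remains in $\mathcal{M}_0$ and quasi-compact. Applying Theorem \ref{TWv} with $k_0=1$ to this compressed algebra yields, for any $\varepsilon>0$ and any finite subset $\mathcal{F}\subset B$, $\mathcal{F}$-$\varepsilon$-multiplicative cpc maps $\phi:B\to M_{4(m+1)}(B)$ and $\psi:B\to D_0\in\mathcal{C}_0^{0'}$ with $D_0\subset M_{4(m+1)}(B)$, $\tau(\psi(a))\ge 1/12(m+1)$ for all $\tau\in T(M_{4(m+1)}(B))$, and the usual tracial smallness of $\phi$.

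The third step, which I expect to be the main technical obstacle, is to absorb the $M_{4(m+1)}$ factor and convert the output of Theorem \ref{TWv} from a decomposition in $M_{4(m+1)}(B)$ into one in $B$ itself. The point is that since $A'=A\otimes U$ is $U$-stable and $U$ absorbs $M_{4(m+1)}$, we have $A'\otimes M_{4(m+1)}\cong A'$; this $M_{4(m+1)}$-stability passes (via Brown's theorem and stability of the absorbing property for hereditary subalgebras of $U$-stable simples) to $B\otimes M_{4(m+1)}\cong B$. Choosing an approximately central unital embedding $M_{4(m+1)}\hookrightarrow B'\cap B$ (with $B'$ the commutant in a suitable ultrapower) realizes this isomorphism concretely and allows me to rewrite the outputs $\phi,\psi$ as cpc maps $\tilde\phi: B\to B$ and $\tilde\psi:B\to \tilde D\subset B$ where $\tilde D$ is still in $\mathcal{C}_0^{0'}$. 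The tracial bound transfers (with a factor of $1/4(m+1)$ coming from the normalized trace on $M_{4(m+1)}$) to $\tau(\tilde\psi(a))\ge\eta:=1/48(m+1)^2$, uniformly in $B$ and $\tau\in T(B)$. Together with the smallness of $\tilde\phi(a)$, fullness of $f_{1/4}(\tilde\psi(a))$ in $\tilde D$, and the fact that $\tilde D\in\mathcal{C}_0^{0'}$, this verifies the hypothesis of Lemma \ref{LWL1}, which then concludes $A\otimes U\in\mathcal{D}_0$.
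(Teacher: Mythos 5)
Your overall route -- pass to $A'=A\otimes U$, check that the structural hypotheses transfer, and then verify the hypothesis of Lemma \ref{LWL1} for hereditary subalgebras of $A'$ by means of Theorem \ref{TWv} (arranging continuous scale via \ref{Lbt1ACT} and \ref{Subcontsc}) -- is exactly what the paper intends, since its proof of \ref{TWLQ} is the one-line instruction to combine \ref{TWv} with \ref{LWL1} and \ref{Subcontsc}. The bookkeeping in your first step is essentially fine (for ``almost stable rank one'' and strict comparison of $A'$ one would quote ${\cal Z}$-stability of $A$ and \cite{Rlz}, \cite{Rrzstable}, but that is minor).

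The genuine gap is in your third step, where the $M_{4(m+1)}$ amplification coming out of Theorem \ref{TWv} is removed. You assert that $U$ absorbs $M_{4(m+1)}$, hence $A'\otimes M_{4(m+1)}\cong A'$ and even $B\otimes M_{4(m+1)}\cong B$ for every hereditary \SCA\ $B\subset A'$. This is false for a general UHF algebra $U$, and the corollary allows \emph{any} UHF algebra: take $U=M_{2^\infty}$ while $4(m+1)$ has an odd prime factor, or take $U$ whose supernatural number is not of infinite type, in which case $U$ is not strongly self-absorbing and ``$A'$ is $U$-stable'' already fails; consequently an (approximately central) unital copy of $M_{4(m+1)}$ inside such a $B$ need not exist, and your transfer to hereditary subalgebras collapses. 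What the UHF factor really provides is only \emph{approximate} divisibility: for a given finite set and tolerance one finds a large matrix subalgebra $M_r\subset U$ approximately commuting with it, writes $r=4(m+1)q+s$ with $0\le s<4(m+1)$, and thus, up to a corner whose trace is at most $4(m+1)/r$, places the relevant elements of $B$ inside a copy of $M_{4(m+1)}(A_0)$ for a suitable hereditary \SCA\ $A_0$ -- that is, the tracially approximately divisible property of Definition \ref{Dappdiv}. This is precisely how the amplification of \ref{TWv} is absorbed in the proof of Theorem \ref{TTTAD} (stated immediately after the corollary): the copy of $M_{4(m+1)}(A_0)$ is produced first, \ref{TWv} is applied to $A_0$, and the small tracial loss is harmless because \ref{LWL1} only needs a fixed $\eta>0$. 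Replacing your exact-absorption step by this approximate-divisibility argument (and, relatedly, justifying the passage of the maps $\sigma_n\otimes{\rm id}_U$, $\rho_n\otimes{\rm id}_U$ to hereditary subalgebras via the existence theorems of \cite{Rl} rather than naive compression, since compression does not obviously preserve the trace-matching hypotheses of \ref{TWv}) would repair the proof.
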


\begin{thm}\label{TTTAD}
Let $A$ be a non-unital   separable simple \CA\, 
with $K_0(A)={\rm ker}\rho_A,$
which has almost stable rank one  and with ${\rm dim}_{nuc} A=m<\infty$
which has  continuous scale.

Let $C\in {\cal M}_0$ be a non-unital simple \CA\,  which is quasi-compact with $K_0(C)=\{0\},$ $K_1(A)=\{0\}$ and 
$\gamma: T(C)\to T(A)$ be  an affine homeomorphism. 
Suppose that there is a sequence of \cpc s $\sigma_n: A\to C$ and 
\hm s $\rho_n: C\to A$ such that
\beq\label{TTWv-1}
&&\lim_{n\to\infty}\|\sigma_n(ab)-\sigma_n(a)\sigma_n(b)\|=0\tforal a, b\in A\andeqn\\\label{TTWv-2}
&&\lim_{n\to\infty}\sup\{|\tau (\rho_n\circ \sigma_n(a))-\gamma(\tau)(a)|:\tau\in T(A)\}=0.
\tand
\eneq
$\sigma_n(a)$ is a strictly positive in $C$ for all $n\in \N.$
Suppose that every non-zero hereditary \SCA\, $A$ has the tracially approximate divisible property.

Then $A\in {\cal D}_{0}.$ 
\end{thm}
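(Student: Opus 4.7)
The plan is to apply Lemma~\ref{LWL1}, which reduces the problem to establishing, uniformly across every non-zero hereditary \SCA\ $B$ of $A$, the decomposition property of Definition~\ref{DNtr1div} with target $D\in{\cal C}_0^{0'}$ and a uniform tracial lower bound $\tau(\psi(a))\ge\eta$ on $T(B)$. To produce such a decomposition I will combine Theorem~\ref{TWv}, which gives an analogous decomposition inside the matrix amplification $M_{4(m+1)}(B)$, with the hypothesized tracial approximate divisibility of every hereditary \SCA\ of $A$, which permits embedding $M_{4(m+1)}(B_1)$ back into $B$ and thereby absorbing the matrix factor. Note also that strong strict comparison for $A$ (required by Lemma~\ref{LWL1}) follows from strict comparison (a consequence of finite nuclear dimension) together with compactness of $T(A)$ under continuous scale, via Theorem~\ref{Pconscale}.

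First I will verify that the hypotheses of Theorem~\ref{TWv} are inherited by every non-zero hereditary \SCA\ $B\subset A$. The $K$-theoretic condition $K_0(B)={\rm ker}\rho_B$ passes by Morita equivalence via Brown's theorem~\cite{Br1}; almost stable rank one and ${\rm dim}_{nuc}(B)\le m$ pass to hereditary subalgebras; and continuous scale of $B$ can be arranged by first shrinking to $\overline{eBe}$ with $d_\tau(e)$ continuous, using Lemmas~\ref{Lbt1ACT} and~\ref{Subcontsc}. The asymptotic maps $\sigma_n^B:B\to C_B$ and $\rho_n^B:C_B\to B$ are obtained by compressing $\sigma_n,\rho_n$ with a strictly positive element $b_0$ of $B$: taking $C_B:=\overline{\sigma_n(b_0)C\sigma_n(b_0)}$, which by Proposition~\ref{PM0her} lies in ${\cal M}_0$ and, being a full hereditary \SCA\ of the quasi-compact simple $C$, is quasi-compact by Corollary~\ref{CherePA=A}. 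Setting $n_0=4(m+1)$, Theorem~\ref{TWv} applied to $B$ with $k_0=1$ and a suitable $\ep'$ then yields c.p.c.\ maps $\phi_0:B\to M_{n_0}(B)$ and $\psi_0:B\to D\subset M_{n_0}(B)$ with $D\in{\cal C}_0^{0'}$ such that
\[
\|1_{n_0}\otimes x-\diag(\phi_0(x),\psi_0(x))\|<\ep' \quad \text{for all } x\in{\cal F},
\]
with $\tau(\psi_0(a))\ge 1/12(m+1)$ on $T(M_{n_0}(B))$, together with fullness and strict positivity of $\psi_0(a)$ in $D$.

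To descend from $M_{n_0}(B)$ back to $B$, apply the tracial approximate divisibility of $B$ with integer $n_0$, an auxiliary element $b_1\in B_+\setminus\{0\}$ chosen Cuntz below $b$ and with small tracial weight, and small $\ep''$: this produces $\sigma$-unital hereditary subalgebras $B_0,B_1\subset B$ with $B_0\perp M_{n_0}(B_1)\subset B$, the strictly positive element of $B_0$ Cuntz below $b_1$, and an approximation
\[
x\approx_{\ep''} x_0 + \diag(x_1,\ldots,x_1),\qquad x_0\in B_0,\; x_1\in B_1,
\]
for each $x\in{\cal F}$. Feeding the $x_1$-coordinates through $\phi_0,\psi_0$ applied now to $B_1$, and viewing their images inside $M_{n_0}(B_1)\subset B$, the diagonal part $\diag(x_1,\ldots,x_1)$ becomes approximately $\phi_0(x_1)+\psi_0(x_1)$ in $B$; combining with $x_0$ gives the candidate decomposition $\phi(x):=x_0+\phi_0(x_1)$ and $\psi(x):=\psi_0(x_1)$ inside $B$, with $\psi$ landing in $D\in{\cal C}_0^{0'}$.

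The hard part will be promoting the estimate $\tau(\psi_0(a_1))\ge 1/12(m+1)$, which holds on $T(M_{n_0}(B_1))$, to a uniform lower bound $\tau(\psi(a))\ge\eta$ on $T(B)$ independent of $\ep$, ${\cal F}$, and $b$. This requires showing that a definite fraction of every trace on $B$ is captured by the embedded copy of $M_{n_0}(B_1)$, which will follow from tracial approximate divisibility together with the freedom to choose $b_1$ of arbitrarily small tracial weight and the compactness of $\overline{T(B)}^w$ supplied by continuous scale (Corollary~\ref{Csemicon}). Once this uniform estimate is secured, the fullness and strict positivity of $\psi(a)$ are inherited from $\psi_0(a_1)$, all hypotheses of Lemma~\ref{LWL1} are verified, and $A\in{\cal D}_0$ follows.
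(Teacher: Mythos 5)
Your architecture is the paper's: use the hypothesized tracial approximate divisibility to place a copy of $M_{4(m+1)}(B_1)$ inside a hereditary \SCA\, $B$ with continuous scale (with the complementary corner of small tracial weight), apply Theorem \ref{TWv} to the small corner $B_1$ with $k_0=1$ to get the summand $\psi$ landing in some $D\in{\cal C}_0^{0'}$ with the $1/12(m+1)$ tracial bound relative to $T(M_{4(m+1)}(B_1))$, convert this to a uniform bound $\eta$ on $T(B)$ using the scale normalization and the smallness of the remainder corner, and conclude via Lemma \ref{LWL1}. This is exactly how the paper proceeds (it takes $\eta=1/16(m+1)$), and your remarks on strong strict comparison and on arranging continuous scale of the corners via Lemmas \ref{Lbt1ACT} and \ref{Subcontsc} are consistent with what the paper uses.

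The one step that does not work as written is your claim that the maps $\rho_n^B:C_B\to B$ are ``obtained by compressing $\rho_n$ with a strictly positive element $b_0$ of $B$.'' The $\rho_n$ are homomorphisms $C\to A$; restricting to $C_B=\overline{\sigma_n(b_0)C\sigma_n(b_0)}$ gives a homomorphism into $A$ whose image has no reason to lie in $B$, and cutting it down by an element of $B$ destroys multiplicativity, so compression produces neither a homomorphism nor a map into $B$; one would also still have to re-derive the tracial condition \eqref{TTWv-2} and the matching $T(C_B)=T(B)$ after such a cut-down. The correct device — and the one the paper uses, in the opening sentence of its proof (``the existence of $\rho_n$ and the condition \eqref{CCWv-2} also holds by \cite{Rl}'') — is not to compress at all but to construct the homomorphisms from $C_B$ (an algebra in ${\cal M}_0$) into $B$ directly from Robert's existence theorem \cite{Rl}, using that $B$ has almost stable rank one and that the relevant $Cu^{\sim}$-data is determined by traces here (since $K_0=\ker\rho$ and $K_*(C_B)=0$), so that the prescribed tracial behavior in \eqref{TTWv-2} can be realized. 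With that substitution your argument goes through and coincides with the paper's proof.
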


\begin{proof}
We first notice that  the existence of $\rho_n$ and 
the condition \eqref{CCWv-2} also holds by \cite{Rl}.




Let $B$ be a hereditary  \SCA\, with continuous scale.
Fix a strictly positive element $e\in B$ with $\|e\|=1$ and positive element $e_1\in B$ 
such that $ee_1=e_1=ee_1$ with $d_\tau(e_1)> 1-1/64(m+2)$ for all $\tau\in T(B).$
Let $\ep>0,$ ${\cal F}\subset B$ be a finite subset and let $b\in B_+\setminus \{0\}.$
Choose $b_0\in B_+\setminus \{0\}$ and $16(m+1)\la b_0\ra \le b$ in 
$Cu(A).$ 
There are 
$e_0\in B_+$ and a   $\sigma$-unital  hereditary \SCA\,
$A_0$ of $A$ such that $e_0\perp M_n(A_0)$ $e_0\lesssim b_0$ and 
$$
{\rm dist}(x, B_{1,d})<\ep/64(m+1)\rforal x\in {\cal F}\cup \{e\},
$$
where $B_{1,d}\subset  \overline{e_0Be_0}\oplus M_n(A_0)\subset  B$ and 
\beq\label{TTTAD-1}
B_{1,d}=\{ \diag( x_0, (\overbrace{x_1,x_1,...,x_1}^{4(m+1)}):x_0\in \overline{e_0Be_0},\, x_1\in A_0\}.
\eneq
and strictly positive elements of $B_{1,d}$ are strictly positive elements of $M_n(D).$
Moreover,  $A_0$ has continuous scale.

\Wlog, we may assume that ${\cal F}\subset B.$
Write 
$$
x=\diag(\overbrace{x_1, x_1,....,x_1}^{4(m+1)}).
$$
Let ${\cal F}_1=\{x_1: x\in {\cal F}\}.$ 
Note that we may write $\diag(\overbrace{x_1, x_1,....,x_1}^{4(m+1)})=x_1\otimes 1_{4(m+1)}.$
Then ${\rm dim}_{nuc}A_0=m$ (see \cite{WZ}).
Also $A_0$ is a non-unital separable simple \CA\, 
with $K_0(A_0)={\rm ker}\rho_{A_0}$ which has continuous scale.
We then apply \ref{TWv}   to $A_0.$ 
Thus, for $\eta={1\over{16(m+1)}},$ the conditions in \ref{LWL1} are satisfied. 
We then   apply \ref{LWL1}.

\end{proof}

\section{\CA s ${\cal W}$ and class ${\cal D}_{0}$}

%


\begin{df}\label{DWtrace}
Let $A$ be a non-unital separable \CA. Suppose that
$\tau\in T(A).$
We say that $\tau$ is a $C_0$-trace if there exists a sequence of
\cpc s $\{\phi_n\}$ from $A$ into $D_n\in {\cal C}_0$ such that
\beq\nonumber
&&\lim_{n\to\infty}\|\phi_n(ab)-\phi_n(a)\phi_n(b)\|=0\rforal a,\,b\in A\andeqn\\
&&\tau(a)=\lim_{n\to\infty}t_n(\phi_n(a))\rforal a\in A,
\eneq
where $t_n\in  T(D_n),$ $n=1,2,....$

\vspace{0.1in}

We say that $\tau$ is a $W$-trace
if there exists a sequence of
\cpc s $\{\phi_n\}$ from $A$ into $W$ such that
such that
\beq\nonumber
&&\lim_{n\to\infty}\|\phi_n(ab)-\phi_n(a)\phi_n(b)\|=0\rforal a,\,b\in A\andeqn\\
&&\tau(a)=\lim_{n\to\infty}t_0(\phi_n(a))\rforal a\in A,
\eneq
where $t_0$ is the unique tracial state on $W.$

\end{df}

\begin{prop}\label{wxtrace}
Let $A$ be a non-unital separable simple \CA\, and
let $\tau\in T(A).$
The following are equivalent.

{\rm (1)}  $\tau$ is a $W$-trace.

{\rm (2)} There exists a sequence of
\cpc s $\{\phi_n\}$ from $A$ into $D_n\in {\cal C}_0^{0'}$ such that
such that
\beq\nonumber
&&\lim_{n\to\infty}\|\phi_n(ab)-\phi_n(a)\phi_n(b)\|=0\rforal a,\,b\in A\andeqn\\
&&\tau(a)=\lim_{n\to\infty}t_n(\phi_n(a))\rforal a\in A,
\eneq
where $t_n\in T(D_n).$
\end{prop}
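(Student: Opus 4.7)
The plan is to establish the two implications (1)$\Rightarrow$(2) and (2)$\Rightarrow$(1) by exploiting the explicit inductive-limit structure of $W$ as $W=\overline{\bigcup_k W_k}$ with each $W_k$ a finite direct sum of Razak blocks $R(k,m,m+1)\in{\cal C}_0^0$, together with the classification of maps from one-dimensional NCCW complices into $W$-like algebras (via \ref{Lrluniq} and the existence theorems of \cite{Rl}). The implication (1)$\Rightarrow$(2) is a density-plus-approximation argument through the finite stages $W_k$, while (2)$\Rightarrow$(1) is the deeper direction and uses Robert's existence theorem to assemble the $D_n$-data into a map into $W$.

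For (1)$\Rightarrow$(2), starting from c.p.c.\ approximately multiplicative $\phi_n:A\to W$ with $t_0\circ\phi_n\to\tau$, I would fix an exhaustion ${\cal F}_n\subset A$ and $\varepsilon_n\searrow 0$. Nuclearity of $W$ produces finite-dimensional $F_n$ and c.p.c.\ maps $\alpha_n:W\to F_n$, $\beta_n:F_n\to W$ with $\beta_n\alpha_n\approx_{\varepsilon_n}{\rm id}$ on $\phi_n({\cal F}_n)$. Because $F_n$ is finite-dimensional, Choi's theorem realizes $\beta_n$ as a positive element of $M_\bullet(W)$, which can be norm-approximated by a positive element of $M_\bullet(W_{k_n})$ for sufficiently large $k_n$; this yields c.p.c.\ $\beta_n':F_n\to W_{k_n}$ with $\beta_n'\approx\beta_n$ on a basis. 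Setting $\phi_n':=\beta_n'\alpha_n\phi_n:A\to W_{k_n}$ gives a c.p.c., $({\cal F}_n,\varepsilon_n)$-multiplicative map with $\|\phi_n'(a)-\phi_n(a)\|\to 0$ on ${\cal F}_n$. Finally, choose a full hereditary subalgebra $D_n\subset W_{k_n}$ containing $\phi_n'({\cal F}_n)$ and large enough that the restriction $t_0|_{D_n}$ has norm close to $1$ (possible since $\bigcup_k W_k$ is dense in $W$ and $t_0$ is a tracial state). Since $W_{k_n}\in{\cal C}_0^0$ one has $D_n\in{\cal C}_0^{0'}$, and after renormalising $t_0|_{D_n}$ to a tracial state $t_n$ one obtains $t_n(\phi_n'(a))\to\tau(a)$ as required.

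For (2)$\Rightarrow$(1), given $\phi_n:A\to D_n\in{\cal C}_0^{0'}$ with $t_n\circ\phi_n\to\tau$, the strategy is to construct *-homomorphisms $\rho_n:D_n\to W$ with $t_0\circ\rho_n=t_n$ and set $\psi_n:=\rho_n\circ\phi_n$. Since $D_n\in{\cal C}_0^{0'}$ is a full hereditary subalgebra of some $B_n\in{\cal C}_0^0$, Brown's stabilization theorem together with the definition of ${\cal C}_0^0$ gives $K_i(D_n)=\{0\}$ for $i=0,1$. The trace $t_n$ therefore determines a ${\rm Cu}^\sim$-morphism $\mathrm{Cu}^\sim(D_n)\to\mathrm{Cu}^\sim(W)$ (the $K$-theoretic components vanish, so only the trace-pairing piece is relevant, and $\mathrm{Cu}^\sim(W)$ is essentially ${\rm LAff}_{b+}(T(W))$ since $K_*(W)=\{0\}$). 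Robert's existence theorem \cite{Rl} then produces a *-hom $\rho_n:D_n\to W$ realising this $\mathrm{Cu}^\sim$-morphism, whence $t_0\circ\rho_n=t_n$. The composition $\psi_n=\rho_n\circ\phi_n$ is c.p.c., approximately multiplicative (composition of an approximately multiplicative c.p.c.\ map with an honest *-hom), and satisfies $t_0(\psi_n(a))=t_n(\phi_n(a))\to\tau(a)$, exhibiting $\tau$ as a $W$-trace.

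The main obstacle is the existence step in (2)$\Rightarrow$(1): one has to verify carefully that the $\mathrm{Cu}^\sim$-morphism induced by $t_n$ really is a valid $\mathrm{Cu}^\sim$-morphism in the sense required for Robert's theorem (compatibility with suprema of increasing sequences, the compact-containment relation $\ll$, and the scale), and that the resulting *-hom matches $t_n$ exactly rather than merely approximately. A lesser technical point, in (1)$\Rightarrow$(2), is the renormalisation of $t_0|_{D_n}$ to a tracial state; this is handled by enlarging $D_n$ inside $W_{k_n}$ so that $t_0|_{D_n}$ has norm approaching $1$, using density of $\bigcup_k W_k$ in $W$.
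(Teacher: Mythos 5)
Your proposal is correct and follows essentially the same route as the paper: the implication (2)$\Rightarrow$(1) is proved exactly as in the paper, by using the vanishing of $K_*(D_n)$ and Robert's ${\rm Cu}^{\sim}$-existence theorem to produce homomorphisms $D_n\to W$ realizing $t_n$ through the unique trace of $W$, and then composing with $\phi_n$. Your (1)$\Rightarrow$(2) argument is just a detailed version of the perturbation into the finite stages $W_{k}$ of the inductive limit, which the paper treats as immediate.
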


\begin{proof}
Suppose that (1) holds.  Since $W$ is an inductive limit of \CA s in $C_0^0,$
(2) holds immediately.

Suppose (2) holds.
Let $\Gamma_n: T(W)=\{t_0\}\to T(D_n)$ by
$\Gamma_n(t_0)=t_n.$ Then there is affine map
$\gamma_n: {\rm LAff}_b(T(D_n))\to {\rm LAff}_b(T(W))=\R$ defined by
$\gamma_n(f)(t_0)=f(t_n),$ $n=1,2,....$
This induces a \hm\, from $\gamma_n^*: Cu^{\sim}(D_n)\to Cu^{\sim}(W).$
It follows from \cite{Rl} that there is a \hm\,
$\psi_n: D_n\to W$ such that $Cu^{\sim}(\psi_n)=\gamma_n^*.$
In particular,
$$
t_0\circ \psi_n(a)=t_n(a)\rforal a\in (D_n)_+.
$$
Define $\Psi_n: A\to W$ by $\Psi_n=\psi_n\circ \phi_n.$
It follows that
\beq
\lim_{n\to\infty}t_0\circ \Psi_n(a)&=&\lim_{n\to\infty}t_0\circ \psi_n\circ \phi_n(a)\\
&=&
\lim_{n\to\infty}t_n\phi_n(a)=\tau(a)\rforal a\in A.
\eneq
\end{proof}

\begin{thm}\label{Ttracekerrho}
Let $A$ be a non-unital separable simple \CA\, which is quasi-compact.
If every tracial state $\tau\in T(A)$ is a $W$-trace, then
$K_0(A)={\rm ker}_A.$
\end{thm}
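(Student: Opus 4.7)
The plan is to show that $\rho_A(x)(\tau)=0$ for every $x\in K_0(A)$ and every $\tau\in T(A)$; since $\rho_A(x)$ is lower semi-continuous affine on $\overline{T(A)}^w$ (and in fact continuous in the quasi-compact setting, using \ref{compactrace}) and $T(A)$ is dense in $\overline{T(A)}^w,$ this suffices. Since $A$ is quasi-compact, $A=P(A)$ by \ref{Tpedersen}, so we may fix a full element $e\in A_+$ and work with the unital hereditary \SCA\ $A_e=\overline{eAe};$ by Brown's stable isomorphism theorem $K_0(A)\cong K_0(A_e),$ and elements of $K_0(A)$ are represented as $[p]-[q]$ with $p,q\in M_k(\widetilde{A_e})$ projections having the same image in $K_0(\C).$ Under the definition of $\rho_A$ in \ref{Dkerrho}, $\rho_A(x)(\tau)=\widehat{\tau}(p)-\widehat{\tau}(q),$ where $\widehat{\tau}$ denotes the canonical extension of $\tau|_{A_e}$ to $M_k(\widetilde{A_e})$ (this extension exists because $A$ is quasi-compact and so $\|\tau|_{A_e}\|$ is finite and controlled).

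Fix $\tau\in T(A).$ By \ref{wxtrace}, since $\tau$ is a $W$-trace there exist $D_n\in\mathcal{C}_0^{0'},$ \cpc s $\phi_n:A\to D_n$ which are approximately multiplicative, and $t_n\in T(D_n)$ with $\tau(a)=\lim_n t_n(\phi_n(a))$ for every $a\in A.$ Restrict to $A_e$ and take unital extensions $\widetilde{\phi_n}:\widetilde{A_e}\to\widetilde{D_n},$ $\widetilde{\phi_n}(a+\lambda\cdot 1)=\phi_n(a)+\lambda\cdot 1_{\widetilde{D_n}},$ and amplify to $M_k.$ These extensions remain approximately multiplicative and therefore for $n$ large enough, $\widetilde{\phi_n}(p)$ and $\widetilde{\phi_n}(q)$ are self-adjoint contractions with spectra clustered in a neighborhood of $\{0,1\};$ the continuous functional calculus (using a cut-off $\chi_{[1/2,1]}$) yields projections $P_n,Q_n\in M_k(\widetilde{D_n})$ with $\|P_n-\widetilde{\phi_n}(p)\|$ and $\|Q_n-\widetilde{\phi_n}(q)\|$ tending to zero.

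Because $\widetilde{\phi_n}$ is unital, the scalar parts of $P_n$ and $Q_n$ coincide with those of $p$ and $q;$ hence $[P_n]$ and $[Q_n]$ have the same image in $K_0(\C),$ so $[P_n]-[Q_n]$ lies in $K_0(D_n).$ But $D_n\in\mathcal{C}_0^{0'}$ has $K_0(D_n)=\{0\},$ so $[P_n]=[Q_n]$ in $K_0(\widetilde{D_n}),$ and after possibly stabilizing, $P_n\oplus 1_{m_n}$ is Murray--von Neumann equivalent to $Q_n\oplus 1_{m_n}$ in some matrix algebra over $\widetilde{D_n}.$ Extending $t_n$ to the tracial state $\widetilde{t_n}$ on $\widetilde{D_n}$ and then to the matrix amplification via $\widetilde{t_n}\otimes\mathrm{Tr},$ we get $\widetilde{t_n}(P_n)=\widetilde{t_n}(Q_n).$ Passing to the limit,
\[
\widehat{\tau}(p)=\lim_{n\to\infty}\widetilde{t_n}(\widetilde{\phi_n}(p))=\lim_{n\to\infty}\widetilde{t_n}(P_n)=\lim_{n\to\infty}\widetilde{t_n}(Q_n)=\widehat{\tau}(q),
\]
so $\rho_A(x)(\tau)=0,$ as required.

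The main obstacle is keeping the various extensions of traces and \cpc s compatible so that the identity $\widehat{\tau}(p)=\lim_n \widetilde{t_n}(\widetilde{\phi_n}(p))$ actually holds for $p\in M_k(\widetilde{A_e}).$ This requires verifying that (i) the convergence $t_n\circ\phi_n\to\tau$ on $A$ upgrades to convergence on $\widetilde{A_e}$ via the canonical unital extensions, which follows because both sides agree on scalars $\lambda\cdot 1,$ and (ii) the perturbation of $\widetilde{\phi_n}(p)$ to a genuine projection $P_n$ only changes traces by $o(1).$ All other steps---representing $x\in K_0(A),$ using $K_0(D_n)=\{0\},$ and taking limits---are routine once these compatibility issues are in place.
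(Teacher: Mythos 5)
Your argument is correct and follows essentially the same route as the paper: approximately factor the trace through algebras with trivial $K_0$, perturb the images of $p,q$ under the unital extensions to genuine projections, use the matching scalar parts together with $K_0=\{0\}$ of the target to force equality of their traces, and pass to the limit. The only cosmetic differences are that the paper works with $W$ directly (arguing by contradiction and invoking stable rank one of $\tilde W$ to get Murray--von Neumann equivalence), while you use the $\mathcal{C}_0^{0'}$ formulation from \ref{wxtrace} and stabilize, which changes nothing essential.
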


\begin{proof}
Fix $\tau\in T(A).$
Suppose that there are two projections $p, q\in M_k({\tilde A})$ such that
$x=[p]-[q]\in K_0(A)$ and $\tau(p)\not=\tau(q).$

Let $d=|\tau(p)-\tau(q)|.$
Note that $\pi(p)$ and $\pi(q)$ have the same rank in $M_k(\C),$
where $\pi: M_k({\tilde A})\to M_k(\C)$ is the quotient map.

Denote still by $\tau$ the extension of $\tau$ on ${\tilde A}$ as well as
on $M_n({\tilde A}).$
If $\tau$ were $W$-trace, there there would be a sequence $\{\phi_n\}$
of \cpc s from $M_k(A)$ into $M_k(W)$  such that
\beq\nonumber
&&\lim_{n\to\infty}\|\phi_n(a)\phi_n(b)-\phi_n(ab)\|=0\rforal a, b\in M_k(A)\andeqn\\
&&\tau(a)=\lim_{n\to\infty}t_0\circ\phi_n(a)\rforal a\in M_k(A),
\eneq
where $t_0$ is the unique tracial state on $W.$
Let ${\tilde \phi}_n: A\to M_k(W)$ be
an extension of \cpc\, such that ${\tilde \phi_n}(1)=1_{\tilde W}.$
It follows that
$$
\lim_{n\to\infty}\|{\tilde \phi}_n(a){\tilde \phi}(b)-{\tilde \phi}(ab)\|=0\rforal a, b\in M_k({\tilde A}).
$$
Let $t_0$ also denote the extension of $t_0$ on $M_k({\tilde W}).$
Then we also have
$$
\tau(a)=\lim_{n\to\infty} t\circ {\tilde \phi}_n(a)\rforal a\in M_k({\tilde A}).
$$
To simplify notation, \wilog, we may assume that
\beq\label{Ttkr-1}
|t\circ{\tilde \phi}_n(p)-t\circ {\tilde \phi}_n(q)|\ge d/2\rforal n.
\eneq
There are projections $p_n,\, q_n\in M_k({\tilde A})$ such that
\beq\label{Ttkr-2}
&&\lim_{n\to\infty}\|{\tilde \phi}_n(p)-p_n\|=0\andeqn\\\label{Ttkr-3}
&&\lim_{n\to\infty}\|{\tilde \phi}_n(q)-q_n\|=0.
\eneq	

Since $\pi(p)$ and $\pi(q)$ has the same rank, there are $v\in M_k({\tilde A})$ such that
$\pi(v^*v)=p$ and $\pi(vv^*)=q.$
Let $\pi_w: M_k({\tilde W})\to M_k$ be the quotient map.  Then
\beq\label{Ttkr-4}
\lim_{n\to\infty}\|\pi_n\circ \phi_n(v^*v)-\pi_n(p_n)\|=0\andeqn
\lim_{n\to\infty}\|\pi_n\circ \phi_n(vv^*)-\pi_n(q_n)\|=0.
\eneq
It follows that $\pi_w(p_n)$ and $\pi_w(q_n)$ are equivalent projections in $M_k$ for all
large $n.$  Since $K_0(W)=0,$ it follows
that $[p_n]-[q_n]=0$ in  $K_0(W)$ which means
that $p_n$ and $q_n$ are equivalent in $M_k({\tilde W})$ since ${\tilde W}$ has stable rank one.
In particular,
$$
t_0(p_n)=t_0(q_n)
$$
for all sufficiently large $n.$  This contradicts with  the fact that \eqref{Ttkr-1}, \eqref{Ttkr-2} and \eqref{Ttkr-3}
hold.

\end{proof}

\begin{prop}\label{Pwtracest}
Let $A$ be a non-unital simple \CA\,  with a $W$-tracial state $\tau\in T(A).$
Let $0\le a\le 1$ be a strictly positive element of $A.$
Then there exists a sequence of \cpc s  $\phi_n: A\to W$ such that
$\phi_n(a)$ is a strictly positive element,
\beq\nonumber
&&\lim_{n\to\infty}\|\phi_n(a)\phi_n(b)-\phi_n(ab)\|=0\rforal a,\, b\in A\andeqn\\
&&\tau(a)=\lim_{n\to\infty} t_w\circ \phi_n(a)\rforal a\in A,
\eneq
where $t_w$ is the unique tracial state of $W.$
\end{prop}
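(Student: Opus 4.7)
The plan is to start with the defining sequence of \cpc s $\phi_n : A \to W$ given by the $W$-trace property of $\tau$, and then post-compose each $\phi_n$ with a trace-scaled $*$-embedding of the hereditary \SCA\ $B_n := \overline{\phi_n(a)W\phi_n(a)}$ back into $W$ that carries $\phi_n(a)$ to a strictly positive element of $W$. The key enabling fact is that, since $\tau \in T(A)$ is a tracial state and $a$ is strictly positive in $A$, one has $d_\tau(a) = 1$, matching the scale of $W$ in the limit.

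The first step is to arrange that $r_n := d_{t_w}(\phi_n(a)) \to 1$. Approximate multiplicativity gives, for each fixed $m$, $\|\phi_n(a^{1/m}) - \phi_n(a)^{1/m}\| \to 0$ as $n \to \infty$; combined with $t_w \circ \phi_n \to \tau$ pointwise, this yields $\lim_n t_w(\phi_n(a)^{1/m}) = \tau(a^{1/m})$ for each $m$. Since the canonical extension of $\tau$ to $A^{**}$ has norm $1$ and $a^{1/m}$ increases strongly to $1_{A^{**}}$, $\tau(a^{1/m}) \nearrow 1$. Together with the bound $d_{t_w}(\phi_n(a)) \le 1$, a diagonal extraction yields $r_n \to 1$ (possibly after passing to a subsequence of $(\phi_n)$).

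The second step is to produce, for each $n$, a $*$-homomorphism $\iota_n : B_n \to W$ with $\iota_n(\phi_n(a))$ strictly positive in $W$ and $t_w \circ \iota_n = r_n^{-1} t_w|_{B_n}$. By \ref{PM0her}, $B_n \in {\cal M}_0$, and $\phi_n(a)$ is strictly positive in $B_n$; the tracial cone $\widetilde T(B_n)$ is the ray $\R_+ \cdot t_w|_{B_n}$, so the rescaling $t_w \mapsto r_n^{-1} t_w|_{B_n}$ is an affine positive map $\widetilde T(W) \to \widetilde T(B_n)$ whose dual is a $Cu^\sim$-morphism $Cu^\sim(B_n) \to Cu^\sim(W)$ sending the class of $\phi_n(a)$ (with dimension value $r_n$) to the class of a strictly positive element $e_W$ of $W$ (with dimension value $1$). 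Robert's classification \cite{Rl} for homomorphisms out of inductive limits of algebras in ${\cal C}_0^0$ then lifts this $Cu^\sim$-morphism to the desired $*$-homomorphism $\iota_n$.

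Finally, choose $m(n) \to \infty$ slowly and set $\psi_n(b) := \iota_n(\phi_n(a)^{1/2m(n)} \phi_n(b) \phi_n(a)^{1/2m(n)})$; the argument lies in $B_n$, so $\iota_n$ applies, and $\psi_n(a) = \iota_n(\phi_n(a)^{1+1/m(n)})$ is strictly positive in $W$. Approximate multiplicativity of $\psi_n$ reduces to that of $\phi_n$ together with the slow growth of $m(n)$, and the trace computation
\[ t_w(\psi_n(b)) \;=\; r_n^{-1}\, t_w\bigl(\phi_n(a^{1/m(n)} b)\bigr) + o(1) \;\longrightarrow\; \tau(b) \]
follows from traciality, $r_n \to 1$, $\|a^{1/m(n)} b - b\| \to 0$ (as $a$ is strictly positive), and $t_w \circ \phi_n \to \tau$. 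The main obstacle lies in the second step: one must invoke Robert's classification to realize the rescaled $Cu^\sim$-morphism by a genuine $*$-homomorphism with the \emph{exact} trace scaling $r_n^{-1}$; a hands-on alternative would use the inductive limit decomposition $W = \lim R_k$ with $R_k \in {\cal R}$ together with semiprojectivity (\ref{str=1}(3)) to build $\iota_n$ directly, but the bookkeeping of scales along the connecting maps becomes less transparent.
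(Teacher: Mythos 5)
Your proposal is correct and takes essentially the same route as the paper's proof: the paper cuts down to the hereditary subalgebra $B_n=\overline{b_nWb_n}$ with $b_n=\psi_n(a^{1/n})$ (so $t_w(b_n)\ge 1-1/n$), invokes Robert's classification \cite{Rl} to get $h_n\colon B_n\to W$ with $[h_n(b_n)]=[e]$ in $Cu(W)$ for $e$ strictly positive, and then deduces the trace scaling factor $\alpha_n\to 1$ from $1\le \alpha_n\le (1-1/n)^{-1}$, rather than prescribing it as $r_n^{-1}$ in the $Cu^{\sim}$-morphism as you do. Your explicit compression by $\phi_n(a)^{1/2m(n)}$ with $m(n)\to\infty$ slowly in fact spells out a step the paper leaves implicit when it simply writes $\phi_n=h_n\circ\psi_n$.
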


\begin{proof}
We may assume
that
\vspace{-0.1in}$$
\tau(a^{1/n})\ge 1-1/2n,\,\,\, n=1,2,....
$$
Since $\tau$ is a $W$-tracial state, there exists a sequence  of \cpc s
$\psi_n: A\to W$ such that
\beq\nonumber
\lim_{n\to\infty}\|\psi_n(a)\psi_n(b)-\psi_n(ab)\|=0\rforal a,\, b\in A\andeqn\\
\tau(a)=\lim_{n\to\infty} t_w\circ \psi_n(a)\rforal a\in A.
\eneq

Put $b_n=\phi_n(a^{1/n}).$  \Wlog, passing to a subsequence of $\{\psi_n\},$ we may assume
\beq\label{Pwtst-1}
t_w(b_n)\ge 1-1/n,\,\,\, n=1,2,....
\eneq

Consider $B_n=\overline{b_nWb_n}.$  Choose a strictly positive element
$0\le e\le 1$ of $W.$   By \cite{Rl}, there is a \hm\,
$h_n: B_n\to W$ such that $[h_n(b_n)]=[e]$ in $Cu(W).$ In particular,
$h_n(b_n)$ is strictly positive.  Since $B_n$ has a unique trace,
there is $\af_n>0$ such that
\beq\label{Pwtst-2}
\af_nt_w(b)=t_w\circ h_n(b)\rforal b\in B_n,\,\,\, n=1,2,...
\eneq
Since now $h_n(b_n)$ is strictly positive,
\beq\label{Pwtst-3}
\lim_{k\to\infty}t_w\circ h_n(b^{1/k})=1.
\eneq
Since $B_n\subset W,$ \eqref{Pwtst-2} and \eqref{Pwtst-3} imply that
\beq\label{Pstst-4}
\af_n\ge 1.
\eneq

On the other hand, by \eqref{Pwtst-1}, we have that
$$
1-1/n \le {t_w\circ h_n(b_n)\over{\af_n}}\le 1/\af_n,\,\,\, n=1,2,....
$$
Therefore
$$
\af_n\le {1\over{1-1/n}}.
$$
It follows that $\lim_{n\to\infty} \af_n=1.$
Let $\phi_n=h_n\circ \psi_n.$ One verifies that $\{\phi_n\}$ meets the requirement.
\end{proof}




\begin{prop}\label{Pwtrace}
Let $A$ be a separable \CA\,  which is quasi-compact and every tracial state $\tau$ is
quasidiagonal.
Let $Z\in {\cal D}_0$ be a  simple \CA\, which is an inductive limit \CA s in ${\cal C}_0'$
such that $Z$ is quasi-compact, $K_0(Z)={\rm ker}\rho_Z$ with a unique
tracial state.
Then all  tracial states of $A\otimes Z$  and $A\otimes W$  are  $W$-tracial states.
\end{prop}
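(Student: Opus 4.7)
The plan is to factorise traces on $A\otimes Z$ as products of traces and then tensor together a quasidiagonal approximation of the $A$-factor with an approximation of $t_Z$ coming from the class $\mathcal{D}_0$.

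First I would show that every $\omega\in T(A\otimes Z)$ has the form $\omega=\tau\otimes t_Z$ for a tracial state $\tau$ on $A$. Since $Z$ is nuclear (it is an inductive limit of one-dimensional NCCW complexes, which are type I), the spatial and maximal tensor products agree, and for each $a\in A_+$ the map $b\mapsto \omega(a\otimes b)$ is a bounded positive trace on $Z$, which by uniqueness of $t_Z$ equals $\tau(a)\,t_Z(\cdot)$ for some $\tau(a)\ge 0$. Polarisation gives $\tau\in T(A)$, and $\omega=\tau\otimes t_Z$ on simple tensors and hence on all of $A\otimes Z$. The same argument handles $A\otimes W$. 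By the quasidiagonality hypothesis, I then choose c.p.c.\ approximately multiplicative $\psi_n:A\to M_{k(n)}$ with $\mathrm{tr}_{k(n)}\circ\psi_n\to\tau$ pointwise on $A$.

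Next I would approximate $t_Z$ through algebras in $\mathcal{C}_0^{0'}$. Applied to an increasing sequence of finite subsets $F_n\subset Z$ whose union is dense, tolerance $1/n$, and elements $b_n\in Z_+\setminus\{0\}$ with $d_{t_Z}(b_n)<1/n$, Definition~\ref{DNtr1div} gives c.p.c.\ approximately multiplicative $\phi_n:Z\to Z$ and $\chi_n:Z\to D_n\in\mathcal{C}_0^{0'}$ with $\|z-\mathrm{diag}(\phi_n(z),\chi_n(z))\|<1/n$ on $F_n$, $\phi_n(a)\lesssim b_n$ for a fixed strictly positive $a\in Z$, and $\chi_n(a)$ strictly positive in $D_n$. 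Since $\phi_n(z)\in\overline{\phi_n(a)Z\phi_n(a)}$, we obtain $|t_Z(\phi_n(z))|\le \|z\|\,d_{t_Z}(\phi_n(a))<\|z\|/n$, so $t_Z\circ\chi_n\to t_Z$ pointwise on $Z$. The normalisations $t_{D_n}:=t_Z|_{D_n}/\|t_Z|_{D_n}\|$ are tracial states on $D_n$, and a Cuntz-comparison argument using Theorem~\ref{Comparison} shows $\|t_Z|_{D_n}\|=d_{t_Z}(\chi_n(a))\to d_{t_Z}(a)=\|t_Z\|=1$, because $t_Z$ is a state on the quasi-compact algebra $Z$. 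Therefore $t_{D_n}\circ\chi_n\to t_Z$ pointwise.

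Finally, I would form $\Phi_n:=\psi_n\otimes\chi_n:A\otimes Z\to M_{k(n)}(D_n)$; this is c.p.c.\ and approximately multiplicative on simple tensors, hence on $A\otimes Z$. When $D_n$ is a full hereditary subalgebra of $E_n\in\mathcal{C}_0^0$ with $E_n=A(F_1,F_2,\phi_0,\phi_1)$, then $M_{k(n)}(D_n)$ is a full hereditary subalgebra of $M_{k(n)}(E_n)=A(M_{k(n)}F_1,M_{k(n)}F_2,\mathrm{id}\otimes\phi_0,\mathrm{id}\otimes\phi_1)$, which is still a one-dimensional NCCW complex with trivial $K$-theory and with $0\notin\overline{T}^w$ (these conditions are Morita invariant); hence $M_{k(n)}(D_n)\in\mathcal{C}_0^{0'}$. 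Using the product tracial states $s_n:=\mathrm{tr}_{k(n)}\otimes t_{D_n}$, one computes
\[s_n\circ\Phi_n(a\otimes c)=\mathrm{tr}_{k(n)}(\psi_n(a))\,t_{D_n}(\chi_n(c))\longrightarrow\tau(a)\,t_Z(c)=\omega(a\otimes c),\]
and by bilinearity and norm-continuity the convergence extends to all of $A\otimes Z$. The equivalent characterisation of $W$-traces via $\mathcal{C}_0^{0'}$-algebras in Proposition~\ref{wxtrace} then yields $\omega\in T_W(A\otimes Z)$. The $A\otimes W$ case is identical, since $W$ itself satisfies the hypotheses placed on $Z$ (unique tracial state, quasi-compact, $K_0(W)=0=\ker\rho_W$, $W\in\mathcal{D}_0$, and $W=\varinjlim R_m$ with $R_m\in\mathcal{C}_0^0$).

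The main obstacle I anticipate is the normalisation step $\|t_Z|_{D_n}\|\to 1$ in Step two: it requires that the dimension function $d_{t_Z}(\chi_n(a))$ really captures the full scale $d_{t_Z}(a)$ in the limit, which relies on the orthogonality of the decomposition $z\approx\phi_n(z)\oplus\chi_n(z)$, the upper bound $d_{t_Z}(\phi_n(a))\le d_{t_Z}(b_n)$, and the strict comparison of positive elements for algebras in $\mathcal{D}$ (Theorem~\ref{Comparison}); packaging these into a clean convergence of tracial states on a non-canonical family of subalgebras is the only place where genuine technical work seems unavoidable.
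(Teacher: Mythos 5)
Your proof is correct, but it takes a genuinely different route from the paper's. The paper also (implicitly) factors $\omega=\tau\otimes t_Z$ and tensors a quasidiagonal approximation $\psi_n\colon A\to M_{k(n)}$ of $\tau$ against the $Z$-leg, but it treats the $Z$-leg with an honest \hm: by Robert's existence theorem (\cite{Rl}) there is, for each $n$, a \hm\ $h_{k(n)}\colon M_{k(n)}(Z)\to W$ sending a strictly positive element to a strictly positive one, and uniqueness of the traces of $Z$ and $W$ forces $t_w\circ h_{k(n)}=\mathrm{tr}_{k(n)}\otimes t_Z$; the maps $a\otimes b\mapsto h_{k(n)}(\psi_n(a)\otimes b)$ then land directly in $W$ and recover $\tau\otimes t_Z$. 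Thus the paper uses the hypothesis that $Z$ is an inductive limit of one-dimensional NCCWs (to invoke Robert) and never uses $Z\in{\cal D}_0$, whereas you use only $Z\in{\cal D}_0$ together with the unique trace and quasi-compactness to tracially compress $Z$ into algebras $D_n\in{\cal C}_0^{0'}$, and you delegate Robert's theorem to the already-proved Proposition~\ref{wxtrace}; your version therefore applies verbatim to any monotracial $Z\in{\cal D}_0$, at the cost of the normalization step $\|t_Z|_{D_n}\|\to 1$, while the paper's version is shorter and gets the trace identity for free from uniqueness. Three small repairs: the normalization does not need Theorem~\ref{Comparison} --- it follows from Lemma~\ref{Lrorm} applied to $\|a-\mathrm{diag}(\phi_n(a),\chi_n(a))\|<1/n$ (or by putting $a^{1/m}$ into the finite sets), exactly as in the proof of Proposition~\ref{PD0qc}, which establishes the very limits $t_Z\circ\chi_n\to t_Z$ and $\|t_Z|_{D_n}\|\to 1$ that you need; for $A\otimes W$ you should either justify $W\in{\cal D}_0$ (Corollary~\ref{R1} for a one-point simplex combined with $W\otimes Q\cong W$ from Razak's classification) or simply run the same tensor argument with $\chi_n$ obtained from Proposition~\ref{wxtrace} applied to the (trivially) $W$-trace $t_W$; and note that Proposition~\ref{wxtrace} is stated for simple algebras while $A\otimes Z$ need not be simple here --- its proof never uses simplicity, so this is harmless, but it deserves a remark when you cite it.
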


\begin{proof}
Let $\tau\in T(A).$ Denote by $t$ the unique tracial state of $Z.$
We will show $\tau\otimes t$ is a W-trace on $A\otimes Z.$

For each $n,$ there is \hm\, $h_n: M_n(Z)\to W$ (by \cite{Rl}) such that
$h_n$ maps a strictly positive element of $M_n(Z)$ to an one in $W.$
Let $t_w\in T(W).$ Then $t_w\circ h_n$ is a tracial state of $Z.$
Therefore $t(a)=t_w\circ h_n(a)$ for all $a\in M_n(Z).$
Moreover,  for any $a\in M_n$ and $b\in Z,$
$$
tr_n(a)t(b)=t_w\circ h_n(a\otimes b),
$$
where $tr_n$ is the normalized on $M_n,$ $n=1,2,....$

Since $\tau$ is quasidiagonal, there is a sequence
$\psi_n: A\to M_{k(n)}$ of \cpc s such that
\beq\label{Pwt-1}
&&\lim_{n\to\infty}\|\psi_n(ab)-\psi_n(a)\psi_n(b)\|=0\rforal a, \, b\in A\andeqn\\
&&\tau(a)=\lim_{n\to\infty}tr_{k(n)}\circ\psi_n(a)\rforal a\in A.
\eneq

Define $\phi_n: A\otimes Z\to W$
by $\phi_n(a\otimes b)=h_{k(n)}\circ  (\psi_n(a)\otimes b)$ for all $a\in A$ and $b\in Z.$
Then $\phi_n$ is \cpc\, and, for any $a\in A$ and $b\in Z,$
\beq\label{Pwt-3}
(\tau\otimes t)(a\otimes b)&=&\lim_{n\to\infty} tr_{k(n)}(\psi_n(a))t(b)\\
&=& \lim_{n\to\infty} t_w\circ h_n(\psi_n(a)\otimes b)\\
&=& \lim_{n\to\infty} t_w\circ h_n(\phi_n(a\otimes b).
\eneq
Therefore $\tau\otimes t$ is $W$-trace.

A similar proof shows that every tracial state of $A\otimes W$ is also a $W$-tracial state.
\end{proof}

\begin{thm}\label{TMW}
Let $A$ be a non-unital separable simple \CA\, with finite nuclear dimension  which is quasi-compact such that
$T(A)\not=\emptyset$ and
every tracial state is a $W$-trace.
Suppose also that $K_0(A)={\rm ker}\rho_A.$
Then $A\otimes U\in {\cal D}_{0}$ for any UHF-algebra $U.$
\end{thm}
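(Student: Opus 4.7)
The plan is to apply Theorem \ref{TTTAD} directly to $A\otimes U$, with target algebra a suitable $C\in\mathcal{M}_0$ produced via Corollary \ref{R1}, and to produce the approximating maps $\sigma_n, \rho_n$ by combining the $W$-trace hypothesis with Robert's classification of morphisms out of $\mathcal{M}_0$-algebras. First I would reduce to the case where $A$ itself has continuous scale: by \ref{Rcontext} there is a full hereditary \SCA\ $A'\subset A$ with continuous scale, and $A'$ inherits from $A$ finite nuclear dimension, the condition $K_0(A')=\ker\rho_{A'}$ (since $K_0$ is preserved by full hereditary inclusion), quasi-compactness, and the $W$-trace property of every tracial state. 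Because $\mathcal{D}_0$ passes to matrix amplifications (\ref{PtadMk}) and to hereditary subalgebras (\ref{Phered}), showing $A'\otimes U\in\mathcal{D}_0$ is equivalent via Brown's theorem to showing $A\otimes U\in\mathcal{D}_0$, so I replace $A$ by $A'$.

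Next I would verify that $A\otimes U$ satisfies the structural hypotheses of \ref{TTTAD}. It is separable and simple; its $K_0$ is still $\ker\rho$ because traces on $A\otimes U$ are exactly $\tau\otimes\tau_U$ for $\tau\in T(A)$ (the unique trace on $U$) and the K\"unneth formula gives $K_0(A\otimes U)\cong K_0(A)\otimes K_0(U)$, which forces any positive class to have zero pairing with every trace; this also shows $A\otimes U$ is stably projectionless. Finite nuclear dimension is preserved under tensoring with UHF algebras. Since $\dim_{\mathrm{nuc}}(A\otimes U)<\infty$ implies $\mathcal{Z}$-stability, by \cite{Rrzstable} and Robert's theorem for stably projectionless $\mathcal{Z}$-stable simple \CA s, $A\otimes U$ has stable rank one (hence almost stable rank one). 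Continuous scale and quasi-compactness descend to $A\otimes U$. The tracial approximate divisibility hypothesis on every hereditary \SCA\ follows easily because every hereditary \SCA\ $B\subset A\otimes U$ absorbs $U$ tensorially (via Brown's stable isomorphism applied inside the UHF-stable class $A\otimes U\cong A\otimes U\otimes U$), and the $M_n$-subalgebra structure in $U$ supplies the splitting $B\approx A_0\oplus M_n(A_1)$ required by \ref{Dappdiv}, with $a_0\lesssim b$ arranged by choosing the complement of a sufficiently large $M_n$-corner.

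For the approximating system, I would invoke \ref{R1} to produce a non-unital simple $C\in\mathcal{M}_0$ with $C\cong C\otimes Q$, continuous scale, $K_0(C)=K_1(C)=\{0\}$, and an affine homeomorphism $\gamma:T(C)\to T(A\otimes U)$. The homomorphisms $\rho_n:C\to A\otimes U$ are produced from Robert's existence theorem \cite{Rl}: since $C$ is an inductive limit of 1-dimensional NCCW complexes with trivial K-theory and $A\otimes U$ has stable rank one with the needed tracial cone, one obtains $\rho_n$ whose induced map on traces converges to $\gamma$, and which send strictly positive elements to strictly positive elements. The construction of the CPC maps $\sigma_n:A\otimes U\to C$ is the main obstacle and uses the $W$-trace hypothesis crucially. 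By a \ref{Pwtrace}-style argument, every trace on $A\otimes U$ is again a $W$-trace (the $\tau\otimes\tau_U$ factorization lets us compose $W$-approximations for $\tau\in T(A)$ with the trace-preserving embedding $M_k\hookrightarrow W$ for an $M_k$-subalgebra of $U$). By \ref{Pwtracest} each $W$-trace admits approximations by CPC maps $A\otimes U\to W$ preserving strict positivity.

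The final, and hardest, step is to assemble these pointwise-in-$\tau$ approximations into a single $\sigma_n:A\otimes U\to C$ that approximates all traces \emph{uniformly} in $\tau\in T(A\otimes U)$. The strategy is: fix a dense increasing sequence of finite sets $\mathcal{F}_n\subset A\otimes U$; cover the compact metric simplex $T(A\otimes U)$ by finitely many weak-$*$ balls of radius $1/n$ on $\mathcal{F}_n$ centered at extreme traces $\tau_{n,1},\ldots,\tau_{n,L(n)}$, with corresponding $W$-approximations $\phi_{n,j}:A\otimes U\to W$; then use the AF-skeleton of $C$ (writing $C$ as an inductive limit of \CA s in $\mathcal{C}_0^0$, with enough matrix structure supplied by $C\cong C\otimes Q$) together with a finite-dimensional partition-of-unity indexed by $T(C)\cong T(A\otimes U)$ to glue the $\phi_{n,j}$ into a single CPC map into a stage $C_{k(n)}\subset C$. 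The uniform tracial convergence $\sup_{\tau}|\tau(\rho_n\sigma_n(a))-\tau(a)|\to 0$ then follows from the chosen trace-matching of $\rho_n$ together with the Lipschitz control supplied by the partition. Once $(\sigma_n,\rho_n)$ is in place, all hypotheses of Theorem \ref{TTTAD} are verified and we conclude $A\otimes U\in\mathcal{D}_0$.
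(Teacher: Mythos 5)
Your overall strategy---reduce to the continuous scale case via \ref{Rcontext}, use the $W$-trace hypothesis to produce approximately multiplicative \cpc s into a model algebra with the right trace simplex, and then invoke the Winter-type machinery (\ref{TTTAD}, resp.\ \ref{TWLQ})---is the right one and matches the paper in outline. But the step you yourself single out as the hardest, namely assembling the pointwise-in-$\tau$ approximations into a single $\sigma_n$ whose induced tracial data matches the prescribed affine homeomorphism of trace simplices \emph{uniformly}, is a genuine gap as written. A ``finite-dimensional partition of unity indexed by $T(C)$'' has no $C^*$-algebraic meaning here: $C$ is simple, so there is no center or $C(X)$-structure through which continuous functions on the trace simplex could be used to weight and glue the maps $\phi_{n,j}\colon A\otimes U\to W$ into a single \cpc\ map into a stage $C_{k(n)}\subset C$. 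Moreover the finite stages of a general $C\in {\cal M}_0$ supplied by \ref{R1} are algebras in ${\cal C}_0^0$ whose trace simplices need not be finite dimensional, so even granting some gluing device there is no reason the glued map's traces interpolate the values at your net points $\tau_{n,j}$, and covering the (typically infinite-dimensional) simplex $T(A\otimes U)$ by finitely many weak-$*$ balls gives no affine or Lipschitz control of the error by itself.

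The paper resolves exactly this difficulty by a more careful choice of model: by \ref{R2} one takes $B=\lim_{n}(B_n,\imath_n)$ with each $B_n$ a \emph{finite direct sum of copies of $W$} and $T(B)\cong T(A).$ The given affine homeomorphism is then approximated, uniformly on the finite set at hand, by a map factoring through the finite-dimensional simplex $T(B_{n_1})$, whose extreme points are the traces $t_{w,1},\dots,t_{w,m}$ of the summands. Each $\kappa(t_{w,i})\in T(A)$ is a $W$-trace, so one obtains \cpc s $\phi_{n,i}\colon A\to W_i$ (with $\phi_{n,i}(a_0)$ strictly positive by \ref{Pwtracest}), and the direct sum $\phi_n=\bigoplus_i\phi_{n,i}\colon A\to B_{n_1}$ automatically matches \emph{every} trace of $B_{n_1}$ uniformly, because every trace of a direct sum is a convex combination of the $t_{w,i}$ and both sides are affine; no gluing is needed. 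Note also that the paper runs this construction with $A$ itself as domain and obtains $A\otimes U\in{\cal D}_0$ from the conclusion of the Winter-type theorem, which spares you the extra verifications your route requires for $A\otimes U$ (tracial approximate divisibility of all hereditary \SCA s and the $W$-trace property of all traces of $A\otimes U$); your $U$-stability arguments for those points are essentially fine but unnecessary. If you replace the partition-of-unity gluing by the paper's choice of model algebra, the rest of your proposal goes through.
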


\begin{proof}
By \ref{Rcontext}, $A$ has a hereditary \SCA\, which has continuous scale.
It follows that, \wilog, we may assume that $A$ has continuous scale.

It follows from \ref{R2} that there is a non-unital simple \CA\, $B=\lim_{n\to\infty} (B_n, \imath_n),$
where each $B_n$ is a finite direct sum of $W$  and
$\imath_{n, \infty}$ maps strictly positive elements to strictly positive elements such that $B_n$ is quasi-compact and
\beq\label{TMW-1}
T(B)\cong T(A).
\eneq
Note that  $T(A)$  is a  metrizable Choquet simplex.
Denote by $\gamma: T(B)\to T(A)$ the affine homeomorphism.
Let $(\imath_{n, \infty})_{\sharp}: T(B)\to T(B_n)$
such that
$$
t\circ \imath_{n, \infty}(b)=(\imath_{n, \infty})_{\sharp}(t)(b)
$$
for all $b\in B_n,$ $n=1,2,....$  It is an affine continuous map.
Note that $T(W)=\{ t_w\},$ where
$t_w$ is the unique tracial state of $W.$

Fix a strictly positive element $a_0\in A.$
Fix $\ep>0$ and a finite subset ${\cal F}\subset A,$
Since $B=\lim_{n\to\infty} (B_n, \imath_n),$ then, it is standard  and easy to see
that there is  an integer $n_1\ge 1$ and $\kappa: T(B_{n_1})\to T(A)$
such that
\beq\label{TMW-2}
\sup_{\tau\in  T(B)}|\kappa\circ (\imath_{n_1, \infty})_{\sharp}(\tau)(f)-\gamma(\tau)(f)|<\ep/2\rforal f\in {\cal F}.
\eneq
Write $B_{n_1}=W_1\oplus W_2\oplus \cdots \oplus W_{m},$
where each $W_i\cong W.$
Denote by $t_{w,1}, t_{w,2},...,t_{w,m}$ the unique tracial states on $W_i,$
and $\theta_i=\kappa(t_{w,i}),$ $i=1,2,...,m.$
By the assumption, there exists, for each $i,$ a sequence  of \cpc\,
$\phi_{n,i}: A\to W_i$ such that
\beq\label{TMW-3}
&&\lim_{n\to\infty}\|\phi_{n,i}(a)\phi_{n,i}(b)-\phi_{n,i}(ab)\|=0\rforal a,b\in A\andeqn\\
&&\theta_i(a)=\lim_{n\to\infty}t_{w,i}\circ \phi_{n,i}(a)\rforal a\in A.
\eneq
Moreover, by \ref{Pwtracest}, we may assume that $\phi_{n,i}(a_0)$ is strictly positive.
Define $\phi_n: A\to B_{n_1}$ by
\vspace{-0.12in}\beq\label{TMW-4}
\phi_n(a)= \phi_{n,1}(a)\oplus \phi_{n,2}(a)\oplus\cdots \phi_{n,m}(a)\rforal a\in A.
\eneq
Then
\vspace{-0.1in}\beq\label{TMW-4+}
\lim_{n\to\infty}\sup_{\tau\in T(B_{n_1})}\{|\tau(\phi_n(a))-\kappa(\tau)(a)|\}=0
\rforal a\in A.
\eneq
Define $\psi_n: A\to B$ by
\vspace{-0.12in}\beq\label{TMW-5}
\psi_n(a)=\imath_{n_1,\infty}\circ \phi_n(a)\rforal a\in A.
\eneq
Note that $\psi_n(a_0)$ is a strictly positive element.
We also have  that
\beq\label{TMW-6}
\lim_{n\to\infty}\|\psi_n(ab)-\psi_n(a)\psi_n(b)\|=0\rforal a,\, b\in A.
\eneq
Moreover,  for any $\tau\in T(B)$ and any $f\in {\cal F},$
\beq\label{TMW-7}
|\gamma(\tau)(f)-\tau\circ \psi_n(f)| &\le & |\gamma(\tau)(f)-\kappa\circ (\imath_{n_1, \infty})_{\sharp}(\tau)(f)|\\
&&+|\kappa\circ (\imath_{n_1, \infty})_{\sharp}(\tau)(f)-\tau\circ \psi_n(f)|\\
&<&\ep/2+|\kappa\circ (\imath_{n_1, \infty})_{\sharp}(\tau)(f)-\tau\circ \imath_{n_1, \infty}\circ \phi_n(f)|\\
&\le & \ep/2+\sup_{t\in T(B_{n_1})}\{|\tau(\phi_n(f))-\kappa(\tau)(f)|\}.
\eneq
By \eqref{TMW-4+}, there exists $N\ge 1$ such that, for all $n\ge N,$
\beq\label{TMW-8}
\sup_{\tau\in T(B)}\{|\gamma(\tau)(f)-\tau\circ \psi_n(f)|\}<\ep\rforal f\in {\cal F}.
\eneq
Thus,  we obtain a sequence of \cpc s $\sigma_n: A\to B$ such that \eqref{TTWv-1}
and \eqref{TTWv-2} hold.
Thus \ref{TTTAD} applies.
\end{proof}

\begin{thm}\label{TWtrace}
Let $A$ be a non-unital separable simple  \CA\, with finite nuclear dimension  which is quasi-compact.
Suppose that $T(A)\not=\emptyset.$
Then $A\otimes Z, A\otimes W\in {\cal D}_{0},$
where $Z$ is as described in \ref{Pwtrace}.
\end{thm}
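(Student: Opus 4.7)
The plan is to combine Propositions~\ref{Pwtrace} and \ref{Ttracekerrho} with Theorem~\ref{TMW} and a UHF absorption step. First, since $A$ is separable, nuclear (as finite nuclear dimension implies nuclearity), has $\dim_{\mathrm{nuc}}A<\infty$, and $T(A)\neq\emptyset$, every tracial state on $A$ is quasidiagonal by the Tikuisis--White--Winter theorem (applicable under the UCT hypothesis in force throughout this paper; all traces on a nuclear \CA\, are amenable by Haagerup, and finite nuclear dimension then upgrades them to quasidiagonal). Combined with the hypothesis that $A$ is quasi-compact, the hypotheses of \ref{Pwtrace} are met, and so every tracial state of $A\otimes Z$ and of $A\otimes W$ is a $W$-trace.

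Next, I verify that $B:=A\otimes Z$ (and symmetrically $A\otimes W$) satisfies all the hypotheses of Theorem~\ref{TMW}. Simplicity passes through the minimal tensor product since $A$ and $Z$ are both nuclear and simple. Separability and nuclearity are clear. By \ref{Tpedersen}, quasi-compactness of $A$ and $Z$ amounts to $P(A)=A$ and $P(Z)=Z$, which yields $P(B)=B$, so $B$ is quasi-compact; nonemptiness of $T(B)$ is immediate from $\tau\otimes t$ for $\tau\in T(A)$ and $t\in T(Z)$. Finite nuclear dimension is preserved because $Z$ is an inductive limit of subhomogeneous \CA s (so has nuclear dimension at most one, cf.\ the estimate $\dim_{\mathrm{nuc}}(A\otimes Z)\le(\dim_{\mathrm{nuc}}A+1)(\dim_{\mathrm{nuc}}Z+1)-1$), and similarly for $W$. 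The $W$-trace hypothesis for $B$ is the conclusion of the previous paragraph. Finally, $K_0(B)=\ker\rho_B$ then follows directly from \ref{Ttracekerrho} applied to $B$, since every tracial state of $B$ is a $W$-trace and $B$ is quasi-compact. The same verification works with $A\otimes W$ in place of $A\otimes Z$.

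Applying Theorem~\ref{TMW} to $A\otimes Z$ and to $A\otimes W$ yields $(A\otimes Z)\otimes U\in {\cal D}_0$ and $(A\otimes W)\otimes U\in {\cal D}_0$ for every UHF algebra $U$. To complete the proof, it suffices to absorb this extra UHF factor. For $W$, this is immediate from Jacelon's characterization: $W\otimes U\cong W$ for any UHF algebra $U$ of infinite type, whence $A\otimes W\cong (A\otimes W)\otimes U\in {\cal D}_0$. For $Z$ the analogous absorption $Z\otimes U\cong Z$ holds either by construction (one can arrange the model $Z$ in \ref{Pwtrace} to be $U$-absorbing, e.g.\ by replacing $Z$ with $Z\otimes U$, which still satisfies the hypotheses of \ref{Pwtrace} since ${\cal D}_0$-membership, simplicity, quasi-compactness, unique trace, and $K_0=\ker\rho$ are all preserved under tensoring with $U$) or by invoking the classification of simple algebras in ${\cal D}_0$ with trivial $K$-theory and a unique trace established earlier in the paper. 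Either way, $A\otimes Z\cong (A\otimes Z)\otimes U\in {\cal D}_0$.

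The hard part is the UHF-absorption step for general $Z$: while $W\otimes U\cong W$ is a known result of Jacelon, handling an arbitrary $Z$ with the listed properties requires either fixing a specific absorbing model (which is the cleanest route and costs no generality in the final statement) or appealing to the classification machinery. All other steps are routine verifications of stability properties (simplicity, quasi-compactness, finite nuclear dimension) under minimal tensor products, together with the already-established implications in \ref{Pwtrace}, \ref{Ttracekerrho}, and \ref{TMW}.
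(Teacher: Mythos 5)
Your main line is exactly the paper's: the paper's entire proof is the three citations you use, namely \ref{Pwtrace} to see that every tracial state of $A\otimes Z$ (and $A\otimes W$) is a $W$-trace, \ref{Ttracekerrho} to get $K_0=\ker\rho$ for these tensor products, and then Theorem \ref{TMW}. Your routine verifications (simplicity, separability, quasi-compactness, finite nuclear dimension, nonempty trace space of the tensor product, and the quasidiagonality of traces via Tikuisis--White--Winter, which does require the UCT even though the statement omits it) are all in the spirit of what the paper leaves implicit.

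Where you go beyond the paper is the UHF-absorption step, and you are right that it is a genuine issue: \ref{TMW} only yields $(A\otimes Z)\otimes U\in{\cal D}_0$, and the paper's proof simply says ``\ref{TMW} applies'' without bridging to $A\otimes Z\in{\cal D}_0$. Your resolution for $A\otimes W$ is fine, since $W\otimes U\cong W$ (Razak/Robert classification, or Jacelon). But neither of your patches closes the case of a general $Z$ as described in \ref{Pwtrace}: the classification results of this paper (\ref{TTMW}, \ref{TTCLM}) require $KK(Z,D)=\{0\}$ for all $D$, i.e.\ essentially trivial $K$-theory, whereas $Z$ is only assumed to satisfy $K_0(Z)=\ker\rho_Z$, which may be a nonzero (even torsion) group, so $Z\otimes U\cong Z$ is not available by that route; and replacing $Z$ by $Z\otimes U$ proves the theorem only for a UHF-stable model, not for every $Z$ satisfying the hypotheses of \ref{Pwtrace}, which is how the statement is quantified. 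So your write-up correctly isolates the one nontrivial point, handles it for $W$, but does not validly dispose of it for arbitrary $Z$ --- with the caveat that the paper's own one-line proof leaves exactly the same point unaddressed.
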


\begin{proof}
It follows \ref{Pwtrace} that every tracial state of $A\otimes Z$ is a $W$-trace.  It follows from \ref{Ttracekerrho}
that $K_0(A\otimes Z)=\rho_{A\otimes Z}.$ Then \ref{TMW} applies.
\end{proof}

\section{Classification of simple \CA s with zero $K_0$ and $K_1$}

\begin{thm}\label{TTMW}
Let $A$ and $B$ be two separable simple amenable \CA s with  continuous scale.
Suppose that both $A$ and $B$ are ${\cal D}$ such that
$KK(A, D)=KK(B,D)=\{0\}.$
Then  $A\cong B$ if and only if
there is an affine homeomorphism $\gamma: T(A)\to T(B).$
Moreover,  there is an isomorphism $\phi: A\to B$ such that
$\phi_T=\gamma^{-1}.$
\end{thm}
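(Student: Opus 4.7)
The necessity is immediate: any $*$-isomorphism $\phi:A\to B$ induces an affine homeomorphism $\phi_T:T(B)\to T(A)$ by $\tau\mapsto\tau\circ\phi$, so one sets $\gamma:=\phi_T^{-1}$. For the converse, the plan is to run Elliott's two-sided approximate intertwining between $A$ and $B$, realizing $\gamma$ in the limit. The inputs needed are an existence theorem producing approximately multiplicative \cpc s $\phi_n:A\to B$ and $\psi_n:B\to A$ whose induced maps on traces converge to $\gamma^{-1}$ and $\gamma$ respectively, and a stable uniqueness theorem saying that two such maps with matching trace data are approximately unitarily equivalent on large finite subsets. Both $A$ and $B$ enjoy the regularity consequences of lying in $\cal D$ with continuous scale: they are quasi-compact (hence $A=P(A)$ by \ref{Tpedersen}), projectionless (\ref{Pprojless}) and of stable rank one (\ref{TTstr1}), have strict comparison (\ref{Comparison}), satisfy $QT=T$ with $\overline{T(A)}^w$ compact (\ref{PD0qc}), are TA${\cal C}_0'$ (\ref{PD0=tad}) and tracially approximately divisible (\ref{TDapprdiv}), and the map $W(A)_+\to{\rm LAff}_{b+}(T(A))$ is surjective (\ref{PWAff}).

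For existence, given $\ep>0$ and a finite ${\cal F}\subset A$, I would use the TA${\cal C}_0'$ property to factor ${\cal F}$ approximately through a \SCA\, $D\subset A$ with $D\in{\cal C}_0'$, with small Cuntz-negligible remainder and tracial data close to that of $A$ on ${\cal F}$. Robert's existence theorem for maps out of inductive limits of 1-dimensional NCCW complexes (\ref{Cruniq} together with the existence half of that classification) then supplies a \hm\, $h:D\to B$ whose $Cu^\sim$-morphism realizes the required push-forward through $\gamma^{-1}$; the prerequisites for applying this to the target $B$ are precisely the regularity properties listed above. Composing yields the desired $\phi$; symmetrically for $\psi$.

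For uniqueness, suppose $\phi_1,\phi_2:A\to B$ are ${\cal G}$-$\dt$-multiplicative \cpc s approximately matching on traces over a suitable finite subset ${\cal H}_2$. I would apply Corollary \ref{CCLuniq}, using Remark \ref{Ruct} to bypass the UCT hypothesis thanks to $KK(A,D)=\{0\}$, and Theorem \ref{Tqcfull} (quasi-compactness of $B$) to produce the requisite uniformly full $\sigma:A\to M_l(B)$. The output is a unitary $U\in\widetilde{M_{1+Kl}(B)}$ conjugating $\phi_1\oplus S$ close to $\phi_2\oplus S$ on ${\cal F}$, where $S=d_K(\sigma)$. The main obstacle is absorbing this stabilizing summand: here one exploits the tracial approximate divisibility of $A$ in its sharp form (\ref{TCCdvi}, \ref{Cuniformful}, \ref{LappZ}) to perturb $\phi_1,\phi_2$ on ${\cal F}$ so as already to contain, inside $\widetilde B$, an approximate copy of the $K$-fold diagonal into which $S$ can be absorbed up to an arbitrarily small error. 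Combined with stable rank one of $B$ (Theorem \ref{TTstr1}), this produces a unitary in $\widetilde B$ conjugating $\phi_1$ to within $\ep$ of $\phi_2$ on ${\cal F}$, as desired.

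With existence and this form of approximate uniqueness in hand, the standard Elliott two-sided approximate intertwining, run along decreasing tolerances $\ep_n\to 0$ and nested finite subsets whose unions are dense in $A$ and $B$, produces mutually inverse limit \hm s; these assemble into an isomorphism $\phi:A\to B$ with $\phi_T=\gamma^{-1}$. The hardest step in the argument is the absorption of $S$ under the intertwining: one must arrange the bookkeeping so that the extra copies of $S$ required by the stable uniqueness theorem at stage $n$ are compatibly produced by the tracial approximate divisibility at stage $n-1$, without destroying the approximate commutativity of the intertwining diagram—everything else is essentially formal.
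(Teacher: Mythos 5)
Your outline is essentially sound and draws on the right ingredients, but it is organized differently from the paper. The paper does not intertwine $A$ with $B$ directly: by \ref{R2} it first builds a concrete model $C\in{\cal M}_0$ (an inductive limit of finite direct sums of $W$) with $T(C)=T(A)$, and then runs the Elliott intertwining between $A$ and $C$, so that every algebra satisfying the hypotheses is identified with the same model. This choice buys two things your direct $A$--$B$ scheme has to do without. First, Robert's theorem gives genuine homomorphisms $H:C\to A$ realizing $(\Gamma^{\sim})^{-1}$ on $Cu^{\sim}$, so the ``downward'' arrows are exact, not merely approximately multiplicative. Second, because the building blocks of $C$ are weakly semiprojective, the comparison on the $C$-side ($L_{n+1}\circ H_n$ versus ${\rm id}_C$) is handled by the soft uniqueness theorem \ref{Lrluniq}, measured only by $K_0$ and traces; only the $A$-side comparison needs the heavy machinery (stable uniqueness \ref{CLuniq} with \ref{Ruct}, plus absorption). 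In your symmetric scheme you must run that heavier argument on both sides, which is possible but duplicates the hardest work, and your existence maps $A\to B$, $B\to A$ are only approximately multiplicative, so the trace-matching bookkeeping is more delicate.

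One point where your sketch is genuinely imprecise is the absorption of the stabilizing summand $S$. You propose to ``perturb $\phi_1,\phi_2$ so as already to contain an approximate copy of the $K$-fold diagonal,'' but arbitrary approximately multiplicative maps matching on traces need not admit such a decomposition: tracial approximate divisibility is a property of the identity map of the algebra, not of a given pair of maps. In the paper the comparison is always arranged against (a piece of) the decomposed identity, ${\rm id}_A\approx\phi_0'\oplus j_1\circ\psi_0$ with $2K_1$ diagonal copies of $\psi_0$ supplied by \ref{Cuniformful}/\ref{D=D0K0}; these copies \emph{are} the summand $S$ demanded by \ref{CLuniq}, the other map $H\circ L_1$ is constructed via Robert's $Cu^{\sim}$-existence to land compatibly in the complementary corner, and the identification of the ``many copies'' parts is made by Robert's uniqueness theorem (Theorem 3.3.1 of \cite{Rl}) for the two genuine homomorphisms from $D_{1,1}$, not by trace matching of the ambient maps, with almost stable rank one used to move unitaries into $\widetilde{A}$. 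Since an Elliott intertwining only ever compares compositions with the identity, your plan can be repaired along these lines, but the ``bookkeeping'' you defer is precisely the content of Steps 1--4 of the paper's proof rather than a formality.
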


\begin{proof}
By \ref{R2},
there exists a simple \CA\, $C=\lim_{n\to\infty} (C_n, \imath_n),$  where each
$B_n$ is a finite direct sum of $W$ and $\imath_n$ maps strictly positive elements to
strictly positive elements, which has continuous scale and
$$
T(A)=T(C).
$$
It suffices to show that $A\cong B.$
We will use $\Gamma: T(C)\to T(A)$ for  the  affine homeomorphism given above.
We will use an approximate intertwining argument of Elliott.

{\bf Step 1}: Construction of $L_1.$

\noindent
Fix a finite subset ${\cal F}_1\subset A$ and $\ep>0.$
\Wlog, we may assume that ${\cal F}_1\subset A^{\bf 1}.$

 Since $A$ has the continuous scale,  $A$ is quasi-compact.
Choose  a strictly positive element $a_0\in A_+$ with $\|a_0\|=1.$
We may assume, \wilog,
that
\beq\label{TC0k-nn1}
a_0y=ya_0=y,\,\, a_0\ge y^*y\andeqn  a_0\ge yy^* \rforal y\in {\cal F}_1
\eneq
Let $T: A_+\setminus \{0\}\to \N\times \R$
 with $T(a)=(N(a), M(a))$ ($a\in A_+\setminus \{0\}$) be the map given
 by \ref{Cuniformful}.

 Let $\dt_1>0$ (in place of  $\dt$), let
 ${\cal G}_1\subset A$ (in place of ${\cal G}$) be a finite subset,
 let ${\cal H}_{1,0}\subset A_+\setminus \{0\}$ (in place of ${\cal H}$)
 be a finite subset, and let $K_1\ge 1$ (in place of $K$) be an integer
 given by \ref{CLuniq} (together with \ref{Rstuniq} and \ref{Ruct}) for the above $T$ (in place of $F$), $\ep/16$ (in place of $\ep$) and ${\cal F}_1.$

 \Wlog, we may assume that ${\cal F}_1\cup {\cal H}_{1,0}\subset {\cal G}_1\subset A^{\bf 1}.$

 Choose $b_0\in A_+\setminus \{0\}$ with
 $d_\tau(b_0)<1/8(K_1+1).$

 It follows from  and \ref{Cuniformful}  (and \ref{D=D0K0}) that there are ${\cal G}_1$-$\dt_1/64$-multiplicative \cpc s
 $\phi_0: A\to A$ and $\psi_0: A\to D$ for some $D\subset A$ with
 $D\in {\cal C}_0'$  such that
 \beq\label{TCzeroK1-1}
&&\hspace{-0.4in} \|x-\diag(\phi_0(x),\overbrace{\psi_0(x), \psi_0(x),...,\psi_0(x)}^{2K_1+1})\|<\min\{\ep/128, \dt_1/128\}\rforal x\in {\cal G}_1\\
&&  \phi_0(a_0)\lesssim b_0, \|\psi_0(x)\|\ge (1-\ep/128)\|x\|\rforal x\in {\cal G}_1,\andeqn
\eneq
 $\psi_0(a_0)$ is strictly positive.
 Moreover,  $\psi_0$ is $T$- ${\cal H}_{1,0}\cup \{f_{1/4}(a_0)\}$-full
 in $\overline{DAD}.$

 Define $\phi_0': A\to A$ by $\phi_0'(a)=\diag(\phi_0(a), \psi_0(a))$ for all $a\in A.$
 Put $A_{00}=\overline{\phi_0'(a_0)A\phi_0'(a_0)}$ and $A_{0,1}=\overline{DAD},$
 let $D_{1,1}=M_{2K_1}(D)$ and $D_{1,1}'=M_{2K_1+1}(D).$
 Let $j_1: D\to M_{2K_1}(D)$ be defined by
 $$
 j_1(d)=\diag(d,d,...,d)\rforal d\in D.
 $$

 Note that
 \vspace{-0.12in}\beq\label{TC0k1-1+}
 \phi_0(a_0)\oplus \psi_0(a_0)\lesssim \psi_0(a_0)\oplus \psi_0(a_0).
 \eneq
  Let
\vspace{-0.12in} \beq\nonumber
 &&d_{00}'=\diag(\overbrace{\psi_0(a_0),\psi_0(a_0),...,\psi_0(a_0)}^{2K_1+1})\in D_{1,1}'\andeqn\\
&& d_{00}=\diag(0,\overbrace{\psi_0(a_0),\psi_0(a_0),...,\psi_0(a_0)}^{2K_1})\in D_{1,1}.
 \eneq	

 Let $\imath_1: D_{1,1}'\to A$ be the embedding.  Let $Cu^{\sim}(\imath_1): Cu^{\sim}(A_{1,1})\to Cu^{\sim}(A)$ be the induced
 map.
 Let $\Gamma^{\sim}: Cu^{\sim}(A)\to Cu^{\sim}(C)$ be
 the isomorphism given by
 $\Gamma^{\sim}(f)(\tau)=f(\Gamma(\tau))$ for all $f\in L\Aff_b(T(A))$ and $\tau\in T(A).$
 It follows Theorem 1.0.1 of \cite{Rl} that there  is a \hm\, $h_1':
 D_{1,1}'\to C$ such
that
 \beq\label{TC0k1-5}
 Cu^{\sim}(h_1')=\Gamma^{\sim}\circ Cu^{\sim}(\imath_1)\andeqn
\la  h_1'(d_{00}')\ra =\Gamma^{\sim}\circ Cu^{\sim}(\imath_1)(\la d_{00}'\ra).
 \eneq
  Let $h_1=(h_1')|_{D_{1,1}}.$  Denote by $C'=\{c\in C: ch_1(d)=h_1(d)c=0\rforal d\in D_{1,1}\}.$
 Note that
 $$
h_1'( \diag(\psi_0(a), \overbrace{0,0,...,0}^{2K_1}))\subset C'\rforal a\in A.
$$
Define  $h_0': A\to C'$ by
\vspace{-0.12in}$$
h_0'(a) =h_1'(\diag(\psi_0(a)\overbrace{0,0,...,0}^{2K_1}))\rforal a\in A.
$$
Define $L_1: A\to C$ by
\vspace{-0.16in}$$
L_1(a)=\diag(h_0'(a),h_1(\diag(\overbrace{\psi_0(a),\psi_0(a),...,\psi_0(a)}^{K_1}))\rforal a\in A.
$$

{\bf Step 2}: Construct $H_1$ and the first approximate commutative diagram.

It follows Theorem 1.0.1 of \cite{Rl} that there is a
\hm\, $H: C\to  A$ such that
\vspace{-0.1in}\beq\label{TC0k1-6}
Cu^{\sim}(H)=(\Gamma^{\sim})^{-1}.
\eneq
\vspace{-0.09in}Note that
\vspace{-0.12in}\beq\label{TC0k1-9}
\la  H\circ h_0'(a_0)\ra \le \la \phi_0(a_0)\ra.
\eneq
Choose $\dt_1/4>\eta_0>0$ such that
\beq\label{TC0k1-10}
\hspace{-0.2in}\|f_{\eta_0}(H\circ h_0'(a_0)) x-x\|,\,\,\|x-xf_{\eta_0}( H\circ h_0'(a_0))\|<
\min\{\ep/128, \dt_1/128\}
\eneq
for all $x\in  H\circ h_0'({\cal G}_1).$
Since $A$ has almost stable rank one, there is a unitary $u_0\in {\tilde A}$ such that
\beq\label{TC0k1-11}
u_0^*f_{\eta_0}( H\circ h_0'(a_0))u_0\in \overline{\psi_{00}(a_0)A\psi_{00}(a_0)},
\eneq
where
\vspace{-0.13in}$$
\psi_{00}(a_0)=\diag(\psi_0(a_0), \overbrace{0,0,...,0}^{2K_1})).
$$
Define $H': A\to \overline{\psi_{00}(a_0)A\psi_{00}(a_0)}$
by
$$
H'(a)=u_0^*(f_{\eta_0}(H\circ h_0'(a_0)))H\circ h_0'(a)(f_{\eta_0}(H\circ h_0'(a_0)))u_0\rforal a\in A.
$$
Note that $H'$ is ${\cal G}_1$-$\dt_1/32$-multiplicative \cpc.
Moreover, by \eqref{TC0k1-10},
\vspace{-0.12in}\beq\label{TC0k1-11+}
\|{\rm Ad}\, u_0\circ H\circ h_0'(a)-{\rm Ad}\, u_0\circ H'(a)\|<\min\{\ep/128, \dt_1/128\}\rforal a\in {\cal G}_1.
\eneq

Consider two \hm s
${\rm Ad}\, u_0\circ H\circ h_1\circ j_1$ and $\imath_1\circ j_1.$
Then, by \eqref{TC0k1-5} and \eqref{TC0k1-6},
\beq\label{TC0k1-7}
Cu^{\sim}({\rm Ad}\, u_0\circ H\circ h_1\circ j_1)=Cu^{\sim}(\imath_1\circ j_1).
\eneq
Put $A'=\{a\in A: a\perp \overline{\psi_{00}(a)A\psi_{00}(a)}\}.$
Note that we may view that both $\imath_1\circ j_1$ and ${\rm Ad}\, u_0\circ H\circ h_1\circ j_1$
are maps into $A'.$

It follows from Theorem 3.3.1 of \cite{Rl} that
there exists a unitary $u_1\in {\tilde A'}$ such that
\beq\label{TC0k1-8}
\|u_1^* ({\rm Ad}\, u_0\circ H\circ h_1\circ j_1(x))u_1-\imath_1\circ j_1(x)\|<\min\{\ep/16,\dt_1/16\}\rforal x\in {\cal G}_1.
\eneq
One may write $u_1=\lambda+z$ for some $z\in A'.$ Therefore
we may view $u_1$ is a unitary in ${\tilde A}.$ Note that, for any
$b\in \overline{\psi_{00}(a)A\psi_{00}(a)},$  $u_1^*bu_1=b.$
In particular, for any $a\in A,$
\beq\label{TC0k1-15}
{\rm Ad}\, u_1\circ H'(a)=H'(a)\rforal a\in A.
\eneq

By applying \ref{CLuniq} and its remarks (including \ref{Ruct}),  there is $u_2\in {\tilde A}$
such that
\beq\label{TC0k1-16}
\|{\rm Ad}\, u_2\circ \diag(H'(a), \imath_1\circ j_1\circ \psi_0(a))-\diag(\psi_0'(a), \imath_1\circ j_1\circ \psi_0(a))\|<\ep/16
\eneq
 for all $a\in {\cal F}_1.$
 Combining \eqref{TC0k1-8}, \eqref{TC0k1-15} and \eqref{TC0k1-11+},
 we have
 \beq\label{TC0k1-17}
 \|{\rm Ad}\, u_2\circ u_1\circ u_0\circ H\circ L_1(a)-{\rm Ad}\, u_2\circ \diag(H'(a), \imath_1\circ j_1\circ \psi_0(a))u_2\|<\ep/7
 \eneq
 for all $ a\in {\cal F}_1.$
On the other hand,  by \eqref{TCzeroK1-1},
 \beq\label{TC0k1-18}
 \|{\rm id}_A(a)-\diag(\psi_0'(a), j_1\circ \psi_0(a))\|<\ep/16\rforal a\in {\cal F}_1.
 \eneq
 Put $U_1=u_0u_1u_2.$  By \eqref{TC0k1-18} and \eqref{TC0k1-17},
 we conclude that
 \beq\label{TC0k1-19}
 \|{\rm id}_A(a)-{\rm Ad}\, U_1\circ H\circ L_1(a)\|<\ep\rforal a\in {\cal F}_1.
 \eneq

 Put $H_1={\rm Ad}\, U_1\circ H.$
 Then we obtain the following diagram:
 \begin{displaymath}
\xymatrix{
A \ar[r]^{\id} \ar[d]_{L_1} & A\\
C \ar[ur]_{H_1}
}
\end{displaymath}
which is approximately commutative on the subset ${\cal F}_1$ within $\ep.$

 {\bf Step 3}: Construction of $L_2$ and the second approximate commutative diagram.

 There is an easy way to obtain a map from $A$ to $C.$ However, since we want the process continue,
 we will repeat the construction in {\bf Step 1}.

 We first back to $C.$
 Define $\Delta: C^{\bf 1, q}\setminus \{0\}\to (0,1)$ by
 \beq\label{TC0k1-20}
 \Delta(\hat{a})=(1/2) \inf\{\tau(a): \tau\in T(C)\}
 \eneq
 (Recall that  $T(C)$ is compact since $C$ has continuous scale.)

 Fix any $\eta_1>0$ and a finite subset ${\cal S}_1\subset C.$
 We may assume that ${\cal S}_1\subset C^{\bf 1}$ and $L_1({\cal F}_1)\subset {\cal S}_1.$

 Let ${\cal G}_2\subset C$ (in place of ${\cal G}$), let ${\cal H}_{1,1}\subset C_+^{\bf 1}\setminus \{0\}$
 (in place of ${\cal H}_1$) be a finite subset, let ${\cal H}_{1,2}\subset C_{s.a.}$ (in place
 of ${\cal H}_2$) be a finite subset, $\dt_2>0$ (in place of $\dt$), $\gamma_1>0$ (in place of $\gamma$)
 be as required by \ref{Lrluniq} for $C,$ $\eta_1/16$ (in place of $\ep$) and ${\cal S}_1$ (in place of ${\cal F}$)
 as well as $\Delta$ above.

 \Wlog, we may assume that ${\cal S}_1\cup {\cal H}_{1,2}\subset {\cal G}_2\subset C^{\bf 1}.$

 Fix $\ep_2>0$ and a finite subset ${\cal F}_2$ such that
 $H_1({\cal S}_1)\cup {\cal F}_1\subset {\cal F}_2.$

 We may assume that ${\cal F}_2\subset A^{\bf 1}.$

 Let
 $$
 \gamma_0=\min\{\gamma_1, \inf\{\Delta(\hat{a}): a\in {\cal H}_{1,1}\cup {\cal H}_{1,2}\}\}.
 $$

Fix a strictly positive element $a_1\in A_+$ with $\|a_1\|=1.$
We may assume, \wilog,
that
\beq\label{TC0k-n1}
a_1y=ya_1=y,\,\, a_1\ge y^*y\andeqn  a_1\ge yy^* \rforal y\in {\cal F}_2
\eneq
Let $T: A_+\setminus \{0\}\to \N\times \R$
 with $T(a)=(N(a), M(a))$ ($a\in A_+\setminus \{0\}$) be the map given
 by \ref{Cuniformful} as mentioned in the {\bf Step 1}.

 Let $\dt_2'>0$ (in place of  $\dt$), let
 ${\cal G}_2\subset A$ (in place of ${\cal G}$) be a finite subset,
 let ${\cal H}_{2,0}\subset A_+\setminus \{0\}$ (in place of ${\cal H}$)
 be a finite subset, and let $K_2'\ge 1$ (in place of $K$) be an integer
 given by \ref{CLuniq} (and its remarks) for the above $T,$ $\ep_1/16$ (in place of $\ep$) and ${\cal F}_2.$

 \Wlog, we may assume that ${\cal H}_{2,0}\subset {\cal G}_2\subset A^{\bf 1}.$

 We may assume that $\dt_2'<\dt_2.$

 Choose $K_2\ge K_2'$ such that
 $1/K_2<\gamma_0/4.$

 Choose $b_{2,0}\in A_+\setminus \{0\}$ with
 \beq\label{TC0k-n2}
 d_\tau(b_{2,0})<1/8(K_2+1).
 \eneq

 It follows from \ref{Cuniformful} (and \ref{D=D0K0})
 that there are ${\cal G}_2$-$\dt_2'/64$-multiplicative \cpc s
 $\phi_{2,0}: A\to A$ and $\psi_{2,0}: A\to D_2$ for some $D_2\subset A$ with
 $D_2\in {\cal C}_0'$  such that
 \beq\label{TCzeroK1-31}
&&\hspace{-0.4in}\hspace{-0.4in} \|x-\diag(\phi_{2,0}(x),\overbrace{\psi_{2,0}(x), \psi_{2,0}(x),...,\psi_{2,0}(x)}^{2K_2+1})\|<\min\{\ep_2/128, \dt_2'/128\}\rforal x\in {\cal G}_2\\
&& \phi_{2,0}(a_1)\lesssim b_{2,0},\\
&& \|\psi_{2,0}(x)\|\ge (1-\ep_2/128)\|x\|\rforal x\in {\cal G}_2\andeqn
\eneq
 $\psi_{2,0}(a_1)$ is strictly positive.
 Moreover, every element $c\in {\cal H}_{2,0}\cup \{f_{1/4}(a_1)\}$ is
 $(N(c), M(c))$-full in $\overline{D_2AD_2}.$

 Define $\phi_{2,0}': A\to A$ by $\phi_{2,0}'(a)=\diag(\phi_{2,0}(a), \psi_{2,0}(a))$ for all $a\in A.$
 Put $A_{2, 00}=\overline{\phi_{2,0}'(a_1)A\phi_{2,0}'(a_1)}$ and $A_{2, 0,1}=\overline{D_2AD_2},$
 let $D_{2,1}=M_{2K_2}(D_2)$ and $D_{2,1}'=M_{2K_2+1}(D_2).$
 Let $j_2: D_2\to M_{2K_2}(D_2)$ be defined by
 $$
 j_2(d)=\diag(d,d,...,d)\rforal d\in D_2.
 $$

 Note that
\vspace{-0.14in}  \beq\label{TC0k1-31+}
 \phi_{2,0}(a_1)\oplus \psi_{2,0}(a_1)\lesssim \psi_{2,0}(a_1)\oplus \psi_{2,0}(a_1).
 \eneq

 Let
\vspace{-0.14in} \beq\nonumber
 &&d_{2,00}'=\diag(\overbrace{\psi_{2,0}(a_1),\psi_{2,0}(a_1),...,\psi_{2,0}(a_1)}^{2K_2+1})\in D_{2,1}'\andeqn\\
&& d_{2,00}=\diag(0,\overbrace{\psi_{2,0}(a_1),\psi_{2,0}(a_1),...,\psi_{2,0}(a_1)}^{2K_2})\in D_{2,1}.
 \eneq	

 Let $\imath_2: D_{2,1}'\to A.$  Let $Cu^{\sim}(\imath_2): Cu^{\sim}(A_{2,1})\to Cu^{\sim}(A)$ be the induced
 map.
 It follows Theorem 1.0.1 of \cite{Rl} that there  is a \hm\, $h_2':
 D_{2,1}'\to C$ such
 that
 \beq\label{TC0k1-35}
 Cu^{\sim}(h_2')=\Gamma^{\sim}\circ Cu^{\sim}(\imath_2)\andeqn
\la  h_2'(d_{2, 00}')\ra =\Gamma^{\sim}\circ Cu^{\sim}(\imath_2)(\la d_{2, 00}'\ra).
 \eneq

 Let $h_2=(h_2')|_{D_{2,1}}.$  Denote by $C''=\{c\in C: ch_2(d)=h_2(d)c=0\rforal d\in D_{2,1}\}.$
 Note that
 $$
h_2'( \diag(\psi_{2,0}(a), \overbrace{0,0,...,0}^{2K_2}))\subset C''\rforal a\in A.
$$
Define  $h_{2,0}': A\to C''$ by
\vspace{-0.12in}$$
h_{2,0}'(a) =h_2'(\diag(\psi_{2,0}(a)\overbrace{0,0,...,0}^{2K_2}))\rforal a\in A.
$$
Define $L_2: A\to C$ by
\vspace{-0.12in}$$
L_2(a)=\diag(h_{2,0}'(a),h_2(\diag(\overbrace{\psi_{2,0}(a),\psi_{2,0}(a),...,\psi_{2,0}(a)}^{K_2}))\rforal a\in A.
$$
By \eqref{TC0k-n2} and $1/K_2<\gamma_0/4,$ we compute that
\beq\label{TC0k1-36}
\sup\{ |\tau\circ L_2\circ {\rm Ad}\, U_1\circ H(x)-\tau(x)|: \tau\in T(C)\}<\gamma_0\rforal x\in {\cal G}_2.
\eneq
This also implies that
\beq\label{TCk1-37}
\tau(L_2\circ {\rm Ad}\, U_1\circ H(b))\ge \Delta(\hat{b})\rforal b\in {\cal H}_{1,1}.
\eneq
Note also that we have assume that $K_0(C)=K_1(C)=\{0\}.$
Thus \ref{Lrluniq} applies. Therefore we obtain a unitary $V_1\in {\tilde C}$ such that
\beq\label{TCk1-38}
\|{\rm Ad}\, V_1\circ L_2\circ H_1(a)-{\rm id}_{C}(a)\|<\eta_1/2\rforal a\in {\cal S}_1.
\eneq
Put $L_2={\rm Ad}\, V_1\circ L_2.$
Then, we obtain the following diagram:
 \begin{displaymath}
\xymatrix{
A \ar[r]^{\id} \ar[d]_{L_1} & A \ar[d]^{L_2}\\
C \ar[ur]_{H_1}\ar[r]_\id & C
}
\end{displaymath}
with the upper triangle approximately commutes on $\mathcal F_1$ up to $\ep$ and the lower triangle approximately commutes on $\mathcal S_1$ up to $\eta_1$.

  {\bf Step 4}: Show the process continues.

 We will repeat the argument in {\bf Step 2}.
 Note  that
\vspace{-0.1in}\beq\label{TC0k1-49}
\la  H\circ h_{2,0}'(a_1)\ra \le \la \phi_{2,0}(a_1)\ra.
\eneq
Choose $\dt_2/4>\eta_1>0$ such that
\beq\label{TC0k1-50}
\hspace{-0.2in}\|f_{\eta_1}(H\circ h_{2,0}'(a_1)) x-x\|,\,\,\|x-xf_{\eta_1}( H\circ h_{2,0}'(a_1))\|<
\min\{\ep_2/128, \dt_2'/128\}
\eneq
for all $x\in  H\circ h_{2,0}'({\cal G}_2).$
Since $A$ has almost stable rank one, there is a unitary $u_{2,0}\in {\tilde A}$ such that
\beq\label{TC0k1-51}
u_{2,0}^*f_{\eta_1}( H\circ h_{2,0}'(a_1))u_{2,0}\in \overline{\psi_{2,00}(a_1)A\psi_{2,00}(a_1)},
\eneq
where
\vspace{-0.1in}$$
\psi_{2,00}(a_1)=\diag(\psi_{2,0}(a_1), \overbrace{0,0,...,0}^{2K_2})).
$$
Define $H'': A\to \overline{\psi_{2,00}(a_1)A\psi_{2,00}(a_1)}$
by
$$
H''(a)=u_{2,0}^*(f_{\eta_1}(H\circ h_{2,0}'(a_1)))H\circ h_{2,0}'(a)(f_{\eta_1}(H\circ h_{2,0}'(a_1)))u_{2,0}\rforal a\in A.
$$
Note that $H''$ is ${\cal G}_2$-$\dt_2'/32$-multiplicative \cpc.
Moreover, by \eqref{TC0k1-50},
\beq\label{TC0k1-52+}
\|{\rm Ad}\, u_{2,0}\circ H\circ h_{2,0}'(a)-{\rm Ad}\, u_{2,0}\circ H''(a)\|<\min\{\ep_2/128, \dt_2'/128\}\rforal a\in {\cal G}_2.
\eneq

Consider two \hm s
${\rm Ad}\, u_{2,0}\circ H\circ h_{2,1}\circ j_2$ and $\imath_2\circ j_2.$
Then, by \eqref{TC0k1-35} and \eqref{TC0k1-36},
\beq\label{TC0k1-57}
Cu^{\sim}({\rm Ad}\, u_{2,0}\circ H\circ h_2\circ j_2)=Cu^{\sim}(\imath_2\circ j_2).
\eneq

Put $A''=\{a\in A: a\perp \overline{\psi_{2,00}(a)A\psi_{2,00}(a)}\}.$
Note that we may view that both $\imath_2\circ j_2$ and ${\rm Ad}\, u_{2,0}\circ H\circ h_2\circ j_2$
are maps into $A''.$

It follows from Theorem 3.3.1 of \cite{Rl} that
there exists a unitary $u_{2,1}\in {\tilde A''}$ such that
\beq\label{TC0k1-8n}
\hspace{-0.2in}\|u_{2,1}^* ({\rm Ad}\, u_{2,0}\circ H\circ h_2\circ j_2(x))u_{2,1}-\imath_2\circ j_2(x)\|<\min\{\ep_2/16,\dt_2'/16\}\rforal x\in {\cal G}_2.
\eneq
One may write $u_{2,1}=\lambda+z'$ for some $z'\in A'.$ Therefore
we may view $u_{2,1}$ is a unitary in ${\tilde A}.$ Note that, for any
$b\in \overline{\psi_{2,00}(a)A\psi_{2,00}(a)}$  $u_{2,1}^*bu_{2,1}=b.$
In particular, for any $a\in A,$
\beq\label{TC0k1-15n}
{\rm Ad}\, u_{2,1}\circ H''(a)=H''(a)\rforal a\in A.
\eneq

By applying \ref{CLuniq} and its remark,  there is $u_{2,2}\in {\tilde A}$
such that
\beq\label{TC0k1-16n}
\|{\rm Ad}\, u_{2,2}\circ \diag(H''(a), \imath_2\circ j_2\circ \psi_{2,0}(a))u_{2,2}-\diag(\psi_{2,0}'(a), j_2\circ \psi_{2,0}(a))\|<\ep_2/16
\eneq
 for all $a\in {\cal F}_2.$
 Combining \eqref{TC0k1-8n}, \eqref{TC0k1-15n} and \eqref{TC0k1-50},
 we have
 \beq\label{TC0k1-17n}\hspace{-0.3in} \|{\rm Ad}\, u_{2,2}\circ u_{2,1}\circ u_{2,0}\circ H\circ L_2(a)-{\rm Ad}\, u_{2,2}\circ \diag(H''(a), \imath_2\circ j_2\circ \psi_{2,0}(a))u_{2,2}\|<\ep_2/7
 \eneq
 for all $ a\in {\cal F}_2.$

On the other hand,  by \eqref{TCzeroK1-31},
 \beq\label{TC0k1-18n}
 \|{\rm id}_A(a)-\diag(\psi_{2,0}'(a), j_2\circ \psi_{2,0}(a))\|<\ep_2/16\rforal a\in {\cal F}_2.
 \eneq
 Put $U_2=u_{2,0}u_{2,1}u_{2,2}.$  By \eqref{TC0k1-18n} and \eqref{TC0k1-17n},
 we conclude that
 \beq\label{TC0k1-19n}
 \|{\rm id}_A(a)-{\rm Ad}\, U_2\circ H\circ L_2(a)\|<\ep_2\rforal a\in {\cal F}_2.
 \eneq
 Thus we expand the diagram to the following:
  \begin{displaymath}
\xymatrix{
A \ar[r]^{\id} \ar[d]_{L_1} & A \ar[d]^{L_2}  & \hspace{-0.3in}{\stackrel{\id}{\longrightarrow}} A\\
C \ar[ur]_{H_1}\ar[r]_\id & C
\ar[ur]_{H_2}
}
\end{displaymath}
where the last triangle is approximately commutative on ${\cal  F}_2$ within $\ep_2.$

We then continue to construct $L_3$ and the Elliott approximately intertwining argument
shows that $A$ and $C$ are isomorphic.

 \end{proof}

\begin{cor}\label{D00=D}
Let $A\in {\cal D}$
with  $K_0(A)=K_1(A)=\{0\}.$
Then $A\in {\cal D}_{0}.$
\end{cor}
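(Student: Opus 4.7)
My plan is to use the classification theorem \ref{TTMW} to identify $A$ with a concrete model in $\mathcal{M}_0$, which automatically lies in $\mathcal{D}_0$.

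First I would reduce to the case that $A$ has continuous scale. By \ref{Rcontext} there is a $\sigma$-unital hereditary \SCA\ $A_1 \subset A$ with continuous scale; by \ref{Phered}, $A_1 \in \mathcal{D}$ and $K_i(A_1) = K_i(A) = 0$ for $i=0,1$. The property of being in $\mathcal{D}_0$ transfers between a $\sigma$-unital \CA\ and its full hereditary sub-\CA s — this is the $\mathcal{D}_0$-analog of \ref{Phered}, whose proof is identical to the one given there once one tracks that the local blocks stay in $\mathcal{C}_0^{0'}$, combined with \ref{PMnTAD} and Brown's stable isomorphism theorem — so it suffices to prove $A_1 \in \mathcal{D}_0$. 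Henceforth assume $A$ has continuous scale, so that $T(A)$ is a compact metrizable Choquet simplex.

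Next I would invoke \ref{R1} (a special case of \ref{Ctsang}) to produce a $\sigma$-unital simple \CA\ $C \in \mathcal{M}_0$ with continuous scale, $K_0(C) = K_1(C) = 0$, and an affine homeomorphism $\Gamma : T(C) \to T(A)$. By \ref{R1}, $C \in \mathcal{D}_0 \subset \mathcal{D}$. Now $A$ and $C$ satisfy all the hypotheses of \ref{TTMW}: both are separable, simple, amenable (in the standing setup of the paper), lie in $\mathcal{D}$, have continuous scale, and admit an affine homeomorphism of tracial simplices. Since both lie in the UCT class ($A$ by the standing assumption, $C$ because it is an inductive limit of type I \CA s in $\mathcal{C}_0^0$), the vanishing $K_*(A) = K_*(C) = 0$ forces $KK(A, D) = KK(C, D) = \{0\}$ for every \CA\ $D$. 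Theorem \ref{TTMW} then yields an isomorphism $A \cong C$, and since $C \in \mathcal{D}_0$, so is $A$.

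The main obstacle I expect is the first step: the passage of $\mathcal{D}_0$-membership upward from a $\sigma$-unital full hereditary \SCA\ to the ambient \CA\ is not literally recorded in the paper (only the downward direction appears in \ref{Phered}). I would establish it by rerunning the proof of \ref{Phered} while keeping the cutdown blocks in $\mathcal{C}_0^{0'}$ throughout, then combining with \ref{PMnTAD} and Brown's theorem. A secondary point to check is that the amenability and UCT hypotheses of \ref{TTMW} are indeed in force — this is the standing convention for the portion of the paper containing this corollary, so once the hereditary-\SCA\ reduction is in place the rest is a direct citation.
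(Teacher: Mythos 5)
Your proof is correct and is essentially the paper's (implicit) argument: the corollary is stated without proof immediately after \ref{TTMW}, and the intended reasoning is exactly your comparison with a model algebra from \ref{R1} lying in $\mathcal{M}_0\subset\mathcal{D}_0$ with the same tracial simplex, with the standing separability/amenability/UCT (i.e.\ $KK$-triviality) hypotheses understood. One small correction to the step you flagged: the upward transfer of $\mathcal{D}_0$ from the continuous-scale hereditary subalgebra back to $A$ is not obtained by rerunning \ref{Phered} (which only gives the downward direction); it follows the converse argument of \ref{PDgTR} with $\mathcal{C}_0^{0'}$ in place of $\mathcal{C}_0'$ --- use \ref{PD0=tad}, \ref{PMnTAD} and Brown's theorem to see that $A$ is TA$\mathcal{C}_0^{0'}$, and then \ref{T1D0} together with quasi-compactness of $A$ (automatic since $A\in\mathcal{D}$ is separable) to conclude $A\in\mathcal{D}_0$.
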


\begin{cor}\label{CWWW}
$W\otimes W\cong W.$
\end{cor}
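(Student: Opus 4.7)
The plan is to apply the main classification theorem \ref{TTMW} with $A = W$ and $B = W\otimes W$. Once its hypotheses are checked for this pair, the isomorphism $W\cong W\otimes W$ follows at once. Both algebras are clearly separable and nuclear (hence amenable), and $W\otimes W$ is simple because $W$ is simple and nuclear.

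To see that both algebras lie in ${\cal D}$ with continuous scale, I first note that $W$ itself is in ${\cal D}_{0}\subset{\cal D}$ directly from the discussion around \ref{DW} and \ref{R1}: it is an inductive limit of Razak blocks \eqref{ddraz} from ${\cal C}_0^{0'}$ with unique tracial state and continuous scale. For $W\otimes W$, I would invoke Theorem \ref{TWtrace} with ``$A$'' there taken to be $W$: since $W$ is separable, simple, nuclear (with $\dim_{\rm nuc} W<\infty$), quasi-compact, and has $T(W)\neq\emptyset$, that theorem yields $W\otimes W\in{\cal D}_{0}\subset{\cal D}$. Continuous scale of $W\otimes W$ then follows from Proposition \ref{Pconscale}: as a member of ${\cal D}_{0}$ it is quasi-compact (the corollary right after \ref{PD0qc}) and has strict comparison (Theorem \ref{Comparison}), so continuous scale is equivalent to compactness of $T(W\otimes W)$.

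For the $KK$-triviality required by \ref{TTMW}, $W$ satisfies the UCT and has $K_0(W)=K_1(W)=0$, hence $KK(W,D)=0$ for every $C^*$-algebra $D$. The algebra $W\otimes W$ lies in the UCT class (tensor product of a nuclear UCT algebra with itself) and, by the K\"unneth theorem, also has trivial $K$-theory, so $KK(W\otimes W,D)=0$. The matching of tracial invariants reduces to showing $T(W)\cong T(W\otimes W)$: $T(W)=\{\tau_W\}$, and $\tau_W\otimes\tau_W$ furnishes at least one tracial state of $W\otimes W$; I would argue $T(W\otimes W)$ is also a singleton using that uniqueness of the tracial state on a simple nuclear $C^*$-algebra is preserved under the minimal tensor product.

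The hardest step I anticipate is precisely this last trace-uniqueness point for $W\otimes W$, which also closes the continuous-scale argument. Every other ingredient is directly supplied by a named earlier result of the paper: membership in ${\cal D}_{0}$ via \ref{TWtrace}, strict comparison and quasi-compactness from the structural results \ref{Comparison} and \ref{PD0qc}, continuous scale from \ref{Pconscale}, and the classification conclusion from \ref{TTMW}. Once $T(W\otimes W)=\{\tau_W\otimes\tau_W\}$ is verified, Theorem \ref{TTMW} delivers $W\otimes W\cong W$ immediately.
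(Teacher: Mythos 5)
Your proposal is correct and takes essentially the same route as the paper: establish $W\otimes W\in {\cal D}_0$ via the $W$-trace/finite-nuclear-dimension results (you invoke Theorem \ref{TWtrace}, while the paper invokes Theorem \ref{TMW}, of which \ref{TWtrace} is the corollary), and then apply the classification Theorem \ref{TTMW} after matching the unique tracial states. The additional verifications you supply (continuous scale of $W\otimes W$, $KK$-triviality, uniqueness of its trace) are exactly the points the paper's one-line proof leaves implicit, and your handling of them is sound.
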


\begin{proof}
Note that $W\otimes Q\cong Q.$ It follows from \ref{TMW} that $W\otimes W\in {\cal D}.$ Then Theorem \ref{TTMW} applies.
\end{proof}

\begin{cor}\label{CZtW}
Let $A$ be a non-unital separable amenable  simple \CA\, with continuous scale  and
satisfy the UCT. Suppose that $A\in {\cal D}$ with $K_0(A)={\rm ker}\rho_A.$
Suppose that $B\in {\cal D}_{0}$ which satisfies the UCT  with continuous scale  and
with $K_0(B)=K_1(B)=\{0\},$ and suppose
that there is an affine homeomorphism $\gamma: T(A)\to T(B).$
Then there is an embedding $\phi: A\to B$ such that $\phi_T=\gamma^{-1}.$
\end{cor}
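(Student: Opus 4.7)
The plan is to construct $\phi$ as a one-sided approximate intertwining limit, paralleling Steps~1--4 of the proof of Theorem~\ref{TTMW} but dropping the reverse direction. The crucial simplification is that the UCT together with $K_0(B)=K_1(B)=\{0\}$ forces $KK(A,B)=\{0\}$ (all $\mathrm{Hom}$ and $\mathrm{Ext}$ terms in the UCT exact sequence vanish), so Corollary~\ref{CLuniq} with Remarks~\ref{Rstuniq} and~\ref{Ruct} applies freely to pairs of approximately multiplicative maps from $A$ to $B$.

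First I would fix a strictly positive element $a_0\in A$, an increasing sequence of finite subsets ${\cal F}_n\subset A^{\bf 1}$ with dense union, a summable sequence $\ep_n>0$, and elements $b_n\in A_+\setminus\{0\}$ with $\sup_{\tau\in T(A)}d_\tau(b_n)\to 0$ (available because $T(A)$ is compact under continuous scale). For each $n$, Corollary~\ref{Cuniformful} together with Remark~\ref{Rm17div} yields ${\cal G}_n$-$\delta_n$-multiplicative c.p.c.\ maps $\phi_0^{(n)}\colon A\to A$ and $\psi_0^{(n)}\colon A\to D_n\in{\cal C}_0'$, plus an integer $k_n$, such that
\[
\|a-\phi_0^{(n)}(a)\oplus j_n(\psi_0^{(n)}(a))\|<\ep_n/2\qquad(a\in{\cal F}_n),
\]
where $j_n\colon D_n\hookrightarrow M_{k_n}(D_n)$ is diagonal repetition, $\phi_0^{(n)}(a_0)\lesssim b_n$, and $\psi_0^{(n)}$ is uniformly $T$-full in the sense of~\ref{Cuniformful}. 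Using Robert's classification (Theorem~1.0.1 of~\cite{Rl}, extended to full hereditary subalgebras via~\ref{Cruniq}), I would then lift to a $*$-homomorphism $h_n\colon M_{k_n}(D_n)\to B$ the $Cu^\sim$-morphism obtained by composing the inclusion $M_{k_n}(D_n)\hookrightarrow M_{k_n}(A)$ with $\gamma^{-1}$ at the level of lower semicontinuous affine functions on traces; because $K_0(B)=K_1(B)=\{0\}$, no $K$-theoretic obstruction arises and only tracial data is involved. Setting $L_n:=h_n\circ j_n\circ\psi_0^{(n)}\colon A\to B$, each $L_n$ is ${\cal F}_n$-$\ep_n$-multiplicative, and $\tau\circ L_n\to\gamma^{-1}(\tau)$ uniformly on each ${\cal F}_m$ since the remainder $\phi_0^{(n)}$ has tracial size at most $d_\tau(b_n)\to 0$.

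The final step is to convert $\{L_n\}$ into a genuine embedding via approximate unitary intertwining. Since $KK(A,B)=\{0\}$, the identity $[L_n]=[L_{n+1}]$ in $KL(A,B)$ holds automatically, and each $L_n$ already absorbs $k_n$ copies of the uniformly $T$-full component $\psi_0^{(n)}$. Choosing $k_n$ larger than the multiplicity $K$ produced by~\ref{CLuniq} for tolerance $\ep_n$ and test set ${\cal F}_n$, and invoking that $B$ has stable rank one (Theorem~\ref{TTstr1}), Corollary~\ref{CLuniq} with Remarks~\ref{Rstuniq} and~\ref{Ruct} yields unitaries $U_n\in\widetilde B$ with $\|U_n^*L_n(a)U_n-L_{n+1}(a)\|<\ep_n$ for $a\in{\cal F}_n$. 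Setting $V_n:=U_0U_1\cdots U_{n-1}$, the sequence $\mathrm{Ad}(V_n)\circ L_n$ is Cauchy on the dense set $\bigcup_n{\cal F}_n$ and converges to a c.p.c.\ map $\phi\colon A\to B$; summability of $\ep_n$ combined with approximate multiplicativity of the $L_n$ forces $\phi$ to be a $*$-homomorphism, simplicity of $A$ then forces it to be injective, and $\phi_T=\gamma^{-1}$ is inherited from the tracial convergence. The main obstacle I anticipate is the simultaneous calibration of the parameters $(k_n,\delta_n,\ep_n,b_n)$: the multiplicity $k_n$ must satisfy the absorption requirement coming out of~\ref{CLuniq} while the tracial smallness ensures $\phi_0^{(n)}$ is negligible, and the fullness hypotheses of~\ref{CLuniq} must be verified uniformly across $n$; the bookkeeping is essentially that of Step~1 of~\ref{TTMW} and should go through without new ideas.
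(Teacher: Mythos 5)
Your overall route is the paper's route: a one-sided Elliott intertwining built from the tracial ${\cal C}_0'$-decomposition of $A$ plus Robert's existence theorem (Theorem 1.0.1 of \cite{Rl}) to produce $L_n\colon A\to B$ with traces tending to $\gamma^{-1}$, exactly as in Step 1 of Theorem \ref{TTMW}. The gap is in the matching step. Corollary \ref{CLuniq} is not available here ($K_i(A)$ need not vanish, and Remark \ref{Ruct} requires $KK(A,D)=\{0\}$ for \emph{all} $D$, not just $D=B$); the applicable statement is Corollary \ref{CCLuniq}, which uses $K_i(B)=\{0\}$. More seriously, even \ref{CCLuniq} does not do what you ask of it: it produces a unitary conjugating $\phi\oplus S$ onto $\psi\oplus S$ for one \emph{common} summand $S=\sigma^{\oplus K}$ built from a single $T$-full map $\sigma$. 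The built-in repetitions inside $L_n$ and $L_{n+1}$ are multiples of \emph{different} maps ($h_n\circ\psi_0^{(n)}$ versus $h_{n+1}\circ\psi_0^{(n+1)}$), so "each $L_n$ already absorbs its own full component" does not exhibit $L_n$ and $L_{n+1}$ in the form $\phi\oplus S$, $\psi\oplus S$ with the same $S$, and the stable uniqueness theorem cannot be applied to the pair $L_n,L_{n+1}$ directly. The missing idea is the alignment of the homomorphism parts: one must first use the Robert-type uniqueness theorem (Theorem 3.3.1 of \cite{Rl}, in the form of Theorem \ref{Lrluniq}, whose hypotheses are exactly the tracial agreement you have already arranged and whose $K_0$-condition is vacuous since $K_0(B)=\{0\}$) to conjugate the stage-$(n+1)$ data so that $L_n$ and the corrected $L_{n+1}$ share a literal common large summand; only then does \ref{CCLuniq} dispose of the remaining small "corner" discrepancy. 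This is precisely what the paper means by keeping "step 3" of the proof of \ref{TTMW}; omitting it is not a calibration issue but a step that fails as written.

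Two further points need repair in the same spirit. First, Remark \ref{Rm17div} gives the decomposition for $A\in{\cal D}$ only with a single ${\cal C}_0'$ summand ($n=1$); the multi-copy version you invoke (an integer $k_n$ of repeated copies inside $A$) is established only for ${\cal D}_0$ or for $K_0(A)=\{0\}$ (Proposition \ref{D=D0K0}), whereas here $K_0(A)={\rm ker}\rho_A$ may be nonzero. If you want repetition, manufacture it on the codomain side, where it is free: choose $h_n$ in diagonal form using divisibility of $B$ (cf.\ Corollary \ref{CDdiag} and the fact that $B$ is isomorphic to an algebra in ${\cal M}_0$ by \ref{CM0=D0}); alternatively route the whole comparison through \ref{Lrluniq} as above so that less multiplicity in $A$ is needed. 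Second, the fullness hypothesis of \ref{CCLuniq} requires a map $T$ fixed \emph{before} $\ep_n,{\cal F}_n$ and a candidate $\sigma$ into (a corner of) $B$ that is $T$-${\cal H}$-full; this is not automatic from fullness of $\psi_0^{(n)}$ inside $A$ and needs the conversion of uniform trace lower bounds in $B$ into uniform fullness, i.e., an application of Theorem \ref{Tqcfull} (using that $B$ has strict comparison and stable rank one). With these ingredients inserted, the rest of your limit argument (Cauchyness, multiplicativity, injectivity from simplicity, and $\phi_T=\gamma^{-1}$) is standard and correct.
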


\begin{proof}
The proof of this corollary is contained in the  proof of the previous theorem but much simpler.
Note since ${\rm ker}\rho_A=K_0(A),$
then, in the previous proof,  $\Gamma$ indeed gives a \hm\, from
$Cu^{\sim}(A)$ to $Cu^{\sim}(C).$
We simply omit the construction of $H_1$ and keep step 1  and step 3
(in the step one we ignore anything related to step 2).  The one-sided Elliott intertwining
will give us a \hm\, from $A$ to $C.$
\end{proof}


{\bf The proof of Theorem \ref{TTCLM}}:

\begin{proof}
Let $\Gamma:({\tilde T}(A), \Sigma_A)\cong ({\tilde T}(B), \Sigma_B)$  be given by the above statement.
Let $a\in P(A)_+$ with $\|a\|=1$ such that $A_0=\overline{aAa}$ has the continuous scale.
Then $T(A_0)$ is a  metrizable Choquet simplex. We may view that ${\tilde T}(A)$ as a cone with the base
$T(A_0).$
Let $b\in B_+$ such that
$$
d_{\Gamma(\tau)}(b)=d_\tau(a)\rforal \tau\in {\tilde T}(A).
$$
Let $B_0=\overline{bBb}.$ Then
$\Gamma$ gives an affine homeomorphism from $T(A_0)$ onto $T(B_0).$
It follows from \ref{TTMW} that there is an isomorphism
$\phi: A_0\to B_0$ such that
$\phi_T$ gives $\Gamma|_{T(A_0)}.$   By \cite{Br1}, this induces
an isomorphism ${\tilde \phi}: A\otimes {\cal K}\to B\otimes {\cal K}.$
Fix a strictly positive element $a_0\in A$ with $\|a_0\|=1$ such
that
$$
d_\tau(a_0)=\Sigma_A(\tau)\rforal \tau\in {\tilde T}(A).
$$
Let $b_0={\tilde \phi}(a_0).$
Then ${\tilde \phi}$ gives an isomorphism from $A$ to $B_1=\overline{b_0(B\otimes {\cal K})b_0}.$
Let $b_1\in B$ be a strictly positive element with $\|b_1\|=1$ such that
$$
d_\tau(b_1)=\Sigma_B(\tau) \rforal \tau\in {\tilde T}(B).
$$
Then
\vspace{-0.1in}$$
d_\tau(b_1)=d_\tau(b_0)\rforal \tau\in {\tilde T}(B).
$$
Since $B$ is a  separable simple \CA\, with stable rank one,  this implies
there exists $\phi_1: B_1\to B$ such that $(\phi_1)_T={\rm id}_{\tilde{T(B)}}.$
Then $\phi_1\circ {\tilde \phi}_A$ gives a  required isomorphism.

\end{proof}

\begin{rem}\label{Ruct2}
The assumption of that both $A$ and $B$ satisfy the UCT can be replaced by
$KK(A,D)=KK(B, D)=\{0\}$ as stated in \ref{TTMW}.
\end{rem}

\begin{cor}\label{CZtWg}
Let $A$ be a non-unital separable amenable  simple \CA\,   and
satisfy the UCT. Suppose that $A\in {\cal D}$ with $K_0(A)={\rm ker}\rho_A.$
Suppose that $B\in {\cal D}$ which satisfies the UCT   and
with $K_0(B)=K_1(B)=\{0\}.$
Suppose also
that there is a cone  homeomorphism   $\gamma: {\tilde T}(A)\to {\tilde T}(B)$
such that
$\Sigma_B\circ \gamma^{-1}=\Sigma_A.$
Then there is an embedding $\phi: A\to B$ such that $\phi_T=\gamma^{-1}$
and $d_\tau\circ \phi(a)=\Sigma_B(\tau)$ for all $\tau\in {\tilde T}(B),$
where $a\in A$ is a strictly positive element.
\end{cor}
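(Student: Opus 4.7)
The plan is to reduce to the continuous-scale setting covered by \ref{CZtW}, then reconstitute the embedding back to all of $A$ using Brown's stable isomorphism theorem together with the compatibility of $\gamma$ with the scale functions. First, I would pick a strictly positive element $a_1\in A$ with $\|a_1\|=1$ and choose $a_0\in P(A)_+$ with $\|a_0\|=1$ such that $A_0:=\overline{a_0Aa_0}$ has continuous scale (\ref{Rcontext}). By \ref{Phered}, $A_0\in {\cal D}$, and since $A_0$ is a full hereditary \SCA\, of $A$, one has $K_0(A_0)\cong K_0(A)={\rm ker}\rho_A$ and $A_0$ still satisfies the UCT. Because $A_0$ has continuous scale, $T(A_0)$ is a compact metrizable Choquet simplex and $\tilde{T}(A)$ is the topological cone generated by $T(A_0)$ with $\tau\mapsto \tau(a_0)$ as a cone base (\ref{DTtilde}, \ref{Pconscale}).

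Second, I would transport the base into $B$. Since $\gamma$ is a cone homeomorphism, the lower semicontinuous affine function $\tau\mapsto d_{\gamma^{-1}(\tau)}(a_0)$ on $\tilde{T}(B)$ is bounded by $\Sigma_B$ and continuous on the weak*-compact set $\{\tau:d_{\gamma^{-1}(\tau)}(a_0)=1\}$. Using that $B\in{\cal D}$ (so by \ref{Comparison}, \ref{CD0str1}, \ref{TTstr1} $B$ has strict comparison, stable rank one, and the surjectivity \ref{Lbt1ACT}, \ref{Lbt2} of $W(B)\to{\rm LAff}_{b+}$), pick $b_0\in (B\otimes{\cal K})_+$ with $d_\tau(b_0)=d_{\gamma^{-1}(\tau)}(a_0)$ for all $\tau\in\tilde{T}(B)$ and set $B_0=\overline{b_0(B\otimes{\cal K})b_0}$. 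Then $B_0$ is a hereditary \SCA\, of $B\otimes{\cal K}$ satisfying $B_0\in{\cal D}$, $K_i(B_0)=\{0\}$ ($i=0,1$) by \ref{Phered}, and by \ref{Subcontsc} $B_0$ has continuous scale. By \ref{D00=D}, $B_0\in{\cal D}_0$. Moreover $\gamma$ restricts to an affine homeomorphism $T(A_0)\to T(B_0)$.

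Third, apply \ref{CZtW} to $A_0$ and $B_0$ to obtain an embedding $\phi_0:A_0\to B_0$ with $(\phi_0)_T=\gamma^{-1}|_{T(B_0)}$. By Brown's stable isomorphism theorem (\cite{Br1}), $\phi_0$ extends uniquely to an embedding $\tilde\phi_0:A\otimes{\cal K}\to B_0\otimes{\cal K}\subseteq B\otimes{\cal K}$ (since $A_0$ is a full hereditary \SCA\, of $A$). Restrict $\tilde\phi_0$ to $A\subseteq A\otimes{\cal K}$ to obtain an embedding $\Psi:A\to B\otimes{\cal K}$. Tracial continuity yields $d_\tau(\Psi(a_1))=d_{\gamma^{-1}(\tau)}(a_1)=\Sigma_A(\gamma^{-1}(\tau))=\Sigma_B(\tau)$ for every $\tau\in\tilde{T}(B)$.

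Finally, fix a strictly positive $b_B\in B$ with $d_\tau(b_B)=\Sigma_B(\tau)$. Because $\Psi(a_1)$ and $b_B$ have the same rank function on $\tilde{T}(B)$ and $B$ has stable rank one with strict comparison (\ref{TTstr1}, \ref{Comparison}), \ref{Lalmstr1} produces an element $x\in B\otimes{\cal K}$ with $x^*x=\Psi(a_1)$ and $xx^*\in \overline{b_B B b_B}=B$; equivalently, there is a $^*$-isomorphism $\alpha:\overline{\Psi(a_1)(B\otimes{\cal K})\Psi(a_1)}\to B$ induced by the partial isometry in the polar decomposition, and such $\alpha$ automatically preserves traces. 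Setting $\phi:=\alpha\circ\Psi:A\to B$ gives the desired embedding; by construction $\phi(a_1)$ is a strictly positive element of $B$ with $d_\tau(\phi(a_1))=\Sigma_B(\tau)$, and $\tau\circ\phi=\gamma^{-1}(\tau)$ for all $\tau\in\tilde T(B)$. The main obstacle I anticipate is the third step: producing the partial-isometry-induced trace-preserving isomorphism $\alpha$ cleanly despite $B$ only being almost unital-free; this requires combining the Cuntz-equivalence of $\Psi(a_1)$ with $b_B$ (via strict comparison on all of $\tilde{T}(B)$, not merely $\overline{T(B)}^w$) with stable rank one of $B$ to lift to an honest $^*$-isomorphism between hereditary subalgebras, and verifying that $\alpha$ is trace-preserving as a map out of $\overline{\Psi(a_1)(B\otimes{\cal K})\Psi(a_1)}$.
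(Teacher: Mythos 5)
Your proposal is correct and follows essentially the route the paper intends for this corollary: cut down to continuous-scale hereditary subalgebras $A_0=\overline{a_0Aa_0}$ and $B_0$ with matching bases of the tracial cones, apply \ref{CZtW} there, and then use Brown's stable isomorphism theorem together with stable rank one and strict comparison to adjust the image so that the scale condition $d_\tau\circ\phi(a)=\Sigma_B(\tau)$ holds -- exactly the reduction used in the proof of \ref{TTCLM}, run one-sidedly. The only cosmetic deviation is that you realize the comparison element $b_0$ in $(B\otimes{\cal K})_+$ and repair at the end, whereas the paper picks it directly in $B_+$; both rest on the same (there unjustified) surjectivity of the rank map and the same Cuntz-equivalence-plus-stable-rank-one argument in the final step.
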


\begin{cor}\label{CM0=D0}
Every separable amenable simple \CA\,  $A$ in ${\cal D}$ with $K_i(A)=\{0\}$ ($i=0,1$)
which satisfies the UCT is isomorphic to a \CA\, in ${\cal M}_0.$
\end{cor}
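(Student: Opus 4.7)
The plan is to realize the invariant of $A$ by a model algebra $B\in\mathcal{M}_0$ constructed via the range theorem, and then appeal to the isomorphism theorem \ref{TTCLM} to identify $A$ with this model. The key verification is that the model algebra has generalized tracial rank at most one.

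First I would note that the hypotheses on $A$ already give the technical regularity needed. Since $A$ is amenable, satisfies the UCT, and has $K_0(A)=K_1(A)=\{0\}$, the UCT short exact sequence (degenerating because $K_*(A)=0$) yields $KK(A,D)=\{0\}$ for every C*-algebra $D$. Since $A\in\mathcal{D}$, Proposition \ref{Pprojless} shows $A$ is stably projectionless, and by Proposition \ref{Phered} every hereditary subalgebra of $A$ remains in $\mathcal{D}$; hence Proposition \ref{PDgTR} gives $gTR(A)\le 1$.

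Next I would invoke Corollary \ref{Ctsang} applied to the cone $\tilde T(A)$ (with a base that is a metrizable Choquet simplex) and the scale function $\Sigma_A$, to obtain a C*-algebra $B\in\mathcal{M}_0$ with $(\tilde T(B),\Sigma_B)\cong(\tilde T(A),\Sigma_A)$. Inspecting the construction in the proof of \ref{Ctsang}, one has $B=B_1\otimes W$ where $B_1$ is a hereditary subalgebra of $D\otimes\mathcal{K}$ for $D$ a unital simple AF-algebra. Since $B$ is an inductive limit of finite direct sums of $W$, we get $K_0(B)=K_1(B)=\{0\}$ and $B$ satisfies the UCT, so in particular $KK(B,D)=\{0\}$ for every $D$.

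The core step is to verify $gTR(B)\le 1$. Using Corollary \ref{CWWW} ($W\otimes W\cong W$) we have $B\cong B\otimes W$. By Remark \ref{Rcontext}, pick a non-zero hereditary subalgebra $B'\subset B$ with continuous scale; then $B'$ is non-unital, separable, simple, quasi-compact, has $T(B')\ne\emptyset$, and has finite nuclear dimension (inherited from $W$ and AF tensor factors). By the tensor stability one still has $B'\cong B'\otimes W$, so Theorem \ref{TWtrace} yields $B'\otimes W\in\mathcal{D}_0$, and therefore $B'\in\mathcal{D}_0\subseteq\mathcal{D}$. Applying Proposition \ref{PDgTR} to any $a\in P(B)_+\setminus\{0\}$ with $\overline{aBa}$ Morita equivalent to a hereditary subalgebra of $B'$ in $\mathcal{D}$, we conclude $gTR(B)\le 1$.

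Finally, both $A$ and $B$ are separable amenable simple, stably projectionless, with $gTR\le 1$ and with matching $(\tilde T,\Sigma)$, and $KK(A,D)=KK(B,D)=\{0\}$ for every $D$ replaces the UCT hypothesis (Remark \ref{Ruct2}). Theorem \ref{TTCLM} then produces an isomorphism $A\cong B$, and since $B\in\mathcal{M}_0$, this completes the proof. The main obstacle is step three: ensuring that the model algebra $B$ has $gTR(B)\le 1$. The delicate point is extracting the correct tensorial presentation $B\cong B\otimes W$ and passing to a continuous-scale hereditary subalgebra so that Theorem \ref{TWtrace} becomes applicable.
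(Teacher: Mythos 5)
Your overall route---realize $({\tilde T}(A),\Sigma_A)$ by a model $B\in\mathcal{M}_0$ via \ref{Ctsang} and then apply the isomorphism theorem \ref{TTCLM} (with the vanishing of $KK(A,\cdot)$ and $KK(B,\cdot)$ coming from the UCT and trivial $K$-theory, and $gTR(A)\le 1$ from \ref{Pprojless}, \ref{Phered} and \ref{PDgTR})---is exactly the argument the paper intends; the corollary is stated without proof precisely because it is this combination of the range results \ref{R1}/\ref{Ctsang} with \ref{TTCLM}/\ref{TTMW}.

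The one step you cannot justify from the paper as written is the claim that your hereditary subalgebra $B'\subset B$ with continuous scale satisfies $B'\cong B'\otimes W$ ``by tensor stability'': nothing in the paper shows that $W$-stability passes to hereditary subalgebras, and \ref{CWWW} only gives $B\cong B\otimes W$ for the ambient algebra. This is easily repaired, because you are free to choose the hereditary subalgebra in tensor form. Since the model from \ref{Ctsang} is $B=B_1\otimes W$ with $B_1$ a simple AF algebra, take $a=p\otimes e_W\in P(B)_+$ with $p\in B_1$ a nonzero projection and $e_W$ strictly positive in $W$ (note $P(W)=W$, as $W$ is algebraically simple); then $\overline{aBa}=pB_1p\otimes W$, and \ref{TWtrace} combined with $W\otimes W\cong W$ gives $pB_1p\otimes W\cong (pB_1p\otimes W)\otimes W\in\mathcal{D}_0$, so $gTR(B)\le 1$ by \ref{PDgTR}. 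Alternatively, \ref{R1} already records that the continuous-scale models in $\mathcal{M}_0$ lie in $\mathcal{D}_0$, which serves the same purpose. With that repair the remaining steps of your proposal go through as written.
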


\begin{rem}\label{RTTCLM}
It should be noted that $A$ is not assumed to have finite nuclear dimension.
However, this brings the question when  $gTR(A)\le 1?$
Theorem \ref{TMW} states that, if $A$ has finite nuclear dimension and if we also assume
that every traces is a $W$ trace then $A$ has $gTR(A)\le 1.$
In particular, $gTR(A\otimes W)\le 1.$
\end{rem}

Thus we have the following corollary:

\begin{cor}\label{CCNinW}
Let $A$ be a separable simple \CA\, with finite nuclear dimension  and with non-zero
traces which satisfies the UCT.
Then $A\otimes W$ is isomorphic to a \CA\, in ${\cal M}_0.$
\end{cor}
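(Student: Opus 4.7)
The plan is to invoke Corollary \ref{CM0=D0} after massaging $A\otimes W$ into the class ${\cal D}$, and then to reconstruct $A\otimes W$ itself (not merely something stably isomorphic to it) as an object of ${\cal M}_0$. First I would record that $A\otimes W$ is a non-unital (indeed stably projectionless, since $W$ is) separable simple amenable \CA\, with $T(A\otimes W)\neq \emptyset$. Finite nuclear dimension passes to the tensor product with the nuclear \CA\, $W$, and $A\otimes W$ satisfies the UCT since both factors do ($W$ trivially, as $K_*(W)=0$). The Künneth formula gives $K_i(A\otimes W)=\{0\}$ for $i=0,1$, again because $K_*(W)=0$.

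Next I need $A\otimes W\in {\cal D}$. The obstruction is that $A$ need not have continuous scale (or even be quasi-compact), so I cannot apply Theorem \ref{TWtrace} directly to $A$. The fix is to use Remark \ref{Rcontext} to pick a non-zero hereditary \SCA\, $A_0\subset A$ with continuous scale. Then $A_0$ inherits simplicity, separability, finite nuclear dimension, and non-empty $T(A_0)$, and continuous scale makes it quasi-compact. Theorem \ref{TWtrace} applied to $A_0$ yields $A_0\otimes W\in {\cal D}_0\subset {\cal D}$. Since $A_0\otimes W$ is a full hereditary \SCA\, of $A\otimes W$ (simplicity), this gives me a foothold.

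Now $A_0\otimes W$ itself fulfils the hypotheses of Corollary \ref{CM0=D0}: it lies in ${\cal D}$, it is separable simple amenable, satisfies the UCT (nuclear tensor of two UCT algebras), and has $K_0=K_1=\{0\}$ by Künneth. Hence $A_0\otimes W\cong M$ for some $M\in {\cal M}_0$. Brown's stable isomorphism theorem then yields
\[
(A\otimes W)\otimes {\cal K}\;\cong\;(A_0\otimes W)\otimes {\cal K}\;\cong\; M\otimes {\cal K},
\]
so $A\otimes W$ embeds as a full hereditary \SCA\, of $M\otimes {\cal K}$.

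The last step, which I expect to be the most delicate, is to conclude that $A\otimes W$ itself (and not merely its stabilisation) lies in ${\cal M}_0$. Since $M\in {\cal M}_0$ is an inductive limit of \CA s in ${\cal C}_0^0$ with connecting maps preserving strictly positive elements, the same is true of each $M_k(M)$ (matrices over a $1$-dimensional NCCW are again $1$-dimensional NCCW), so $M_k(M)\in {\cal M}_0$ for every $k$. Fix a strictly positive element $e$ of $A\otimes W$, viewed inside $M\otimes {\cal K}$; after a small perturbation we may arrange $e\in M_n(M)$ for some $n$. Then $A\otimes W=\overline{e(M\otimes {\cal K})e}=\varinjlim_{k\ge n}\overline{e M_k(M) e}$, where the connecting maps are the inclusions, and $e$ is strictly positive in every $\overline{e M_k(M)e}$, so these connecting maps do preserve strictly positive elements. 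Proposition \ref{PM0her} identifies each $\overline{e M_k(M)e}$ as a member of ${\cal M}_0$; a diagonal argument through the inductive limit decompositions of each level then exhibits $A\otimes W$ itself as an inductive limit of \CA s in ${\cal C}_0^0$ with connecting maps preserving strictly positive elements, giving $A\otimes W\in {\cal M}_0$.
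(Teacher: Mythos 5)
Your first half is sound: passing to a hereditary \SCA\, $A_0\subset A$ with continuous scale (Remark \ref{Rcontext}), getting $A_0\otimes W\in {\cal D}_0$ from Theorem \ref{TWtrace}, invoking Corollary \ref{CM0=D0} to get $A_0\otimes W\cong M\in {\cal M}_0$, and then Brown's theorem to get $(A\otimes W)\otimes{\cal K}\cong M\otimes {\cal K}$. The gap is in the final ``un-stabilization'' step. First, the claim that ``after a small perturbation we may arrange $e\in M_n(M)$'' does not do what you need: perturbing a strictly positive element changes the hereditary \SCA\, it generates, and closeness of $e$ and $e'$ only yields $(e-\ep)_+\lesssim e'$, not Cuntz equivalence; to transfer the isomorphism class of $\overline{e(M\otimes{\cal K})e}$ you would need $\la e\ra=\la e'\ra$ together with stable rank one (as in \ref{Lalmstr1}), which a norm perturbation does not provide. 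Second, and more fatally, the step is impossible in general: the hypotheses allow $A$ (hence $A\otimes W$) to have unbounded scale --- for instance $A$ stable --- whereas $M\cong A_0\otimes W$ is quasi-compact, so every $e\in M_n(M)$ satisfies $\sup_\tau d_\tau(e)<\infty$ and $\overline{e(M\otimes{\cal K})e}$ can never be isomorphic to $A\otimes W$ when $\Sigma_{A\otimes W}$ is unbounded. (The concluding ``diagonal argument'' turning an inductive limit of ${\cal M}_0$-algebras into a member of ${\cal M}_0$ is also not automatic, since the maps produced by semiprojectivity liftings need not be injective or map strictly positive elements to strictly positive elements, but this is secondary.)

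The scale has to be handled by the classification theorem rather than by hand, and this is how the paper proceeds: from Remark \ref{RTTCLM} (equivalently, \ref{PDgTR} applied to a hereditary \SCA\, in the Pedersen ideal) one gets $gTR(A\otimes W)\le 1$, with $K_i(A\otimes W)=\{0\}$ and the UCT giving trivial $KK$; Corollary \ref{Ctsang} produces $B\in{\cal M}_0$ with $({\tilde T}(B),\Sigma_B)\cong({\tilde T}(A\otimes W),\Sigma_{A\otimes W})$ and $gTR(B)\le 1$; and Theorem \ref{TTCLM}, in which the scale function is part of the invariant, then yields $A\otimes W\cong B$. That is exactly the mechanism that covers the unbounded-scale case on which your perturbation argument breaks down; if you want to salvage your outline, replace the last step by this invariant-matching argument (your Brown-stable-isomorphism step then becomes unnecessary).
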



\section{Finite nuclear dimension case}

\begin{rem}\label{RbC05}
Let $A$ be a non-unital separable \CA.
Since ${\tilde A}\otimes Q$ is unital, we may view ${\widetilde{A\otimes Q}}$ as
a \SCA\, of ${\tilde A}\otimes Q$ with the unit $1_{{\tilde A}\otimes Q}.$
In the following corollary we use $\imath$ for the embedding
from $A\otimes Q$ to ${\tilde A}\otimes Q$ as well as from
${\widetilde{A\otimes Q}}$ to ${\tilde A}\otimes Q.$
Since $K_1(Q)=\{0\},$  from the six-term exact sequence in $K$-theory, one concludes that
 the \hm\,
$\imath_{*0}: K_0(A\otimes Q)\to K_0({\tilde A}\otimes Q)$  is injective.

We will use this fact and identify $x$ with $\imath_{*0}(x)$ for all $x\in K_0(A\otimes Q)$ in the following corollary.
\end{rem}

\begin{lem}\label{LfactorQ}
Let $A$ be a non-unital separable \CA\, and let $\{\psi_n\}$ be a sequence of approximately multiplicative  \cpc  s
from ${\tilde A}\otimes Q$ to $Q,$ then $\phi_n=\psi_n\circ \imath$ is a  sequence of approximately multiplicative  \cpc  s
from $A$ into $Q,$ where $\imath: A\to {\tilde A}\otimes Q$
is the embedding defined by $a\mapsto a\otimes 1$ for all $a\in A.$

Conversely,  if $\{\phi_n\}$ is  a sequence of approximately multiplicative  \cpc  s
from $A$ to $Q,$
then, there exists a sequence of approximately multiplicative \cpc s
$\{\psi_n\}: {\tilde A}\otimes Q\to Q$
such that
$$
\lim_{n\to\infty}\|\phi_n(a)-\psi_n\circ \imath(a)\|=0\rforal a\in A,
$$
where $\imath: A\to {\tilde A}\otimes Q$
is the embedding defined by $a\mapsto a\otimes 1$ for all $a\in A.$

Moreover, if $\lim\sup \|\phi_n(a)\|\not=0$ for some $a\in A$  and if $\{e_n\}$ is an approximate
identity, then, we can choose $\psi_n$ such that
$$
tr(\psi_n(1))=d_{tr}(\phi_n(e_n)))\tforal n.
$$
\end{lem}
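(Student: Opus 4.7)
The forward direction is immediate. The map $\imath: A \to \tilde A \otimes Q$, $a \mapsto a \otimes 1_Q$, is an injective $*$-homomorphism, so $\phi_n := \psi_n \circ \imath$ is automatically cpc, and
$$\|\phi_n(ab) - \phi_n(a)\phi_n(b)\| = \|\psi_n(ab \otimes 1) - \psi_n(a \otimes 1)\psi_n(b \otimes 1)\| \to 0$$
by approximate multiplicativity of $\psi_n.$

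For the converse, I would exploit that $Q$ is strongly self-absorbing: for any separable subset $S \subset Q$ and any $\ep > 0,$ there is a unital $*$-homomorphism $\rho: Q \to Q$ whose image $\ep$-commutes with $S.$ First extend each $\phi_n$ to a cpc map $\tilde\phi_n: \tilde A \to Q$ via the unital extension $\tilde\phi_n(a + \lambda 1) = \phi_n(a) + \lambda 1_Q,$ which is cp by the Schwarz inequality for 2-positive contractions. Next, by self-absorption applied to the separable image $\tilde\phi_n(\tilde A)$ with increasing accuracy, choose unital $*$-homomorphisms $\rho_n: Q \to Q$ so that $\|[\tilde\phi_n(a), \rho_n(q)]\| \to 0$ on growing finite subsets of $\tilde A$ and $Q.$ On elementary tensors set $\psi_n(a \otimes q) := \tilde\phi_n(a)\rho_n(q);$ the asymptotic commutation together with nuclearity of $Q$ (so spatial and max tensor norms on $\tilde A \otimes Q$ agree) guarantees that $\psi_n$ extends to a cpc map $\tilde A \otimes Q \to Q$ which is approximately multiplicative. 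Since $\rho_n(1_Q) = 1_Q$ we have $\psi_n(a \otimes 1) = \tilde\phi_n(a) = \phi_n(a)$ for $a \in A,$ so $\psi_n \circ \imath = \phi_n$ identically on $A.$

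For the moreover clause, set $r_n := d_{tr}(\phi_n(e_n));$ I need $tr(\psi_n(1)) = r_n.$ First replace $\phi_n$ by $\phi_n'(a) := \phi_n(e_n a e_n),$ noting $\|\phi_n(a) - \phi_n'(a)\| \le \|a - e_n a e_n\| \to 0$ for each fixed $a \in A,$ so the statement is unaffected. Approximate multiplicativity of $\phi_n$ then forces $\phi_n'$ to asymptotically factor through the hereditary \SCA\, $B_n := \overline{\phi_n(e_n)Q\phi_n(e_n)}.$ Choose $c_n \in Q_+$ with $0 \le c_n \le 1,$ with $c_n x \approx x$ for $x$ in a growing finite subset of $B_n,$ and with $tr(c_n) = r_n;$ such $c_n$ exists because $Q$ has real rank zero and $tr$ surjects onto $[0,1]$---first take a projection $p_n$ dominating $f_\dt(\phi_n(e_n))$ for small $\dt$ (so $tr(p_n)$ is close to $r_n$ from above), then fine-tune by a small positive perturbation inside $p_n Q p_n$ to hit $r_n$ exactly. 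Now repeat the construction within $B_n:$ extend $\phi_n'$ to $\tilde\phi_n': \tilde A \to Q$ by $\tilde\phi_n'(a + \lambda 1) = \phi_n'(a) + \lambda c_n$ (cp via a Schwarz estimate in the corner, using the asymptotic factoring through $B_n$), and, invoking that $B_n$ absorbs $Q,$ pick unital $*$-homomorphisms $\rho_n: Q \to Q$ with $\rho_n(1_Q)$ equal to (or approximating) $c_n$ and image asymptotically commuting with $\tilde\phi_n'(\tilde A).$ The resulting $\psi_n$ satisfies $\psi_n(1) = c_n,$ hence $tr(\psi_n(1)) = r_n.$

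\textbf{Main obstacle.} The delicate point is the moreover clause: producing $tr(\psi_n(1)) = r_n$ \emph{exactly} even though $r_n$ may be irrational and so unattainable by a projection alone. The remedy is the two-stage choice of $c_n$ (a projection dominating $f_\dt(\phi_n(e_n))$ plus a small positive perturbation), together with a verification that the non-unital extension $\tilde\phi_n'(1) = c_n$ is cp; the latter reduces to the Schwarz inequality in the corner $B_n$ combined with the fact that $c_n$ behaves as a unit on the asymptotic range of $\phi_n'.$
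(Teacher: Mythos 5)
Your forward direction is fine, and the skeleton of your converse (unitize $\phi_n$, find an approximately central unital copy of $Q$, multiply) could in principle be repaired: the product map $a\otimes q\mapsto \tilde\phi_n(a)\rho_n(q)$ is not obviously completely positive when the ranges only approximately commute, but the standard remedy (pass to the sequence algebra $\ell^\infty(Q)/c_0(Q)$, where the ranges commute exactly, and lift the resulting cpc map by Choi--Effros, using separability and nuclearity of ${\tilde A}\otimes Q$) would do, since the lemma only asks that $\psi_n\circ\imath$ agree with $\phi_n$ asymptotically. The genuine gap is in the moreover clause, exactly the point you flag as the main obstacle. First, your trace estimate goes the wrong way: a projection $p_n$ dominating $f_\dt(\phi_n(e_n))$ only satisfies $tr(p_n)\ge tr(\chi_{(\dt/2,1]}(\phi_n(e_n)))$, a quantity that approaches $r_n=d_{tr}(\phi_n(e_n))$ from \emph{below}; nothing makes $tr(p_n)$ close to $r_n$, nor even $\ge r_n$ (spectral mass of $\phi_n(e_n)$ near $0$ is invisible to $p_n$), so the proposed ``small positive perturbation inside $p_nQp_n$'' may have to move in the wrong direction and by a non-small amount. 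Second, and more seriously, the step ``pick unital $*$-homomorphisms $\rho_n\colon Q\to Q$ with $\rho_n(1_Q)$ equal to (or approximating) $c_n$'' is incoherent: a unital homomorphism sends $1_Q$ to $1_Q$, and any homomorphism sends $1_Q$ to a projection, never to the non-projection $c_n$ you introduced precisely because no projection of $Q$ (rational trace) can have trace $r_n$. If you retreat to $\rho_n(1_Q)$ being a projection merely approximating $c_n$, then $\psi_n(1)=\tilde\phi_n'(1)\rho_n(1)\neq c_n$ and the exact equality $tr(\psi_n(1))=r_n$---the entire content of the clause---is lost again. (There is also an unaddressed tension: approximate multiplicativity of $\psi_n$ on $1\otimes Q$ forces $\|\psi_n(1)^2-\psi_n(1)\|$ to be small, so $\psi_n(1)$ must be an almost-projection with an exactly prescribed, possibly irrational, trace; this can be arranged, but your construction never actually produces it.)

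The paper sidesteps all of this with a reduction you did not consider: since $Q$ is UHF, one first composes $\phi_n$ with conditional expectations onto the finite-dimensional subalgebras $M_{n!}\subset Q$, so that (after this harmless perturbation, which is permissible because the required agreement with $\phi_n$ is only asymptotic) $\phi_n$ takes values in $M_{n!}$. Then $\phi_n(e_n)$ has finite spectrum, its range projection $p_n$ is an honest projection of $Q$ acting as an exact unit on $\phi_n'(a)=\phi_n(e_n^{1/2}ae_n^{1/2})$, and $tr(p_n)=d_{tr}(\phi_n(e_n))$ on the nose, so the irrationality problem never arises. One then defines $\psi_n'$ on ${\tilde A}\otimes Q$ by $(a+\lambda)\otimes r\mapsto (\phi_n'(a)+\lambda p_n)\otimes r\in Q\otimes Q$, which is manifestly cpc (a cpc map tensored with ${\rm id}_Q$), and transports it back to $Q$ via an isomorphism $Q\otimes Q\cong Q$ whose composition with the first-factor embedding is approximately unitarily equivalent to ${\rm id}_Q$, conjugating by suitable unitaries; no self-absorption, approximate commutation, or lifting machinery is needed, and the trace condition holds exactly by construction. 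To salvage your approach you would need to build in such a finite-dimensionalization (or some other device producing an exact unit with exactly the right trace); as written, the moreover clause is not proved.
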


\begin{proof}
We prove only the second part.
Write $Q=\overline{\cup_{n=1}^{\infty} M_{n!}}$ with the embedding
$j_n: B_n:=M_{n!}\to M_{n!}\otimes M_{n+1}=M_{(n+1)!},$ $n=1,2,....$
\Wlog, we may assume
that $\phi_n$ maps $A$ into $B_n,$  $n=1,2,....$
Consider $\phi_n'(a)=\phi_n(e_n^{1/2}ae_n^{1/2}),$ $n=1,2,....$
Choose $p_n$ to be  the range projection of $\phi_n(e_n)$ in $B_n.$
Define $\psi_n': {\tilde A}\otimes Q\to Q\otimes Q$ by $\psi_n(a\otimes 1_Q)=\phi_n'(a)\otimes 1_{Q}$
for all $a\in A,$ $\psi'_n(1\otimes r)=p\otimes r$ for all $r\in Q.$
Then
$$
\lim_{n\to\infty}\|\psi_n'(a\otimes 1)-\phi_n(a)\otimes 1\|=0\rforal a\in A.
$$
Moreover, $tr(\psi_n'(1))=d_{tr}(\phi_n(e_n))$ for all $n.$
There is an isomorphism $h: Q\otimes Q\to Q$ such that
$h\circ \imath$  is approximately unitarily equivalent to ${\rm id}_Q.$
By choosing  some unitaries $u_n\in Q,$ we can choose $\psi={\rm ad}\, u_n\circ h\circ \psi_n',$
$n=1,2,....$

\end{proof}

\vspace{-0.1in}The following is a non-unital version of Lemma 4.2 of \cite{EGLN}.

\begin{lem}\label{Lpartuniq}
Let $A$ be a non-unital simple separable amenable C*-algebra  with
$T(A)\not=\emptyset$
which is quasi-compact and
which satisfies the UCT.
Fix a strictly positive element $a\in A_+$ with $\|a\|=1$
such that
\beq\label{Lpartuniq-n1}
\tau(f_{1/2}(a))\ge d\rforal \tau\in \overline{T(A)}^w.
\eneq

For any $\ep>0$ and any finite subset ${\mathcal F}$ of $A$, there exist
$\dt>0,$   a finite subset ${\mathcal G}$ of $A$,
and a finite subset ${\mathcal P}$ of $K_0(A)$ with the following property.
\noindent
Let $\psi,\phi: A\to Q$ be two ${\mathcal G}$-$\dt$-multiplicative \cpc s such that
\beq\label{puniq-1}
&&[\psi]|_{\mathcal P}=[\phi]|_{\mathcal P}\tand\\\label{puniq-1n}
&&tr(f_{1/2}(\phi_0(a)))\ge d/2 \tand tr(f_{1/2}(\phi_1(a)))\ge d/2,
\eneq
where $tr$ is the unique tracial state of $Q.$
Then there is a unitary $u\in Q$ and an  ${\mathcal F}$-$\ep$-multiplicative \cpc\, $L: A\to {\rm C}([0,1], Q)$
such that
\beq\label{puni-2}
&& \pi_0\circ L=\psi,\,\,\, \pi_1\circ L={\rm Ad}\, u\circ \phi.
\eneq
Moreover, if
\begin{equation}\label{puniq-3}
 |\mathrm{tr}\circ \psi(h)-\mathrm{tr}\circ \phi(h)|<\ep'/2\tforal  h\in {\mathcal H},
\end{equation}
for a finite set ${\mathcal H}\subset A$ and $\ep'>0$,
then $L$ may be chosen such that
\begin{equation}\label{puniq-4}
 |\mathrm{tr}\circ \pi_t\circ L(h)-\mathrm{tr}\circ \pi_0\circ L(h)|<\ep'\rforal  h\in {\mathcal H}\andeqn t\in [0, 1].
\end{equation}
Here, $\pi_t: {\rm C}([0,1], Q)\to Q$ is the point evaluation at $t\in [0,1].$
\end{lem}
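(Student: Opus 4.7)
The plan is to adapt the proof of Lemma 4.2 of \cite{EGLN} to the non-unital setting, using the stable uniqueness machinery of Sections~6--7 together with the strongly self-absorbing structure and $K_1$-triviality of $Q$. First I would use the trace condition $\tau(f_{1/2}(a))\ge d$ together with Theorem \ref{Tqcfull} to extract a map $T\colon A_+\setminus\{0\}\to\N\times\R_+$ and a finite subset ${\cal H}_1\subset A_+\setminus\{0\}$ such that any sufficiently multiplicative \cpc\ $\rho\colon A\to Q$ with $tr(f_{1/2}(\rho(a)))>d/2$ is automatically $T$-${\cal H}_1$-full. The trace hypothesis \eqref{puniq-1n} then forces both $\psi$ and $\phi$ to be $T$-${\cal H}_1$-full. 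I would then feed $T$, $\cal F$, and $\ep/4$ into Corollary \ref{CLuniq} (using also Remarks \ref{Rstuniq} and \ref{Ruct}) to obtain the requisite parameters $\dt$, $\cal G$, $\cal P$, and an integer $K\ge 1$.

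Taking $\sigma=\psi$ as the full auxiliary map, Corollary \ref{CLuniq} produces a unitary $U\in M_{K+1}(Q)\cong Q$ with $\|U^*(\psi\oplus\psi^{\oplus K})(a)U-(\phi\oplus\psi^{\oplus K})(a)\|<\ep/8$ for all $a\in\cal F$. I would then descend this amplified equivalence to $\|u^*\psi(a)u-\phi(a)\|<\ep/2$ directly in $Q$ using that $Q\cong Q\otimes Q$ is strongly self-absorbing with a unique tracial state: realizing $\psi$ as $\psi\otimes 1_Q$ through a suitable isomorphism allows the ``reservoir'' $\psi^{\oplus K}$ to be absorbed into the second tensor factor, yielding unitaries $v_1,v_2\in Q$ with $v_1^*(\psi\oplus\psi^{\oplus K})v_1\approx\psi$ and $v_2^*(\phi\oplus\psi^{\oplus K})v_2\approx\phi$ up to error $\ep/8$ on $\cal F$. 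Setting $u=v_1Uv_2^*\in Q$ then gives the required approximate unitary equivalence.

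For the path construction, since $K_1(Q)=\{0\}$ and hence $U(Q)=U_0(Q)$, I pick a continuous path $w\colon[0,1/2]\to U(Q)$ with $w(0)=1_Q$ and $w(1/2)=u$. Define $L\colon A\to C([0,1],Q)$ by $L(a)(t)=w(t)^*\psi(a)w(t)$ on $t\in[0,1/2]$, and for $t\in[1/2,1]$ by the linear interpolation $L(a)(t)=(2-2t)u^*\psi(a)u+(2t-1)u^*\phi(a)u$. Linear interpolation of two \cpc\ maps is again \cpc, and a standard cross-term computation shows that if both endpoints are sufficiently multiplicative and lie within $\ep/2$ of each other in norm on $\cal F$, then their interpolation is $\cal F$-$\ep$-multiplicative; thus $\pi_0\circ L=\psi$ and $\pi_1\circ L={\rm Ad}\,u\circ\phi$ exactly, as required. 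The trace addendum \eqref{puniq-4} is immediate: since $tr$ is invariant under inner automorphisms of $Q$, $tr\circ\pi_t\circ L(h)=tr\circ\psi(h)$ for $t\in[0,1/2]$, and is a convex combination of $tr\circ\psi(h)$ and $tr\circ\phi(h)$ for $t\in[1/2,1]$; under \eqref{puniq-3} these two values differ by less than $\ep'/2$, well within the required $\ep'$.

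The main obstacle will be the descent step in the second paragraph: rigorously establishing $\psi\oplus\psi^{\oplus K}\approx_{v_1}\psi$ and $\phi\oplus\psi^{\oplus K}\approx_{v_2}\phi$ in $Q$ with compatible error control. The naive matrix-algebra identification $M_{K+1}(Q)\cong Q$ does not preserve traces correctly (the $(K+1)$-fold amplification scales the trace by $1/(K+1)$), so one must exploit the self-absorption $Q\otimes Q\cong Q$ to realize $\psi$ as $\psi\otimes 1_Q$ --- which trivially absorbs arbitrary finite direct sums --- before applying the matrix identification. The matching of $K$-theory data, which follows from $[\psi]|_{\cal P}=[\phi]|_{\cal P}$ via the UCT assumption on $A$, must be checked to be compatible with this absorption so that the composed unitary $v_1Uv_2^*$ indeed implements the desired equivalence on the original unamplified maps.
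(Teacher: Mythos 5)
There is a genuine gap at your ``descent'' step, and in fact the conclusion you want from it is false in general. You propose to cancel the absorbing summand and obtain a single unitary $u\in Q$ with $\|u^*\psi(a)u-\phi(a)\|<\ep/2$ for all $a\in {\cal F}$. But unitary conjugation preserves the trace exactly, so such a $u$ forces $|\mathrm{tr}\circ\psi(a)-\mathrm{tr}\circ\phi(a)|<\ep/2$ on ${\cal F}$ --- and the hypotheses of the lemma contain no trace-matching condition at all (the condition \eqref{puniq-3} appears only in the ``moreover'' part, only on ${\cal H},$ and with an unrelated tolerance $\ep'$). One can easily have $\psi,\phi$ satisfying \eqref{puniq-1} and \eqref{puniq-1n} with $\mathrm{tr}\circ\psi$ and $\mathrm{tr}\circ\phi$ far apart on ${\cal F},$ so no unitary exists. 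The same obstruction defeats your proposed fix via $Q\otimes Q$: even after arranging $\psi=\psi_0\otimes 1_Q$ so that an identification $\Lambda$ of $M_{K+1}(Q\otimes Q)$ with $Q\otimes Q$ carries $\psi^{\oplus(K+1)}$ exactly onto $\psi$, the same $\Lambda$ carries $\phi\oplus\psi^{\oplus K}$ onto a map whose trace is $\tfrac{1}{K+1}\mathrm{tr}\circ\phi+\tfrac{K}{K+1}\mathrm{tr}\circ\psi$, which is not close to $\mathrm{tr}\circ\phi$; hence the unitary $v_2$ with $v_2^*(\phi\oplus\psi^{\oplus K})v_2\approx\phi$ cannot exist either. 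This is exactly why the statement asks only for a path of approximately multiplicative maps (along which the induced trace may drift) rather than an approximate unitary equivalence of $\psi$ and $\phi$. There is also a smaller slip in your path: the second half interpolates between $u^*\psi u$ and $u^*\phi u$, which differ by $\|\psi(a)-\phi(a)\|$, not by your claimed $\ep/2$; you would have to interpolate between $u^*\psi u$ and $\phi$ --- but this only matters once the descent step is repaired, which it cannot be. (A further minor point: for general $A$ one must invoke Theorem \ref{Lauct2} with Remark \ref{RRLuniq} rather than Corollary \ref{CLuniq}, since the latter assumes $K_i(A)=\{0\}$; this is where the finite subset ${\cal P}\subset K_0(A)$ comes from.)

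The mechanism the paper uses avoids cancellation altogether: after arranging $\psi(A),\phi(A)\subset Q\otimes 1,$ it cuts the unit into $2K+1$ mutually equivalent projections $e_i$ commuting with the images, forms the chain $\Phi_i=\phi_0\oplus\cdots\oplus\phi_{i-1}\oplus\psi_i\oplus\cdots\oplus\psi_{2K}$ interpolating between $\Phi_0=\psi$ and $\Phi_{2K+1}=\phi$, and applies the stable uniqueness theorem to each consecutive pair, with the $2K$ untouched summands (each $T$-${\cal H}_1$-full by \ref{Tqcfull}) serving as the absorbing part. Consecutive conjugated maps $\tilde\Phi_i,\tilde\Phi_{i+1}$ are then within $\ep/4$ in norm, so piecewise-linear interpolation gives the required ${\cal F}$-$\ep$-multiplicative path with endpoints $\psi$ and ${\rm Ad}(u_{2K}\cdots u_0)\circ\phi$; along the path the trace is a convex combination of $\mathrm{tr}\circ\psi$ and $\mathrm{tr}\circ\phi$, which yields \eqref{puniq-4}. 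If you want to salvage your outline, you must replace the descent step by some version of this one-summand-at-a-time chain (or another device that produces small norm steps without ever claiming $\psi\approx_u\phi$).
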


\begin{proof}
 Let $T: A_+\setminus \{0\}\to \N\times \R_+\setminus \{0\}$ be given by \ref{Tqcfull}
 (with above $d$ and $a$).
 In the notation in \ref{Blbm}, $Q\in {\bf C}_{0,0, t, 1, 2},$ where $t: \N\times \N\to \N$ is defined
 to be $t(n,k)=n/k$ for all $n, k\ge 1.$  Now ${\bf C}_{0,0, t, 1,2}$ is fixed.
 We are going to apply Theorem \ref{Lauct2} together with the Remark \ref{RRLuniq} (note
 that $Q$ has real rank zero and $K_1(Q)=\{0\}$).

Let ${\mathcal F}\subset A$ be a finite subset and let $\ep>0$ be given.   We may assume that $a\in {\mathcal F}$ and every element of ${\mathcal F}$ has norm at most one.
Write  ${\mathcal F}_1=\{ab: a, b\in {\mathcal F}\}\cup {\cal F}$.

Let  $\delta_1>0$ (in place $\dt$), ${\mathcal G}_1$ (in place
of ${\cal G}$) and  ${\cal H}_1$(in place of ${\cal H}$), ${\mathcal P},$ and $K$ be as assured by
Theorem \ref{Lauct2} for ${\mathcal F}_1$ and $\ep/4$ as well as $T$ (in place of $F$). (As stated earlier we will also use the Remark \ref{RRLuniq} so that we drop ${\bf L}$  and
 condition \eqref{Lauct-1}.)
Since $K_1(Q)=\{0\}$ and
$K_0(Q)=\Q,$ we may choose ${\cal P}\subset K_0(A).$

We may also assume that $\mathcal F_1\cup {\cal H}_1\subset \mathcal G_1$ and
$K\ge 2.$

Now, let ${\cal G}_2\subset A$ (in place of ${\cal G}$) be a finite subset and
let $\dt_2>0$ (in place of $\dt_1$) given by \ref{Tqcfull} for the above ${\cal H}_1$ and
$T.$

Let $\dt=\min\{\ep/4, \dt_1/2, \dt_2/2\}$ and ${\cal G}={\cal G}_1\cup {\cal G}_2.$
\Wlog, we may assume that ${\cal G}\subset A^{\bf 1}.$

Since $Q\cong Q\otimes Q,$ we may assume, \wilog,
that $\phi(a), \psi(a)\in Q\otimes 1$ for all $a\in A.$
Pick mutually equivalent projections
$e_0, e_1,e_2,...,e_{2K}\in Q$ satisfying $\sum_{i=0}^{2K} e_i=1_Q.$
Then, consider the maps $\phi_i, \psi_i: A\to Q\otimes e_iQe_i$, $i=0, 1, ..., 2K$, which are defined by
$$\phi_i(a)=\phi(a)\otimes e_i\quad\mathrm{and}\quad \psi_i(a)=\psi(a)\otimes e_i,\quad a\in A,$$ 
and consider the maps
$$\Phi_{K+1}:=\phi=\phi_0\oplus \phi_1\oplus\cdots \oplus \phi_{2K},\quad \Phi_0:=\psi=\psi_0\oplus \psi_1\oplus \cdots \oplus \psi_{2K}$$ and
\vspace{-0.12in}$$\Phi_i:=\phi_0\oplus\cdots  \oplus \phi_{i-1}\oplus \psi_{i}\oplus \cdots \oplus \psi_{2K},\quad i=1,2,...,2K.$$
Since $e_i$ is unitarily equivalent to $e_{{0}}$ for all $i$, one has
$$[\phi_i]|_{\mathcal P}=[\psi_j]|_{\mathcal P},\quad 0\leq i, j\leq 2K.$$
and in particular, 
\vspace{-0.1in}\beq\label{puniq-10+}
[\phi_{i}]|_{\mathcal P}=[\psi_{i}]|_{\mathcal P},\quad i=0, 1, .., 2K.
\eneq

Note that, for each $i=0, 1, ..., n$, $\Phi_i$ is unitarily equivalent to
$$
\psi_{i}\oplus (\phi_0\oplus \phi_1\oplus \cdots \oplus \phi_{i-1}\oplus  \psi_{i+1}\oplus  \psi_{i+2}\oplus \cdots \oplus \psi_{2K}),
$$
and $\Phi_{i+1}$ is unitarily equivalent to
$$
 \phi_{i}\oplus (\phi_0\oplus \phi_1\oplus \cdots  \oplus \phi_{i-1}\oplus \psi_{i+1}\oplus  \psi_{i+2}\oplus \cdots \oplus \psi_{2K}).
$$

By \eqref{puniq-1n},
applying \ref{Tqcfull}, each $\phi_i$ as well as $\psi_i$ are
$T$-${\cal H}_1$-full in $e_iQe_i,$ $i=0,1,2,...,2K.$

In view of this, and \eqref{puniq-10+}, applying Theorem \ref{Lauct2} (and its remarks),
 we obtain unitaries $u_i\in Q$, $i=0, 1, ..., 2K$,  
such that
\begin{equation}\label{puniq-11}
\|{\tilde \Phi}_{i+1}(a)-{\tilde \Phi}_{i}(a)\|<\ep/4,\quad a\in {\mathcal F}_1,
\end{equation}
where
\vspace{-0.12in}$${\tilde \Phi}_0:=\Phi_0=\psi \quad\textrm{and}\quad {\tilde \Phi}_{i+1}:={\rm Ad}\,  u_{i}\circ \cdots \circ {\rm Ad}\ u_1 \circ   {\rm Ad}\ u_0\circ \Phi_{i+1},\quad i=0, 1,...,2K.$$
Put $t_i=i/(2K+1)$, $i=0, 1, ..., 2K+1$,
and define $L: A\to {\rm C}([0,1], Q)$ by
$$
\pi_t\circ L=(2K+1)(t_{i+1}-t){\tilde \Phi_i}+(2K+1)(t-t_i){\tilde \Phi_{i+1}},\quad t\in [t_i, t_{i+1}],\ i=0,1,...,2K.
$$
By construction,
\begin{equation}\label{eq-verification}
\pi_0\circ L={\tilde \Phi}_0=\psi\quad\mathrm{and}\quad \pi_1\circ L={\tilde \Phi}_{n+1}={\rm Ad}\, u_n\circ\cdots \circ{ \rm Ad}\ u_1 \circ{ \rm Ad}\ u_0 \circ \phi.
\end{equation}
Since $\tilde{\Phi}_i$, $i=0, 1, ..., 2K$, are $\mathcal G$-$\delta$-multiplicative (in particular $\mathcal F$-$\ep/4$-multiplicative), it follows from \eqref{puniq-11} that $L$ is ${\mathcal F}$-$\ep$-multiplicative. By \eqref{eq-verification}, $L$ satisfies \eqref{puni-2} with $u=u_{2K}\cdots u_1 u_0$.

Moreover, if there is a finite set $\mathcal H$ such that \eqref{puniq-3} holds, it is then also straightforward to verify that $L$ satisfies \eqref{puniq-4}, as desired.
\end{proof}

\begin{rem}\label{Ruct3}
If $KK(A, D)=\{0\}$ for all \CA s $D,$ then the assumption
that $A$ satisfies the UCT can be dropped (see \ref{Ruct}).
\end{rem}

\begin{thm}\label{TalWtrace}
Let $A$ be a non-unital separable amenable simple \CA\, with $K_0(A)={\rm Tor}(K_0(A)$ and with
$T(A)\not=\emptyset$ which satisfies the UCT.  Suppose that $A$ is quasi-compact.
Then every trace in $\overline{T(A)}^w$  is  a $W$-trace.
\end{thm}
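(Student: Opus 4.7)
My plan is to combine the quasidiagonality of amenable traces (Tikuisis--White--Winter) with the partial uniqueness theorem for maps to $Q$ (Lemma \ref{Lpartuniq}) and the model-building results \ref{R2}, \ref{Ctsang} to produce the required $W$-approximations of $\tau$. First, since $A$ is quasi-compact, \ref{compactrace} gives $0 \notin \overline{T(A)}^w$, so each $\tau \in \overline{T(A)}^w$ is (proportional to) a faithful tracial state on simple $A$. The TWW theorem on quasidiagonality of faithful amenable traces applies (separability, nuclearity, UCT), yielding a sequence of approximately multiplicative cpc maps $\phi_n : A \to M_{k(n)}$ with $\mathrm{tr}_{k(n)} \circ \phi_n \to \tau$ pointwise. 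Composing with standard unital inclusions $M_{k(n)} \hookrightarrow Q$ gives approximately multiplicative $\Phi_n : A \to Q$ recovering $\tau$; since $K_0(A)$ is torsion and $K_0(Q) = \mathbb{Q}$ is torsion-free while $K_1(Q) = 0$, the class $[\Phi_n]$ in $KL(A,Q)$ on $K_0(A)$ is automatically trivial.

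Next I will build a parallel $W$-sided approximation. Using \ref{R2} combined with \ref{Ctsang}, I construct a simple $\mathcal{M}_0$-algebra $B$, written as an inductive limit of finite direct sums of $W$, with $K_0(B) = K_1(B) = 0$, continuous scale, and $T(B)$ affinely homeomorphic to the compact Choquet simplex $\overline{T(A)}^w$. Traces on $B$ are $W$-traces on $B$ by construction of $B$ as a limit of $W$-summands together with \ref{wxtrace}. The goal is then to produce approximately multiplicative cpc maps $\rho_n : A \to B$ with $\tau_B \circ \rho_n \to \tau$, where $\tau_B \in T(B)$ corresponds to $\tau$. Composing these $\rho_n$ with the tracial factorizations through $W$ provided by Robert's $\mathrm{Cu}^\sim$-classification (\cite{Rl}, applied summand-wise to the $W$ direct summands at each finite stage of $B$) yields approximately multiplicative cpc $\Psi_n : A \to W$ with $t_W \circ \Psi_n \to \tau$, which is exactly the $W$-trace condition for $\tau$.

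To construct the $\rho_n$, the plan is an approximate intertwining argument in the style of \ref{TTMW} that exchanges $\Phi_n : A \to Q$ for maps into $B$. First I will push $\Phi_n$ through a $K_0$-matching embedding of $Q$ into $Q \otimes B$ (or a $Q$-corner absorbing the $B$-structure), verifying that the induced maps into $Q \otimes B$ still carry trivial $K_0$-class on the torsion group $K_0(A)$. Then, using the tracial control in Lemma \ref{Lpartuniq}---which applies because $K_1$ of the target is zero and only a finite subset $\mathcal{P} \subset K_0(A)$ is needed---I compare $\Phi_n$ with a candidate map constructed from $\rho_n$ postcomposed into $Q \otimes B$, obtaining an approximate path between them whose tracial fibers are controlled by $\tau$. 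Passing the trace information along this path gives the desired tracial identity $t \circ \rho_n \to \tau$. The use of Lemma \ref{LfactorQ} allows us to unitize to $\tilde{A}\otimes Q$ whenever needed.

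The main obstacle will be Step three: bridging the matrix-valued approximations coming from TWW with the $W$-building-block targets without a finite nuclear dimension hypothesis. This is precisely where the torsion hypothesis on $K_0(A)$ substitutes for the finite-nuclear-dimension assumption that drives \ref{TMW} and \ref{Pwtrace}: it forces any $K_0$-obstruction between $Q$-valued and $W$-valued realizations to vanish, so that Lemma \ref{Lpartuniq} can be invoked to match traces up to asymptotic unitary equivalence across the transfer. Carrying out this comparison rigorously---maintaining approximate multiplicativity through the transfer and lifting Cuntz-semigroup data to norm-level approximations---will be the delicate technical heart of the proof.
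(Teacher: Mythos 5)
Your opening and closing moves match the paper: reduce to $A\otimes Q$ (where the torsion hypothesis kills $K_0$, so the $\mathcal{P}$-condition in Lemma \ref{Lpartuniq} becomes vacuous), invoke quasidiagonality of traces (TWW) to get approximately multiplicative cpc maps into $Q$, and finish by invoking Robert's $Cu^{\sim}$-classification to land in $W$. But the middle of your argument has a genuine gap, and it is exactly the step you yourself flag as "the delicate technical heart." You propose to produce approximately multiplicative cpc maps $\rho_n: A\to B$ into a model algebra $B\in{\cal M}_0$ with $\tau_B\circ\rho_n\to\tau$, by "comparing $\Phi_n$ with a candidate map constructed from $\rho_n$" via an intertwining in the style of \ref{TTMW}. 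This is circular: the existence of such $\rho_n$ is essentially the statement being proved (a trace is a $W$-trace precisely when such approximately multiplicative maps into $W$, equivalently into ${\cal C}_0^{0'}$-algebras by \ref{wxtrace}, exist), and there is no existence machinery available for maps out of $A$ here --- $A$ is not assumed to have finite nuclear dimension, to lie in ${\cal D}$, or to be tracially approximated by ${\cal C}_0'$-subalgebras, which is what drives the intertwining in \ref{TTMW} (there the existence half comes from Robert's theorem applied to the building blocks sitting \emph{inside} the algebras, not from the algebras themselves). A "$K_0$-matching embedding of $Q$ into $Q\otimes B$" does not help, because the problem is not a $K$-theoretic obstruction but the absence of any map from $A$ toward the $W$-side to compare with.

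The idea your proposal is missing is the paper's mapping-torus trick, which produces the $W$-valued map directly from a single $Q$-valued approximation without ever needing an existence theorem for maps out of $A$. Given one quasidiagonal approximation $\psi: A_1\to Q$ ($A_1=A\otimes Q$), form the two amplifications $\Psi_0=\psi\otimes(e_1+\cdots+e_m)$ and $\Psi_1=\psi\otimes(e_1+\cdots+e_{m+1})$ with $m$ and $m+1$ equivalent summands; their traces differ by at most $1/(m+1)$, so Lemma \ref{Lpartuniq} (with ${\cal P}$ empty since $K_0(A_1)=0$) yields a tracially controlled path $L_1: A_1\to C([0,1],Q)$ with $\pi_0\circ L_1=\Psi_0$ and $\pi_1\circ L_1=\Psi_1$. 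The pair $(L_1,\psi)$ then satisfies exactly the boundary conditions of the Razak mapping torus $C_1=C_0\otimes Q$ built from the multiplicities $m$ and $m+1$, so it defines an $\mathcal{F}$-$\ep$-multiplicative map $\Phi': A_1\to C_1$; since $C_1$ is an inductive limit of Razak algebras with trivial $K$-theory, Robert's classification gives a homomorphism $\lambda: C_1\to W$ whose trace is integration of $tr$ over $[0,1]$, and $\Phi=\lambda\circ\Phi'$ recovers $\tau$ up to $\ep'$. This is how the transfer from $Q$-valued to $W$-valued approximations is actually accomplished; without it (or a substitute of comparable strength), your outline does not close.
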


\begin{proof}
It suffices to show that every tracial state of $A$ is a $W$-trace.
It follows from \cite{TWW} that every trace is quasidiagonal.
For a fixed  $\tau\in T(A),$  there exists a sequence of approximate multiplicative \cpc s
$\{\phi_n\}$ from $A$ into $Q$ such that
$$
\lim_{n\to\infty} tr\circ \phi_n(a)=\tau(a)\rforal a\in A.
$$
By \ref{LfactorQ}, we may assume that $\phi_n=\psi_n\circ \imath,$
where $\imath: A\to A\otimes Q$  is the embedding defined by
$\imath(a)=a\otimes 1_Q$ for all $a\in A$ and $\psi_n: A\otimes Q\to Q$ is a sequence
of approximate multiplicative \cpc s.

Therefore it suffices to show that every tracial state of $A\otimes Q$ is a $W$-trace.
Set $A_1=A\otimes Q.$  Then  $K_0(A_1)=\{0\}.$

Fix $1>\ep>0,$ $1>\ep'>0,$  a finite subset ${\cal F}\subset A_1$ and a finite ${\cal H}\subset A_1.$
Put ${\cal F}_1={\cal F}\cup {\cal H}.$
\Wlog, we may assume that ${\cal F}_1\subset A_1^{\bf 1}.$
Note that $A$ is non-unital. Choose  a strictly positive element $a\in A_+$ with $\|a\|=1.$
We also assume  that
$$
\tau(f_{1/2}(a))\ge d>0\rforal \tau\in \overline{T(A)}^w.
$$




Let $1>\dt>0,$ ${\cal G}\subset A_1$ be a finite subset be required by
\ref{Lpartuniq} for $A_1$ (in place of $A$), $d/2$ (in place of $d$), $\ep/16$ (in place of $\ep$)
and ${\cal F}_1.$ (Note since $K_0(A_1)=\{0\},$ the required set ${\cal P}$  in \ref{Lpartuniq}  does not  appear here.)



Let  ${\cal G}_1={\cal G}\cup {\cal F}_1$ and let
$
\ep_1={\ep\cdot \ep'\cdot \dt\over{2}}.
$

%



Let $\tau\in T(A_1).$  Since $\tau$ is quasi-diagonal,
 there exists a ${\cal G}_1$-$\ep_1$-multiplicative \cpc\,
$\phi: A_1\to Q$
such that
\beq\label{Talwtr-11}
&&|\tau(b)-tr\circ \psi(b)|<\ep'/16\rforal b\in {\cal G}\cup {\cal F},\\
&&tr(f_{1/2}(\psi(a)))>2d/3.
\eneq


Choose an integer $m\ge 3$ such that
$$
1/m<\min\{\ep_1/64, d/8\}.
$$

Let $e_1, e_2,...,e_{m+1}\in Q$ be a set of mutually orthogonal and mutually equivalent  projections
such that
$$
\sum_{i=1}^{m+1}e_i=1_Q\andeqn tr(e_i)={1\over{m+1}},\,\,\,i=1,2,...,m+1.
$$
Let $\psi_i: A_1\to (1\otimes e_i)(Q\otimes Q)(1\otimes e_i)$ be defined by
$\psi_i(b)=\psi(b)\otimes e_i,$ $i=1,2,...,m+1.$
Let
$$\Psi_0=\sum_{i=1}^m\psi_i\andeqn \Psi_1=\sum_{i=1}^{m+1}\psi_i.$$
Identify $Q\otimes Q$ with $Q.$
Note that
\beq\label{Talwtr-12}
tr(f_{1/2}(\Psi_i(a)))\ge d/2,\,\,\,i=0,1.
\eneq
Moreover,
$$
|\tau\circ \Phi_0(b)-\tau\circ \Phi_1(b)|<{1\over{m+1}}<\min\{\ep_1/64, d/8\}\rforal b\in A_1.
$$

Again, keep in mind that $K_0(A_1)=\{0\}.$ Applying \ref{Lpartuniq}, we obtain
 a unitary $u\in Q$ and a  ${\mathcal F}_1$-$\ep/16$-multiplicative \cpc\,  $L: A\to {\rm C}([0,3/4], Q)$
such that
\hspace{-0.1in}\beq\label{Talwtr-13}
 \pi_0\circ L=\Psi_0,\,\,\, \pi_{3/4}\circ L={\rm Ad}\, u\circ \Psi_1.
\eneq
Moreover,
\begin{equation}\label{Talwtr-14}
 |\mathrm{tr}\circ \pi_t\circ L(h)-\mathrm{tr}\circ \pi_0\circ L(h)|<1/m<\ep'/64, \quad h\in {\mathcal F}_1,\ t\in [0, 3/4].
\end{equation}
Here, $\pi_t: {\rm C}([0,3/4], Q)\to Q$ is the point evaluation at $t\in [0,3/4].$
There is a continuous path of unitaries $\{u(t): t\in [3/4,1]\}$ such that
$u(3/4)=u$ and $u(1)=1_Q.$
Define $L_1: A_1\to C([0,1], Q)$
by
 $\pi_t\circ L_1=\pi_t\circ L$ for $t\in [0, 3/4]$ and
$\pi_t\circ L_1={\rm Ad} u_t \circ \Psi_1$ for $t\in (3/4, 1].$
$L_1$ is a ${\cal F}_1$-$\ep/16$-multiplicative \cpc\, from $A_1$ into $C([0,1], Q).$
Note now
\vspace{-0.12in}\beq\label{Talwtr-15}
&&\pi_0\circ L_1=\Psi_0\andeqn \pi_1\circ L_1=\Psi_1\andeqn\\
&&|tr\circ \pi_t\circ L(h)-tr\circ \Psi_1(h)|<\ep'/64\rforal h\in {\cal H}.
\eneq

Fix an integer $k\ge 2.$ Let $\kappa_i: M_k\to M_{k(m+1)}$  ($i=0,1$) be defined by
\beq\label{Talwtr-16}
\kappa_0(c)&=&\diag(\overbrace{c,c,...,c}^m,0)\andeqn\\
\kappa_1(c)&=&\diag(\overbrace{c,c,...,c}^{m+1})
\eneq
\hspace{-0.1in}for all $c\in M_k.$ Define
$$
C_0=\{(f,c): C([0,1], M_{k(m+1)})\oplus M_k:  f(0)=\kappa_0(c)\andeqn f(1)=\kappa_1(c)\}.
$$
and defined
$$
C_1=C_0\otimes Q.
$$
Note that $C_0\in {\cal C}_0^0$  and  $C_1$ is an inductive limit
of  Razak algebras $C_0\otimes M_{n!}.$  Moreover
$K_0(C_1)=K_1(C_1)=\{0\}.$
Put $p_0=\sum_{i=1}^m1_Q\otimes e_i.$
Define  ${\bar \kappa}_0: Q\to p_0(Q\otimes Q)p_0$ to be the unital \hm\,
defined by ${\bar \kappa}_0(a)=a\otimes \sum_{i=1}^me_i$ and
${\bar \kappa}_1(a)=a\otimes 1_Q$ for all $a\in Q.$

Then
one may write
$$
C_1=\{(f, c)\in C([0,1], Q)\oplus Q: f(0)={\bar {\kappa}}_0(c)\andeqn f(1)={\bar \kappa}_1(c)\}.
$$
Note that ${\bar \kappa}_0\circ \psi(b)=\Psi_0(b)$ for all $b\in A_1$ and
${\bar \kappa}_1\circ \psi(b)=\Psi_1(b)$ for all $b\in A_1.$  Thus
one can define  $\Phi': A_1\to C_1$ by
$\Phi'(b)=(L_1(b), \psi(b))$ for all $b\in A_1.$

Then  $\Phi'$ is a ${\cal F}_1$-$\ep/16$-multiplicative \cpc\,
such that
\beq\label{Talwtr-17}
|tr(\pi_t\circ \Phi'(h))-tr\circ \psi(h)|<\ep'/4 \rforal h\in {\cal H}.
\eneq
Let $\mu$ be the Lebesque measure on $[0,1].$
There is a \hm\, $\Gamma: Cu^{\sim}(C_1)\to Cu^{\sim}(W)$ such that
$\Gamma(f)(\tau_0)=(\mu\otimes tr)(f)$ for all $f\in Aff(T(C_1)),$
where $\tau_0$ is the unique tracial state of $W.$
By \cite{Rl}, there exists a \hm\,
$\lambda: C_1\to W$
such that
\beq\label{Talwtr-18}
\tau_0\circ \lambda((f,c))=\int_0^1 tr(f(t))d t\rforal (f, c)\in C_1.
\eneq
Finally, let
$\Phi=\lambda\circ \Phi'.$
Then $\Phi$ is a ${\cal F}_1$-$\ep$-multiplicative \cpc\, from
$A_1$ into $W.$  Moreover, one computes
that
\beq\label{Talwtr-19}
|\tau_0\circ \Phi(h)-\tau(h)|<\ep'\rforal h\in {\cal H}.
\eneq
This proves the theorem.
\end{proof}

\begin{df}\label{DTAmap}
Let $D, C$ be two non-unital separable amenable \CA s and  let $u\in {\tilde C}.$  Suppose that $\phi: D\to C$ be a \hm.
 Let $\phi^{\sim}: {\tilde D}\to {\cal C}$ be the unital extension 
of $\phi.$  In what follows $z$ denotes the standard unitary generator of $C(\T).$

If $u\phi(d)=\phi(d)u$ for all $d\in D,$  for some unitary $u\in {\tilde C}$ with $\pi^{C\sim}(u)=1,$
where $\pi^{C\sim}: {\tilde C}\to \C$ is the quotient map,
then one can defined a \hm\,  $\Phi_{u, \phi}: C(\T)\otimes {\tilde D}\to {\tilde C}$ such 
that $\Phi(z\otimes 1_{\tilde D})=u$ and $\Phi(1\otimes d)=\phi^{\sim}(d)$ for all 
$d\in {\tilde D}.$  For any finite subset ${\cal F}\subset C(\T)\otimes D,$ 
any finite subset ${\cal F}_d\subset {\tilde D},$ and $\ep>0,$ 
there exists a finite subset ${\cal G}\subset D$ and $\dt>0$ such that, whenever 
$\phi: D\to  C$ is a ${\cal G}$-$\dt$ -multiplicative \cpc \, (for any \CA\, $C$)  and 
$\|[u, \phi(g)]\|<\dt$ for all $g\in {\cal G},$ there exists a ${\cal F}$-$\ep$-multiplicative 
\cpc\, $L: C(\T)\otimes {\tilde D}\to {\tilde D}$ such that 
\beq
\|L(z\otimes 1)-u\|<\ep\andeqn \|L(1\otimes d)-\phi^{\sim}(d)\|<\ep\tforal d\in {\cal F}_d.
\eneq
We will denote such $L$ by $\Phi_{u, \phi}.$ 
\end{df}

The following  is a  non-unital version  of 8.4 of \cite{GLN}. The proof is almost identical to that
of 8.4 of \cite{GLN}. We include 
it here for convenience of the reader. 

\begin{lem}\label{UniqN1}
Let $A=C(\T)\otimes {\tilde D},$ where $D\in {\cal C}_0^0.$  
Let ${\cal F}\subset A$ be a finite subset, let
$\ep>0$ be a positive number and let $\Delta: A_+^{q, {\bf 1}}\setminus \{0\}\to (0,1)$  be an order preserving map. There exists  a finite subset ${\cal H}_1\subset A_+^{\bf 1}\setminus \{0\},$
$\gamma_1>0,$ $\gamma_2>0,$ $\dt>0,$ a finite subset
${\cal G}\subset A$ 
a finite subset ${\cal H}_2\subset A$
satisfying the following:
For any unital ${\cal G}$-$\dt$-multiplicative \morp s 
$\Phi_{u,\psi}, \Phi_{v, \psi}: A\to {\tilde C}$ 
for some $C\in {\cal R}$  
where $u, v\in U({\tilde C})$ and 
$\phi, \psi: D\to C$ are two ${\cal G}_d$-$\dt$-multiplicative \cpc s
($\{g\otimes 1: g\in {\cal G}_d\}\subset {\cal G}$)
such that
\begin{equation}\label{Uni1-1}
\tau(\Phi_{u,\phi}(a))\ge \Delta(a),\,\,\, \tau(\Phi_{v,\psi}(a))\ge \Delta(a) \tforal \tau\in T(C) \tand a\in {\cal H}_1,
\end{equation}
\begin{equation}\label{Uni1-3}
|\tau\circ \phi(a)-\tau\circ \psi(a)|<\gamma_1 \tforal a\in {\cal H}_2\tand
\end{equation}
\begin{equation}\label{Uni1-3+1}
{\rm dist}(\bar{u}, \bar{v})<\gamma_2,
\end{equation}
there exists a unitary $W\in {\tilde C}$ such that
\begin{equation}\label{Uni1-4}
\|W(\Phi_{u,\phi}(f))W^*-(\Psi_{v, \psi}(f))\|<\ep,\tforal f\in {\cal F}.
\end{equation}
\end{lem}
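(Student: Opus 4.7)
Since $A = C(\mathbb{T})\otimes \tilde D$, a unital map $\Phi_{u,\phi}:A\to \tilde C$ is encoded by a pair $(u,\phi)$ with $\phi:D\to C$ an approximately multiplicative c.p.c.\ map and $u\in \tilde C$ an (approximate) unitary approximately commuting with $\phi^\sim(\tilde D)$. Because $C\in \mathcal R$ satisfies $K_0(C)=K_1(C)=0$, the only genuine invariants to be matched are the restricted tracial data on $D$ and the scalar image $\bar u\in\mathbb{C}$ of $u$ in $\tilde C/C$. The proof would therefore split into two uniqueness statements — one for $\phi,\psi:D\to C$, and one for the unitaries $u,v$ — which are combined at the end.

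First I would split every tolerance, finite subset, and auxiliary set into a ``$D$-part'' and a ``$\mathbb{T}$-part'': pulling back ${\cal F}$ along the natural inclusions $D\hookrightarrow A$ and $C(\mathbb{T})\hookrightarrow A$, and using semiprojectivity of $D$ to replace approximately multiplicative c.p.c.\ maps by genuine $\ast$-homomorphisms on a fixed finite subset. I would then apply Lemma \ref{Lrluniq} to $\phi,\psi:D\to C$; the hypotheses \eqref{Uni1-1} (restricted to $\{h\otimes 1: h\in {\cal H}_1^{(D)}\}$) and \eqref{Uni1-3} (restricted to ${\cal H}_2^{(D)}$) supply exactly the tracial input required, while $K_0(D)=K_1(D)=0$ trivialises the $K$-theoretic hypotheses of \ref{Lrluniq}. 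This yields a unitary $W_1\in \tilde C$ with $\|W_1\phi(d)W_1^*-\psi(d)\|<\varepsilon/4$ for $d$ in the prescribed finite subset of $D$.

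Second, after replacing $(u,\phi)$ by $(W_1 u W_1^*, W_1\phi W_1^*)$, I may assume $\phi\approx \psi$ on the relevant finite subset and work with the unitaries $u':=W_1uW_1^*$ and $v$, both approximately commuting with $\psi(D)$. Because $K_1(C)=0$ and $\text{cer}(M_m(\tilde C))\le R$ for all $m$, the unitary $v(u')^*$ lies in $U_0(\tilde C)$ and can be written as a product of at most $R$ exponentials $\exp(ih_j)$ with $h_j=h_j^*\in \tilde C$ of controlled norm; the tracial closeness \eqref{Uni1-3} together with \eqref{Uni1-3+1} controls the scalar image of $v(u')^*$, so each $h_j$ may be taken close to an element of $C$ (not merely $\tilde C$). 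Using a Berg-type lemma inside $C\in\mathcal R$ — i.e.\ cutting $h_j$ along a partition of $\mathrm{Sp}(u')$ refined so that $\psi(D)$ is approximately constant on each piece — one constructs a unitary $W_2\in\tilde C$ with
\[
W_2 u' W_2^*\approx v,\qquad \|[W_2,\psi(d)]\|<\varepsilon/4
\]
for $d$ in the prescribed finite subset. Setting $W=W_2W_1$ and estimating gives \eqref{Uni1-4}.

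The main obstacle is the second step: forcing $W_2$ into the approximate commutant of $\psi(D)$ while still implementing $v(u')^*$. This is where one must exploit the very specific structure of $\mathcal R$ (the mapping-torus description of Razak building blocks, and the bound $R$ on exponential rank of matrices over $\tilde C$) together with semiprojectivity of $D\in{\cal C}_0^0$ to perform a Berg-type splitting that does not disturb the tracial estimates already obtained from \ref{Lrluniq}. The matching of scalar parts via the $\gamma_2$-hypothesis \eqref{Uni1-3+1} enters here to show that the correction unitary $W_2$ can be chosen in $\tilde C$ rather than merely in the unitization of $C$ up to a scalar error.
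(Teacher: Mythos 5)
There is a genuine gap, in fact two. First, your step (a) invokes Lemma \ref{Lrluniq} with codomain $C\in{\cal R}$, but the hypotheses of \ref{Lrluniq} require the codomain to be a \emph{simple}, exact C*-algebra with strict comparison, almost stable rank one, quasi-compactness, and surjectivity onto ${\rm LAff}_{b+}(\overline{T(\cdot)}^w)$; a Razak building block is a non-simple subhomogeneous algebra (it has ideals corresponding to points of $(0,1)$), so that lemma simply does not apply. If instead you mean to use Robert's Theorem 3.3.1 of \cite{Rl} directly (any stable rank one codomain), you must convert the hypotheses \eqref{Uni1-1} and \eqref{Uni1-3}, which are stated for $\tau\in T(C)$, into the strict $Cu^{\sim}$-inequalities at \emph{every} irreducible representation of $C$, uniformly along the partition of the spectrum; that compactness/uniformity step is exactly what the proof of \ref{Lrluniq} does for simple targets and is not automatic here, and you do not address it.

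Second, and more seriously, your step (b) is the actual content of the lemma and is asserted rather than proved. The obstruction to finding $W_2$ in the approximate commutant of $\psi(D)$ conjugating $u'$ near $v$ is not the ``scalar image'' of $v(u')^*$ (which is trivially adjustable), nor the exponential-rank bound: it is a Bott/determinant obstruction. In the paper's argument one evaluates $\Phi,\Psi$ along a fine partition of $[0,1]$, applies the finite-dimensional uniqueness theorem (5.9 of \cite{GLN}) at each $t_i$ to get local conjugating unitaries $w_i\in F_2$, and then the discrepancies ${\rm bott}_1(\langle\pi_{t_i}\Phi(g_1)\rangle, w_i^*w_{i+1})$ are integers $\lambda_{i+1}-\lambda_i\in K_0(F_2)$ computed via the Exel trace formula; the hypothesis \eqref{Uni1-3+1} enters precisely to show, through the de la Harpe--Skandalis type estimate on $\langle\Phi(g_1^*)\rangle\langle\Psi(g_1)\rangle$, that these integers are small relative to $k_1(m_0+1)$, after which unitaries $z_i$ with prescribed Bott data (7.4 of \cite{GLN}) and homotopies in the approximate commutant (6.7 of \cite{GLN}) cancel them. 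Both of those steps require the fullness hypothesis \eqref{Uni1-1}, which your sketch never uses; without it neither the prescribed-Bott construction nor the homotopy in the approximate commutant is available. Finally, your ``Berg-type splitting'' does not address the mapping-torus boundary condition: the resulting $W$ must be a path $W(t)\in F_2$ with $W(0)$ and $W(1)$ induced, via $h_0$ and $h_1$, by a single unitary over $F_1$ (the $w_e, z_e, \af_e$ bookkeeping in the paper), and nothing in your construction guarantees this, so you only get a unitary in the multiplier-type completion rather than in $\tilde C$. In short, the two-step reduction collapses the lemma onto its hardest special case (a basic homotopy lemma with $U/CU$-control over a non-simple one-dimensional NCCW target) and leaves that case unproved.
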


\begin{proof}
One computes $K_0(A)=\Z$ and $K_1(A)=\Z,$ while $K_0({\tilde C})=\Z$ and $K_1({\tilde C})=\{0\}.$
Therefore $KK(A, {\tilde C})={\rm Hom}(K_0(A), K_0({\tilde C})={\rm Hom}(\Z, \Z).$


Let ${\cal H}_1'\subset A_+\setminus \{0\}$ (in place of ${\cal H}_1$)
for $\ep/32$  (in place of $\ep$) and ${\cal F}$ required by 
6.7 of \cite{GLN}.
 Let
 $\dt_1>0$ (in place of $\dt$), ${\cal G}_1\subset A$ (in place of ${\cal G}$) be a finite subset and let ${\cal P}_0\subset \underline{K}(A)$ (in place of ${\cal P}$) be a finite subset required by 
 6.7 of \cite{GLN}
for $\ep/32$ (in place of $\ep$), ${\cal F}$ and  $\Delta.$
We may assume that $\dt_1<\ep/32$ and $(2\dt_1, {\cal G}_1)$
is a $KK$-pair (see the end of 
2.12 of \cite{GLN}).

Moreover, we may assume that $\dt_1$ is sufficiently small  that
if $\|uv-vu\|<3\dt_1,$ then the Exel formula
$$
\tau({\rm bott}_1(u,v))={1\over{2\pi\sqrt{-1}}}(\tau(\log(u^*vuv^*))
$$
holds for any pair of unitaries $u$ and $v$ in any unital \CA\, $C$ with tracial rank zero and any $\tau\in T(C)$ (see Theorem 3.6 of \cite{Linajm}). Moreover
if $\|v_1-v_2\|<3\dt_1,$ then
$$
{\rm bott}_1(u,v_1)={\rm bott}_1(u,v_2).
$$

Let  $g_1\in U(A)$  be given  by 
$z\otimes 1_{\tilde D}.$  Let $\bar{g_1}\in U(A)/CU(A)$ be the image.
Let $\dt_u=\min\{1/256, \dt_1/16\},$
${\cal G}_u={\cal F}\cup{\cal G}_1\cup {\cal U}_0$ and
let ${\cal P}_u={\cal P}_0\cup \{[g_1]\}.$

Let $\dt_2>0$ (in place of $\dt$), ${\cal G}_2\subset A$ (in place of ${\cal G}$), ${\cal H}_2'\subset A_+\setminus \{0\}$ (in place of ${\cal H}$), $N_1\ge 1$ (in place of $N$) be the finite subsets and the constants as required by 
7.4 of \cite{GLN}
for 
$\dt_u$ (in place of $\ep$), ${\cal G}_u$ (in place of ${\cal F}$), ${\cal P}_u$ (in place of ${\cal P}$)  and
$\Delta$ and with $\bar{g}_1$ (in place of $g_j$).

Let $\dt_3>0$  and let ${\cal G}_3\subset A\otimes C(\T)$
be a finite subset satisfying the following:
For any ${\cal G}_3$-$\dt_3$-multiplicative \morp\, $L': A\otimes C(\T)\to C'$ (for any unital \CA\, $C'$ with $T(C')\not=\emptyset$),
 \beq\label{Uni-10}
 |\tau([L]({\boldsymbol{\bt}}(\bar{g}_1))|<1/8N_1.
\eneq

Without loss of generality, we may assume
that
$$
{\cal G}_3=\{g\otimes z': g\in {\cal G}_3'\andeqn z'\in \{1,z, z^*\}\},
$$
where
${\cal G}_3'\subset A$ is a finite subset containing $1_A$  (by  choosing a smaller $\dt_3$ and large ${\cal G}_3'$).

 Let $\ep_1'=\min\{d/27N_1, \dt_u/2,  \dt_2/2, \dt_3/2\}$ and let ${\bar \ep}_1>0$ (in place of $\dt$) and ${\cal G}_4\subset A$ (in place of ${\cal G}$) be a finite subset as
required by  6.4 of \cite{GLN}
for $\ep_1'$ (in place of $\ep$) and ${\cal G}_u\cup {\cal G}_3'.$
Put
$
\ep_1=\min\{\ep_1', \ep_1'',{\bar \ep_1}\}.
$
Let ${\cal G}_5={\cal G}_u\cup {\cal G}_3'\cup {\cal G}_4.$

Let $\mathcal H_3'\subseteq A_+^{\bf 1}\setminus \{0\}$ (in place of $\mathcal H_1$), $\dt_4>0$ (in place of $\dt$),  ${\cal G}_6\subset A$  (in place of ${\cal G}$), ${\cal H}_4'\subset A_{s.a.}$ (in place of $\mathcal H_2$), ${\cal P}_1\subset \underline{K}(A)$ (in place of ${\cal P}$) and $\sigma>0$ 
be the finite subsets and constants  as required by Theorem 
5.9  of \cite{GLN}
with respect to $\ep_1/4$ (in place $\ep$) and ${\cal G}_5$ (in place of ${\cal F}$) and $\Delta$.

Choose $N_2\ge N_1$ such that $(k(A)+1)/N_2<1/8N_1.$
 Choose ${\cal H}_5'\subset A_+^{\bf 1} \setminus \{0\}$ and
 $\dt_5>0$ and a finite subset ${\cal G}_7\subset A$ such that, for any $M_m$ and unital ${\cal G}_7$-$\dt_5$-multiplicative \morp\, $L': A\to M_m,$
 if
 ${\rm tr}\circ L'(h)>0\tforal h\in {\cal H}_5',$
 then
 $m\ge 16N_2.$

Put $\dt=\min\{\ep_1/16,  \dt_4/4, \dt_5/4\},$
${\cal G}={\cal G}_5\cup {\cal G}_6\cup {\cal G}_7$, and  ${\cal P}={\cal P}_u\cup {\cal P}_1.$
Put
$$
{\cal H}_1={\cal H}_1'\cup {\cal H}_2'\cup {\cal H}_3'\cup {\cal H}_4'\cup {\cal H}_{{5}}'
$$
and let ${\cal H}_2={\cal H}_4'.$
Let $\gamma_1=\sigma$ and let
$0<\gamma_2<\min\{d/16N_2, \dt_u/9, 1/256\}.$

Now suppose that $C\in {\cal R}$ and $\Phi_{u, \phi}, \Psi_{v,\psi}: A\to {\tilde C}$ are two unital
${\cal G}$-$\dt$-multiplicative \morp s satisfying the condition of the theorem for the given $\Delta,$ ${\cal H}_1,$ $\dt,$ ${\cal G},$ ${\cal P},$ ${\cal H}_2,$ $\gamma_1,$ $\gamma_2$ and ${\cal U}.$ 
Here we also assume that  
\beq
\|\pi^{C\sim}(\Phi_{u, \phi}(g))-\pi^{C\sim}(\Phi_{v, \psi}(g))\|<\ep_1'\rforal g\in {\cal G}_5,
\eneq
$\pi^{C\sim}: {\tilde C}\to \C$ is  the 
quotient map.

In what follows we will use $\Phi$ for $\Phi_{u, \phi}$ and $\Psi$ for $\Phi_{v, \psi}.$ 

By considering only one summand, we may write $C=A(F_1, F_2, h_0, h_1)=A(k_1, m_0, m_0+1),$ 
where $F_1=M_{k_1},$ $F_2=M_{k_1(m_0+1)},$
$h_1(f)=f\otimes 1_{M_{m_0+1}}$ and $h_0(f)=f\otimes p$ for all $f\in F_1,$ where $p\in M_{m_0+1}$
with rank $m_0.$ 
By the choice of ${\cal H}_5',$ one has that
$k_1\ge 16 N_2.$
Let
\vspace{-0.14in}$$
0=t_0<t_1<\cdots <t_n=1
$$
be a partition of $[0,1]$ so that
\vspace{-0.12in}\beq\label{Uni-11}
\|\pi_{t}\circ \Phi(g)-\pi_{t'}\circ \Phi(g)\|<\ep_1/16\andeqn
\|\pi_{t}\circ \Psi(g)-\pi_{t'}\circ \Psi(g)\|<\ep_1/16
\eneq
for all $g\in {\cal G},$ provided $t, t'\in [t_{i-1}, t_i],$ $i=1,2,...,n.$

Note that, by \ref{Runitz}, 
${\tilde C}=A(F_1\otimes \C, F_2, h_0^{\sim}, h_1^{\sim}),$ 
where $h_0^{\sim}((f,\lambda))=h_0(f)\oplus \lambda (1_{F_2}-h_0(1_{F_1}))$
and $h_1^{\sim}((f, \lambda))=h_1(f).$ 

Applying Theorem 
5.9 of \cite{GLN},
one obtains a unitary
$w_i\in F_2,$ if $0<i<n$ 
and $w_n\in h_1(F_1).$ 
such that, for  $0<i\le n,$ 
\vspace{-0.1in}\beq\label{Uni-12}
\|w_i\pi_{t_i}\circ \Phi(g)w_i^*-\pi_{t_i}\circ \Psi(g)\|<\ep_1/16\tforal g\in {\cal G}_5.
\eneq
Let $\pi^{F_1^{\sim'}}: h_0^{\sim}(F_1^{\sim})\to \C$ and $\pi^{D'}: {\tilde D}\to \C$  be the quotient maps. 
Let $\pi': h_0(F_1^{\sim})\to h_0(F_1)$ be the quotient map.  Consider $\pi'\circ \Phi$ and $\pi'\circ \Psi.$ 
Then, by applying  Theorem 
5.9 of \cite{GLN}, one also has a unitary $w_0'\in  h_0(F_1)$ such that
\beq\label{Uni-12-+1}
\|(w_0')\pi'\circ \pi_{t_0}\circ \Phi(g)(w_0')^*-\pi'\circ \pi_{t_0}\circ \Psi(g)\|<\ep_1/16\tforal g\in {\cal G}_5.
\eneq
Put $w_0=w_0'\oplus (F_2-h_0(1_{F_1})).$ 
Then we have 
\beq\label{Uni-12-+2}
\|w_0\pi_{t_0}\circ \Phi(g)(w_0)^*-\pi_{t_0}\circ \Psi(g)\|<\ep_1'\tforal g\in {\cal G}_5.
\eneq

It follows from 
8.2 of \cite{GLN} that  we may assume that there is a unitary  $w_e\in F_1\oplus  \C$ such that 
$h_0^{\sim}(w_e)=w_0$ and $h_1^{\sim}(w_e)=w_n.$ Write $w_0=w_0'\oplus 1_\C.$ 
Then $h_0(w_e')=w_0$ and $h_1(w_0')=w_n.$

By \eqref{Uni1-3+1}, there is a unitary  $\omega_1\in {\tilde C}$
such that $\omega_1\in CU({\tilde C}))$ and
\beq\label{Uni-13}
\|\langle (\Phi(g_1^*)\rangle \langle (\Psi(g_1)\rangle  -\omega_1\|<\gamma_2.
\eneq
({\it note that we now have $w_1$ as well as $\omega_1$ in the proof}.) 
Note that $\omega_1$ can be chosen so that $\omega_1=1+\omega_{01}$ for
some $\omega_{01}\in C.$
Write
\vspace{-0.12in}$$
\omega_1=\prod_{l=1}^{e(1)}\exp(\sqrt{-1}a_1^{(l)})
$$
for some selfadjoint element $a_1^{(l)}\in {\tilde C},$
$l=1,2,...,e(1).$ 
Let $r_l=\pi^{C^\sim}(a_1^{(l)}),$ $l=1,2,...,e(1).$ 
Then $r_l\in \R.$  Then $\sum_{l=1}^{e(1)}r_l\in 2\pi \Z.$ Replacing $a_1^{(l)}$ by
$a_1^{(l)}-r_l,$ $l=1,2,...,e(1),$ we may assume that $\pi^{C^\sim}(a_1^{(l)})=0,$
$l=1,2,...,e(1).$

Then
$$
\sum_{l=1}^{e(j)}{k_1(m_0+1)({\rm tr}(a_1^{(l)}(t)))\over{2\pi}}\in \Z,\quad t\in (0,1),
$$
where ${\rm tr}$ is the normalized trace on $F_2.$ One checks also 
the above also has an integer value at $1$ as well as $0.$ 
In particular,
\beq\label{Uni-15}
\sum_{l=1}^{e(j)}(k_1(m_0+1))({\rm tr})(a_1^{(l)}(t))=\sum_{l=1}^{e(j)}(k_1(m_0+1)){\rm tr}(a_1^{(l)}(t'))
\rforal t, t''\in [0,1].
\eneq

Then
\begin{eqnarray}\label{Ui-16}
&&\|\pi_{t_i}(\langle \Phi)(g_1^*)\rangle) w_i(\pi_{t_i}(\langle \Phi)(g_1)\rangle)w_i^*-\omega_1(t_i)\|
<3\ep_1/8+2\gamma_2<1/32.
\end{eqnarray}
Let $\pi_e': C\to F_1$ be the restriction $(\pi_e')|_{F_1}$ and $\Phi_e'=\pi'_e\circ \Phi.$
Write  $\pi_e(\omega_1)=\omega_{1e}\oplus 1_\C,$  where $\omega_{1e}\in F_1.$
 Then \eqref{Uni-17} also means
\beq\label{Uni-17+1}
\|\langle (\Phi_e')(g_1^*)\rangle w_e'(\langle \Phi_e')(g_1)\rangle)(w_e')^*-\omega_{1e}\|<3\ep_1/8+2\gamma_2<1/32.
\eneq
We also have (with $\Phi_e=\pi_e\circ \Phi$)
\beq\label{Uni-17}
\hspace{-0.4in}\|\langle (\Phi_e)(g_1^*)\rangle w_e(\langle \Phi_e)(g_1)\rangle)w_e^*-\pi_e(\omega_1)\|<3\ep_1'/8+2\gamma_2<1/32.
\eneq

It follows from \eqref{Ui-16} that there exists selfadjoint elements $b_i\in F_2$ such that
\beq\label{Uni-18}
\hspace{-0.2in}\exp(\sqrt{-1}b_{i})=\omega_1(t_i)^*(\pi_{t_i}(\langle \Phi)(g_1^*)\rangle)w_i(\pi_i(\langle \Phi)(g_1)\rangle) w_i^*,
\eneq
and $b_{e}\in F_1$ such that
\beq\label{Uni-19}
\hspace{-0.3in}&&\exp(\sqrt{-1}b_{e})=\pi_e(\omega_1)^*(\pi_e(\langle \Phi)(g_1^*)\rangle)
w_e'(\pi_e(\langle \Phi)(g_1)\rangle) w_e^*,\andeqn\\
\label{Uni-20}
\hspace{-0.3in}&&\|b_{i}\|<2\arcsin (3\ep_1/16+\gamma_2) \,i=0,1,...,n, \andeqn \|b_e\|<2\arcsin (3\ep_1'/16+\gamma_2)
\eneq
Put $b_e'=\pi_e'(b_e).$ 
Then
\beq\label{Uni-20+1}
b_e'=\omega_{1e}^*(\pi_e(\langle \Phi)(g_1^*)\rangle)w_{1e}w_e'(\pi_e(\langle \Phi)(g_1)\rangle) (w_e')^*
\andeqn \|b_e'\| <2\arcsin (3\ep_1/16+\gamma_2).
\eneq
We have that
\beq\label{Uni-20+}
h_0^{\sim} (b_{e})=b_{0}\andeqn h_1^{\sim}(b_{e})=b_{n}.
\eneq
Note that
\beq\label{Uni-21}
(\pi_{t_i}(\langle \Phi(g_j^*)\rangle))w_i(\pi_{t_i}(\langle \Phi)(g_1)\rangle) w_i^*
=\pi_{t_i}(\omega_1)\exp(\sqrt{-1}b_{i}),
\,\,\,i=0,1,...,n,e.
\eneq
Then,
\beq\label{Uni-22}
{k_1(m_0+1)\over{2\pi}}( {\rm tr})(b_{i})\in \Z,
\eneq
$i=0,1,...,n.$
We also have
\beq\label{Uni-23}
{k_1\over{2\pi}}({\rm tr}_{k(1)})(b_{e}')\in \Z,
\eneq
where ${\rm tr}_{k(1)}$ is the normalized trace on $F_1.$
Put
$$
\lambda_i={k_1(m_0+1)\over{2\pi}}( {\rm tr})(b_{i})\in \Z,  \,\,\,  i=0,1,2,...,n,\andeqn
\lambda_{e}={k_1\over{2\pi}}({\rm tr}_{k_1})(b_{e})\in \Z.
$$

 We have, by (\ref{Uni-20}),
\beq\label{Uni-24}
|{\lambda_i\over{k_1(m_0+1)}}|&<&1/4N_1,\,\,\,i=1,2,...,n,\andeqn\\\label{Uni-24+}
|{\lambda_{e}\over{k_1}}|&<&1/4N_1,
\eneq

Define $\af_i^{(0,1)}: K_1(A)\to \Z=K_0(F_2)$ by mapping $[g_1]$ to $\lambda_{i},$  $i=0,1,2,...,n,$ and
define
$\af_e^{(0,1)}: K_1(A)\to \Z=K_0(F_1)$ by
mapping $[g_1]$ to $\lambda_{e}.$ 
We write $K_0(A\otimes C(\T))=K_0(A)\oplus {\boldsymbol{\bt}}(K_1(A)))$
(see  
%
2.10 of \cite{LnHomtp}
for the definition
of ${\boldsymbol{\bt}}$).
Define $\af_i: K_*(A\otimes C(\T))\to K_*(F_2)$ as follows:
On $K_0(A\otimes C(\T)),$ define
\beq\label{Uni-25}
\af_i|_{K_0(A)}=[\pi_i\circ \Phi]|_{K_0(A)},\,\,\,
\af_i|_{{\boldsymbol{\bt}}(K_1(A))}=\af_i\circ {\boldsymbol{\bt}}|_{K_1(A)}=\af_i^{(0,1)}
\eneq
and on $K_1(A\otimes C(\T)),$ define
\beq\label{Uni-25+}
\af_i|_{K_1(A\otimes C(\T))}=0,\,\,\,i=0,1,2,...,n.
\eneq

Also define  $\af_e\in {\rm Hom}(K_*(A\otimes C(\T)), K_*(F_1)),$ by
\beq\label{Uni-26}
\af_e|_{K_0(A)}=[\pi_e'\circ \Phi]|_{K_0(A)},\,\,\,
\af_e|_{{\boldsymbol{\bt}}(K_1(A))}=\af_e\circ {\boldsymbol{\bt}}|_{K_1(A)}=\af_e^{(0,1)}
\eneq
on $K_0(A\otimes C(\T))$ and  $(\af_e)|_{K_1(A\otimes C(\T))}=0.$
Note that
\beq\label{Uni-26+0}
(h_0)_{*}\circ \af_e=\af_0\andeqn (h_1)_{*}\circ\af_e=\af_n.
\eneq
Since $A\otimes C(\mathbb{T})$ satisfies the UCT,  the map $\alpha_e$ can be lifted to an element of $KK(A\otimes C(\mathbb T), F_1)$ which is still denoted by $\alpha_e$. Then define
\beq\label{Uni-26+}
\af_0=\af_e\times [h_0] \andeqn \af_n=\af_e\times [h_1]
\eneq
in $KK(A\otimes C(\mathbb T), F_2)$.

For $i=1, ..., n-1$, also pick a lifting of $\alpha_i$ in $KK(A\otimes C(\mathbb T), F_2)$, and still denote it by $\alpha_i$.
We estimate that
\beq\label{Uni-26+n}
\|(w_{i}^*w_{i+1})\pi_{t_i}\circ \Phi(g)-\pi_{t_i}\circ \Phi(g)(w_{i}^*w_{i+1})\|<\ep_1/4\tforal g\in {\cal G}_5,
\eneq
$i=0,1,...,n-1.$
Let $\Lambda_{i,i+1}: C(\T)\otimes A\to F_2$ be a unital \morp\, given
by the pair $w_{i}^*w_{i+1}$ and $\pi_{t_i}\circ \Phi$  (by 
6.4 of \cite{GLN}, see 2.8 of \cite{LnHomtp}).
Denote $V_{i}=\langle \pi_{t_i}\circ \Phi(g_1) \rangle,$  $i=0,1,2,...,n-1.$

We have
\beq\label{Uni-27}
\|w_{i}V_{i}^*w_{i}^* V_{i}V_{i}^*w_{i+1}V_{i}w_{i+1}^*-1\|<1/16\\
\|w_{i}V_{i}^*w_{i}^*V_{i}V_{i+1}^*
w_{i} V_{i+1}w_{i+1}^*-1\|<1/16
\eneq
and there is a continuous path $Z(t)$
of unitaries  such that $Z(0)=V_{i}$ and $Z(1)=V_{i+1}.$ Since
$$
\|V_{i}-V_{i+1}\|<\dt_1/12,\,\,\,j=1,2,...,k(A),
$$
we may assume that $\|Z(t)-Z(1)\|<\dt_1/6$ for all $t\in [0,1].$
We obtain a continuous path
$$
w_{i}V_{i}^*w_{i}^*V_{i}Z(t)^*w_{i+1} Z(t)w_{i+1}^*
$$
which is in $CU(M_{nm(A)})$
for all $t\in  [0,1]$ and
$$
\|w_{i}V_{i}^*w_{i}^*V_{i}Z(t)^*w_{i+1} Z(t)w_{i+1}^*-1\|<1/8\tforal t\in [0,1].
$$

It follows that
$$
(1/2\pi\sqrt{-1})( {\rm tr})[\log(w_{i}V_{i}^*w_i^*V_{i}Z(t)^*w_{i+1}Z(t)w_{i+1}^*)]
$$
is a constant integer,  where ${\rm tr}$ is the normalized trace on $F_2.$
In particular,
\beq\label{Uni-28}
&&(1/2\pi\sqrt{-1})({\rm tr})(\log(w_{i}V_{i}^*w_{i}^*w_{i+1} V_{i}w_{i+1}^*))\\\label{Uni-28+}
&&=(1/2\pi\sqrt{-1})({\rm tr})(\log(w_{i}V_{i}^*w_{i}^*V_{i}V_{i+1}^*w_{i +1}V_{i}w_{i+1}^*)).
\eneq
One also has
\begin{eqnarray}\label{Uni-29}
&&w_{i}V_{i}^*w_{i}^*V_{i}V_{i+1}^*w_{i+1} V_{i+1}w_{i+1}^*\\
&&=(\omega_1(t_{i})\exp(\sqrt{-1}b_{i}))^*\omega_1(t_{i+1})
\exp(\sqrt{-1}b_{i+1})\\\label{Uni-30}
&&=\exp(-\sqrt{-1}b_{i})\omega_1(t_{i})^*\omega_1(t_{i+1})
\exp(\sqrt{-1}b_{i+1}).
\end{eqnarray}
Note that, by (\ref{Uni-13}) and (\ref{Uni-11}), for $t\in [t_i, t_{i+1}],$
\begin{equation}
\|\omega_1(t_{i})^*\omega_1(t)-1\|<2(m(A)^2)\ep_1/16+2\gamma_2<1/32,
\end{equation}
 $i=0,1,..., n-1.$

By Lemma 3.5 of \cite{Lin-AU11},
\begin{equation}\label{LeasyApp}
{\rm tr}(\log(\omega_1(t_{i})^*\omega_1(t_{i+1})))=0.
\end{equation}

It follows that (by the Exel formula (see \cite{HL}), using (\ref{Uni-28+}), (\ref{Uni-30}) and (\ref{LeasyApp}))
\beq\label{Uni-31}
&&\hspace{-0.6in}{\rm tr}({\rm bott}_1(V_{i}, w_{i}^*w_{i+1}))\\
 \hspace{-0.2in}&=&
({1\over{2\pi \sqrt{-1}}})({\rm tr})(\log(V_{i}^*w_{i}^*w_{i+1}V_{i}w_{i+1}^*w_{i}))\\
 \hspace{-0.2in}&=&({1\over{2\pi \sqrt{-1}}}){\rm tr})(\log(w_{i}V_{i}^*w_{i}^*w_{i+1}V_{i}w_{i+1}^*))\\
&=&({1\over{2\pi \sqrt{-1}}}) {\rm tr})(\log(w_{i}V_{i}^*w_{i}^*V_{i}V_{i+1}^*
w_{i+1}V_{i+1}w_{i+1}^*))\\
&=& ({1\over{2\pi \sqrt{-1}}}){\rm tr}(\log(\exp(-\sqrt{-1}b_{i})\omega_j(t_{i})^*
\omega_j(t_{i+1})\exp(\sqrt{-1}b_{i+1}))\\
&=& ({1\over{2\pi \sqrt{-1}}})[({\rm tr})(-\sqrt{-1}b_{i})+({\rm tr})(\log(\omega_1(t_{i})^*\omega_1(t_{i+1}))\\
&&\hspace{1.4in}+({\rm tr})(\sqrt{-1}b_{i})]\\
&=&{1\over{2\pi}}({\rm tr})(-b_{i}+b_{i+1})
\eneq
for all $t\in T(F_2).$  In other words,
\beq\label{Uni-32}
{\rm bott}_1(V_{i}, W_{i}^*W_{i+1}))=-\lambda_{i}+\lambda_{i+1}
\eneq
 $i=0,1,...,  n-1.$

Applying
7.4 of \cite{GLN}
(using \eqref{Uni-24+}, \eqref{Uni1-1},  among other items), there are unitaries
$z_i\in F_2$, $i=1,2,...,n-1$, and $z_e\in F_1$ such that
\begin{eqnarray}\label{Uni-33}
&&\|[z_i,\,\pi_{t_i}\circ\Phi(g)]\|<\dt_u\tforal g\in {\cal G}_u\\
&& {\rm Bott}(z_i, \pi_{t_i}\circ \Phi)=\alpha_i\andeqn
 {\rm Bott}(z_e, \pi_e'\circ \Phi)=\alpha_e.
\end{eqnarray}
Put $$z_0=h_0(z_e)\otimes (1_{F_2}-h_0(1_{F_1}))\quad \mathrm{and} \quad z_n=h_1(z_e).$$
One verifies (by \eqref{Uni-26+}) that
\begin{eqnarray}\label{Uni-34}
{\rm Bott}(z_0, \pi_{t_0}\circ \Phi)=\alpha_0\andeqn
{\rm Bott}(z_n, \pi_{t_n}\circ \Phi)=\alpha_n.
\end{eqnarray}

Denote by $\Lambda_e: A\otimes C(\T)\to F_1$ the unital \cpc\, given by the pair 
$\pi_e\circ \Phi$ and $z_e$ as in 6.4 of \cite{GLN} (see 2.8 of \cite{LnHomtp}).
Then 
\beq\nonumber
&&[h_0\circ \Lambda_e]|_{{\boldsymbol{\bt}(\underline{K}(A))}}=(\af_e\times [h_0])|_{{\boldsymbol{\bt}(\underline{K}(A))}}\andeqn\\\nonumber
&&{[}h_1\circ \Lambda_e{]}|_{{\boldsymbol{\bt}(\underline{K}(A))}}=(\af_e\times [h_1])|_{{\boldsymbol{\bt}(\underline{K}(A))}}
\eneq
One verifies (by \eqref{Uni-26+}) that
$$\hspace{0.2in}{\rm Bott}(z_0, \pi_{t_0}\circ \Phi)=(\alpha_0)|_{{\boldsymbol{\bt}(\underline{K}(A))}}\andeqn
{\rm Bott}(z_n, \pi_{t_n}\circ \Phi)=(\alpha_n)|_{{\boldsymbol{\bt}(\underline{K}(A))}}.$$

Let $U_{i}=z_{i}(w_{i})^*w_{i+1}(z_{i+1})^*,$
$i=0,1,2,...,n-1.$
Then, by \eqref{Uni-33} and \eqref{Uni-26+n},
\beq\label{NT-8}
&&\|[U_{i}, \pi_{t_i}\circ \Phi(g)]\|<2\ep_1+2\dt_u<3\dt_u {<\dt_1/2}
\rforal g\in {\cal G}_u,
\eneq
$ i=0,1,2,...,n-1.$
Moreover, for $i=0,1, 2, ..., n-1$,
\beq\label{NT-8+1}
\hspace{0.5in}{\rm bott}_1(U_{i}, \pi_{t_i}\circ \Phi)&=&
{\rm bott}_1(z_{i},\, \pi_{t_i}\circ \Phi{)}+{\rm bott}_1{(}w_{i}^*w_{i+1}, \pi_{t_i}\circ \Phi{)}\\\nonumber
&&\quad+{\rm bott}_1{(}z_{i+1}^*,\, \pi_{t_i}\circ\Phi)\\\nonumber
&=& (\lambda_{i,j})+(-\lambda_{i,j}+\lambda_{i+1,j})+(-\lambda_{i+1,j})\\\nonumber
&=&0.
\eneq
Note that $K_1(F_2, \Z/k\Z)=K_1(F_1,\Z/k\Z)=\{0\}$ 
for $k\ge 2,3,....$ Therefore, in  the case that  $K_1(A)$ is torsion free, \eqref{NT-8+1} implies 
that 
\beq\label{Uni-37}
{\rm Bott}(U_{i}, \pi_{t_i}\circ \Phi)=0,\,\,\, i=0,1,...,n-1.
\eneq
Note that, by the assumption \eqref{Uni1-1},
\beq\label{Uni-38}
{\rm tr}\circ \pi_t\circ \Phi(h)\ge \Delta(\hat{h})\tforal h\in {\cal H}_1',
\eneq
where ${\rm tr}$ is the normalized trace on $F_2.$

By applying 
6.7 of \cite{GLN}, using \eqref{Uni-38}, \eqref{NT-8} and \eqref{Uni-37}, there exists
a continuous path of unitaries, $\{\tilde{U}_{i,i+1}(t): t\in [t_i, t_{i+1}]\}\subset F_2$ such that
\begin{equation}\label{Uni-39}
\tilde{U}_{i,i+1}(t_i)={\rm id}_{F_2},\,\,\, \tilde{U}_{i, i+1}(t_{i+1})=(z_iw_i^*w_{i+1}z^*_{i+1}),
\end{equation}
and
\begin{equation}\label{Uni-39+}
\|\tilde{U}_{i, i+1}(t)\pi_{t_i}\circ \Phi(f)\tilde{U}_{i, i+1}(t)^*-\pi_{t_i}\circ \Phi(f)\|<\ep/32
\end{equation}
for all $f\in {\cal F}$ and for all $t\in [t_i, t_{i+1}].$ Define
$W\in {\tilde C}$ by
\beq\label{Uni-40}
W(t)=(w_iz_i^*)\tilde{U}_{i, i+1}(t)\tforal t\in [t_i, t_{i+1}],
\eneq
$i=0,1,...,n-1.$ Note that $W(t_i)=w_iz_i^*,$ $i=0,1,...,n.$
Note also that
\beq 
&&W(0)=w_0z_0^* =h_0(w_ez_e^*)\oplus (1_{F_2}-h_0(1_{F_1})\andeqn\\
&&W(1)=w_nz_n^*=h_1(w_ez_e^*).
\eneq
So $W\in {\tilde C}.$
One then checks that, by  (\ref{Uni-11}), (\ref{Uni-39+}) , (\ref{Uni-33}) and (\ref{Uni-12}),
\begin{eqnarray}
&&\|W(t)((\pi_t\circ \Phi)(f)W(t)^*-(\pi_t\circ \Psi)(f)\|\\
&<&\|W(t)((\pi_t\circ \Phi)(f))W(t)^*-W(t)((\pi_{t_i}\circ \Phi)(f))W^*(t)\|\\
 &&+\|W(t)(\pi_{t_i}\circ \Phi)(f)W(t)^*-W(t_i)(\pi_{t_i}\circ \Phi)(f) W(t_i)^*\|\\
 &&+\|W(t_i)((\pi_{t_i}\circ \Phi)(f) W(t_i)^*-(w_i(\pi_{t_i}\circ \Phi)(f)w_i^*)\|\\
 &&+\|w_i(\pi_{t_i}\circ \Phi)(f)w_i^*-\pi_{t_i}\circ \Psi(f)\|\\
 && +\|\pi_{t_i}\circ \Psi(f)-\pi_t\circ \Phi(f)\|\\
 &<&\ep_1/16+\ep/32+\dt_u+\ep_1/16+\ep_1/16<\ep
\end{eqnarray}
 for all $f\in {\cal F}$ and for $t\in [t_i, t_{i+1}]$.




\end{proof}

\begin{cor}\label{CUniqN1}
Lemma \ref{UniqN1} holds for $D, C\in {\cal M}_0$ with continuous scales.
\end{cor}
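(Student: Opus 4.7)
\textbf{Proof plan for Corollary \ref{CUniqN1}.}

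The plan is to reduce to Lemma \ref{UniqN1} by exhausting both $D$ and $C$ by their building blocks. Writing $D=\lim_{m\to\infty}(D_m,\iota_m)$ with each $D_m\in\mathcal{C}_0^0$ and, using Corollary \ref{Ctsang} together with the continuous-scale assumption on $C$, writing $C=\lim_{n\to\infty}(C_n,\kappa_n)$ where each $C_n$ is a finite direct sum of Razak algebras (so $C_n\in\mathcal{R}$). Both inductive systems can be chosen so that the connecting maps send strictly positive elements to strictly positive elements, so that $T(C)$ and each $T(C_n)$ are compact Choquet simplices with well-behaved pull-back maps $(\kappa_{n,\infty})_\sharp : T(C)\to T(C_n)$. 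The $\Delta$-condition, which is a uniform lower bound on tracial values of elements of $\mathcal{H}_1$, will be preserved (up to halving) under these pull-backs for all sufficiently large $n$.

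First I would, given $\mathcal{F}\subset A=C(\mathbb T)\otimes\tilde D$ and $\epsilon>0$, choose $m$ so large that $\mathcal{F}$ lies within $\epsilon/64$ of a finite subset $\mathcal{F}_m\subset C(\mathbb T)\otimes\tilde D_m$, and let $(\delta_m,\mathcal{G}_m,\mathcal{H}_{1,m},\mathcal{H}_{2,m},\gamma_{1,m},\gamma_{2,m})$ be the parameters produced by Lemma \ref{UniqN1} for $\tilde D_m$, $\mathcal{F}_m$, $\epsilon/4$ and (a restriction of) $\Delta$. These will be the parameters output by the corollary, with $\delta,\mathcal{G},\mathcal{H}_1,\mathcal{H}_2,\gamma_1,\gamma_2$ obtained by mild shrinking/enlarging (halving $\gamma_1,\gamma_2$ and halving the values of $\Delta$ on $\mathcal{H}_1$) to absorb approximation errors between $D$ and $D_m$ and between $C$ and $C_n$. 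Given the maps $\Phi_{u,\phi},\Phi_{v,\psi}:A\to\tilde C$ satisfying the hypotheses, I would use semiprojectivity of $\mathcal{C}_0^0$ (part (3) of \ref{str=1}) to lift $\phi|_{D_m},\psi|_{D_m}$ to approximately multiplicative c.p.c.\ maps $\phi_n,\psi_n:D_m\to C_n$ for some large $n$, and approximate $u,v\in U(\tilde C)$ by unitaries $u_n,v_n\in U(\tilde C_n)$ (this is possible since $K_1(C_n)\to K_1(C)$ is compatible and $U(\tilde C_n)\to U(\tilde C)$ has dense image on unitaries of the form $\lambda+$ (small element)). Since all commutation relations $[u,\phi(d)]\approx 0$ are preserved by approximation, the resulting $\Phi_{u_n,\phi_n},\Phi_{v_n,\psi_n}:C(\mathbb T)\otimes\tilde D_m\to\tilde C_n$ are well defined up to small error.

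Next I would verify that the hypotheses \eqref{Uni1-1}, \eqref{Uni1-3}, \eqref{Uni1-3+1} of Lemma \ref{UniqN1} transfer to these approximate maps into $\tilde C_n$: the $\Delta$-lower bound is preserved because $T(C_n)\to T(C)$ is an affine continuous surjection (using continuous scale), so any trace on $C_n$ pulls back (up to scalar) to a trace on $C$; the $\gamma_1$ tracial closeness is preserved because $\phi_n,\psi_n$ approximate $\phi,\psi$ in norm; and the $\gamma_2$ closeness of $\bar u,\bar v$ in $U(\tilde C)/CU(\tilde C)$ transfers to $U(\tilde C_n)/CU(\tilde C_n)$ because $C_n$ is a finite direct sum of algebras in $\mathcal{R}$, which have stable rank one (part (1) of \ref{str=1}), and the canonical map $U(\tilde C_n)/CU(\tilde C_n)\to U(\tilde C)/CU(\tilde C)$ is sufficiently well behaved after enlarging $n$. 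Then Lemma \ref{UniqN1}, applied summand-by-summand to the decomposition of $C_n$ as a direct sum of Razak algebras, produces a unitary $W_n\in\tilde C_n\subset\tilde C$ with $\|W_n\Phi_{u_n,\phi_n}(f)W_n^*-\Phi_{v_n,\psi_n}(f)\|<\epsilon/4$ for all $f\in\mathcal{F}_m$; combining this with the original approximations gives $\|W_n\Phi_{u,\phi}(f)W_n^*-\Phi_{v,\psi}(f)\|<\epsilon$ on $\mathcal{F}$.

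The main obstacle I expect is the transfer of the commutator-subgroup distance condition $\mathrm{dist}(\bar u,\bar v)<\gamma_2$ from $U(\tilde C)/CU(\tilde C)$ down to the approximation $U(\tilde C_n)/CU(\tilde C_n)$: a priori, the images of $u_n,v_n$ in $U(\tilde C)/CU(\tilde C)$ approximate those of $u,v$, but one needs a uniform quantitative control on the commutator length / exponential rank of $\kappa_{n,\infty}$, and this uses $C_n\in\mathcal R$ having bounded exponential rank (part (2) of \ref{str=1}) together with the fact that $K_1(\tilde C_n)=\{0\}$ for each finite stage. A related but more routine obstacle is that finite-stage approximation of $\phi,\psi$ preserves the $\Delta$-lower bound only up to a controlled loss; halving $\Delta$ on $\mathcal{H}_1$ in the output parameters absorbs this, but one must carefully choose $\mathcal{H}_1$ in both the domain $A$ and in its approximant $C(\mathbb T)\otimes\tilde D_m$ to make the estimates consistent.
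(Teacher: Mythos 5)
Your overall route is the one the paper itself takes: the paper's proof of \ref{CUniqN1} consists of the single remark that algebras in ${\cal M}_0$ are inductive limits of algebras in ${\cal R}$ with injective connecting maps, and that the reduction to Lemma \ref{UniqN1} by finite-stage approximation is then routine. So exhausting $D$ by stages $D_m$ and $C$ by stages $C_n\in{\cal R}$ and invoking \ref{UniqN1} at a finite stage is the intended argument, and your domain-side reduction (approximating ${\cal F}$ inside $C(\T)\otimes{\tilde D}_m$ and restricting the given maps to that subalgebra) is fine.

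However, your codomain reduction has a genuine gap. To apply \ref{UniqN1} to the pushed-down maps into ${\tilde C}_n$ you must verify \eqref{Uni1-1} and \eqref{Uni1-3} for \emph{all} $\tau\in T(C_n)$, and \eqref{Uni1-3+1} in $U({\tilde C}_n)/CU({\tilde C}_n)$, whereas the hypotheses of the corollary only give these conditions for traces of $C$ and in $U({\tilde C})/CU({\tilde C})$. Your justification --- that ``$T(C_n)\to T(C)$ is an affine continuous surjection, so any trace on $C_n$ pulls back (up to scalar) to a trace on $C$'' --- is false: there is no natural map $T(C_n)\to T(C)$, the natural map is restriction $T(C)\to T(C_n)$, and it is typically very far from surjective, precisely because the connecting maps collapse trace simplices (this is exactly how $W$, with a unique trace, is built from Razak blocks whose trace simplices are large; the same happens for a general $C\in{\cal M}_0$ realized as a limit of algebras in ${\cal R}$ via \ref{Ctsang} and the classification results). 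Hence a trace on $C_n$ controlled by nothing in $T(C)$ can destroy \eqref{Uni1-1} at the finite stage. The correct mechanism is a quantitative late-stage argument: for the finitely many positive elements involved, $\inf\{\tau(\kappa_{n,n'}(c)):\tau\in T(C_{n'})\}$ is monotone in $n'$ and converges to $\inf\{\tau(\kappa_{n,\infty}(c)):\tau\in T(C)\}$ (a compactness/inverse-limit argument, using continuous scale and that the connecting maps are injective and send strictly positive elements to strictly positive elements, in the spirit of the Claim in the proof of \ref{TDapprdiv}); so one must push to a sufficiently much later stage, chosen after the elements are fixed, rather than to any stage through which the maps approximately factor. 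The same issue affects \eqref{Uni1-3} and, as you yourself note, \eqref{Uni1-3+1}: smallness of ${\rm dist}(\bar u,\bar v)$ in $U({\tilde C})/CU({\tilde C})$ does not pass to $U({\tilde C}_n)/CU({\tilde C}_n)$ by an appeal to exponential rank alone, since the de la Harpe--Skandalis determinant at stage $n$ is evaluated against the larger simplex $T({\tilde C}_n)$; one needs either the same late-stage trace control or an explicit correction of the approximating unitaries (multiplying by $\exp(ih)$ with $h\in({\tilde C}_n)_{s.a.}$ adjusting the determinant, as in the first reduction in the proof of \ref{BHK00}). As written, the proposal asserts the transfer rather than proving it, and the stated justification would fail.
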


\begin{proof}
Note that \CA s in ${\cal M}_0$ are  inductive limits of \CA s in ${\cal R}$ with injective connecting maps.
It is then easy to see that the lemma holds for $D, C\in {\cal M}_0.$ 
\end{proof}

\begin{lem}\label{densesp}
Let $A$  be a  non-unital  C*-algebra and $T(A)\not=\emptyset,$
  let $U$ be an infinite dimensional  UHF-algebra
and $B\subset A$ be a hereditary \SCA\, of $B.$ 
Suppose that there exists  $e\in A_+$ with $\|e\|=1$ and $eb=be=b$ for all 
$b\in B.$ Then there is a unitary $w\in {\tilde A\otimes U}$ 
with the form $w=\exp(i \pi (e\otimes h))$ for some $h\in U_{s.a.}$  with 
$\tau_U(h)=0$ (where $\tau_U$ is the unique tracial state of $U$) such that for
any unitary $u=\lambda +x\in {\tilde A}$   with  
$\lambda \in \T\subset \C$ and $x\in B,$  one has, for any $b\in B$ and $f\in C(\T),$ 
\beq\label{densesp-1}
\tau(bf((u\otimes 1)w))=\tau(b)\tau(f(1_A\otimes \exp(i h)))=\tau(b)\int_{\T}f dm
\eneq
and for all $\tau\in T(A\otimes U),$
where $m$ is the normalized Lebesgue
 measure on $\T.$
Moreover, for any $a\in B$  and $\tau\in T(A\otimes U)$, $\tau ((a\otimes 1)  w^j)=0$ if $j\not=0$.
Furthermore, if $A$ has continuous scale, then, for any $\ep>0,$  and any $N\ge 1,$ one can choose $e$ such that
\beq\label{densesp-2}
|\tau((u\otimes 1)w)^j)|<\ep\rforal  0<|j|\le N.
\eneq
\end{lem}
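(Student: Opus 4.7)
The plan is to reduce the trace computations to the product structure $\tau = \tau_A \otimes \tau_U$ on $A\otimes U$ (valid since $U$ is nuclear with a unique tracial state), exploiting that $e$ is a local unit for $B$ and commutes with $u\in \tilde A$ of the prescribed form $\lambda + x$ with $x\in B$.

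First I would construct a Haar unitary in $U$: taking $v_k \in M_{n_k}\subset U$ to be the unitary with eigenvalues the $n_k$-th roots of unity, one has $\tau_{M_{n_k}}(v_k^j) = \delta_{n_k|j}$. A standard eigenvalue-matching argument shows that (after suitable unitary conjugations within the UHF tower) these $v_k$ form a norm-Cauchy sequence whose limit $v\in U$ satisfies $\tau_U(v^j) = 0$ for every $j \in \Z\setminus\{0\}$. Taking the principal logarithm and dividing by $\pi$ yields $h\in U_{s.a.}$ with $\|h\|\le 1$, $v = \exp(i\pi h)$, and spectral measure uniform on $[-1,1]$; in particular $\tau_U(h) = 0$. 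The key algebraic identity, for $b\in B$,
\begin{equation*}
(b\otimes 1)\exp(ij\pi(e\otimes h)) = b\otimes \exp(ij\pi h),
\end{equation*}
follows from $b e^k = b$ for all $k\ge 1$ (since $be = b$ and $0\le e\le 1$), applied termwise to the exponential series.

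Next, since $x\in B$ yields $ex = xe = x$ and hence $e$ commutes with $u = \lambda + x$, the element $u\otimes 1$ commutes with $w$, so $((u\otimes 1)w)^j = (u^j\otimes 1)w^j$. Writing $u^j = \lambda^j + y_j$ with $y_j\in B$ (a polynomial in $x$ with no constant term), applying the displayed identity to $bu^j \in B$ gives
\begin{equation*}
\tau((b\otimes 1)((u\otimes 1)w)^j) = \tau_A(bu^j)\,\tau_U(\exp(ij\pi h)) = \tau_A(b)\,\delta_{j,0}.
\end{equation*}
The main formula for arbitrary $f\in C(\T)$ then follows by Stone--Weierstrass density of trigonometric polynomials and norm-continuity of the functional calculus. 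The assertion $\tau((a\otimes 1)w^j) = 0$ for $j\neq 0$ is just the case $u = 1$.

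For the furthermore statement, under continuous scale (Proposition \ref{Pconscale}), $T(A)$ is compact and the scale is continuous, so I can choose $e\in A_+$ satisfying $eb = b$ for $b\in B$ with $\sup_{\tau_A\in T(A)}\tau_A(1-e) < \delta$ for any preassigned $\delta > 0$. Splitting
\begin{equation*}
\tau(((u\otimes 1)w)^j) = \lambda^j\,\tau(\exp(ij\pi(e\otimes h))) + \tau((y_j\otimes 1)\exp(ij\pi(e\otimes h))),
\end{equation*}
the second term vanishes by the identity above; for the first, expanding the exponential and subtracting the vanishing ``$e = 1$'' series gives
\begin{equation*}
|\tau(\exp(ij\pi(e\otimes h)))| \le \sum_{k\ge 1}\tfrac{(|j|\pi)^k}{k!}\,|1-\tau_A(e^k)|\,\|h\|^k \le \delta\,|j|\pi\,\exp(|j|\pi),
\end{equation*}
using $1-e^k = (1-e)(1+e+\cdots + e^{k-1}) \le k(1-e)$ and $\|h\|\le 1$. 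Choosing $\delta < \varepsilon/(N\pi\,e^{N\pi})$ yields the desired bound for all $0 < |j|\le N$. The main technical hurdle is the very first step: producing a genuinely Haar (not merely approximately Haar) unitary in the UHF algebra $U$. Once the matching estimate $\|v_{k+1} - u_k(v_k\otimes 1)u_k^*\| = O(k/n_{k+1})$ is extracted, summability and convergence to an honest element of $U$ are immediate, but the care in setting up that estimate is the one genuinely non-routine ingredient.
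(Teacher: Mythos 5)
Your argument is correct in substance and follows essentially the same route as the paper: every trace on $A\otimes U$ is a product trace, $w=\exp(i\pi(e\otimes h))$ with $\exp(i\pi h)$ a Haar unitary in $U$, the identity $(b\otimes 1)\exp(ij\pi(e\otimes h))=b\otimes\exp(ij\pi h)$ coming from $be^k=b$ for all $k\ge 1$, density of trigonometric polynomials, and, for the last part, a local unit $e$ of uniformly large trace together with the exponential-series estimate (the paper simply asserts the existence of $h$ and calls the final step a ``simple calculation,'' so your added detail is consistent with, and fills in, its proof). One small repair: you cannot obtain $h$ as the principal logarithm of the limiting Haar unitary $v$, since $\mathrm{sp}(v)=\T$ and the principal branch of $\log$ is not continuous on all of $\T$; instead run the same eigenvalue-matching argument directly on self-adjoint matrices $h_k$ with eigenvalues equidistributed in $[-1,1]$ (equivalently, take logarithms at the finite matrix stages with consistently chosen branches) and set $v=\exp(i\pi h)$ for the norm limit $h$ --- this is precisely the element the paper picks without proof. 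Also, the matching estimate should be of order $1/n_k$ rather than $k/n_{k+1}$, though summability holds either way since $n_k\ge 2^k$; and in the ``furthermore'' part your re-chosen $e$ must, as you state, still satisfy $eb=b$ on $B$ so that the $y_j$-term vanishes, which is the same (equally brief) choice of $e_1\le e$ made in the paper.
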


\begin{proof}
Denote by $\tau_U$ the unique trace of $U$. Then any trace $\tau\in T(A\otimes U)$ is a product trace, i.e.,
$$\tau(a\otimes b)=\tau(a\otimes 1)\otimes\tau_U(b),\quad a\in A, b\in U.$$

Pick a selfadjoint element $h\in U$ such that the spectral measure of the  unitary $w_0=\exp(ih)$ 
is the Lebesque measure (a Haar unitary).  Moreover, ${\rm sp}(h)=[-\pi, \pi]$ and 
$\tau(h)=0.$

Then one has, for each $n\in \Z,$ 
\vspace{-0.12in}$$
\tau_U(w_0^n)=
\left\{
\begin{array}{ll}
1, & \textrm{if $n=0$},\\
0, & \textrm{otherwise}.
\end{array}
\right.
$$
Put $w=\exp(i (e\otimes h))\in {\widetilde{A\otimes U}}.$ 
Thus $w=\sum_{k=0}^{\infty} {i e^k\otimes h^k\over{k!}}.$ 
Hence, for any $\tau\in T(A\otimes U)$, one has, for each $n\in \Z,$
$$\tau(b((u\otimes 1) w)^n)=\tau(b(u^n\otimes 1)(e\otimes 1)(1\otimes w_0^n))=\tau(bu^n\otimes 1)\tau_U(w_0^n)=
\left\{
\begin{array}{ll}
1, & \textrm{if $n=0$},\\
0, & \textrm{otherwise};
\end{array}
\right.
$$
and therefore
$$\tau((b\otimes 1)P(u\otimes 1)w))=\tau(b)\tau(P(1\otimes w))=\tau(b)\int_{\T}P(z)dm$$
for any polynomial $P$. Similarly, $\tau(bP(u\otimes w)^*)=\tau(b)\int_TP({\bar z})dm$
for any polynomial $P.$
Since polynomials of $z$ and $z^{-1}$ are dense in $\mathrm{C}(\T)$, one has
$$\tau((b\otimes 1)f((u\otimes 1)w))=\tau(b)\tau(f(1\otimes w))=\tau(b)\int_{\T}fdm,\quad f\in \mathrm{C}(\T),$$
as desired.

For the second part of this lemma, 
assume that $A$ has continuous scale.  Then,  for any $\dt>0$ and any integer $N_1\ge 1,$ 
we can choose $e_1, e\in A_+$ such that $1\ge e\ge e_1,$ $e_1e=ee_1=e_1$
$\tau(e^k)\ge \tau(e_1^{N_1})>1-\dt$ for all $\tau\in T(A)$ and $k\in \N.$
Fix  $N$ and $\ep>0.$ 
a simple calculation shows the second part of
the lemma follows by choosing sufficiently small $\dt$ and large $N_1.$ 

\end{proof}

\begin{prop}\label{densesp3}
Let $C$ be a  non-unital  amenable simple \CA\, and let $U$ be an infinite dimensional UHF-algebra.
{{For any $ \dt>0, \dt_c>0,$  $1>\sigma_1,\, \sigma_2>0,$ any finite subset ${\cal G}\subset
C\otimes C(\T),$  any finite ${\cal G}_c\subset {\tilde C},$ any finite subset ${\cal H}_1\subset C(\T)_+\setminus \{0\}$ and any finite subset ${\cal H}_2\subset  (C\otimes C(\T))_{s.a.}$ 
and any integer $N\ge 1,$
 there exist $\dt_1>0$ and a finite subset
 ${\cal G}_1\subset C$ satisfying the following:}}
For 
 any unital
 ${\cal G}_1$-$\dt_1$-multiplicative \morp\, $L: C\to  A$ and a unitary 
 $u\in {\tilde A}$ with $\|L(g),\, u]\|<\dt_1$ for all $g\in {\cal G}_1,$ 
 where $A$ is another non-unital \CA\, with $T(A)\not=\emptyset$ and with continuous scale,
 there exists  a positive element $e\in A$ with $\|e\|=1$ and $h\in U$ 
satisfying the following: 
there are unital $\mathcal G$-$\dt$-multiplicative \cpc s $L_1, L_2: {\tilde C}\otimes C(\T)\to {\tilde B}$ such that 
 \beq\label{densesp3-1}
 |\tau(L_1(f))-\tau(L_2(f))|<\sigma_1\tforal f\in {\cal H}_2,\ \tau\in T(B),\ \textrm{and}\ \label{densesp3-2} \\
 \tau(g(w))\ge \sigma_2(\int g dm)\tforal g\in {\cal H}_1,\ \tau\in T(B),
 \eneq
where $B=A\otimes U$ and $m$ is the normalized Lebesgue
 measure
 on $\T,$  and
  \beq\label{densesp3-n11}
 \|L_i(c\otimes 1_{C(\T)})-L^{\sim}(c)\otimes 1_U\|<\dt_c\tforal c\in {\cal G}_c,\,\,\, i=1,2,\\\label{densesp3-n12}
 \|L_1(c\otimes z^j)-L^{\sim}(c)(u\exp(ie\otimes h))^j\|<\dt_c\tforal c\in {\cal G}_c\andeqn\\\label{densesp3-n13}
 \|L_2(c\otimes z^j)-L(c)^{\sim} \exp(ie\otimes h)^j\|<\dt_c\tforal c\in {\cal G}_c
 \eneq
 and for all $0<|j|\le N,$ where $L^{\sim}: {\tilde C}\to {\tilde A}$ is 
 the unital extension of  $L.$  Moreover, $\tau(e\otimes h)=0$ for all $\tau\in A\otimes Q.$
%
\end{prop}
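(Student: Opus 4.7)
Write $u=\lambda\cdot 1+x\in\tilde A$ with $\lambda\in\T$ and $x\in A$. The strategy is to produce the pair $(L,u)$ as a ``commuting diagram'' of approximately multiplicative morphisms into $\widetilde{A\otimes U}$ by twisting $u$ with a Haar-type unitary built out of a nearly central element of $A$ and a selfadjoint Haar generator of $U$, then invoke Lemma \ref{densesp} to obtain the trace identities for free. First, given the target data $(\dt,\dt_c,\sg_1,\sg_2,{\cal G},{\cal G}_c,{\cal H}_1,{\cal H}_2,N)$, I would choose an auxiliary tolerance $\eta>0$ and finite subset ${\cal G}_0\subset C$ (large enough to control all elements of ${\cal G},{\cal G}_c,{\cal H}_2$ up to tolerance $\eta$ after writing their non--$z$ components in $C$), together with an integer $N_0\ge N$ large enough to approximate every function in the $C(\T)$--components of ${\cal H}_2$ uniformly by trigonometric polynomials of degree $\le N_0$. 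Then take $\dt_1$ small enough and ${\cal G}_1\supset {\cal G}_0$ large enough so that for any $L,u$ as in the hypothesis the element $L^\sim(c)$ commutes with $u$ within tolerance $\eta$ on ${\cal G}_0$, and the unitization $L^\sim$ is $({\cal G}_0\cup\{1\})$--$\eta$--multiplicative.

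Second, once $L$ and $u$ are fixed, since $A$ has continuous scale I would pick (via \ref{Pconscale}, \ref{Csemicon}) a positive contraction $e\in A$ that is nearly central on $L({\cal G}_1)\cup\{x,x^*\}$ and satisfies $\|eL(c)-L(c)\|<\eta$, $\|ex-x\|<\eta$, and $\tau(e^k)\ge 1-\eta$ for $\tau\in T(A)$ and $1\le k\le N_0$; this is the ``$e$'' that plays the role of the approximate identity in Lemma \ref{densesp} applied to the hereditary \SCA\ $B=\overline{eAe}$ (which contains $x$ and $L({\cal G}_1)$ up to $\eta$). Next, using that $U$ is infinite-dimensional UHF, choose $h\in U_{s.a.}$ with $\tau_U(h)=0$ whose spectral distribution is Lebesgue measure on $[-\pi,\pi]$, so that $\exp(ih)\in U$ is a Haar unitary. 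Set
\[
w=\exp\bigl(i(e\otimes h)\bigr)\in\widetilde{A\otimes U},\qquad V_1=(u\otimes 1)w,\qquad V_2=w.
\]
Since $[e,L(c)]$ and $[e,x]$ are small, $[w,L(c)\otimes 1]$ and $[w,u\otimes 1]$ are small, and together with $\|[u,L(g)]\|<\dt_1$ this yields $\|[V_i,L(g)\otimes 1]\|<\dt$ for $g\in{\cal G}_1$, $i=1,2$.

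Third, define $L_i:\tilde C\otimes C(\T)\to\widetilde{A\otimes U}$ via the standard functional-calculus construction for approximate commuting pairs (as in \ref{DTAmap}): $L_i$ is determined (up to the specified tolerances) by $L_i(c\otimes 1)=L^\sim(c)\otimes 1_U$ and $L_i(1\otimes z)=V_i$. Approximate multiplicativity on ${\cal G}$ and the bounds \eqref{densesp3-n11}--\eqref{densesp3-n13} are direct consequences of the commutator estimates and the polynomial approximation degree $N_0$. For the trace identity, write every $f\in{\cal H}_2$ as $\sum_{|j|\le N_0}c_j\otimes z^j+r_f$ with $\|r_f\|$ small and $c_j\in C$. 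Applying Lemma \ref{densesp} to each $c_j$ (noting that both $V_1=(u\otimes 1)w$ and $V_2=w$ are of the form covered by that lemma, the former because $u\otimes 1\in\widetilde{A}\otimes 1$ is of the required shape $\lambda+x\otimes 1$), one obtains
\[
\tau\bigl(L(c_j)V_i^{\,j}\bigr)=\tau(L(c_j))\,\delta_{j,0}\quad(i=1,2,\ j\ne0)
\]
up to error controlled by $\eta$; summing gives $|\tau(L_1(f))-\tau(L_2(f))|<\sg_1$ for all $f\in{\cal H}_2$. For the spectral-measure lower bound, Lemma \ref{densesp} applied with $u=1$ yields $\tau(g(w))=\tau(e)\int_\T g\,dm$ modulo $O(\eta)$, which exceeds $\sg_2\int g\,dm$ once $\eta$ is small.

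\textbf{Main obstacle.} The subtlety lies in the uniformity: the choice of $e$ depends on $L$ and $u$, and the Haar property of $w$ furnished by Lemma \ref{densesp} genuinely holds only when the arguments lie in the hereditary \SCA\ of which $e$ is a ``cap.'' One must therefore first enlarge $B$ so that it absorbs both $L({\cal G}_1)$ and the non-scalar part $x$ of $u$ within tolerance, then certify that $\tau(e^k)\to 1$ uniformly in $k\le N_0$ for all $\tau\in T(A)$---this is precisely where continuous scale of $A$ (hence compactness and local uniform convergence of $\tau(e^k)$ on $\overline{T(A)}^w$) is indispensable. All remaining estimates are routine propagation of $\eta$--errors through the functional-calculus construction of $L_1$ and $L_2$.
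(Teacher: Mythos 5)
Your proposal is correct and follows essentially the same route as the paper: you twist $u$ by the Haar-type unitary $w=\exp(i(e\otimes h))$ with $e$ a nearly central positive element dominating $L({\cal G}_1)$ and the non-scalar part of $u$, define $L_1=\Phi_{(u\otimes 1)w,\,L}$ and $L_2=\Phi_{w,\,L}$ via the pair construction of \ref{DTAmap}, and obtain the trace estimates from Lemma \ref{densesp} (the paper takes exact relations $e_1L(g)=L(g)=L(g)e$ where you use $\eta$-approximate ones, a harmless difference absorbed by error propagation).
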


\begin{proof}
\Wlog, we may assume that there are  finite subsets ${\cal G}_c, {\cal H}_{c,1}\subset {\tilde C}$
such that ${\cal G}=\{c\otimes 1_{C(\T)}: c\in {\cal G}_c\}\cup \{1, 1_{\tilde C}\otimes z, 1_{\tilde C}\otimes z^*\}$ and 
${\cal H}_2=\{c\otimes 1_{C(\T)}: c\in {\cal H}_{c,1}\}\cup \{1\otimes b: b\in {\cal H}_T\},$
where ${\cal H}_T\subset C(\T)_{s.a.}.$ We may assume 
that $1_{\tilde C}\in {\cal G}_c,$ $1_{\tilde C}\in {\cal H}_{c,1}$ and $1_{C(\T)}\in {\cal H}_T.$
We may also assume that $\|a\|\le 1$ for all $a\in {\cal G}_c\cup {\cal H}_2.$
Put 
$$
{\cal G}_0=\{cd\otimes gf: c, d\in {\cal G}_c\cup {\cal H}_{c,1},\,\, g,f\in \{z, z^*\}\cup {\cal H}_T\}.
$$

Fix $\dt, \dt_c>0, \sigma_1, \sigma_2>0.$
Put $\ep=\min\{\dt/4, \dt_c/4, \sigma_1/4, \sigma_2/4\}.$

Let $\dt_1'>0$ and ${\cal G}_{0m}\subset {\tilde C}$ be a finite subset 
such that there is a ${\cal G}_0$-$\ep$-multiplicative \cpc\, $L': {\tilde C}\otimes C(\T)\to D,$
for any \CA\, $D$ and any ${\cal G}_{0m}$-$\dt_1'$-multiplicative \cpc\, $L'': {\tilde C}\to D,$ 
such that
\beq
\|L'(g\otimes 1_{C(\T)})-L''(g)\|<\ep\rforal g\in {\cal G}_0.
\eneq

Let ${\cal G}_1={\cal G}_0\cup  {\cal G}_{0m}$ and 
$\dt_1=\min\{\dt_1'/4, \ep/4\}.$ 

Now suppose that $L: {\tilde C} \to {\tilde A}$ is a ${\cal G}_1$-$\dt_1$-multiplicative \cpc\, and 
$u\in {\tilde A}$ is a unitary. 
\Wlog, we may assume that there are  positive elements 
$e_1, e\in A$ with $\|e_1\|=1=\|e\|$ such that
\beq
e_1L(g)=L(g)e=L(g)\rforal g\in C, ee_1=e_1e=e_1\andeqn \tau_U(e_1)>1-\ep.
\eneq
Furthermore, \wilog, we may assume that $L(c)=e_1L(c)e_1$ for all $c\in C.$
Let $h\in U$ be as in  \ref{densesp}. 
Let $v=\exp (i e\otimes h).$   Note that $\tau_U(e^j)>1-\ep$ for all $j\in \N.$ 
We can choose $e$ so that both \eqref{densesp-1} and \eqref{densesp-2} hold. This lemma then follows from an easy  application 
of \ref{densesp} and  
Lemma 2.8 of \cite{LnHomtp} (with $L_1=\Phi_{v_1, L}$ and 
$L_2=\Phi_{v_2, L},$ where $v_1=u(\exp(ie\otimes h))$ and $v_2=\exp(ie\otimes h).$

\end{proof}

\begin{lem}\label{BHK00}
Let $A\in {\cal M}_0$ with continuous scale.
For any $1>\ep>0$ and any finite subset ${\cal F}\subset A,$ there exist  $\dt>0,$ $\sigma>0$, a finite subset
${\cal G}\subset A$ 
satisfying the following:

Let $B=B_1\otimes U,$
where $B_1\in {\cal M}_0$  with continuous scale which satisfies the UCT and  $U$ is UHF-algebras of infinite type.
Suppose that $\phi: A\to B$ is a  \hm.

If $u\in U({\tilde B})$  is a unitary such that
\beq\label{BHfull-1}
&&\|[\phi(x),\, u]\|<\dt\tforal x\in {\cal G},
\eneq
there exists a continuous path of unitaries $\{u(t): t\in [0,1]\}\subset U({\tilde B})$ such
that
\beq\label{BHTL-3}
&&u(0)=u,\,\,\, u(1)=1_{\tilde B},\\
&&\|[\phi(a),\, u(t)]\|<\ep\tforal a\in {\cal F}\tand for\,\, all\,\, t\in [0,1].
\eneq
\end{lem}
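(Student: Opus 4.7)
The strategy follows the Basic Homotopy Lemma template: the almost commuting pair $(\phi, u)$ induces an approximately multiplicative map $\Phi_{u, \phi}: {\tilde A} \otimes C(\T) \to {\tilde B}$ via Definition~\ref{DTAmap}. By absorbing a ``Haar-like'' unitary $v$ constructed through Proposition~\ref{densesp3}, the map $\Phi_{uv, \phi}$ will be shown to be approximately unitarily equivalent to $\Phi_{v, \phi}$ via the uniqueness theorem (Corollary~\ref{CUniqN1}); the implementing unitary $W$ then yields a conjugation path connecting $u$ to $1$ that approximately commutes with $\phi({\cal F})$.

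After multiplying $u$ by a scalar if necessary, assume $\pi^{B\sim}(u) = 1$, so that $\Phi_{u, \phi}$ is well-defined. Define an order preserving map $\Delta: ({\tilde A} \otimes C(\T))^{q, {\bf 1}}_+ \setminus \{0\} \to (0,1)$ by $\Delta(\hat a) = (1/2)\inf\{\tau(a): \tau \in T({\tilde A} \otimes C(\T))\}$, using the continuous scale of $A$. Both ${\tilde A}$ and $B = B_1 \otimes U$ lie in ${\cal M}_0$ with continuous scale (the latter since ${\cal M}_0$ is stable under tensoring with a UHF algebra of infinite type), so Corollary~\ref{CUniqN1} applies with $D = A$ and $C = B$. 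Invoking it with input $\ep/8$ and ${\cal F} \cup \{1_{{\tilde A}} \otimes z, 1_{{\tilde A}} \otimes z^*\}$ yields parameters ${\cal H}_1, {\cal H}_2, \gamma_1, \gamma_2, \dt_u$, and a finite ${\cal G}_u \subset {\tilde A} \otimes C(\T)$. Feeding these into Proposition~\ref{densesp3} (applied with source $C = A$, target $B$, and the UHF factor coming from $B = B_1 \otimes U$ via $U \otimes U \cong U$) produces $\dt_1 > 0$ and a finite subset ${\cal G}_1 \subset A$, which we take as the output $\dt$ and ${\cal G}$ of the Lemma.

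For a pair $(\phi, u)$ satisfying $\|[\phi(g), u]\| < \dt_1$ on ${\cal G}_1$, Proposition~\ref{densesp3} furnishes $e \in B$ with $\|e\| = 1$ and $e \phi(a) = \phi(a) e = \phi(a)$ for $a \in {\cal F}$, together with $h \in U_{s.a.}$ with $\tau_U(h) = 0$; setting $v := \exp(i e \otimes h) \in {\tilde B}$, the unitary $v$ commutes exactly with $\phi(A)$ (since $[e \otimes h, \phi(a)] = 0$), and the \cpc s $L_1 := \Phi_{uv, \phi}$ and $L_2 := \Phi_{v, \phi}: {\tilde A} \otimes C(\T) \to {\tilde B}$ are ${\cal G}_u$-$\dt_u$-multiplicative, obey the tracial agreement $|\tau\circ L_1(f) - \tau\circ L_2(f)| < \gamma_1$ on ${\cal H}_2$ from \eqref{densesp3-1}, and produce the lower bound $\tau\circ L_i(g(w)) \geq \Delta(\hat g)$ on ${\cal H}_1$ from \eqref{densesp3-2} combined with Lemma~\ref{densesp}. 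Applying Corollary~\ref{CUniqN1} to $L_1, L_2$ produces a unitary $W \in {\tilde B}$ with
\[
\|W \phi(a) W^* - \phi(a)\| < \ep/8 \quad (a \in {\cal F}), \qquad \|W (uv) W^* - v\| < \ep/8.
\]

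Set $v_t := \exp(i t e \otimes h)$ for $t \in [0,1]$ (so $v_0 = 1, v_1 = v$) and $\gamma(t) := W^* v_{1-t} W v_{1-t}^{-1}$. Then $\gamma(1) = 1$ and $\|\gamma(0) - u\| < \ep/4$ from the second estimate above; a short interpolation of unitaries between $u$ and $\gamma(0)$ (possible since they are close in norm and ${\tilde B}$ has stable rank one) extends $\gamma$ to a norm-continuous path $u(\cdot)$ with $u(0) = u$ and $u(1) = 1$. Each $u(t)$ approximately commutes with $\phi(a)$ within $\ep$ on ${\cal F}$: $W$ does so by the first estimate, and $v_t$ commutes exactly with $\phi(a)$ since $[e \otimes h, \phi(a)] = 0$. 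The main obstacle is verifying the $K_1$-type hypothesis ${\rm dist}(\overline{uv}, \overline{v}) < \gamma_2$ in $U({\tilde B})/CU({\tilde B})$ required by Lemma~\ref{UniqN1}; since $K_1(B) = K_1(B_1 \otimes U) = 0$, this reduces to a tracial condition that is controlled by \eqref{densesp3-1} together with the normalization $\tau_U(h) = 0$ and the scalar reduction $\pi^{B\sim}(u) = 1$, while the $K_0$-part of the ${\cal P}$-condition is automatic because $K_0(A) = 0$.
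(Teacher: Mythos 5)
Your overall architecture (form $\Phi_{uv,\phi}$ and $\Phi_{v,\phi}$, compare them with Corollary \ref{CUniqN1}, and fold the conjugating unitary $W$ into a three-stage path) is the same as the paper's, and the path you build at the end would work. But there is a genuine gap at the point you yourself flag and then dismiss: the hypothesis \eqref{Uni1-3+1} of Lemma \ref{UniqN1}, namely ${\rm dist}(\overline{uv},\overline{v})<\gamma_2$ in $U(\widetilde B)/CU(\widetilde B)$, is \emph{not} a consequence of $K_1(B)=0$ together with the trace estimate \eqref{densesp3-1}. Vanishing of $K_1(B)$ only gives $U(\widetilde B)=U_0(\widetilde B)$; the quotient $U_0(\widetilde B)/CU(\widetilde B)$ is still (essentially) $\mathrm{Aff}(T(\widetilde B))/\overline{\rho_{\widetilde B}(K_0(\widetilde B))}$, which is nontrivial here, and the class of $u$ in it is measured by the de la Harpe--Skandalis determinant, not by $\tau(u)$. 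Since $\tau(e\otimes h)=0$ forces $\overline{v}= \overline{1}$, the required estimate is exactly ${\rm dist}(\overline{u},\overline{1})<\gamma_2$, and a unitary $u=1+x$ with $\pi^{B\sim}(u)=1$ which (even exactly) commutes with $\phi(A)$ can have arbitrary determinant class, e.g.\ $u=\exp(2\pi i t\, b)$ for suitable $b\in B_+$ commuting with $\phi(A)$. The estimate \eqref{densesp3-1} only delivers the separate hypothesis \eqref{Uni1-3} (closeness of traces, the $\gamma_1$-condition); it says nothing about \eqref{Uni1-3+1}. So as written, Corollary \ref{CUniqN1} cannot be invoked, and no $W$ is produced.

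The missing idea, which is the first step of the paper's proof, is a preliminary reduction to the case $u\in CU(\widetilde B)$: write $u=\prod_{j}\exp(i h_{j0}')$ with $h_{j0}'\in B_{s.a.}$, choose positive elements $e',e'',e'''$ with $e'$ almost a unit for $\phi({\cal G}')$ and $e'''\perp e'$, pick a self-adjoint $h_0$ supported in the corner determined by $e'''$ with $\tau(h_0)=\sum_j\tau(h_{j0}')$ for all $\tau\in T(B)$, and use the path $u\exp(i t h_0)$, $t\in[0,1]$. Because $h_0$ lives in a corner nearly orthogonal to $\phi({\cal G}')$, this path almost commutes with $\phi({\cal G}')$, and its endpoint lies in $CU(\widetilde B)$ (its determinant has been cancelled). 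Only after this correction does one have ${\rm dist}(\overline{v_1},\overline{v_2})=0<\gamma_2$, so the uniqueness theorem applies and the rest of your argument goes through. (A second, smaller omission: you should also use that $U$ is strongly self-absorbing to reduce to $\phi(A)\subset B_1\otimes 1_U$ and $u\in\widetilde{B_1}$, so that the Haar-type unitary $\exp(ie\otimes h)$ produced by Proposition \ref{densesp3} genuinely commutes with $\phi(A)$; your appeal to $U\otimes U\cong U$ gestures at this but does not set it up.)
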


\begin{proof}
Fix a finite subset ${\cal G}'\subset A^{\bf 1}$ and $\ep'>0,$  there exists a 
positive element $e', e'', e'''\in B'\setminus \{0\}$ with $\|e'\|=1=\|e''\|=\|e'''\|,$ 
$e'e''=e''e'=e'$  such that $e'''e'=e'e'''=0.$ 
\beq\label{BHTL-nn0}
\|\phi(g)e'-\phi(g)\|<\ep'/2\andeqn \|e'\phi(g)-\phi(g)\|<\ep'/2\rforal g\in {\cal G}'.
\eneq
Let $\pi^{B\sim}: {\tilde B}\to \C$ be the quotient map.
\Wlog, we may assume that $\pi^{B\sim}(u)=1_\C.$ 
Since $U({\tilde B})=U_0({\tilde B}),$ we may 
write $u=\prod_{i=1}^m\exp(i h_{j0})$ for some 
$h_{j0}\in {\tilde B}_{s.a.}.$  Write $h_{j,0}=r_j+h_{j0}',$ where 
$r_j\in \R$ and $h_{j0}'\in B_{s.a.}.$  Note that 
$\sum_{j=1}^m r_j=2\pi k_u$ for  $k_u\in \Z.$  Therefore $u=\prod_{j=1}^m\exp(i h_{j0}').$
We may also assume, \wilog, that $u=1+x,$ where $x\in \overline{e''Be''}.$ 
It is easy to find an element $h_0\in \overline{e'''Be''}$ such that
$\tau(h_0)=\sum_{j=1}^m h_{j0}'$ for $\tau\in T(B).$  
Let $u'(t)=\exp(i th_0)$ for all $t\in [0,1].$ 
Note that
\beq\label{BHTL-nn1}
uu'(0)=u\andeqn uu'(1)\in CU({\tilde B}).
\eneq
Moreover, by \eqref{BHTL-nn0},
\vspace{-0.1in}\beq
\|\phi(g)u'(t)-u'(t)\phi(g)\|<2\ep'\rforal g\in {\cal G}'\andeqn \rforal t\in [0,1].
\eneq
In other words, we have just reduced 
the general case to the case that $u\in CU({\tilde B}).$ 

Let 
\beq
\Delta_0(\hat{h})=\inf\{\tau(\phi(h)); \tau\in T(B)\}\rforal h\in {\tilde A}_+^{\bf 1}\setminus \{0\}.
\eneq
Since both $A$ and $B$ are simple, $\Delta_0(h)>0$ for all $h\in A_+\setminus \{0\}.$
If $h\in {\tilde A}_+\setminus \{0\},$ then $ehe\ge 0$ for all $e\in A_+.$ It is then easy to see 
that $ehe\not=0$ for some $e\in A_+.$ Therefore $\Delta_0(\hat{h})>0$ for all $h\in {\tilde A}_+\setminus \{0\}.$ 
Put $\Delta=\Delta_0/2.$
Since $U$ is strongly self absorbing, we may reduce the general 
case to the case that 
$\phi:A\to B_1\otimes 1_U$ and $u\in {\tilde B_1}.$

Define
$\Delta_1: (A\otimes C(\T))_+^{q, {\bf 1}}\setminus\{0\}\to (0,1)$ by
\beq\label{Ddel-1}
\hspace{-0.3in}\Delta_1(\hat{h})=\sup\{ {\Delta(h_1)\tau_m(h_2)\over{4}}:\,
\hat{h}\ge \widehat{h_1\otimes h_2}\andeqn h_1\in A_+\setminus \{0\},\,\,\, h_2\in C(\T)_+\setminus\{0\}\}.
\eneq

Let ${\cal F}_1=\{ a\otimes b: a\in {\cal F}\andeqn b\in \{1, z, z^*\}\}.$ 
Put $A_1={\tilde A}\otimes C(\T).$ 
Let $\gamma_1>0, \gamma_2>0, \dt>0$ be positive numbers, and,  ${\cal H}_1\subset (A_1)_+^{\bf 1}\setminus \{0\},$
${\cal H}_2\subset A_1$ and ${\cal G}\subset A_1$ be finite subsets  given 
by \ref{CUniqN1} (referring to  \ref{UniqN1}) for $\ep/16$ (instead of $\ep$) and ${\cal F}_1$ (instead of ${\cal F}$).
\Wlog, we may assume that ${\cal G}=\{a\otimes g:  a\in {\cal G}_a\andeqn g\in \{1, z, z^*\}\},$
where ${\cal G}_a\subset A$ is a finite subset containing $1_{\tilde A},$ and 
${\cal H}_1=\{a\otimes g: a\in {\cal H}_{1a}\andeqn g\in {\cal H}_{1T}\},$ 
where ${\cal H}_a\subset A_+^{\bf 1}\setminus \{0\}$ and ${\cal H}_{1T}\subset C(\T)_+^{\bf 1}\setminus \{0\}$ 
are finite subsets containing identities.  We may also assume 
that ${\cal H}_2=\{a\otimes g: a\in {\cal H}_{2a}\andeqn g\in {\cal H}_{2T}\},$
where ${\cal H}_{2a}$ and ${\cal H}_{2T}$ are finite subsets containing identities. 
We may also assume that every element in ${\cal H}_{1T}$ and ${\cal H}_{2T}$ is 
a polynomial of $z, z^*$ with degree no more than an integer $N.$ Furthermore, all the coefficients 
have absolute values no more than $M$ for some $M\ge 1.$ 

Choose 
\beq
\sigma_1=\min\{\gamma_1/2, \inf\{\Delta_1(\hat{h}): h\in {\cal H}_1\}/4MN\andeqn\\
\sigma_2=1-\gamma_2/16MN.
\eneq
Choose $\dt_c=\min\{\ep/16, \sigma_1/4, \sigma_2/4\}/4MN$ and 
${\cal G}_c={\cal F}\cup {\cal G}_a\cup {\cal H}_{1a}\cup {\cal H}_{2a}.$ 
Let ${\cal G}_c'\subset A$  be a finite subset such that 
every element $a\in {\cal G}_c$ has the form $a=\lambda +b$ for some $\lambda\in \C$ and 
$b\in {\cal G}_c'.$ 

Let $\dt_1>0$ and ${\cal G}_1\subset A$ be the finite subset required by \ref{densesp3}
for the above given $\dt,$ $\dt_c,$ $\sigma_1,$ $\sigma_2,$  ${\cal H}_1,$ ${\cal H}_2$ and ${\cal G}.$
\Wlog, we may assume that ${\cal G}_1\supset {\cal G}_c'.$
Furthermore, 

Now assume that $u\in CU({\tilde A})$ such that
\beq
\|[u,\, \phi(a)]\|<\dt_1\rforal a\in {\cal G}_1.
\eneq

Applying \ref{densesp3},  we obtain $e\in (B_1)_+$ with $\|e\|=1$ and $h\in U_{s.a.}$ satisfying 
the conclusions of \ref{densesp3}.  Note that we may assume 
that 
\beq\label{BHTL-nn9}
e\phi(g)=\phi(g)e\rforal g\in {\cal G}_1.
\eneq


Put $v_1=u\exp(ie\otimes h)$ and $v_2=\exp(ie\otimes h).$  
Note that $\tau(e\otimes h)=0$ so $v_2\in CU({\tilde B}).$ 

Let $L_1$ and $L_2$ be given by \ref{densesp3}.
Note by \eqref{densesp3-n11},  \eqref{densesp3-n12}  and \eqref{densesp3-n13},
we may write $L_1=\Phi_{v_1, \phi}$ and $L_2=\Phi_{v_2, \phi}.$ 
Then, by \eqref{densesp3-2},  \eqref{densesp-1} and the choice of $\dt_c,$ 
we have 
\beq\label{BHTL-nn10}
\tau(L_i(h))\ge \Delta_1(\hat{h})\rforal h\in {\cal H}_1\,\,\,i=1,2.
\eneq
We also have ${\rm dist}({\bar v_1}, {\bar v_2})=0.$ 
Then, by \eqref{densesp3-1}  and applying \ref{CUniqN1}, we obtain a unitary $W\in {\tilde B}$ such that
\beq\label{BHTL-nn11}
\|W^*L_2(f)W-L_1(f)\|<\ep/16\rforal f\in {\cal F}_1.
\eneq
Therefore
\beq\label{BHTL-nn12}
&&\hspace{-0.2in}\|[L(a),\,\,\, W^*v_2W]\|<\ep/8\andeqn \|L(a)-W^*L(a)W\|<\ep/8 \rforal a\in {\cal F}\\
&& \andeqn \|v_1-W^*v_2W\|<\ep/8.
\eneq
Let $v_1^*W^*v_2W=\exp (ih_1)$ for some 
$h_1\in {\tilde B}_{s.a.}$ such that $\|h_1\|\le 2\arcsin (\ep/16).$ 
Now define $u(t)=u\exp(i3t(e\otimes h))$ for $t\in [0,1/3],$  $u(t)=u(1/3)\exp(i3(t-1/3)h_1)$
for $t\in (1/3, 2/3]$ and $u(t)=u(2/3)W^*\exp (i(3(1-t))(e\otimes h)W.$  So $\{u(t): t\in [0,1]\}$ is a continuous path of 
unitaries in ${\tilde B}$ such that $u(0)=u$ and $u(1)=1_{\tilde B}.$
Moreover, we estimates, by \eqref{BHTL-nn9} and \eqref{BHTL-nn12}, that
\beq
\|[\phi(a),\,\, u(t)]\|<\ep\rforal a\in {\cal F}.
\eneq

\end{proof}

\begin{cor}\label{Casympu}
Let $A, B\in {\cal D}$ be two separable \CA\, with  continuous scale and with $K_i(A)=K_i(B)=\{0\}$ ($i=0,1$) which satisfy the UCT. 
Suppose that $\phi_1, \phi_2: A\to B$ are two \hm s such that
\vspace{-0.1in}\beq
(\phi_1)_T=(\phi_2)_T.
\eneq
Then there is a continuous path of unitaries $\{u(t): t\in [0, 1)\}\subset {\tilde B}$ with $u(0)=1$
such that
\beq
\lim_{t\to\infty} {\rm Ad}\, u(t)\circ \phi_1(a)=\phi_2(a)\rforal a\in A.
\eneq
\end{cor}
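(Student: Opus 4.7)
The plan is to prove Corollary \ref{Casympu} in three stages: reduce to the case where $B$ absorbs the universal UHF-algebra $Q$ so that Lemma \ref{BHK00} applies; establish approximate unitary equivalence of $\phi_1$ and $\phi_2$ on every finite subset of $A$; and telescope the resulting unitaries into a norm-continuous path on $[0,1)$ starting at $1$. For the reduction, since $A, B \in \mathcal D$ satisfy the UCT with $K_i = \{0\}$, the algebras $A\otimes Q, B\otimes Q$ remain in $\mathcal D$ with the same invariants $(K_*, T, \Sigma)$, so Theorem \ref{TTMW} applied to $(A, A\otimes Q)$ and $(B, B\otimes Q)$ gives isomorphisms $A \cong A \otimes Q$ and $B \cong B_1 \otimes Q$ for some $B_1 \in \mathcal D$ of continuous scale.

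For the approximate unitary equivalence, Corollary \ref{CM0=D0} identifies $A$ with a member of $\mathcal M_0$, hence with an inductive limit of \CA s in $\mathcal C_0^0$. Given $\epsilon > 0$ and a finite $\mathcal F \subset A$, approximate $\mathcal F$ within $\epsilon/3$ by the image of a finite subset of a hereditary \SCA\, $C \subset A$ with $C \in \mathcal C_0^{0'}$ (using Proposition \ref{PM0her}). The hypothesis $(\phi_1)_T = (\phi_2)_T$ restricts to $\tau \circ \phi_1|_C = \tau \circ \phi_2|_C$ for every $\tau \in T(B)$. The K-theoretic agreement $[\phi_1|_C]|_{\mathcal P} = [\phi_2|_C]|_{\mathcal P}$ required by Corollary \ref{Cruniq} (the hereditary-inductive-limit refinement of \ref{Lrluniq}) holds trivially because the composition $K_0(C) \to K_0(A) = \{0\}$ is zero. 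The structural hypotheses on $B$ (strict comparison from \ref{Comparison}, stable rank one from \ref{TTstr1}, quasi-compactness from continuous scale, and surjectivity of $W(B)\to \mathrm{LAff}_{b+}(T(B))$ from Proposition \ref{PWAff}) are all verified, yielding a unitary $u\in \widetilde B$ with $\|u\phi_1(a)u^* - \phi_2(a)\| < \epsilon$ for $a \in \mathcal F$.

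For the telescoping, fix exhausting finite subsets $\mathcal F_1 \subset \mathcal F_2 \subset \cdots \subset A$ with dense union and let $\epsilon_n = 2^{-n}$. Feed $(\epsilon_n, \mathcal F_n)$ into Lemma \ref{BHK00} to obtain tolerances $(\dt_n, \mathcal G_n)$, and then inductively pick unitaries $v_0 = 1, v_1, v_2, \ldots \in \widetilde B$ from the previous step so that $\|v_n \phi_1(a) v_n^* - \phi_2(a)\| < \dt_n/4$ for $a \in \mathcal G_n \cup \mathcal F_n$. Then $w_n := v_n^* v_{n+1}$ satisfies $\|[w_n, \phi_1(a)]\| < \dt_n$ on $\mathcal G_n$, so Lemma \ref{BHK00} supplies a continuous path $\{w_n(t)\}_{t\in [0,1]}$ from $w_n$ to $1$ with $\|[\phi_1(a), w_n(t)]\| < \epsilon_n$ on $\mathcal F_n$. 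Set $t_n = 1 - 2^{-n}$ and define
\[
u(t) = v_n\, w_n\!\left(1 - \frac{t - t_n}{t_{n+1} - t_n}\right),\qquad t \in [t_n, t_{n+1}],\ n \ge 1,
\]
with $u|_{[0, t_1]}$ any continuous path from $1$ to $v_1$ in $U(\widetilde B)$, which exists because $K_1(\widetilde B) = K_1(B) = \{0\}$ combined with stable rank one forces $U(\widetilde B) = U_0(\widetilde B)$. Then $u$ is norm-continuous, $u(0) = 1$, $u(t_n) = v_n$, and $\|\mathrm{Ad}\, u(t)(\phi_1(a)) - \phi_2(a)\| \le \epsilon_n + \dt_n/4$ on $\mathcal F_n$, so $\mathrm{Ad}\, u(t) \circ \phi_1 \to \phi_2$ pointwise in norm as $t \to 1^-$.

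The main obstacle will be the nested-tolerance bookkeeping in the telescoping step: Lemma \ref{BHK00} requires $w_n$ to $\dt_n$-commute with $\phi_1(\mathcal G_n)$, not merely with $\phi_1(\mathcal F_n)$, so the stage-$n$ application of the uniqueness result must be performed with small tolerance on the larger set $\mathcal G_n$, which itself depends on the stage-$n$ inputs; this forces a careful ordering in which $(\dt_n, \mathcal G_n)$ is computed from \ref{BHK00} before the intertwining tolerance in the approximate unitary equivalence is chosen. A secondary technical point is confirming the strong strict comparison hypothesis of \ref{Cruniq} for $B$, which follows from strict comparison together with the equality $T(B) = \overline{T(B)}^w$ guaranteed by continuous scale (Theorem \ref{Pconscale}).
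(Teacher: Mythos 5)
Your argument is correct and follows essentially the same route as the paper's own (very terse) proof: approximate unitary equivalence of $\phi_1$ and $\phi_2$ extracted from the uniqueness machinery behind \ref{TTMW} (i.e.\ \ref{Lrluniq}/\ref{Cruniq}, where the $K$-theoretic hypothesis is vacuous because $K_*(A)=\{0\}$ and the trace hypothesis is exact because $(\phi_1)_T=(\phi_2)_T$), the observation $U({\tilde B})=U_0({\tilde B})$, and then repeated application of \ref{BHK00} with the standard telescoping you carry out in your last paragraph.

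Two small repairs to the write-up. First, a hereditary \SCA\ of the simple algebra $A$ can never belong to $\mathcal{C}_0^{0'}$ (those algebras are subhomogeneous, while hereditary \SCA s of $A$ are simple and infinite dimensional), so \ref{PM0her} is not the right citation; the intermediate subalgebras $C$ should be the (non-hereditary) images of the building blocks in the inductive-limit decomposition of the $\mathcal{M}_0$-model of $A$ furnished by \ref{CM0=D0}, or, more simply, one can apply \ref{Cruniq} with $C=A$ itself, since \ref{Cruniq} is stated for hereditary \SCA s of inductive limits of algebras in $\mathcal{C}$ and $A$ qualifies. Second, the decomposition $B\cong B_1\otimes U$ with $B_1\in\mathcal{M}_0$ demanded by \ref{BHK00} should not be obtained by asserting that $A\otimes Q$ and $B\otimes Q$ ``remain in $\mathcal{D}$'' (the paper establishes such tensor-product statements only under finite nuclear dimension, which is not assumed here); instead, identify $B$ with a member of $\mathcal{M}_0$ via \ref{CM0=D0} and use the $Q$-stability of $\mathcal{M}_0$-algebras with vanishing $K$-theory, as in the proof of \ref{Lmatrixpullb}. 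With these substitutions, and the nested choice of $(\dt_n,{\cal G}_n)$ you already describe, the proof is complete.
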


\begin{proof}
The proof of \ref{TTMW} (see also \ref{Lrluniq}) shows that there exists a sequence of unitaries 
$\{u_n\}\subset {\tilde B}$ such that
\beq
\lim_{t\to\infty}{\rm Ad}\, u_n\circ \phi_1(a)=\phi_2(a)\rforal a\in A.
\eneq
Note that $U({\tilde B})=U_0({\tilde B}).$ 
It is then standard, by applying \ref{BHK00} repeatedly, one can obtain a required continuous path of unitaries. 
\end{proof}

\begin{thm}\label{LFNZ}
Let $A$ be a non-untal separable 
finite simple \CA\,  with finite nuclear dimension and with $KK(A,D)=0$ for all separable \CA s $D$ which satisfies the UCT.
Then $A\in {\cal D}_0.$ Without assuming continuous scale, $gTR(A)\le 1.$
\end{thm}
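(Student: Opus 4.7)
The plan is to verify all hypotheses of Theorem \ref{TTTAD} for $A$ (after reducing to the continuous scale case) and conclude $A \in {\cal D}_0$ directly, then deduce $gTR(A) \le 1$ in general by passage to a hereditary \SCA.

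\textbf{Step 1: Reduction to continuous scale.} By \ref{Rcontext}, any non-elementary separable simple \CA\, contains a non-zero hereditary \SCA\, $A_0 \subset A$ with continuous scale. The hypotheses (separability, simplicity, finiteness, finite nuclear dimension, $KK(A_0, D) = 0$ for all $D$) descend to $A_0$. By \ref{PDgTR}, if we can prove $A_0 \in {\cal D}_0 \subset {\cal D}$, then $gTR(A) \le 1$. So it suffices to prove the first assertion assuming $A$ has continuous scale; in particular $A$ is quasi-compact.

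\textbf{Step 2: Extraction of structural properties.} From $KK(A, D) = \{0\}$ for all $D$, one deduces $K_0(A) = K_1(A) = \{0\}$ (take $D = \C$); consequently $K_0(A) = {\rm ker}\,\rho_A$ trivially, and $A$ satisfies the UCT trivially (both sides of the UCT sequence vanish). Since $A$ is separable, simple, nuclear with finite nuclear dimension, Winter's absorption theorem gives $A \cong A \otimes {\cal Z}$. ${\cal Z}$-stability yields strict comparison of positive elements and stable rank one (since $A$ is finite), hence almost stable rank one in the sense of \ref{Dalst1}. Crucially, ${\cal Z}$-stability also supplies the tracially approximate divisibility of Definition \ref{Dappdiv} for $A$ and every non-zero hereditary \SCA\, of $A$, via the standard approximately central embeddings of $M_n \hookrightarrow {\cal Z}$.

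\textbf{Step 3: $W$-traces and the approximating system.} Since $K_0(A) = \{0\}$ is trivially torsion, $A$ is quasi-compact, amenable, and satisfies the UCT (or rather the $KK$-vanishing substitute recorded in \ref{Ruct3}), Theorem \ref{TalWtrace} applies to show that every tracial state of $A$ is a $W$-trace. Next, invoking \ref{R2}, choose $C \in {\cal M}_0$ with continuous scale realizing $T(C) \cong T(A)$ as Choquet simplices via an affine homeomorphism $\gamma$. One then repeats the construction in the proof of \ref{TMW}: using the $W$-trace factorizations of the extreme traces of $A$ together with Robert's classification \cite{Rl} of homomorphisms out of 1-dimensional NCCW inductive limits (to upgrade c.p.c.\ approximations into the inductive limit of finite direct sums of $W$ that makes up $C$), produce approximately multiplicative c.p.c.\ maps $\sigma_n : A \to C$ and homomorphisms $\rho_n : C \to A$ satisfying
\[
\lim_{n\to\infty} \|\sigma_n(ab) - \sigma_n(a)\sigma_n(b)\| = 0 \quad \text{and} \quad \lim_{n\to\infty} \sup_{\tau\in T(A)} |\tau(\rho_n\circ\sigma_n(a)) - \tau(a)| = 0
\]
for all $a,b \in A$, and $\sigma_n(a_0)$ strictly positive in $C$, where $a_0$ is a strictly positive element of $A$.

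\textbf{Step 4: Applying \ref{TTTAD} and main obstacle.} With continuous scale, $K_0(A) = {\rm ker}\,\rho_A$, almost stable rank one, finite nuclear dimension, the approximating system $(C, \sigma_n, \rho_n)$, and the tracially approximate divisibility for all hereditary \SCA s all in place, Theorem \ref{TTTAD} applies and yields $A \in {\cal D}_0$. Removing the continuous scale assumption in Step 1 then gives $gTR(A) \le 1$ in general. The main obstacle is Step 3, specifically the construction of the approximations $\sigma_n$: one must combine the $W$-trace property (giving approximations trace-by-trace) with uniform control over the whole Choquet simplex $T(A)$, and then use the semiprojectivity/weak stability of the building blocks of $C$ together with Robert's uniqueness to assemble these into genuine c.p.c.\ maps into $C$ with the required tracial convergence; this is essentially the most delicate portion of the proof of \ref{TMW} adapted to our setting.
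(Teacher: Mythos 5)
Your Steps 1 and 3 are consistent with the paper's reductions (pass to a hereditary subalgebra with continuous scale, get $K_*(A)=0$ from $KK$-vanishing, use \ref{TalWtrace} together with the construction in the proof of \ref{TMW} to produce the maps $\sigma_n\colon A\to C\in{\cal M}_0$ and $\rho_n\colon C\to A$). The gap is in Step 2: you assert that ${\cal Z}$-stability ``supplies the tracially approximate divisibility of Definition \ref{Dappdiv} for $A$ and every non-zero hereditary subalgebra, via the standard approximately central embeddings of $M_n\hookrightarrow{\cal Z}$.'' There are no unital (or approximately central unital) embeddings of $M_n$ into ${\cal Z}$: ${\cal Z}$ is projectionless, and $A$ itself is stably projectionless, so there are no matrix units to work with. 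What ${\cal Z}$-stability gives is a unital copy of a dimension-drop algebra, i.e.\ positive elements $e_1,\dots,e_n,d$ with $\sum e_j+d=1$, the $e_j$ mutually Cuntz-equivalent via elements $w_j$ (not partial isometries) and $d\lesssim e_1$. Turning this into the exact structure demanded by Definition \ref{Dappdiv} --- an orthogonal decomposition $A_0\oplus M_n(A_1)$ of honest subalgebras of $A$, approximation of ${\cal F}$ by elements whose $M_n(A_1)$-component is literally diagonal-constant, and a remainder $A_0$ whose strictly positive element is Cuntz-dominated by an \emph{arbitrary} $b\in A_+\setminus\{0\}$ --- is precisely the nontrivial point. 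The paper carries out such an argument only inside inductive limits of one-dimensional NCCW complexes (the second half of \ref{Lmatrixpullb}), where semiprojectivity lets one perturb to a finite stage and realize an honest copy of $M_K(D_m'')$ in a building block; no such device is available for a general ${\cal Z}$-stable $A$, and there is no result in the paper (or quotable at this point) saying that ${\cal Z}$-stability alone implies Definition \ref{Dappdiv} in the stably projectionless setting. So your invocation of \ref{TTTAD} in Step 4 rests on an unproved hypothesis, and in effect begs the question the theorem is meant to settle.

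By contrast, the paper's proof spends essentially all of its effort on exactly this divisibility: it first gets $B=A\otimes Q\in{\cal D}_0$ (W-traces plus \ref{TMW}), then uses the classification theorem \ref{TTMW} to identify $B$ with an algebra in ${\cal M}_0$ and to produce isomorphisms $\phi_{\p}\colon B\otimes M_{\p}\to A\otimes M_{\p}$, $\phi_{\q}\colon B\otimes M_{\q}\to A\otimes M_{\q}$ agreeing on traces, writes ${\cal Z}$ as a stationary inductive limit of ${\cal Z}_{\p,\q}$ (R\o rdam--Winter), and glues $\phi_{\p}\otimes{\rm id}$ and $\phi_{\q}\otimes{\rm id}$ along a continuous path of unitaries furnished by the asymptotic uniqueness result \ref{Casympu} (which rests on the Basic Homotopy Lemma \ref{BHK00}) to obtain $A\otimes{\cal Z}_{\p,\q}\cong B\otimes{\cal Z}_{\p,\q}$; since $B$ is $Q$-stable, this transfers the required (tracial) approximate divisibility to $A\cong A\otimes{\cal Z}$, and only then is \ref{TTTAD} applied. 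If you want to keep your outline, you must either reproduce this intertwining argument or supply an independent proof that separable simple stably projectionless ${\cal Z}$-stable algebras (with continuous scale) satisfy Definition \ref{Dappdiv}; neither is a one-line consequence of ${\cal Z}$-absorption.
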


\begin{proof}
It suffices to consider the case that $A$ has continuous scale. 
By \cite{Wnz}, $A\cong A\otimes {\cal Z}.$ 
It follows from  \ref{TTTAD} that it suffices to show that $A\otimes {\cal Z}$ is tracially approximate divisible 
in the sense of \ref{Dappdiv}. 
Let $B=A\otimes Q.$ Then,  by \ref{TTTAD}, $B\in {\cal D}_0.$ Note, by \ref{TTMW},
$B\cong C,$ where $C\in {\cal M}_0.$ Moreover $C\cong A\otimes U$ for any 
UHF-algebra $U.$ 
To simplify notation, we write $B=C.$ 
Pick a pair of relatively prime supernatural numbers $\p$ and $\q.$ Let 
\beq
{\cal Z}_{\p, \q}=\{f\in C([0,1], Q): f(0)\in M_\p\otimes 1\andeqn f(1)\in M_\q\otimes 1\}\andeqn\\
D\otimes {\cal Z}_{\p,\q}=\{f\in C([0,1], D\otimes Q): f(0)\in D\otimes M_\p\otimes 1\andeqn f(1)\in D\otimes M_\q\otimes 1\}
\eneq
for  any \CA\, $D.$ Note, by \cite{RW}, ${\cal Z}$ is a stationary inductive limit of ${\cal Z}_{\p, \q}.$
Therefore, it suffices to show that $A\otimes {\cal Z}_{\p, \q}$ is (tracially) approximately divisible.
Since $B\cong B\otimes Q,$ it suffices, then, to show the following, 
for any finite subset ${\cal F}\subset A\otimes {\cal Z}_{\p, \q}$ and any $\ep>0,$ 
there exists a \hm\, $\Phi: B\otimes {\cal Z}_{\p, \q}\to A\otimes {\cal Z}_{\p, \q}$ such that
\beq
{\rm dist}(f, \Phi(B\otimes {\cal Z}_{\p, \q}))<\ep\rforal f\in {\cal F}.
\eneq

Let $\phi_\p: B\otimes M_\p\to A\otimes M_\p$  and 
$\phi_\q: B\otimes M_\q\to   A\otimes M_\q$  be  isomorphisms given by \ref{TTMW}.
Moreover, since $T(B)=T(A)=T(A\otimes M_\q)=T(A\otimes M_q),$
we may assume that $(\phi_\q)_T=(\phi_\q)_T,$ by
\ref{TTMW}.  Let $\psi_\p: B\otimes M_\p\otimes M_\q=B\otimes Q \to A\otimes M_\p\otimes M_\q=A\otimes Q$
given by $\psi_\p=\phi_\p\otimes {\rm id}_{M_\q}$ and 
let $\psi_\q=\phi_\q\otimes {\rm id}_{M_\p}: B\otimes Q\to A\otimes Q.$ 
Now $(\psi_\p)_T=(\psi_\q)_T.$ It follows from 
\ref{Casympu} that there exists a continuous path of unitaries $\{u(t): t\in [0,1)\}\subset A\otimes Q$
with $u(0)=1$ and 
$\lim_{t\to 1}u^*(t)\psi_\p(a)u^*(t)=\psi_\q(a)$ for all $a\in B\otimes M_\p\otimes M_\q.$
Define, for each $a(t)\in B\otimes {\cal Z}_{\p, \q},$ 
\beq
\Phi(a(t))=u^*(t)\psi_\p(a(t))u(t)\rforal t\in [0,1)\andeqn \Phi(a(1))=\psi_\q(a(1)).
\eneq
One then checks that $\Phi$ defines an isomorphism from $B\otimes Z_{\p, \q}$ onto 
$A\otimes {\cal Z}_{\p, \q}.$

\end{proof}

{\bf The proof of Theorem \ref{TMpartI}}:

\begin{proof}
This immediately follows
 from \ref{TTCLM}  and the above.
\end{proof}





{\bf The proof of Corollary \ref{CWiso}}:

\begin{proof}
If $A$ has finite nuclear dimension, then by \cite{aTz},
$A\otimes W$ is ${\cal Z}$-stable. If ${\tilde T}(A)=\{0\},$ then
$A\otimes W$ is purely infinite and non-unital.  Since $K_i(A\otimes W)=\{0\}$ ($i=0,1$),
$A\otimes {\cal K}\cong {\cal O}_2\otimes {\cal K}$ by \cite{Pclass} (see also \cite{KP}).
In the case  that ${\tilde T}(A)\not=\emptyset,$  then $gTR(A\otimes W)\le 1$ by  \ref{LFNZ}.
Thus \ref{CWiso} follows from \ref{TMpartI}.
\end{proof}


%


\providecommand{\href}[2]{#2}





\begin{thebibliography}{10}

{\small
 \bibitem{AAP} C.  A. Akemann, J.  Anderson and G. K. Pedersen, {\em Excising states of $C^*$-algebras},
 Canad. J. Math. {\bf 38} (1986), 1239--1260.

 \bibitem{Arv} W. Arveson, {\em Notes on extensions of \CA s},
Duke Math. J. {\bf 44} (1977),  329--355.

\bibitem{Btrace} B. E.  Blackadar, {\em Traces on simple AF \CA s},  J. Funct. Anal.  {\bf 38} (1980),  156--168.

 \bibitem{Br1}  L. G. Brown, {\em Stable isomorphism of hereditary subalgebras of $C^*$-algebras},
  Pacific J. Math.  {\bf 71} (1977),  335--348.

\bibitem{BPT} N. Brown,  F. Perera and A. Toms, {\em The Cuntz semigroup, the Elliott conjecture, and dimension functions on $C^*$-algebras},  J. Reine Angew. Math.  {\bf 621}  (2008), 191--211.

\bibitem{BT} N. P.   Brown and A. S. Toms,  {\em Three applications of the Cuntz semigroup},  Int. Math. Res. Not. IMRN 2007, no. 19, Art. ID rnm068, 14 pp.


\bibitem{CEI} K.  Coward, G. A. Elliott and C.  Ivanescu, {\em The Cuntz semigroup as an invariant for \CA s} J. Reine Angew. Math.  {\bf 623}  (2008), 161--193.

\bibitem{CP}  J. Cuntz and G. K.  Pedersen, {\em Equivalence and traces on \CA s},
 J. Funct. Anal.  {\bf 33} (1979),  135--164.

\bibitem{ED} M.   D\u{a}d\u{a}rlat and S.  Eilers, {\em  On the classification of nuclear \CA s}, Proc. London Math. Soc.
{\bf 85} (2002),  168--210.

\bibitem{DL}
M.~D{\u a}d{\u a}rlat and T.~Loring, \emph{A universal multicoefficient theorem
  for the $\textrm{Kasparov}$ groups}, Duke Math. J. \textbf{84} (1996),
  355--377.


\bibitem{ELP1} S.  Eilers, T. Loring and G. K. Pedersen, {\em Stability of anticommutation relations: an application of noncommutative CW complexes}, J. Reine Angew. Math. {\bf 499} (1998), 101--143.

\bibitem{ELP2} S. Eilers, T. Loring and P.K. Pedersen, {\em  Fragility of subhomogeneous C*-algebras with one-dimensional spectrum},  Bull. London Math. Soc.  {\bf 31} (1999), 337--344.

\bibitem{Ellicm} G. A. Elliott, {The classification problem
for amenable $C^*$-algebras}. Proceedings of the International Conference of Mathematics,
Vol.1,2  Zurich, 1994. 922--932, Birkhusr, Basel, 1995.



\bibitem{EGLN} G. A.   Elliott, G. Gong, H. Lin and Z. Niu, {\em  On the classification of simple amenable \CA s with finite decomposition rank, II},  arXiv:1507.03437.

\bibitem{ERS11} G. A.  Elliott, L.  Robert  and L. Santiago, {\em The cone of lower semicontinuous traces on a
\CA},  Amer. J. Math. {\bf 133}  (2011),  969--1005.

\bibitem{GL1} G. Gong and H.  Lin, {\em Almost multiplicative morphisms and K-theory},
 Internat. J. Math.  {\bf 11} (2000),  983--1000.

\bibitem{GL2} G. Gong and H.  Lin, {\em  On classification of non-unital simple amenable \CA s, II},
in preparation.


\bibitem{GLN} G.~Gong, H.~Lin, and Z.~Niu, {\em Classification of finite simple amenable ${\mathcal Z}$-stable $C^*$-algebras},
preprint, arXiv:1501.00135.

 \bibitem{HL} J. Hua and H. Lin, {\em Rotation algebras and Exel trace formula},  {Canad. J. Math. {\bf  67} (2015), 404--423. }

\bibitem{Jb} B.  Jacelon,  {\em A simple, monotracial, stably projectionless \CA}, J. Lond. Math. Soc.  {\bf 87} (2013),  365--383.

 \bibitem{KP} E.  Kirchberg and N. C.  Phillips, {\em Embedding of exact \CA s in the Cuntz algebra ${\cal O}_2$},
  J. Reine Angew. Math.  {\bf 525}  (2000), 17--53.

 \bibitem{KW} E.  Kirchberg and W.  Winter,  {\em Covering dimension and quasidiagonality},
  Internat. J. Math.  {\bf 15} (2004),  63--85.

\bibitem{Lncs1} H.   Lin, {\em Simple \CA s  with continuous scales and simple corona algebras},
 Proc. Amer. Math. Soc.  {\bf 112} (1991), 871--880.

\bibitem{Lnsuniq} H. Lin, {\em Stable approximate unitary equivalence of homomorphisms}, J. Operator Theory
{\bf 47} (2002), 343--378.

 \bibitem{Lnbk} H. Lin, {\em  An introduction to the classification of amenable \CA s},  World Scientific Publishing Co., Inc., River Edge, NJ, 2001. xii+320 pp. ISBN: 981-02-4680-3.

 \bibitem{Lncrell} H.   Lin, {\em  Traces and simple $C^*$-algebras with tracial topological rank zero},
  J. Reine Angew. Math. {\bf 568} (2004), 99--137.

\bibitem{Lncs2} H.  Lin, {\em Simple corona \CA s}, Proc. Amer. Math. Soc. {\bf 132}  (2004), 3215--.
3224.

 \bibitem{Lnauct} H. Lin, {\em An approximate universal coefficient theorem}, Trans. Amer. Math. Soc. {\bf 357} (2005), 3375--3405.

\bibitem{Linajm} H. Lin, {\em Asymptotically unitary equivalence and asymptotically inner automorphisms},  Amer. J. Math. {\bf 131} (2009), 1589--1677.

\bibitem{LnHomtp}
H.~Lin, \emph{Approximate homotopy of homomorphisms from {$\textrm{C}(X)$}
  into a simple {C*}-algebra}, Mem. Amer. Math. Soc. \textbf{205} (2010),
  no.~963, vi+131.


\bibitem{Ln-hmtp} H.  Lin, {\em Homotopy of unitaries in simple \CA s with tracial rank one},  J. Funct. Anal. {\bf 258} (2010),  1822--1882.

\bibitem{Lnclasn}
H.~Lin, \emph{Asymptotic unitary equivalence and classification of simple
  amenable {C*}-algebras}, Invent. Math. \textbf{183} (2011),  385--450.


\bibitem{Lnloc}  H.  Lin, {\em Locally AH algebras},  Mem. Amer. Math. Soc. {\bf  235} (2015), no. 1107, vi+109 pp. ISBN: 978-1-4704-1466-5; 978-1-4704-2225-7.

\bibitem{Lin-AU11}
H.~Lin, \emph{Homomorphisms from {AH}-algebras},  Journal of Topology and Analysis
{\bf 9},  (2017),  67--125, \,\,
arXiv: 1102.4631v1 (2011).

\bibitem{Lncbms} H. Lin, {\em  From the Basic Homotopy Lemma to the Classification of \CA s},
     a CBMS Lectures Notes, preprint.

\bibitem{Pbook}  G. K. Pedersen, {\em \CA s and their automorphism groups},
 London Mathematical Society Monographs, 14. Academic Press, Inc.
 London-New York, 1979. ix+416 pp. ISBN: 0-12-549450-5.

 \bibitem{Pedjot87} G. K. Pedersen, {\em Unitary extensions and polar decompositions in a \CA},  J. Operator Theory {\bf 17} (1987),  357--364.

\bibitem{Pclass} N. C.  Phillips, {\em A classification theorem for nuclear purely infinite simple \CA s}, Doc. Math. {\bf 5} (2000), 49--114.

\bibitem{Raz} S. Razak {\em On the classification of simple stably projectionless $C^*$-algebras},
Canad. J. Math.  {\bf 54} (2002),  138--224.

 \bibitem{Rl} L.  Robert, {\em Classification of inductive limits of 1-dimensional NCCW complexes},  Adv. Math.  {\bf 231} (2012),  2802--2836.

 \bibitem{Rlz} L.   Robert, {\em Remarks on ${\cal Z}$-stable projectionless $C^*$-algebras},  Glasg. Math. J. {\bf 58} (2016), 273--277,


\bibitem{Rr11} M.   R\o rdam, {\em On the structure of simple \CA s tensored with a UHF-algebra. II},  J. Funct. Anal. {\bf 107} (1992),  255--269.

\bibitem{RorUHF2}  M.  R\o rdam, {\em On the structure of simple $C^*$-algebras tensored with a UHF-algebra},  J. Funct. Anal. {\bf 100} (1991),  1--17.

 \bibitem{Ror-KL-I} M.~R{\o}rdam, {\em Classification of certain infinite simple {C*}-algebras},  J. Funct. Anal.  {\bf 131}  (1995),   415--458.


\bibitem{Rrzstable} M.   R\o rdam, {\em The stable and the real rank of  ${\cal Z}$-absorbing \CA s},  Internat. J. Math.
{\bf 15} (2004), 1065--1084.



\bibitem{RW} M.  Rørdam and W.  Winter, {\em The Jiang-Su algebra revisited} . J. Reine Angew. Math. 
{\bf 642} (2010), 129--155. 

\bibitem{aTz} A.  Tikuisis,  {\em  Nuclear dimension, ${\cal Z}$-stability, and algebraic simplicity for stably projectionless
\CA s}, Math. Ann.  {\bf 358} (2014),  729--778.

\bibitem{TWW}
A~Tikuisis, S.~White, and W.~Winter,
\emph{Quasidiagonality of nuclear {C*}-algebras}, preprint, {arXiv:1509.08318}.



\bibitem{Tsang} K-W.  Tsang, {\em On the positive tracial cones of simple stably projectionless \CA s},
 J. Funct. Anal.  {\bf 227} (2005),  188--199.

\bibitem{Wann05} W.   Winter, {\em On topologically finite-dimensional simple \CA s},  Math. Ann. {\bf  332} (2005), 843--878.
\bibitem{Wnz} W.   Winter, {\em Nuclear dimension and  ${\cal Z}$-stability of pure  $C^*$-algebras},
 Invent. Math.  {\bf 187} (2012),  259--342.

\bibitem{Winter-Z}
W.~Winter, \emph{Localizing the $\textrm{Elliott}$ conjecture at strongly
  self-absorbing $\textrm{C*}$-algebras}, J. Reine Angew. Math. {\bf 692} (2014), 193--231.


\bibitem{Wccross} W.   Winter, {\em Classifying crossed product \CA s},  Amer. J. Math. {\bf 138} (2016), 793--820.

\bibitem{WZ} W.  Winter and J. Zacharias, {\em The nuclear dimension of \CA s}, Adv. Math.  {\bf 224} (2010),  461--498.
}
\end{thebibliography}
\end{document}